\newcommand{\SortNoop}[1]{}
\theoremstyle{plain}
\newtheorem{theorem}{Theorem}[section]
\newtheorem{lemma}[theorem]{Lemma}
\newtheorem{corollary}[theorem]{Corollary}
\theoremstyle{definition}
\newtheorem{definition}[theorem]{Definition}
\newtheorem{remark}[theorem]{Remark}
\newtheorem{obs}[theorem]{\!\!}
\newcommand{\1}{\mathbbm{1}}
\newcommand{\dd}{\, \mathrm{d}}
\newcommand{\C}{\mathbb C}
\newcommand{\T}{\mathbb T}
\newcommand{\E}{\mathbb E}
\newcommand{\Z}{\mathbb Z}
\newcommand{\R}{\mathbb R}
\newcommand{\N}{\mathbb N}
\newcommand{\Q}{\mathbb Q}
\newcommand{\cC}{\mathcal{C}}
\newcommand{\cD}{\mathcal{D}}
\newcommand{\cF}{\mathcal{F}}
\newcommand{\cG}{\mathcal{G}}
\newcommand{\cR}{\mathcal{R}}
\newcommand{\cS}{\mathcal{S}}
\newcommand{\cT}{\mathcal{T}}
\newcommand{\cW}{\mathcal{W}}
\newcommand{\cX}{\mathcal{X}}
\newcommand{\cY}{\mathcal{Y}}
\newcommand{\cZ}{\mathcal{Z}}
\newcommand{\fd}{\mathfrak{d}}
\newcommand{\fn}{\mathfrak{n}}
\newcommand{\fp}{\mathfrak{p}}  
\newcommand{\fq}{\mathfrak{q}}
\newcommand{\fr}{\mathfrak{r}}
\newcommand{\fs}{\mathfrak{s}}
\newcommand{\fC}{\mathfrak{C}}   
\newcommand{\fD}{\mathfrak{D}}
\newcommand{\fR}{\mathfrak{R}}
\newcommand{\fS}{\mathfrak{S}}
\newcommand{\fX}{\mathfrak{X}}
\newcommand{\sH}{\mathscr{H}}
\renewcommand{\P}{\mathbb{P}}
\newcommand{\supp}{\operatorname{supp}} 
\newcommand{\ran}{\operatorname{ran}} 
\newcommand{\Dir}{\fd}
\newcommand{\Even}{\fn}
\newcommand{\rD}{\mathrm{D}}
\renewcommand{\i}{\mathrm i}
\newcommand{\para}{\varolessthan}
\newcommand{\arap}{\varogreaterthan}
\newcommand{\reso}{\varodot}
\newcommand{\bxi}{{\boldsymbol{\xi}}}
\newcommand{\bzeta}{{\boldsymbol{\zeta}}}
\newcommand{\btheta}{{\boldsymbol{\theta}}}
\newcommand{\blambda}{{\boldsymbol{\lambda}}}
\newcommand{\I}{\mathbb{I}}
\newcommand{\limn}{\lim_{n\rightarrow \infty}}
\renewcommand{\epsilon}{\varepsilon}
\newenvironment{colorwil}
    {\color[rgb]{0,0.5,0}    }
\newcommand{\ew}{\normalcolor}
\newcommand{\bwil}{\begin{colorwil}}
\newcommand{\ewil}{\end{colorwil}}
\newcommand{\towil}{\todo[color=green!50]}
\newenvironment{calc}    {\color{orange}     }    {     }
\newenvironment{calc2}    {     }    {     }
\newcommand{\cnewline}{\\}
\newcommand{\cand}{&}
\newenvironment{hide}    {\color{purple}     }    {     }
\renewcommand{\cnewline}{}  
\renewcommand{\cand}{}  
\newcommand{\bfold}{$}
\newcommand{\efold}{$ }
\begin{document}

\renewcommand{\thefootnote}{\Roman{footnote}}

\title{Asymptotics of the eigenvalues of the Anderson Hamiltonian with white noise potential in two dimensions}

\author{
\renewcommand{\thefootnote}{\Roman{footnote}}
Khalil Chouk
\footnotemark[1]
\\
\renewcommand{\thefootnote}{\Roman{footnote}}
Willem van Zuijlen
\footnotemark[2]
}

\footnotetext[1]{School of Mathematics,
University of Edinburgh, United Kingdom.}
\footnotetext[2]{
Weierstrass Institute for Applied Analysis and Stochastics, Berlin, Germany.
}

\date{December 6, 2020}

\maketitle 

\renewcommand{\thefootnote}{\arabic{footnote}} 

\begin{abstract}
In this paper we consider the Anderson Hamiltonian with white noise potential on the box $[0,L]^2$ with Dirichlet boundary conditions. 
We show that all the eigenvalues divided by $\log L$ converge as $L\rightarrow \infty$ almost surely to the same deterministic constant, which is given by a variational formula. 

\emph{MSC 2010}. 
Primary
60H25, 
60F15, 
35J10, 
35P15; 
Secondary
60F10. 

\emph{Key words and phrases.} Anderson Hamiltonian, white noise, paracontrolled distributions, operators with Dirichlet boundary conditions. 
\end{abstract}

\setcounter{tocdepth}{1} 
\tableofcontents

\section{Introduction}

We consider the Anderson Hamiltonian (also called random Schr\"odinger operator), formally defined by
\bfold 
\sH = \Delta + \xi,
\efold 
under Dirichlet boundary conditions on the two-dimensional box $[0,L]^2$, where $\xi$ is considered to be white noise. \towil{PUT DATE BY HAND!!}
We are interested in the behaviour of this operator as the size of the box, $L$, tends to infinity. 
In this paper we prove the following asymptotics of the eigenvalues. 
Let $\lambda(L)=\lambda_{1}(L) > \lambda_{2}(L) \ge \lambda_3(L) \cdots$ be the  eigenvalues of the Anderson Hamiltonian on $[0,L]^2$.  For all $n\in\N$, almost surely 
\begin{align*}
\lim_{ \overset{  L \in \Q }{ L \rightarrow \infty}}
\frac{\lambda_n(L)}{\log L} 
& = 4
\sup_{ \overset{ \psi \in C_c^\infty(\R^2) }{ \|\psi \|_{L^2}^2 =1} } 
\|\psi\|_{L^4}^2 - 
\int_{\R^2} | \nabla \psi|^2 = \chi,
\end{align*} 
where $\chi$ is the smallest $C>0$ such that $\|f\|_{L^4}^4 \le C \|\nabla f\|_{L^2}^2 \|f\|_{L^2}^2$ for all $f\in H^1(\R^2)$ (this is Ladyzhenskaya's inequality).

\subsection{Main challenge and literature}

In the one dimensional setting, i.e., on the box $[0,L]$, the Anderson Hamiltonian can be defined using the associated Dirichlet form as the white noise is sufficiently regular, see Fukushima and Nakao \cite{FuNa76} (see \cite{Ve11} for the regularity of white noise).  
In dimension two the regularity of white noise is too small to allow for the same approach. 
A naive way to tackle the problem of the construction is to take a smooth approximation of the white noise $\xi_\epsilon$ so that the operator $\sH_\epsilon=\Delta+\xi_\epsilon$ is well-defined as an unbounded self-adjoint operator, and then take the limit $\epsilon \downarrow 0$. 
However, $\sH_\epsilon$ does not converge, but $\sH_\epsilon - c_\epsilon$ does converge to an operator $\sH$ for certain renormalisation constants $c_\epsilon \nearrow_{\epsilon \downarrow 0} \infty$. 
This has been shown by Allez and Chouk \cite{AlCh15} for periodic boundary conditions, using the techniques of paracontrolled distributions introduced by Gubinelli, Imkeller and Perkowski \cite{GuImPe15} in order to study singular stochastic partial differential equations. In this paper we extend this to Dirichlet boundary conditions. 

Recently, also Labb\'e \cite{La19} constructed the Anderson Hamiltonian  with both periodic and Dirichlet boundary conditions, using the tools of regularity structures. 
Gubinelli, Ugurcan and Zachhuber \cite{GuUgZa20} extend the work of Allez and Chouk to define the Anderson Hamiltonian with periodic boundary conditions also for dimension $3$. 

One of the main interests in the study of this operator is due to its universal property, more precisely, it was proved by Chouk, Gairing and Perkowski \cite[Theorem 6.1]{ChGaPe17}
that under periodic boundary conditions the operator $\mathscr H$ is the limit \begin{calc}(in the resolvent sense)\end{calc} under a suitable renormalisation of the discrete Anderson Hamiltonian 
\bfold
\mathscr H_N=\Delta_N+\tfrac{1}{N} \eta_N
\efold
defined on the periodic lattice $(\frac{1}{N} \mathbb Z/N\mathbb Z)^2$ where $\Delta_N$ is discrete Laplacian and $(\eta_N(i),i\in\mathbb Z^2)$ are centred I.I.D. random variables with normalised variance and finite $p$-th moment, for some $p>6$. 

Recently, Dumaz and Labb\'e \cite{DuLa17}  proved the Anderson localization  for the one dimensional case for the largest eigenvalues and they obtain the exact fluctuation of the eigenvalue and the exact behaviour of the eigenfunctions near their maxima. 
Unfortunately, their approach used to tackle the Anderson localization in the one dimensional setting is strongly attached to the SDE obtained by the so-called Riccati transform and cannot be adapted to the two dimensional setting. 
Also Chen  \cite{Ch14} considers the one dimensional setting for the white noise (and shows $\lambda(L) \approx (\log L)^{\frac23}$), but also a higher dimensional setting for the more regular fractional  white noise (where $\lambda(L) \approx (\log L)^{\beta}$ for some $\beta \in (\frac12,1)$ (and $\beta \in (\frac12, \frac{2}{3})$ for $d=1$), where $\beta$ is a function of the degree of singularity of the covariance at zero). The techniques in his work do not allow for an extension to a higher dimensional setting with a white noise potential. 
\begin{calc}
In \cite[Lemmas 2.3 and 4.1]{Ch14} the almost sure convergence of the principal eigenvalue is stated. 
\end{calc}

The asymptotics of the principal eigenvalue is of particular interest for the asymptotics of the total mass of the solution to the parabolic Anderson model: $\partial_t u = \Delta u + \xi u= \sH u$. 
Chen \cite{Ch14} shows that with $U(t)$ the total mass of $u(t,\cdot)$, one has $\log U(t) \approx t \lambda(L_t)$ for some almost linear $L_t$, so that the asymptotics of $\lambda(L)$ leads to asymptotics of $\log U(t)$: In $d=1$ with $\xi$ white noise, $\log U(t) \approx t (\log t)^{\frac23}$; for $d\ge 1$ with $\xi$ a fractional white noise $\log U(t) \approx t (\log t)^\beta$, with $\beta$ as above. 
For smooth Gaussian fields $\xi$, Carmona and Molchanov \cite{CaMo95} show $\log U(t) \sim t (\log t)^{\frac12}$.
In a future work by K\"onig, Perkowski and van Zuijlen, the following asymptotics of the total mass of the solution to the parabolic Anderson model with white noise potential in two dimensions will be shown: $\log U(t) \approx t \log t$.

For a general overview about the parabolic Anderson model and the Anderson Hamiltonian 
we refer to the book by K\"onig \cite{Ko16}.

Let us mention that our main result is already applied in \cite{PeRo19} to prove that the super Brownian motion in static random environment is almost surely super-exponentially persistent.

\begin{calc}
\begin{remark}
About defining the operator using Dirichlet forms. 
\cite[Theorem VIII.15]{ReSi75} states that every closed semi-bounded quadratic form is the form of a unique self-adjoint operator 
Considering one dimension, white noise is of regularity a little less than $-\frac12$ in the sense that $\xi \in B_{\infty,\infty}^{-\frac12-\epsilon}$ for all $\epsilon>0$. 
For $u,v \in H_0^1$ one has $uv \in B_{1,1}^1$ (by Cauchy-Schwarz). Therefore the pairing with $\xi$ is (almost surely) well-defined and continuous by \cite[Theorem 2.76]{BaChDa11} and so $q(u,v) := \langle \nabla u, \nabla v \rangle + \langle \xi, uv \rangle$ defines a semi-bounded quadratic form on $H_0^1$.
Note that $q(u,u)$ is equivalent to $\|u\|_{H_0^1}^2$ by Poincar\'e's inequality, so that $q$ is also closed and hence is the form of a unique self-adjoint operator. 

In two dimensions, this does not work as the product $uv$ is still in $B_{1,1}^1$ but $\xi$ does not have values in $B_{\infty,\infty}^{-1}$ (the dual of $B_{1,1}^1$) but in $B_{\infty,\infty}^{-1-\epsilon}$ for all $\epsilon>0$. 
\end{remark}

\begin{theorem} \cite[Theorem 5.1]{CaMo95}
Let $V$ be a mean zero stationary Gaussian field on $\R^d$ with covariance function $\gamma$, i.e., $V(x)$ is a mean zero Gaussian random variable and $\E[V(x) V(0)] = \gamma(x)$. 
With $u$ the solution to the parabolic Anderson model, 
$ 
\partial_t u = \Delta u + V  u, 
$ 
for all $x\in \R^d$
\begin{align*}
\frac{\log u(t,x)}{ t \sqrt{\log t}} = \sqrt{2 d \gamma(0)} \qquad a.s. 
\end{align*}
\end{theorem}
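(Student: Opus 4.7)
The plan is to use the Feynman--Kac representation
\[
u(t,x) = \E_x \lc \exp \lp \int_0^t V(B_s) \dd s \rp u_0(B_t) \rc,
\]
together with sharp extreme value statistics for the stationary Gaussian field $V$, to derive matching almost sure upper and lower asymptotic bounds for $\log u(t,x)$. The central input from extreme value theory, which follows from the Borell--TIS concentration inequality and an entropy/packing argument for a sufficiently regular stationary Gaussian field, is that
\[
\max_{|y-x| \le R} V(y) = \sqrt{2 d \gamma(0) \log R}\,(1+o(1)) \qquad \text{a.s.\ as } R \to \infty,
\]
since a cube of side $R$ contains of order $R^d$ asymptotically decorrelated cells each of variance $\gamma(0)$. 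This identifies the natural spatial scale $R \sim t$ at which the Brownian path optimally harvests the potential.

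For the upper bound, I would bound $\int_0^t V(B_s) \dd s \le t \sup_{|y-x| \le M_t} V(y)$ with $M_t := \sup_{s \le t}|B_s - x|$, and decompose the Feynman--Kac expectation over the dyadic shells $\{M_t \in [2^k t, 2^{k+1} t)\}$. The Gaussian tail $\P_x(M_t \ge r) \le C e^{-r^2/(2t)}$ yields Brownian escape cost $4^k t/2$, while the extreme value input bounds $\sup_{|y-x| \le 2^{k+1} t} V$ by $\sqrt{2 d \gamma(0)(k + \log t)}\,(1+o(1))$. Only the first few shells contribute significantly, and summing them gives
\[
\log u(t,x) \le t \sqrt{2 d \gamma(0) \log t}\,(1+o(1)) \qquad \text{a.s.\ as } t \to \infty,
\]
after transfer from a subsequence $t_n = n$ (furnished by Borel--Cantelli) to all $t$ via the local Lipschitz estimate $|\log u(t,x) - \log u(t_n,x)| \le |t-t_n| \sup_{|y-x| \le O(\sqrt{t})}|V(y)|$.

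For the lower bound, I would pick a near-maximiser $y_t^\star$ of $V$ in the ball $B(x,t)$, so that $V(y_t^\star) \ge \sqrt{2 d \gamma(0)\log t}\,(1-\ep)$ almost surely for large $t$, and restrict the Feynman--Kac expectation to the event $A_t$ that $B$ reaches $B(y_t^\star, r)$ by time $\delta t$ and then stays inside $B(y_t^\star, 2r)$ until time $t$. Continuity of $V$ on $B(y_t^\star,2r)$ gives
\[
\int_0^t V(B_s) \dd s \ge (V(y_t^\star) - \ep)(1-\delta)t - \|V\|_{L^\infty(B(x,t))}\,\delta t,
\]
and standard heat-kernel estimates yield $\P_x(A_t) \ge \exp(-|y_t^\star - x|^2/(2\delta t) - c_d (1-\delta) t/r^2) \ge \exp(- t/(2\delta) - c_d t/r^2)$. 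Dividing the resulting bound by $t\sqrt{\log t}$, letting $t \to \infty$, and then $\ep,\delta \downarrow 0$, gives $\liminf_{t\to\infty} \log u(t,x)/(t\sqrt{\log t}) \ge \sqrt{2 d \gamma(0)}$ almost surely.

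The main obstacle will be preserving summability in the Borel--Cantelli step of the upper bound while obtaining the sharp constant $\sqrt{2d\gamma(0)}$: the Borell--TIS fluctuations of $\sup_{|y-x| \le 2^{k+1} t} V$ around their mean must be controlled uniformly in $k$ and summably in $n = t_n$, which forces a careful choice of the subsequence and polynomial-in-$t_n$ tightness of the spatial supremum. Once this is in place, the remaining ingredients---Gaussian extreme value asymptotics, heat-kernel estimates, and the Feynman--Kac formula---are classical, and the strategy that Brownian motion should reach a high site of $V$ inside $B(x,t)$ and then stay there is seen to be asymptotically optimal.
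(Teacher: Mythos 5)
The paper does not actually prove this statement---it is quoted from \cite[Theorem 5.1]{CaMo95} inside a hidden remark---so there is no internal proof to compare against; your sketch correctly reconstructs the standard Feynman--Kac plus Gaussian extreme-value argument, which is essentially Carmona and Molchanov's own route (the paper's adjacent remark even notes that their approach avoids the eigenvalue expansion). The only points that deserve explicit care in a full write-up are the hypotheses the informal statement suppresses---decay of correlations (without which $\max_{B(x,R)}V$ need not grow like $\sqrt{2d\gamma(0)\log R}$; think of $\gamma\equiv\gamma(0)$) and enough regularity of $\gamma$ at $0$ for a.s.\ continuity and the entropy bound---together with a modulus-of-continuity estimate that is uniform over $B(x,t)$, so that $\inf_{B(y_t^\star,r)}V$ is within $C r^{\alpha}\sqrt{\log t}$ of $V(y_t^\star)$ and the error is killed by sending $r\downarrow 0$ after $t\to\infty$.
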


\begin{remark}
This then leads to the asymptotics of the total mass, as mentioned in the introduction. 
In their paper they need not mention the asymptotics of the principal eigenvalue, as their approach does not use the eigenvalue expansion. 
However, by using the heuristics mentioned above that $\log U(t) \approx t \lambda(L_t)$, one expects $\lambda(L) \approx (\log L)^{\frac12}$. 
This implies that one cannot interchange limits in $L$ and $\epsilon$ for $\lambda(Q_L,\xi_\epsilon)$, where $\xi_\epsilon$ denotes a mollification of $\xi$. 
\end{remark}

\begin{theorem}[ McKean \cite{Mc94}]
\begin{align*}
\lim_{L\rightarrow \infty} \P\left( 
\tfrac{L}{\pi} \lambda(L)^\frac12 \exp( - \tfrac83 \lambda(L)^\frac32 ) > x 
\right) 
= 
\begin{cases}
1 & x <0, \\
e^{-x} & x \ge 0 . 
\end{cases}
\end{align*}
\end{theorem}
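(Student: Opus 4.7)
The statement is McKean's classical Gumbel-type limit for the one-dimensional principal eigenvalue. The natural strategy is the Riccati (phase-function) transform, which reduces the eigenvalue problem to a single one-dimensional diffusion. For $\sH u = \lambda u$, i.e.\ $u'' + \xi u = \lambda u$ on $[0,L]$ with Dirichlet data, setting $\phi := u'/u$ gives the Riccati SDE
\[ d\phi_x = (\lambda - \phi_x^2)\,dx - dB_x, \]
where $B$ is the Brownian motion whose distributional derivative is $\xi$. The Dirichlet conditions force $\phi$ to start from $+\infty$ at $x=0$ and to reach $-\infty$ at $x=L$. Restarted from $+\infty$ after each explosion to $-\infty$, the $n$-th eigenvalue $\lambda_n(L)$ becomes the largest $\lambda$ producing exactly $n-1$ explosions on $[0,L)$; in particular $\{\lambda(L) \le \lambda\}$ coincides with $\{T_\lambda > L\}$, where $T_\lambda$ is the first explosion time started from $+\infty$.

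The drift $\lambda - \phi^2 = -U'(\phi)$ comes from the cubic potential $U(\phi) = \tfrac13 \phi^3 - \lambda\phi$, which has a well at $\phi = +\sqrt{\lambda}$ and a barrier at $\phi = -\sqrt{\lambda}$ of height $\Delta U = U(-\sqrt{\lambda}) - U(\sqrt{\lambda}) = \tfrac43 \lambda^{3/2}$. Classical metastability theory for one-dimensional diffusions (Kramers / Freidlin--Wentzell) yields both an exponential limit law, $T_\lambda / \tau_\lambda \to \mathrm{Exp}(1)$ in distribution, and an Arrhenius-type prefactor for the mean,
\[ \tau_\lambda := \E[T_\lambda] \;\sim\; \frac{2\pi}{\sqrt{|U''(\sqrt{\lambda})|\,|U''(-\sqrt{\lambda})|}}\, e^{2\Delta U} \;=\; \frac{\pi}{\sqrt{\lambda}} \exp\!\Bigl(\tfrac{8}{3}\lambda^{3/2}\Bigr) \qquad (\lambda \to \infty), \]
using $U''(\pm\sqrt{\lambda}) = \pm 2\sqrt{\lambda}$. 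This can be established either by Laplace's method on the explicit scale-function / speed-measure double integral for the exit time from $+\infty$ to $-\infty$, or by a spectral computation for the Riccati generator with absorption at $-\infty$, reducing via Airy-function asymptotics to the same answer.

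Combining these two ingredients, in the joint regime $L,\lambda \to \infty$ with $L/\tau_\lambda \to c \in [0,\infty]$, one obtains $\P(\lambda(L) \le \lambda) = \P(T_\lambda > L) \to e^{-c}$. For fixed $x > 0$, define $\lambda^{\ast} = \lambda^{\ast}(L,x)$ implicitly by $\tfrac{L}{\pi}\sqrt{\lambda^{\ast}}\,e^{-\frac{8}{3}(\lambda^{\ast})^{3/2}} = x$, so that $L/\tau_{\lambda^{\ast}} \to x$. Since the function $\lambda \mapsto \tfrac{L}{\pi}\sqrt{\lambda}\,e^{-\frac{8}{3}\lambda^{3/2}}$ is eventually strictly decreasing, the event $\bigl\{\tfrac{L}{\pi}\lambda(L)^{1/2} e^{-\frac{8}{3}\lambda(L)^{3/2}} > x\bigr\}$ coincides with $\{\lambda(L) < \lambda^{\ast}\}$, and the previous display yields $e^{-x}$; for $x < 0$ the claim is trivial since the expression on the left is positive.

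The main obstacle is the \emph{sharp} asymptotic for $\tau_\lambda$: one needs not just the exponential rate $\tfrac{8}{3}\lambda^{3/2}$ but the precise prefactor $\pi/\sqrt{\lambda}$ with multiplicative error $1+o(1)$, since a constant-factor error $1+\delta$ on $\tau_\lambda$ translates into the rescaling $x \mapsto x/(1+\delta)$ in the limit law and destroys it. The crude logarithmic asymptotic $\log \tau_\lambda \sim \tfrac{8}{3}\lambda^{3/2}$ available from large-deviation heuristics is insufficient; one genuinely needs the Gaussian-fluctuation contributions at both the well $\phi = +\sqrt{\lambda}$ and the barrier $\phi = -\sqrt{\lambda}$, which is the technical heart of the Kramers saddle-point analysis. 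A secondary but essentially routine task is to make the metastable exponential limit for $T_\lambda/\tau_\lambda$ uniform in $\lambda$ on the relevant scale, so as to support the simultaneous limit $L, \lambda \to \infty$ with $L/\tau_\lambda$ bounded; this follows from standard renewal/regeneration estimates once the sharp $\tau_\lambda$ asymptotic is in hand.
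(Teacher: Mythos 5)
The paper does not prove this statement: it is quoted from McKean \cite{Mc94} as background inside a suppressed remark, so there is no internal proof to compare against. Your Riccati--Kramers outline is exactly the route McKean himself takes --- the identification $\{\lambda(L)\le\lambda\}=\{T_\lambda>L\}$ for the explosion time of $d\phi=(\lambda-\phi^2)\,dx-dB_x$, the cubic potential with barrier height $\tfrac43\lambda^{3/2}$, and the sharp mean passage time $\tau_\lambda\sim\tfrac{\pi}{\sqrt{\lambda}}\,e^{\frac83\lambda^{3/2}}$ via Laplace's method on the scale-function/speed-measure double integral --- your constants all check out, and the two steps you defer (the sharp prefactor and the uniform exponential limit for $T_\lambda/\tau_\lambda$) are precisely the technical content of \cite{Mc94}.
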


\begin{theorem} \cite[Lemmas 2.3 and 4.1]{Ch14} 
Let $V$ be a mean zero stationary generalised Gaussian field on $\R^d$ with covariance function $\gamma: \R^d \setminus \{0\}$ with $\gamma(x) \sim c |x|^{-\alpha}$ as $x\rightarrow 0$ for some $\alpha \in (0,2\wedge d)$. 
This means that for all $\varphi,\psi \in \cS(\R^d)$, 
$\langle V, \varphi \rangle$ is a mean zero Gaussian random variable and 
\begin{align*}
\E[\langle V, \varphi \rangle \langle V, \psi \rangle] = \int_{\R^d} \int_{\R^d} \gamma(x-y) \varphi(x) \psi(y) \dd x \dd y .  
\end{align*}
Then $ (\log L)^{-\frac{2}{4-\alpha}} \lambda((-L,L)^d, V)$ converges almost surely to a deterministic scalar, which can be described in terms of $d$, $\alpha$ and $\gamma$. 

In case $V$ is white noise in dimension $1$ (formally, $\gamma = \delta_0$), then 
$ (\log L)^{-\frac{2}{3}} \lambda((-L,L), V)$ converges almost surely to a deterministic scalar. 
\end{theorem}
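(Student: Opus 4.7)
My approach centres on the Rayleigh--Ritz characterisation
\begin{align*}
\lambda((-L,L)^d, V) \;=\; \sup_{\substack{\psi \in H_0^1((-L,L)^d) \\ \|\psi\|_{L^2}=1}} \Bigl[ \langle V, \psi^2 \rangle - \|\nabla \psi\|_{L^2}^2 \Bigr],
\end{align*}
where, for each fixed test function $\psi$, $\langle V, \psi^2\rangle$ is a centred Gaussian with variance $\sigma(\psi)^2 = \int\!\!\int \gamma(x-y)\psi^2(x)\psi^2(y)\,dx\,dy$. First I would identify the scale by the following heuristic. Under the rescaling $\psi_a(x) = a^{d/2}\psi(ax)$ with $\|\psi\|_{L^2}=1$, one has $\|\nabla\psi_a\|_{L^2}^2 = a^2 \|\nabla\psi\|_{L^2}^2$, and using $\gamma(z) \sim c|z|^{-\alpha}$ near $0$, $\sigma(\psi_a)^2 \sim c\, a^\alpha \, I_\alpha(\psi)$, with $I_\alpha(\psi) := \int\!\!\int |u-v|^{-\alpha}\psi^2(u)\psi^2(v)\,du\,dv$. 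Since the maximum over $\asymp (aL)^d$ asymptotically independent translates in $(-L,L)^d$ is of order $\sigma(\psi_a)\sqrt{2d\log L}$, balancing $a^{\alpha/2}\sqrt{\log L}$ against $a^2$ forces the scale $a_L = (\log L)^{1/(4-\alpha)}$; both terms are then of order $(\log L)^{2/(4-\alpha)}$, suggesting the limiting constant
\begin{align*}
\chi(d,\alpha,c) \;=\; \sup_{\substack{\psi\in C_c^\infty(\R^d)\\ \|\psi\|_{L^2}=1}} \Bigl[ \sqrt{2d\,c\, I_\alpha(\psi)} - \|\nabla\psi\|_{L^2}^2 \Bigr].
\end{align*}

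For the upper bound I would cover $(-L,L)^d$ by overlapping cubes of side $\ell_L$ with $a_L^{-1} \ll \ell_L \ll L$, and for each cube apply a Gaussian concentration inequality (Borell--TIS) to the map $\psi \mapsto \langle V, \psi^2\rangle$ restricted to test functions essentially supported in that cube. A partition-of-unity decomposition of competing $\psi$'s distributes mass between cubes at a gradient cost absorbed by $\|\nabla \psi\|_{L^2}^2$. A Borel--Cantelli argument along a geometric subsequence $L_k = 2^k$ upgrades the resulting tail bound to almost sure convergence. Because the competing set is an infinite-dimensional $L^2$-ball, I would first project onto frequencies $\le N_L$ (chosen so that the discarded high-frequency part contributes $o((\log L)^{2/(4-\alpha)})$ to the quadratic form), and apply the concentration estimate on the resulting finite-dimensional Gaussian space, matching the metric entropy against $\log L$.

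For the lower bound, fix a near-optimiser $\psi_\star \in C_c^\infty(\R^d)$ for $\chi(d,\alpha,c)$, rescale it to the concentration scale $a_L$, and place $N_L \asymp (a_L L)^d$ disjoint translates $\psi_\star^{(i)}$ inside $(-L,L)^d$ at mutual distance $\gg a_L^{-1}$. Since $\gamma$ is integrable away from $0$, the Gaussians $\langle V, (\psi_\star^{(i)})^2\rangle$ are asymptotically independent, and the classical asymptotic $\max_{i \le N} Z_i / \sqrt{2 \log N} \to 1$ for i.i.d.\ standard Gaussians, combined with Slepian's lemma to transfer this to our weakly correlated family, produces
\begin{align*}
\liminf_{L\to\infty}\frac{\lambda((-L,L)^d, V)}{(\log L)^{2/(4-\alpha)}} \;\ge\; \sqrt{2d\, c\, I_\alpha(\psi_\star)} - \|\nabla \psi_\star\|_{L^2}^2 \qquad \text{a.s.,}
\end{align*}
and letting $\psi_\star$ approach the supremum closes the two-sided bound.

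The one-dimensional white noise case ($d=1$, $\gamma = \delta_0$) runs identically, with $\sigma(\psi)^2 = \|\psi\|_{L^4}^4$ in place of $c\, I_\alpha(\psi)$, so the scaling picks out exponent $2/(4-1) = 2/3$ and the limit is a Gagliardo--Nirenberg-type optimum $\sup_{\|\psi\|_{L^2}=1}[\sqrt{2}\,\|\psi\|_{L^4}^2 - \|\nabla\psi\|_{L^2}^2]$. The step I expect to be hardest is the upper bound: the truncation from infinite-dimensional test spaces to a finite-dimensional one must be done \emph{uniformly} in the random potential, which requires a deterministic a priori Schauder-type estimate for the eigenfunctions (essentially saying near-maximisers are concentrated on a ball of radius $O(a_L^{-1})$ with controlled high-frequency content), and this deterministic input, together with precise metric-entropy matching against the Borell--TIS bound, is where the argument is most delicate, especially as $\alpha \uparrow 2\wedge d$ where the Sobolev embedding degenerates.
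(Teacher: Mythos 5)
First, a point of orientation: the paper does not prove this statement at all --- it is quoted verbatim from Chen \cite[Lemmas 2.3 and 4.1]{Ch14} as background, so there is no internal proof to match your argument against. The closest comparison is the paper's own proof of the analogous (critical, $\alpha=d=2$, white-noise) result, whose architecture --- domain monotonicity, an IMS partition-of-unity reduction of $\lambda(Q_L)$ to a maximum of eigenvalues over $O((L/r)^d)$ small boxes with error $O(a^{-2})$, independence/translation invariance of the field over disjoint boxes, a large deviation principle for the field on a fixed box transferred to the eigenvalue by the contraction principle, tail bounds, and Borel--Cantelli along a geometric subsequence followed by monotone interpolation --- is essentially the skeleton you describe. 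Your scaling heuristic correctly identifies the exponent $2/(4-\alpha)$ and the shape of the limiting constant (an optimisation of $\sqrt{2dc\,I_\alpha(\psi)}-\|\nabla\psi\|_{L^2}^2$ over $\psi$ and over dilations), consistent with Chen's variational formula and, in the critical case, with the Ladyzhenskaya constant appearing in Theorem \ref{theorem:convergence_infima_rate_function}.

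There are, however, two genuine gaps. First, your lower bound invokes Slepian's lemma to pass from the weakly correlated family $\langle V,(\psi_\star^{(i)})^2\rangle$ to an independent one, but Slepian runs in the wrong direction here: if the cross-covariances are nonnegative (as they are whenever $\gamma\ge 0$), Slepian yields that the correlated maximum is stochastically \emph{smaller} than the independent maximum, which is useless for a lower bound; and for a general signed $\gamma$ the covariances $\iint\gamma(x-y)\psi_i^2(x)\psi_j^2(y)\,\dd x\,\dd y$ have no fixed sign, so Slepian cannot be applied in either direction. What is actually needed is a quantitative normal comparison inequality (Berman/Li--Shao) exploiting the decay of the correlations with separation, or a second-moment argument --- note that for a field with nonlocal covariance, disjoint supports do \emph{not} give independence, unlike the white-noise setting of this paper where Lemma \ref{lemma:translation_of_eigenvalues} delivers exact independence for free. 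Second, your upper bound is not closed: the Borell--TIS-plus-metric-entropy control of $\sup_\psi[\langle V,\psi^2\rangle-\|\nabla\psi\|_{L^2}^2]$ over the infinite-dimensional constraint set is precisely the hard step, and the frequency truncation you propose must be justified uniformly in the realisation of $V$. The standard route (Chen's, and this paper's Sections \ref{section:ldp_enhancement}--\ref{section:ldp_results}) sidesteps this entirely: one proves an abstract-Wiener-space LDP for the field restricted to a \emph{fixed} small box, uses continuity of $V\mapsto\lambda(Q_r,V)$ and the contraction principle to get an LDP for the single-box eigenvalue, and only then takes the maximum over boxes combinatorially. Until these two steps are repaired or replaced, the proposal is a plausible plan rather than a proof.
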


\end{calc}

\subsection{Outline}

In Section~\ref{section:main_results} we state the main results of this paper. 
In Section~\ref{section:tail_bounds} we give a proof of the tail bounds of the eigenvalues using the other ingredients presented in Section \ref{section:main_results}, and use this to prove the main theorem. 
The definitions of our Dirichlet and Neumann (Besov) spaces and para- and resonance products between those spaces are given in Section~\ref{section:dirichlet_besov}. 
With the definitions given we can properly define the Anderson Hamiltonian on its Dirichlet domain and state the spectral properties in Section~\ref{section:operator_def}. 
In Section \ref{section:white_noise} we prove the convergence to enhanced white noise, that will be used to extend properties for smooth potentials to analogue properties where enhanced white noise is taken. 
In Section~\ref{section:scaling_and_translation} we prove scaling and translation properties. 
In Section~\ref{section:eigenvalues_on_boxes} we compare eigenvalues on boxes of different size.
In Section~\ref{section:ldp_enhancement} we prove the large deviation principle of the enhanced white noise. 
This leads to the large deviation principle for the eigenvalues. 
In Section~\ref{section:ldp_results} we study infima over the large deviation rate function, which are used to express the limit of the eigenvalues. 
The more cumbersome calculations needed to prove convergence to enhanced white noise are postponed to Section~\ref{section:proof_white_noise_conv} and Section~\ref{section:proof_convergence_diff_box_sizes}.

\textbf{Acknowledgements.} 
The authors are grateful to G. Cannizzaro, P. Gaudreau Lamarre, C. Labb\'e,  W. K\"onig, A. Martini, T. Orenshtein, N. Perkowski, A.C.M. van Rooij, T. Rosati and R.S. dos Santos  for discussions and valuable feedback. 
KC contributed to this paper when he was employed at the Technische Universit\"at Berlin and was supported by the European Research Council through Consolidator
Grant 683164.
WvZ is supported by the German Science Foundation (DFG) via the Forschergruppe FOR2402 ``Rough paths, stochastic partial differential equations and related topics".

\subsection{Notation}
\label{subsection:notation}

$\N = \{1,2,\dots\}$, $\N_0 = \{0\} \cup \N$, $\N_{-1} =\{-1\} \cup \N_0$. 
$\delta_{k,l}$ is the Kronecker delta, i.e., $\delta_{k,k}=1$ and $\delta_{k,l} =0$ for $k\ne l$. $\i = \sqrt{-1}$. 
For $f,g \in L^2(D)$, for some domain $D \subset \R^d$ we write $\langle f, g \rangle_{L^2(D)} =\int_D f  \overline g$. 
We write $\T_L^d$ for the $d$-dimensional torus of length $L>0$, i.e., $\R^d / L\Z^d$. $(\Omega, \P)$ will be our underlying complete probability space. In order to avoid cumbersome administration of constants, for families $(a_i)_{i\in \I}$ and $(b_i)_{i\in \I}$ in $\R$, we also write $a_i \lesssim b_i$ to denote that there exists a $C>0$ such that $a_i\le C b_i$ for all $i \in \I$ and $a_i \eqsim b_i$ to denote that both $a_i \lesssim b_i$ and $a_i \gtrsim b_i$ (i.e., $b_i \lesssim a_i$). 
We write $C_c^\infty(A)$ for those functions in $C^\infty(A)$ that have compact support in $A^\circ$.

\section{Main results}
\label{section:main_results}

In this section we give the main results of this paper without the technical details and definitions; the main theorem is Theorem \ref{theorem:as_convergence_all_eigenvalues}. 

We build on the methods on the construction of the Anderson Hamiltonian in \cite{AlCh15}. 
In that paper the operator is considered on the torus or differently said, on a box with periodic boundary conditions. 
In order to consider Dirichlet boundary conditions we will consider the domain to be a subset of $H_0^1$. 
The construction in \cite{AlCh15} relies on Bony estimates for para- and resonance products. 
We therefore have to find the right space in which we take $\xi$ in order to be able to take para- and resonance products of $\xi$ with elements in the domain. 
For this reason we construct the framework of Dirichlet, $B_{p,q}^{\fd,\alpha}$, and Neumann Besov spaces, $B_{p,q}^{\fn,\alpha}$ in Section~\ref{section:dirichlet_besov}. 
We will show that $H_0^\gamma$ agrees with $B_{2,2}^{\fd,\gamma}$ and show that the Bony estimates extend to products between elements of Dirichlet and Neumann spaces. 
Basically the idea is as follows, for $d=1$ and $L=1$.
Instead of the basis for the periodic Besov space $L^2$, given by $x\mapsto e^{2\pi \i k x }$ we build the Dirichlet Besov space by the basis of $L^2$ given by $x \mapsto \sin(\pi k x)$ and the Neumann Besov space by $x \mapsto \cos(\pi k x)$. 
The elements of the Dirichlet/Neumann Besov space on $[0,L]$ then extend oddly/evenly to elements of the periodic Besov space on $\T_{2L}$. 
We show that the extension of a product is the same as the product of the respective extensions, which allows us to obtain the Bony estimates from the periodic spaces. Moreover, this also allows us to extend the main theorem in \cite{AlCh15}  to Dirichlet boundary conditions on $Q_L=[0,L]^2$, as we present in the following theorem. 
We will consider $\xi$ in $\cC_\fn^\alpha$ and its enhancement in $\fX_\fn^\alpha$, which are the Neumann analogues of $\cC^\alpha$ and $\fX^\alpha$.

\begin{theorem}[Summary of Theorem \ref{theorem:dirichlet_summary}]
\label{theorem:summary_of_dirichlet_summary}
Let $\alpha \in (-\frac43,-1)$.
Let $y\in \R^2,L>0$ and $\Gamma = y + Q_L$. 
For an enhanced Neumann distribution $\bxi=(\xi,\Xi) \in \fX_\fn^\alpha(\Gamma)$ we construct a stongly paracontrolled Dirichlet domain $\fD_\bxi^\Dir(\Gamma)$, such that the Anderson Hamiltonian on $\fD_\bxi^\Dir(\Gamma)$ maps in $L^2(\Gamma)$ and is self-adjoint as an operator on $L^2(\Gamma)$ with a countable spectrum given by eigenvalues $\lambda(\Gamma,\bxi) =\lambda_1(\Gamma,\bxi) > \lambda_2(\Gamma,\bxi)\ge \cdots$ (counting multiplicities). 
For all $n\in\N$ the map $\bxi \mapsto \lambda_n(\Gamma,\bxi)$ is locally Lipschitz.
Moreover, a Courant-Fischer formula is given for $\lambda_n$ (see \eqref{eqn:min-max_dir}). 
\end{theorem}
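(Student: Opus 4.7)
The plan is to reduce the Dirichlet problem on $\Gamma = y + Q_L$ to the periodic problem on the doubled torus $\T_{2L}^2$, for which the construction of \cite{AlCh15} applies, and then extract the Dirichlet operator as the restriction to a parity-invariant subspace. By translation invariance we may assume $y=0$. Define the odd extension $E_\Dir \colon L^2(Q_L)\to L^2(\T_{2L}^2)$ by reflecting across each coordinate axis with alternating signs, and the even extension $E_\Even\colon L^2(Q_L)\to L^2(\T_{2L}^2)$ by reflecting without sign change. As announced before the theorem, $E_\Dir$ is an isometric isomorphism from $B_{p,q}^{\fd,\alpha}(Q_L)$ onto the odd sector of the periodic Besov space, and similarly $E_\Even$ identifies $B_{p,q}^{\fn,\alpha}(Q_L)$ with the even sector; and the para- and resonance products intertwine with these extensions (product of odd and even is odd, product of even and even is even). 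Thus $\tilde\bxi := (E_\Even \xi, E_\Even \Xi)$ defines an enhanced distribution on $\T_{2L}^2$ in the periodic class used in \cite{AlCh15}, with enhancement norm controlled by $\|\bxi\|_{\fX_\fn^\alpha(\Gamma)}$.

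Now apply the periodic Allez--Chouk construction to $\tilde\bxi$: one obtains a strongly paracontrolled domain $\tilde\fD_{\tilde\bxi}(\T_{2L}^2) \subset H^1(\T_{2L}^2)$ consisting of $u$ admitting an expansion $u = u\para Y + u^\sharp$ with $Y$ built from $\tilde\xi$ and $u^\sharp$ of higher regularity, on which $\tilde\sH_{\tilde\bxi} = \Delta + \tilde\xi$ is defined via the renormalised paracontrolled formula and is self-adjoint on $L^2(\T_{2L}^2)$ with compact resolvent. Because $\tilde\xi$ is even and the Laplacian preserves parity, the odd sector $L^2_{\mathrm{odd}}(\T_{2L}^2)$ is invariant: the paraproduct, resonance product and remainder in the paracontrolled ansatz all map odd$\times$even into odd (using the Bony intertwining observed above), so $u\in \tilde\fD_{\tilde\bxi}$ odd forces $\tilde\sH_{\tilde\bxi} u$ odd. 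Define
\[
\fD_\bxi^\Dir(\Gamma) := E_\Dir^{-1}\bigl( \tilde\fD_{\tilde\bxi}(\T_{2L}^2) \cap L^2_{\mathrm{odd}}(\T_{2L}^2)\bigr), \qquad \sH_\bxi u := E_\Dir^{-1} \tilde\sH_{\tilde\bxi} E_\Dir u.
\]
Odd functions on $\T_{2L}^2$ vanish on the reflecting axes, hence the restriction of $E_\Dir u$ to $Q_L$ lies in $H_0^1(\Gamma)$; this is what encodes the Dirichlet boundary condition.

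For the spectral properties, the restriction of the self-adjoint operator $\tilde\sH_{\tilde\bxi}$ to the invariant closed subspace $L^2_{\mathrm{odd}}$ is itself self-adjoint with compact resolvent; identifying this subspace with $L^2(\Gamma)$ via $E_\Dir/\sqrt{4}$ yields self-adjointness of $\sH_\bxi$ on $L^2(\Gamma)$ and a discrete spectrum $\lambda_1 > \lambda_2 \ge \cdots \to -\infty$. The Courant--Fischer formula
\[
\lambda_n(\Gamma,\bxi) = \sup_{\substack{V\subset \fD_\bxi^\Dir(\Gamma)\\ \dim V = n}}\inf_{\substack{u\in V\\ \|u\|_{L^2}=1}} \langle \sH_\bxi u,u\rangle_{L^2(\Gamma)}
\]
follows from the min-max principle for semi-bounded self-adjoint operators once one shows that the associated quadratic form is semi-bounded and closable on $H_0^1(\Gamma)$ with form core $\fD_\bxi^\Dir(\Gamma)$; the form can be obtained from the paracontrolled structure exactly as in \cite{GuUgZa20}, with the renormalisation absorbed into the resonance term $\xi \reso (u\para Y)$. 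Local Lipschitz continuity of $\bxi\mapsto \lambda_n(\Gamma,\bxi)$ transfers from the periodic case: the even extension $\bxi\mapsto\tilde\bxi$ is bounded linear, the Allez--Chouk eigenvalues depend locally Lipschitz on the enhanced periodic noise, and odd eigenvalues are a subsequence so the map is still locally Lipschitz.

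The main obstacle is the intertwining step: establishing that the odd/even sectors are preserved by \emph{the full paracontrolled machinery} (Bony paraproduct, resonance product, commutator estimates, and the renormalisation), not merely by the naive $\Delta + \xi$. This is precisely what the Section~\ref{section:dirichlet_besov} framework is designed to check, and it reduces to verifying that each Littlewood--Paley block of an even (resp.\ odd) function is itself even (resp.\ odd), which in turn requires choosing the dyadic partition of unity on $\T_{2L}^2$ symmetric under coordinate reflections. A secondary nuisance is that the original Allez--Chouk construction is stated for the unit torus; one must either redo their fixed-point argument on $\T_{2L}^2$ keeping track of the $L$-dependence of all constants, or rescale to the unit torus at the cost of tracking how $\bxi$ transforms (handled in Section~\ref{section:scaling_and_translation}).
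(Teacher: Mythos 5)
Your proposal follows essentially the same route as the paper: reduce the Dirichlet problem on $Q_L$ to the periodic problem on $\T_{2L}^2$ by taking the even extension $\overline\bxi$ of the enhanced Neumann noise, applying the periodic Allez--Chouk construction, showing that the Laplacian and all the para-/resonance/commutator terms preserve the odd sector because $\overline\xi$ is even and the dyadic partition is radial (hence reflection-symmetric), and then identifying the Dirichlet operator with the restriction of $\sH_{\overline\bxi}$ to $L^2_{\mathrm{odd}}(\T_{2L}^2)$. This is precisely the content of Lemma~\ref{lemma:H_and_tilde_H_and_their_domains} (the intertwining $\widetilde{\sH_\bxi f} = \sH_{\overline\bxi}\tilde f$, the parity commutation $\sH_{\overline\bxi}(l_\fq f)=l_\fq\sH_{\overline\bxi}f$, and the induced self-adjointness and compactness of the restricted resolvent). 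Your two identified obstacles are both real and both handled exactly as you anticipate: the partition of unity is chosen radial in \eqref{eqn:partition_prop_sum_to_1_and_squares_inbetween_half_and_1}, and the $L$-dependence is dealt with by scaling in Section~\ref{section:scaling_and_translation}.

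There is, however, one genuine gap. You assert a strict inequality $\lambda_1(\Gamma,\bxi) > \lambda_2(\Gamma,\bxi)$, but nothing in the ``restrict a self-adjoint operator to a closed invariant subspace'' argument forces the top of the spectrum of the restriction to be simple. The principal eigenfunction of $\sH_{\overline\bxi}$ on $\T_{2L}^2$ is \emph{even} (being nonnegative, it cannot be odd), so it is killed by the projection onto $L^2_{\mathrm{odd}}$; the top eigenvalue of the odd sector is therefore some lower eigenvalue of $\sH_{\overline\bxi}$, for which simplicity is not automatic. The paper proves $\lambda_1>\lambda_2$ by a separate argument: it shows the semigroup $e^{t\sH_\bxi}$ on $Q_L$ is positivity improving (a strong maximum principle, following \cite{CaFrGa17}), and invokes \cite[Theorem~XIII.44]{ReSi78}. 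You would need to supply this, and it does not follow from the intertwining/restriction machinery alone.

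Two smaller remarks: your $E_\Dir$ is an isomorphism but only an isometry up to the normalization constant $a_{d,p}$ built into the definition of $\|\cdot\|_{B^{\fd,\alpha}_{p,q}}$; and where you invoke a quadratic-form argument in the style of \cite{GuUgZa20} for the Courant--Fischer formula, the paper instead applies Fischer's operator-theoretic min--max principle directly to the self-adjoint operator with compact resolvent (\cite[Section 28, Theorem~4]{La02}), bypassing the need to discuss closability of a form.
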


In Section \ref{section:white_noise} we show that there exists a canonical enhanced white noise in $\fX_\fn^\alpha$: 

\begin{theorem}[See Theorem \ref{theorem:convergence_in_enhanced_space_to_white_noise} and  \ref{obs:shift_enhanced_white_noise}]
Let $\alpha \in (-\frac43,-1)$. 
For all $y\in \R^2$ and $L>0$ there exists a canonical $\bxi_L^y = (\xi_L^y,\Xi_L^y) \in \fX_\fn^\alpha(y+Q_L)$ such that $\xi_L^y$ is a white noise (in the sense that is described in that theorem). 
\end{theorem}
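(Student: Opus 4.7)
The plan is to obtain $\bxi_L^y$ as the limit in $\fX_\fn^\alpha(y+Q_L)$ of a sequence of smoothed, renormalised enhancements $(\xi_{L,\epsilon}^y,\Xi_{L,\epsilon}^y)$ as $\epsilon \downarrow 0$. First I would realise the white noise $\xi_L^y$ concretely through the Fourier cosine series in the eigenbasis $(e_k)_{k\in\N_0^2}$ of the Neumann Laplacian on $y+Q_L$: set
\[
\xi_L^y \;=\; \sum_{k\in \N_0^2} g_k\, e_k, \qquad (g_k) \text{ i.i.d.\ standard Gaussian},
\]
and verify that the partial sums converge in $\cC_\fn^\alpha(y+Q_L)$ for any $\alpha<-1$ using the Littlewood-Paley characterisation of the Neumann Besov scale from Section~\ref{section:dirichlet_besov}. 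By orthonormality $\langle \xi_L^y,\varphi\rangle$ is centred Gaussian with variance $\|\varphi\|_{L^2}^2$ for every $\varphi \in L^2(y+Q_L)$, which is the white-noise property described in the theorem.

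For the enhancement I would mollify by a Fourier cutoff $\xi_{L,\epsilon}^y := \sum_{|k|\le \epsilon^{-1}} g_k e_k$ (other mollifications giving the same limit by canonicity) and set
\[
\Xi_{L,\epsilon}^y \;:=\; \xi_{L,\epsilon}^y \reso (1-\Delta_\fn)^{-1}\xi_{L,\epsilon}^y \;-\; c_\epsilon,
\]
with the Neumann Laplacian $\Delta_\fn$ and a deterministic renormalisation constant $c_\epsilon = O(\log \epsilon^{-1})$ designed to cancel the diagonal divergence. Since $\Xi_{L,\epsilon}^y$ lives in the second Wiener chaos, convergence in $\cC_\fn^{2\alpha+2}$ reduces via a Kolmogorov-type criterion to $L^2(\Omega)$ bounds on the Littlewood-Paley blocks $\Delta_j \Xi_{L,\epsilon}^y$ plus Cauchy estimates in $\epsilon$; both can be evaluated explicitly from the cosine expansion and the orthogonality of Wiener chaoses. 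Independence of the limit on the specific mollifier (the canonicity statement) then follows from uniqueness of the Wick-square limit together with a standard $\epsilon,\epsilon'$ coupling argument.

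The main obstacle I expect is transferring these second-moment estimates to the well-understood periodic framework of \cite{AlCh15,GuImPe15}. The Neumann spaces of Section~\ref{section:dirichlet_besov} have been built so that even extension embeds $B_{p,q}^{\fn,\alpha}(y+Q_L)$ into $B_{p,q}^\alpha(\T_{2L}^2)$ isometrically (up to a universal factor), and the Neumann resonance product is intertwined with the periodic one by this extension. This will let one compare the $L^p(\Omega)$ estimates for $\Xi_{L,\epsilon}^y$ with their periodic counterparts. One must however be careful that the even extension of $\xi_L^y$ is \emph{not} a periodic white noise on $\T_{2L}^2$ -- its four reflected copies are perfectly correlated rather than independent -- so the standard periodic Wick constant cannot be quoted verbatim, and one must verify that the additional contribution to $c_\epsilon$ coming from the reflections remains bounded as $\epsilon \downarrow 0$ (or is absorbed into a shift of the limiting $\Xi_L^y$). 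Once this compatibility is established, the translation statement of Theorem~\ref{obs:shift_enhanced_white_noise} follows immediately from invariance of the law of $\xi_L^y$ under shifts of the base point $y$.
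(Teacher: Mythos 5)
Your broad strategy — mollify via a Fourier cutoff in the Neumann cosine basis, Wick‑renormalise by a constant $c_\epsilon \sim \frac{1}{2\pi}\log\epsilon^{-1}$, show convergence of the second‑chaos component by a Kolmogorov--Chentsov argument, and identify the limit as mollifier‑independent — is exactly the paper's (Theorem~\ref{theorem:convergence_in_enhanced_space_to_white_noise}, proved via Theorem~\ref{theorem:convergence_to_dirichlet_white_noise} and Lemmas \ref{lemma:expectation_reso_xi_eps_C_0_plus_reso_constant}, \ref{lemma:renormalisation_constant}). You also correctly flag the key obstruction: the even extension of white noise on $[0,L]^2$ is \emph{not} periodic white noise on $\T_{2L}^2$, because the four reflected copies are perfectly correlated.

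However, your plan to ``transfer the second‑moment estimates to the periodic framework of \cite{AlCh15,GuImPe15}'' is precisely where this obstruction bites, and the paper does not do this. Because the Fourier coefficients of $\overline{\xi_L^y}$ in the $(e_k)$ basis of $\T_{2L}^2$ are not independent, the periodic Wiener‑chaos computations for $\xi_\epsilon \reso \sigma(\rD)\xi_\epsilon$ give the wrong covariance structure and the wrong expectation. Instead the paper carries out all chaos estimates \emph{directly} in the cosine basis: the product formula $\fn_k \fn_l = (2L)^{-1}\sum_{\fp}\frac{\nu_k\nu_l}{\nu_{k+\fp\circ l}}\fn_{k+\fp\circ l}$ (cf.\ \eqref{eqn:basis_prod_even_even}, \ref{obs:inner_prod_of_prod_f_k_f_l_with_f_z}) together with Wick's theorem yields the second‑moment bounds of \ref{obs:second_moment_bounds} (Lemma~\ref{lemma:bound_on_expect_mod_Delta_i_difference_Xi_in_x_squared}), and these are what feed into Kolmogorov--Chentsov via Lemma~\ref{lemma:gaussian_besov_bound}. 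The extension‑to‑torus machinery of Section~\ref{section:dirichlet_besov} is used only for the deterministic Bony estimates, not for the stochastic estimates.

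A related inaccuracy in your description of the reflection correction: you suggest verifying that the ``additional contribution to $c_\epsilon$ coming from the reflections remains bounded.'' What actually happens is that $\E[\xi_\epsilon \reso \sigma(\rD)\xi_\epsilon(x)]$ is genuinely $x$‑dependent (unlike the periodic case, where it is constant); it decomposes as $4c_{\epsilon,L}$ plus an $x$‑dependent term (see \eqref{eqn:expression_expectation_xi_reso_xi} and \eqref{eqn:fn_k_squared}). The constant piece $c_{\epsilon,L}$ carries the logarithmic divergence (Lemma~\ref{lemma:renormalisation_constant}). The $x$‑dependent piece is \emph{not} bounded in $L^\infty$; what one proves (Lemma~\ref{lemma:expectation_reso_xi_eps_C_0_plus_reso_constant}) is that it converges as a distribution in $\cC_\fn^{-\gamma}$ for every $\gamma>0$, and this nontrivial distribution is absorbed into $\Xi_L^y$. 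Your proposal, as written, treats this term as a number to be controlled, which would not account for this structure. Finally, a small technical point: the hard cutoff $\1_{|k|\le\epsilon^{-1}}$ is not Lipschitz in $\epsilon$, so the Cauchy‑in‑$\epsilon$ second‑moment estimate \eqref{eqn:bound_difference_tau_squared} fails for it directly; the paper uses a smooth $\tau\in C_c^\infty$ for the convergence proof and handles the hard cutoff separately (Theorem~\ref{theorem:convergence_prima_and_theta}\ref{item:smooth_vs_table_tau}) by comparing the two approximations.
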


We will write $\bxi_L= \bxi_L^0, \xi_L=\xi_L^0, \Xi_L=\Xi_L^0$ and for $\beta>0$
\begin{align*}
\blambda_n(y+Q_L,\beta) = \lambda_n(y+Q_L, (\beta \xi_L^y, \beta^2 \Xi_L^y)), 
\qquad \blambda_n(y+Q_L)=\blambda_n(y+Q_L,1). 
\end{align*}

Now we have the framework set and can get to the key ingredients, of which two are given in Section \ref{section:scaling_and_translation}, the scaling and translation properties:

\begin{obs}
\label{obs:scaling_and_translation}
\begin{enumerate}
\item 
\label{item:scaling}
(Lemma \ref{lemma:scaling_eigenvalues_with_white_noise}) For 
$L,\beta,\epsilon>0$,  
\bfold 
\blambda_n(Q_L,\beta) \overset{d}{=} 
 \tfrac{1}{\epsilon^2} 
 \blambda_n (Q_{\frac{L}{\epsilon}}, \epsilon \beta  )
+ \tfrac{1}{2\pi} \log \epsilon.
\efold 
\item 
\label{item:translation}
(Lemma \ref{lemma:translation_of_eigenvalues}) 
For $y\in \R^2$ and $L,\beta>0$, 
\bfold 
\blambda_n(Q_L, \beta  )
 \overset{d}{=} \blambda_n(y+Q_L, \beta  ). 
\efold 
Moreover, if $y +Q_L^\circ \cap Q_L^\circ = \emptyset$, then $\blambda_n(Q_L, \beta )
$ and $\blambda_n(y+Q_L, \beta  )$ are independent. 
\end{enumerate}
\end{obs}

In  \cite[Proposition 1]{GaKo00} 
and 
\cite[Lemma 4.6]{BiKo01LTT}
 the principal eigenvalue on a large box is bounded by maxima of principal eigenvalues on smaller boxes. 
We extend these results from smooth potentials to enhanced potentials:

\begin{theorem}[Consequence of Theorem \ref{theorem:lower_and_upper_bounds_white_noise}\footnote{In this statement we have choosen $a=\frac12 r$.}]
\label{theorem:comparing_eigenvalues_different_boxes_summary}
There exists a $K>0$ such that for all $\epsilon>0$ and $L>r\ge 1$, the following inequalities hold almost surely 
\begin{align*}
\notag
 \max_{k\in \N_0^2, |k|_\infty< \frac{L}{r} - 1 }
\blambda(rk + & Q_{r}, \epsilon ) 
 \le 
\blambda( Q_L, \epsilon  )  \le \max_{k\in \N_0^2, |k|_\infty< \frac{L}{r} +1}
\blambda(rk + Q_{\frac32 r}, \epsilon ) + \tfrac{4K}{r^2}. 
\end{align*}
Moreover, for $n\in\N$ and $L>r \ge 1$; 
if $x,y \in \R^2$ and $x+Q_r \subset y+Q_L$, then $\blambda_n(x+Q_r,\epsilon) \le \blambda_n(y+Q_L,\epsilon)$; 
if $y,y_1,\dots,y_n \in \R^2$ are such that $(y_i + Q_r)_{i=1}^n$ are pairwise disjoint subsets of $y+Q_L$, then almost surely
$ 
\blambda_n(y+Q_L,\epsilon) \ge \min_{i\in \{1,\dots,n\}} \blambda(y_i +Q_r ,\epsilon). 
$ 
\end{theorem}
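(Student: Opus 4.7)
Since the Courant–Fischer formula \eqref{eqn:min-max_dir} is available and the map $\bxi\mapsto \blambda_n(\Gamma,\bxi)$ is locally Lipschitz by Theorem~\ref{theorem:summary_of_dirichlet_summary}, my plan is to dispatch the two "Moreover" clauses first by pure min-max arguments, deduce the lower half of the sandwich from inclusion monotonicity, and prove the upper half by an IMS-type localisation which is first carried out for smooth approximations of the noise and then passed to the limit.

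For the monotonicity assertion $\blambda_n(x+Q_r,\epsilon)\le\blambda_n(y+Q_L,\epsilon)$ when $x+Q_r\subset y+Q_L$, extension by zero embeds $\fD_{\bxi}^{\Dir}(x+Q_r)$ into $\fD_{\bxi}^{\Dir}(y+Q_L)$ with the Dirichlet quadratic form and the $L^2$-norm both preserved: the Dirichlet boundary condition survives because an $H_0^1$ function extended by zero remains $H_0^1$ on the larger domain, and the enhanced noise on the larger box restricts to the one on the smaller by construction. Every admissible $n$-dimensional test space for the smaller box therefore lifts to one for the larger box with identical Rayleigh quotients, and min-max gives the inequality. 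For the disjoint-boxes assertion, let $\phi_i$ be the $L^2$-normalised principal eigenfunction on $y_i+Q_r$ extended by zero. Because the supports are disjoint, the $\phi_i$ are pairwise orthogonal in $L^2(y+Q_L)$, and for $u=\sum_{i=1}^n c_i\phi_i$ one computes $\langle u,\sH u\rangle=\sum_i |c_i|^2\,\blambda(y_i+Q_r,\epsilon)$ and $\|u\|_{L^2}^2=\sum_i|c_i|^2$, so the Rayleigh quotient of any such $u$ is at least $\min_i\blambda(y_i+Q_r,\epsilon)$; applying min-max to $\mathrm{span}(\phi_1,\ldots,\phi_n)$ concludes. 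The lower half of the sandwich is then immediate from monotonicity, since each box $rk+Q_r$ with $|k|_\infty<L/r-1$ lies inside $Q_L$.

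For the upper half $\blambda(Q_L,\epsilon)\le\max_k\blambda(rk+Q_{3r/2},\epsilon)+4K/r^2$, I would fix a partition of unity $(\chi_k^2)_{|k|_\infty<L/r+1}$ with $\chi_k\in C_c^\infty((rk+Q_{3r/2})^\circ)$, $\sum_k\chi_k^2=1$ on $Q_L$, and $\|\nabla\chi_k\|_\infty\le 2/r$. Multiplying the principal eigenfunction $\phi$ of $\sH$ on $Q_L$ by each $\chi_k$ and using the IMS-type identity
$$
\langle\phi,\sH\phi\rangle_{L^2(Q_L)}=\sum_k\langle\chi_k\phi,\sH(\chi_k\phi)\rangle_{L^2(rk+Q_{3r/2})}-\sum_k\bigl\||\nabla\chi_k|\,\phi\bigr\|_{L^2}^2,
$$
combined with the local min-max bound $\langle\chi_k\phi,\sH(\chi_k\phi)\rangle\le\blambda(rk+Q_{3r/2},\epsilon)\|\chi_k\phi\|_{L^2}^2$ and $\sum_k\|\chi_k\phi\|_{L^2}^2=1$, delivers the claim with the constant $K$ absorbing the uniform bound $\|\nabla\chi_k\|_\infty^2\le 4/r^2$. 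The main obstacle is that in the paracontrolled setting multiplication by a sharp cutoff is not an admissible operation on $\fD_{\bxi}^{\Dir}$ and the IMS identity must be reinterpreted distributionally; I would bypass this by mollifying $\xi\rightsquigarrow\xi_\delta$ (so each $\sH_{\xi_\delta}$ is a bona fide Schr\"odinger operator for which the IMS computation proceeds verbatim), applying the bound for each $\delta>0$ on the finitely many boxes appearing in the maximum, and then passing $\delta\downarrow 0$ via the local Lipschitz continuity of $\blambda_n$ in $\bxi$ from Theorem~\ref{theorem:summary_of_dirichlet_summary}. This limiting argument, performed uniformly in $r$ and $L$, is essentially the content of the invoked Theorem~\ref{theorem:lower_and_upper_bounds_white_noise}.
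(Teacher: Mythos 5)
Your overall plan — prove the three inequalities for smooth approximations of the noise and then pass to the limit using continuity of $\blambda_n$ in $\bxi$ — is indeed the paper's strategy, and your IMS computation for the upper bound is the right one (it matches Lemma~\ref{lemma:partition_with_a_and_r}, \ref{obs:IMS} and Theorem~\ref{theorem:eigenvalues_boxes}). However, there are two genuine gaps.

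First, your direct paracontrolled argument for the two ``moreover'' clauses does not go through. You claim that extension by zero embeds $\fD_{\bxi}^{\Dir}(x+Q_r)$ into $\fD_{\bxi}^{\Dir}(y+Q_L)$ and that ``the enhanced noise on the larger box restricts to the one on the smaller by construction.'' Neither is true. The enhancements $\bxi_L^y$ and $\bxi_r^x$ are built from \emph{different} Neumann bases (the $\fn_{m,L}$ versus the $\fn_{k,r}$), with different Littlewood--Paley blocks and different renormalisation constants, and there is no a priori compatibility between them; establishing a precise compatibility is exactly what Theorem~\ref{theorem:theta_epsilon_converges_to_xi_r} is for. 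Moreover, if $f\in\fD_{\bxi}^{\Dir}(x+Q_r)$ and $\tilde f$ is its zero extension, then $\tilde f$ has a kink at $\partial(x+Q_r)$, so $\tilde f\in H_0^1(y+Q_L)$ but generically $\tilde f\notin H^2$ near that boundary, and $\tilde f^{\flat\bxi_L^y}$ (which involves paraproducts with the \emph{big-box} noise, not the small-box one) will not lie in $H_0^2$. The Rayleigh quotient $\langle\sH_{\bxi_L^y}\tilde f,\tilde f\rangle$ therefore does not make sense. In the paper these two clauses are proved, like the sandwich, by passing to the limit from the smooth-potential statements \eqref{eqn:min_max_for_smooth}, Lemma~\ref{lemma:monotonicity_of_eigenvalues_in_radius} and Theorem~\ref{theorem:n_eigenvalue_bound_from_below_of_principals}, where $C_c^\infty$ test functions do extend by zero.

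Second, and more fundamentally, the limiting argument in your last paragraph hides the actual content of Theorem~\ref{theorem:lower_and_upper_bounds_white_noise}. Local Lipschitz continuity of $\bxi\mapsto\blambda_n(\Gamma,\bxi)$ only helps once you know \emph{what} the mollified objects converge to. When you mollify on $Q_L$ and restrict to a sub-box $y+Q_r$, what you obtain is $\theta_\epsilon^y$ in \eqref{eqn:theta_epsilon}, not the intrinsic mollification $\xi_{r,\epsilon}^y$. The non-trivial claim — proved in Theorem~\ref{theorem:convergence_prima_and_theta}\ref{item:comparing_box_sizes} via the correlation estimates of Section~\ref{section:proof_convergence_diff_box_sizes} on the coefficients $b_{m,l}^z=\langle\fn_{m,L},\cT_z\fn_{l,r}\rangle$ — is that $(\theta_\epsilon^y,\theta_\epsilon^y\reso\sigma(\rD)\theta_\epsilon^y-c_\epsilon')$ converges in $\fX_\fn^\alpha(y+Q_r)$ to $\bxi_r^y$ with the \emph{same} renormalisation constant $c_\epsilon'$ as on the big box. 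Without this, the eigenvalues on $Q_L$ and on the sub-boxes would converge to quantities built from incompatible renormalisations, and the inequalities would not survive the $\epsilon\downarrow 0$ limit. This is the step that must be identified and proved, not merely referred back to the theorem being established.
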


\begin{calc}
Note that $rk + [0,r]^d$ is indeed a subset of $[0,L]^d$ for $k\in \N_0^d$ if (and only if) $|k|_\infty < \frac{L}{r} - 1$. 
\end{calc}

Another important tool that we prove is the large deviations of the eigenvalues, which --by the contraction principle and continuity of the eigenvalues in terms of its enhanced distribution-- is a consequence of the large deviations of $(\sqrt{\epsilon}\xi_L, \epsilon \Xi_L)$, proven in Section \ref{section:ldp_enhancement}.

\begin{theorem}[See Corollary \ref{cor:ldp_eigenvalues}]
\label{theorem:LDP_summary}
$\blambda_n(Q_L,\sqrt{\epsilon})= \lambda_n(Q_L, (\sqrt{\epsilon}\xi_L, \epsilon \Xi_L) )$ satisfies the large deviation principle with rate $\epsilon$ and rate function $I_{L,n} : \R \rightarrow [0,\infty]$ given by 
\begin{align*}
I_{L,n}(x) = \inf_{\overset{V\in L^2(Q_L)}{ \lambda_n(Q_L,V) = x}} \tfrac12 \|V\|_{L^2}^2. 
\end{align*}
\end{theorem}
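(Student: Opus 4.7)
\bigskip

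\noindent\textbf{Proof proposal.} The plan is to deduce the LDP for eigenvalues from the LDP for the enhanced white noise via the contraction principle, using local Lipschitz continuity of $\bxi \mapsto \lambda_n(Q_L,\bxi)$ stated in Theorem~\ref{theorem:summary_of_dirichlet_summary}.

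First, I would invoke the large deviation principle for $(\sqrt{\epsilon}\,\xi_L,\epsilon\, \Xi_L)$ in $\fX_\fn^\alpha(Q_L)$, proved in Section~\ref{section:ldp_enhancement}. That result supplies an LDP at speed $\epsilon$ with a good rate function $J_L \colon \fX_\fn^\alpha(Q_L)\to[0,\infty]$ characterised by
\[
J_L(\bzeta)=\inf\Bigl\{\tfrac12\|V\|_{L^2(Q_L)}^2 : V\in L^2(Q_L),\ \bzeta\text{ is the canonical enhancement of }V\Bigr\},
\]
with the infimum over the empty set equal to $+\infty$. For smooth (indeed $L^2$) potentials $V$ the resonance product $V\varodot\cdot$ is classically defined, no renormalisation is needed, and the canonical enhancement is uniquely determined by $V$; so $J_L$ really does take the claimed form on its effective domain.

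Next, Theorem~\ref{theorem:summary_of_dirichlet_summary} tells us that the map
\[
F_n\colon \fX_\fn^\alpha(Q_L)\longrightarrow \R,\qquad \bzeta\longmapsto \lambda_n(Q_L,\bzeta)
\]
is locally Lipschitz, hence continuous. Since $\blambda_n(Q_L,\sqrt{\epsilon})=F_n(\sqrt{\epsilon}\,\xi_L,\epsilon\,\Xi_L)$ by the definition at the top of Section~\ref{section:main_results}, the contraction principle (e.g.\ Dembo--Zeitouni) yields that $\blambda_n(Q_L,\sqrt{\epsilon})$ satisfies the LDP at speed $\epsilon$ with rate function
\[
I_{L,n}(x)=\inf\bigl\{J_L(\bzeta):F_n(\bzeta)=x\bigr\}.
\]
Finally, I would simplify the infimum. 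If $\bzeta$ is not the canonical enhancement of some $V\in L^2$ then $J_L(\bzeta)=+\infty$ and such $\bzeta$ contribute nothing. Otherwise, for $V\in L^2(Q_L)$ with canonical enhancement $\bzeta_V$ one has $F_n(\bzeta_V)=\lambda_n(Q_L,V)$ in the classical sense (this identification uses that the paracontrolled formulation reduces to the usual form domain description when $V\in L^2$). Reindexing the infimum over $V$ rather than $\bzeta$ therefore gives
\[
I_{L,n}(x)=\inf_{\substack{V\in L^2(Q_L)\\ \lambda_n(Q_L,V)=x}}\tfrac12\|V\|_{L^2}^2,
\]
as required; goodness of $I_{L,n}$ follows from goodness of $J_L$ together with continuity of $F_n$.

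The main obstacle in filling in the details is the identification on the effective domain: one must check that the rate function of the enhanced LDP is exactly $\frac12\|V\|_{L^2}^2$ (and not merely dominated by it) and that $\lambda_n$ evaluated on the canonical enhancement of $V\in L^2$ coincides with the variational $n$-th eigenvalue of $\Delta+V$. Both points amount to verifying that the paracontrolled construction restricts transparently to the classical regime, which in turn relies on the approximation of $V$ by smooth functions together with the continuity of $F_n$; this is where the bulk of the work in Corollary~\ref{cor:ldp_eigenvalues} will actually go.
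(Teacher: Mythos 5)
Your proposal matches the paper's own argument: the paper obtains Corollary~\ref{cor:ldp_eigenvalues} by combining the LDP for $(\sqrt{\epsilon}\,\xi_L,\epsilon\,\Xi_L)$ in $\fX_\fn^\alpha$ (Theorem~\ref{theorem:ldp_white_noise_enhancement_dir}) with the local Lipschitz continuity of $\bxi\mapsto\lambda_n(Q_L,\bxi)$ (from \eqref{eqn:locally_lipschitz_eigenvalue_of_dirichlet}) via the contraction principle. The two "obstacles" you flag at the end are precisely what the paper handles elsewhere — \ref{obs:rough_distributions_for_H_elements}, \ref{obs:smooth_potentials} and Lemma~\ref{lemma:continuity_of_eigenvalues_on_H_00} identify $\lambda_n$ on the canonical enhancement of $V\in L^2$ with the classical eigenvalue, and the form of the rate function comes out of the extended contraction principle in the proof of Theorem~\ref{theorem:ldp_white_noise_enhancement_dir} — so your outline is complete and correct.
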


In Section \ref{section:ldp_results} we study infima over the large deviation rate function over half-lines, in terms of which the almost sure limit of the eigenvalues will be described: 

\begin{theorem}
\label{theorem:convergence_infima_rate_function}
There exists a $C>0$ such that for all $n\in\N$,   
\bfold
\varrho_n 
= \inf_{L>0} \inf I_{L,n}[1,\infty) 
= \lim_{L \rightarrow \infty } \inf I_{L,n}[1,\infty) 
>C
\efold 
and 
\begin{align}
\label{eqn:2_div_varrho_n_var_formula}
\frac{2}{\varrho_n} 
& = 4
 \sup_{ \overset{ V\in C_c^\infty(\R^2)}{ \|V\|_{L^2}^2 \le 1} } 
\sup_{ \overset{ F \sqsubset C_c^\infty(\R^2) }{ \dim F = n} } 
\inf_{ \overset{ \psi \in F }{ \|\psi \|_{L^2}^2 =1} } 
\int_{\R^2} - | \nabla \psi|^2 + V \psi^2  .
\end{align}
Moreover, 
\begin{align}
\label{eqn:2_div_varrho_1_var_formula_and_lady_constant}
\frac{2}{\varrho_1} 
= 4
\sup_{ \overset{ \psi \in C_c^\infty(\R^2) }{ \|\psi \|_{L^2}^2 =1} } 
\|\psi\|_{L^4}^2 - 
\int_{\R^2} | \nabla \psi|^2 = \chi, 
\end{align}
where 
$\chi$ is the smallest $C>0$ such that $\|f\|_{L^4}^4 \le C \|\nabla f\|_{L^2}^2 \|f\|_{L^2}^2$ for all $f\in H^1(\R^2)$ (this is Ladyzhenskaya's inequality). 
\end{theorem}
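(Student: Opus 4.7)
\emph{Strategy.} The plan is to reduce $\varrho_n$ to a variational problem on all of $\R^2$ by exploiting two features: monotonicity of $\inf I_{L,n}[1,\infty)$ in $L$ (which gives $\inf_L = \lim_L$) together with scale invariance of the ratio $\|V\|_{L^2}^2/\lambda_n(Q_L,V)$; for $n=1$ the resulting supremum is then evaluated via a Cauchy--Schwarz duality in $V$ followed by Ladyzhenskaya's inequality.

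\emph{Monotonicity and scaling.} For $L' > L$ and any $V \in L^2(Q_L)$ with $\lambda_n(Q_L, V) \ge 1$, extending $V$ and any $n$-dimensional trial subspace of $H_0^1(Q_L)$ by zero to $Q_{L'}$ gives $\lambda_n(Q_{L'}, \tilde V) \ge \lambda_n(Q_L, V) \ge 1$ via Courant--Fischer while preserving the $L^2$-norm, so $L \mapsto \inf I_{L, n}[1,\infty)$ is non-increasing. Next, the scaling $V_\ell(y) := \ell^2 V(\ell y)$ maps $L^2(Q_L)$ to $L^2(Q_{L/\ell})$ with $\lambda_n(Q_{L/\ell}, V_\ell) = \ell^2 \lambda_n(Q_L, V)$ and $\|V_\ell\|_{L^2}^2 = \ell^2 \|V\|_{L^2}^2$, so the ratio is scale invariant and every admissible $V$ can be renormalised to $\lambda_n = 1$. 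Combined, these give
\[
\varrho_n = \inf_{L>0}\inf I_{L,n}[1,\infty) = \inf_{\substack{L > 0,\ V \in L^2(Q_L) \\ \lambda_n(Q_L, V) > 0}} \frac{\|V\|_{L^2}^2}{2\,\lambda_n(Q_L, V)}, \qquad \frac{1}{2 \varrho_n} = \sup_{L,V} \frac{\lambda_n(Q_L, V)}{\|V\|_{L^2}^2}.
\]

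\emph{Passage to $\R^2$ and general $n$.} By density of $C_c^\infty(Q_L)$ in $L^2(Q_L)$ and the local Lipschitz dependence of $\lambda_n$ on $\bxi$ from Theorem~\ref{theorem:summary_of_dirichlet_summary}, the above supremum is unchanged if we restrict to $V \in C_c^\infty(Q_L)$. For such $V$, extending by zero to $C_c^\infty(\R^2)$ and applying the monotonicity just proved yields $\lambda_n(Q_L, V) \nearrow \lambda_n(\R^2, V)$ as $L \to \infty$, where $\lambda_n(\R^2, V) := \sup_F \inf_\psi \bigl(-\|\nabla \psi\|_{L^2}^2 + \int V \psi^2\bigr)$ with $F \sqsubset C_c^\infty(\R^2)$ of dimension $n$ and $\psi \in F$ of unit $L^2$-norm. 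This produces \eqref{eqn:2_div_varrho_n_var_formula}.

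\emph{Case $n=1$, strict positivity, and main obstacle.} For $n = 1$ the subspace $F$ collapses to $\R \psi$ and the two suprema commute; Cauchy--Schwarz with optimiser $V = \psi^2/\|\psi^2\|_{L^2}$ gives $\sup_{\|V\|_{L^2}^2 \le 1} \int V \psi^2 = \|\psi\|_{L^4}^2$, yielding the first equality of \eqref{eqn:2_div_varrho_1_var_formula_and_lady_constant}. To identify it with $\chi$: the upper bound follows from plugging $\|\psi\|_{L^4}^2 \le \sqrt{\chi}\,\|\nabla \psi\|_{L^2}$ (Ladyzhenskaya with $\|\psi\|_{L^2} = 1$) into the map $u \mapsto \sqrt{\chi}\,u - u^2$, whose maximum is $\chi/4$; the matching lower bound by taking a Ladyzhenskaya-extremising sequence in $H^1(\R^2)$, spatially rescaling so that $\|\nabla \psi_k\|_{L^2} = \tfrac12 \sqrt{\chi}$, and truncating/mollifying into $C_c^\infty(\R^2)$. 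Finally, $\varrho_n \ge \varrho_1 = 2/\chi > 0$ (because $\lambda_n \le \lambda_1$ makes the constraint $\lambda_n \ge 1$ imply $\lambda_1 \ge 1$) furnishes any uniform $C \in (0, 2/\chi)$. The technically delicate step I expect to be the main obstacle is this last approximation: the Ladyzhenskaya extremiser on $\R^2$ is a non-compactly-supported positive radial bubble (the ground state of $-\Delta R + R = R^3$), so the truncation-and-mollification must be tuned to preserve both $\|\psi\|_{L^4}$ and $\|\nabla \psi\|_{L^2}$ in the limit.
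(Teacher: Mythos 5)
Your proposal is correct and follows essentially the same route as the paper: scaling invariance of the ratio $\|V\|_{L^2}^2/\lambda_n$ to pass to a supremum over all boxes (the paper's Lemma~\ref{lemma:formula_2_of_rho_n}, which couples a scaling to the unit box with the abstract duality Lemma~\ref{lemma:infimum_duality}; you instead renormalise each $V$ directly so that $\lambda_n = 1$, which amounts to the same thing), then Cauchy--Schwarz in $V$ to expose the $L^4$ norm, then a second scaling in $\lambda$ to identify the constant with $\chi$ via Ladyzhenskaya.

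Two small remarks. First, the paper establishes $\varrho_n > 0$ \emph{before} invoking the duality lemma (its Lemma~\ref{lemma:infimum_duality} requires $\varrho > 0$ as a hypothesis), whereas you state the dual identity $\tfrac{1}{2\varrho_n} = \sup_{L,V}\lambda_n/\|V\|^2$ first and only establish positivity at the end; this is harmless since your direct rescaling gives the identity as an equality in $[0,\infty]$, but you should make that explicit or reorder. Second, the ``technically delicate'' truncation step you flag is not actually an obstacle: since $C_c^\infty(\R^2)$ is dense in $H^1(\R^2)$ and $f\mapsto \|f\|_{L^4}^4/(\|\nabla f\|_{L^2}^2\|f\|_{L^2}^2)$ is continuous there, the Ladyzhenskaya supremum over $H^1$ equals the supremum over $C_c^\infty$, so one may take the extremising sequence already in $C_c^\infty$ and never touch the non-compactly-supported radial bubble. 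This is in effect what the paper's reference to Chen's Theorem~C.1 and the inline optimisation over $\lambda$ (replacing $\psi$ by $\lambda f(\lambda\cdot)$, maximising the resulting concave quadratic $\lambda\|f\|_{L^4}^2 - \lambda^2\|\nabla f\|_{L^2}^2$, and then taking $\sup$ over $f\in C_c^\infty$ with $\|f\|_{L^2}=1$) accomplish.
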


Using the scaling and translation properties of \ref{obs:scaling_and_translation}, the comparison of the eigenvalue with maxima of eigenvalues of smaller boxes in Theorem \ref{theorem:comparing_eigenvalues_different_boxes_summary} and the large deviations in Theorem \ref{theorem:LDP_summary} we obtain the following tail bounds in Section \ref{section:tail_bounds}.

\begin{theorem}
\label{theorem:tail_bounds}
Let $K>0$ be as in Theorem \ref{theorem:lower_and_upper_bounds_white_noise}.
Let $r,\beta>0$. 
We will abbreviate $I_{r,1}$ by $I_r$. 
For all $\mu > \inf I_{r}(1,\infty)$ 
and $\kappa < \inf I_{\frac32 r}[1-\frac{16K}{r^2})$ 
there exists an $M>0$ such that for all $L,x>0$ with $L\sqrt{x} >M$ 
\begin{align}
\label{eqn:tail_bound_le_x}
\P\left( \blambda( Q_{L},\beta) \le x  \right) 
& \le 
\exp\left(-\frac{ e^{2\log L-\frac{\mu}{\beta^2} x } x }{2r^2}   \right), \\
\label{eqn:tail_bound_ge_x}
\P\left( \blambda( Q_{L},\beta) \ge x  \right) 
& \le 
 \tfrac{ 2 }{ r^2}   x e^{2\log L -\frac{\kappa}{\beta^2} x }.
\end{align}
\end{theorem}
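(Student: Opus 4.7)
The plan is to combine three ingredients in the order: scaling, partition, and large deviation principle (LDP).

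\textbf{Step 1 (Scaling).} I apply Observation \ref{obs:scaling_and_translation}(a) with $\ep_0=1/\sqrt{x}$ to get $\blambda(Q_L,\beta)\overset{d}{=} x\,\blambda(Q_{L\sqrt{x}},\beta/\sqrt{x})-\tfrac{\log x}{4\pi}$. Consequently the events $\{\blambda(Q_L,\beta)\le x\}$ and $\{\blambda(Q_L,\beta)\ge x\}$ have the same probability as $\{\blambda(Q_{L\sqrt{x}},\beta/\sqrt{x})\le 1+\tfrac{\log x}{4\pi x}\}$ and its reverse. The key gain is that the noise strength is now $\beta/\sqrt{x}$, which vanishes as $x\to\infty$, placing us in the LDP regime with parameter $\ep_{\mathrm{LDP}}=\beta^2/x\downarrow 0$.

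\textbf{Step 2 (Partition/Comparison).} I apply Theorem \ref{theorem:comparing_eigenvalues_different_boxes_summary} to the enlarged box $Q_{L\sqrt{x}}$ with subboxes of side $r$. For the $\le x$ side, the lower comparison gives $\blambda(Q_{L\sqrt{x}},\cdot)\ge \max_k \blambda(rk+Q_r,\cdot)$ over $N\ge \tfrac{L^2 x}{2r^2}$ disjoint translates; translation invariance plus independence (Observation \ref{obs:scaling_and_translation}(b)) then factors the probability as an $N$-th power. For the $\ge x$ side, the upper comparison gives $\blambda(Q_{L\sqrt{x}},\cdot)\le \max_k \blambda(rk+Q_{\frac32 r},\cdot)+\tfrac{4K}{r^2}$ over $N'\le \tfrac{2 L^2 x}{r^2}$ translates, handled by a union bound. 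The extra factor $x$ in both counts, compared with the naive $L^2/r^2$ one would get by partitioning $Q_L$ directly, is the pay-off for having scaled first.

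\textbf{Step 3 (LDP).} I invoke Theorem \ref{theorem:LDP_summary} with $\ep_{\mathrm{LDP}}=\beta^2/x$. For \eqref{eqn:tail_bound_le_x}: given $\mu>\inf I_r(1,\infty)$, the infimum of $I_r$ over $(1+\delta,\infty)$ decreases to $\inf I_r(1,\infty)$ as $\delta\downarrow 0$, so one may pick $\delta>0$ with $\inf I_r(1+\delta,\infty)<\mu$; for $x$ large enough that $\tfrac{\log x}{4\pi x}<\delta$ and $\ep_{\mathrm{LDP}}$ is below the LDP threshold, the LDP lower bound on the open set $(1+\delta,\infty)$ delivers $\P(\blambda(Q_r,\beta/\sqrt{x})>1+\tfrac{\log x}{4\pi x})\ge e^{-\mu x/\beta^2}$, and $(1-p)^N\le e^{-Np}$ closes the estimate. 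For \eqref{eqn:tail_bound_ge_x}: once $x\ge 1$ (so $\tfrac{\log x}{4\pi x}\ge 0$) the closed set $[1-\tfrac{4K}{r^2}+\tfrac{\log x}{4\pi x},\infty)$ is contained in $[1-\tfrac{16K}{r^2},\infty)$ and hence has $I_{\frac32 r}$-rate at least $\kappa$; the LDP upper bound yields $\P(\blambda(Q_{\frac32 r},\beta/\sqrt{x})\ge 1-\tfrac{4K}{r^2}+\tfrac{\log x}{4\pi x})\le e^{-\kappa x/\beta^2}$, and the union bound over the $N'$ translates produces the polynomial-in-$x$ prefactor.

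The principal subtlety is the joint bookkeeping of the scaling and LDP limits: $\ep_{\mathrm{LDP}}=\beta^2/x$ must be below the LDP threshold, $L\sqrt{x}$ must dominate $r$ so the partition counts are the claimed $\tfrac{L^2 x}{2r^2}$ and $\tfrac{2L^2 x}{r^2}$, and the correction $\tfrac{\log x}{4\pi x}$ must fit inside the slack $\delta$ respectively inside the width between $\tfrac{4K}{r^2}$ and $\tfrac{16K}{r^2}$ (the factor $4$ in $16K=4\cdot 4K$ is precisely this slack). All these conditions combine into the single threshold $L\sqrt{x}>M$, with $M$ depending on $r,\beta,\mu,\kappa$.
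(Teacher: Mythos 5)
Your proposal is correct and follows essentially the same route as the paper: scale with $\epsilon=1/\sqrt{x}$ via \ref{obs:scaling_and_translation}(a), localise by the comparison bounds of Theorem \ref{theorem:comparing_eigenvalues_different_boxes_summary} together with independence/translation from \ref{obs:scaling_and_translation}(b), and close with the LDP of Theorem \ref{theorem:LDP_summary}, with the prefactor counts $\tfrac{L^2x}{2r^2}$ and $\tfrac{2L^2x}{r^2}$ matching the paper's. The only cosmetic difference is in handling the $\tfrac{\epsilon^2}{2\pi}\log\epsilon$ term: the paper absorbs it into the LDP by exponential equivalence and keeps fixed threshold sets, whereas you shift it into the threshold $1+\tfrac{\log x}{4\pi x}$ and argue by monotone set inclusion using the slack between $4K/r^2$ and $16K/r^2$; both are valid and yield the same estimates.
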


Using the tail bounds and the limit in Theorem \ref{theorem:convergence_infima_rate_function} we obtain our main result by a Borel-Cantelli argument and the `moreover' part of Theorem \ref{theorem:comparing_eigenvalues_different_boxes_summary}. For the details  see Section \ref{section:tail_bounds}. 

\begin{theorem}
\label{theorem:as_convergence_all_eigenvalues}
Let $\I \subset (1,\infty)$ be an unbounded countable set, and let $\beta>0$. 
For $L\in \I$ let $y_L \in \R^2$ be such that $y_r + Q_r \subset y_L + Q_L$ for $r,L \in \I$ with $L>r$.  Then for $n\in\N$
\begin{align*}
\lim_{ \overset{  L \in \I }{ L \rightarrow \infty}} \frac{ \blambda_n(y_L+Q_L,\beta)}{\log L} = \frac{2\beta^2 }{\varrho_1} 
= \beta^2 \chi
 \qquad a.s.
\end{align*}
\end{theorem}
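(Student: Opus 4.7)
Set $c = \beta^2\chi = 2\beta^2/\varrho_1$. The nesting hypothesis together with the first ``moreover'' in Theorem~\ref{theorem:comparing_eigenvalues_different_boxes_summary} makes the process $(\blambda_n(y_L+Q_L,\beta))_{L\in\I}$ a.s.\ non-decreasing; this monotonicity is what will let us turn per-$L$ tail estimates from Theorem~\ref{theorem:tail_bounds} into uniform a.s.\ statements over the countable set $\I$. I would prove $\liminf\ge c$ and $\limsup\le c$ separately, via Borel--Cantelli, using the two tail bounds in complementary ways.

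For the \textbf{lower bound}, fix $\epsilon>0$. Apply the second ``moreover'' in Theorem~\ref{theorem:comparing_eigenvalues_different_boxes_summary} with $n$ pairwise disjoint sub-boxes of a common side length $r(L)$ of order $L$ packed inside $y_L+Q_L$, combined with the translation invariance and independence for disjoint boxes from Observation~\ref{obs:scaling_and_translation}; this gives
\[
\blambda_n(y_L+Q_L,\beta) \;\geq\; \min_{i=1,\dots,n} \blambda\bigl(y_i(L)+Q_{r(L)},\beta\bigr),
\]
where on the right are $n$ i.i.d.\ copies of $\blambda(Q_{r(L)},\beta)$. Choose the parameter $\mu$ in \eqref{eqn:tail_bound_le_x} close to (and above) $\varrho_1$, which is possible by Theorem~\ref{theorem:convergence_infima_rate_function}; then at $x=(c-\epsilon)\log L$ the exponent in the lower tail is of the form $L^{\eta}\log L$ for some $\eta=\eta(\epsilon)>0$, i.e., super-exponentially small in $L$. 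A union bound over the $n$ sub-boxes preserves this decay, which is summable over \emph{any} countable $\I$, and Borel--Cantelli then yields $\blambda_n(y_L+Q_L,\beta)>(c-\epsilon)\log L$ eventually in $L\in\I$ a.s.; sending $\epsilon\downarrow 0$ concludes.

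For the \textbf{upper bound}, use $\blambda_n\le\blambda_1$ to reduce to the principal eigenvalue. The upper tail \eqref{eqn:tail_bound_ge_x} at $x=(c+\epsilon)\log L$ decays only polynomially in $L$ and is therefore \emph{not} summable over an arbitrary countable $\I$; the monotonicity must do the work. I would construct a geometric subsequence $(N_j)_{j\ge 1}\subset\I$ inductively by $N_{j+1}:=\min\{L\in\I:L\ge 2N_j\}$, giving $N_j\ge 2^{j-1}N_1$ and a summable polynomial tail along $(N_j)$; Borel--Cantelli then gives $\blambda_1(y_{N_j}+Q_{N_j},\beta)\le(c+\epsilon)\log N_j$ eventually a.s. For $L\in\I$ with $N_j\le L<N_{j+1}$, monotonicity yields
\[
\frac{\blambda_n(y_L+Q_L,\beta)}{\log L} \;\le\; (c+\epsilon)\,\frac{\log N_{j+1}}{\log N_j}.
\]
In the ``dense'' regime where the multiplicative gaps $N_{j+1}/N_j$ stay bounded, $\log N_{j+1}/\log N_j\to 1$ and one concludes; in the complementary ``log-sparse'' regime where $N_{j+1}$ is much larger than $N_j$, the gaps of $\I$ make $\sum_{L\in\I} L^{-\delta}$ itself summable, and a direct Borel--Cantelli on $\I$ bypasses the interpolation entirely. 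Sending $\epsilon\downarrow 0$ and invoking \eqref{eqn:2_div_varrho_1_var_formula_and_lady_constant} identifies the limit as $c=2\beta^2/\varrho_1=\beta^2\chi$.

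The main obstacle is the polynomial-only decay of the upper tail bound, which prevents a one-shot Borel--Cantelli on $\I$ and forces the monotonicity-driven interpolation; dovetailing the interpolation with the possibility that $\I$ has arbitrary structure---in particular arbitrarily large multiplicative gaps interleaved with dense parts---is the delicate point, which the dense/log-sparse dichotomy above is designed to handle. By contrast, the lower bound is comparatively clean because the lower tail decays super-exponentially and the disjoint-sub-box reduction preserves this decay.
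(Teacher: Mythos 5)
Your lower bound has a genuine gap at the Borel--Cantelli step. You write that the super-exponential decay $\exp(-L^{\eta}\log L/(2r^2))$ from \eqref{eqn:tail_bound_le_x} is ``summable over \emph{any} countable $\I$,'' but that is false: the theorem allows $\I$ to have accumulation points in $(1,\infty)$ (the abstract uses $\I = \Q\cap(1,\infty)$!), and near such a point the summand is bounded away from $0$, so $\sum_{L\in\I}\P(\,\cdot\,)$ diverges. (For $L$ below the threshold $M/\sqrt{x}$ the tail bound does not even apply.) The upper bound has an analogous problem: your ``log-sparse'' branch of the dichotomy asserts that large multiplicative gaps between consecutive $N_j$ force $\sum_{L\in\I}L^{-\delta}<\infty$, but sparsity of the $N_j$ says nothing about how densely $\I$ populates the blocks $[N_j,N_{j+1})$; $\I$ can cluster arbitrarily near each $N_j$ while the $N_j$ themselves have enormous gaps, and then neither branch of your dichotomy closes the argument.

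The paper avoids both problems by decoupling the probabilistic estimate from the index set $\I$ altogether. It first proves the limit along the fixed geometric sequence $\{2^m\}_{m\in\N}$ (Theorem~\ref{theorem:as_convergence_eigenvalues_dyadics}): there Borel--Cantelli sums over the discrete index $m$, where \eqref{eqn:tail_bound_le_x} gives doubly exponential decay and \eqref{eqn:tail_bound_ge_x} gives geometric decay, both trivially summable. Then, after the reduction $y_L=0$, any real $L$ (not just $L\in\I$) is sandwiched as $L=a2^m$ with $a\in[1,2]$, and the a.s.\ monotonicity of $L\mapsto\blambda_n(Q_L,\beta)$ from Theorem~\ref{theorem:comparing_eigenvalues_different_boxes_summary} gives $\blambda_n(Q_{2^m})\le\blambda_n(Q_L)\le\blambda_n(Q_{2^{m+1}})$; dividing by $\log L$ and using $\log 2/\log 2^m\to 0$ closes the squeeze uniformly. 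This interpolation is completely insensitive to the structure of $\I$ (density, accumulation points, multiplicative gaps), which is exactly what your $\I$-indexed Borel--Cantelli and your $N_j$-dichotomy cannot achieve. Your instinct to exploit the monotonicity is correct; the fix is to interpolate against a \emph{fixed} geometric grid, not a grid extracted from $\I$.
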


\section{Proof of Theorem \ref{theorem:tail_bounds} and Theorem \ref{theorem:as_convergence_all_eigenvalues}}
\label{section:tail_bounds}

In this section we prove Theorem \ref{theorem:tail_bounds} and Theorem \ref{theorem:as_convergence_all_eigenvalues} by using \ref{theorem:summary_of_dirichlet_summary}--\ref{theorem:convergence_infima_rate_function}.

\begin{obs}
\label{lemma:probability_estimates}
Let $K>0$ be as in Theorem \ref{theorem:comparing_eigenvalues_different_boxes_summary}.
To simplify notation we take $\beta =1$. 
By consecutively applying 
the scaling in \ref{obs:scaling_and_translation}\ref{item:scaling},
the bounds in Theorem \ref{theorem:comparing_eigenvalues_different_boxes_summary} and 
then the independence and translation properties in \ref{obs:scaling_and_translation}\ref{item:translation},
we get for
$L,r,\epsilon>0$ with  $\frac{L}{\epsilon}> r\ge1 $ 
\begin{align}
\P\left( \epsilon ^2 \blambda( Q_{L}) \le 1 \right) 
\notag & =
\P\left( \blambda ( Q_{\frac{L}{\epsilon }}, \epsilon ) + \tfrac{\epsilon^2}{2\pi}\log \epsilon  \le 1 \right)  \\
\notag & \le
\P\left( 
 \max_{k\in \N_0^2, |k|_\infty< \frac{L}{\epsilon r} - 1 }
\blambda(rk + Q_{r}, \epsilon ) 
  \le 1 -\tfrac{\epsilon^2}{2\pi}\log \epsilon  \right) \\
 \cand \begin{calc} 
 \notag\le 
\prod_{k\in \N_0^2, |k|_\infty< \frac{L}{\epsilon r} - 1 }
\P\left( 
\blambda(rk + Q_{r}, \epsilon ) 
 \le 1 -\tfrac{\epsilon^2}{2\pi}\log \epsilon  \right) 
\end{calc} \cnewline
\label{eqn:bound_prob_le_1}
& = \P\left( 
\blambda( Q_{r}, \epsilon ) \le 1 - \tfrac{\epsilon^2}{2\pi}\log \epsilon  \right)^{\# \{k\in \N_0^2 : |k|_\infty< \frac{L}{\epsilon r} - 1\}}, 
\end{align}
and similarly 
\begin{align}
\notag 
\P &  \left( \epsilon ^2 \blambda( Q_{L}) \ge  1  \right) 
= \P\left( \blambda ( Q_{\frac{L}{\epsilon }}, \epsilon ) + \tfrac{\epsilon^2}{2\pi}\log \epsilon  \ge 1 \right) 
\\
\notag & \le 
\P\left( 
\max_{k\in \N_0^2, |k|_\infty< \frac{L}{\epsilon r} +1}
\blambda(rk + Q_{\frac32 r}, \epsilon  ) + \tfrac{4K}{r^2}
 + \tfrac{\epsilon^2}{2\pi}\log \epsilon  \ge 1 \right) \\
\cand \begin{calc}
\notag  \le 
\sum_{k\in \N_0^2, |k|_\infty \le \frac{L}{\epsilon r} +1}
\P\left( 
\blambda(rk + Q_{\frac32 r}, \epsilon )  \ge 1  - \tfrac{4K}{r^2}
 - \tfrac{\epsilon^2}{2\pi}\log \epsilon \right)
\end{calc} \cnewline
\label{eqn:bound_prob_ge_1} & \le 
\# \{ k\in \N_0^2 : |k|_\infty< \tfrac{L}{\epsilon r} +1\}
\P\left(
\blambda( Q_{\frac32 r}, \epsilon )  \ge 1  - \tfrac{4K}{r^2}  - \tfrac{\epsilon^2}{2\pi}\log \epsilon  \right). 
\end{align}
\begin{calc}
As $\#\{ k \in \N_0^2 : |k|_\infty \le n\} = (n+1)^2$ for $n\in\N$, we have 
\begin{align*}
 \lim_{M\rightarrow \infty} \frac{\# \{ k\in \N_0^2 : |k|_\infty < M  \pm 1\}}{ M^2 }  
  =1.
\end{align*}
\end{calc}
Observe that there exists an $M>0$ such that for all $L,r,\epsilon>0$ with $\frac{L}{\epsilon r} >M$
\begin{align*}
\tfrac12 (\tfrac{L}{\epsilon  r})^2 \le \# \{ k\in \N_0^2 : |k|_\infty < \tfrac{L}{\epsilon r} \pm 1\} 
\le 2 (\tfrac{L}{\epsilon  r})^2. 
\end{align*}
\end{obs}

By combining the above observations we have obtained the following.

\begin{lemma}
\label{lemma:prob_estimate_from_above_large_box_by_smaller}
Let $K>0$ be as in Theorem \ref{theorem:lower_and_upper_bounds_white_noise}. Let $\beta >0$. 
There exists an $M>1$ such that for all $L,r,\epsilon>0$ 
with $\frac{L}{\epsilon} > Mr > r \ge 1$
\begin{align}
\label{eqn:p_sq_lambda_le_q}
\P\left( \epsilon^2 \blambda( Q_{L},\beta) \le 1  \right) 
& \le 
\P\left( 
\blambda( Q_{r}, \epsilon \beta ) \le 1  - \tfrac{\epsilon^2}{2\pi}\log \epsilon  \right)^{\frac12 \left( \frac{L }{  \epsilon r}  \right)^2 }, \\
\label{eqn:p_sq_lambda_qe_q}
\P\left( \epsilon ^2 \blambda( Q_{L},\beta) \ge  1  \right) 
& \le 
2 \left( \tfrac{L  }{\epsilon r}  \right)^2
\P\left( 
\blambda( Q_{\frac32 r}, \epsilon \beta )   \ge 1 - \tfrac{4K}{r^2}   - \tfrac{\epsilon^2}{2\pi}\log \epsilon \right).
\end{align}
\end{lemma}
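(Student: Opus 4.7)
The plan is to reproduce the chain of inequalities from the preceding observation, but for arbitrary $\beta>0$ rather than $\beta=1$. The first step is to apply the scaling identity in \ref{obs:scaling_and_translation}\ref{item:scaling} with scale parameter $\epsilon$, which gives
\begin{equation*}
\epsilon^2 \blambda(Q_L,\beta) \overset{d}{=} \blambda(Q_{L/\epsilon}, \epsilon\beta) + \tfrac{\epsilon^2}{2\pi}\log\epsilon.
\end{equation*}
So the events $\{\epsilon^2\blambda(Q_L,\beta)\le 1\}$ and $\{\epsilon^2\blambda(Q_L,\beta)\ge 1\}$ translate into events about the principal eigenvalue on the rescaled box $Q_{L/\epsilon}$ with noise strength $\epsilon\beta$, with the threshold shifted by $-\tfrac{\epsilon^2}{2\pi}\log\epsilon$.

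Next I would tile $Q_{L/\epsilon}$ by boxes of side $r$ (for the lower-tail bound) and by boxes of side $\tfrac32 r$ on the grid $rk$ (for the upper-tail bound), and invoke Theorem \ref{theorem:comparing_eigenvalues_different_boxes_summary} to sandwich $\blambda(Q_{L/\epsilon},\epsilon\beta)$ between a maximum of the principal eigenvalues on the small subboxes and a maximum over the slightly larger overlapping boxes plus the remainder $4K/r^2$. For the event $\{\blambda(Q_{L/\epsilon},\epsilon\beta)\le 1 - \tfrac{\epsilon^2}{2\pi}\log\epsilon\}$, the lower bound forces the maximum over disjoint translates to be at most that threshold, which is an intersection of events over $k$; since the boxes $rk+Q_r$ for distinct $k$ are disjoint up to boundary, \ref{obs:scaling_and_translation}\ref{item:translation} gives independence and identical distribution of the individual factors, so the probability factorises. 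For the upper-tail event, the upper bound in Theorem \ref{theorem:comparing_eigenvalues_different_boxes_summary} expresses it inside a union over $k$, and a union bound with translation invariance collapses the sum into a single probability times the number of boxes.

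Finally I would apply the counting estimate $\tfrac12(L/(\epsilon r))^2 \le \#\{k\in\N_0^2:|k|_\infty<L/(\epsilon r)\pm 1\} \le 2(L/(\epsilon r))^2$, valid once $L/(\epsilon r)$ exceeds some absolute constant $M$; this yields the exponent $\tfrac12(L/(\epsilon r))^2$ in \eqref{eqn:p_sq_lambda_le_q} and the prefactor $2(L/(\epsilon r))^2$ in \eqref{eqn:p_sq_lambda_qe_q}.

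The only delicate point is bookkeeping: one must be careful that the scaling turns $\beta$ into $\epsilon\beta$ in the right-hand probabilities (the shift $\tfrac{\epsilon^2}{2\pi}\log\epsilon$ and the $\epsilon$ dependence of $L/\epsilon$ do not depend on $\beta$, so the argument is essentially the $\beta=1$ computation with $\beta$ appearing only in the noise-strength parameter). No genuinely new estimate is needed beyond those already invoked in the observation preceding the lemma, so I do not anticipate a main obstacle apart from verifying that the hypothesis $L/\epsilon > Mr$ is exactly what is required to apply both the counting bound and the tilings that underlie Theorem \ref{theorem:comparing_eigenvalues_different_boxes_summary}.
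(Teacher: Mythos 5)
Your proposal reproduces the paper's own argument (as laid out in the observation immediately preceding the lemma): scaling by $\epsilon$ via \ref{obs:scaling_and_translation}\ref{item:scaling}, sandwiching $\blambda(Q_{L/\epsilon},\epsilon\beta)$ by max/min over subboxes via Theorem~\ref{theorem:comparing_eigenvalues_different_boxes_summary}, factorising/union-bounding via independence and translation invariance in \ref{obs:scaling_and_translation}\ref{item:translation}, and finally the crude counting estimate $\tfrac12(L/(\epsilon r))^2 \le \#\{k\in\N_0^2:|k|_\infty<L/(\epsilon r)\pm 1\}\le 2(L/(\epsilon r))^2$ once $L/(\epsilon r)>M$. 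This is exactly the paper's route; your remark that $\beta$ only rides along as the noise-strength parameter and does not interact with the scaling shift is the correct bookkeeping observation that lets the $\beta=1$ computation carry over verbatim.
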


\begin{obs}
Let $r>0$.
Let us now use the large deviation principle in Corollary \ref{cor:ldp_eigenvalues}.
First, observe that as $\lim_{\epsilon\downarrow 0} \tfrac{\epsilon^2}{2\pi}\log \epsilon =0$,  also $\blambda(Q_r, \epsilon \beta) +\tfrac{\epsilon^2}{2\pi}\log \epsilon$ satisfies the large deviation principle with the rate function $\beta^{-2} I_{r,n}$ (by exponential equivalence, see \cite[Theorem 4.2.13]{DeZe10}). 
Hence for all $\mu > \inf I_{r,n}(1,\infty)$ and $\kappa < \inf I_{\frac32 r,n}[1-\frac{4K}{r^2},\infty)$ there exists a $\epsilon_0$ such that for $\epsilon \in (0,\epsilon_0)$ we have the following bound on the probability appearing in \eqref{eqn:p_sq_lambda_le_q} (using that $1-x \le e^{-x}$ for $x\ge 0$):
\begin{align}
\label{eqn:ldp_bound_le}
& \P\left( 
\blambda( Q_{r}, \epsilon \beta ) \le 1 - \tfrac{\epsilon^2}{2\pi}\log \epsilon  \right) 
\le 1- e^{-\frac{\mu}{\epsilon^2 \beta^2 }} \le e^{- e^{-\frac{\mu}{\epsilon^2 \beta^2 }}},\\
\label{eqn:ldp_bound_ge}
& \P\left( 
\blambda( Q_{\frac32 r}, \epsilon \beta ) 
\ge 1 - \tfrac{4K}{r^2} - \tfrac{\epsilon^2}{2\pi}\log \epsilon  \right) 
\le e^{-\frac{\kappa}{\epsilon^2 \beta^2 }} .
\end{align}
\end{obs}

\begin{proof}[Proof of Theorem \ref{theorem:tail_bounds}]
This now follows by Lemma \ref{lemma:prob_estimate_from_above_large_box_by_smaller} and the bounds \eqref{eqn:ldp_bound_le} and \eqref{eqn:ldp_bound_ge}. 
\begin{calc}
We obtain 
\begin{align}
\notag 
& \forall r >0 
\ \forall \mu > \inf I_r(1,\infty)
\ \exists M >0 
\ \forall L,x> 0  \mbox{ with } \tfrac{L\sqrt{x}}{r}>M: \\
\label{eqn:tail_bound_le_x_for_integer_restriction}
& \P\left(  \blambda( Q_{L}) \le x  \right) 
 \le 
e^{- \frac{L^2 x}{ 2 r^2}  e^{-\mu x} }
= e^{- \frac{ x}{ 2 r^2}  e^{2\log L -\mu x} }
, \\
\notag & \forall L,r >0 
\ \forall \kappa < \inf I_{\frac32 r}[1- \tfrac{4K}{r^2},\infty)
\ \exists M >0 
\ \forall L,x> 0 \mbox{ with }  \tfrac{L\sqrt{x}}{r} >M: \\
\label{eqn:tail_bound_ge_x_for_integer_restriction}
& \P\left(  \blambda( Q_{L}) \ge x  \right) 
 \le 
2 \tfrac{L^2 x}{r^2} e^{-\kappa x}. 
\end{align}
\end{calc}
\end{proof}




First we prove the convergence of the eigenvalues along the set $\{2^m: m\in \N\}$, before proving Theorem \ref{theorem:as_convergence_all_eigenvalues}.
Observe that in Theorem \ref{theorem:as_convergence_eigenvalues_dyadics}, contrary to Theorem \ref{theorem:as_convergence_all_eigenvalues}, we do not impose a condition on the sequence $(y_m)_{m\in\N}$. 

\begin{theorem}
\label{theorem:as_convergence_eigenvalues_dyadics}
Let $n\in \N$ and $\beta>0$. 
For any sequence $(y_m)_{m\in \N}$ in $\R^2$. 
\begin{align*}
\lim_{ m \in \N, m\rightarrow \infty} \frac{ \blambda_n(y_m+Q_{2^m},\beta)}{\log 2^m} = \frac{2\beta^2 }{\varrho_1} 
= 4\beta^2 
 \sup_{ \overset{ V\in C_c^\infty(\R^2)}{ \|V\|_{L^2}^2 \le 1} } 
\sup_{ \overset{ \psi \in C_c^\infty(\R^2)  }{ \|\psi \|_{L^2}^2 =1} } 
\int_{\R^2} - | \nabla \psi|^2 + V \psi^2 
 \quad a.s.
\end{align*} \towil{not necessary to repeat the var formula here, possibly suggest by proofs AOP to take out}
\end{theorem}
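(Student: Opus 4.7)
The plan is to establish the claimed limit by proving matching $\limsup$ and $\liminf$ bounds separately, both by Borel--Cantelli applied along the dyadic subsequence $L = 2^m$, invoking the two tail estimates of Theorem \ref{theorem:tail_bounds}. Translation invariance (\ref{obs:scaling_and_translation}\ref{item:translation}) ensures that $\blambda_n(y_m+Q_{2^m},\beta)$ has the same distribution as $\blambda_n(Q_{2^m},\beta)$, so the shifts $y_m$ play no role at the level of probabilities; the independence for disjoint sub-boxes, also from \ref{obs:scaling_and_translation}\ref{item:translation}, will be needed only for the $\liminf$.

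For the \textbf{upper bound} I would fix $c > 2\beta^2/\varrho_1$ and pick $\kappa < \varrho_1$ with $\kappa c/\beta^2 > 2$. Theorem \ref{theorem:convergence_infima_rate_function} yields $\inf I_{\frac32 r}[1,\infty)\to\varrho_1$ as $r\to\infty$, so (with a mild continuity argument for the slight interval widening to $[1-16K/r^2,\infty)$) the admissibility condition on $\kappa$ in Theorem \ref{theorem:tail_bounds} can be met for some fixed $r$. Plugging $L = 2^m$ and $x = c\log 2^m$ into \eqref{eqn:tail_bound_ge_x} gives
\begin{align*}
\P(\blambda(Q_{2^m},\beta)\ge c\log 2^m) \le \frac{2cm\log 2}{r^2}\,2^{(2-\kappa c/\beta^2)m},
\end{align*}
which is summable in $m$. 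Borel--Cantelli combined with the monotonicity $\blambda_n \le \blambda_1 = \blambda$ then yields $\limsup_m \blambda_n(y_m+Q_{2^m},\beta)/(m\log 2) \le c$ almost surely for every such $c$.

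The \textbf{lower bound} is the more delicate step and is the main obstacle. Given $c < 2\beta^2/\varrho_1$, I would pick $\mu > \varrho_1$ with $\mu c/\beta^2 < 2$ and use Theorem \ref{theorem:convergence_infima_rate_function} to fix $r$ with $\mu > \inf I_r(1,\infty)$. To pass from $\blambda_n$ to the principal eigenvalue I would partition $y_m + Q_{2^m}$ into $\lceil\sqrt n\rceil^2$ disjoint translates of $Q_{L_m}$ with $L_m = 2^m/\lceil\sqrt n\rceil$ and select any $n$ of them, so that the ``moreover'' part of Theorem \ref{theorem:comparing_eigenvalues_different_boxes_summary} gives
\begin{align*}
\blambda_n(y_m+Q_{2^m},\beta) \ge \min_{i=1,\dots,n}\blambda(z_i+Q_{L_m},\beta).
\end{align*}
By translation invariance and disjointness the $n$ variables on the right are i.i.d.\ copies of $\blambda(Q_{L_m},\beta)$, so the probability that the minimum is $\le c\log 2^m$ is at most $n\,\P(\blambda(Q_{L_m},\beta)\le c\log 2^m)$. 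Applying \eqref{eqn:tail_bound_le_x} with $L = L_m$ and $x = c\log 2^m$, the key exponent $e^{2\log L_m - \mu c\log 2^m/\beta^2}$ equals $\lceil\sqrt n\rceil^{-2}\,2^{(2-\mu c/\beta^2)m}$ and grows exponentially in $m$, producing a doubly-exponentially small tail bound and giving summability. The core difficulty is the balance forced on $L_m$: one must keep the sub-boxes on the same order as $2^m$ (only a constant factor smaller) in order to preserve the positive exponent $2-\mu c/\beta^2$ after passing from $\blambda_n$ to independent copies of $\blambda_1$. This is precisely why the dyadic subsequence makes the Borel--Cantelli summability automatic without any interpolation, and why the stronger statement Theorem \ref{theorem:as_convergence_all_eigenvalues} (over arbitrary unbounded $\I$) will require the extra sandwiching via Theorem \ref{theorem:comparing_eigenvalues_different_boxes_summary} between successive dyadic scales.
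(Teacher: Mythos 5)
Your proposal is correct, and the upper bound is the same as the paper's. For the general-$n$ lower bound, however, you take a genuinely different route from the paper, and it is worth recording the contrast.

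The paper first proves the $n=1$ case by Borel--Cantelli (exactly as you do), and then for general $n$ does \emph{not} redo any tail estimates: it fixes $n$ points $x_1,\dots,x_n \in Q_{2^n}$ with $(x_i+Q_1)$ disjoint, observes that $2^m x_i + Q_{2^m}$ are disjoint translates of $Q_{2^m}$ inside $Q_{2^{n+m}}$, and applies the already-proved $n=1$ almost-sure convergence (which crucially was stated for arbitrary sequences $(y_m)$) to each of the $n$ shifted sequences $(2^m x_i)_m$. The deterministic comparison $\blambda_n(Q_{2^{n+m}}) \ge \min_i \blambda(2^m x_i + Q_{2^m})$ then gives the $\liminf$ immediately, with the dimension $n$ only appearing in the harmless ratio $\log 2^{n+m}/\log 2^m \to 1$. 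You instead run a fresh Borel--Cantelli argument for every $n$: you tile $y_m + Q_{2^m}$ into $\lceil\sqrt n\rceil^2$ translates of $Q_{L_m}$ with $L_m = 2^m/\lceil\sqrt n\rceil$, invoke the comparison theorem and the union bound, and then apply \eqref{eqn:tail_bound_le_x} at scale $L_m$. Your bookkeeping on the exponent $2\log L_m - \mu c\log 2^m/\beta^2 = (2-\mu c/\beta^2)m\log 2 - 2\log\lceil\sqrt n\rceil$ is correct; the constant offset $-2\log\lceil\sqrt n\rceil$ is immaterial and summability follows. What your approach buys is that it is self-contained in $n$ and makes transparent exactly how the choice of sub-box scale $L_m \asymp 2^m$ preserves the positive exponent; what it costs is having to re-derive the doubly-exponential tail. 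The paper's route is lighter precisely because it leans on the ``for any sequence $(y_m)$'' formulation of the $n=1$ statement, which your writeup did not quite exploit.

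Two small remarks. First, you claim the $n$ eigenvalues on the sub-boxes are i.i.d., but your estimate uses only the union bound, which needs only identical distribution (from translation invariance), not independence; the independence statement in \ref{obs:scaling_and_translation}\ref{item:translation} is actually only needed in the proof of Theorem~\ref{theorem:tail_bounds} itself, not in your $\liminf$ step. Second, a literal ``partition'' of $Q_{2^m}$ into closed sub-boxes gives interior-disjoint, not disjoint, pieces; this is harmless for the comparison theorem (which only needs support-disjoint test functions) but is worth flagging, as the paper's $n\ge 2$ construction sidesteps it by placing $n$ genuinely separated boxes in $Q_{2^n}$ rather than tiling.
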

\begin{proof}
Without loss of generality we may assume $y_m=0$ for all $m\in \N$ and take $\beta = 1$. 

$\bullet$ First we prove the convergence of the principal eigenvalue, i.e., we consider $n=1$. 
Let $p,q \in \R$ be such that $p<\frac{2}{\varrho_1}<q$. We show that 
\begin{align*}
\liminf_{m\rightarrow \infty} \frac{\blambda(Q_{2^m})}{\log 2^m} > p \quad \mbox{a.s.}, \qquad \qquad
\limsup_{m\rightarrow \infty} \frac{\blambda(Q_{2^m})}{\log 2^m} < q \quad \mbox{a.s.}
\end{align*}
By the lemma of Borel-Cantelli it is sufficient to show that 
\begin{align*}
& \sum_{m=1}^\infty  \P \left[ \frac{\blambda(Q_{2^m})}{\log 2^m} < p \right] < \infty, 
& \sum_{m=1}^\infty  \P \left[ \frac{\blambda(Q_{2^m})}{\log 2^m} > q \right] < \infty. 
\end{align*}
By Lemma \ref{lemma:infima_rate_function_translated_by_constant}
\begin{align*}
\lim_{r\rightarrow \infty} \inf I_{r} (1, \infty) 
= \lim_{r\rightarrow \infty} \inf I_{\frac32 r} [1 -\tfrac{16K}{r^2}, \infty) 
= \varrho_1.
\end{align*}
Let $r>0$ be large enough such that 
\begin{align*}
p \inf I_{r}(1,\infty) <2 < q \inf I_{\frac32 r} [1- \tfrac{16K}{r^2},\infty).
\end{align*}
Let $\mu > \inf I_{r}(1,\infty)$ be such that $p \mu <2$ and 
$\kappa < \inf I_{\frac32 r} [1- \frac{16K}{r^2},\infty) $ be such that $q \kappa >2$.
By Theorem \ref{theorem:tail_bounds} for $M \in \N$ large enough \towil{the two $8$'s should be $2$'s}
\begin{align*}
\sum_{m=M}^\infty  \P \left[ \frac{\blambda(Q_{2^m})}{\log 2^m }  < p \right] 
&\le \sum_{m=M}^\infty  2^{- m  \frac{p 2^{(2-p \mu)m } }{2r^2}   } <\infty, 
\end{align*}
which is finite because  $\frac{p 2^{(2-p \mu)m} }{8r^2}>1$ for large $m$, as $2-p\mu >0$. Also
\begin{align*}
\sum_{m=M}^\infty  \P \left[ \frac{\blambda(Q_{2^m})}{\log 2^m }  > q \right]
& \le 
\sum_{m=M}^\infty \frac{   2 m \log 2}{r^2} 2^{(2-\kappa q)m}, 
\end{align*}
which is  finite as $2-\kappa q< 0$ (and because $2^{-\alpha m} m \rightarrow 0$ for $\alpha>0$). 

$\bullet$ Let $n\in\N$. 
Let us first observe that as $\blambda_n(Q_{2^m})\le \blambda(Q_{2^m})$, we have  $\limsup_{ m \rightarrow \infty} \frac{ \blambda_n(Q_{2^m})}{\log 2^m} \le  \frac{2}{\varrho_1} $. 
Let $x_1,\dots,x_n \in Q_{2^n}$ be such that $(x_i + Q_1)_{i=1}^n$ are disjoint. By 
Theorem \ref{theorem:comparing_eigenvalues_different_boxes_summary} we obtain almost surely
\begin{align*}
\liminf_{m\rightarrow \infty} \frac{\blambda_n(Q_{2^{n+m}})}{\log 2^{n+m} } 
\ge 
\min_{i\in \{1,\dots,n\}}
\lim_{m\rightarrow \infty} 
\frac{\blambda(2^m x_i + Q_{2^m})}{\log 2^{n} +\log 2^{m} }  = \frac{2}{\varrho_1}.
\end{align*}
\end{proof}

\begin{proof}[Proof of Theorem \ref{theorem:as_convergence_all_eigenvalues}]
The condition on $y_L$ is assumed in order to have the monotonicity of $L\mapsto \blambda_n(y_L)$ on $\I$. 
Therefore and for convenience, we  assume $y_L=0$ for all $L \in \I$.
Also we take $\beta =1$. 
Write $s= \frac{2}{\varrho_1}$. 
Let $\epsilon \in (0,s)$. 
By Theorem \ref{theorem:as_convergence_eigenvalues_dyadics} there exists an $M$ such that for all $m\ge M$ 
\begin{align*}
(\log  2^m) ( s- \epsilon) 
\le  \blambda_n(Q_{2^m})
\le (\log 2^m) (s+\epsilon) \qquad \mbox{a.s.}
\end{align*}
Let $a\in [1,2]$, then almost surely, as $L \mapsto \blambda_n( Q_L)$ is an increasing function 
\begin{align*}
 (\log a 2^{m-1}) (s- \epsilon) 
& \le \blambda_n(Q_{2^m}) \le \blambda_n ( Q_{a2^m}) \le \blambda_n( Q_{2^{m+1}}) 
 \le (\log a 2^{m+1})( s+ \epsilon), 
\end{align*}
and 
\begin{align*}
\left( 1- \frac{\log 2}{\log (2^m)}\right) (s- \epsilon)
& \le 
\left( 1- \frac{\log 2}{\log (a2^m)}\right) (s- \epsilon) \\
& \le 
\frac{ \blambda_n(Q_{a 2^m})}{ \log (a 2^m)}
\le \left(1+ \frac{\log 2}{\log (2^{m})}\right) (s+\epsilon). 
\end{align*}
From this it follows that almost surely 
\bfold
\lim_{L \in \I , L\rightarrow \infty} \frac{ \blambda_n (Q_{L})}{ \log (L)} =s .
\efold
\end{proof}

\section{Dirichlet and Neumann Besov spaces, para- and resonance  products}
\label{section:dirichlet_besov}

Let $d\in \N$. 
Let $L>0$. 
We will first introduce Dirichlet and Neumann spaces on $Q_L=[0,L]^d$. 
In order to do this we use 3 different bases of $L^2([0,L]^d)$, one standard (the $e_k$'s), one as an underlying basis for Dirichlet spaces (the $\fd_k$'s) and one as an underlying basis for Neumann spaces (the $\fn_k$'s). 
After defining these spaces (in Definition \ref{def:Dir_and_Neu_Besov}) we prove a few results that compare Besov and Sobolev spaces. 
Later, in Definition \ref{def:spaces_on_general_boxes} we show how to generalize this to spaces on general boxes of the form $\prod_{i=1}^d [a_i, b_i]$. 
Then we present bounds on Fourier multipliers (Theorem \ref{theorem:schauder}) and define para- and resonance products (Definition \ref{definition:paraproducts_odd_with_even}) and state their Bony estimates (Theorem \ref{theorem:bony_estimates}). 

In the following we will introduce some notation. 
For $\fq \in \{-1,1\}^d$ and $x\in \R^d$ we use the following short hand notation ($\fq \circ x$ is known as the Hadamard product) 
\begin{align*}
(\prod \fq) = \prod_{i=1}^d \fq_i, \qquad \fq\circ x = (\fq_1 x_1 ,\dots, \fq_d x_d).
\end{align*}
We call a function $f: [-L,L]^d \rightarrow \C$ \emph{odd} if $f(x) = (\prod \fq) f( \fq \circ x) $ for all $\fq\in \{-1,1\}^d$, and similarly we call $f$ \emph{even} if $f(x) = f(\fq \circ x)$ for all $\fq \in \{-1,1\}^d$. 
For any $f: [0,L]^d \rightarrow \C$ 
  we write $\tilde f: [-L,L]^d \rightarrow \C$ for its odd extension 
(the $\sim$ notation is taken as it looks like the graph of an odd function)  
  and $\overline f : [-L,L]^d \rightarrow \C$ for its even extension (similarly, the notation -- is taken as it looks like the graph of an even function), i.e., for the functions that satisfy 
\begin{align*}
\tilde f (\fq \circ x) = (\prod \fq) f(x), \quad \overline f (\fq \circ x ) = f(x) \quad \mbox{ for all } x\in [0,L]^d, \fq\in \{-1,1\}^d. 
\end{align*}


If a function $f: [-L,L]^d \rightarrow \C$ is \emph{periodic}, 
which means that $f(y,L) = f(y,-L)$ and $f(L,y)= f(-L,y)$ for all $y \in [-L,L]$, 
then it can be extended periodically on $\R^d$ (with period $2L$) we will also consider it to be a function on the domain $\T_{2L}^d$. 
Note that if $f$ is periodic and odd, then $f=0$ on $\partial [0,L]^d$. 
\begin{calc}
Indeed, as $f$ is odd we have $f(x_1,x_2) =-  f( -x_1, x_2)$ from which it follows that $f(x_1,x_2)=0$ in case $x_1=0$. 
As $f$ is periodic, we have $f(L,x_2) = f(-L,x_2)$ so that combined with the above rule we see that $f(x_1,x_2)=0$ also in case $x_1=-L$ or $x_1=L$. 
\end{calc}

\bigskip

For $k = (k_1,\dots, k_d) \in \N_0^d$ let $\nu_k = 2^{-\frac12 \# \{ i : k_i =0\} }$ and write $\fd_{k,L}$ and $\fn_{k,L}$ or simply $\fd_k$ and $\fn_k$ for the functions $ [0,L]^d \rightarrow \C$ and
$e_{k,2L}$ or simply $e_k$ for the function $[-L,L]^d \rightarrow \C$ given by
\begin{align}
\label{eqn:formula_d_k}
\fd_{k,L}(x) & = \fd_k(x) = (\tfrac{2}{L})^{\frac{d}{2}} \prod_{i=1}^d \sin( \tfrac{\pi}{L} k_i x_i ), \\
\label{eqn:formula_f_k}
\fn_{k,L}(x) & = \fn_k(x) = \nu_k (\tfrac{2}{L})^{\frac{d}{2}} \prod_{i=1}^d \cos( \tfrac{\pi}{L} k_i x_i ), \\
e_{k,2L}(x) &= e_k(x) = (\tfrac{1}{2L})^{\frac{d}{2}} e^{\frac{\pi \i}{L} \langle k, x \rangle }. 
\end{align}
Note that $\tilde \fd_k(x)$ equals the right-hand side of \eqref{eqn:formula_d_k} and $\overline \fn_k(x)$ equals the right-hand side of \eqref{eqn:formula_f_k} for $x\in [-L,L]^d$, 
so that $\tilde \fd_k$ and $\overline \fn_k$ are elements of $C^\infty(\T_{2L}^d)$. 
We can also write $\tilde \fd_k$ and $\overline \fn_k$ as follows 
\begin{align}
\label{eqn:d_k_in_terms_of_e_k}
\tilde \fd_k(x) &=  (\tfrac{2}{L})^{\frac{d}{2}} \prod_{i=1}^d \frac{e^{ \frac{\pi \i }{L} k_i x_i } - e^{ -  \frac{\pi \i }{L} k_i x_i }}{2\i }
 =  (-\i)^d 
\sum_{\fq\in \{-1,1\}^d}   (\prod \fq) e_{\fq\circ k}(x), \\
\label{eqn:f_k_in_terms_of_e_k}
\overline \fn_k(x) &= \nu_k  (\tfrac{2}{L})^{\frac{d}{2}} \prod_{i=1}^d \frac{e^{ \frac{\pi \i }{L} k_i x_i } + e^{ -  \frac{\pi \i }{L} k_i x_i }}{2}
 = \nu_k
\sum_{\fq\in \{-1,1\}^d}    e_{\fq\circ k}(x).
\end{align}
\begin{calc}
\begin{align*}
(\tfrac{2}{L})^{\frac{d}{2}} \prod_{i=1}^d \frac{e^{ \frac{\pi \i }{L} k_i x_i } - e^{ -  \frac{\pi \i }{L} k_i x_i }}{2\i }
= (\tfrac{1}{2L})^{\frac{d}{2}} \prod_{i=1}^d \sum_{q_i \in \{-1,1\}} \frac{q_i}{\i }  e^{ \frac{\pi \i }{L} q_i k_i x_i } . 
\end{align*}
\end{calc}
For an integrable function $f: \T_{2L}^d \rightarrow \C$ its $k$-th Fourier coefficient is defined by 
\begin{align*}
 \cF f(k) = \langle f, e_k \rangle 
=  \frac{1}{(2L)^\frac{d}{2}} \int_{\T_{2L}^d} f(x) e^{- \frac{\pi \i }{L} \langle  k, x \rangle } \dd x \qquad (k\in\Z^d). 
\end{align*}

\begin{obs}
\label{obs:odd_and_even_functions_and_their_fourier_transform_and_inner_prod}
It is not difficult to see that for $\varphi ,\psi \in L^2([0,L]^d)$, the following equalities hold:
\begin{align}
\label{eqn:hat_of_tilde_is_odd}
& \cF(\tilde \varphi)(k) = (\prod \fq) \cF(\tilde \varphi)( \fq \circ k ) \mbox{ for all } k\in \Z^d, \fq\in \{-1,1\}^d, \\
\label{eqn:hat_of_tilde_on_boundary_zero}
& \cF( {\tilde  \varphi})(k) =0 \mbox{ for all } k \in \Z^d \mbox{ with } k_i=0 \mbox{ for some } i, \\
\label{eqn:hat_of_overline_is_even}
& \cF(\overline \varphi)(k) =  \cF(\overline \varphi)( \fq \circ k ) \mbox{ for all } k\in \Z^d, \fq\in \{-1,1\}^d, \\
&
\label{eqn:inner_product_of_tildes_vs_without_and_with_overline}
\langle \tilde \varphi, \tilde \psi \rangle_{L^2[-L,L]^d} = 2^d \langle \varphi, \psi \rangle_{L^2[0,L]^d} = 
\langle \overline \varphi, \overline \psi \rangle_{L^2[-L,L]^d} 
, \\
& 
\label{eqn:innerprod_d_k_versus_e_k}
\langle \varphi,\fd_k \rangle 
\begin{calc2} 
= \frac{1}{2^d} \langle \tilde \varphi , \tilde \fd_k \rangle  
= 
\Big(\frac{\i}{2} \Big)^{d} 
\sum_{\fq\in \{-1,1\}^d}   (\prod \fq) \cF( \tilde \varphi)(\fq\circ k) 
\end{calc2}
= \i^{d} \cF(\tilde \varphi)(k) \mbox{ for all } k \in \N^d, \\
& 
\label{eqn:innerprod_f_k_versus_e_k}
\langle \varphi, \fn_k \rangle 
\begin{calc2}
=  \frac{1}{2^d}\langle \overline \varphi , \overline \fn_k \rangle  
= 
 \frac{1}{2^d}
\sum_{\fq\in \{-1,1\}^d}  \cF( \overline \varphi)(\fq\circ k) 
\end{calc2}
= \cF(\overline \varphi)(k) \mbox{ for all } k \in \N_0^d.
\end{align}
\end{obs}

\begin{obs}
\label{obs:derivatives_and_fourier_transforms}
By partial integration one obtains that 
\bfold 
\cF( \partial^\alpha f) (k) = ( \tfrac{\pi \i }{L} k )^{\alpha} \cF(f)(k). 
\efold 
So that 
$
\cF( \Delta f) (k) = - |\tfrac{\pi }{L}k |^2 \cF(f) (k). 
$ 
Consequently 
$\langle \Delta f, \fd_k \rangle = - |\tfrac{\pi }{L}k |^2  \langle f, \fd_k \rangle$ and 
$\langle \Delta f, \fn_k \rangle = - |\tfrac{\pi }{L}k |^2  \langle f, \fn_k \rangle$. 
This will be used later to define $(a- \Delta)^{-1}$ for $a\in \R\setminus \{0\}$. 

 \begin{calc}
 Indeed, by partial integration one has 
\begin{align*}
\cF( \partial_i f) (k) 
\cand \begin{calc} =
\frac{1}{(2L)^{\frac{d}{2}}} \int_{\T_{2L}^d} e^{-\frac{\pi \i }{L} \langle k, x \rangle} \partial_i f(x) \dd x
\end{calc}\cnewline
\cand \begin{calc} =
- \frac{1}{(2L)^{\frac{d}{2}}} \int_{\T_{2L}^d}   f(x) \partial_i
e^{-\frac{\pi \i }{L} \langle k, x \rangle}
 \dd x
\end{calc}\cnewline
\cand \begin{calc} =
\frac{\pi \i }{L} k_i \frac{1}{(2L)^{\frac{d}{2}}} \int_{\T_{2L}^d}   f(x) 
e^{-\frac{\pi \i }{L} \langle k, x \rangle}
 \dd x.
\end{calc}
\end{align*}
\end{calc}

Moreover, from this one obtains that the spectrum of $-\Delta$ is given by $\{ \frac{\pi^2}{L^2} |k|^2 : k \in \Z^d\}$ and that every $e_k$ is an eigenvector.

\end{obs}

\begin{lemma}
\label{lemma:basis_d_k_and_f_k}
$\{ \fd_k : k \in \N^d\}$ and $\{ \fn_k : k\in \N_0^d\}$ form orthonormal bases for $L^2([0,L]^d)$. 
\end{lemma}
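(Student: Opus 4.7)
The plan is to reduce both statements to the well-known fact that $\{e_k : k \in \Z^d\}$ is an orthonormal basis of $L^2(\T_{2L}^d)$, exploiting the odd/even extensions $\tilde{\,\cdot\,}$ and $\overline{\,\cdot\,}$ together with the identities already recorded in Observation \ref{obs:odd_and_even_functions_and_their_fourier_transform_and_inner_prod}.

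For orthonormality of $\{\fd_k : k \in \N^d\}$, I would combine \eqref{eqn:inner_product_of_tildes_vs_without_and_with_overline} with the expansion \eqref{eqn:d_k_in_terms_of_e_k}: for $k, l \in \N^d$, $\langle \fd_k, \fd_l \rangle_{L^2([0,L]^d)} = 2^{-d}\langle \tilde{\fd}_k, \tilde{\fd}_l \rangle_{L^2([-L,L]^d)}$, and expanding each $\tilde{\fd}_k, \tilde{\fd}_l$ as a signed sum over $\{-1,1\}^d$ of the $e_{\fq\circ k}$, the orthonormality of $\{e_m\}_{m \in \Z^d}$ leaves only the diagonal terms $\fq\circ k = \fq' \circ l$, which (since $k, l \in \N^d$) force $\fq = \fq'$ and $k = l$. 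A short bookkeeping of the $(-\i)^d \cdot (\prod \fq)$ prefactors then gives $\langle \fd_k, \fd_l \rangle = \delta_{k,l}$. The analogous calculation for $\{\fn_k\}_{k \in \N_0^d}$ is essentially the same, with the factor $\nu_k$ introduced precisely to compensate for the fact that coordinates $k_i = 0$ give fewer distinct signs $\fq$ in the sum \eqref{eqn:f_k_in_terms_of_e_k}.

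For totality, I would argue by contradiction. Suppose $f \in L^2([0,L]^d)$ satisfies $\langle f, \fd_k \rangle = 0$ for every $k \in \N^d$. By \eqref{eqn:innerprod_d_k_versus_e_k} this means $\cF(\tilde f)(k) = 0$ for all $k \in \N^d$. Because $\tilde f$ is an odd extension, \eqref{eqn:hat_of_tilde_on_boundary_zero} gives $\cF(\tilde f)(k) = 0$ whenever some $k_i = 0$, and the oddness symmetry \eqref{eqn:hat_of_tilde_is_odd} propagates the vanishing from $\N^d$ to all of $\Z^d$. Since $\{e_k\}_{k \in \Z^d}$ is a basis of $L^2(\T_{2L}^d)$, $\tilde f = 0$ in $L^2([-L,L]^d)$, hence $f = 0$ in $L^2([0,L]^d)$. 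For $\{\fn_k\}$ the analogous argument uses \eqref{eqn:innerprod_f_k_versus_e_k} and the even symmetry \eqref{eqn:hat_of_overline_is_even} (no boundary-vanishing condition is needed this time).

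There is no serious obstacle; the whole proof is essentially bookkeeping. The only thing to watch is keeping the normalization constants straight: the $(\tfrac{2}{L})^{d/2}$ factors in \eqref{eqn:formula_d_k}--\eqref{eqn:formula_f_k} versus the $(\tfrac{1}{2L})^{d/2}$ in $e_k$, the factor $2^d$ relating inner products of $f$ and $\tilde f$ in \eqref{eqn:inner_product_of_tildes_vs_without_and_with_overline}, and the weights $\nu_k$ in the Neumann case which exactly account for the degeneracy of the sum over $\fq \in \{-1,1\}^d$ when some coordinates of $k$ vanish.
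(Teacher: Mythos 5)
Your proof is correct and takes essentially the same route as the paper: reduce to the orthonormal basis $\{e_k\}_{k\in\Z^d}$ of $L^2(\T_{2L}^d)$ via the odd/even extensions, using the identities in Observation \ref{obs:odd_and_even_functions_and_their_fourier_transform_and_inner_prod} and the expansions \eqref{eqn:d_k_in_terms_of_e_k}--\eqref{eqn:f_k_in_terms_of_e_k}. The only cosmetic difference is in the totality step: you show that orthogonality to every $\fd_k$ (resp.\ $\fn_k$) forces $f=0$, whereas the paper directly exhibits $\tilde\varphi = \sum_{k\in\N^d}\langle\varphi,\fd_k\rangle\,\tilde\fd_k$ (resp.\ the even analogue) by regrouping the $e_k$-expansion of $\tilde\varphi$ using \eqref{eqn:hat_of_tilde_is_odd} and \eqref{eqn:hat_of_tilde_on_boundary_zero}; the two are dual formulations of the same completeness argument and rest on exactly the same identities.
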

\begin{proof}
We leave it to the reader to check that those sets are orthonormal. 
Let $\varphi \in L^2([0,L]^d)$. 
By expressing $\tilde \varphi$ and $\overline \varphi$ in terms of the basis $\{e_k: k\in \Z^d\}$ and using \ref{obs:odd_and_even_functions_and_their_fourier_transform_and_inner_prod} one obtains $\tilde \varphi = \sum_{k\in \N^d} \langle  \varphi ,  \fd_k \rangle_{L^2[0,L]^2}  \tilde \fd_k$ and $\overline \varphi = \sum_{k\in \N_0^d} \langle  \varphi,  \fn_k \rangle_{L^2[0,L]^2} \overline \fn_k $. 
\end{proof}

\begin{calc}
Indeed 
\begin{align*}
\sum_{k\in \N^d} \langle  \varphi ,  \fd_k \rangle_{L^2[0,L]^2}  \tilde \fd_k
& = 
\sum_{k\in \N^d} \i^d \cF( \tilde \varphi) (k)  \tilde \fd_k \\
& = 
\sum_{k\in \N^d} \i^d (-\i)^d \sum_{\fq\in \{-1,1\}^d}   (\prod \fq) \cF( \tilde \varphi) (k)   e_{\fq\circ k}(x) \\
& = 
\sum_{k\in \N^d} \sum_{\fq\in \{-1,1\}^d}  \cF( \tilde \varphi) (\fq \circ k)   e_{\fq\circ k}(x) \\
& = 
\sum_{k\in \Z^d}  \cF( \tilde \varphi) (k)   e_{k}(x) 
= \tilde \varphi(x). 
\end{align*}
For the orthonormality, for $k, l \in \N^d$
\begin{align*}
2^d \langle \fd_k , \fd_l \rangle_{L^2([0,L]^d)}
= \langle \tilde \fd_k , \tilde \fd_l \rangle_{L^2(\T_{2L}^d)}
= \sum_{\fq,\fp \in \{-1,1\}^d} (\prod \fq \fp) \langle e_{\fq \circ k} , e_{\fp \circ l } \rangle 
= 
\begin{cases}
0 & k \ne l ,\\
2^d & k = l .
\end{cases}
\end{align*}
For $k,l \in \N_0^d$
\begin{align*}
2^d \langle \fn_k , \fn_l \rangle_{L^2([0,L]^d)}
= \langle \overline \fn_k , \overline \fn_l \rangle_{L^2(\T_{2L}^d)} 
& = \sum_{\fq,\fp \in \{-1,1\}^d} \langle e_{\fq \circ k} , e_{\fp \circ l } \rangle \\
& = 
\begin{cases}
0 & k \ne l, \\
2^d 2^{\# \{ i : k_i =0\}}  & k = l .
\end{cases}
\end{align*}
\end{calc}

\begin{definition}
We define the set of test functions on $[0,L]^d$ that oddly and evenly extend to smooth functions on $\T_{2L}^d$ (here $\cS(\T_{2L}^d) = C^\infty (\T_{2L}^d)$): 
\begin{align*}
\cS_0([0,L]^d)& := \{ \varphi \in C^\infty ([0,L]^d) : \tilde \varphi \in \cS(\T_{2L}^d)\}, \\
\cS_\fn([0,L]^d)& := \{ \varphi \in C^\infty ([0,L]^d) : \overline \varphi \in \cS(\T_{2L}^d)\}. 
\end{align*}
We equip $\cS_0([0,L]^d)$, $\cS_\fn([0,L]^d)$ and $\cS(\T_{2L}^d)$ with the Schwarz--seminorms. 
\begin{calc}
The Schwarz-seminorms $\|\cdot\|_{k,\cS}$ for $k\in \N_0$ are defined by 
\begin{align}
\label{eqn:schwarz_k_norm_without_square}
\|f\|_{k,\cS} := \sum_{\alpha: |\alpha|\le k} \sup_{x\in \R^d} |(1+|x|)^k D^{\alpha} f(x)| <\infty. 
\end{align}
\end{calc}
Note that\footnote{For the notation see Section \ref{subsection:notation}.} $C_c^\infty([0,L]^d)$ is a subset of both  $\cS_0([0,L]^d)$ and $\cS_\fn([0,L]^d)$.
\end{definition}

In the following theorem we state how one can represent elements of $\cS$, $\cS_0$ and $\cS_\fn$ and of $\cS'$, $\cS_0'$ and $\cS_\fn'$ in terms of series in terms of $e_k$, $\fd_k$ and $\fn_k$. 

\begin{theorem}
\label{theorem:representation_schwarz_func_and_distr}
\begin{enumerate}
\item 
\label{item:representation_of_functions_in_schwarz_spaces}
Every $\omega \in \cS(\T_{2L}^d)$, $\varphi \in \cS_0([0,L]^d)$ and $\psi \in \cS_\fn([0,L]^d)$ can be represented by 
\begin{align}
\label{eqn:rep_cS_0_function}
\omega = \sum_{k\in \Z^d} a_k e_k,
\quad 
\varphi = \sum_{k\in \N^d} b_k \fd_k, 
\quad 
\psi = \sum_{k\in \N_0^d} c_k \fn_k, 
\end{align}
where $(a_k)_{k\in \Z^d}$, $(b_k)_{k\in \N^d}$ and $(c_k)_{k\in \N_0^d}$ in $\C$ are such that 
\begin{align}
\label{eqn:growth_coefficients}
\forall n\in \N: \ 
\sup_{k\in\Z^d} (1+|k|)^n |a_k| < \infty, \ 
\sup_{k\in\N^d} (1+|k|)^n |b_k| < \infty, \ 
\sup_{k\in\N_0^d} (1+|k|)^n |c_k| < \infty , 
\end{align}
and $a_k = \langle \omega, e_k \rangle$, $b_k = \langle \varphi, \fd_k \rangle$ and $c_k = \langle \psi, \fn_k \rangle$. 

Conversely, if $(a_k)_{k\in \Z^d}$,  $(b_k)_{k\in \N^d}$ and $(c_k)_{k\in \N_0^d}$ satisfy \eqref{eqn:growth_coefficients} then $\sum_{k\in\Z^d} a_k e_k$, 
 $\sum_{k\in\N^d} b_k \fd_k$ and $\sum_{k\in\N_0^d} c_k \fn_k$ converge in $ \cS(\T_{2L}^d)$, $\cS_0([0,L]^d)$ and $\cS_\fn([0,L]^d)$, respectively. 

\item 
\label{item:representation_of_distributions_on_schwarz_spaces}
Every $w\in \cS'(\T_{2L}^d)$, $u\in \cS_0'([0,L]^d)$ and $v\in \cS_\fn'([0,L]^d)$ can be represented by 
\begin{align}
\label{eqn:rep_cS_0_distribution}
w = \sum_{k\in \Z^d} a_k e_k ,
\quad 
u = \sum_{k\in \N^d} b_k \fd_k, 
\quad 
v = \sum_{k\in \N_0^d} c_k \fn_k, 
\end{align}
where $(a_k)_{k\in \Z^d}$, $(b_k)_{k\in \N^d}$ and $(c_k)_{k\in \N_0^d}$ in $\C$ are such that 
\begin{align}
\label{eqn:growth_coefficients_distri}
\exists n\in \N:  \
\sup_{k\in\Z^d} \frac{ |a_k|}{ (1+|k|)^n}< \infty, \ 
\sup_{k\in\N^d}  \frac{|b_k|}{ (1+|k|)^n} < \infty, \ 
\sup_{k\in\N_0^d} \frac{|c_k|}{ (1+|k|)^n} < \infty , 
\end{align}
and $a_k = \langle w, e_k \rangle$, $b_k = \langle u, \fd_k \rangle$ and $c_k = \langle v, \fn_k \rangle$. 

Conversely, if $(a_k)_{k\in \Z^d}$, $(b_k)_{k\in \N^d}$ and $(c_k)_{k\in \N_0^d}$ satisfy \eqref{eqn:growth_coefficients_distri} then $\sum_{k\in\Z^d} a_k e_k$, $\sum_{k\in\N^d} b_k \fd_k$ and $\sum_{k\in\N_0^d} c_k \fn_k$ converge in $ \cS'(\T_{2L}^d)$, $\cS_0'([0,L]^d)$ and $\cS_\fn'([0,L]^d)$, respectively.

\end{enumerate}
\end{theorem}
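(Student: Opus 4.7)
The plan is to reduce everything to classical Fourier analysis on the torus $\T_{2L}^d$ via the odd and even extension maps $\varphi\mapsto \tilde\varphi$ and $\psi\mapsto \overline\psi$, which I expect will identify $\cS_0([0,L]^d)$ and $\cS_\fn([0,L]^d)$ topologically with the closed subspaces of odd and even functions in $\cS(\T_{2L}^d)$.

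For the torus case of part (a), standard Fourier analysis applies: integration by parts together with \ref{obs:derivatives_and_fourier_transforms} yields $(\tfrac{\pi\i}{L}k)^\alpha \langle \omega, e_k\rangle = \langle \partial^\alpha \omega, e_k\rangle$, so rapid decay of the Fourier coefficients follows from the Schwarz bounds on $\partial^\alpha \omega$, and the converse follows from the Weierstrass $M$-test applied termwise to derivatives of $\sum a_k e_k$. The Dirichlet and Neumann statements then follow by combining \eqref{eqn:innerprod_d_k_versus_e_k}--\eqref{eqn:innerprod_f_k_versus_e_k} (which identify $\langle\varphi,\fd_k\rangle$ and $\langle\psi,\fn_k\rangle$ with $\cF(\tilde\varphi)(k)$ and $\cF(\overline\psi)(k)$ up to explicit constants) with the symmetry properties \eqref{eqn:hat_of_tilde_is_odd}--\eqref{eqn:hat_of_overline_is_even} that recover the full Fourier series on $\Z^d$ from coefficients indexed by $\N^d$ or $\N_0^d$. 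Conversely, a series of $\fd_k$ or $\fn_k$ with rapidly decaying coefficients extends by odd/even symmetrization to a series in $e_k$ satisfying the torus hypothesis, so it converges in $\cS(\T_{2L}^d)$; each partial sum is odd (resp.\ even), the limit inherits this symmetry, and restriction to $[0,L]^d$ lands in $\cS_0$ (resp.\ $\cS_\fn$).

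For part (b), the torus case is again standard: continuity of $w\in\cS'(\T_{2L}^d)$ gives $|\langle w, e_k\rangle|\lesssim \|e_k\|_{n,\cS}$ for some fixed $n$, and a direct polynomial bound on $\|e_k\|_{n,\cS}$ in $|k|$ yields \eqref{eqn:growth_coefficients_distri}; conversely, polynomially bounded coefficients pair absolutely with the rapidly decaying coefficients of any test function from part (a), defining a continuous linear functional. I would handle the Dirichlet and Neumann cases either directly (testing $u\in\cS_0'$ against $\fd_k$, which has Schwarz seminorm polynomial in $k$ by \eqref{eqn:formula_d_k}) or by duality through the extension map, transferring $u\in\cS_0'$ to the odd $\tilde u\in\cS'(\T_{2L}^d)$ determined by $\langle\tilde u,\tilde\varphi\rangle=2^d\langle u,\varphi\rangle$, using \eqref{eqn:inner_product_of_tildes_vs_without_and_with_overline}, and then invoking the torus case.

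The main obstacle is the topological step: showing that $\varphi\mapsto\tilde\varphi$ is a homeomorphism of $\cS_0([0,L]^d)$ onto the odd subspace of $\cS(\T_{2L}^d)$ (and similarly for $\overline{\cdot}$). Continuity in one direction is built into the definition of $\cS_0$, but one must verify that the Schwarz seminorms of $\tilde\varphi$ on $\T_{2L}^d$ control those of $\varphi$ on $[0,L]^d$ uniformly — which amounts to checking that derivatives of $\tilde\varphi$ match one-sided derivatives of $\varphi$ across the boundary, forcing exactly the compatibility conditions that the definition of $\cS_0$ encodes. Once this identification is in place, all remaining claims reduce to bookkeeping using the coefficient symmetries collected in \ref{obs:odd_and_even_functions_and_their_fourier_transform_and_inner_prod}.
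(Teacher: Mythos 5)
Your proposal is correct and follows essentially the same strategy as the paper: prove the torus case, then transfer via the odd/even extension maps using the symmetry identities of \ref{obs:odd_and_even_functions_and_their_fourier_transform_and_inner_prod} and \eqref{eqn:d_k_in_terms_of_e_k}--\eqref{eqn:f_k_in_terms_of_e_k}, with part (b) following by duality. The ``main obstacle'' you raise at the end is actually not one: by definition $\varphi\in\cS_0([0,L]^d)$ means $\tilde\varphi\in\cS(\T_{2L}^d)$, and on the compact sets $[0,L]^d$ and $\T_{2L}^d$ the polynomial weight in the Schwarz seminorm is bounded above and below, while $\sup_{\T_{2L}^d}|D^\alpha\tilde\varphi|=\sup_{[0,L]^d}|D^\alpha\varphi|$ because $\tilde\varphi$ is built from $\varphi$ by sign-changing reflections; hence the extension maps are seminorm-preserving onto their images and no further work is required.
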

\begin{proof}
Let $\omega \in \cS(\T_{2L}^d)$. 
As one has the relation $ \cF ( \Delta^n   \omega)(k) =  ( - \frac{\pi^2  }{L^2} |k|^2)^{n}  \cF(  \omega)(k)$ for all $n\in\N_0$,
we have \eqref{eqn:growth_coefficients} and 
$\sum_{k\in \Z^d: |k|\le N} \cF( \omega)(k) e_k \xrightarrow{N\rightarrow \infty}  \omega $ in $\cS(\T_{2L}^d)$, see also
\cite[Corollary 2.2.4]{StSh03}. 

Let $\varphi \in \cS_0([0,L]^d)$. 
Using the shown convergence above for $\omega = \tilde \varphi$, by
\eqref{eqn:d_k_in_terms_of_e_k}, \eqref{eqn:hat_of_tilde_is_odd},   \eqref{eqn:hat_of_tilde_on_boundary_zero} and \eqref{eqn:innerprod_d_k_versus_e_k}
\begin{align*}
\sum_{\substack{ k\in\Z^d \\ |k| \le N}} \cF(\tilde \varphi)(k) e_k
& = \sum_{\substack{ k\in\N^d \\ |k| \le N}} \sum_{\fq \in \{-1,1\}^d} \cF(\tilde \varphi)( \fq \circ k ) e_{\fq\circ k } 
= \sum_{\substack{ k\in\N^d \\ |k| \le N}} \langle \varphi, \fd_k \rangle \tilde \fd_k.
\end{align*}
Hence 
 $\sum_{k\in\N^d: |k| \le N} \langle \varphi, \fd_k \rangle \fd_k $ converges to $\varphi$ in $\cS_0([0,L]^d)$. 

Let $\psi \in \cS_\fn([0,L]^d)$. 
Using the shown convergence above for $\overline \psi$, by  \eqref{eqn:f_k_in_terms_of_e_k}, \eqref{eqn:hat_of_overline_is_even} and \eqref{eqn:innerprod_f_k_versus_e_k}
\begin{align*}
\sum_{\substack{ k\in\Z^d \\ |k| \le N}} \cF(\overline \psi )(k) e_k
& = \sum_{{\substack{ k\in\N_0^d \\ |k| \le N}}} 2^{-\# \{ i : k_i = 0\}} \sum_{\fq \in \{-1,1\}^d} \cF(\overline \psi)( \fq \circ k ) e_{\fq\circ k } 
= \sum_{\substack{ k\in\N_0^d \\ |k| \le N}} c_k \overline \fn_k.
\end{align*}
Hence 
 $\sum_{k\in\N^d: |k| \le N} \langle \psi, \fn_k \rangle \fn_k $ converges to $\psi$ in $\cS_\fn([0,L]^d)$. 

\ref{item:representation_of_distributions_on_schwarz_spaces} follows from \ref{item:representation_of_functions_in_schwarz_spaces}. 

\begin{calc}
Let $w \in \cS'(\T_{2L}^d)$. 
Then there exists an $n\in \N$ and a $C>0$ such that $|w(\varphi)| \le C \|\varphi\|_{m} = C \sum_{\alpha: |\alpha|\le n} \| D^\alpha \varphi \|_\infty$. 
As $D^\alpha e_k = (- \frac{\pi \i }{L})^\alpha  k^\alpha e_k$, \eqref{eqn:growth_coefficients_distri} follows. 
\end{calc}
\end{proof}

For $\varphi \in \cS_0([0,L]^d)$, note that $\tilde \varphi = \sum_{k \in \N^d} \langle \varphi, \fd_k \rangle \tilde \fd_k$.
Moreover, note that $\omega \in \cS(\T_{2L}^d)$ is odd if and only if $\langle \omega, e_{\fq \circ k} \rangle = (\prod \fq) \langle \omega, e_k \rangle$ for all $k \in \Z^d$ and $\fq \in \{-1,1\}^d$. This motivates the following definition. 

\begin{definition}
For $u\in \cS_0'([0,L]^d)$ we write $\tilde u$ for the distribution in $\cS'(\T_{2L}^d)$ given by $\tilde u = \sum_{k\in\N^d} \langle u, \fd_k \rangle \tilde \fd_k$.
For $v \in \cS_\fn'([0,L]^d)$ we write $\overline v$ for the distribution in $\cS'(\T_{2L}^d)$ given by $\overline v = \sum_{k\in\N_0^d}   \langle u,\fn_k \rangle \overline \fn_k$. 
A $w \in \cS'(\T_{2L}^d)$ is called \emph{odd} if $\langle w, e_{\fq \circ k} \rangle =  (\prod \fq) \langle w, e_k \rangle$ for all $k \in \Z^d$ and $\fq \in \{-1,1\}^d$. If instead $\langle w, e_{\fq \circ k } \rangle =  \langle w, e_k \rangle$ for all $k \in \Z^d$ and $\fq \in \{-1,1\}^d$, then $w$ is called \emph{even}. 

Note that $\tilde u$ is odd and $\overline v$ is even. 
\end{definition}

By \eqref{eqn:inner_product_of_tildes_vs_without_and_with_overline} and Theorem \ref{theorem:representation_schwarz_func_and_distr}, for $u \in \cS_0'([0,L]^d)$, $\varphi \in \cS_0([0,L]^d)$ and  $v \in \cS_\fn'([0,L]^d)$, $\psi  \in \cS_\fn([0,L]^d)$
\begin{align}
\label{eqn:pairing_on_box_compared_with_extensions_on_torus}
\langle u, \varphi \rangle  = 2^{-d} \langle \tilde u , \tilde \varphi \rangle, \qquad 
\langle v, \psi \rangle  = 2^{-d} \langle \overline v , \overline \psi  \rangle. 
\end{align}

\begin{theorem}
\label{theorem:closedness_and_completeness_of_S_spaces}
\begin{enumerate}
\item 
\label{item:tilde_S_spaces_closed}
We have 
\begin{align*}
\tilde \cS_0(\T_{2L}^d) & := \{ \tilde \varphi : \varphi \in \cS_0 ([0,L]^d) \} 
= \{ \psi \in \cS (\T_{2L}^d) : \psi \mbox{ is odd}\}, \\
\overline \cS_\fn(\T_{2L}^d) &  := \{ \overline \varphi : \varphi \in \cS_\fn ([0,L]^d) \} 
= \{ \psi \in \cS (\T_{2L}^d) : \psi \mbox{ is even}\}, 
\end{align*}
and $\tilde \cS_0(\T_{2L}^d) $ and $\overline \cS_\fn(\T_{2L}^d)$ are closed in $\cS(\T_{2L})$. 
\item 
\label{item:S_spaces_complete}
$\cS(\T_{2L}^d)$, $\cS_0([0,L]^d)$ and $\cS_\fn([0,L]^d)$ are complete. 
\item 
\label{item:tilde_dual_S_spaces_closed}
We have 
\begin{align*}
\tilde \cS_0'(\T_{2L}^d) & := \{ \tilde u : u \in \cS_0' ([0,L]^d) \} = \{ w \in \cS'(\T_{2L}^d) : w \mbox{ is odd}\}, \\
\overline \cS_\fn'(\T_{2L}^d) & := \{ \overline v : v \in \cS_\fn' ([0,L]^d) \} = \{ w \in \cS'(\T_{2L}^d) : w \mbox{ is even}\}, 
\end{align*}
and $\tilde \cS_0'(\T_{2L}^d)$ and $\overline \cS_\fn'(\T_{2L}^d)$ are  closed in $\cS'(\T_{2L}^d)$. 
\item 
\label{item:S_dual_spaces_complete}
$\cS'(\T_{2L}^d)$, 
$\cS_0'([0,L]^d)$ and $\cS_\fn'([0,L]^d)$ 
are (weak$^*$) sequentially complete.
\end{enumerate}
\end{theorem}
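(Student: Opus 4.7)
The plan is to identify the extension images $\tilde\cS_0(\T_{2L}^d)$ and $\overline\cS_\fn(\T_{2L}^d)$ (together with their distributional analogues) as the odd/even subspaces of the ambient space $\cS(\T_{2L}^d)$ (respectively $\cS'(\T_{2L}^d)$), and then to deduce (sequential) completeness of $\cS_0, \cS_\fn, \cS_0', \cS_\fn'$ from the principle that closed subspaces of (sequentially) complete spaces are (sequentially) complete.

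For (a), the inclusion $\tilde\cS_0(\T_{2L}^d) \subset \{\psi \in \cS(\T_{2L}^d) : \psi \text{ odd}\}$ is definitional, since $\tilde\varphi$ is smooth on $\T_{2L}^d$ (by definition of $\cS_0$) and odd by construction. For the reverse inclusion, given an odd $\psi \in \cS(\T_{2L}^d)$, set $\varphi := \psi|_{[0,L]^d}$; then $\tilde\varphi = \psi$ on $[-L,L]^d$ (both sides are odd extensions of $\varphi$), so $\tilde\varphi \in \cS(\T_{2L}^d)$ and $\varphi \in \cS_0$. The even case is handled identically. Closedness in $\cS(\T_{2L}^d)$ follows because Schwartz convergence implies pointwise convergence, and the relations $\psi(\fq \circ x) = (\prod\fq)\psi(x)$ and $\psi(\fq \circ x) = \psi(x)$ pass to the pointwise limit.

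For (b) and (d), the starting point is that $\cS(\T_{2L}^d) = C^\infty(\T_{2L}^d)$ is a Fréchet space in the uniform $C^n$-seminorms, and that its weak-$*$ dual $\cS'(\T_{2L}^d)$ is sequentially complete by the standard Banach--Steinhaus argument on Fréchet spaces. By (a), the odd and even subspaces are closed and hence complete. The extension $\varphi \mapsto \tilde\varphi$ is a linear bijection onto $\tilde\cS_0(\T_{2L}^d)$, and since $\|D^\alpha \tilde\varphi\|_{L^\infty([-L,L]^d)} = \|D^\alpha \varphi\|_{L^\infty([0,L]^d)}$ (derivatives of the odd extension on $[-L,L]^d$ are exactly controlled by those of $\varphi$ on $[0,L]^d$), this bijection is a Fréchet homeomorphism; similarly for $\psi \mapsto \overline\psi$. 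Taking transposes yields weak-$*$ homeomorphisms on duals. Part (c) then runs in parallel to (a): for $u \in \cS_0'$, Theorem \ref{theorem:representation_schwarz_func_and_distr}\ref{item:representation_of_distributions_on_schwarz_spaces} ensures $\tilde u = \sum_{k\in\N^d} \langle u, \fd_k\rangle \tilde\fd_k$ converges in $\cS'(\T_{2L}^d)$, and oddness of $\tilde u$ is checked on Fourier coefficients via \eqref{eqn:d_k_in_terms_of_e_k}; conversely, an odd $w \in \cS'(\T_{2L}^d)$ defines $u(\varphi) := 2^{-d}\langle w, \tilde\varphi\rangle \in \cS_0'$ (continuous by the homeomorphism just established), and $\tilde u = w$ follows by matching Fourier coefficients. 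Closedness of $\tilde\cS_0'$ and $\overline\cS_\fn'$ in $\cS'(\T_{2L}^d)$ is immediate since the oddness and evenness conditions $\langle w, e_{\fq\circ k}\rangle = (\prod\fq)\langle w, e_k\rangle$ and $\langle w, e_{\fq\circ k}\rangle = \langle w, e_k\rangle$ persist under weak-$*$ limits. Statement (d) then follows by transferring weak-$*$ sequential completeness along the topological identifications.

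The only delicate point is the order of the steps: (c) uses continuity of the extension map from $\cS_0$ into $\cS(\T_{2L}^d)$, which is established in (b); so I would proceed (a) $\to$ (b) $\to$ (c) $\to$ (d). Beyond this bookkeeping everything is routine, since once the extension/restriction pair is a topological isomorphism onto a closed subspace, all four assertions are abstract consequences of the Fréchet and weak-$*$ sequential completeness of $\cS(\T_{2L}^d)$ and $\cS'(\T_{2L}^d)$.
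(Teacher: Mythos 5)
Your proof is correct and takes essentially the same route as the paper: closedness of the odd/even subspaces via stability of the relations $\psi(\fq\circ x)=(\prod\fq)\psi(x)$ (resp.\ $\psi(\fq\circ x)=\psi(x)$) under pointwise limits, and then (sequential) completeness of the $\cS_0$, $\cS_\fn$ spaces and their duals by transferring along the extension maps from the complete spaces $\cS(\T_{2L}^d)$ and $\cS'(\T_{2L}^d)$. The paper's proof is terser---it leaves the identification $\tilde\cS_0=\{\text{odd}\}$ and the fact that $\varphi\mapsto\tilde\varphi$ is a topological isomorphism implicit---but the substance is the same, and you fill in those steps correctly.
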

\begin{proof}
\ref{item:tilde_S_spaces_closed}
 follows as convergence in $\cS$ implies pointwise convergence and therefore the limit of odd and even functions is again odd and even, respectively.  
  \ref{item:S_spaces_complete} follows from \ref{item:tilde_S_spaces_closed} as $\cS(\T_{2L}^d)$ is complete (see  \cite[Page 134]{Do69}). 
\ref{item:tilde_dual_S_spaces_closed} 
If a net $(w_\iota)_{\iota \in \I} $ in $\tilde \cS_0'$ converges in $\cS'$ to some $w$, then $ \langle w_\iota, e_k \rangle \rightarrow \langle w ,e_k \rangle$ for all $k$, so that $w$ is odd. 
\ref{item:S_dual_spaces_complete} follows from \ref{item:tilde_dual_S_spaces_closed} as $\cS'(\T_{2L}^d)$ is weak$^*$ sequentially complete (see \cite[Page 137]{Do69}). 
\end{proof}

As we index the basis $e_k$, $\fd_k$ and $\fn_k$ by elements $k$ in $\Z^d$ and not in $\frac1L \Z^d$, in the next definition of a Fourier multiplier we have an additional $\frac1L$ factor in the argument of the functions $\tau$ and $\sigma$.

\begin{definition}
\label{def:fourier_multiplier}
Let $\tau : \R^d \rightarrow \R$, $\sigma : [0,\infty)^d \rightarrow \R$, 
 $w\in \cS'(\T_{2L}^d)$, $u\in \cS_0'([0,L]^d)$ and $v\in \cS_\fn'([0,L]^d)$. We define (at least formally) the so-called \emph{Fourier multipliers}  by
\begin{align}
\notag 
\tau(\rD) w & = \sum_{k\in \Z^d} \tau(\tfrac{k}{L}) \langle w, e_k \rangle  e_k, \\
\label{eqn:definition_sigma_D}
\sigma(\rD) u & = \sum_{k\in \N^d} \sigma(\tfrac{k}{L}) \langle u,\fd_k \rangle  \fd_k, 
\quad 
\sigma(\rD) v = \sum_{k\in \N_0^d} \sigma(\tfrac{k}{L}) \langle v, \fn_k \rangle  \fn_k. 
\end{align}
\end{definition}

Let $(\rho_j)_{j\in \N_{-1}}$ form a \emph{dyadic partition of unity}, i.e.,  $\rho_{-1}$ and $\rho_0$ are $C^\infty$ radial functions on $\R^d$, where $\rho_{-1}$ is supported in a ball and $\rho_0$ is supported in an annulus, $\rho_j = \rho(2^{-j} \cdot)$ for $j \in \N_0$, and 
\begin{align}
\label{eqn:partition_prop_sum_to_1_and_squares_inbetween_half_and_1}
& \sum_{j \in \N_{-1}} \rho_{j}(y) =1, 
\qquad 
\frac12 \le \sum_{j \in \N_{-1}} \rho_j(  y)^2 \le 1 \qquad (y \in \R^d), \\
\label{eqn:partition_prop_disjointness_far_away}
& |j - k| \ge 2 \Longrightarrow \supp \rho_j \cap \supp \rho_k = \emptyset \qquad (j,k\in \N_0). 
\end{align}

Let $w\in \cS'(\T_{2L}^d)$, $u\in \cS_0'([0,L]^d)$ and $v\in \cS_\fn'([0,L]^d)$. 
We define the  Littlewood-Paley blocks $\Delta_j w$, $\Delta_j u$ and $\Delta_j v$ for $j\in \N_{-1}$ by
$\Delta_j w = \rho_j( \rD) w$, $\Delta_j u = \rho_j( \rD) u$, $\Delta_j v = \rho_j( \rD) v$, i.e.,   
\begin{align*}
 \Delta_j  w = \sum_{k\in\Z^d} \langle w, e_k \rangle \rho_j(\tfrac{k}{ L }) e_k, \quad 
 \Delta_j  u = \sum_{k\in\N^d} \langle u, \fd_k \rangle \rho_j(\tfrac{k}{ L }) \fd_k, \quad 
 \Delta_j  v = \sum_{k\in\N_0^d} \langle v, \fn_k \rangle \rho_j(\tfrac{k}{ L }) \fn_k. 
\end{align*}
Let $\overline \sigma : \R^d \rightarrow \R$ be the even extension of $\sigma$, i.e., $\overline \sigma( \fq \circ x) = \sigma(x)$ for all $x\in [0,\infty)^d$ and $\fq \in \{-1,1\}^d$. 
As $ \sigma(\rD) \fd_k = \sigma(\frac{k}{L}) \fd_k$ and $\overline \sigma(\rD) \tilde \fd_k = \sigma(\frac{k}{L}) \tilde \fd_k$, by Theorem \ref{theorem:representation_schwarz_func_and_distr} we obtain that for all $u\in \cS_0'([0,L]^d)$ and $v\in \cS_\fn'([0,L]^d)$, 
\begin{align}
\label{eqn:DDelta_and_Delta_of_tilde}
\widetilde{\sigma(\rD) u} = \overline \sigma(\rD) \tilde u, \qquad \overline{\sigma(\rD) v} = \overline \sigma(\rD) \overline v. 
\end{align}
Moreover, with $a_{d,p} = 2^{-\frac{d}{p}}$ for $p <\infty$ and $a_{d,\infty} = 1$  we have for all $p \in [1,\infty]$
\begin{align*}
\| \sigma(\rD) u \|_{L^p([0,L]^d)} 
&= a_{d,p} \| \widetilde{ \sigma(\rD) u} \|_{L^p(\T_{2L}^d)}
= a_{d,p} \| \overline \sigma(\rD) \tilde u \|_{L^p(\T_{2L}^d)},  \\
\| \sigma(\rD) v \|_{L^p([0,L]^d)} 
&= a_{d,p} \| \overline{ \sigma(\rD) v} \|_{L^p(\T_{2L}^d)}
= a_{d,p} \| \overline \sigma(\rD) \overline v \|_{L^p(\T_{2L}^d)}. 
\end{align*}
Therefore, by applying the above to $\sigma = \rho_j$, with $\|\cdot\|_{B_{p,q}^\alpha}$ the standard Besov norm, 
\begin{align*}
a_{d,p} \|\tilde u \|_{B^{\alpha}_{p,q}} =   \| ( 2^{i\alpha} \| \Delta_i u \|_{L^p})_{i \in \N_{-1}}\|_{\ell^q}, \qquad 
a_{d,p} \|\overline v\|_{B^{\alpha}_{p,q}} =   \| ( 2^{i\alpha} \| \Delta_i v \|_{L^p})_{i \in \N_{-1}}\|_{\ell^q}. 
\end{align*}
This motivates the following definition. 

\begin{definition}
\label{def:Dir_and_Neu_Besov}
Let $\alpha \in \R$, $p,q\in [1,\infty]$. 
We define the 
\emph{Dirichlet Besov space} 
$B^{\Dir,\alpha}_{p,q}([0,L]^d)$ to be the space of 
$u \in \cS_0'([0,L]^d)$ for which $\|u\|_{B^{\Dir,\alpha}_{p,q}}:= a_{d,p} \| \tilde u\|_{B^{\alpha}_{p,q}}<\infty$. 
Similarly, we define the 
\emph{Neumann Besov space}
$B^{\Even,\alpha}_{p,q}([0,L]^d)$ as the space of 
$v \in \cS_\fn'([0,L]^d)$ for which $\|v\|_{B^{\Even,\alpha}_{p,q}}:= a_{d,p} \| \overline v\|_{B^{\alpha}_{p,q}}<\infty$. 

We will abbreviate $\cC^\alpha_\fn = B^{\Even,\alpha}_{\infty,\infty}$, $H^\alpha_\fn = B^{\Even,\alpha}_{2,2}$. 
In Theorem \ref{theorem:equivalences_H_0_spaces} we show $H^\alpha_0 = B^{\Dir,\alpha}_{2,2}$.
\end{definition}

\begin{calc}
\begin{obs}
Let us see if the definition is such that we get ``what we want''. 
First of all let us note that the Littlewood-Paley block $\Delta_i u$ needs to be defined as $\rho_i(\rD) u$ which has the factor $\frac{1}{L}$ in front of the Fourier coefficient $k$, so that we have equivalence with the $H^\alpha$ spaces for $p=q=2$. 
This so that the regularity of distributions does not change when considering a bigger space. 
Let us demonstrate this for $\delta_0$ in $\cS_\fn'([0,L]^d)$. 
We have 
\begin{align*}
 \Delta_i \delta_0 = \sum_{k\in \N_0^d} \rho_i(\tfrac{k}{L}) \fn_0. 
\end{align*}
As $\|\fn_0\|_{L^\infty}$ is bounded by $L^{-\frac{d}{2}}$ and equal to it at $0$ we have 
\begin{align*}
\| \sum_{k\in \N_0^d} \rho_i(\tfrac{k}{L}) \fn_0(0) \fn_0\|_{L^\infty} 
= L^{-d} \sum_{k\in \N_0^d} \rho_i(\tfrac{k}{L}). 
\end{align*}
The latter sum is about equal to $2^{id} L^d$ so that $\delta_0 \in B_{\infty,\infty}^{-d}([0,L]^d)$ and has the about the same norm for all $L$ (at least for $L=\lambda 2^m$ with $\lambda>0$ and $m\in \N_0$ the norms are the same). 

Now let us see what happens for $\alpha =0$ and constant functions. 
We have 
\begin{align*}
\langle \1 , \fn_{0,L} \rangle = \int_{[0,L]^d} L^{-\frac{d}{2}} \dd x = L^{\frac{d}{2}}, 
\end{align*}
and thus 
\begin{align*}
\Delta_{-1} \1 = \rho_{-1}(0) \langle \1, \fn_{0,L} \rangle \fn_{0,L}   =  1 \qquad \Delta_i \1 =0 \mbox{ for } i \ge 0. 
\end{align*}
Hence $\| \Delta_{-1} \1\|_{L^\infty} =1 $ and thus $\| \1\|_{B_{\infty,\infty}^{\fn,0}([0,L]^d)} =1 = \| \1 \|_{L^\infty}$. 
On the other hand $\| \Delta_{-1} \1\|_{L^p} = L^{\frac{1}{p}}$ and thus $\| \1\|_{B_{2,2}^{\fn,0}([0,L]^d)} = L^{\frac{1}{p}} = \| \1 \|_{L^2}$.  
\end{obs}
\end{calc}

As $B_{p,q}^\alpha(\T_{2L}^d)$ is a Banach space, 
 $\|\cdot\|_{B^{\Dir,\alpha}_{p,q}}$ is a norm on 
$B^{\Dir,\alpha}_{p,q}([0,L]^d)$ under which it is a Banach space. 
Similarly, $\|\cdot \|_{B^{\Even,\alpha}_{p,q}}$  is a norm on 
$B^{\Even,\alpha}_{p,q}([0,L]^d)$ under which it is a Banach space.

\begin{theorem}
\label{theorem:denseness_smooth_compact_in_dirchlet_besov}
$C_c^\infty([0,L]^d )$ 
is dense in $B_{p,q}^{\Dir, \alpha}([0,L]^d)$ for all $\alpha \in \R$, $p, q \in [1,\infty)$. 
\end{theorem}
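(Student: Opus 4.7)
The strategy is a two-stage approximation. First, I would approximate an arbitrary $u \in B_{p,q}^{\Dir,\alpha}([0,L]^d)$ by smooth functions lying in $\cS_0([0,L]^d)$; second, I would approximate such smooth functions by elements of $C_c^\infty([0,L]^d)$ via a boundary cutoff; a diagonal argument then combines the two stages.

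For the first stage, consider the Littlewood--Paley partial sum $S_N u := \sum_{j=-1}^{N} \Delta_j u$. Since $\rho_j$ has compact support, $\Delta_j u = \sum_{k\in\N^d} \rho_j(k/L) \langle u,\fd_k\rangle \fd_k$ is a finite linear combination of $\fd_k$'s, so $S_N u \in \cS_0([0,L]^d)$. By Definition~\ref{def:Dir_and_Neu_Besov} together with~\eqref{eqn:DDelta_and_Delta_of_tilde},
\[
\|u - S_N u\|_{B^{\Dir,\alpha}_{p,q}}^{q} = a_{d,p}^{q} \sum_{j>N} 2^{j\alpha q}\,\|\Delta_j \tilde u\|_{L^p(\T_{2L}^d)}^{q},
\]
which is the tail of a summable series for $p,q \in [1,\infty)$, hence $S_N u \to u$ in $B_{p,q}^{\Dir,\alpha}$.

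For the second stage, given $w\in\cS_0([0,L]^d)$, pick a smooth even cutoff $\chi_\epsilon \colon \T_{2L}\to[0,1]$ with $\chi_\epsilon = 0$ on $[-\epsilon,\epsilon] \cup \{|t|\ge L-\epsilon\}$ and $\chi_\epsilon = 1$ on the complementary bulk, and set $\eta_\epsilon(x) := \prod_{i=1}^d \chi_\epsilon(x_i)$. Since $\eta_\epsilon$ is even on $\T_{2L}^d$, the product $\eta_\epsilon\tilde w$ is odd and smooth, and its restriction $w_\epsilon$ to $[0,L]^d$ has compact support in $(0,L)^d$, so $w_\epsilon \in C_c^\infty([0,L]^d)$. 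The task is then to show $(1-\eta_\epsilon)\tilde w \to 0$ in $B_{p,q}^\alpha(\T_{2L}^d)$. Here the relevant input is the fine boundary vanishing of $\tilde w$: as $\tilde w$ is a smooth $2L$-periodic function that is odd in each variable, $\tilde w$ together with every derivative $\partial^\beta \tilde w$ with some $\beta_i$ even vanishes on the singular set $S := \bigcup_i\{x_i \in \{0,\pm L\}\}$. Using Taylor expansion around $S$, together with $\|\partial^\gamma(1-\eta_\epsilon)\|_\infty \lesssim \epsilon^{-|\gamma|}$ and the $O(\epsilon)$-measure of $\supp\,\partial^\gamma(1-\eta_\epsilon)$, a Leibniz-rule computation yields $\|(1-\eta_\epsilon)\tilde w\|_{L^p} \to 0$ and comparable bounds on the relevant Sobolev seminorms; interpolation then transfers the convergence to $B_{p,q}^\alpha$.

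The main obstacle is the Besov convergence in the second stage: since the operator norm of multiplication by $\eta_\epsilon$ on $B_{p,q}^\alpha$ diverges as $\epsilon\to 0$, no abstract multiplier estimate is available, and one must exploit the parity-induced pointwise vanishing of derivatives of $\tilde w$ along the coordinate hyperplanes to absorb the singularities of $\partial^\gamma(1-\eta_\epsilon)$. A diagonal argument over $N$ and $\epsilon$ then completes the proof.
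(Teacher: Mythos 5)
Your Stage~1 is correct and is essentially what the paper invokes by pointing to the analogue of \cite[Prop.~2.74]{BaChDa11}: the partial sums $S_N u$ lie in $\cS_0([0,L]^d)$ and converge in $B_{p,q}^{\Dir,\alpha}$ for $p,q<\infty$, giving density of $\cS_0$. Since $\cS_0\supsetneq C_c^\infty([0,L]^d)$ (already $\fd_k\in\cS_0\setminus C_c^\infty$, as its normal derivative does not vanish on $\partial[0,L]^d$), you rightly isolate the boundary cutoff in Stage~2 as the additional content of the theorem, and that is where the gap lies.

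The Leibniz/interpolation estimate in Stage~2 does not close for all $\alpha$. Parity makes $\partial^\beta\tilde w$ vanish on $\{x_i\in\{0,\pm L\}\}$ only when $\beta_i$ is \emph{even}; the first normal derivative $\partial_i\tilde w$ is even in $x_i$ and is generically nonzero there, so the Leibniz term $\partial_i\eta_\epsilon\cdot\partial_i\tilde w$ has no extra smallness to exploit. On the cutoff zone one has $|\partial^\gamma\eta_\epsilon|\lesssim\epsilon^{-|\gamma|}$, $|\tilde w|\lesssim\epsilon$, $|\nabla\tilde w|\lesssim1$, and the measure of the zone is $\lesssim\epsilon$, so $\|(1-\eta_\epsilon)\tilde w\|_{W^{k,p}(\T_{2L}^d)}\lesssim\epsilon^{1/p+1-k}$ for $k\ge1$; interpolation then yields $\|(1-\eta_\epsilon)\tilde w\|_{B_{p,q}^\alpha}\lesssim\epsilon^{1/p+1-\alpha}$, which fails to vanish once $\alpha\ge1+\tfrac{1}{p}$. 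This is not a fixable defect of the method: for $\alpha>1+\tfrac{1}{p}$ the statement itself is false. The map $v\mapsto\partial_1\tilde v|_{x_1=0}$ is continuous on $B_{p,q}^{\Dir,\alpha}$ in that range (apply $\partial_1:B_{p,q}^\alpha(\T_{2L}^d)\to B_{p,q}^{\alpha-1}(\T_{2L}^d)$ and then the hyperplane trace, which requires $\alpha-1>\tfrac{1}{p}$), it annihilates $C_c^\infty([0,L]^d)$, and it is nonzero on $\fd_{(1,\dots,1)}$; hence $\fd_{(1,\dots,1)}$ is not in the closure of $C_c^\infty$. The theorem therefore needs the restriction $\alpha<1+\tfrac{1}{p}$, and under that restriction your two-stage argument, with the exponent bookkeeping above, does give a complete proof --- one that is more explicit than the paper's one-line reference, which as written only yields $\cS_0$-density.
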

\begin{proof}
The proof follows the same strategy as the proof of \cite[Proposition 2.74]{BaChDa11}. 
\end{proof}

\begin{theorem}
\label{theorem:equivalent_spaces_to_H_alpha_R_d_and_torus}
For $\alpha>0$, 
$H^\alpha(\R^d) = B_{2,2}^\alpha(\R^d) = \Lambda_{2,2}^\alpha(\R^d)$  and their norms are equivalent (for the definitions see \cite[p. 36]{Tr83}). 

\end{theorem}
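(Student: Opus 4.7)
The statement is a classical embedding/equivalence result, and in fact the last equality $B_{2,2}^\alpha(\R^d)=\Lambda_{2,2}^\alpha(\R^d)$ is exactly \cite[Theorem 2.5.12]{Tr83}, so I would just cite that and focus the work on the first equality $H^\alpha(\R^d)=B_{2,2}^\alpha(\R^d)$. The plan is to reduce everything to Plancherel's theorem applied to the Littlewood--Paley decomposition, using nothing beyond the partition of unity properties \eqref{eqn:partition_prop_sum_to_1_and_squares_inbetween_half_and_1}--\eqref{eqn:partition_prop_disjointness_far_away}.

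The key steps, in order:
\begin{enumerate}
\item Recall that $\|f\|_{H^\alpha}^2=\int_{\R^d}(1+|\xi|^2)^\alpha|\widehat f(\xi)|^2\,\dd\xi$ and that $\|f\|_{B_{2,2}^\alpha}^2=\sum_{j\in\N_{-1}}2^{2j\alpha}\|\Delta_j f\|_{L^2}^2$, where $\widehat{\Delta_j f}=\rho_j\,\widehat f$.
\item Apply Plancherel to each block to get $\|\Delta_j f\|_{L^2}^2=\int_{\R^d}\rho_j(\xi)^2|\widehat f(\xi)|^2\,\dd\xi$, then swap sum and integral (by Tonelli, since everything is positive) to obtain
\[
\|f\|_{B_{2,2}^\alpha}^2=\int_{\R^d}|\widehat f(\xi)|^2\,m_\alpha(\xi)\,\dd\xi,\qquad m_\alpha(\xi):=\sum_{j\in\N_{-1}}2^{2j\alpha}\rho_j(\xi)^2.
\]
\item Prove the pointwise two-sided bound $m_\alpha(\xi)\eqsim (1+|\xi|^2)^\alpha$ on $\R^d$. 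This is where the dyadic partition of unity is used: for $|\xi|\ge 1$ fix the unique $j_0\ge 0$ with $2^{j_0-1}\le|\xi|<2^{j_0+1}$; then by \eqref{eqn:partition_prop_disjointness_far_away} only $\rho_{j_0-1},\rho_{j_0},\rho_{j_0+1}$ can be nonzero at $\xi$, and by the lower bound in \eqref{eqn:partition_prop_sum_to_1_and_squares_inbetween_half_and_1} at least one of their squares exceeds a fixed constant, giving $m_\alpha(\xi)\gtrsim 2^{2j_0\alpha}\eqsim|\xi|^{2\alpha}$; the matching upper bound is immediate because only three terms contribute. For $|\xi|\le 1$ use that $\rho_{-1}(\xi)^2\le m_\alpha(\xi)\le \rho_{-1}(\xi)^2+\rho_0(\xi)^2$.
\item Combining Steps 2 and 3 yields $\|f\|_{B_{2,2}^\alpha}\eqsim\|f\|_{H^\alpha}$, which gives $B_{2,2}^\alpha(\R^d)=H^\alpha(\R^d)$ with equivalent norms.
\item Quote \cite[Theorem 2.5.12]{Tr83} for the identification $B_{2,2}^\alpha(\R^d)=\Lambda_{2,2}^\alpha(\R^d)$.
\end{enumerate}

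The only non-routine part is Step 3, but it really is just book-keeping with the dyadic partition: the function $m_\alpha$ is comparable to $(1+|\xi|^2)^\alpha$ because, up to a bounded overlap, the sum is a geometric-type sum peaked at the index $j_0\sim\log_2|\xi|$. Since $\alpha>0$ is assumed, we do not have to worry about cancellations between positive and negative powers of $2^{2j\alpha}$. Everything else is Plancherel and a direct citation.
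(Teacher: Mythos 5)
Your proof is correct, but it takes a genuinely different route from the paper. The paper's own proof is purely bibliographic: it cites Triebel for a chain of three identities, $H^\alpha(\R^d) = F_{2,2}^\alpha(\R^d)$ (p.~88), $F_{2,2}^\alpha(\R^d) = B_{2,2}^\alpha(\R^d)$ (p.~47) and $B_{2,2}^\alpha(\R^d) = \Lambda_{2,2}^\alpha(\R^d)$ (p.~90), so it passes through Triebel--Lizorkin spaces as an intermediary and offers no computation. You instead give a direct, self-contained proof of $H^\alpha = B_{2,2}^\alpha$ via Plancherel and the pointwise comparability $\sum_j 2^{2j\alpha}\rho_j(\xi)^2 \eqsim (1+|\xi|^2)^\alpha$, which is the same multiplier estimate the paper itself states without proof (as \eqref{eqn:sum_of_power_2_rho_j_and_one_plus_mod_sq}) and uses a few lines later for the torus version of the result. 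So your argument effectively supplies a proof of the key bound the paper takes for granted, while the paper defers to the literature; your version is longer but self-contained, and in fact it does not use the hypothesis $\alpha>0$ at all (the sum in Step 3 has only boundedly many nonzero terms at each $\xi$, so no cancellation can occur for any sign of $\alpha$ — the restriction $\alpha>0$ in the statement is only relevant for the $\Lambda_{2,2}^\alpha$ characterization). Two very small slips in your Step 3: the $j_0$ with $2^{j_0-1}\le|\xi|<2^{j_0+1}$ is not unique (two consecutive indices satisfy it), though this is harmless; and for $|\xi|\le 1$ the index $j=1$ may also contribute, so the upper bound should read $m_\alpha(\xi)\le \rho_{-1}(\xi)^2 + \rho_0(\xi)^2 + 2^{2\alpha}\rho_1(\xi)^2$, still $O(1)$. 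Neither affects the conclusion.
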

\begin{proof}
For $H^\alpha(\R^d) = F_{2,2}^\alpha(\R^d)$ see \cite[p.88]{Tr83}, for $F_{2,2}^\alpha(\R^d) = B_{2,2}^\alpha(\R^d)$ see \cite[p.47] {Tr83} and for $B_{2,2}^\alpha(\R^d) = \Lambda_{2,2}^\alpha(\R^d)$
  see \cite[p.90]{Tr83}. 
\end{proof}

\begin{lemma}
For $\alpha \in \R$ the spaces $B_{2,2}^\alpha(\T_{2L}^d)$ and $H^\alpha(\T_{2L}^d)$ (see \cite[p. 168]{ScTr87}) are equal with equivalent norms. 
Here $H^\alpha(\T_{2L}^d)$ is the space of distributions in $\cS'(\T_{2L}^d)$ for which $\|u\|_{H^\alpha}<\infty$, where
\begin{align*}
\|u\|_{H^{\alpha}} = \sqrt{ \sum_{k\in \N_0^2} (1+|\tfrac{k}{L}|^2)^\alpha \langle u, e_k\rangle^2 }. 
\end{align*}
\end{lemma}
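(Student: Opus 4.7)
The plan is to unwind both definitions via the Fourier basis $\{e_k\}_{k\in\Z^d}$ and check the equivalence coefficient-wise. By definition and Parseval's identity applied to the orthonormal basis $\{e_k\}_{k\in\Z^d}$ of $L^2(\T_{2L}^d)$,
\begin{align*}
\|u\|_{B_{2,2}^\alpha}^2
= \sum_{j\in\N_{-1}} 2^{2j\alpha}\|\Delta_j u\|_{L^2}^2
= \sum_{j\in\N_{-1}} 2^{2j\alpha}\sum_{k\in\Z^d} \rho_j(\tfrac{k}{L})^2 |\langle u,e_k\rangle|^2.
\end{align*}
Since the sum is over nonnegative terms, Tonelli allows me to interchange the order and obtain
\begin{align*}
\|u\|_{B_{2,2}^\alpha}^2 = \sum_{k\in\Z^d} w_\alpha(k)\, |\langle u,e_k\rangle|^2,
\qquad
w_\alpha(k) := \sum_{j\in\N_{-1}} 2^{2j\alpha}\rho_j(\tfrac{k}{L})^2.
\end{align*}

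The whole proof thus reduces to showing that $w_\alpha(k)\eqsim (1+|\tfrac{k}{L}|^2)^\alpha$ uniformly in $k\in\Z^d$. First, by the disjointness property \eqref{eqn:partition_prop_disjointness_far_away} at most two consecutive indices $j$ contribute to $w_\alpha(k)$ for any fixed $k$. Second, for $j\geq 0$ the function $\rho_j$ is supported on an annulus $\{|y|\eqsim 2^j\}$, so whenever $\rho_j(k/L)\neq 0$ we have $2^j\eqsim |k/L|$ and hence $2^{2j\alpha}\eqsim (1+|k/L|^2)^\alpha$ (with constants depending only on $\alpha$ and $\rho_0$). For the remaining index $j=-1$, $\rho_{-1}$ is supported in a bounded ball, so on this set $(1+|k/L|^2)^\alpha\eqsim 1=2^{-2(-1)\alpha}\cdot 2^{2\alpha}$, up to an $\alpha$-dependent constant. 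Combining these cases with the squared partition bound $\tfrac12\le\sum_j\rho_j(k/L)^2\le 1$ from \eqref{eqn:partition_prop_sum_to_1_and_squares_inbetween_half_and_1} gives the claimed two-sided estimate $w_\alpha(k)\eqsim (1+|\tfrac{k}{L}|^2)^\alpha$ with constants independent of $k$ and $L$.

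Plugging this back yields
\begin{align*}
\|u\|_{B_{2,2}^\alpha}^2 \eqsim \sum_{k\in\Z^d} (1+|\tfrac{k}{L}|^2)^\alpha |\langle u,e_k\rangle|^2 = \|u\|_{H^\alpha}^2,
\end{align*}
which proves both the equality of the spaces (finiteness of one norm is equivalent to finiteness of the other, and membership in $\cS'(\T_{2L}^d)$ follows in either case from Theorem~\ref{theorem:representation_schwarz_func_and_distr}\ref{item:representation_of_distributions_on_schwarz_spaces} since polynomial growth of $|\langle u,e_k\rangle|$ is implied by the summability against any of the two weights for sufficiently negative shifts of $\alpha$) and the equivalence of norms.

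The main obstacle, though routine, is the case analysis for the weight $w_\alpha$: the low-frequency block $j=-1$ must be handled separately from the dyadic blocks $j\ge 0$, and one has to check that constants can be chosen independently of $k$ (and of $L$, which only enters via the rescaling $k\mapsto k/L$ and hence does not affect the dyadic comparison). No subtlety arises from the sign of $\alpha$ once the comparison is performed on the support of each $\rho_j$.
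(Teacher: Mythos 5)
Your proposal is correct and follows essentially the same route as the paper, which likewise reduces everything to Plancherel together with the weight equivalence $\sum_{j}2^{2\alpha j}\rho_j(\tfrac{k}{L})^2\eqsim(1+|\tfrac{k}{L}|^2)^\alpha$; you have merely written out the Parseval/Tonelli interchange and the case analysis for the weight explicitly. One cosmetic slip: in the $j=-1$ case the identity ``$1=2^{-2(-1)\alpha}\cdot 2^{2\alpha}$'' is false for $\alpha\ne 0$ (it equals $2^{4\alpha}$), but the intended point — that both $(1+|\tfrac{k}{L}|^2)^\alpha$ and $2^{2(-1)\alpha}$ are $\eqsim 1$ with $\alpha$-dependent constants on the bounded support of $\rho_{-1}$ — is right and suffices.
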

\begin{proof}
Observe that by the properties of the dyadic partition:
for all $\alpha \in \R $ there exist $c_\alpha, C_\alpha>0$ such that 
\begin{align}
\label{eqn:sum_of_power_2_rho_j_and_one_plus_mod_sq}
c_\alpha \left(1+ |\tfrac{k}{ L }|^2\right)^\alpha 
\le \sum_{j \in \N_{-1}} 2^{2\alpha j } \rho_j(\tfrac{k}{ L })^2
\le C_\alpha \left(1+ |\tfrac{k}{ L }|^2\right)^\alpha. 
\end{align}
Therefore the equivalence of the norms follows by Plancherel's formula. 
\end{proof}

The following is a consequence of the fact that the norms of $H^\alpha (\T_{2L}^d)$ (see \cite[p. 168]{ScTr87}) and $B_{2,2}^\alpha(\T_{2L}^d)$ are equivalent.

\begin{theorem}
\label{theorem:equivalent_norms_H_spaces}
For all $\alpha \in \R$ we have 
for $u \in \cS_\fn'([0,L]^d)$ and $v\in \cS_0'([0,L]^d)$
\begin{align*}
\|u\|_{B_{2,2}^{\fn,\alpha}} 
\eqsim \sqrt{ \sum_{k\in \N_0^2} (1+|\tfrac{k}{L}|^2)^\alpha \langle u, \fn_k\rangle^2 }, \quad 
\|v\|_{B_{2,2}^{\fd,\alpha}} 
\eqsim \sqrt{ \sum_{k\in \N_0^2} (1+|\tfrac{k}{L}|^2)^\alpha \langle v, \fd_k\rangle^2 }.
\end{align*}
\end{theorem}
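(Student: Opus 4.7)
The plan is to reduce everything to the equivalence between $H^\alpha(\T_{2L}^d)$ and $B_{2,2}^\alpha(\T_{2L}^d)$ that was just established in the preceding lemma. Using Definition \ref{def:Dir_and_Neu_Besov}, I have $\|u\|_{B_{2,2}^{\fn,\alpha}} = 2^{-d/2}\|\overline u\|_{B_{2,2}^\alpha(\T_{2L}^d)}$ and $\|v\|_{B_{2,2}^{\fd,\alpha}} = 2^{-d/2}\|\tilde v\|_{B_{2,2}^\alpha(\T_{2L}^d)}$, and by the preceding lemma these are each equivalent (with constants depending only on $\alpha$ and $d$) to $2^{-d/2}\|\overline u\|_{H^\alpha(\T_{2L}^d)}$ and $2^{-d/2}\|\tilde v\|_{H^\alpha(\T_{2L}^d)}$ respectively. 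So the task is to rewrite these $H^\alpha$ sums, which are indexed over $\Z^d$, as sums indexed over $\N_0^d$ (resp.\ $\N^d$).

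For this I would compute the Fourier coefficients $\langle \overline u, e_l\rangle$ and $\langle \tilde v, e_l\rangle$ directly. Starting from $\overline u = \sum_{k\in\N_0^d}\langle u,\fn_k\rangle\overline \fn_k$ and $\tilde v = \sum_{k\in\N^d}\langle v,\fd_k\rangle\tilde \fd_k$ (Theorem \ref{theorem:representation_schwarz_func_and_distr}), and plugging in the expansions \eqref{eqn:f_k_in_terms_of_e_k} and \eqref{eqn:d_k_in_terms_of_e_k}, a short calculation gives, writing $|l| = (|l_1|,\dots,|l_d|)$,
\begin{align*}
\langle \overline u, e_l\rangle = 2^{\frac12\#\{i:l_i=0\}}\,\langle u,\fn_{|l|}\rangle,\qquad \langle \tilde v, e_l\rangle = (-\i)^d\Bigl(\prod_i \mathrm{sgn}(l_i)\Bigr)\langle v,\fd_{|l|}\rangle\,\1_{\{l_i\neq 0\ \forall i\}}.
\end{align*}

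The remaining step is combinatorial bookkeeping with factors of $2$. For each $k\in\N_0^d$ there are exactly $2^{d-\#\{i:k_i=0\}}$ elements $l\in\Z^d$ with $|l|=k$; combined with the factor $2^{\#\{i:l_i=0\}}$ squared above, this produces the uniform multiplier $2^d$. Thus
\begin{align*}
\|\overline u\|_{H^\alpha}^2 \;=\; 2^d\sum_{k\in\N_0^d}(1+|k/L|^2)^\alpha|\langle u,\fn_k\rangle|^2,
\end{align*}
and identically (with the sum restricted to $\N^d$, where $\fd_k$ lives) for $\tilde v$. The $2^{-d/2}$ normalisation from $a_{d,2}$ absorbs the factor $2^d$ and yields the claimed equivalences. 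There is no real obstacle here; the main subtlety is simply keeping the multiplicity weights on the coordinate axes ($\nu_k^2 = 2^{-\#\{i:k_i=0\}}$) consistent with the number of preimages of $k$ under $l\mapsto |l|$, which is exactly what makes the Neumann case work over $\N_0^d$ rather than $\N^d$.
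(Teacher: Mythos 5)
Your proof is correct and carries out exactly the route the paper gestures at: reduce to the $B_{2,2}^\alpha(\T_{2L}^d)=H^\alpha(\T_{2L}^d)$ equivalence from the preceding lemma, then rewrite $\|\overline u\|_{H^\alpha}$ (resp.\ $\|\tilde v\|_{H^\alpha}$) as a weighted sum over $\N_0^d$ (resp.\ $\N^d$) by the even/odd extension. One small remark: your formula $\langle \overline u, e_l\rangle = 2^{\frac12\#\{i:l_i=0\}}\langle u,\fn_{|l|}\rangle$ is the correct one (and is what makes the multiplicity count collapse to a clean $2^d$), whereas the paper's display \eqref{eqn:innerprod_f_k_versus_e_k} drops a $\nu_k$ factor; as you note, this only affects bounded coordinate-axis weights and is irrelevant for the norm equivalence, but it is worth being aware that you silently corrected that identity.
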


\begin{theorem}
\label{theorem:equivalences_H_0_spaces}
For $\alpha >0$ 
the spaces $B_{2,2}^{\Dir, \alpha}([0,L]^d)$ and $H^\alpha_0([0,L]^d)$ are equal with equivalent norms, where $H^\alpha_0([0,L]^d)$ is the closure of $C_c^\infty([0,L]^d)$ in $H^\alpha(\R^d)$. 
\end{theorem}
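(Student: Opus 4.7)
The plan is to identify both spaces on their common dense subset $C_c^\infty([0,L]^d)$ and establish equivalence of the two norms there. Density in $B_{2,2}^{\Dir,\alpha}$ is given by Theorem \ref{theorem:denseness_smooth_compact_in_dirchlet_besov}, and density in $H_0^\alpha$ holds by definition. Hence it suffices to prove
\[
\|\varphi\|_{H^\alpha(\R^d)} \eqsim \|\varphi\|_{B_{2,2}^{\Dir,\alpha}} \qquad \text{for every } \varphi \in C_c^\infty([0,L]^d),
\]
after which the identity map extends uniquely to a bi-Lipschitz isomorphism between the completions. I would carry this out by comparing both quantities with the intermediate norm $\|\tilde\varphi\|_{H^\alpha(\R^d)}$, exploiting that, by the convention of Section \ref{subsection:notation}, $\supp\varphi$ is compact in $(0,L)^d$, so the odd extension $\tilde\varphi$ lies in $C_c^\infty((-L,L)^d)$ and can be viewed simultaneously as an element of $C_c^\infty(\R^d)$ (extension by zero) and of $C^\infty(\T_{2L}^d)$ (periodic extension).

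For the first comparison $\|\varphi\|_{H^\alpha(\R^d)} \eqsim \|\tilde\varphi\|_{H^\alpha(\R^d)}$, the integer case $\alpha = m$ is immediate: for each multi-index $\beta$ with $|\beta|\le m$, differentiation commutes with the iterated odd reflection (since $\varphi$ vanishes to infinite order on the reflection faces $\{x_i=0\}$), and each of the $d$ reflections doubles the $L^2$-integral, so $\|D^\beta\tilde\varphi\|_{L^2(\R^d)}^2 = 2^d\|D^\beta\varphi\|_{L^2(\R^d)}^2$. Theorem \ref{theorem:equivalent_spaces_to_H_alpha_R_d_and_torus} identifies $H^\alpha(\R^d)$ with $\Lambda_{2,2}^\alpha(\R^d)$, which is a real-interpolation space between consecutive integer Sobolev spaces, so the equivalence extends to arbitrary $\alpha > 0$ by interpolating the bounded operator $\varphi \mapsto \tilde\varphi$ and its inverse (restriction).

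For the second comparison $\|\tilde\varphi\|_{H^\alpha(\R^d)} \eqsim \|\tilde\varphi\|_{B_{2,2}^\alpha(\T_{2L}^d)} = a_{d,2}^{-1}\|\varphi\|_{B_{2,2}^{\Dir,\alpha}}$, the key observation is that $\tilde\varphi$ and all its derivatives have support strictly inside $(-L,L)^d$, so for integer $\alpha = m$ both norms reduce (up to constants) to $\sum_{|\beta|\le m}\|D^\beta\tilde\varphi\|_{L^2}^2$ and the Euclidean and torus $L^2$-integrals of a function supported away from $\partial[-L,L]^d$ agree. Passing to general $\alpha > 0$ again goes through real interpolation between adjacent integer endpoints, applied to the identity map viewed between the two Sobolev scales. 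Alternatively, one may proceed via the Fourier identity $\cF\tilde\varphi(k) = (2L)^{-d/2}\widehat{\tilde\varphi}(\pi k / L)$ together with Theorem \ref{theorem:equivalent_norms_H_spaces}, comparing the discrete weighted sum with the continuous weighted integral; this is valid because $\widehat{\tilde\varphi}$ decays rapidly by Paley--Wiener. The main obstacle is exactly this fractional range of $\alpha$ in the second step, where neither a pure derivative formula nor a Plancherel identity applies directly; real interpolation is the cleanest tool, and it goes through once one observes that the endpoint operators are uniformly bounded on the class of functions supported in a fixed compact subset of $(-L,L)^d$, which is standard.
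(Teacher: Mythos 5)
Your overall skeleton agrees with the paper's: reduce to the dense subspace $C_c^\infty([0,L]^d)$ and establish equivalence of the two norms there, passing through some intermediate quantity. The route you take, however, is genuinely different, and it is in that route that there is a gap.

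The paper's intermediate quantity is the $\Lambda_{2,2}^\alpha$ norm of Theorem~\ref{theorem:equivalent_spaces_to_H_alpha_R_d_and_torus}, which is defined intrinsically by finite differences. That choice is not incidental: because finite differences commute with the iterated odd reflection and with restriction to a fundamental domain (for a function compactly supported in the interior), the identities
$\|f\|_{\Lambda_{2,2}^\alpha(\T_L^d)} = \|f\|_{\Lambda_{2,2}^\alpha(\R^d)}$ and
$\|\tilde f\|_{\Lambda_{2,2}^\alpha(\T_{2L}^d)} = 2^{d/2}\|f\|_{\Lambda_{2,2}^\alpha(\T_L^d)}$
are direct computations, valid for every $\alpha>0$ at once. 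You instead use $\|\tilde\varphi\|_{H^\alpha(\R^d)}$ as the pivot and propose to push the integer-order computation to fractional $\alpha$ by real interpolation. The easy direction $\|\tilde\varphi\|_{H^\alpha(\R^d)}\lesssim\|\varphi\|_{H^\alpha(\R^d)}$ follows from the triangle inequality (write $\tilde\varphi = \sum_{\fq\in\{-1,1\}^d}(\prod\fq)\,\varphi(\fq\circ\cdot)$, each term having the same $H^\alpha(\R^d)$-norm as $\varphi$) and does not need interpolation. The problem is the reverse direction $\|\varphi\|_{H^\alpha(\R^d)}\lesssim\|\tilde\varphi\|_{H^\alpha(\R^d)}$. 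The map $\tilde\varphi\mapsto\varphi=\tilde\varphi\cdot\1_{[0,L]^d}$ is \emph{not} bounded on $H^\alpha(\R^d)$ for $\alpha\ge\tfrac12$ (the characteristic function of a cube is not a multiplier there), so one cannot simply interpolate it. It is bounded on the closed subspace of odd functions supported in $[-L,L]^d$, but interpolating an operator on a closed subspace gives a bound with respect to the interpolation norm of the subspaces, not with respect to the restriction of the ambient $H^\alpha(\R^d)$ norm. These genuinely differ on the Lions--Magenes exponents $\alpha\in\tfrac12+\N_0$: the interpolation space $[H_0^{0},H_0^{1}]_{1/2,2}$ is the strictly smaller $H_{00}^{1/2}$-type space. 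Your final sentence (``the endpoint operators are uniformly bounded on the class of functions supported in a fixed compact subset of $(-L,L)^d$'') does not resolve this, since $C_c^\infty((0,L)^d)$ is not contained in any such class with uniform distance from the boundary. The alternate route you sketch via Paley--Wiener and comparing the lattice sum in Theorem~\ref{theorem:equivalent_norms_H_spaces} with the continuous integral also requires a real theorem (Plancherel--P\'olya sampling for entire functions of exponential type) that is not in the paper and is of comparable difficulty to the original statement. In short: the integer-order computation is fine, but the passage to fractional $\alpha$ is the entire content of the theorem, and the interpolation argument as proposed does not close it. The paper's $\Lambda$-norm chain is precisely the device that handles all $\alpha>0$ simultaneously without running into the subspace-interpolation issue.
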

\begin{proof}
As $C_c^\infty([0,L]^d)$ is dense in $B_{2,2}^{\Dir, \alpha}([0,L]^d)$ (Theorem \ref{theorem:denseness_smooth_compact_in_dirchlet_besov}) it is sufficient to prove the equivalence of  the norms on $C_c^\infty([0,L]^d)$. 
Let $f \in C_c^\infty([0,L]^d)$. 
By definition of the $\Lambda_{2,2}^\alpha$ norm,  $ \|  f\|_{\Lambda_{2,2}^\alpha(\T_L^d)} =  \| f\|_{\Lambda_{2,2}^\alpha(\R^d)}$. 
As $D^\beta \tilde f = \widetilde{ D^\beta f} $ we have 
$\| \tilde f\|_{\Lambda_{2,2}^\alpha(\T_{2L}^d)} = 2^{\frac{d}{2}} \| f\|_{\Lambda_{2,2}^\alpha(\T_L^d)}$. 
Because $\| \tilde f \|_{B_{2,2}^{\alpha}(\T_{2L}^d)}= 2^{\frac{d}{2}} \| f \|_{B_{2,2}^{\Dir,\alpha}([0,L]^d)} $ (by definition), the proof follows by Theorem \ref{theorem:equivalent_spaces_to_H_alpha_R_d_and_torus}. 
\begin{calc}
Explaining the ``sufficient'' now. 
Let $f \in B_{2,2}^{\Dir,\alpha}([0,L]^d)$ and $g_n \in C_c^\infty([0,L]^d)$ be such that $g_n \rightarrow f$ in $B_{2,2}^{\Dir,\alpha}$. Then by the proved equivalences $(g_n)_n$ is Cauchy in $H_0^\alpha$, hence is  converging to $f$. This extends the equivalence of norms on $C_c^\infty([0,L]^d)$ to the whole space. 
\end{calc}
\end{proof}

\begin{theorem}
\label{theorem:compact_embeddings_besov_spaces}
Let $p,q \in [1,\infty]$ and $\beta,\gamma \in \R$, $\gamma <\beta$. 
Then $B_{p,q}^\beta(\T_{2L}^d)$ is compactly embedded in $B_{p,q}^\gamma(\T_{2L}^d)$, i.e., every bounded set in $B_{p,q}^\beta(\T_{2L}^d)$ is compact in $B_{p,q}^\gamma(\T_{2L}^d)$. 
The analogous statement holds for $B_{p,q}^{\Dir,\beta}([0,L]^d)$ and $B_{p,q}^{\Even,\beta}([0,L]^d)$ spaces. 
In particular, the injection $j : H_0^\beta ([0,L]^d) \rightarrow H_0^\gamma ([0,L]^d)$ is a compact operator.
\end{theorem}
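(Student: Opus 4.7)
The plan is to prove the compactness first on the torus $\T_{2L}^d$ and then transfer it to the Dirichlet/Neumann spaces using the extension isometries, finally invoking Theorem~\ref{theorem:equivalences_H_0_spaces} for the $H_0$-statement.

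For the torus, I would use the classical truncation argument adapted to the discrete Fourier spectrum of $\T_{2L}^d$. Let $(u_n)_{n\in\N}$ be a bounded sequence in $B_{p,q}^\beta(\T_{2L}^d)$, and for $N\in\N$ set $S_N u = \sum_{j=-1}^N \Delta_j u$. The key observation is that since $\T_{2L}^d$ has discrete Fourier spectrum $\Z^d$, each block $\Delta_j u$ is supported on the finitely many frequencies $k\in\Z^d$ with $k/L \in \supp\rho_j$, so $S_N u_n$ lives in a finite-dimensional space whose dimension depends only on $N$ and $L$. A Bolzano--Weierstrass argument combined with a diagonal extraction produces a subsequence $(u_{n_k})$ such that $S_N u_{n_k}$ is Cauchy (in every $L^p$ and hence in every Besov norm) for every fixed $N$. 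Meanwhile, since $\gamma<\beta$,
\begin{equation*}
\|u_n - S_N u_n\|_{B_{p,q}^\gamma}
= \bigl\|(2^{j\gamma}\|\Delta_j u_n\|_{L^p})_{j>N}\bigr\|_{\ell^q}
\le 2^{-N(\beta-\gamma)} \|u_n\|_{B_{p,q}^\beta},
\end{equation*}
which tends to $0$ as $N\to\infty$, uniformly in $n$. Combining these yields that $(u_{n_k})$ is Cauchy in $B_{p,q}^\gamma(\T_{2L}^d)$; by weak-$*$ sequential completeness of $\cS'(\T_{2L}^d)$ (Theorem~\ref{theorem:closedness_and_completeness_of_S_spaces}\ref{item:S_dual_spaces_complete}) and standard arguments the limit lies in $B_{p,q}^\gamma(\T_{2L}^d)$.

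To transfer to the Dirichlet case, I would use that $u \mapsto \tilde u$ is, by Definition~\ref{def:Dir_and_Neu_Besov}, an isometry (up to the constant $a_{d,p}$) from $B_{p,q}^{\Dir,\alpha}([0,L]^d)$ onto the closed subspace $\tilde\cS_0'(\T_{2L}^d)\cap B_{p,q}^\alpha(\T_{2L}^d)$ of $B_{p,q}^\alpha(\T_{2L}^d)$, and similarly $v\mapsto\overline v$ for the Neumann case (using Theorem~\ref{theorem:closedness_and_completeness_of_S_spaces}\ref{item:tilde_dual_S_spaces_closed}). Given a bounded sequence $(u_n)$ in $B_{p,q}^{\Dir,\beta}([0,L]^d)$, the sequence $(\tilde u_n)$ is bounded in $B_{p,q}^\beta(\T_{2L}^d)$, hence admits a subsequence converging in $B_{p,q}^\gamma(\T_{2L}^d)$ to some $w$; since the odd subspace is closed, $w=\tilde u$ for some $u\in B_{p,q}^{\Dir,\gamma}([0,L]^d)$, and the isometry yields $u_{n_k}\to u$ in $B_{p,q}^{\Dir,\gamma}([0,L]^d)$. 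The Neumann case is identical.

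The final assertion about the injection $j\colon H_0^\beta([0,L]^d)\to H_0^\gamma([0,L]^d)$ then follows immediately: by Theorem~\ref{theorem:equivalences_H_0_spaces}, $H_0^\alpha([0,L]^d) = B_{2,2}^{\Dir,\alpha}([0,L]^d)$ with equivalent norms for $\alpha>0$, so compactness of the Dirichlet Besov embedding gives compactness of $j$. I do not anticipate any serious obstacle; the only minor subtlety is checking that the diagonal subsequence is Cauchy in $B_{p,q}^\gamma$ (using the uniform tail bound together with Cauchyness of each finite $S_N u_{n_k}$ in every norm), which is routine once the finite-dimensionality of each Littlewood--Paley block on the torus is observed.
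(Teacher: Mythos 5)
Your proof is correct and has the same overall architecture as the paper's: split the Besov norm into a high-frequency tail (controlled uniformly by $2^{-N(\beta-\gamma)}$ thanks to $\gamma<\beta$) and a low-frequency part (along which one extracts a convergent subsequence), and transfer to $B_{p,q}^{\Dir/\Even}$ via the odd/even extension isometries and Theorem~\ref{theorem:closedness_and_completeness_of_S_spaces}, then to $H_0^\gamma$ via Theorem~\ref{theorem:equivalences_H_0_spaces}. The one place where you diverge is the extraction step: the paper invokes the Fatou-type compactness result \cite[Theorem 2.72]{BaChDa11} to produce a subsequence converging in $\cS'(\T_{2L}^d)$ with limit in $B_{p,q}^\beta$, from which blockwise $L^p$-convergence follows, whereas you exploit the discreteness of the Fourier spectrum on $\T_{2L}^d$ directly: each $\Delta_j$ has finite-dimensional range, so Bolzano--Weierstrass plus a diagonal argument gives a subsequence with $\Delta_j u_{n_k}$ convergent for every fixed $j$. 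Your route is more elementary and self-contained (it does not presuppose the BCD result), at the price of relying on the compactness of the spatial domain in a way that would not transfer to $\R^d$; since the whole theorem is torus/box-specific anyway, this is no loss here. One small inaccuracy: the displayed equality $\|u_n - S_N u_n\|_{B_{p,q}^\gamma} = \|(2^{j\gamma}\|\Delta_j u_n\|_{L^p})_{j>N}\|_{\ell^q}$ is not exact, because $\Delta_j(u_n-S_Nu_n)$ for $j\in\{N,N+1\}$ differs from $\Delta_j u_n$ by boundary terms from the overlapping partition of unity; one only has a two-sided comparison up to a universal constant. This does not affect the conclusion, since the final bound $\lesssim 2^{-N(\beta-\gamma)}\|u_n\|_{B_{p,q}^\beta}$ survives the constant.
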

\begin{proof}
We consider the underlying space to be $\T_{2L}^d$, i.e., periodic boundary conditions; the other cases follow by Theorem \ref{theorem:closedness_and_completeness_of_S_spaces}.
Suppose that $u_n \in B_{p,q}^\beta$ and $\|u_n\|_{B_{p,q}^\beta} \le 1$ for all $n\in\N$. We prove that there is a subsequence of $(u_n)_{n\in\N}$ that converges in $B_{p,q}^\gamma$. 
By \cite[Theorem 2.72]{BaChDa11} there exists a subsequence of $(u_n)_{n\in\N}$, which we assume to be the sequence itself, such that $u_n \rightarrow u$ in $\cS'$ and $\|u\|_{B_{p,q}^\beta} \le 1 $. As $\langle u_n, e_k \rangle \rightarrow \langle u, e_k \rangle$ for all $k\in \Z^d$, we have $\| \Delta_j (u_n -u )\|_{L^p} \rightarrow 0$ for all $j\in \N_{-1}$. 
Let $\epsilon>0$. 
Choose $J \in \N$ large enough such that $2^{(\gamma - \beta) J} < \epsilon$, so that for all $n\in\N$
\begin{align*}
\| ( 2^{\gamma j} \|\Delta_j (u_n -u) \|_{L^p})_{j=J+1}^\infty \|_{\ell^q}  
&  \le  2^{(\gamma - \beta) J} \| ( 2^{\beta j} \|\Delta_j (u_n -u) \|_{L^p})_{j=J+1}^\infty \|_{\ell^q} \\
& \le  2^{(\gamma - \beta) J} (\|u_n\|_{B_{p,q}^\beta} + \|u\|_{B_{p,q}^\beta}) 
< 2\epsilon. 
\end{align*}
Then, by choosing $N\in \N$ large enough such that $\| ( 2^{\gamma j} \|\Delta_j (u_n -u) \|_{L^p})_{j=-1}^J \|_{\ell^q} < \epsilon$  for all $n\ge N$, one has with the above bound that $\| u_n - u\|_{B_{p,q}^\gamma} < 3 \epsilon$ for all $n\ge N$. 
\end{proof}

%

\begin{obs}
\label{obs:H_0_equals_L_2}
Observe that by Lemma \ref{lemma:basis_d_k_and_f_k} $H_0^0([0,L]^d) = H_\fn^0([0,L]^d) = L^2([0,L]^d)$ and $\|\cdot\|_{H_0^0} \eqsim \|\cdot\|_{H_\fn^0} \eqsim \|\cdot\|_{L^2}$. 
\end{obs}

\begin{obs}
\label{obs:inverse_laplacian_plus_one}
By \ref{obs:derivatives_and_fourier_transforms} we have $(a-\Delta)^{-1} f = \sigma(\rD) f$ for $\sigma(x) = (a+\pi^2 |x|^2)^{-1}$.
\end{obs}

\begin{obs}
\label{obs:scaling_fourier_multipliers}
For any function $\varphi$ and $\lambda \in \R$ we write $l_\lambda \varphi$ for the function $x\mapsto \varphi(\lambda x)$. 
For a distribution $u$ we write $l_\lambda u$ for the distribution given by $\langle l_\lambda u , \varphi \rangle = \lambda^{-d} \langle  u , l_{\frac{1}{\lambda}} \varphi \rangle  $. 
As $l_\lambda e_{k,2L} = \lambda^{-\frac{d}{2}} e_{k, \frac{2L}{\lambda}}$, 
and 
$\langle l_\lambda u , e_{k,\frac{2L}{\lambda}} \rangle = \lambda^{-\frac{d}{2}} \langle u, e_{k,2L} \rangle$, 
we have for $u \in \cS'(\T_{2L}^d)$
\begin{align}
\label{eqn:scaling_and_fourier_multipliers}
l_\lambda [ \sigma(\lambda \rD) u ]  = \sigma(\rD) [l_\lambda u]. 
\end{align}
Similarly, \eqref{eqn:scaling_and_fourier_multipliers} holds for 
$u\in \cS'_0([0,L]^d)$ and $u\in  \cS'_\fn([0,L]^d)$ (use e.g. \ref{obs:odd_and_even_functions_and_their_fourier_transform_and_inner_prod}).
\begin{calc}
If $\varphi  \in \cS([0,L]^d)$, then $l_\lambda \varphi \in \cS( [0, \frac{L}{\lambda}]^d)$. 
If $u \in \cS'([0,L]^d)$, then $l_\lambda u \in \cS'([0,\frac{L}{\lambda}]^d)$. 
\begin{align*}
l_\lambda [ \sigma(\rD) w ] = 
l_\lambda [ 
 \sum_{k\in\Z^d} \langle w, e_k \rangle \sigma(\tfrac{k}{L}) e_k
]
= \lambda^{-\frac{d}{2}}  \sum_{k\in\Z^d} \langle w, e_k \rangle \sigma(\tfrac{k}{L}) e_{k, \frac{2L}{\lambda}}
\\
\sigma(\rD) [l_\lambda w]
=  \sum_{k\in\Z^d} \langle l_\lambda w, e_{k,\frac{2L}{\lambda}} \rangle \sigma(\tfrac{\lambda k}{L}) e_{k,\frac{2L}{\lambda}}
\end{align*}
\end{calc}
\end{obs}

\begin{definition}
\label{def:spaces_on_general_boxes}

Let $y \in \R^d$, $ s \in (0,\infty)^d$ and 
\bfold 
\Gamma = y + \prod_{i=1}^d [0,s_i]. 
\efold 
Let $l : \prod_{i=1}^d [0 , s_i ] \rightarrow [0,1]^d$ be given by $l(x) = ( \frac{x_1 }{s_1}, \dots, \frac{x_d }{s_d} ) $. 
For a function $\varphi$ we define new functions $l\varphi$ and $\cT_y \varphi$ by $l \varphi(x)  =\varphi \circ l(x)$ and $\cT_y \varphi(x)= \varphi(x-y)$ and for a distribution $u$ we define the distributions $lu$ and $\cT_y u$ by $\langle lu, \varphi \rangle = |\det l |^{-1} \langle u, l^{-1} \varphi \rangle$ and $\langle \cT_y u, \varphi \rangle = \langle u, \cT_y^{-1} \varphi \rangle$. 
We define
\begin{align}
\notag \cS_0(\Gamma) & := \cT_y l [\cS_0([0,1]^d)], 
\qquad  \cS_0'( \Gamma )  := \cT_y l (\cS_0'([0,1]^d)), \\
\label{eqn:definition_sigma_D_on_rectangle}
\sigma(\rD) u & := \cT_y l [ (l\sigma)(\rD) ( (\cT_y l)^{-1} u) ] \qquad \mbox{ for } u \in \cS_0'(\Gamma). 
\end{align} 
Note that the definition of $\sigma(\rD)u$ is consistent with \eqref{eqn:definition_sigma_D} by \ref{obs:scaling_fourier_multipliers}.
\begin{calc}
Indeed, for $u\in \cS_0'([0,L]^d)$ we have $l_L u \in \cS_0'([0,1]^d)$, $ | \det l_L | = L^d$  and 
\begin{align*}
 \| \sigma (\rD) l_L u \|_{L^p([0,1]^d)} 
= \| l_L[ \sigma (L \rD) u] \|_{L^p([0,1]^d)} 
= L^{-\frac{d}{p}}  \| \sigma (L \rD) u \|_{L^p([0,L]^d)} . 
\end{align*}
\begin{hide}
This is not equal to $L^{-\frac{d}{p}}  \| \Delta_i f \|_{L^p([0,L]^d)} $ because of the factor $L$ in $\rho_i$. But this does not matter for the equivalence with the $H^\alpha$ spaces as we see later. 
\end{hide}
\end{calc}
Moreover, we define $\Delta_i = \rho_i(\rD)$ (as in \eqref{eqn:definition_sigma_D_on_rectangle}) and 
\begin{align*}
\|u\|_{B_{p,q}^{\Dir,\alpha}}(\Gamma) 
:=  \| ( 2^{i\alpha} \| \Delta_i u \|_{L^p})_{i \in \N_{-1}}\|_{\ell^q}.  
\end{align*}
Similarly, we define $\cS_\fn(\Gamma),\cS_\fn'(\Gamma),B_{p,q}^{\fn,\alpha}(\Gamma)$ and $\|\cdot\|_{B_{p,q}^{\fn,\alpha}(\Gamma)}$.

\begin{hide}
OLD (when $\Delta_i = \rho_i(L\rD)$):
\begin{align*}
\|u\|_{B_{p,q}^{\Dir,\alpha}}(\Gamma) 
:= |\det l|^{-\frac{1}{p}} \left\| (\cT_y l)^{-1} u \right\|_{B_{p,q}^{\Dir,\alpha}([0,1]^d)} ,  
\end{align*}
where for $p = \infty$ we make the convention that $|\det l|^{-\frac{1}{p}}=1$. 
Observe that this agrees with our definition of the $B_{p,q}^{\Dir,\alpha}$ on $[0,L]^d$ for all $L>0$ (see also \ref{obs:scaling_fourier_multipliers}). 
Indeed, for $f\in B_{p,q}^{\Dir,\alpha}([0,L]^d)$ we have $l_L f \in B_{p,q}^{\Dir,\alpha}([0,1]^d)$, $ | \det l_L | = L^d$  and 
\begin{align*}
\| \Delta_i l_L f \|_{L^p([0,1]^d)} 
& = \| \rho_i (\rD) l_L f \|_{L^p([0,1]^d)} 
= \| l_L[\rho_i (L \rD) f] \|_{L^p([0,1]^d)} \\
& = L^{-\frac{d}{p}}  \| \rho_i (L \rD) f \|_{L^p([0,L]^d)} 
= L^{-\frac{d}{p}}  \| \Delta_i f \|_{L^p([0,L]^d)}. 
\end{align*}
Note that for $s = (L,L,\dots,L)$ the corresponding $l$ equals $l_L^{-1}$. So $|\det l_L^{-1}|^{-\frac{1}{p}} = L^{\frac{d}{p}}$ and indeed
\begin{align*}
\|f\|_{B_{p,q}^{\Dir,\alpha}}([0,L]^d)
= L^{\frac{d}{p}} \| l_L f\|_{B_{p,q}^{\Dir,\alpha}}([0,1]^d). 
\end{align*}
\end{hide}

\end{definition}

The following theorem gives a bound on Fourier multipliers, similar as in \cite[Theorem 2.78]{BaChDa11}. However, considering the particular choice $H^\gamma(\T_{2L}^d)= B_{2,2}^\gamma(\T_{2L}^d)$ allows us to reduce condition to control all derivatives of $\sigma$ to a condition that only controls the growth of $\sigma$ itself. 

\begin{theorem} 
\label{theorem:schauder} 
Let $\gamma,m \in \R$ and $M>0$. There exists a $C>0$ such that the following statements hold. 
\begin{enumerate}
\item 
For all bounded $\sigma : \R^d  \rightarrow \R$ such that 
$| \sigma(x) | \le M ( 1+ |x|)^{-m}$ for all $x\in \R^d$, 
\begin{align}
\label{eqn:fourier_multiplier_bound_on_H}
\|\sigma(\rD)w  \|_{H^{\gamma +m }} & \le C  \|w \|_{H^\gamma} \qquad (w\in \cS'(\T_{2L}^d)).
\end{align}
By \eqref{eqn:DDelta_and_Delta_of_tilde}, one may replace ``$H$'' and ``$\cS'(\T_{2L}^d)$'' by ``$H_0$'' and ``$\cS'_0([0,L]^d)$'' or ``$H_\fn$'' and ``$\cS'_\fn([0,L]^d)$'' in \eqref{eqn:fourier_multiplier_bound_on_H}. 
\item 
For all $\sigma : \R^d  \rightarrow \R$ which are $C^\infty$ on $\R^d \setminus \{0\}$, such that  $|\partial^\alpha \sigma(x)| \le M |x|^{-m-|\alpha|}$ for all $x \in \R^d\setminus \{0\}$ and $\alpha \in \N_0^d$ with $|\alpha| \le 2 \lfloor 1+ \frac{d}{2} \rfloor$, 
\begin{align}
\label{eqn:fourier_multiplier_bound_on_C}
\|\sigma(\rD)w  \|_{\cC^{\gamma +m }} & \le C  \|w \|_{\cC^\gamma}  \qquad (w\in \cS'(\T_{2L}^d)).
\end{align}
By \eqref{eqn:DDelta_and_Delta_of_tilde}, one may replace ``$\cC$'' and ``$\cS'(\T_{2L}^d)$'' by ``$\cC_\fn$'' and ``$\cC'_\fn([0,L]^d)$'' in \eqref{eqn:fourier_multiplier_bound_on_C}. 
\end{enumerate}
\end{theorem}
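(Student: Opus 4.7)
The plan is to reduce both parts to the periodic case on $\T_{2L}^d$ and then to argue by a direct Plancherel computation for (1) and by Littlewood-Paley kernel estimates for (2). The reduction is clean: by \eqref{eqn:DDelta_and_Delta_of_tilde}, a Fourier multiplier on $\cS_0'([0,L]^d)$ or $\cS_\fn'([0,L]^d)$ corresponds, after odd resp.\ even extension, to the multiplier $\overline\sigma(\rD)$ on $\cS'(\T_{2L}^d)$, and the Besov norms of $u$, $v$ coincide with those of $\tilde u$, $\overline v$ up to the factor $a_{d,p}$. It thus suffices to prove both inequalities for $w\in\cS'(\T_{2L}^d)$.

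For part (1), I would use the Plancherel characterisation $\|w\|_{H^\gamma(\T_{2L}^d)}^2\eqsim \sum_{k\in\Z^d}(1+|k/L|^2)^\gamma|\langle w,e_k\rangle|^2$, which follows from Plancherel and \eqref{eqn:sum_of_power_2_rho_j_and_one_plus_mod_sq}. This yields
\[
\|\sigma(\rD)w\|_{H^{\gamma+m}}^2 \eqsim \sum_{k\in\Z^d}(1+|k/L|^2)^{\gamma+m}|\sigma(k/L)|^2|\langle w,e_k\rangle|^2,
\]
and the hypothesis $|\sigma(x)|\le M(1+|x|)^{-m}$ combined with $(1+|x|^2)^{m}\eqsim(1+|x|)^{2m}$ bounds the right-hand side by a multiple of $\|w\|_{H^\gamma}^2$. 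No smoothness of $\sigma$ is needed, which is precisely the simplification over \cite[Theorem 2.78]{BaChDa11} advertised in the theorem's introduction.

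For part (2), I would follow the standard Littlewood-Paley approach. Fix $j\in\N_{-1}$ and set $\widetilde\rho_j:=\rho_{j-1}+\rho_j+\rho_{j+1}$ (with $\rho_{-2}\equiv 0$); since $\widetilde\rho_j\equiv 1$ on $\supp\rho_j$, one has $\Delta_j\sigma(\rD)w = K_j*\Delta_j w$, where $K_j\in C^\infty(\T_{2L}^d)$ has Fourier coefficients $\widetilde\rho_j(k/L)\sigma(k/L)$. Young's inequality then gives $\|\Delta_j\sigma(\rD)w\|_{L^\infty}\le \|K_j\|_{L^1(\T_{2L}^d)}\|\Delta_j w\|_{L^\infty}$, so the whole estimate \eqref{eqn:fourier_multiplier_bound_on_C} reduces to the uniform kernel bound
\[
\|K_j\|_{L^1(\T_{2L}^d)}\lesssim 2^{-jm}, \qquad j\in\N_{-1};
\]
multiplying by $2^{j(\gamma+m)}$ and taking the supremum over $j$ then concludes. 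For this kernel bound Poisson summation dominates $\|K_j\|_{L^1(\T_{2L}^d)}$ by $\|F_j\|_{L^1(\R^d)}$ with $F_j:=\cF^{-1}_{\R^d}(\widetilde\rho_j\sigma)$. For $j\ge 1$ the rescaling $F_j(x)=2^{jd}G_j(2^j x)$, where $G_j$ is the inverse Fourier transform of $\eta\mapsto\widetilde\eta_0(\eta)\sigma(2^j\eta)$ for a suitable fixed bump $\widetilde\eta_0$ supported in an annulus where $|\eta|\sim 1$, reduces matters to $G_j$. On $\supp\widetilde\eta_0$ the chain rule combined with $|\partial^\alpha\sigma(x)|\le M|x|^{-m-|\alpha|}$ yields $|\partial^\alpha[\widetilde\eta_0(\cdot)\sigma(2^j\cdot)]|\lesssim 2^{-jm}$ uniformly in $j$ for $|\alpha|\le 2\lfloor 1+d/2\rfloor$; integration by parts then gives $|G_j(y)|\lesssim 2^{-jm}(1+|y|)^{-2\lfloor 1+d/2\rfloor}$, which is integrable, so $\|F_j\|_{L^1(\R^d)}=\|G_j\|_{L^1(\R^d)}\lesssim 2^{-jm}$.

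The step I expect to be the main obstacle is the low-frequency block $j\in\{-1,0\}$, where $\widetilde\rho_j$ meets the origin and the rescaling above no longer isolates $\sigma$ from its singularity at $0$. Here I would argue directly on the torus: only finitely many Fourier modes $k\in\Z^d$ (those with $k/L\in\supp\widetilde\rho_j$) contribute to $K_j$, so that, after fixing a convention at the single problematic mode $k=0$ (e.g.\ $\sigma(0):=0$), the sum $\sum_k|\widetilde\rho_j(k/L)\sigma(k/L)|$ is finite and provides a bound on $\|K_j\|_{L^1(\T_{2L}^d)}$ that is harmless, as it concerns only two values of $j$ and can be absorbed into the overall constant $C$.
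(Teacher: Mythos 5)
Your proposal is correct and follows essentially the same route as the paper, just with more of the details filled in. For part (1) the paper argues block by block: on $\supp\rho_j$ with $j\ge 0$ one has $|k/L|\gtrsim 2^j$, so $|\sigma(k/L)|\lesssim 2^{-jm}$, and Plancherel then gives $\|\sigma(\rD)\Delta_j w\|_{L^2}\lesssim 2^{-jm}\|\Delta_j w\|_{L^2}$; your version sums the blocks first and uses the global norm equivalence $\|w\|_{H^\gamma}^2\eqsim\sum_k(1+|k/L|^2)^\gamma|\langle w,e_k\rangle|^2$ together with $(1+|x|^2)^m(1+|x|)^{-2m}\eqsim 1$, which is the same computation packaged differently, and your formulation makes clear that the boundedness hypothesis on $\sigma$ is essentially redundant. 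For part (2) the paper simply cites \cite[Lemma 2.2]{BaChDa11}; your kernel argument via $\widetilde\rho_j$, rescaling, integration by parts and Poisson summation is precisely what a self-contained proof of that lemma, transplanted to $\T_{2L}^d$, looks like, so you have supplied the detail the paper delegates to the reference. Your flagged obstacle at $j\in\{-1,0\}$ is a genuine subtlety of that transplantation — the rescaling trick fails when $\widetilde\rho_j$ meets the origin — but it is harmless here: the theorem is stated for a fixed $L$ (contrast Theorem~\ref{theorem:bony_estimates}, which explicitly asserts $L$-uniformity), so the finitely-many-mode bound you give may depend on $L$; moreover in the only instance where the paper actually invokes part (2), namely $\sigma(x)=(1+\pi^2|x|^2)^{-1}$ (see \ref{obs:sigma_one_plus_x_sq_inverse_satisfies_conditions}), the symbol is smooth and bounded at the origin, so the low-frequency block causes no loss at all.
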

\begin{proof}
Let $a>0$ be such that $\rho(k) = 0$ if $|k|<a$. 
Then for $j \ge 0$ one has  $|\rho_j(k) \sigma(k)|\le M (1+\frac{a 2^j}{L})^{-m} \rho_j(k)  \le M L^m a^{-m} 2^{-j m} \rho_j(k) $ for all $k\in \Z^d$. As $\sigma$ is bounded on the support of $\rho_{-1}$, there exists a $C>0$ such that for all $j \in \N_{-1}$
\begin{align*}
\| \sigma(\rD) \Delta_j w \|_{L^2} 
&= \sqrt{ \sum_{k\in \Z^d} |w(e_k)|^2 |\sigma(\tfrac{k}{L})|^2 |\rho_j(k)|^2 }
\le C 2^{-jm} \|  \Delta_j w \|_{L^2} .
\end{align*}
\eqref{eqn:fourier_multiplier_bound_on_C} follows from 
\cite[Lemma 2.2]{BaChDa11}.
\end{proof}

\begin{obs}
\label{obs:sigma_one_plus_x_sq_inverse_satisfies_conditions}
Using the multivariate chain rule (Fa\`a di Bruno's formula) one can prove that $\sigma(x) = (1+\pi^2|x|^2)^{-1}$ satisfies the conditions in Theorem \ref{theorem:schauder} (those needed for \eqref{eqn:fourier_multiplier_bound_on_C}). 
\end{obs}

One other bound that we will refer to is a special case of \cite[Proposition 2.71]{BaChDa11}:

\begin{theorem}
\label{theorem:inclusion_of_H_into_C}
For all $\alpha \in \R$ there exists a $C>0$ such that 
\bfold 
 \|w\|_{\cC_\fn^{\alpha}} \le C \|w\|_{H_\fn^{\alpha+\frac{d}{2}}} 
\efold 
for all $w\in \cS_\fn'([0,L]^d)$.
\end{theorem}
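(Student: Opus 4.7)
The goal is the Besov embedding $B_{2,2}^{\fn,\alpha+d/2}([0,L]^d) \hookrightarrow B_{\infty,\infty}^{\fn,\alpha}([0,L]^d)$, which is stated in the theorem as $H_\fn^{\alpha+d/2} \hookrightarrow \cC_\fn^\alpha$. The plan is to reduce the inequality to the corresponding one on the torus $\T_{2L}^d$ via the even extension map and then apply a standard Bernstein-type estimate. The Schauder-style argument of Theorem~\ref{theorem:schauder} is not directly applicable because it controls multiplier norms, not the $L^\infty$ versus $L^2$ comparison of a fixed Littlewood-Paley block; but the underlying idea (Plancherel on each dyadic block) is the same.

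First I would translate both norms to the torus. By Definition~\ref{def:Dir_and_Neu_Besov}, for $w \in \cS_\fn'([0,L]^d)$ one has
\begin{align*}
\|w\|_{\cC_\fn^{\alpha}} = \|\overline{w}\|_{B_{\infty,\infty}^{\alpha}(\T_{2L}^d)},
\qquad
\|w\|_{H_\fn^{\alpha+d/2}} = a_{d,2}\,\|\overline{w}\|_{B_{2,2}^{\alpha+d/2}(\T_{2L}^d)},
\end{align*}
so it suffices to prove that for every $\alpha \in \R$ there exists $C>0$ with
\begin{align*}
\|u\|_{B_{\infty,\infty}^{\alpha}(\T_{2L}^d)} \le C \|u\|_{B_{2,2}^{\alpha+d/2}(\T_{2L}^d)} \qquad (u \in \cS'(\T_{2L}^d)).
\end{align*}

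Next I would establish the required Bernstein inequality on each Littlewood-Paley block. Each $\Delta_j u$ is a trigonometric polynomial whose Fourier coefficients are supported in $\{k \in \Z^d : \tfrac{k}{L} \in \supp \rho_j\}$, a set of cardinality at most $C\, 2^{jd} L^d$ (for $j \ge 0$, with the obvious bound for $j=-1$). By Cauchy-Schwarz on the Fourier series expansion,
\begin{align*}
\|\Delta_j u\|_{L^\infty(\T_{2L}^d)}
\le \sum_{k} |\langle u, e_k\rangle|\,|\rho_j(\tfrac{k}{L})|\,\|e_k\|_{L^\infty}
\lesssim 2^{j d/2} \|\Delta_j u\|_{L^2(\T_{2L}^d)},
\end{align*}
where the implicit constant absorbs $L$-dependent factors arising from $\|e_k\|_{L^\infty} = (2L)^{-d/2}$ and the volume counting. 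This is the content of \cite[Lemma 2.1/Proposition 2.71]{BaChDa11}.

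Finally, multiplying through by $2^{j\alpha}$ and taking suprema, we get
\begin{align*}
\|u\|_{B_{\infty,\infty}^{\alpha}} = \sup_{j \ge -1} 2^{j\alpha} \|\Delta_j u\|_{L^\infty}
\lesssim \sup_{j \ge -1} 2^{j(\alpha + d/2)} \|\Delta_j u\|_{L^2}
\le \|u\|_{B_{2,2}^{\alpha+d/2}},
\end{align*}
where the last step uses that the $\ell^\infty$ norm of a sequence is dominated by its $\ell^2$ norm. Combining with the first step completes the proof. I do not expect a real obstacle: the only mildly delicate point is tracking the $L$-dependent constants when passing between the torus and the box, but since the theorem only asserts existence of $C$ (which may depend on $L$, $\alpha$, $d$), these are absorbed into $C$ without issue.
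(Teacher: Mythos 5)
Your proof is correct and is essentially the paper's proof made explicit: the paper simply reduces to the torus via the even extension (built into Definition~\ref{def:Dir_and_Neu_Besov}) and cites \cite[Proposition 2.71]{BaChDa11} for the Besov embedding $B_{2,2}^{\alpha+d/2}\hookrightarrow B_{\infty,\infty}^{\alpha}$, which is exactly the Bernstein-plus-Cauchy--Schwarz argument you spell out. One small refinement worth noting: the $L$-dependence in fact cancels exactly (the $(2L)^{-d/2}$ from $\|e_k\|_{L^\infty}$ against the $L^{d/2}$ from the lattice-point count), so the constant $C$ is $L$-independent rather than merely absorbed, which matters elsewhere in the paper where uniformity over box sizes is used.
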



Now 
we consider (para- and resonance-) products between elements of $\cS_0'([0,L]^d)$ and $\cS_\fn'([0,L]^d)$, and between elements of $\cS_\fn'([0,L]^d)$. 

\begin{obs}
\label{obs:products_of_cS_distributions}
Let $w_1,w_2 \in \cS'(\T_{2L}^d)$ be represented by $w_1 = \sum_{k \in \Z^d} a_k e_k$ and $w_2 = \sum_{l \in \Z^d} b_l e_l$. 
Then formally $w_1 w_2 = \sum_{m\in \Z^d} c_m e_m$, with $c_m = \sum_{k,l\in \Z^d, k+l=m} a_k b_l$. 

\begin{calc}
Let us first observe the following. 
\begin{itemize}
\item If $(a_k)_{k\in\Z^d}$ and $(b_k)_{k\in \Z^d}$ satisfy \eqref{eqn:growth_coefficients}, then so does $(c_m)_{m\in\Z^d}$.
Indeed, by assuming that $a_0=b_0=0$ and using that $|kl| \ge \frac{|k+l|}{2}$ for $k,l \ne 0$ we have for all large enough $n\in\N$
\begin{align*}
|c_m| \lesssim \sum_{\substack{k,l \in \Z^d \setminus \{0\} \\ k+l = m} } |kl |^{-n} 
\lesssim \sum_{\substack{k,l \in \Z^d \setminus \{0\}\\ k+l = m} } |kl |^{-\frac{n}{2}} |k+l|^{-\frac{n}{2}} 
\lesssim |m|^{-\frac{n}{2}}. 
\end{align*}
\item If $(a_k)_{k\in\Z^d}$ and $(b_k)_{k\in \Z^d}$ satisfy \eqref{eqn:growth_coefficients_distri}, then $(c_m)_{m\in\Z^d}$ as above might not (take $a_k = b_k = |k|^n$ for some $n\in\N$). 
\end{itemize}
\end{calc}
Of course this series is not always convergent (e.g. take $a_k = b_k = |k|^n$ for some $n\in\N$ and see \eqref{eqn:growth_coefficients_distri}).
But if it does, then due to the identities 
\begin{align}
\label{eqn:basis_prod_odd_even}
(2L)^{\frac{d}{2}} \tilde \fd_k \overline \fn_l 
& 
\begin{calc2} 
= (-\i)^d \nu_l \sum_{\fq, \fp \in \{-1,1\}^d} (\prod \fq) e_{\fq \circ (k + \fp \circ l)} 
\end{calc2}
= \nu_l \sum_{\fp \in \{-1,1\}^d} \tilde \fd_{k+\fp \circ l}, \\
\label{eqn:basis_prod_odd_odd}
(2L)^{\frac{d}{2}} \tilde \fd_k \tilde \fd_l 
&
\begin{calc2}
 = (-1)^d \sum_{\fq,\fp \in \{-1,1\}^d} (\prod \fp) e_{\fq \circ (k + \fp \circ l)} 
 \end{calc2}
= (-1)^d \sum_{\fp \in \{-1,1\}^d} \nu_{k+\fp \circ l}^{-1} (\prod \fp) \overline \fn_{ k + \fp \circ l},  \\
\label{eqn:basis_prod_even_even}
(2L)^{\frac{d}{2}} \overline \fn_k \overline \fn_l 
& 
\begin{calc2}
= \nu_k \nu_l \sum_{\fq,\fp \in \{-1,1\}^d}  e_{\fq \circ (k + \fp \circ l)} 
\end{calc2}
= \sum_{\fp \in \{-1,1\}^d} \frac{\nu_k \nu_l}{\nu_{k+\fp \circ l}} \overline \fn_{ k + \fp \circ l}, 
\end{align}
the product obeys the following rules 
\begin{align*}
\mbox{even} \times \mbox{even} = \mbox{even}, 
\quad \mbox{odd} \times \mbox{even} = \mbox{odd},
\quad \mbox{odd} \times \mbox{odd} = \mbox{even}.
\end{align*}
For example, if $u \in \cS_0'$ and $v\in \cS_\fn'$ and $uv$ exists in a proper sense, then $uv \in \cS_0'$.
\end{obs}

\begin{definition}
\label{definition:paraproducts_odd_with_even}
For $u\in \cS_0'([0,L]^d) \cup  \cS_\fn'([0,L]^d)$ and $v\in  \cS_\fn'([0,L]^d)$ we write (at least formally)
\begin{align}
\label{eqn:para_oe_and_reso}
u \para v  = v \arap u = \sum_{{\substack{ i,j\in \N_{-1} \\ i \le j-1}}} \Delta_i  u \Delta_j v , \qquad 
u \reso v  = \sum_{{\substack{ i,j\in \N_{-1} \\ |i-j|\le 1 }}} \Delta_i  u \Delta_j v .
\end{align}
\end{definition}

\begin{obs}
\label{obs:para_and_resonance_products_extended_to_torus}
As $\widetilde{ \fd_k \fn_m} = \tilde \fd_k \overline \fn_m$ and $\overline{\fn_k \fn_m} = \overline{\fn}_k \overline{\fn}_m$, we have (at least formally) 
\begin{align}
\label{eqn:tilde_of_products}
\widetilde{u \para v} = \tilde u \para \overline{v}, \qquad
\widetilde{u \arap v} = \tilde u \arap \overline{v}, \qquad 
\widetilde{u \reso v} = \tilde u \reso \overline{v}, \\
\overline{u \para v} = \overline{u} \para \overline{v}, \qquad
\overline{u \arap v} = \overline{u} \arap \overline{v}, \qquad 
\overline{u \reso v} = \overline{u} \reso \overline{v}. 
\end{align}
With this one can extend the Bony estimates on the (para-/resonance) products on the torus to Bony estimates between elements of $B^{\Dir,\alpha}_{p,q}([0,L]^d)$ and $B^{\Even,\beta}_{p,q}([0,L]^d)$ and between elements of $B^{\Even,\beta}_{p,q}([0,L]^d)$. We list some Bony estimates in Theorem \ref{theorem:bony_estimates}.
\end{obs}

\begin{theorem}(Bony estimates)
\label{theorem:bony_estimates}
\begin{enumerate}
\item 
\label{item:bony_para}
For all $\alpha<0$, $\gamma \in \R$, there exists a $C>0$ such that for all $L>0$ 
\begin{align*}
\| f \arap \xi \|_{H_0^{\alpha+\gamma}} \le C \|f\|_{H_0^\gamma} \|\xi \|_{\cC_\fn^\alpha} \qquad (f \in \cS_0'([0,L]^d),\xi \in \cS_\fn'([0,L]^d)).
\end{align*}
\item 
\label{item:bony_for_L_functions_with_minimal_loss_of_reg}
For all $\delta>0$, $\gamma \ge -\delta$ and $\beta \in \R$ there exists a $C>0$ such that for all $L>0$ 
\begin{align*}
\| f\para \xi \|_{H_0^{\beta-\delta}} \le C \|f\|_{H_0^\gamma} \|\xi \|_{\cC_\fn^\beta} \qquad (f \in \cS_0'([0,L]^d),\xi \in \cS_\fn'([0,L]^d)).
\end{align*}
\item 
\label{item:bony_reso}
For all $\alpha,\gamma \in \R$ with $\alpha + \gamma>0$, there exists a $C>0$ such that for all $L>0$
\begin{align*}
 \| f \reso \xi \|_{H_0^{\alpha +\gamma  }} 
& \le C \|f\|_{H_0^\gamma} \|\xi\|_{\cC_\fn^\alpha} \qquad (f \in \cS_0'([0,L]^d),\xi \in \cS_\fn'([0,L]^d)), \\ 
\| f \reso \xi \|_{\cC_\fn^{\alpha +\gamma  }} 
& \le C \| f \|_{\cC_\fn^\gamma} \|\xi\|_{\cC_\fn^\alpha}
\qquad (f ,\xi \in \cS_\fn'([0,L]^d)). 
\end{align*}
\item 
\label{item:bony_product_of_distributions}
For all $\alpha,\gamma \in \R$ with $\alpha + \gamma>0$ and $\delta>0$ there exists a $C>0$ such that for all $L>0$ 
\begin{align*}
\|f \xi \|_{H_0^{\alpha \wedge \gamma - \delta}} \le C \|f\|_{H_0^\gamma} \|\xi \|_{\cC_\fn^\alpha}  \qquad (f \in \cS_0'([0,L]^d),\xi \in \cS_\fn'([0,L]^d)).
\end{align*}
\end{enumerate}
The above statements also hold by simultaneously replacing ``$H_0$'' and ``$\cS'_0$'' by ``$H_\fn$'' and ``$\cS'_\fn$''. 
\end{theorem}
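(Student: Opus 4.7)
The plan is to reduce every estimate to the corresponding standard Bony estimate on the torus $\T_{2L}^d$ via the odd/even extension formalism. The key identities are the compatibility of extensions with para- and resonance products recorded in Observation~\ref{obs:para_and_resonance_products_extended_to_torus}:
\begin{align*}
\widetilde{f \para \xi} = \tilde f \para \overline \xi, \qquad \widetilde{f \arap \xi} = \tilde f \arap \overline \xi, \qquad \widetilde{f \reso \xi} = \tilde f \reso \overline \xi,
\end{align*}
and analogously for the overlines (in the Neumann-Neumann case). Together with the norm identities $\|f\|_{H_0^\gamma} \eqsim \|\tilde f\|_{H^\gamma(\T_{2L}^d)}$ and $\|\xi\|_{\cC_\fn^\alpha} \eqsim \|\overline \xi\|_{\cC^\alpha(\T_{2L}^d)}$ from Definition~\ref{def:Dir_and_Neu_Besov}, each estimate (a)--(c) is equivalent to its periodic analogue on $\T_{2L}^d$, proved in, e.g., \cite[Theorems 2.82 and 2.85]{BaChDa11}.

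For (a), I would start with $f \in \cS_0'$ and $\xi \in \cS_\fn'$, take extensions, and apply the periodic estimate $\|\tilde f \arap \overline \xi\|_{H^{\alpha+\gamma}(\T_{2L}^d)} \lesssim \|\tilde f\|_{H^\gamma}\|\overline \xi\|_{\cC^\alpha}$ (which requires $\alpha<0$ so that the partial sums $\sum_{j\le i-1}\Delta_j\overline\xi$ remain bounded in the appropriate sense). For (b), the same reduction yields $\|\tilde f \para \overline \xi\|_{H^{\beta-\delta}}\lesssim \|\tilde f\|_{H^\gamma}\|\overline \xi\|_{\cC^\beta}$, using that $\gamma \ge -\delta$ ensures summability of the $\Delta_j \tilde f$ blocks against a shifted $\overline \xi$. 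For (c), the resonance product is treated via the condition $\alpha+\gamma>0$, which guarantees absolute summability of the diagonal blocks in the appropriate Littlewood--Paley norm; the Neumann-Neumann version is handled identically using overlines. Finally (d) follows from the Bony decomposition $f\xi = f\para\xi + f\reso\xi + f \arap\xi$ combined with (a)--(c), by observing that $\alpha\wedge\gamma-\delta$ is dominated by each of $\beta-\delta$, $\alpha+\gamma$, $\alpha+\gamma$ arising in the three pieces (up to possibly enlarging $\delta$, which only weakens the norm on the left-hand side).

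The main obstacle is not the combinatorics but ensuring that all constants are uniform in $L$. The Littlewood-Paley blocks on $\T_{2L}^d$ are defined in Definition~\ref{def:fourier_multiplier} via $\rho_j(k/L)$, so the dyadic partition is rescaled to match the frequency lattice $\frac{1}{L}\Z^d$. Consequently the proof of the periodic Bony estimates is essentially the same as on $\R^d$: one controls $\|\Delta_j(\tilde f \para \overline \xi)\|_{L^p}$ via the spectral support of the summands and Young's inequality, and the resulting constants depend only on the fixed profile $\rho$ and on $d$, $\alpha$, $\gamma$, $p$, $q$, not on $L$. Alternatively, $L$-independence can be obtained by applying $l_L$ from~\ref{obs:scaling_fourier_multipliers} to reduce to the case $L=1$, using that para- and resonance products commute with the dilation $l_L$ (which follows from \eqref{eqn:scaling_and_fourier_multipliers} since each Littlewood-Paley block does). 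Either route gives the uniform constant $C$ demanded by the statement.
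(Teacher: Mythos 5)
Your proposal is correct and follows essentially the same route as the paper's proof: reduce to the periodic case on $\T_{2L}^d$ via the compatibility of the odd/even extensions with para- and resonance products (Observation~\ref{obs:para_and_resonance_products_extended_to_torus}) and the norm equivalences, then invoke the standard Bony estimates (the paper cites \cite[Proposition 2.82, 2.85]{BaChDa11} and \cite[Lemma 2.1]{PrTr16}), and obtain (d) from (a)--(c) via the Bony decomposition. Your additional remarks on $L$-uniformity (either via the $L$-scaled dyadic partition or via the dilation $l_L$) correctly spell out a point the paper leaves implicit.
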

\begin{proof}
By \ref{obs:para_and_resonance_products_extended_to_torus} it is sufficient to consider the analogue statements with periodic boundary conditions, that is, considering the underlying space $\T_{2L}^d$. 
For \ref{item:bony_para} and \ref{item:bony_for_L_functions_with_minimal_loss_of_reg} see \cite[Lemma 2.1]{PrTr16} and \cite[Proposition 2.82]{BaChDa11} where the underlying space is $\R^d$ rather than the torus. 
For \ref{item:bony_reso} see \cite[Proposition 2.85]{BaChDa11}. 
\ref{item:bony_product_of_distributions} follows from the rest. 
\end{proof}

The last observation we make is that one can also define Besov spaces with mixed boundary conditions, to which we refer in Definition~\ref{def:dirichlet_domain_etc}. 

\begin{obs}[Besov spaces with mixed boundary conditions]
\label{obs:besov_spaces_with_mixed_bc}
Beside the Dirichlet and Neumann Besov spaces one can define Besov spaces with mixed boundary conditions as follows. 
First observe that for $k\in \N_0^d$, the function $\fd_{k,L}$ is the product of the one dimensional functions $\fd_{k_i,L}$, in the sense that $\fd_{k,L}(x) = \prod_{i=1}^d \fd_{k_i,L}(x_i)$. Similarly, $\fn_{k,L}(x) = \prod_{i=1}^d \fn_{k_i,L}(x_i)$. 
One could interpret this as taking Dirichlet (or Neumann) boundary conditions in every direction. 
Instead one could for example for $d=2$ take the function $f_{k,L}(x) = \fd_{k_1,L}(x_1) \fn_{k_2,L}(x_2)$ and analogously to Definition~\ref{def:Dir_and_Neu_Besov} define a Besov space with mixed boundary conditions. 
Moreover, analogous to Definition~\ref{definition:paraproducts_odd_with_even} one can define the para- and resonance products as in \eqref{eqn:para_oe_and_reso} and obtain the Bony estimates as in Theorem~\ref{theorem:bony_estimates} for elements with ``opposite boundary conditions''.
\end{obs}

\section{The operator \texorpdfstring{$\Delta + \xi$}{} with Dirichlet boundary conditions}
\label{section:operator_def}

We define the Anderson Hamiltonian with Dirichlet boundary conditions and study its spectral properties that will be used in the rest of the paper. 
\textbf{In this section we assume $d=2$, $y \in \R^2$ and $s \in (0,\infty)^2$ and write 
$\Gamma = y + \prod_{i=1}^2 [0,s_i]$.
Moreover, we let $\alpha \in (- \frac43, -1)$ and $\xi \in \cC^\alpha_\fn(\Gamma)$. We abbreviate $\cC^\alpha_\fn(\Gamma)$ by $\cC^\alpha_\fn$, $H^\gamma_0(\Gamma)$ by $H^\gamma_0$, etc. 
We write $\sigma : \R^2 \rightarrow (0,\infty)$ for the function given by 
\begin{align*}
\sigma(x) = \frac{1}{1+\pi^2 |x|^2}. 
\end{align*}
Additional assumptions are given in \ref{obs:assumption_box}.
}
Remember, see \ref{obs:inverse_laplacian_plus_one}, that $\sigma(\rD) = (1-\Delta)^{-1}$. 

\begin{definition}
\label{def:rough_distributions_even}
For $\beta \in \R$, we define the 
\emph{space of enhanced Neumann distributions}, written $\fX_\fn^\beta$, to be the closure in $\cC_\fn^\beta \times \cC_\fn^{2\beta+2}$ of the set 
\begin{align*}
\{ (\zeta, \zeta \reso \sigma(\rD) \zeta - c) : \zeta \in \cS_\fn, c\in \R\}. 
\end{align*}
We equip $\fX_\fn^\beta$ with the relative topology with respect to $\cC_\fn^\beta \times \cC_\fn^{2\beta +2}$. 
\end{definition}

We will now define the Dirichlet domain of the Anderson Hamiltonian analogously to \cite{AlCh15} did on the torus.

\begin{definition}
\label{def:dirichlet_domain_etc}
Let $\bxi = (\xi, \Xi) \in \fX_\fn^\alpha$. 
For $\gamma \in (0,\alpha + 2)$ we define 
$
\cD_\xi^{\Dir,\gamma} = \{ f\in H^\gamma_0 : f^{\sharp \xi} \in H^{2\gamma}_0 \},
$ 
where 
$
f^{\sharp \xi} := f - f \para \sigma(\rD) \xi .
$ 
Moreover, we define an inner product on $\cD_\xi^{\Dir,\gamma}$, written $\langle \cdot, \cdot \rangle_{\cD_{\xi}^{\Dir,\gamma}}$, by 
$
\langle f, g \rangle_{\cD_{\xi}^{\Dir,\gamma}} = \langle f, g \rangle_{H_0^\gamma} + \langle f^{\sharp \xi}, g^{\sharp \xi} \rangle_{H_0^{2\gamma}}. 
$ 

For $\gamma \in (-\frac{\alpha}{2} , \alpha +2)$ 
we define the space of 
\emph{strongly paracontrolled distributions} by 
$
\fD_{\bxi}^{\Dir,\gamma} = \{ f\in H_0^\gamma: f^{\flat \bxi} \in H_0^2\}, 
$ 
where  
$ 
f^{\flat \bxi} : = f^{\sharp \xi} - B(f,\bxi) 
$ 
and 
$B(f,\bxi)  = 
\sigma(\rD) ( 
 f \Xi 
+ f \arap \xi  
- ( (\Delta -1) f )\para \sigma(\rD) \xi 
- 2 
 \sum_{i=1}^d \partial_{x_i} f \para \partial_{x_i} \sigma(\rD) \xi
)$
(for the paraproducts under the sum, see \ref{obs:besov_spaces_with_mixed_bc}).
We define an inner product on $\fD_\bxi^{\Dir,\gamma}$, written $\langle \cdot, \cdot \rangle_{\fD_\bxi^{\Dir,\gamma}}$, by 
$
\langle f, g \rangle_{\fD_\bxi^{\Dir,\gamma}} = \langle f, g \rangle_{H_0^\gamma} + \langle f^{\flat \bxi}, g^{\flat \bxi} \rangle_{H_0^{2}}. 
$ 
As in the periodic setting, one has $\fD_\bxi^{\Dir,\gamma} \subset H_0^{\alpha+2-}$ for all $\gamma \in (-\frac{\alpha}{2} , \alpha+2)$. We write  $\fD_\bxi^\Dir= \{ f\in H_0^{\alpha+2-} : f^{\flat \bxi} \in H_0^2\}$. 
\end{definition}

We will define the Anderson Hamiltonian on the Dirichlet domain in a similar sense as is done on the periodic domain, however we choose to change the sign in front of the Laplacian as this is more common in literature on the parabolic Anderson model.

\begin{definition}
\label{def:anderson_operator}
Let $\gamma \in (- \frac{\alpha}{2}, \alpha +2)$, $\bxi \in \fX_\fn^\alpha$. 
We define\footnote{The definition needs of course justification to show $H_0^{\gamma-2}$ is really the codomain, this is shown in Theorem \ref{theorem:dirichlet_summary}.} the operator $\sH_{\bxi} : \cD_\xi^{\Dir,\gamma} \rightarrow H_0^{\gamma-2}$ by 
\begin{align*}
\sH_{\bxi} f =  \Delta f + f \diamond \bxi, 
\end{align*}
where $f \diamond \bxi = f \para \xi +  f^{\sharp \xi} \reso \xi  + \cR(f,\sigma(\rD)\xi,\xi) + f \Xi + f \arap \xi$ and 
$
\cR(f,g,h) := (f \para g) \reso h - f (g \reso h) . 
$ 
\end{definition}

We state the main results about the spectrum of the Anderson Hamiltonian, on its Dirichlet domain. These results are analogous to the Anderson Hamiltonian on the torus \cite{AlCh15} (one can just read the theorem below without the Dirichlet and Neumann notations, i.e., the sub- or superscripts ``$0$,$\fd$,$\fn$'', and with the spaces interpreted to be defined on a torus).
Moreover, they are similar to the results of \cite{La19}, which proof is based on the theory of regularity structures. 

\begin{theorem}
\label{theorem:dirichlet_summary}
For $\gamma \in (- \frac{\alpha}{2}, \alpha +2)$ there exists a $C>0$ such that 
\begin{align}
\label{eqn:bound_on_dirichlet_norm_of_H}
\| \sH_\bxi f \|_{H_0^{\gamma -2 }} \le C \|f\|_{\cD_\xi^{\Dir,\gamma}} (1+\|\bxi\|_{\fX_\fn^\alpha})^2 \qquad (f \in \cD_\xi^{\Dir,\gamma}, \bxi \in \fX_\fn^\alpha). 
\end{align}
$\sH_\bxi( \fD_\bxi^\Dir) \subset L^2 $ and $\sH_\bxi : \fD_\bxi^\Dir  \rightarrow L^2$ is closed and self-adjoint as an operator on $L^2$, and $\fD_\bxi^\Dir$ is dense in $L^2$. 
There exist 
$\lambda_1(\Gamma,\bxi) > \lambda_2(\Gamma,\bxi) \ge \lambda_3(\Gamma,\bxi) \ge \cdots $ 
 such that $\limn \lambda_n(\Gamma,\bxi) =-\infty$, $\sigma(\sH_\bxi) = \sigma_p(\sH_\bxi) = \{ \lambda_n(\Gamma,\bxi) : n\in\N\}$ and 
 $\#\{n\in\N: \lambda_n(\Gamma,\bxi) = \lambda\} = \dim \ker (\lambda - \sH_\bxi)<\infty$ for all $\lambda \in \sigma(\sH_\bxi)$. 
  One has
\begin{align*}
\fD_\bxi^\fd = \bigoplus_{\lambda \in \sigma(\sH_\bxi)} \ker ( \lambda - \sH_\bxi). 
\end{align*} 
There exists an $M>0$ such that for all $n\in\N$ and $\bxi, \btheta \in \fX_\fn^\alpha$ 
\begin{align}
\label{eqn:locally_lipschitz_eigenvalue_of_dirichlet}
|\lambda_n(\Gamma,\bxi)  - \lambda_n(\Gamma,\btheta) |
\le M 
 \|\bxi - \btheta \|_{\fX_\fn^\alpha} 
\left( 1 + \|\bxi  \|_{\fX_\fn^\alpha} + \| \btheta \|_{\fX_\fn^\alpha} \right)^M . 
\end{align}
With the notation $\sqsubset $ for ``is a linear subspace of'', 
\begin{align}
\label{eqn:min-max_dir}
\lambda_n(\Gamma,\bxi) = 
\sup_{\substack{F\sqsubset \fD_\bxi^\Dir \\ \dim F = n}}
\inf_{\substack{\psi \in F \\ \|\psi\|_{L^2}=1}} \langle \sH_\bxi \psi, \psi\rangle_{L^2}
\end{align}
In particular, 
\bfold 
\lambda_1(\Gamma,\bxi)
= \sup_{\psi\in \fD_\bxi^\Dir : \|\psi\|_{L^2} =1} \langle \sH_\bxi \psi, \psi\rangle_{L^2} 
\efold. 
\end{theorem}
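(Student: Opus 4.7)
The theorem is obtained by reducing to the torus setting of \cite{AlCh15} via the odd-extension identification of Section \ref{section:dirichlet_besov}. By \ref{obs:para_and_resonance_products_extended_to_torus} and \eqref{eqn:DDelta_and_Delta_of_tilde}, for $f\in\cS_0'(\Gamma)$ and $\zeta\in\cS_\fn'(\Gamma)$ one has $\widetilde{f\para\zeta}=\tilde f\para\overline\zeta$, and analogously for $\arap$, $\reso$ and for $\sigma(\rD)$, so that $\widetilde{f^{\sharp\xi}}=\tilde f^{\sharp\overline\xi}$ and $\widetilde{f^{\flat\bxi}}=\tilde f^{\flat\overline\bxi}$, where $\overline\bxi:=(\overline\xi,\overline\Xi)$ lies in the enhancement space of \cite{AlCh15} on the doubled torus. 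A direct computation then yields $\widetilde{\sH_\bxi f}=\sH_{\overline\bxi}\tilde f$, and since $\overline\bxi$ is even, the closed subspace of odd distributions is invariant under $\sH_{\overline\bxi}$; thus $\sH_\bxi$ is unitarily equivalent (up to the constant in \eqref{eqn:inner_product_of_tildes_vs_without_and_with_overline}) to the restriction of the torus operator to odd functions, and every item of the theorem will be read off from the corresponding periodic statement.

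With this identification in hand, the bound \eqref{eqn:bound_on_dirichlet_norm_of_H} is obtained by applying the Bony estimates of Theorem \ref{theorem:bony_estimates} and the Schauder bounds of Theorem \ref{theorem:schauder} to each summand in $f\diamond\bxi=f\para\xi+f^{\sharp\xi}\reso\xi+\cR(f,\sigma(\rD)\xi,\xi)+f\Xi+f\arap\xi$, together with the standard commutator estimate on $\cR$ (transferred from the torus via odd extension) and the fact that $f^{\sharp\xi}\in H_0^{2\gamma}$ has norm controlled by $\|f\|_{\cD_\xi^{\Dir,\gamma}}$. Self-adjointness is then established by the contraction scheme of \cite{AlCh15}: for $M$ large enough (depending on $\|\bxi\|_{\fX_\fn^\alpha}$) one solves $(M-\sH_\bxi)f=g$ for $g\in L^2$ by writing $f=f\para\sigma(\rD)\xi+B(f,\bxi)+f^\flat$ and deriving a contractive fixed-point equation for $f^\flat\in H_0^2$, whose contractivity follows from the Bony estimates above. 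The resulting bounded inverse $(M-\sH_\bxi)^{-1}:L^2\to\fD_\bxi^\Dir$, combined with symmetry of $\sH_\bxi$ on $\fD_\bxi^\Dir$ (verified first for smooth $\zeta$ by integration by parts on $\Gamma$ with Dirichlet boundary conditions, and then extended by continuity via the identity $\widetilde{\sH_\bxi f}=\sH_{\overline\bxi}\tilde f$ together with the density of smooth elements in $\fX_\fn^\alpha$), gives self-adjointness. Density of $\fD_\bxi^\Dir$ in $L^2(\Gamma)$ transfers from the density of the periodic domain in $L^2$ of the doubled torus by restriction to the odd subspace.

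Discreteness of $\sigma(\sH_\bxi)$ is then immediate from the compact embedding $\fD_\bxi^\Dir\hookrightarrow L^2$ (Theorem \ref{theorem:compact_embeddings_besov_spaces}), which makes the resolvent compact; the spectral theorem for compact self-adjoint operators produces the decreasing eigenvalue sequence, finite multiplicities, and the orthogonal decomposition of $\fD_\bxi^\Dir$ along eigenspaces. The min-max formula \eqref{eqn:min-max_dir} is the Courant--Fischer principle for a self-adjoint operator with discrete spectrum bounded from above, and the local Lipschitz bound \eqref{eqn:locally_lipschitz_eigenvalue_of_dirichlet} is then deduced from \eqref{eqn:min-max_dir} by applying \eqref{eqn:bound_on_dirichlet_norm_of_H} to $\sH_\bxi-\sH_\btheta$ on a common family of normalised approximate eigenfunctions, combined with an a priori $\cD_\xi^{\Dir,\gamma}$-bound on unit eigenfunctions in terms of $\|\bxi\|_{\fX_\fn^\alpha}$ coming from the resolvent estimate. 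The main technical obstacle is the fixed-point step in the construction of the resolvent, where closing all Bony estimates while simultaneously preserving the Dirichlet constraint $f\in H_0^\gamma$ is delicate; the odd-extension identification reduces this precisely to the (by now standard) closure on the torus carried out in \cite{AlCh15}, which is the reason the framework of Dirichlet and Neumann Besov spaces was set up in Section \ref{section:dirichlet_besov}.
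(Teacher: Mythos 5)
Your overall strategy is essentially the paper's: identify $\sH_\bxi$ with the restriction of the periodic operator $\sH_{\overline\bxi}$ to odd distributions via the tilde/overline extensions, then transfer self-adjointness, compactness of resolvents, discreteness of spectrum, the min-max principle and local Lipschitz continuity from \cite{AlCh15}. The internal redundancy — you first announce a wholesale transfer, then separately re-run the contraction scheme and integration-by-parts symmetry argument — is harmless but unnecessary: once $\widetilde{\sH_\bxi f}=\sH_{\overline\bxi}\tilde f$ and $\sigma(\sH_\bxi)\subset\sigma(\sH_{\overline\bxi})$ are established, self-adjointness comes from closedness plus symmetry plus a nonempty real resolvent set, without re-proving the fixed point. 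Your route to \eqref{eqn:locally_lipschitz_eigenvalue_of_dirichlet} via min-max with ``a common family of normalised approximate eigenfunctions'' also needs care, since $\fD_\bxi^\Dir$ and $\fD_\btheta^\Dir$ are genuinely different subspaces of $L^2$ (they are defined via $f^{\flat\bxi}$ resp.\ $f^{\flat\btheta}$); it is cleaner, as in the paper, to inherit the Lipschitz bound from the periodic operator using $\widetilde{\sH_\bxi f}=\sH_{\overline\bxi}\tilde f$ and the equivalence of domain norms in Lemma \ref{lemma:H_and_tilde_H_and_their_domains}\ref{item:H_and_tilde_H_and_their_domains}.

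There is, however, one genuine gap: you never address why the top eigenvalue is \emph{simple}, i.e.\ the strict inequality $\lambda_1(\Gamma,\bxi)>\lambda_2(\Gamma,\bxi)$ asserted in the theorem. The spectral theorem for a compact self-adjoint resolvent only produces a nonincreasing eigenvalue sequence with finite multiplicities; it does not single out the top eigenvalue as simple. This requires a separate argument — in the paper, via \cite[Theorem XIII.44]{ReSi78}, after showing the semigroup $e^{t\sH_\bxi}$ is positivity improving (strong maximum principle for the PAM with Dirichlet boundary conditions, following the strategy of \cite[Theorem 5.1]{CaFrGa17}: a lower bound on $P_t\1_{B(x,\delta)}$ plus an $O(t^{1-\epsilon})$ estimate on $\int_0^t P_{t-s}(u_s\diamond\bxi)\,\dd s$ in $B^{\fd,\epsilon}_{\infty,\infty}$, iterated in time). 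Without this ingredient the claim $\lambda_1>\lambda_2$ is unproved, and since the main theorem of the paper identifies a limit for all $\lambda_n$ divided by $\log L$ and uses the structure of the eigenvalue sequence, you should not drop this point.
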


\begin{remark}
\label{remark:neumann_statements}
Let us mention that in an analogous way one can state (and prove) the same statement for the operator with Neumann boundary conditions by replacing ``$\Dir$'' by ``$\Even$'' and ``$H_0$'' by ``$H_\fn$''.  
\end{remark}

\begin{remark}
In \cite{AlCh15} it is pointed out  that in \eqref{eqn:min-max_dir} one may replace $\fD_\bxi^\fd$ by $\cD_\xi^\gamma$ for $\gamma \in (\frac23, \alpha+2)$, and 
$\langle \sH_\bxi \psi, \psi\rangle_{L^2}$ by ${}_{H_0^{-\gamma}}\langle \sH_\bxi \psi , \psi \rangle_{H_0^\gamma}$, where ${}_{H_0^{-\gamma}} \langle \cdot, \cdot \rangle_{H_0^\gamma} : H_0^{-\gamma} \times H_0^\gamma \rightarrow \R $ is the continuous bilinear map (see \cite[Theorem 2.76]{BaChDa11}) given by 
\begin{align*}
{}_{H_0^{-\gamma}} \langle f, g \rangle_{H_0^\gamma} = \sum_{ \substack{ i,j \in \N_{-1} \\ |i-j|\le 1}} \langle \Delta_i  f, \Delta_j g \rangle_{L^2}. 
\end{align*} 
This is done for the periodic setting, but the arguments can easily be adapted to our setting. 
\begin{calc}
Indeed, first one shows that $\fD_\bxi^\Dir$ is dense in $\cD_\xi^{\Dir,\gamma}$: 
$\cS_0$  and thus $L^2$ is dense in $H_0^{\gamma-2}$ (see \cite[Theorem 2.74]{BaChDa11} and Theorem \ref{theorem:equivalences_H_0_spaces}), therefore for $a \notin \sigma(\sH_\bxi)$ and $\cG_a = (a- \sH_\bxi)^{-1}$,  $\fD_\bxi^{\Dir} = \cG_a L^2 $ is dense in $\cD_\xi^{\Dir,\gamma} = \cG_a H_0^{\gamma -2}$.
This proves \ref{item:strong_domain_dense_in_domain_dir_short}. 

With this it is sufficient to use the continuity of the map
\begin{align*}
\cD_\xi^{\Dir,\gamma} \times \cD_\xi^{\Dir,\gamma}  \rightarrow \R, \qquad (\psi, \varphi) \mapsto {}_{H_0^{-\gamma}} \langle \sH_\bxi \psi , \varphi \rangle_{H_0^\gamma}, 
\end{align*}
which follows from the following bound 
(observe that $\gamma - 2 < - \gamma$ and use \eqref{eqn:bound_on_dirichlet_norm_of_H})
\begin{align*}
| {}_{H_0^{-\gamma}} \langle \sH_\bxi \psi , \varphi \rangle_{H_0^\gamma} |
\le \| \sH_\bxi \psi \|_{H_0^{\gamma-2}} \| \varphi \|_{H_0^\gamma}
\lesssim (1+\|\bxi\|_{\fX^\alpha})^2 \| \psi\|_{\cD_\xi^{\Dir,\gamma}} \|\varphi\|_{\cD_\xi^{\Dir,\gamma}}. 
\end{align*}
\end{calc}
\end{remark}

\begin{obs}
\label{obs:rough_distributions_for_H_elements}
Let $\eta \in L^2$ (which equals $H_\fn^0$, see \ref{obs:H_0_equals_L_2}).
By Theorem \ref{theorem:schauder} $\sigma(\rD)\eta \in H_\fn^2$, which is included in $\cC^1_\fn$ by Theorem \ref{theorem:inclusion_of_H_into_C}. 
Then by Theorem \ref{theorem:bony_estimates}, $\eta \reso \sigma(\rD) \eta \in H_\fn^1$.
Moreover, if $\eta_\epsilon \rightarrow \eta$ in $L^2$, then $ \eta_\epsilon \reso \sigma(\rD) \eta_\epsilon \rightarrow \eta \reso \sigma(\rD) \eta$ in $H_\fn^1$ (by the same theorems). Hence, by Theorem \ref{theorem:inclusion_of_H_into_C} we obtain the following convergence in $\fX_\fn^\alpha$ for all $\alpha \le -1$
\begin{align*}
(\eta_\epsilon , \eta_\epsilon \reso \sigma(\rD) \eta_\epsilon) \rightarrow 
(\eta , \eta \reso \sigma(\rD) \eta).
\end{align*}
\begin{calc}
Indeed
\begin{align*}
\| \eta \reso \sigma(\rD) \eta - \eta_\epsilon \reso \sigma(\rD) \eta_\epsilon \|_{H_\fn^2}
\lesssim \|\eta - \eta_\epsilon\|_{H^0_\fn} \|\eta\|_{H^0_\fn} + \|\eta - \eta_\epsilon\|_{H^0_\fn} \|\eta_\epsilon \|_{H^0_\fn}. 
\end{align*} 
\end{calc}
We write $\lambda_n(\Gamma, \eta)  = \lambda_n(\Gamma, (\eta, \eta \reso \sigma(\rD) \eta ))$. 
\end{obs}

By \ref{obs:rough_distributions_for_H_elements} and the continuity of $\bxi \mapsto \lambda_n(\Gamma,\bxi)$, see \eqref{eqn:locally_lipschitz_eigenvalue_of_dirichlet} in Theorem \ref{theorem:dirichlet_summary}, we obtain the following lemma. 

\begin{lemma}
\label{lemma:continuity_of_eigenvalues_on_H_00}
The map 
$ 
L^2(\Gamma) \rightarrow \R,  \eta \mapsto \lambda_n(\Gamma,\eta) 
$ 
is continuous. 
\end{lemma}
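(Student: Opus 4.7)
The plan is to verify the lemma directly from the two inputs flagged just above it, namely the convergence statement in \ref{obs:rough_distributions_for_H_elements} and the local Lipschitz bound \eqref{eqn:locally_lipschitz_eigenvalue_of_dirichlet}. Fix $\alpha \in (-\tfrac43,-1)$ as in the standing assumption of Section \ref{section:operator_def}. Given $\eta \in L^2(\Gamma)$ and a sequence $(\eta_k)_{k\in\N}$ in $L^2(\Gamma)$ with $\eta_k \to \eta$, define
\[
\bxi_k = (\eta_k,\, \eta_k \reso \sigma(\rD)\eta_k), \qquad \bxi = (\eta,\, \eta \reso \sigma(\rD)\eta).
\]

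First I would check that $\bxi_k$ and $\bxi$ actually lie in $\fX_\fn^\alpha$, so that the Lipschitz bound applies to them. For $\eta \in \cS_\fn$ the pair $(\eta, \eta \reso \sigma(\rD)\eta)$ already belongs to the defining set of $\fX_\fn^\alpha$ (take $c=0$ in Definition \ref{def:rough_distributions_even}). Since $\cS_\fn$ is dense in $L^2 = H_\fn^0$, and the chain of estimates in \ref{obs:rough_distributions_for_H_elements} (Fourier multiplier bound for $\sigma(\rD)\colon L^2 \to H_\fn^2 \hookrightarrow \cC_\fn^1$, Bony resonance estimate into $H_\fn^1 \hookrightarrow \cC_\fn^0 \subset \cC_\fn^{2\alpha+2}$) makes the map
\[
L^2(\Gamma) \longrightarrow \cC_\fn^\alpha \times \cC_\fn^{2\alpha+2}, \qquad \zeta \longmapsto (\zeta, \zeta \reso \sigma(\rD)\zeta)
\]
continuous, approximating $\eta$ and each $\eta_k$ by smooth Neumann functions places both $\bxi$ and every $\bxi_k$ in $\fX_\fn^\alpha$.

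Next I would invoke \ref{obs:rough_distributions_for_H_elements} verbatim to conclude $\bxi_k \to \bxi$ in $\fX_\fn^\alpha$. Combining this with \eqref{eqn:locally_lipschitz_eigenvalue_of_dirichlet} from Theorem \ref{theorem:dirichlet_summary}, one obtains, with the $M$ from that theorem,
\[
|\lambda_n(\Gamma,\bxi_k) - \lambda_n(\Gamma,\bxi)| \le M\, \|\bxi_k - \bxi\|_{\fX_\fn^\alpha}\, \bigl(1 + \|\bxi_k\|_{\fX_\fn^\alpha} + \|\bxi\|_{\fX_\fn^\alpha}\bigr)^{M},
\]
and the right-hand side tends to $0$ since $\|\bxi_k\|_{\fX_\fn^\alpha}$ is bounded uniformly in $k$. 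By the convention stated at the end of \ref{obs:rough_distributions_for_H_elements}, $\lambda_n(\Gamma,\eta_k) = \lambda_n(\Gamma,\bxi_k)$ and $\lambda_n(\Gamma,\eta) = \lambda_n(\Gamma,\bxi)$, so $\lambda_n(\Gamma,\eta_k) \to \lambda_n(\Gamma,\eta)$, which is the claimed continuity.

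There is no real obstacle here: the lemma is essentially a functoriality statement, expressing that the enhancement $\eta \mapsto (\eta,\eta \reso \sigma(\rD)\eta)$ is continuous from $L^2$ into $\fX_\fn^\alpha$, and that $\lambda_n(\Gamma,\cdot)$ is continuous on $\fX_\fn^\alpha$. The only care required is to confirm that $L^2$ distributions genuinely produce elements of the \emph{closure} $\fX_\fn^\alpha$ (rather than just of $\cC_\fn^\alpha \times \cC_\fn^{2\alpha+2}$), which is handled by the density of $\cS_\fn$ in $L^2$.
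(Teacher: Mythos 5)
Your proof is correct and takes the same route as the paper: combine the continuity of $\eta \mapsto (\eta, \eta\reso\sigma(\rD)\eta)$ from $L^2$ into $\fX_\fn^\alpha$ (established in \ref{obs:rough_distributions_for_H_elements} via the Fourier multiplier and Bony estimates) with the local Lipschitz bound \eqref{eqn:locally_lipschitz_eigenvalue_of_dirichlet}. Your extra care in checking that $L^2$ enhancements land in the closure $\fX_\fn^\alpha$ via density of $\cS_\fn$ is a detail the paper leaves implicit, but it is exactly the same argument.
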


\begin{obs}
\label{obs:smooth_potentials}
Let $\zeta \in \cS_\fn^\infty$. 
Then $\bzeta := (\zeta, \zeta \reso \sigma(\rD) \zeta )\in \fX_\fn^\beta$, 
$f \para \sigma(\rD) \zeta \in H_0^\beta$ for all $\beta \in \R$ and $B(f,\bzeta) \in H_0^2$ and $f\in H_0^\gamma$ with $\gamma\in(0,1)$ (use Theorems \ref{theorem:schauder}, \ref{obs:sigma_one_plus_x_sq_inverse_satisfies_conditions} and  \ref{theorem:bony_estimates}). 
\begin{calc}
Clearly $\zeta \reso \sigma(\rD) \zeta \in \cC_\fn^{2\beta -2}$ for all $\beta \in \R$ as $\zeta \in \cC_\fn^\alpha$ for all $\alpha \in \R$. 
We have 
\begin{align*}
\| f \para \sigma(\rD) \zeta \|_{H_0^\beta} 
\lesssim \|f\|_{H_0^\gamma} \|\sigma(\rD) \zeta\|_{\cC_\fn^{\beta-\delta}}
\lesssim \|f\|_{H_0^\gamma} \| \zeta\|_{\cC_\fn^{\beta-\delta-2}}.
\end{align*}
Let us check each of the individual terms in $B$: 
\begin{align*}
B(f,\bzeta)  = 
\sigma(\rD) ( 
 f \zeta \reso \sigma(\rD) \zeta
+ f \arap \zeta  
- ( (\Delta -1) f )\para \sigma(\rD) \zeta 
- 2 
 \sum_{i=1}^d \partial_{x_i} f \para \partial_{x_i} \sigma(\rD) \zeta
).
\end{align*}
$f (\zeta \reso \sigma(\rD) \zeta) \in H_0^{\gamma -\delta}$ for all $\delta>0$, 
$f \arap \zeta \in H_0^\beta$ for all $\beta \in \R$,
$(\Delta-1) f\in H_0^{\gamma-2}$, $\sigma(\rD) \zeta \in \cC_\fn^\beta$ for all $\beta \in \R$, therefore 
$( (\Delta -1) f )\para \sigma(\rD) \zeta \in H_0^\beta$ for all $\beta \in \R$ and similarly $\partial_{x_i} f \para \partial_{x_i} \sigma(\rD) \zeta \in H_0^\beta$ for all $\beta \in \R$ (observe that $\partial_{x_i} f$ and $\partial_{x_i} \sigma(\rD) \zeta$ are in Besov spaces with mixed boundary conditions). 
Therefore $B(f,\bzeta) \in H_0^{2+\gamma-\delta}$, and so if we choose $\delta$ small enough we obtain that $B(f,\bzeta)$ is an element of $H_0^2$. 
\end{calc}
Therefore, for all $\gamma \in (0,1)$, $\cD_\zeta^{\Dir, \gamma} = H_0^{2\gamma}$ and $ \fD_\bzeta^{\Dir,\gamma}= H_0^2$ and for $f\in H_0^\gamma$,  
$
f \reso \zeta =  f^{\sharp \zeta} \reso \zeta  + \cR(f,\sigma(\rD)\zeta,\zeta) + f (\zeta \reso \sigma(\rD) \zeta)$, so that 
\begin{align}
\label{eqn:operator_zeta}
\sH_\zeta f := \Delta f + f \zeta= \sH_\bzeta f . 
\end{align}
Now suppose $\zeta \in L^\infty \subset \cC_\fn^\infty$. Then $\bzeta := (\zeta, \zeta \reso \sigma(\rD) \zeta )\in \fX_\fn^0$, but the Bony estimates give $f \para \sigma(\rD) \zeta \in H^{2-}_0$ (and not $\in H^2_0$). 
Nevertheless, by the Kato-Rellich theorem \cite[Theorem X.12]{ReSi75}
on the domain $H_0^2$ the operator $\sH_\zeta$ defined as in \eqref{eqn:operator_zeta} is self-adjoint. 
As the injection map $H_0^2 \rightarrow L^2$ is compact (see Theorem \ref{theorem:compact_embeddings_besov_spaces}), every resolvent is compact. 
Hence by 
the Riesz-Schauder theorem \cite[Theorem VI.15]{ReSi75} 
and the Hilbert-Schmidt theorem \cite[Theorem VI.16]{ReSi75}
 there exist $\lambda_1(\Gamma,\zeta) \ge \lambda_2(\Gamma,\zeta) \ge \cdots $ 
 such that $\sigma(\sH_\zeta) = \sigma_p(\sH_\zeta) = \{ \lambda_n(\Gamma,\zeta) : n\in\N\}$ and 
 $\#\{n\in\N: \lambda_n(\Gamma,\zeta) = \lambda\} = \dim \ker (\lambda - \sH_\zeta)<\infty$ for all $\lambda \in \sigma(\sH_\zeta)$. 
Moreover, by Fischer's principle
\cite[Section 28, Theorem 4, p. 318]{La02}\footnote{In this reference the operator is actually assumed to be compact and symmetric, whereas we apply it to $\sH_\bxi$. But the compactness is only assumed to guarantee that the spectrum is countable and ordered, so that the arguments still hold.}
 and Lemma \ref{lemma:min_max_over_smooth}
\begin{align}
\notag 
\lambda_n(\Gamma,\zeta) 
& = 
\sup_{\substack{F\sqsubset H_0^2 \\ \dim F = n}}
\inf_{\substack{\psi \in F \\ \|\psi\|_{L^2}=1}} \langle \sH_\zeta \psi, \psi\rangle_{L^2} \\
\label{eqn:min_max_for_smooth}
& = 
\sup_{\substack{F\sqsubset C_c^\infty \\ \dim F = n}}
\inf_{\substack{\psi \in F \\ \|\psi\|_{L^2}=1}} \int -| \nabla \psi |^2 + \zeta \psi^2 . 
\end{align}
\end{obs}

The proof of Theorem \ref{theorem:dirichlet_summary} follows from the results of the Anderson Hamiltonian on the torus with the help of Lemma \ref{lemma:H_and_tilde_H_and_their_domains}. The proof is written below Lemma \ref{lemma:H_and_tilde_H_and_their_domains}. We may restrict ourselves to the case $\Gamma= Q_L$. 

\begin{obs}
\label{obs:assumption_box}
\textbf{For the rest of this section $y=0$ and $b_i=L$ for all $i$, i.e., $\Gamma = Q_L = [0,L]^2$.}
\end{obs}

\begin{obs}
\label{obs:odd_and_even_for_distributions}
For $\fq \in \{-1,1\}^d$ and $w\in \cS'$ we write $l_\fq w$ for the element in $\cS'$ given by $\langle l_\fq w, \varphi \rangle = \langle w, \varphi (\fq \circ \cdot) \rangle$ for $\varphi \in \cS$. 
Then $w$ is odd if and only if $w = (\prod \fq ) l_\fq w$ for all $\fq \in \{-1,1\}^d$ and $w$ is even if and only if $w = l_\fq w$ for all $\fq \in \{-1,1\}^d$. 
\end{obs}

\begin{lemma}
\label{lemma:H_and_tilde_H_and_their_domains}
Let $\bxi \in \fX_\fn^\alpha$. 
Let $\frac23 < \gamma < \alpha+2$. 
Write $\overline \bxi = (\overline \xi, \overline \Xi)$, 
$\cD_{\overline \xi}^\gamma= \cD_{\overline \xi}^\gamma (\T_{2L}^d)$, 
$\fD_{\overline \bxi}^\gamma= \fD_{\overline \bxi}^\gamma (\T_{2L}^d)$. 
\begin{enumerate}
\item 
\label{item:H_and_tilde_H_and_their_domains}
$\widetilde{\cD_\xi^{\Dir, \gamma}}  = 
\{ w\in \cD_{\overline \xi}^\gamma : w \mbox{ is odd}\}$,
$ \widetilde{\fD_\bxi^{\Dir, \gamma}}  = \{ w\in \fD_{\overline \bxi}^\gamma : w \mbox{ is odd}\}$,
$\widetilde{\sH_\bxi f} = \sH_{\overline \bxi} \tilde f \mbox{ and } \| f\|_{\cD_\xi^{\Dir,\gamma} } \eqsim \|\tilde f\|_{\cD_{\overline \xi}^\gamma } \mbox{ uniformly for all }f\in \cD_\xi^{\Dir,\gamma}$ and
$\| f\|_{\fD_{ \bxi}^{\Dir,\gamma} } \eqsim \|\tilde f\|_{\fD_{\overline \bxi}^\gamma } \mbox{ uniformly for all }f\in \fD_\bxi^{\Dir,\gamma}$. 
\item
\label{item:H_on_domain_contained_in}
$\sH_\bxi( \cD_\xi^{\Dir,\gamma}) \subset H_0^{\gamma-2} $, $\sH_\bxi( \fD_\bxi^{\Dir,\gamma}) \subset L^2$. 

\item 
\label{item:operator_commutes_with_l_fq}
$\sH_{\overline \bxi} ( l_\fq f) = l_\fq \sH_{\overline \bxi} f$ for all $f\in \cD_{\overline \xi}^\gamma$ and $\fq \in \{-1,1\}^2$. 

\item 
\label{item:resolvents_compared}
$\sigma(\sH_\bxi) \subset \sigma(\sH_{\overline \bxi})$ (for the operators either on the $\cD$ or $\fD$ domains) and for all $a\in \C \setminus \sigma(\sH_{\overline \bxi})$ the inverse of $a- \sH_\bxi : \fD_\bxi^\fd \rightarrow L^2$ is self-adjoint and compact. 
\item 
\label{item:strong_domain_dense_in_domain_dir_short}
$\fD^{\Dir}_\bxi$ is dense in $\cD_\xi^{\Dir,\gamma}$ and $\cD_\xi^{\Dir,\gamma}$ is dense in $L^2$. 
\begin{hide}
\item 
\label{item:min-max_with_cD_xi_dir}
\begin{align}
\label{eqn:min-max_with_cD_xi_dir}
\sup_{\substack{F\sqsubset \fD_\bxi^\Dir \\ \dim F = n}}
\inf_{\substack{\psi \in F \\ \|\psi\|_{L^2}=1}} \langle \sH_\bxi \psi, \psi\rangle_{L^2}
=
\sup_{ \substack{F\sqsubset \cD_\xi^{\Dir,\gamma} \\ \dim F = n}}
\inf_{\substack{\psi \in F \\ \|\psi\|_{L^2}=1}}  {}_{H_0^{-\gamma}}\langle \sH_\bxi \psi , \psi \rangle_{H_0^\gamma}. 
\end{align}
\end{hide}
\end{enumerate}
\end{lemma}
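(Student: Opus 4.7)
The strategy is that every ingredient in the definitions of $\cD_\xi^{\Dir,\gamma}$, $\fD_\bxi^{\Dir,\gamma}$ and $\sH_\bxi$ is built from Fourier multipliers, para-products and resonance products, and each of these commutes with the odd-extension map $f\mapsto \tilde f$ via \eqref{eqn:DDelta_and_Delta_of_tilde} and \eqref{eqn:tilde_of_products}. The plan is to match each ingredient on the box with its odd-extended counterpart on the torus and then import the periodic results (the periodic analog of Theorem~\ref{theorem:dirichlet_summary} proved in \cite{AlCh15}).

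For \ref{item:H_and_tilde_H_and_their_domains}, I first recall that $f\in H_0^\gamma$ iff $\tilde f\in H^\gamma(\T_{2L}^2)$ is odd (Theorem~\ref{theorem:equivalences_H_0_spaces}). Since $\widetilde{f\para \sigma(\rD)\xi}=\tilde f\para \sigma(\rD)\overline{\xi}$ by \eqref{eqn:DDelta_and_Delta_of_tilde}--\eqref{eqn:tilde_of_products}, the condition $f^{\sharp \xi}\in H_0^{2\gamma}$ is equivalent to $(\tilde f)^{\sharp \overline{\xi}}\in H^{2\gamma}(\T_{2L}^2)$, with oddness automatic (odd $\para$ even is odd, by the product rules in \ref{obs:products_of_cS_distributions} applied Littlewood--Paley block by block). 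The analogous term-by-term computation for $B(f,\bxi)$ — using Observation~\ref{obs:besov_spaces_with_mixed_bc} to handle the mixed-parity gradient terms $\partial_{x_i} f \para \partial_{x_i}\sigma(\rD)\xi$, whose two factors have opposite boundary conditions — gives $\widetilde{f^{\flat \bxi}}=(\tilde f)^{\flat \overline\bxi}$, which proves the $\fD$-part. The same bookkeeping applied term by term to the expression for $\sH_\bxi f$ yields $\widetilde{\sH_\bxi f}=\sH_{\overline\bxi}\tilde f$. The norm equivalences are immediate from the very definition of the $H_0$-norm via the odd extension, together with \eqref{eqn:pairing_on_box_compared_with_extensions_on_torus}. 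Part~\ref{item:H_on_domain_contained_in} is then inherited from its periodic counterpart via this identification.

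For \ref{item:operator_commutes_with_l_fq}, the crucial point is that the dyadic partition $\rho_j$ is radial, so $l_\fq$ commutes with each Littlewood--Paley block, with the Fourier multiplier $\sigma(\rD)$, and hence with $\para$, $\arap$ and $\reso$; moreover $l_\fq \overline\xi=\overline\xi$ and $l_\fq \overline\Xi=\overline\Xi$ since $\overline\xi,\overline\Xi$ are even. Applying $l_\fq$ to each term of $\sH_{\overline\bxi}f$ therefore reproduces the same operation applied to $l_\fq f$.

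For \ref{item:resolvents_compared} and \ref{item:strong_domain_dense_in_domain_dir_short}, fix $a\notin \sigma(\sH_{\overline\bxi})$ and $h\in L^2(Q_L)$. The periodic resolvent supplies the unique $g\in \fD_{\overline\bxi}$ with $(a-\sH_{\overline\bxi})g=\tilde h$; applying $l_\fq$ and using \ref{item:operator_commutes_with_l_fq} together with the oddness of $\tilde h$ shows $(\prod\fq)\, l_\fq g$ solves the same equation, so by uniqueness $g$ is odd, i.e.\ $g=\tilde f$ for some $f\in \fD_\bxi^{\Dir}$. This gives a bijective inverse $a-\sH_\bxi:\fD_\bxi^{\Dir}\to L^2(Q_L)$, which inherits compactness and self-adjointness through the odd-extension isometry. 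For the density statements, the odd-extension identification reduces the claim to density of $\fD_{\overline\bxi}$ in $\cD_{\overline\xi}^\gamma$ and of $\cD_{\overline\xi}^\gamma$ in $L^2(\T_{2L}^2)$ (which are the periodic results), combined with the observation that the odd projection $P_{\mathrm{odd}}=4^{-1}\sum_\fq (\prod\fq)\, l_\fq$ is continuous on all the relevant spaces and preserves $\fD_{\overline\bxi}$ and $\cD_{\overline\xi}^\gamma$ by \ref{item:operator_commutes_with_l_fq}. The main place requiring care is the parity bookkeeping in $B(f,\bxi)$, precisely because of those mixed-boundary-condition paraproducts built from $\partial_{x_i} f$ and $\partial_{x_i}\sigma(\rD)\xi$.
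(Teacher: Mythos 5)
Your proposal is correct and follows essentially the same strategy as the paper: transport every ingredient ($\para$, $\reso$, Fourier multipliers, the $\sharp$- and $\flat$-maps) across the odd-extension map via \eqref{eqn:DDelta_and_Delta_of_tilde} and \eqref{eqn:tilde_of_products} to reduce to the periodic statements, then use the $l_\fq$-equivariance of $\sH_{\overline\bxi}$ to identify the odd subspace with the Dirichlet picture. The one place where your route differs is the density of $\fD_\bxi^\Dir$ in $\cD_\xi^{\Dir,\gamma}$: you obtain it by projecting a periodic approximating sequence with $P_{\mathrm{odd}}$ (which requires checking that $P_{\mathrm{odd}}$ is bounded on $\cD_{\overline\xi}^\gamma$ and preserves $\fD_{\overline\bxi}$, facts that follow from the commutation ingredients of part~\ref{item:operator_commutes_with_l_fq}), whereas the paper argues directly on the box via the resolvent: $\fD_\bxi^\Dir = \cG_a L^2$ and $\cD_\xi^{\Dir,\gamma} = \cG_a H_0^{\gamma-2}$ with $L^2$ dense in $H_0^{\gamma-2}$ and $\cG_a$ continuous. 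Both are valid; the paper's version avoids verifying continuity of $P_{\mathrm{odd}}$ on the paracontrolled domains, which your version implicitly relies on but states only in passing.
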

\begin{proof}
\ref{item:H_and_tilde_H_and_their_domains} follows from the identities \eqref{eqn:tilde_of_products}, $\widetilde{ f^{\sharp \xi}} = {\tilde f}^{\sharp \overline{\xi}}$, 
$\widetilde{B(f,\bxi)} = B(\tilde f , \overline \bxi)$, $\widetilde{f^{\flat \bxi}} = {\tilde f}^{\flat \overline \bxi}$ and because $\|\tilde g\|_{H^\gamma} \eqsim \|g\|_{H_0^\gamma} $ for all $\gamma \in \R$ and $g \in H_0^\gamma([0,L]^d)$  (indeed, $\|g\|_{B_{2,2}^{\Dir,\gamma}} =\|\tilde g\|_{B_{2,2}^{\gamma}}$ by definition and $\|\cdot\|_{H_0^\gamma} \eqsim \|\cdot \|_{B_{2,2}^{\Dir,\gamma}} $ and $\| \cdot \|_{B_{2,2}^{\gamma}} \eqsim \| \cdot\|_{H^\gamma}$ by Theorems \ref{theorem:equivalent_spaces_to_H_alpha_R_d_and_torus} and \ref{theorem:equivalences_H_0_spaces}). 

\ref{item:H_on_domain_contained_in} follows from \ref{item:H_and_tilde_H_and_their_domains} as $\sH_{\overline \bxi} (\cD_{\overline \xi}^\gamma) \subset H^{\gamma-2}$ and $\sH_{\overline \bxi} (\fD_{\overline \bxi}) \subset H^0$ (see \cite{AlCh15}). 

\ref{item:operator_commutes_with_l_fq} follows by a straightforward calculation; 
use that $\cF(l_\fq f) = l_\fq \cF(f)$, $l_\fq \rho_i = \rho_i$, $l_\fq \overline \xi = \overline \xi$ and $l_\fq \overline \Xi = \overline \Xi$ for $\fq \in \{-1,1\}^2$. 

\ref{item:resolvents_compared}
Let $a \in \C$ be such that $a- \sH_{\overline \bxi}$ has a bounded inverse $\cR_a$. 
By \ref{item:operator_commutes_with_l_fq} $(a- \sH_{\overline \bxi}) f$ is odd if and only if $f$ is odd, indeed, if $(a- \sH_{\overline \bxi}) f$ is odd, then $(a-\sH_{\overline \bxi}) [f - (\prod \fq) l_\fq f]=0$ (see  \ref{obs:odd_and_even_for_distributions}) and thus $f= (\prod \fq) l_\fq f$ . 
Hence $a- \sH_\bxi$ has a bounded inverse $\cR_a^\Dir$ such that $\widetilde{\cR_a^\Dir h} = \cR_a \tilde h$. 
 From the fact that $\cR_a$ is self-adjoint and compact it follows that $\cR_a^\Dir$ is too. 
 \begin{calc}
Because when $A$ is closed/open, then $\tilde A$ is closed/open. 
 \end{calc}

$\cS_0$  and thus $L^2$ is dense in $H_0^{\gamma-2}$ (see \cite[Theorem 2.74]{BaChDa11} and Theorem \ref{theorem:equivalences_H_0_spaces}), therefore for $a \notin \sigma(\sH_\bxi)$ and $\cG_a = (a- \sH_\bxi)^{-1}$,  $\fD_\bxi^{\Dir} = \cG_a L^2 $ is dense in $\cD_\xi^{\Dir,\gamma} = \cG_a H_0^{\gamma -2}$.
That $\cD_\xi^{\Dir,\gamma}$ is dense in $L^2$ follows from the periodic counterpart, which is proven in \cite[Lemma 4.12]{AlCh15}.
This proves \ref{item:strong_domain_dense_in_domain_dir_short}. 
%
\begin{hide} 
As $\fD_\bxi^\Dir \subset \cD_\xi^{\Dir,\gamma}$, we have that $\lambda_n^\bxi$ is $\ge$ to the right-hand side of \eqref{eqn:min-max_with_cD_xi_dir}. 
As $\fD_\bxi^\Dir$ is dense in $\cD_\xi^{\Dir,\gamma}$, we obtain the equality \eqref{eqn:min-max_with_cD_xi_dir} by the continuity of 
\begin{align*}
\cD_\xi^{\Dir,\gamma} \times \cD_\xi^{\Dir,\gamma}  \rightarrow \R, \qquad (\psi, \varphi) \mapsto {}_{H_0^{-\gamma}} \langle \sH_\bxi \psi , \varphi \rangle_{H_0^\gamma}, 
\end{align*}
which follows from the following bound 
(observe that $\gamma - 2 < - \gamma$ and use \eqref{eqn:bound_on_dirichlet_norm_of_H})
\begin{align*}
| {}_{H_0^{-\gamma}} \langle \sH_\bxi \psi , \varphi \rangle_{H_0^\gamma} |
\le \| \sH_\bxi \psi \|_{H_0^{\gamma-2}} \| \varphi \|_{H_0^\gamma}
\lesssim (1+\|\bxi\|_{\fX^\alpha})^2 \| \psi\|_{\cD_\xi^{\Dir,\gamma}} \|\varphi\|_{\cD_\xi^{\Dir,\gamma}}. 
\end{align*}
This proves \ref{item:min-max_with_cD_xi_dir}. 
\end{hide}
\end{proof}

\begin{proof}[Proof of Theorem \ref{theorem:dirichlet_summary}]
By Lemma \ref{lemma:H_and_tilde_H_and_their_domains} it follows that $\sH_\bxi$ is a closed densely defined symmetric operator and that $\sigma(\sH_\bxi) \subset \sigma(\sH_{\overline \bxi})$ so that $\sH_\bxi$ is indeed self-adjoint (see \cite[Theorem X.2.9]{Co07}). 
As the resolvents are compact, the statements in Theorem \ref{theorem:dirichlet_summary} up to \eqref{eqn:locally_lipschitz_eigenvalue_of_dirichlet} follow by the 
Riesz-Schauder theorem \cite[Theorem VI.15]{ReSi75} 
and the Hilbert-Schmidt theorem \cite[Theorem VI.16]{ReSi75}
because of the following identity, where $\cR_\mu = (\mu- \sH_\bxi)^{-1}$, 
\begin{align*}
\sigma(\sH_\bxi) = \sigma_p(\sH_\bxi) = \{   \mu -\tfrac{1}{\lambda} : \lambda \in \sigma_p( \cR_\mu ) \setminus \{0\} \},
\end{align*}
this means that $\lambda - \cR_\mu$ is boundedly invertible (or injective)  if and only if $ \mu - \tfrac{1}{\lambda}  - \sH_\bxi$ is, and in turn follows from the identity
\begin{align*}
\lambda( \mu - \tfrac{1}{\lambda}  - \sH_\bxi) 
&= \lambda (\mu - \sH_\bxi) -1 = (\lambda - \cR_\mu) (\mu - \sH_\bxi) \\
&=   (\mu - \sH_\bxi)\lambda -1  = (\mu - \sH_\bxi) (\lambda - \cR_\mu).
\end{align*}
As every eigenvalue of $\sH_\bxi$ is an eigenvalue of $\sH_{\overline \bxi}$ which is locally lipschitz in the analogues sense of \eqref{eqn:locally_lipschitz_eigenvalue_of_dirichlet}, also \eqref{eqn:locally_lipschitz_eigenvalue_of_dirichlet} holds by the equivalences of norms in Lemma \ref{lemma:H_and_tilde_H_and_their_domains}\ref{item:H_and_tilde_H_and_their_domains}. 
\eqref{eqn:min-max_dir} follows from 
\begin{hide}
Lemma \ref{lemma:H_and_tilde_H_and_their_domains}\ref{item:min-max_with_cD_xi_dir} and \end{hide}
 Fischer's principle \cite[Section 28, Theorem 4, p. 318]{La02}. 
\begin{calc}
See also Theorem \ref{theorem:spectrum_under_compact_resolvent}.
\end{calc}
That $\lambda_1 > \lambda_2$, or in other words, that the first eigenvalue is simple, follows from \cite[Theorem XIII.44]{ReSi78}. 
The only condition to prove for that theorem is that the semigroup $e^{t\sH_\bxi}$ is  positivity improving, or differently called the strong maximum principle for $e^{t\sH_\bxi}$. The strategy to obtain this we borrow from \cite[Theorem 5.1]{CaFrGa17}. With $u_t := e^{t \sH_\bxi} u_0$, the map $(t,x) \mapsto u_t(x)$ is the solution to the parabolic Anderson model $\partial_t u = \Delta u + u \diamond \bxi$, hence satisfies $\sup_{s\in [0,t]} \| u_s \|_{B_{\infty,\infty}^{\fd,1-\epsilon}} <\infty$ for all $\epsilon>0$ (see \cite{GuImPe15}, the extension to Dirichlet boundary conditions follows similar as the extension of the operator) and $u_t = P_t u_0 + \int_0^t P_{t-s} (u_s \diamond \bxi) \dd s$, where $P_t u_0 (x) = p_t * u_0 (x)$ and $p_t$ the standard heat kernel $p_t(x) = (2\pi t)^{-\frac{d}{2}} e^{-\frac{|x|^2}{2t}}$. The next step is to prove that $P_t u_0$ is larger than the supremum norm of $\int_0^t P_{t-s} (u_s \diamond \bxi) \dd s$. 
In \cite{CaFrGa17} it is shown that for all $\rho>0$ there exists a $t_\rho$ such that  $P_t \1_{B(x,\delta)}\ge \frac14 \1_{B(x, \delta +\rho t)}$ for $t\in (0,t_\rho]$. 
\begin{calc}
First observe that $P_t \1_{B(x,\delta)}(z) = \int_{B(x,\delta)} p_t(z-y) \dd y = \int_{B(z-x,\delta)} p_t(y) \dd y$, and thus (with $e_1$ the unit vector with $1$ at the first coordinate)
\begin{align*}
\inf_{z\in B(x,\delta+t)} P_t ( \1_{B(x,\delta)})(z) 
& = \inf_{z\in B(0,\delta+t)} P_t ( \1_{B(0,\delta)})(z)  \\
& = P_t ( \1_{B(0,\delta)}) \big( (\delta+t) e_1\big) \\
& = \int_{B \big( \frac{\delta + t}{  \sqrt{t}} e_1, \frac{\delta}{\sqrt{t}} \big) } p_1(y) \dd y \xrightarrow{t \downarrow 0} 
\int_{(0,\infty) \times \R } p_1(y) \dd y
= \frac12 . 
\end{align*}
because $B \big( \frac{\delta + t}{  \sqrt{t}} e, \frac{\delta}{\sqrt{t}} \big) \uparrow (0,\infty) \times \R$ as $t \downarrow 0$. 
\end{calc}
On the other hand, one can prove that for $\epsilon \in (0,1)$ there exists a $C>0$ such that $\| \int_0^t P_{t-s} ( u_s \diamond \bxi)  \dd s \|_{B_{\infty,\infty}^{\fd,\epsilon}} \le C t^{1-\epsilon}$. 
\begin{calc}
Indeed, with \cite[Lemma A.7]{GuImPe15}
\begin{align*}
\|\int_0^t P_{t-s} ( u_s \diamond \bxi) \dd s \|_\infty 
& \le \int_0^t \| P_{t-s} ( u_s \diamond \bxi) \|_{B_{\infty,\infty}^{\fd,\epsilon}} \dd s \\
& \lesssim \int_0^t (t-s)^{-\epsilon} \| u_s \diamond \bxi \|_{B_{\infty,\infty}^{\fd,-1-\epsilon}} \dd s \\
& \lesssim t^{1-\epsilon} \int_0^1 (1-s)^{-\epsilon} \dd s 
\sup_{s\in [0,t]} \| u_s \|_{B_{\infty,\infty}^{\fd,1-\epsilon}} 
\| \bxi \|_{\fX_\fn^{-1-\epsilon}} .
\end{align*}
Now observe that for $\epsilon \in (0,1)$ the integral $\int_0^1 (1-s)^{-\epsilon} \dd s$ equals $(1-\epsilon)^{-1}$. 
\end{calc}
Hence we can choose $t_0 \in (0,t_\rho)$ such that $\begin{calc} \|\int_0^{t_\rho} P_{t_1-s} ( u_s \diamond \bxi) \dd s \|_\infty \le \end{calc} C t_0^{1-\epsilon} \le \frac18$. 
This implies that $u_t \ge \frac18$ on $B(x,\delta + \rho t_0 )$. 
Let $T,\rho>0$, by choosing $n$ such that $\frac{T}{n} \le t_0$, by repeating the argument we have $u_T \ge (\frac18)^n$ on $B(x, \delta + \rho T)$. As this holds for arbitrary $\rho>0$, this implies that $U_T$ is strictly positive everywhere. 
\end{proof}


\section{Enhanced white noise}
\label{section:white_noise}

In this section we prove Theorem \ref{theorem:convergence_in_enhanced_space_to_white_noise}; we first recall a definition and introduce notation. 


\begin{definition}
A \emph{white noise} on $\R^d$ is a random variable $W : \Omega \rightarrow \cS'(\R^d,\R)$ such that  for all $f\in \cS(\R^d,\R)$ the random variable $\langle W,f \rangle$ is a centered Gaussian random variable. 
\end{definition}

\begin{obs}
\label{obs:bounded_lin_op_corresponding_to_white_noise}
Because $\| \langle W, f \rangle \|_{L^2(\Omega,\P)} = \|f\|_{L^2(\R^d)}$, the function $ f\mapsto \langle W, f \rangle$ extends to a bounded linear operator $\cW : L^2(\R^d) \rightarrow L^2(\Omega,\P)$  such that for all $f\in L^2(\R^d)$, $\cW f$ is a complex Gaussian random variable,  $\cW \overline f = \overline{ \cW f}$ and $\E [ \cW f \overline{\cW g} ] = \langle f, g \rangle_{L^2}$ for all $f, g\in L^2(\R^d)$. 
\end{obs}

\begin{obs}
\label{obs:overview_white_noise_section}
Let $W$ be a white noise on $\R^2$ and $\cW$ be as in \ref{obs:bounded_lin_op_corresponding_to_white_noise}. 
For the rest of this section we fix $L>0$. 
Unless mentioned otherwise $\tau \in C_c^\infty(\R^d, [0,1])$ is an even function that is equal to $1$ on a neighbourhood of $0$. 
Define $\xi_{L,\epsilon} \in \cS_\fn([0,L]^d)$ by
(for $\langle \cW, \fn_{k,L} \rangle$, we interpret $\fn_{k,L}$ to be the function in $L^2(\R^d)$ being equal to $\fn_{k,L}$ on $[0,L]^d$ and equal to $0$ elsewhere)
\begin{align}
\label{eqn:xi_epsilon_dir}
\xi_{L,\epsilon}  = 
 \sum_{k\in \N_0^d} \tau( \tfrac{\epsilon}{L} k) \langle \cW, \fn_{k,L} \rangle  \fn_{k,L} . 
\end{align}
For $k\in\N_0^d$ define $Z_k := \langle \cW, \fn_{k,L} \rangle$. Then $Z_k$ is a (real) normal random variable with 
\begin{align}
\label{eqn:first_definition_Z_k}
\E[Z_k]=0,  \qquad \E[Z_k Z_l]= \delta_{k,l}. 
\end{align}

Before we state the convergence to the enhanced white noise, let us discuss our choice of regularization \eqref{eqn:xi_epsilon_dir}.
We use the regularisation by means of a Fourier multiplier, as in \cite{AlCh15}. 
This basically means we `project' the white noise on the Neumann space on the box and then take the regularisation corresponding to a Fourier multiplier. 
Another option is to consider mollified white noise on the full space by convolution and then project the white noise on the Neumann space. 
In a future work by K\"onig, Perkowski and van Zuijlen, it will be shown that both choices lead to the same limiting object (up to a constant, by using techniques from Section~\ref{section:proof_white_noise_conv}). 
This also confirms that our construction of the Anderson Hamiltonian with enhanced white noise agrees with the construction of the Anderson Hamiltonian in \cite{La19}, where the Anderson Hamiltonian is considered a limit of the operators with mollified white noise as potentials. 
\end{obs}

\begin{theorem}
\label{theorem:convergence_in_enhanced_space_to_white_noise}
Let $d=2$. For all $\alpha <-1$
there exists a $\bxi_L \in \fX_\fn^\alpha$ such that  the following convergence holds almost surely in $\fX_\fn^\alpha$, i.e., on a measurable set $\Omega_L$ with $\P(\Omega_L)=1$ 
\begin{align}
\label{eqn:limit_of_mollifiers}
\lim_{\epsilon \downarrow 0, \epsilon \in \mathbb{Q}\cap (0,\infty)} (\xi_{L,\epsilon}, \xi_{L,\epsilon} \reso \sigma(\rD) \xi_{L,\epsilon} - c_\epsilon  ) = \bxi_L, 
\end{align}
where $c_\epsilon= \tfrac{1}{2\pi} \log(\tfrac{1}{\epsilon}) + c_\tau \in \R$ and $c_\tau$ only depends on $\tau$. $\bxi_L$ does not depend on the choice of $\tau$. 
$\xi_L$ is a white noise in the sense that for $\varphi, \psi \in \cS_\fn (Q_L)$, $\xi_L(\varphi)$ and $\xi_L(\psi)$ are Gaussian random variables with 
\begin{align}
\label{eqn:white_noise_characterising_expectation}
\E [ \xi_L(\varphi) ] =0, \qquad 
\E [ \xi_L(\varphi) \xi_L(\psi)] = \langle \varphi, \psi \rangle_{L^2([0,L]^d)}. 
\end{align}
 Moreover, for $\varphi \in C_c^\infty(Q_L)$ one has almost surely (i.e., on $\Omega_L$)
\begin{align*}
 \langle \xi_{L} , \varphi \rangle
 = \lim_{\epsilon\downarrow 0} \langle \xi_{L,\epsilon} , \varphi \rangle 
 = \sum_{k\in \N_0^d}  \langle \cW, \fn_{k,L} \rangle  \langle \fn_{k,L}, \varphi \rangle 
 = \langle W, \varphi \rangle. 
\end{align*}
Hence, for every $L>0$ the $W$ viewed as an element of $\cD'(Q_L)$ extends almost surely uniquely to a $\xi_L$ in $\cC_\fn^\alpha$. 
\end{theorem}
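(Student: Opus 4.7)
The plan is to handle the two components of $(\xi_{L,\epsilon}, \xi_{L,\epsilon}\reso \sigma(\rD)\xi_{L,\epsilon} - c_\epsilon)$ separately and then pass from convergence along a suitable sub-sequence to almost-sure convergence along $\epsilon \in \mathbb Q \cap (0,\infty)$ via a Borel--Cantelli / Kolmogorov argument. Throughout, the natural device is to pass to the even extension $\overline{\xi_{L,\epsilon}} \in \cS'(\T_{2L}^2)$: by \ref{obs:para_and_resonance_products_extended_to_torus} the resonance product commutes with $\overline{\,\cdot\,}$, and $\|u\|_{\cC_\fn^\beta}\eqsim \|\overline u\|_{\cC^\beta}$, so that convergence in $\cC_\fn^{2\alpha+2}$ will be reduced to a periodic computation close in spirit to \cite{AlCh15}. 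The difference is bookkeeping: the independent Gaussians $Z_k$ are indexed by $\N_0^2$ via the basis $(\fn_{k,L})$ rather than by $\Z^2$ via $(e_k)$.

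For the first component, I would use the explicit formula
\begin{align*}
\Delta_j \xi_{L,\epsilon} = \sum_{k\in\N_0^2} \rho_j(k/L)\,\tau(\epsilon k/L)\, Z_k \,\fn_{k,L},
\end{align*}
compute $\E|\Delta_j \xi_{L,\epsilon}(x)|^2$ using $\E[Z_k Z_l]=\delta_{k,l}$, and apply Gaussian hypercontractivity to pass from $L^2(\Omega)$ to $L^{2p}(\Omega)$ moments. Counting modes in the annulus $\{k:|k|/L\sim 2^j\}$ yields uniform-in-$\epsilon$ bounds on $\E\|\Delta_j \xi_{L,\epsilon}\|_{L^{2p}}^{2p}$ with the correct power of $2^j$, and Besov embedding (Theorem~\ref{theorem:inclusion_of_H_into_C}) gives the $\cC_\fn^\alpha$ bound for any $\alpha<-1$. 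The same estimate applied to $\xi_{L,\epsilon}-\xi_{L,\epsilon'}$ (which only involves modes where $\tau(\epsilon k/L)\ne \tau(\epsilon' k/L)$, hence modes $|k| \gtrsim (\epsilon\wedge\epsilon')^{-1}$) gives a quantitative Cauchy estimate, from which Borel--Cantelli along a rational sequence produces the almost-sure limit $\xi_L$.

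For the second component, I would expand the resonance product in products $Z_k Z_l$, apply the Wick decomposition $Z_k Z_l=\,:\!Z_k Z_l\!:+\,\delta_{k,l}$, and split into a zeroth-chaos (deterministic) and a second-chaos (mean-zero) piece. Using the product identity \eqref{eqn:basis_prod_even_even}, the zeroth-chaos piece decomposes further into a spatially constant term, which is precisely $c_\epsilon$, plus a remainder which is bounded in $\cC_\fn^{2\alpha+2}$ uniformly in $\epsilon$. An integral approximation of the divergent sum $\sum_{k\in\N_0^2} \tau(\epsilon k/L)^2 \sigma(k/L)$ yields $c_\epsilon=\tfrac1{2\pi}\log(1/\epsilon)+c_\tau+o(1)$, where $c_\tau$ is the standard two-dimensional renormalization constant depending on $\tau$ but not on $L$. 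The second-chaos piece is controlled by computing the variance of its $\Delta_j$ (a double sum over the support of $\rho_j$ against the kernel $\sigma$) and again using hypercontractivity plus Besov embedding; the key point is that after removing the diagonal, the kernel is square-summable on scales of order $2^j$ with the right exponent when $\alpha<-1$.

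The independence of $\bxi_L$ from $\tau$ follows by showing that for two admissible mollifiers $\tau_1,\tau_2$ the difference $c_\epsilon^{\tau_1}-c_\epsilon^{\tau_2}\to c_{\tau_1}-c_{\tau_2}$, while the chaos contributions have the same limit. The white-noise identities \eqref{eqn:white_noise_characterising_expectation} are immediate from $\E[Z_k Z_l]=\delta_{k,l}$ and dominated convergence applied to the Parseval expansion of $\langle \xi_L,\varphi\rangle\langle\xi_L,\psi\rangle$; the extension statement is obtained by identifying $\langle \xi_{L,\epsilon},\varphi\rangle = \sum_k \tau(\epsilon k/L)Z_k\langle \fn_{k,L},\varphi\rangle$ with the truncated Karhunen--Lo\`eve expansion of $\langle W,\varphi\rangle$ for $\varphi \in C_c^\infty(Q_L)$ and letting $\epsilon \downarrow 0$. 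The main obstacle, and what Section~\ref{section:proof_white_noise_conv} presumably addresses, is the honest bookkeeping for the Neumann basis: products $\fn_k\fn_l$ do not reduce to single basis functions as cleanly as $e_ke_l$ do, and the diagonal contribution $\fn_k^2$ is spatially inhomogeneous (unlike $|e_k|^2=(2L)^{-d}$), so one must isolate the constant part that carries the logarithmic divergence while showing that the inhomogeneous remainder converges in $\cC_\fn^{2\alpha+2}$.
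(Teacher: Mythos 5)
Your proposal tracks the paper's proof almost exactly: the paper decomposes $\xi_{L,\epsilon}\reso\sigma(\rD)\xi_{L,\epsilon}-c_\epsilon$ into the centered second-chaos piece $\Xi_\epsilon$ (handled in Theorem~\ref{theorem:convergence_to_dirichlet_white_noise} via Kolmogorov--Chentsov, hypercontractivity and the second-moment bounds of Section~\ref{section:proof_white_noise_conv}), the spatially inhomogeneous part $\E[\xi_\epsilon\reso\sigma(\rD)\xi_\epsilon]-c_{\epsilon,L}$ (Lemma~\ref{lemma:expectation_reso_xi_eps_C_0_plus_reso_constant}), and $c_{\epsilon,L}-c_\epsilon$ (Lemma~\ref{lemma:renormalisation_constant}), and you correctly identify the new difficulty relative to the torus case, namely that $\fn_k^2$ is not constant. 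The one small inaccuracy is that $c_{\epsilon,L}-\tfrac1{2\pi}\log(1/\epsilon)-c_\tau$ converges not to $0$ but to an $L$-dependent constant $C_L$ (and the remainder in Lemma~\ref{lemma:expectation_reso_xi_eps_C_0_plus_reso_constant} must genuinely \emph{converge}, not merely be bounded); both are harmless, since the limits are simply absorbed into $\bxi_L$, but they explain why the paper treats Lemma~\ref{lemma:renormalisation_constant} as a separate step.
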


Instead of taking $Q_L$ as an underlying space, we can also take a shift of the box, i.e., $y+Q_L$:

\begin{obs}
\label{obs:shift_enhanced_white_noise}
For $y \in \R^d$ we define 
\begin{align*}
\xi_{L,\epsilon}^y = \cT_y \Big[ 
 \sum_{k\in \N_0^d} \tau( \tfrac{\epsilon}{L} k) \langle \cT_{y}^{-1} \cW, \fn_{k,L} \rangle  \fn_{k,L} \Big].
\end{align*}
If $d=2$, by Theorem \ref{theorem:convergence_in_enhanced_space_to_white_noise} there exists a $\bxi_L^y = (\xi_L^y, \Xi_L^y) \in \fX_\fn^\alpha(y+Q_L)$ such that almost surely
\begin{align}
\label{eqn:limit_of_mollifiers_shift}
\lim_{\epsilon \downarrow 0, \epsilon \in \mathbb{Q}\cap (0,\infty)} (\xi^y_{L,\epsilon}, \xi^y_{L,\epsilon} \reso \sigma(\rD) \xi^y_{L,\epsilon} - \tfrac{1}{2\pi} \log(\tfrac{1}{\epsilon}) ) = \bxi_L^y,
\end{align}
and such that $ \xi_L^y$ is a white noise in the sense described in Theorem \ref{theorem:convergence_in_enhanced_space_to_white_noise} (i.e. $\cT_{-y} \xi_L^y$ satisfies \eqref{eqn:white_noise_characterising_expectation}).
\end{obs}

\textbf{For the rest of this section we fix $L>0$ and drop the subindex $L$; we write $\xi_{\epsilon}= \xi_{L,\epsilon}$ 
and $\fn_k = \fn_{k,L}$.}

\begin{definition}
\label{def:Xi_epsilon}
Define $\Xi_\epsilon \in \cS_\fn(Q_L)$ by 
\begin{align}
\label{eqn:Xi}
\Xi_\epsilon(x) 
 = \xi_\epsilon \reso \sigma(\rD) \xi_\epsilon(x) -  \E [\xi_\epsilon \reso \sigma(\rD) \xi_\epsilon (x) ] .
\end{align}
\end{definition}

The strategy of the proof of the following theorem is rather similar to the proof on the torus in \cite{AlCh15}, but due to the differences of the Dirichlet setting and for the sake of self-containedness we provide the proof.  

\begin{theorem}
\label{theorem:convergence_to_dirichlet_white_noise}
For all $\alpha < - \frac{d}{2}$, $\xi_\epsilon$ converges almost surely as $\epsilon \downarrow 0$ in $\cC_\fn^\alpha$, to the white noise $\xi_L$ (as in Theorem \ref{theorem:convergence_in_enhanced_space_to_white_noise}). 
Moreover, for $d=2$ and all $\alpha < -1$, $\Xi_\epsilon$ converges almost surely as $\epsilon \downarrow 0$ in $\cC_\fn^{2\alpha + 2}$; the limit is independent of the choice of $\tau$. 
\end{theorem}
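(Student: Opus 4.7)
The plan is to obtain almost sure convergence in the relevant Neumann Besov spaces via moment bounds on $\Delta_j(\xi_\epsilon - \xi_{\epsilon'})$ and $\Delta_j(\Xi_\epsilon - \Xi_{\epsilon'})$, combined with Besov embeddings and a Borel--Cantelli argument along the dyadic sequence $\epsilon_n = 2^{-n}$; convergence for rational $\epsilon \downarrow 0$ then follows by interpolation between consecutive dyadic scales. The two assertions split naturally: $\xi_\epsilon$ lies in the first Wiener chaos and $\Xi_\epsilon$ in the second.

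For $\xi_\epsilon$: since $\Delta_j \xi_\epsilon(x) = \sum_{k\in\N_0^d} \rho_j(k/L)\tau(\tfrac{\epsilon}{L} k) Z_k \fn_k(x)$ is Gaussian at every $x$, I would estimate
\[
\E\lvert \Delta_j(\xi_\epsilon - \xi_{\epsilon'})(x) \rvert^2 = \sum_k \rho_j(\tfrac{k}{L})^2 \lvert \tau(\tfrac{\epsilon}{L} k)-\tau(\tfrac{\epsilon'}{L} k) \rvert^2 \fn_k(x)^2
\]
using $\lVert\fn_k\rVert_\infty \lesssim L^{-d/2}$, the fact that only $O(2^{jd}L^d)$ lattice points lie in $\mathrm{supp}\,\rho_j(\cdot/L)$, and the bound $\lvert \tau(\tfrac{\epsilon}{L} k) - \tau(\tfrac{\epsilon'}{L} k) \rvert \lesssim 2^{j\theta}\lvert \epsilon - \epsilon' \rvert^\theta$ for some $\theta\in(0,1]$ on that support. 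Gaussian hypercontractivity upgrades this to all $L^{2p}$ moments; integrating in $x$ and applying the Besov embedding $B_{2p,2p}^{\fn,\beta} \hookrightarrow \cC_\fn^\alpha$ (valid for $\beta > \alpha + d/(2p)$, which accommodates any $\alpha < -d/2$ by taking $p$ large) yields $\E\lVert \xi_\epsilon - \xi_{\epsilon'}\rVert_{\cC_\fn^\alpha}^{2p} \lesssim \lvert\epsilon - \epsilon'\rvert^{2p\theta'}$ for some $\theta' > 0$. Chebyshev and Borel--Cantelli close the dyadic argument, and the identification of the limit with $\xi_L$ follows by testing against $\fn_k$, where $\langle \xi_L, \fn_k\rangle = Z_k$.

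For $\Xi_\epsilon$: the key is that $\Xi_\epsilon$ lives in the second homogeneous Wiener chaos, so chaos hypercontractivity reduces $L^{2p}$-moment bounds to $L^2$-covariance estimates. Writing
\[
\Xi_\epsilon = \sum_{\lvert i-l\rvert \le 1}\sum_{k,m} \rho_i(\tfrac{k}{L})\rho_l(\tfrac{m}{L}) \tau(\tfrac{\epsilon}{L} k)\tau(\tfrac{\epsilon}{L} m) \sigma(\tfrac{m}{L})(Z_k Z_m - \delta_{km}) \fn_k \fn_m,
\]
and using the Wick identity $\E[(Z_kZ_m - \delta_{km})(Z_{k'}Z_{m'} - \delta_{k'm'})] = \delta_{kk'}\delta_{mm'} + \delta_{km'}\delta_{mk'}$, only off-diagonal pairings survive in $\E\lvert \Delta_j \Xi_\epsilon(x) - \Delta_j \Xi_{\epsilon'}(x)\rvert^2$. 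Expanding $\fn_k\fn_m$ via \eqref{eqn:basis_prod_even_even} rewrites the expression as a sum over $\overline{\fn}_{k\pm m}$ which --- up to the reflection corrections --- matches the periodic covariance analysis of \cite{AlCh15} on $\T_{2L}^d$. In $d=2$ the resonance constraint $\lvert i-l\rvert \le 1$ forces $\lvert k\rvert \eqsim \lvert m\rvert$, and the sum over $k$ at fixed $k-m$ converges absolutely for the off-diagonal piece, giving a Cauchy bound in $\cC_\fn^{2\alpha+2}$ for $\alpha < -1$. The $\tau$-independence of the limit comes from the fact that $\tau$ enters the off-diagonal sum only through $\tau(\tfrac{\epsilon}{L} k)\tau(\tfrac{\epsilon}{L} m) \to 1$ pointwise, which combined with the $\epsilon$-uniform $L^2$-estimate yields the same limit for any admissible $\tau$ by dominated convergence.

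The main obstacle is controlling the subtracted constant $c_\epsilon$: one must show that $\E[\xi_\epsilon \reso \sigma(\rD) \xi_\epsilon(x)]$ is independent of $x$ up to a lower-order term, and that its divergent part is exactly $\tfrac{1}{2\pi}\log(1/\epsilon)$ with a finite, $\tau$-dependent remainder $c_\tau$. This amounts to the asymptotic analysis of $\sum_k \tau(\tfrac{\epsilon}{L} k)^2 \sigma(k/L)$ as $\epsilon \downarrow 0$ against the continuous integral $\int_{\R^2} \tau(\epsilon x)^2 \sigma(x)\,dx$, which produces the Green's-function constant $\tfrac{1}{2\pi}$ of $1-\Delta$ on $\R^2$. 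The combinatorial bookkeeping from the reflection terms introduced by \eqref{eqn:basis_prod_even_even} is precisely why these cumbersome calculations are deferred to Section~\ref{section:proof_white_noise_conv}.
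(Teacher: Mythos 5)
Your overall strategy coincides with the paper's: reduce to pointwise second-moment bounds on the Littlewood--Paley blocks via chaos hypercontractivity (Lemma~\ref{lemma:p_th_moment_of_Gaussian} / Lemma~\ref{lemma:gaussian_besov_bound}), pass through the Besov embedding $B_{p,p}^{\fn,\beta}\hookrightarrow\cC_\fn^\alpha$, and then deduce almost-sure convergence by a dyadic/Borel--Cantelli argument --- which the paper packages as the Kolmogorov--Chentsov theorem (Theorem~\ref{theorem:kolmogorov_chentsov}). The covariance estimate you sketch for $\xi_\epsilon$ and the Wick/off-diagonal computation for $\Xi_\epsilon$ are exactly what Lemmas~\ref{lemma:bound_on_expect_mod_Delta_i_difference_xi_in_x_squared} and~\ref{lemma:bound_on_expect_mod_Delta_i_difference_Xi_in_x_squared} establish, with the identity~\eqref{eqn:basis_prod_even_even} handling the Dirichlet/Neumann reflection terms. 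For the $\tau$-independence, ``pointwise convergence plus a uniform estimate'' needs to be made precise: the paper shows that for two choices $\tau,\tau'$ the difference of the corresponding $\Xi_\epsilon$'s tends to $0$ in probability (Theorem~\ref{theorem:convergence_enhanced_pair_theta}\ref{item:convergence_Theta_min_Xi_eps} via Lemmas~\ref{lemma:gaussian_besov_bound}\ref{item:zeta_convergence_hypercontractivity} and~\ref{lemma:difference_resos_paired_in_square_to_zero}), and since both converge a.s.\ the limits must agree.

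The genuine slip is in your last paragraph. You treat the asymptotic splitting $\E[\xi_\epsilon\reso\sigma(\rD)\xi_\epsilon(x)] = \tfrac{1}{2\pi}\log\tfrac1\epsilon + c_\tau + o(1)$ as ``the main obstacle'' for this theorem, but it plays no role here. By Definition~\ref{def:Xi_epsilon}, $\Xi_\epsilon$ is already centered by the \emph{full} $x$-dependent expectation $\E[\xi_\epsilon\reso\sigma(\rD)\xi_\epsilon(x)]$ --- not by a constant --- so the present statement is a pure variance/second-chaos estimate with no renormalisation constant to analyse. The identification of the divergent constant $c_\epsilon=\tfrac1{2\pi}\log\tfrac1\epsilon+c_\tau$ and the $\tau$-independence of the non-random remainder are a separate matter, carried out in Lemmas~\ref{lemma:expectation_reso_xi_eps_C_0_plus_reso_constant} and~\ref{lemma:renormalisation_constant}, which feed into the proof of Theorem~\ref{theorem:convergence_in_enhanced_space_to_white_noise} (where $\xi_\epsilon\reso\sigma(\rD)\xi_\epsilon-c_\epsilon$ is decomposed into $\Xi_\epsilon$ plus a deterministic part). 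If you were writing this proof out in full you would find that no log-divergence appears anywhere in the estimate for $\Xi_\epsilon$, so the paragraph is spent on the wrong problem.
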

\begin{proof}
The proof relies on the Kolmogorov-Chentsov theorem (Theorem \ref{theorem:kolmogorov_chentsov}).
Lemma \ref{lemma:gaussian_besov_bound}\ref{item:zeta_bound_hypercontractivity}
shows that the required bound for this theorem can be reduced to bounds on the second moments of $\Delta_i (\xi_\epsilon - \xi_{\delta})(x)$ and $\Delta_i  (\Xi_\epsilon - \Xi_{\delta})(x)$, given in \ref{obs:second_moment_bounds} (the proofs of these bounds are lengthy and therefore postponed to Section \ref{section:proof_white_noise_conv}).
\eqref{eqn:white_noise_characterising_expectation} follows from 
\begin{align*}
\E[\langle \xi_\epsilon, \varphi \rangle \langle \xi_\epsilon, \psi \rangle] 
=  \sum_{k \in \N_0^d} \tau( \epsilon k)^2 \langle \varphi, \fn_k \rangle \langle \psi, \fn_k \rangle
\xrightarrow{\epsilon \downarrow 0} 
 \sum_{k \in \N_0^d}  \langle \varphi, \fn_k \rangle \langle \psi, \fn_k \rangle =  \langle \varphi, \psi \rangle.
\end{align*}
That the limit of $\Xi_\epsilon$ is independent of the choice of $\tau$, follows from Theorem~\ref{theorem:convergence_enhanced_pair_theta}~\ref{item:convergence_Theta_min_Xi_eps}. 
\end{proof}

\begin{theorem}[Kolmogorov-Chentsov theorem]
\label{theorem:kolmogorov_chentsov}
Let $\zeta_\epsilon $ be a random variable with values in a Banach space $\fX$ for all $\epsilon>0$. 
Suppose there exist $a,b,C>0$ such that for all $\epsilon, \delta>0$,  
\begin{align*}
\E \left[ \| \zeta_\epsilon - \zeta_\delta\|_{\fX}^a \right] \le C |\epsilon - \delta|^{1+b}. 
\end{align*}
Then there exists a random variable $\zeta$ with values in $\fX$ such that in $L^a(\Omega, \fX)$ and almost surely 
\bfold 
\lim_{\epsilon \downarrow 0, \epsilon \in \mathbb{Q} \cap (0,\infty)} \zeta_\epsilon = \zeta. 
\efold 
\end{theorem}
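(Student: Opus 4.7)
The plan is to run the classical Kolmogorov--Chentsov chaining argument, adapted for a Banach-space valued ``process'' $\epsilon\mapsto\zeta_\epsilon$ indexed by $\epsilon\in \Q\cap(0,1]$ (without loss of generality one may restrict to this range). Fix any $\gamma\in(0,b/a)$ and set $\epsilon_n=2^{-n}$. By the moment hypothesis and Markov's inequality,
\[
\P\bigl(\|\zeta_{\epsilon_n}-\zeta_{\epsilon_{n+1}}\|_{\fX}>2^{-n\gamma}\bigr)\le C\,2^{-n(1+b-a\gamma)},
\]
which is summable, so Borel--Cantelli yields a null set $N_1$ off which $(\zeta_{\epsilon_n})_n$ is Cauchy in $\fX$. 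Define $\zeta$ as its limit (set $\zeta:=0$ on $N_1$). The telescoping bound
\[
\|\zeta_{\epsilon_n}-\zeta_{\epsilon_m}\|_{L^a(\Omega,\fX)}\le \sum_{k\ge \min(n,m)}C^{1/a}\,2^{-k(1+b)/a}
\]
shows the same sequence is Cauchy in $L^a(\Omega,\fX)$, hence (by Fatou and the a.s.\ limit) converges in $L^a$ to $\zeta$.

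To upgrade to $L^a$-convergence along \emph{every} $\delta\downarrow 0$ in $\Q\cap(0,1]$, pick $n$ with $\epsilon_n\le\delta<\epsilon_{n-1}$; the hypothesis gives $\|\zeta_\delta-\zeta_{\epsilon_n}\|_{L^a}\le C^{1/a}(2\epsilon_n)^{(1+b)/a}\to 0$ as $\delta\downarrow 0$, and the claim follows from the triangle inequality.

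For almost-sure convergence along rationals I would apply the dyadic chaining argument. With $\gamma$ as before, a union bound plus Markov gives
\[
\P\Bigl(\max_{1\le k<2^n}\|\zeta_{(k+1)2^{-n}}-\zeta_{k\,2^{-n}}\|_{\fX}>2^{-n\gamma}\Bigr)\le C\,2^{-n(b-a\gamma)},
\]
still summable. Borel--Cantelli yields a null set $N_2$ and a (random) $n_0(\omega)$ such that, off $N_2$, every consecutive dyadic increment at level $n\ge n_0$ is bounded by $2^{-n\gamma}$. Standard dyadic chaining then produces a (random) $K<\infty$ with $\|\zeta_s-\zeta_t\|_{\fX}\le K|s-t|^\gamma$ for all sufficiently close dyadic $s,t\in(0,1]$, so $\epsilon\mapsto \zeta_\epsilon$ is uniformly continuous on dyadics and $\zeta_d\to\zeta$ a.s.\ as $d\downarrow 0$ through dyadics. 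Finally, for each $q\in\Q\cap(0,1]$ the $L^a$-continuity established in the second paragraph gives $\zeta_{d_j}\to\zeta_q$ in probability along dyadics $d_j\to q$, while the Hölder bound gives an a.s.\ $\fX$-limit $\widehat\zeta_q$; these agree outside a null set $N_q$. Throwing away $N_1\cup N_2\cup\bigcup_{q\in\Q\cap(0,1]}N_q$ (still null, by countability of $\Q$) leaves a full-measure set on which $q\mapsto\zeta_q$ is Hölder continuous on $\Q\cap(0,1]$, whence $\lim_{q\downarrow 0,\,q\in\Q}\zeta_q=\zeta$.

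The main obstacle is the last step: converting an a.s.\ statement along the single dyadic subsequence $\epsilon_n=2^{-n}$ into an a.s.\ statement along \emph{all} sequences of rationals tending to zero. The dyadic chaining bound is precisely what provides the uniform $\fX$-valued modulus of continuity needed, and the countability of $\Q$ is what allows one to identify $\zeta_q$ with the dyadic-continuous extension simultaneously at every rational $q$.
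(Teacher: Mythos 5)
Your proof is correct, but it takes a genuinely different (and more verbose) route than the paper's. The paper's argument is essentially a two-liner: it first observes that the hypothesis makes $\epsilon\mapsto\zeta_\epsilon$ Cauchy in $L^a(\Omega,\fX)$, so the limit $\zeta_0$ exists in $L^a$ and the extended process $(\zeta_t)_{t\in[0,1]}$ satisfies the Kolmogorov moment condition; it then invokes the classical Kolmogorov continuity theorem (citing the proof of Theorem 2.23 in Kallenberg, which goes through for Banach-space-valued processes) to produce a continuous modification $\tilde\zeta$, and concludes by noting that on a single full-measure event one has $\zeta_\epsilon=\tilde\zeta_\epsilon$ simultaneously for all of the countably many $\epsilon\in\Q\cap(0,1]$, whence $\zeta_\epsilon\to\tilde\zeta_0$ a.s.\ along rationals. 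Your chaining argument (Borel--Cantelli on the dyadic subsequence, union bound over dyadic increments at each level, Hölder modulus on dyadics, then identification at rationals via the $L^a$ estimate) is in effect an unfolded proof of the very continuity theorem the paper delegates to Kallenberg. What your version buys is complete self-containedness and the explicit Banach-space-valued adaptation; what the paper's version buys is brevity by black-boxing the modulus-of-continuity step. Two small remarks: the statement allows any $a>0$, so for $a<1$ you should replace the $L^a$ triangle inequality by the quasi-norm inequality $\|X+Y\|_{L^a}^a\le\|X\|_{L^a}^a+\|Y\|_{L^a}^a$ in the telescoping and in the upgrade-to-all-$\delta$ step; and your final identification step only needs $L^a$-continuity of $\delta\mapsto\zeta_\delta$ at each rational $q$, which follows directly from the hypothesis $\E[\|\zeta_\delta-\zeta_q\|^a]\le C|\delta-q|^{1+b}$ rather than from the argument you gave for $q=0$.
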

\begin{proof}
This follows from the proof of \cite[Theorem 2.23]{Ka97}.  
\begin{calc}
First note that $\zeta_\epsilon$ is Cauchy in $L^a(\Omega, \fX)$, so that $\lim_{\epsilon\downarrow0} \zeta_\epsilon =\zeta_0$ exists as a limit in this space. Then $\zeta_t$ for $t \in [0,1]$ is as in the Kolmogorov-Chentsov theorem. Therefore it has a continuous modification $\tilde \zeta_t$. 
So for countably many $\epsilon \in (0,1]$ we have $\zeta_\epsilon = \tilde \zeta_\epsilon$ on a full probability set and thus $\zeta_\epsilon \rightarrow \tilde \zeta_0$ almost surely. 
\end{calc}
\end{proof}

In Lemma \ref{lemma:gaussian_besov_bound}\ref{item:zeta_bound_hypercontractivity} we show how we obtain $L^p$ bounds on the $\cC_\fn$ norm from bounds on squares of the Littlewood-Paley blocks.
Lemma \ref{lemma:gaussian_besov_bound}\ref{item:zeta_convergence_hypercontractivity} follows from \ref{item:zeta_bound_hypercontractivity} and will be used in Section \ref{section:eigenvalues_on_boxes} to prove Theorem \ref{theorem:theta_epsilon_converges_to_xi_r}. 

To prove Lemma \ref{lemma:gaussian_besov_bound} we use the following auxiliary lemma. 
It is generally known that the $p$-th moment of a centered Gaussian random variable $Z$ can be bounded by its second moment, as $\E[ | Z|^p ] = (p-1)!! \E[ |Z|^2]^{\frac{p}{2}}$ (see \cite[p.110]{Pa65}). 
We will use the generalisation of this bound, which is a consequence of the so-called hypercontractivity.

\begin{lemma}
\label{lemma:p_th_moment_of_Gaussian}
\cite[Theorem 1.4.1 and equation (1.71)]{Nu09}
Suppose that $Z_n$ for $n\in\N$ are independent standard Gaussian random variables. 
If $Z$ is a random variable in the first or second Wiener chaos, which means it is of the form 
$\sum_{n\in\N} a_n Z_n$ or $\sum_{n,m\in \N} a_{n,m} (Z_n Z_m - \E[ Z_n Z_m])$ 
with $a_n, a_{n,m} \in \C$, then for $p>1$ 
\begin{align*}
\E[ | Z|^p ] \le p^p \E[ |Z|^2]^{\frac{p}{2}}. 
\end{align*}
\end{lemma}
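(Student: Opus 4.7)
The plan is to deduce this from Nelson's hypercontractivity theorem, which is the standard route; the statement is essentially a repackaging of the Wiener chaos moment bounds found in \cite{Nu09}. First I would recall that there is an Ornstein--Uhlenbeck semigroup $(T_t)_{t \ge 0}$ acting on the Gaussian probability space generated by $(Z_n)_{n \in \N}$. Nelson's theorem states that $T_t : L^2 \to L^q$ is a contraction whenever $e^{2t} \ge q-1$, i.e., $\|T_t F\|_{L^q} \le \|F\|_{L^2}$ under that condition.

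Next I would use the fact that the Wiener chaos decomposition diagonalises $(T_t)$: on the $n$-th chaos $\mathcal{H}_n$, the operator $T_t$ acts as multiplication by $e^{-nt}$. In particular, if $Z \in \mathcal{H}_n$ (here $n=1$ or $n=2$), then $T_t Z = e^{-nt} Z$, so for $q \ge 2$, choosing $t$ to saturate $e^{2t} = q-1$ gives
\begin{align*}
e^{-nt} \|Z\|_{L^q} = \|T_t Z\|_{L^q} \le \|Z\|_{L^2},
\end{align*}
and hence $\|Z\|_{L^q} \le (q-1)^{n/2} \|Z\|_{L^2}$. Raising to the $p$-th power with $p = q \ge 2$ yields $\E[|Z|^p] \le (p-1)^{pn/2} \E[|Z|^2]^{p/2}$, and since $n \le 2$ we have $pn/2 \le p$ and $(p-1)^{pn/2} \le p^p$, giving the claimed bound for $p \ge 2$. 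The case $p \in (1,2)$ follows by Jensen's (or H\"older's) inequality since $\|Z\|_{L^p} \le \|Z\|_{L^2}$ and $p^p \ge 1$.

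The only subtlety worth flagging is that $Z$ may be an infinite sum; one handles this by a standard truncation/limit argument, applying the bound to the partial sums $\sum_{n \le N} a_n Z_n$ (resp.\ $\sum_{n,m \le N} a_{n,m}(Z_n Z_m - \E[Z_n Z_m])$), which lie in a finite-dimensional subspace of the chaos, and then passing to the limit using the $L^2$ convergence of the series together with the uniform $L^p$ bound to conclude that the limit has the required $p$-th moment. The main obstacle, strictly speaking, is invoking Nelson's theorem; if one wanted a self-contained proof, one would have to prove hypercontractivity, which is the genuine technical content of the lemma. Since the paper is content to cite \cite{Nu09}, I would simply reference Theorem~1.4.1 and equation~(1.71) there.
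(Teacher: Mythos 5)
The paper offers no proof of its own here; it simply cites Nualart. Your reconstruction via Nelson's hypercontractivity and the eigenvalue relation $T_t|_{\mathcal H_n} = e^{-nt}\,\mathrm{id}$ is correct and is precisely the argument behind the cited Theorem~1.4.1 and~(1.71), including the elementary bookkeeping $(p-1)^{pn/2}\le p^p$ for $n\le 2$, $p\ge 2$, the Jensen step for $p\in(1,2)$, and the truncation/Fatou passage for infinite sums.
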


\begin{lemma}
\label{lemma:gaussian_besov_bound} 
Let $A>0$ and $a\in \R$. 
\begin{enumerate}
\item 
\label{item:zeta_bound_hypercontractivity}
Suppose $\zeta$ is a random variable with values in $\cS'_\fn( [0,L]^d)$ such that $\Delta_i \zeta (x)$ is a random variable of the form as $Z$ is, as in Lemma \ref{lemma:p_th_moment_of_Gaussian} for all $i\in \N_{-1}$ and $x\in [0,L]^d$. Suppose that for all $i \in \N_{-1}$, $x\in [0,L]^d$  
\begin{align}
\label{eqn:bound_Delta_i_zeta}
\E[ | \Delta_i \zeta (x) |^2]
\le A 2^{a i }.
\end{align}
Then for all  $\kappa >0$ 
there exists a $C>0$ independent of $\zeta$ such that for all $p\ge 1$
\begin{align}
\label{eqn:zeta_bound_hypercontractivity}
\E[ \| \zeta  \|_{\cC_\fn^{-\frac{a}{2}-\kappa -\frac{2}{p}}}^p ] 
\le C  p^p
L^{d} A^\frac{p}{2}.
\end{align}
\item 
\label{item:zeta_convergence_hypercontractivity}
Suppose that $(\zeta_\epsilon)_{\epsilon>0}$ is a family of such random variables for which \eqref{eqn:bound_Delta_i_zeta} holds for all $i\in \N_{-1}$ and $x\in [0,L]^d$, and that for all $k\in \N_0^d$ 
\begin{align}
\label{eqn:zeta_epsilon_paired_with_fn_k_conv_0}
\E[ |\langle \zeta_\epsilon, \fn_{k,L} \rangle|^2]  \rightarrow 0.
\end{align}
Then for all $\kappa>0$ and $p>1$
\begin{align*}
\E[  \| \zeta_\epsilon  \|_{\cC_\fn^{-\frac{a}{2}-\kappa -\frac{2}{p}}}^p ]  \rightarrow 0. 
\end{align*}
Consequently, we have $\zeta_\epsilon \xrightarrow{\P} 0$ (convergence in probability) in $\cC_\fn^{-\frac{a}{2}-\kappa -\frac{2}{p}}([0,L]^d)$.
\end{enumerate}
\end{lemma}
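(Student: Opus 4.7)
The plan for (a) is to combine the hypercontractivity bound of Lemma~\ref{lemma:p_th_moment_of_Gaussian} (to trade the $p$th moment for the second moment) with the Besov embedding $B_{p,p}^{s}\hookrightarrow B_{\infty,\infty}^{s-d/p}$ (to pass from an $L^p$-based norm to $\cC_\fn=B_{\infty,\infty}$). First I would apply hypercontractivity to the scalar random variable $\Delta_i\zeta(x)$ together with \eqref{eqn:bound_Delta_i_zeta} to get $\E[|\Delta_i\zeta(x)|^p]\le p^p(A2^{ai})^{p/2}$ pointwise in $x$; integrating this against $\dd x$ via Fubini will then yield $\E[\|\Delta_i\zeta\|_{L^p}^p]\le p^p L^d A^{p/2}2^{api/2}$. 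Setting $\beta:=-a/2-\kappa$ and invoking the embedding, I would bound
\begin{align*}
\E\bigl[\|\zeta\|_{\cC_\fn^{\beta-d/p}}^p\bigr]\lesssim\E\bigl[\|\zeta\|_{B_{p,p}^{\beta}}^p\bigr]=\sum_{i\ge-1}2^{ip\beta}\,\E\bigl[\|\Delta_i\zeta\|_{L^p}^p\bigr]\le p^p L^d A^{p/2}\sum_{i\ge-1}2^{-ip\kappa},
\end{align*}
and the geometric series converges because $\kappa>0$. This produces \eqref{eqn:zeta_bound_hypercontractivity}; the loss $-d/p$ from the embedding is precisely the $-2/p$ appearing in the statement under the standing dimension $d=2$.

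For part (b) the plan is a dominated-convergence argument on the Littlewood-Paley series that defines the Besov norm, with (a) supplying a majorant that is summable uniformly in $\epsilon$. The estimate from (a) already gives $2^{ip\beta}\,\E[\|\Delta_i\zeta_\epsilon\|_{L^p}^p]\le p^pL^dA^{p/2}2^{-ip\kappa}$ for every $\epsilon>0$. For each fixed $i$, the block $\Delta_i\zeta_\epsilon=\sum_k\rho_i(k/L)\langle\zeta_\epsilon,\fn_k\rangle\fn_k$ is a finite linear combination since $\rho_i$ has compact support, so the elementary inequality $|\sum_{j=1}^N a_j|^p\le N^{p-1}\sum_j|a_j|^p$ combined with hypercontractivity and hypothesis \eqref{eqn:zeta_epsilon_paired_with_fn_k_conv_0} will give $\E[\|\Delta_i\zeta_\epsilon\|_{L^p}^p]\to 0$ as $\epsilon\downarrow 0$. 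Dominated convergence applied to the sum over $i$ then delivers $\E[\|\zeta_\epsilon\|_{\cC_\fn^{-a/2-\kappa-2/p}}^p]\to 0$, and Markov's inequality upgrades this to convergence in probability.

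I do not anticipate a genuine obstacle here; the proof is essentially the bookkeeping of three competing exponents. The delicate point is aligning the $d/p$ loss from the Besov embedding with the $-d/p$ appearing in the target regularity, exploiting the arbitrary $\kappa>0$ to buy geometric summability in $i$, and keeping the $p^p$ prefactor explicit so that the bound is usable for the subsequent Kolmogorov-Chentsov argument (Theorem~\ref{theorem:kolmogorov_chentsov}), which requires uniform-in-$p$ control.
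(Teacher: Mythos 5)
Your proposal is correct and follows essentially the same route as the paper for both parts: for (a), pointwise hypercontractivity plus Fubini to bound $\E[\|\Delta_i\zeta\|_{L^p}^p]$, geometric summation in $i$ costing the arbitrary $\kappa$, then the Besov embedding $B_{p,p}^\beta\hookrightarrow\cC^{\beta-d/p}$; for (b), a uniform-in-$\epsilon$ tail bound from (a) combined with blockwise convergence to zero from \eqref{eqn:zeta_epsilon_paired_with_fn_k_conv_0} (the paper phrases this as an explicit truncation of the sum at a level $I$ rather than as dominated convergence, but these are the same argument). One small remark on (b): invoking $\lvert\sum_{j=1}^N a_j\rvert^p\le N^{p-1}\sum_j\lvert a_j\rvert^p$ is unnecessary — it is cleaner, and what the paper does, to apply hypercontractivity to $\Delta_i\zeta_\epsilon(x)$ directly to reduce to $\E[\lvert\Delta_i\zeta_\epsilon(x)\rvert^2]$, then expand this second moment; since $\rho_i$ has compact support the expansion is a finite sum and one passes to the limit term by term using \eqref{eqn:zeta_epsilon_paired_with_fn_k_conv_0} and Cauchy--Schwarz on the covariances. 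You also correctly flag that the $-2/p$ loss in the stated regularity matches the $-d/p$ Besov-embedding loss only under the standing assumption $d=2$.
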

\begin{proof}
\ref{item:zeta_bound_hypercontractivity}
For $\kappa>0$, by Lemma \ref{lemma:p_th_moment_of_Gaussian}, with $C_\kappa = \sum_{i=-1}^\infty  2^{ - \kappa i}$, 
\begin{align*}
\E[ \| \zeta \|_{B_{p,p}^{\Even, -\frac{a}{2}-\kappa}}^p ]
& =   \sum_{i=-1}^\infty 2^{(-\frac{a}{2}-\kappa) p i} \E \left[ \| \Delta_i \zeta \|_{L^p}^p \right]  
 \le  p^p L^{d} \Big(\sum_{i=-1}^\infty  2^{ -p \kappa i} \Big) A^{ \frac{p}{2}} 
\le C_\kappa p^p L^{d} A^{ \frac{p}{2}}. 
\end{align*}
Using the embedding property of Besov spaces \cite[Proposition 2.71]{BaChDa11}, 
which implies the existence of a $C>0$ such that 
$
 \| \cdot \|_{\cC_\fn^{-\frac{a}{2}-\kappa -\frac{2}{p}}}
\le C 
 \| \cdot \|_{B_{p,p}^{\Even,-\frac{a}{2}-\kappa}}
$, one obtains \eqref{eqn:zeta_bound_hypercontractivity}. \\
\ref{item:zeta_convergence_hypercontractivity}
By Lemma \ref{lemma:p_th_moment_of_Gaussian} (and Fubini) 
\begin{align*}
\E[ \| \Delta_i \zeta_\epsilon \|_{L^p}^p ] 
\cand \begin{calc}= \int \E[ | \Delta_i \zeta_\epsilon(x) |^p ] \dd x \end{calc} \cnewline
& \le p^p \int \E[ | \Delta_i \zeta_\epsilon(x) |^2 ]^{\frac{p}{2}} \dd x 
 \lesssim 
 p^p L^d \Big(\sum_{k\in\N_0^d} \rho_i( \tfrac{k}{L} )^2 \E[ |\langle \zeta_\epsilon, \fn_k \rangle|^2 ] \Big)^{\frac{p}{2}}.
\end{align*}
and so 
\begin{align*}
\E[ \| \zeta_\epsilon  \|_{B_{p,p}^{\fn,-\frac{a}{2}-\kappa }}^p ]
\cand 
\begin{calc}
= \sum_{i=-1}^\infty 2^{(-\frac{a}{2}-\kappa) p i} \E \left[ \| \Delta_i \zeta_\epsilon \|_{L^p}^p \right]  
\end{calc}
\cnewline
& \le p^p L^d \left( 
\sum_{i = -1}^I 2^{(-\frac{a}{2}-\kappa) p i} 
\left(\sum_{k\in\N_0^d} \rho_i( \tfrac{k}{L} )^2 \E[ |\langle \zeta_\epsilon, \fn_k \rangle|^2 ] \right)^{\frac{p}{2}}
 + A^{\frac{p}{2}} \sum_{i \ge I+1} 2^{-\kappa i }
\right) .
\end{align*}
The latter becomes arbitrarily small by choosing $I$ large and subsequently $\epsilon$ small. 
\end{proof}

\begin{obs}
\label{obs:second_moment_bounds}

The following two statements are proved in Section \ref{section:proof_white_noise_conv}. 
\begin{enumerate}
\item 
(Lemma \ref{lemma:bound_on_expect_mod_Delta_i_difference_xi_in_x_squared})
For all $\gamma\in (0,1)$ there exists a $C>0$ such that for all $i \in \N_{-1}$, $\epsilon,\delta>0$, $x\in [0,L]^d$
\begin{align*}
\E[ | \Delta_i (\xi_\epsilon - \xi_\delta) (x)|^2]
& \le C
2^{(d+2\gamma )i} |\epsilon - \delta|^{\gamma}. 
\end{align*}
\item 
(Lemma \ref{lemma:bound_on_expect_mod_Delta_i_difference_Xi_in_x_squared})
Let $d=2$. 
For all $\gamma \in (0,1)$ there exists a $C>0$ such that for all  $i\in \N_{-1}$, $\epsilon,\delta>0$, $x\in Q_L$
\begin{align*}
\E [ |\Delta_i ( \Xi_\epsilon - \Xi_\delta)(x)|^2] 
\le C  2^{2\gamma i } |\epsilon - \delta|^{\gamma}  . 
\end{align*}
\end{enumerate}
\end{obs}

\begin{definition}
Define $c_{\epsilon,L} \in \R$ by 
\begin{align}
\label{eqn:c_eps_L}
c_{\epsilon,L} = \frac{1}{4 L^2}
\sum_{k\in \Z^2} 
\frac{ \tau(\tfrac{\epsilon}{L}k)^2}{1+\frac{\pi^2}{L^2} |k|^2}. 
\end{align}
\end{definition}

In the periodic setting one has that with  $\xi_\epsilon$ defined as in \cite{AlCh15}, 
$ 
\E[\xi_\epsilon \reso \sigma(\rD) \xi_\epsilon(x)]= c_{\epsilon,L}.
$ 
Observe that it is independent of $x$. 
In our setting, the Dirichlet setting, we have (remember \eqref{eqn:first_definition_Z_k} and use that 
$\sum_{i,j \in \N_{-1},|i-j|\le1} \rho_i( \frac{k}{L} ) \rho_j( \frac{k}{L} \ew) = 1$)
\begin{align}
\label{eqn:expression_expectation_xi_reso_xi}
\E [\xi_\epsilon \reso \sigma(\rD) \xi_\epsilon (x) ]
\begin{calc}
=  
\sum_{\substack{i,j \in \N_{-1}\\|i-j|\le1} }
\sum_{k\in \N_0^2} 
\rho_i( \tfrac{k}{L} )\rho_j( \tfrac{k}{L} ) \frac{  \tau(\tfrac{\epsilon}{L} k)^2}{1+\frac{\pi^2}{L^2} |k|^2} \fn_k(x)^2
\end{calc}
=  
\sum_{k\in \N_0^2} 
\frac{ \tau(\tfrac{\epsilon}{L} k)^2}{1+\frac{\pi^2}{L^2} |k|^2} \fn_k(x)^2 .
\end{align}
By \eqref{eqn:basis_prod_even_even}, as $\fn_0(x) = \frac{2}{L} \nu_0= \frac{1}{L} $ and $\nu_{2k}=\nu_k$, 
\begin{align}
\label{eqn:fn_k_squared}
 \fn_k(x)^2 
\cand \begin{calc} \notag = \frac{1}{2L}
\sum_{\fp  \in \{-1,1\}^2} \frac{\nu_k^2}{\nu_{k+\fp \circ k }} \fn_{k+ \fp \circ k} (x) \end{calc} \cnewline
\cand \begin{calc} \notag = 
\frac{1}{2L}  \frac{\nu_k^2}{\nu_{2k}} \fn_{k+ \fp \circ k} (x)
+
\frac{1}{2L}  \frac{\nu_k^2}{\nu_{(k_1,0)}} \fn_{(2k_1,0)} (x)
+
\frac{1}{2L}  \frac{\nu_k^2}{\nu_{(0,k_2)}} \fn_{(0,2k_2)} (x)
+
\frac{1}{2L}  \frac{\nu_k^2}{\nu_{0 }} \fn_{0} (x) \end{calc} \cnewline
& = 
\frac{1}{2L} \nu_k \fn_{2k }(x) 
+ \frac{1}{2L} \frac{\nu_k^2}{\nu_{(k_1,0)}}  \fn_{(2k_1,0) }(x) 
+ \frac{1}{2L} \frac{\nu_k^2}{\nu_{(0,k_2)}}  \fn_{(0,2k_2) }(x) + \frac{\nu_k^2}{L^2} .
\end{align}
Note that 
\begin{calc}
as $\fn_k(0) = 2\frac{\nu_k}{L} $
\end{calc}
\begin{align}
\label{eqn:c_epsilon_L_in_terms_of_f_k_zero}
c_{\epsilon,L} = 
\sum_{k\in \N_0^2} 
\frac{\tau(\tfrac{\epsilon}{L} k)^2}{1+\frac{\pi^2}{L^2} |k|^2} \frac{ \nu_k^2}{L^2} = \frac{1}{4} \E[\xi_\epsilon \reso \sigma(\rD) \xi_\epsilon(0)] . 
\end{align}
\begin{calc}
\begin{align*}
4\nu_k^2 = 2^{2-\# \{i : k_i =0\}} = 
\begin{cases}
1 & k = 0, \\
2 & k_1 = 0, k_2 \ne 0 \mbox{ or } k_1 \ne 0, k_2 =0, \\
4 & k \in \N^2. 
\end{cases}
\end{align*}
\end{calc}

Lemma \ref{lemma:expectation_reso_xi_eps_C_0_plus_reso_constant} deals with this $x$ dependence of $\E [\xi_\epsilon \reso \sigma(\rD) \xi_\epsilon (x) ]$.

The following observations will be used multiple times. 

\begin{obs}
As $0 \le \rho_i \le 1$ and there is a $b \ge 1$ such that $\rho_i$ is supported in a ball of radius $2^i b$ for all $i \in \N_{-1}$, one has for all $i \in \N_{-1}$, $x\in \R^d$ and $\gamma>0$ 
\begin{align}
\label{eqn:polynomial_bound_on_rho_i}
\rho_i(x)  \le \left(2b \frac{2^i}{1+|x|} \right)^{\gamma}. 
\end{align}
\end{obs}

\begin{theorem}
\label{theorem:fourier_mollifiers_converge_in_B_22_space}
Let $\tau : \R^2 \rightarrow [0,1]$ be a compactly supported even function that equals $1$ on a neighbourhood of $0$.
There exists a $C>0$ such that for all $\gamma \in \R$, $L>0$ and $h\in H_\fn^\gamma(Q_L)$ we have $\| h - \tau(\epsilon \rD) h\|_{H_\fn^\gamma} \rightarrow 0$ and for $\beta < \gamma$ 
\begin{align*}
\| h - \tau(\epsilon \rD) h\|_{H_\fn^\beta} \le C \epsilon^{\gamma-\beta} \|h\|_{H_\fn^\gamma}. 
\end{align*}
\end{theorem}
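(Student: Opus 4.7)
The plan is to identify the $H_\fn^\gamma$ norm with the weighted $\ell^2$ norm of the coefficients $\langle h,\fn_k\rangle$ via Theorem \ref{theorem:equivalent_norms_H_spaces}, which reduces everything to an elementary estimate on the multiplier symbol. By that theorem there exist constants $c,C>0$ (independent of $L$ and $\gamma$ --- this uniformity in $L$ comes from the factor $\tfrac{k}{L}$ in Definition \ref{def:fourier_multiplier}) such that
\begin{equation*}
 c\,\|u\|_{H_\fn^\gamma}^2 \le \sum_{k\in\N_0^2}\bigl(1+|\tfrac{k}{L}|^2\bigr)^\gamma \langle u,\fn_k\rangle^2 \le C\,\|u\|_{H_\fn^\gamma}^2.
\end{equation*}
Applied to $u = h - \tau(\epsilon\rD)h$, the Fourier multiplier $\tau(\epsilon\rD)$ acts on coefficients by multiplication with $\tau(\tfrac{\epsilon k}{L})$ (see \eqref{eqn:definition_sigma_D}), so $\langle h - \tau(\epsilon\rD)h,\fn_k\rangle = (1-\tau(\tfrac{\epsilon k}{L}))\langle h,\fn_k\rangle$.

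First I would prove the quantitative bound for $\beta<\gamma$. Let $a>0$ be such that $\tau\equiv 1$ on the ball $B(0,a)$. Then $1-\tau(\tfrac{\epsilon k}{L})=0$ whenever $|\tfrac{k}{L}|\le a/\epsilon$, and $|1-\tau(\tfrac{\epsilon k}{L})|\le 2$ everywhere (as $\tau$ takes values in $[0,1]$). On the complementary range $|\tfrac{k}{L}|>a/\epsilon$ one has $(1+|\tfrac{k}{L}|^2)\ge (a/\epsilon)^2$, hence
\begin{equation*}
 \bigl(1+|\tfrac{k}{L}|^2\bigr)^\beta \le \bigl(\tfrac{\epsilon}{a}\bigr)^{2(\gamma-\beta)}\bigl(1+|\tfrac{k}{L}|^2\bigr)^\gamma.
\end{equation*}
Summing this inequality against $\langle h,\fn_k\rangle^2$ over $\{|\tfrac{k}{L}|>a/\epsilon\}$ and using the norm equivalence above gives
\begin{equation*}
 \|h-\tau(\epsilon\rD)h\|_{H_\fn^\beta}^2 \le 4\,c^{-1} a^{-2(\gamma-\beta)}\,\epsilon^{2(\gamma-\beta)} \cdot C\,\|h\|_{H_\fn^\gamma}^2,
\end{equation*}
which is the claimed estimate (with a constant depending only on $\tau$, not on $\gamma$, $\beta$ or $L$, once we absorb $a^{-2(\gamma-\beta)}$ into the $C$ as the theorem is stated for a fixed $\tau$).

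Finally I would establish the convergence $\|h-\tau(\epsilon\rD)h\|_{H_\fn^\gamma}\to 0$ by dominated convergence on the coefficient side: the general term
\begin{equation*}
 \bigl(1+|\tfrac{k}{L}|^2\bigr)^\gamma\bigl(1-\tau(\tfrac{\epsilon k}{L})\bigr)^2\langle h,\fn_k\rangle^2
\end{equation*}
is dominated by $4(1+|\tfrac{k}{L}|^2)^\gamma\langle h,\fn_k\rangle^2$, which is summable since $h\in H_\fn^\gamma$, and it converges to $0$ for every fixed $k$ as $\epsilon\downarrow 0$ because $\tau(\tfrac{\epsilon k}{L})\to \tau(0)=1$. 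The main (and really only) subtle point is making sure the constant produced in the quantitative bound does not depend on $L$; this is automatic once we work with the coefficient characterization of Theorem \ref{theorem:equivalent_norms_H_spaces}, because the weight $(1+|\tfrac{k}{L}|^2)^\gamma$ and the multiplier symbol $\tau(\tfrac{\epsilon k}{L})$ depend on $k/L$ in exactly the compatible way.
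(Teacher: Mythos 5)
Your proof is correct and takes essentially the same route as the paper's: pass to the coefficient characterization of the $H_\fn^\gamma$ norm via Theorem~\ref{theorem:equivalent_norms_H_spaces}, use that $1-\tau(\tfrac{\epsilon k}{L})$ vanishes for $|\tfrac{k}{L}| \le a/\epsilon$, and on the complementary range trade a power of $(1+|\tfrac{k}{L}|^2)$ for a power of $\epsilon/a$. The one place you go slightly beyond the paper is making explicit the dominated-convergence step for the qualitative claim $\|h-\tau(\epsilon\rD)h\|_{H_\fn^\gamma}\to 0$, which the paper leaves implicit (its displayed estimate only treats $\beta<\gamma$). One small imprecision in your write-up: you say the resulting constant does not depend on $\gamma$ or $\beta$ after absorbing $a^{-2(\gamma-\beta)}$, but that factor (and, for that matter, the implied constants in the norm equivalence of Theorem~\ref{theorem:equivalent_norms_H_spaces}, which come from \eqref{eqn:sum_of_power_2_rho_j_and_one_plus_mod_sq}) does depend on the exponents; the paper's own proof has the same feature, and it is harmless here because in all applications $\gamma$ and $\beta$ are fixed, but the clause asserting $\gamma$-, $\beta$-independence should be dropped.
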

\begin{proof}
By assumption on $\tau$ there exists an $a>0$ such that $\tau =1$ on $B(0,a)$. Then 
\begin{align*}
\begin{cases}
1- \tau(\tfrac{\epsilon}{L} k) =0 & |k| < \tfrac{La}{\epsilon}, \\
(1+|\frac{k}{L}|^2)^{\beta-\gamma}
\lesssim \epsilon^{2(\gamma-\beta)} & |k| \ge \frac{La}{\epsilon}. 
\end{cases}
\end{align*}
\begin{calc}
If $|k| \ge \frac{La}{\epsilon}$, then $|\frac{k}{L}| \ge \frac{a}{\epsilon}$ and thus 
\begin{align*}
\left(1+|\frac{k}{L}|^2 \right)^{-1}
\le \left(1+ (\frac{a}{\epsilon})^2 \right)^{-1} \le \frac{\epsilon^2}{a^2}. 
\end{align*}
\end{calc}
By the following bounds the theorem is proved; by Theorem \ref{theorem:equivalent_norms_H_spaces}
\begin{align*}
\| h - \tau(\epsilon \rD) h\|_{H_\fn^\beta} 
\lesssim 
\sqrt{ \sum_{k\in \N_0^d} (1+|\tfrac{k}{L}|^2)^{\beta} (1- \tau(\tfrac{\epsilon}{L} k) )^2 \langle h, \fn_k \rangle^2 }
\lesssim \epsilon^{\gamma-\beta} \|h\|_{H_\fn^\gamma}.
\end{align*}
\end{proof}

\begin{lemma}
\label{lemma:expectation_reso_xi_eps_C_0_plus_reso_constant}
Let $\tau : \R^2 \rightarrow [0,1]$ be a compactly supported even function that equals $1$ on a neighbourhood of $0$. Then 
$x\mapsto \E [\xi_\epsilon \reso \sigma(\rD) \xi_\epsilon (x) ] -  c_{\epsilon,L} $ converges in $\cC_\fn^{-\gamma}$  to a limit that is independent of $\tau$ as $\epsilon \downarrow 0$ for all $\gamma>0$. 
\end{lemma}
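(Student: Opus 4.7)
The plan is to substitute the identity \eqref{eqn:fn_k_squared} into \eqref{eqn:expression_expectation_xi_reso_xi} and cancel $c_{\epsilon,L}$ using \eqref{eqn:c_epsilon_L_in_terms_of_f_k_zero}: the constant $\nu_k^2/L^2$ term in the expansion of $\fn_k(x)^2$ sums precisely to $c_{\epsilon,L}$, leaving
\[
h_\epsilon(x)
\ :=\ \E[\xi_\epsilon \reso \sigma(\rD)\xi_\epsilon(x)] - c_{\epsilon,L}
\ =\ \sum_{k\in\N_0^2} \frac{\tau(\tfrac{\epsilon}{L}k)^2}{1+\pi^2|k|^2/L^2}\, \frac{1}{2L}
\Bigl[\nu_k\fn_{2k}(x) + \tfrac{\nu_k^2}{\nu_{(k_1,0)}}\fn_{(2k_1,0)}(x) + \tfrac{\nu_k^2}{\nu_{(0,k_2)}}\fn_{(0,2k_2)}(x)\Bigr].
\]
I will take the candidate limit $h_0$ to be the same series with each $\tau(\tfrac{\epsilon}{L}k)^2$ replaced by $1$; this formula depends only on $L$, so $\tau$-independence of the limit is built into the definition, and it remains only to show $h_\epsilon \to h_0$ in $\cC_\fn^{-\gamma}(Q_L)$ for every $\gamma>0$.

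I would split the difference as $h_\epsilon-h_0 = r_\epsilon^{(1)} + r_\epsilon^{(2)} + r_\epsilon^{(3)}$ according to the three basis functions. The bulk piece $r_\epsilon^{(1)}$ has Fourier coefficient $\langle r_\epsilon^{(1)},\fn_m\rangle$ vanishing unless $m=2k$ with $k\in\N_0^2$, in which case it is of order $(1-\tau(\tfrac{\epsilon m}{2L})^2)\cdot L/|m|^2$. Representing $\|\cdot\|_{H_\fn^s(Q_L)}$ via the basis (Theorem \ref{theorem:equivalent_norms_H_spaces}), the resulting series is dominated in $m$ by a summable tail whenever $s<1$; dominated convergence then yields $r_\epsilon^{(1)}\to 0$ in $H_\fn^s(Q_L)$ for every $s<1$, and the Sobolev embedding of Theorem \ref{theorem:inclusion_of_H_into_C} (with $d=2$) delivers convergence in $\cC_\fn^{s-1}(Q_L)\subset\cC_\fn^{-\gamma}$ for every $\gamma>0$.

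The pieces $r_\epsilon^{(2)}$ and $r_\epsilon^{(3)}$ are the main obstacle: their Fourier support is concentrated on the axis lattices $\{(m_1,0)\}$ and $\{(0,m_2)\}$, and the coefficient $\langle r_\epsilon^{(2)},\fn_{(2j,0)}\rangle$ is itself a sum over $k_2\in\N_0$ whose tail $\sum_{k_2}(1+\pi^2((j/L)^2+(k_2/L)^2))^{-1}$ decays only of order $L/j$. A direct 2D $H_\fn^s$ estimate is thus only available for $s<\tfrac12$, which via the 2D embedding would yield just $\cC_\fn^{-\gamma}$ for $\gamma>\tfrac12$. The key observation is that $\fn_{(2k_1,0)}$ depends only on $x_1$, so $r_\epsilon^{(2)}$ is genuinely a function of one variable; since $\rho_i$ is radial one has $\rho_i((m_1,0)/L)=\rho_i(m_1/L)$, whence the two-dimensional Littlewood-Paley block of $r_\epsilon^{(2)}$ agrees under the natural identification with its one-dimensional block and $\|r_\epsilon^{(2)}\|_{\cC_\fn^{-\gamma}(Q_L)} = \|r_\epsilon^{(2)}\|_{\cC_\fn^{-\gamma}([0,L])}$. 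Dominated convergence then gives $r_\epsilon^{(2)}\to 0$ in $H_\fn^s([0,L])$ for every $s<\tfrac12$, and Theorem \ref{theorem:inclusion_of_H_into_C} with $d=1$ (where the Sobolev loss is only $\tfrac12$) delivers convergence in $\cC_\fn^{s-1/2}([0,L])\subset\cC_\fn^{-\gamma}$ for every $\gamma>0$; the piece $r_\epsilon^{(3)}$ is handled symmetrically.
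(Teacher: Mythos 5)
Your proposal is correct, and it follows the same overall structure as the paper's proof: substitute \eqref{eqn:fn_k_squared} into \eqref{eqn:expression_expectation_xi_reso_xi}, cancel $c_{\epsilon,L}$ via \eqref{eqn:c_epsilon_L_in_terms_of_f_k_zero}, split into a bulk piece and two axis pieces, and show each converges. For the bulk piece the two arguments are essentially identical (the paper also works in $H_\fn^s$, identifying the sum as $\tfrac14[\tau(\epsilon\rD)^2\sigma(\rD)\delta_0](2x)$ and invoking Theorem~\ref{theorem:fourier_mollifiers_converge_in_B_22_space} plus the Besov embedding). Where you diverge is the axis pieces. The paper stays in $\cC_\fn^{-\gamma}$ throughout: it bounds $\sup_i 2^{-\gamma i}\|\Delta_i(h_\epsilon - h_0)\|_{L^\infty}$ directly, using $\|\Delta_i \fn_{(l,0)}\|_{L^\infty} \lesssim \rho_i(\tfrac{l}{L},0) \lesssim 2^{\gamma i}(1+\tfrac{l^2}{L^2})^{-\gamma}$ (valid for arbitrary $\gamma>0$ since $\rho_i$ decays rapidly along the axis), and then shows the resulting double sum is finite via the multiplicative split $1+l^2+m^2 \ge (1+l)^{1-\gamma/2}(1+m)^{1+\gamma/2}$. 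You instead pass through $H_\fn^s$ and compensate for the fact that a $2$-dimensional embedding loses a full derivative by observing that these pieces have Fourier support on a coordinate axis, hence are genuinely one-variable functions whose 2D and 1D Littlewood–Paley blocks (and so H\"older norms) coincide; the 1D embedding loses only $\tfrac12$ a derivative and recovers the full range $\gamma>0$. Both routes exploit the same fact — that the $\rho_i$ weight restricted to the axis decays as fast as one likes — but the paper packages this into the $\cC_\fn^{-\gamma}$ estimate directly, while you package it as a dimensional reduction. Your version is slightly longer but perhaps more conceptual: it makes explicit why the naive 2D embedding fails ($s<\tfrac12$, loss of $1$) and exactly what saves the argument.
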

\begin{proof}
Let $\gamma>0$. 
As there are only finitely many $k\in \N_0^2$ for which $\tau(\tfrac{\epsilon}{L} k) \ne 0$, $x\mapsto \E [\xi_\epsilon \reso \sigma(\rD) \xi_\epsilon (x) ] -  c_{\epsilon,L} $ is smooth. 
We can rewrite \eqref{eqn:fn_k_squared} and find uniformly bounded $a_k,b_k$ such that $\fn_k(x)^2 - \frac{\nu_k^2}{L^2}= \frac{1}{2L} [\fn_{k} + a_k \fn_{(k_1,0)} + b_k \fn_{(0,k_2)}](2x)$. By \eqref{eqn:c_epsilon_L_in_terms_of_f_k_zero} this means that $\E [\xi_\epsilon \reso \sigma(\rD) \xi_\epsilon (x) ] $ 
(see \eqref{eqn:expression_expectation_xi_reso_xi})
 can be decomposed into three sums. 

For the first sum (by taking the part with ``$\fn_k$''), as $\delta_0 \in H_\fn^{-1}$ and $\langle \delta_0, \fn_k \rangle = \frac2L$ for all $k \in \N_0^2$
\begin{align*}
\frac{1}{2L}
\sum_{k\in \N_0^2} 
\frac{ \tau(\tfrac{\epsilon}{L} k)^2}{1+ \frac{\pi^2}{L^2} |k|^2} 
 \fn_{k } (2x) 
= \tfrac{1}{4} [ \tau (\epsilon  \rD)^2 \sigma(\rD) \delta_0] (2x).
\end{align*}
By Theorem \ref{theorem:schauder} 
$\sigma(\rD) \delta_0 \in H_\fn^1$, so that by Theorem \ref{theorem:fourier_mollifiers_converge_in_B_22_space} 
$\tau (\epsilon  \rD)^2 \sigma(\rD) \delta_0 \rightarrow  \sigma(\rD) \delta_0$ in $H_\fn^{1-\gamma}$ and thus in $\cC_\fn^{-\gamma}$ (by \cite[Theorem 2.71]{BaChDa11}). 
This convergence is `stable' under `multiplying the argument by $2$' (see also \ref{obs:scaling_fourier_multipliers}).  

Now let us show the convergence of the other sums. 
We only consider the sum with ``$a_k \fn_{(k_1,0)}$'' in it, as the sum with ``$b_k \fn_{(0,k_2)}$'' follows similarly. 
Let us write $h_\epsilon$ for 
\begin{align*}
h_\epsilon (x)
& 
\begin{calc2}
= \sum_{k\in \N_0^2} 
\frac{ \tau(\tfrac{\epsilon}{L} k)^2}{1+ \frac{\pi^2}{L^2} |k|^2} a_k
 \fn_{(k_1,0) } (x) 
 \end{calc2}
 = \sum_{l ,m\in \N_0} 
\frac{ \tau(\tfrac{\epsilon}{L}(l,m))^2}{1+ \frac{\pi^2}{L^2} (l^2+m^2)} a_{(l,m)}
 \fn_{(l, 0) } (x). 
\end{align*}
\begin{calc2}
Then 
\begin{align*}
\Delta_i h_\epsilon = \sum_{l\in \N_0} \rho_i (  \tfrac{l}{L}  ,0) \sum_{m\in \N_0} \frac{ \tau(\tfrac{\epsilon}{L}(l,m))^2}{1+ \frac{\pi^2}{L^2} (l^2+m^2)} 
a_{(l,m)}
 \fn_{(l, 0) } . 
\end{align*}
\end{calc2}
With \eqref{eqn:polynomial_bound_on_rho_i} 
$\|\Delta_i \fn_{(l,0)}\|_{L^\infty} \lesssim |\rho_i( \tfrac{l}{L} ,0) | \lesssim 2^{\gamma i } (1+  \tfrac{l^2}{L^2} )^{-\gamma}$.  
Hence
\begin{align*}
\sup_{i \in \N_{-1}} 2^{-\gamma i } \| \Delta_i (h_\epsilon - h_0) \|_{L^\infty} 
\lesssim \sum_{l,m\in \N_0 } (1+  \tfrac{l^2}{L^2}  )^{-\gamma}  \frac{ \left| \tau(\tfrac{\epsilon}{L}(l,m))^2-1 \right|}{1+ \frac{\pi^2}{L^2} (l^2+m^2)} . 
\end{align*}
By Lebesgue's dominated convergence theorem and the next bound it follows that $h_0 \in \cC_\fn^{-\gamma}$ and $h_\epsilon \rightarrow h_0$ in $\cC_\fn^{-\gamma}$. By using that $1+l^2+m^2 \ge (1+l)^{1-\frac{\gamma}{2}} (1+m)^{1+\frac{\gamma}{2}}$, 
\begin{align*}
\sum_{l,m\in \N_0 }  \frac{ (1+  \tfrac{l^2}{L^2})^{-\gamma} }{1+ \frac{\pi^2}{L^2} (l^2+m^2)} 
& \lesssim 
\sum_{l,m\in \N_0 }  \frac{ 1 }{(1+l)^{1+\frac{\gamma}{2}} (1+m)^{1+\frac{\gamma}{2}}} 
 < \infty. 
\end{align*}
By these convergences and by plugging in the factor $2$ also here the convergence is proved. 
\end{proof}


Before we give the proof of Theorem \ref{theorem:convergence_in_enhanced_space_to_white_noise}, we study the behaviour of $c_{\epsilon,L}$.

\begin{lemma}
\label{lemma:renormalisation_constant}
Let $\tau : \R^2 \rightarrow [0,1]$ be almost everywhere continuous, be equal to $1$ on $B(0,a)$ and zero outside $B(0,b)$ for some $a,b$ with $0<a<b$. 
There exist a $c_\tau \in \R$ that only depends on $\tau$, and $(C_L)_{L\ge 1}$ in $\R$ that do not depend on $\tau$ with $C_L \xrightarrow{L\rightarrow \infty} 0$ such that 
\bfold 
c_{\epsilon,L} 
- \tfrac{1}{2\pi} \log \tfrac{1}{\epsilon} -c_\tau  \xrightarrow{\epsilon \downarrow 0} C_L 
\efold 
for all $L \ge 1$. 
\end{lemma}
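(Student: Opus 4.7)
The plan is to isolate the logarithmic divergence via the Poisson summation formula. Set $\phi_\epsilon(u) = \tau(\epsilon u)^2/(1+\pi^2|u|^2)$, so that $c_{\epsilon,L} = \frac{1}{4L^2}\sum_{k\in\Z^2}\phi_\epsilon(k/L)$. Since $\phi_\epsilon \in L^1$ with compact support, Poisson summation (for merely a.e.\ continuous $\tau$, first regularise and pass to the limit) yields
\[
c_{\epsilon,L} \;=\; \tfrac14\int_{\R^2}\phi_\epsilon(u)\,du \;+\; \tfrac14\sum_{k\in\Z^2\setminus\{0\}}\hat\phi_\epsilon(Lk),
\]
where $\hat\phi_\epsilon(\xi) = \int\phi_\epsilon(u)e^{-2\pi i\xi\cdot u}\,du$. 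The two summands are analysed separately.

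For the integral, substituting $v=\epsilon u$ and splitting $\tau^2 = \1_{B(0,a)} + (\tau^2-\1_{B(0,a)})$ gives a piece with an explicit radial computation
\[
\tfrac14\int_{B(0,a)}\!\frac{dv}{\epsilon^2+\pi^2|v|^2} \;=\; \tfrac{1}{4\pi}\log\tfrac{\epsilon^2+\pi^2 a^2}{\epsilon^2} \;=\; \tfrac{1}{2\pi}\log\tfrac{1}{\epsilon} + \tfrac{1}{2\pi}\log(\pi a) + o(1),
\]
while the remainder converges by dominated convergence (since $\tau^2-\1_{B(0,a)}$ vanishes on $B(0,a)$ and is supported in $B(0,b)$) to $\frac14\int_{\R^2}(\tau^2-\1_{B(0,a)})/(\pi^2|v|^2)\,dv$. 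Setting $c_\tau$ equal to the sum $\frac{1}{2\pi}\log(\pi a) + \frac14\int_{\R^2}(\tau(v)^2-\1_{B(0,a)}(v))/(\pi^2|v|^2)\,dv$ yields the stated limit; independence of the choice of $a$ is an easy direct check (the $a$-derivative of each piece vanishes).

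For the correction term, integration by parts gives $|\hat\phi_\epsilon(\xi)| \le (2\pi|\xi|)^{-2n}\|\Delta^n\phi_\epsilon\|_{L^1}$ for every $n\in\N$. A Leibniz expansion of $\Delta^n(\tau(\epsilon\cdot)^2\phi_0)$, combined with the decay $\partial^\alpha\phi_0(u) = O(|u|^{-2-|\alpha|})$ and the fact that derivatives of $\tau(\epsilon\cdot)$ carry factors $\epsilon^j$ supported on the thin annulus $\{a/\epsilon\le|u|\le b/\epsilon\}$, shows that $\|\Delta^n\phi_\epsilon\|_{L^1}$ is uniformly bounded in $\epsilon$. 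Hence $|\hat\phi_\epsilon(Lk)| \lesssim L^{-2n}|k|^{-2n}$, uniformly in $\epsilon$ and summable for $n\ge 2$. Pointwise, $\hat\phi_\epsilon(Lk)\to\hat\phi_0(Lk)$ for each $k\ne 0$ follows from $\Delta^n\phi_\epsilon\to\Delta^n\phi_0$ in $L^1$, where $\hat\phi_0$ is defined via the identity $\hat\phi_0(\xi) = -(2\pi|\xi|)^{-2n}\widehat{\Delta^n\phi_0}(\xi)$ (and coincides with the modified Bessel expression $\frac{2}{\pi}K_0(2|\xi|)$, a function independent of $\tau$). Dominated convergence in $k$ then yields $\sum_{k\ne 0}\hat\phi_\epsilon(Lk)\to 4 C_L$ with
\[
C_L \;=\; \tfrac{1}{2\pi}\sum_{k\in\Z^2\setminus\{0\}}K_0(2L|k|),
\]
which does not depend on $\tau$, and $C_L\to 0$ as $L\to\infty$ (in fact exponentially, by $K_0(z)\sim\sqrt{\pi/(2z)}\,e^{-z}$).

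The main obstacle is the uniform-in-$\epsilon$ control of $\|\Delta^n\phi_\epsilon\|_{L^1}$: although the support of $\phi_\epsilon$ grows like $1/\epsilon$, the key point is that the derivatives of $\phi_0$ of order $\ge 1$ are already globally $L^1$ on $\R^2$, while the boundary contributions coming from differentiating $\tau(\epsilon\cdot)$ are suppressed by the $\epsilon^j$ factors, which exactly compensate the growth of the annulus of support.
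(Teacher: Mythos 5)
Your route is genuinely different from the paper's. The paper writes $4c_{\epsilon,L}=\int_{\R^2}\tau(\tfrac{\epsilon}{L}\lfloor y\rfloor)^2h_L(\lfloor y\rfloor)\,dy$ with $h_L(y)=(L^2+\pi^2|y|^2)^{-1}$, controls the Riemann-sum error $h_L(\lfloor y\rfloor)-h_L(y)$ by a $\tau$-independent dominated-convergence estimate (this is what produces $C_L$, as an improper integral), and then computes $\int\tau(\tfrac{\epsilon}{L}y)^2h_L(y)\,dy$ radially. Your Poisson-summation decomposition $c_{\epsilon,L}=\tfrac14\hat\phi_\epsilon(0)+\tfrac14\sum_{k\ne0}\hat\phi_\epsilon(Lk)$ isolates the logarithmic divergence cleanly in the zero mode and gives the attractive closed form $C_L=\tfrac{1}{2\pi}\sum_{k\ne0}K_0(2L|k|)$. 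For $\tau$ smooth and compactly supported your argument is correct and, arguably, more structured than the paper's.

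There is, however, a genuine gap in regularity. The lemma assumes only that $\tau$ is almost everywhere continuous, and the paper actually needs this generality: Theorem~\ref{theorem:convergence_prima_and_theta}\ref{item:smooth_vs_table_tau} invokes the lemma with $\tau'=\1_{(-1,1)^2}$. Your estimate $|\hat\phi_\epsilon(\xi)|\le(2\pi|\xi|)^{-2n}\|\Delta^n\phi_\epsilon\|_{L^1}$, the Leibniz expansion of $\Delta^n\bigl(\tau(\epsilon\cdot)^2\phi_0\bigr)$, and the resulting uniform-in-$\epsilon$ bound on $\|\Delta^n\phi_\epsilon\|_{L^1}$ all require $\tau\in C^{2n}$. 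You flag ``first regularise and pass to the limit,'' but that step is not routine: with $\tau_\eta=\tau*\psi_\eta$ your argument gives the statement for each smooth $\tau_\eta$ separately, and transferring it to $\tau$ requires controlling $c_{\epsilon,L}(\tau)-c_{\epsilon,L}(\tau_\eta)$ uniformly in $\epsilon$, which is precisely a Riemann-sum estimate of the kind the paper performs directly --- and it is there, in the pointwise hypothesis of dominated convergence, that a.e.\ continuity of $\tau$ actually enters. Moreover, for an indicator $\tau$ the Poisson tail $\sum_{k\ne0}\hat\phi_\epsilon(Lk)$ is not absolutely convergent: $\phi_\epsilon$ then has jumps along the sides of a square of diameter $\sim1/\epsilon$, so $|\hat\phi_\epsilon(\xi)|$ decays only like $\epsilon/|\xi|$ in the axis directions, which is not summable over $\Z^2\setminus\{0\}$; hence even the Poisson identity needs a summability interpretation, and the ``dominated convergence in $k$'' step fails.
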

\begin{proof}
We define $\lfloor y \rfloor = (\lfloor y_1 \rfloor , \lfloor y_2 \rfloor )$
and $h_L(y) = ( L^2 + \pi^2 |y|^2)^{-1}$
 for $y\in \R^2$. Then
\bfold 
4 c_{\epsilon,L} 
 \begin{calc}
 =   \sum_{k\in  \Z^2} \frac{\tau(\frac{\epsilon}{L} k)^2}{ L^2 + \pi^2 |k|^2}
 \end{calc} 
 =   \int_{\R^2} \tau( \tfrac{\epsilon}{L}  \lfloor  y \rfloor )^2 h_L ( \lfloor  y \rfloor) \dd y.
\efold 
We first show that 
\bfold 
4 c_{\epsilon,L} -   \int_{\R^2} \tau( \tfrac{\epsilon}{L}   y  )^2 h_L (   y ) \dd y \rightarrow 0. 
\efold 
Write $A(s,t)$ for the annulus $\{ y \in \R^2 : s \le |y| \le t\}$. 
To shorten notation, we write $\delta = \frac{\epsilon}{L}$. 
As $| \lfloor y \rfloor - y | \le \sqrt{2}$
\begin{align*}
4 c_{\epsilon,L} - \int_{\R^2} \tau( \tfrac{\epsilon}{L}   y )^2 h_L (   y ) \dd y
& =  \int_{B(0,\frac{a}{\delta} - \sqrt{2})} h_L(\lfloor y \rfloor) - h_L(y) \dd y \\
& \qquad + \int_{A( \frac{a}{\delta} - \sqrt{2}, \frac{b}{\delta} + \sqrt{2})} 
\tau( \delta  \lfloor  y \rfloor )^2 h_L ( \lfloor  y \rfloor) 
- \tau( \delta    y  )^2 h_L (   y ) \dd y. 
\end{align*}
As $h_L(\lfloor y \rfloor) - h_L(y) = h_L(\lfloor y \rfloor) h_L(y)( |y|^2 - |\lfloor y \rfloor|^2)$, $h_L(\lfloor y \rfloor) \lesssim h_L(y)$ and $( |y|^2 - |\lfloor y \rfloor|^2) \lesssim 1+ |y|$, we have 
$h_L(\lfloor y \rfloor) - h_L(y) \lesssim (1+|y|) h_L(y)^2$. As the latter function is integrable over $\R^2$, it follows by Lebesgue's dominated convergence theorem that $\int_{B(0,\frac{a}{\delta} - \sqrt{2})} h_L(\lfloor y \rfloor) - h_L(y) \dd y$ converges in $\R$ to a $C_L$ for which $C_L \xrightarrow{L\rightarrow \infty}0$. 
On the other hand, the integral over the annulus can be written as 
\begin{align}
\label{eqn:difference_integral_with_integer_parts_and_without}
 \int_{A( a - \sqrt{2}\delta, b + \sqrt{2} \delta)} 
\frac{\tau( \delta  \lfloor \frac{x}{\delta} \rfloor )^2}{ \delta^2 L^2 + \pi^2 \delta^2 | \lfloor \frac{x}{\delta}\rfloor |^2} 
- \frac{\tau(  x )^2}{ \delta^2 L^2 + \pi^2 | x |^2}  \dd x.
\end{align}
Again by a domination argument (note that $\frac{1}{|x|^2}$ is integrable over annuli), using that 
$ | \frac{x}{\delta} |^2 
\begin{calc}
\le  ( | \lfloor \frac{x}{\delta} \rfloor |+ \sqrt{2}) ^2 
\end{calc}
\le
 4 + 2 | \lfloor \frac{x}{\delta} \rfloor |^2 \le 4 (L^2 + | \lfloor \frac{x}{\delta} \rfloor |^2 )$,
\begin{calc}
so that 
\begin{align*}
\left| 
\frac{\tau( \delta  \lfloor \frac{x}{\delta} \rfloor )^2}{ \delta^2 L^2 + \pi^2 \delta^2 | \lfloor \frac{x}{\delta}\rfloor |^2} 
\right| 
\le \frac{4}{\pi^2 |x|^2}. 
\end{align*}
\end{calc} 
 we conclude that \eqref{eqn:difference_integral_with_integer_parts_and_without} converges to $0$. 
Observe that 
 \begin{align*}
  \int_{A( a - \sqrt{2}\delta, b + \sqrt{2} \delta)} 
 \frac{\tau(  x )^2}{ \delta^2 L^2 + \pi^2 | x |^2}  \dd x \xrightarrow{\delta \downarrow 0}
   \int_{A( a , b )} 
 \frac{\tau(  x )^2}{\pi^2 | x |^2}  \dd x. 
 \end{align*}
By some substitutions  (remember $\delta = \frac{\epsilon}{L}$), for $\epsilon <a$
\begin{align*}
\frac{1}{2\pi}
\int_{B(0,\frac{a}{\delta} - \sqrt{2})}  h_L(y) \dd y
\cand \begin{calc}
= \int_0^{\frac{a}{\delta} - \sqrt{2}} \frac{x}{L^2 + \pi^2 x^2} \dd x
= \int_0^{\frac{a}{\epsilon} - \frac{\sqrt{2}}{L} } \frac{y}{1 + \pi^2 y^2} \dd y
\end{calc} \cnewline
\cand \begin{calc}
= \int_0^{1 } \frac{y}{1 + \pi^2 y^2} \dd y
+ 
\int_0^{\frac{a}{\epsilon} }\frac{y}{1 + \pi^2 y^2} \dd y
+ 
\int_{\frac{a}{\epsilon} - \frac{\sqrt{2}}{L} }^{\frac{a}{\epsilon}} \frac{y}{1 + \pi^2 y^2} \dd y
\end{calc} \cnewline
& = \int_0^{1} \frac{s}{1 + \pi^2 s^2} \dd s
+ \int_1^{\frac{a}{\epsilon}} \frac{s}{1 + \pi^2 s^2} \dd s
- \int_{a - \frac{\sqrt{2}\epsilon}{L} }^{a} \frac{s}{\epsilon^2 + \pi^2 s^2} \dd s.
\end{align*}
The last integral converges as $\epsilon \downarrow 0$ to zero. 
For the second integral we consider 
\begin{align*}
\int_1^{\frac{a}{\epsilon}} \frac{s}{1 + \pi^2 s^2} - \frac{1}{\pi^2 s} \dd s
= 
\int_1^{\frac{a}{\epsilon}}
 \frac{ -  1 }{\pi^2 s(  1+ \pi^2 s^2) } \dd s, \qquad 
 \int_1^{\frac{a}{\epsilon}}  \frac{1}{\pi^2 s} \dd s = \frac{1}{\pi^2} \log(\frac{a}{\epsilon}). 
\end{align*}
Observe that if $a\le 1$ then  $\int_{A(a,1)} \frac{1}{\pi^2 |x|^2} \dd x =- \frac{2}{\pi} \log a$ and if $a\ge 1$ then $ \int_{A( 1, a)} \frac{1}{\pi^2 |x|^2} \dd x =\frac{2}{\pi} \log a $. Therefore, with 
\begin{align*}
c_\tau =    \int_{A(a \wedge 1,b)} 
 \frac{\tau(  x )^2}{\pi^2 | x |^2}  \dd x 
 - \int_{A(a \wedge 1,1)} 
 \frac{1}{ \pi^2 | x |^2}  \dd x 
 +  \int_0^{1} \frac{2\pi s}{1 + \pi^2 s^2} \dd s
 -  \int_1^{\infty}
 \frac{  2 }{\pi s(  1+ \pi^2 s^2) } \dd s.
\end{align*}
we obtain that $c_{\epsilon,L} - \tfrac{1}{2\pi} \log \tfrac{1}{\epsilon} - C_L - c_\tau \xrightarrow{\epsilon \downarrow 0 } 0$.
Observe that $c_\tau$ does not depend on the choice of $a,b$ (such that $\tau=1$ on $B(0,a)$ and $\tau =0$ outside $B(0,b)$).
\end{proof}

\begin{proof}[Proof of Theorem \ref{theorem:convergence_in_enhanced_space_to_white_noise}]
This is a consequence of 
Theorem \ref{theorem:convergence_to_dirichlet_white_noise} and
Lemmas \ref{lemma:expectation_reso_xi_eps_C_0_plus_reso_constant} and \ref{lemma:renormalisation_constant}. 
\begin{calc}
Indeed, we can decompose $\xi_\epsilon \reso \sigma(\rD) \xi_\epsilon -c_\epsilon$ into the sum of $\Xi_\epsilon$ (which converges by Theorem~\ref{theorem:convergence_to_dirichlet_white_noise}), $\E[\xi_\epsilon \reso \sigma(\rD) \xi_\epsilon] - c_{\epsilon,L}$ (which converges by Lemma~\ref{lemma:expectation_reso_xi_eps_C_0_plus_reso_constant}) and $ c_{\epsilon,L} -c_\epsilon$ (which converges by Lemma~\ref{lemma:renormalisation_constant}). So 
\begin{align*}
\lim_{\epsilon \downarrow 0} 
\xi_\epsilon \reso \sigma(\rD) \xi_\epsilon -c_\epsilon
& = \lim_{\epsilon \downarrow 0}  \Xi_\epsilon  +
\lim_{\epsilon \downarrow 0} \Big( \E[\xi_\epsilon \reso \sigma(\rD) \xi_\epsilon] - c_{\epsilon,L} \Big) 
+ C_L. 
\end{align*}
\end{calc}
\end{proof}

\section{Scaling and translation}
\label{section:scaling_and_translation}

In this section we prove the scaling properties of the eigenvalues, by scaling the size of the box and the noise. 
\textbf{In this section we fix $L>0$ and $n\in\N$.}

\begin{lemma}
\label{lemma:scaling_smooth_potentials_dir}
Suppose that $V\in L^\infty([0,L]^d)$. 
 For all $\beta >0$ 
\begin{align*}
\lambda_n([0,L]^d, V)
= \tfrac{1}{\beta^2} \lambda_n([0,\tfrac{L}{\beta}]^d, \beta^2 V(\beta\cdot)).
\end{align*}
\end{lemma}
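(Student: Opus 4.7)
The plan is to invoke the variational characterisation \eqref{eqn:min_max_for_smooth} for smooth potentials, which applies because $V \in L^\infty \subset L^\infty \subset \cC_\fn^\infty$ (as noted in \ref{obs:smooth_potentials}), and then perform the change of variables $x = \beta y$ at the level of the Rayleigh quotient.

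Concretely, I would define the linear bijection
\[
\Phi : C_c^\infty([0,L]^d) \to C_c^\infty([0,L/\beta]^d), \qquad (\Phi\psi)(y) = \beta^{d/2}\,\psi(\beta y),
\]
chosen with the $L^2$-normalising factor $\beta^{d/2}$ so that $\|\Phi\psi\|_{L^2([0,L/\beta]^d)} = \|\psi\|_{L^2([0,L]^d)}$. A direct substitution then yields
\[
\int_{[0,L/\beta]^d} |\nabla (\Phi\psi)|^2 = \beta^2 \int_{[0,L]^d} |\nabla \psi|^2, \qquad \int_{[0,L/\beta]^d} \beta^2 V(\beta \cdot)\,(\Phi\psi)^2 = \beta^2 \int_{[0,L]^d} V\psi^2,
\]
so that the Rayleigh quotient satisfies
\[
\int_{[0,L/\beta]^d} \bigl(-|\nabla (\Phi\psi)|^2 + \beta^2 V(\beta\cdot)\,(\Phi\psi)^2\bigr) = \beta^2 \int_{[0,L]^d} \bigl(-|\nabla \psi|^2 + V\psi^2\bigr).
\]

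Since $\Phi$ is a linear bijection between $n$-dimensional subspaces of $C_c^\infty([0,L]^d)$ and $C_c^\infty([0,L/\beta]^d)$ that preserves the $L^2$ unit sphere, taking the infimum over $\psi\in F$ with $\|\psi\|_{L^2}=1$ and then the supremum over $n$-dimensional subspaces commutes with multiplication by $\beta^2$. Applying the Courant--Fischer formula \eqref{eqn:min_max_for_smooth} on both sides gives
\[
\lambda_n\bigl([0,L/\beta]^d,\, \beta^2 V(\beta\cdot)\bigr) = \beta^2 \,\lambda_n\bigl([0,L]^d,\, V\bigr),
\]
and dividing by $\beta^2$ yields the claim. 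There is no real obstacle here; the only thing to check is that $\beta^2 V(\beta\cdot) \in L^\infty([0,L/\beta]^d)$ so that \eqref{eqn:min_max_for_smooth} genuinely applies on the rescaled box, which is immediate from $V \in L^\infty$.
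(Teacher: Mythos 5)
Your argument is correct, but it takes a genuinely different route from the paper. You transport the Courant--Fischer characterisation \eqref{eqn:min_max_for_smooth} across the dilation $\Phi\psi = \beta^{d/2}\psi(\beta\cdot)$, check that $\Phi$ is an $L^2$-isometry onto $C_c^\infty([0,L/\beta]^d)$, and verify that the quadratic form picks up an overall factor $\beta^2$; since $\Phi$ sends $n$-dimensional subspaces bijectively to $n$-dimensional subspaces, the $\sup$-$\inf$ scales by $\beta^2$. The paper instead fixes an $H_0^2$ eigenfunction $g$ for $\lambda_n([0,L]^d,V)$, observes that $g_\beta = g(\beta\cdot)$ satisfies $(\Delta + \beta^2 V(\beta\cdot))g_\beta = \beta^2\lambda\, g_\beta$ pointwise a.e.\ on $[0,L/\beta]^d$, and then notes that $g\mapsto g_\beta$ preserves eigenspace dimensions, so the ordered eigenvalue lists match term by term. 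Your variational approach buys you a cleaner treatment of multiplicities (the $\sup$-$\inf$ formula handles them automatically, whereas the paper's argument quietly relies on the dilation being a linear bijection on each eigenspace), at the cost of needing the min-max formula \eqref{eqn:min_max_for_smooth} as an input. The paper's approach is more elementary and self-contained but requires the reader to supply the multiplicity argument. Both are sound.
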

\begin{proof}
Fix $n\in \N$ and write $\lambda = \lambda_n([0,L]^d,V)$. 
Suppose that $g\in H_0^2$ (see \ref{obs:smooth_potentials}) is an eigenfunction for $\lambda$ of $\Delta + V$. 
With $g_\beta(x) := g(\beta x)$ we have for almost all $x$
\begin{align*}
\Delta g_\beta (x) + \beta^2 V(\beta x) 
= \beta^2 (\Delta g)(\beta x) + \beta^2 V(\beta x) = \beta^2 \lambda g_\beta(x). 
\end{align*}
So that $\beta^2 \lambda$ is an eigenvalue of $\Delta + \beta^2 V(\beta\cdot)$ on $[0,\frac{L}{\beta}]^d$. 
As the multiplicities of the eigenvalues on $[0,L]^d$ and $[0,\tfrac{L}{\beta}]^d$ are the same, $\beta^2 \lambda = \lambda_n( [0,\tfrac{L}{\beta}]^d, \beta^2 V(\beta \cdot))$. 
\end{proof}

\begin{obs}
For $y\in \R^2$, $L>0$ and $\beta \in \R$ we write 
\begin{align*}
\blambda_n(y+Q_L, \beta) = \lambda_n(y+Q_L, (\beta \xi_L^y, \beta^2 \Xi_L^y) ),
\quad 
\blambda_n(y+Q_L) = \blambda_n(y+Q_L,1), 
\end{align*}
where $\bxi_L^y= (\xi_L^y, \Xi_L^y)$ is as in \ref{obs:shift_enhanced_white_noise}. 
\end{obs}

\begin{lemma}
\label{lemma:scaling_eigenvalues_with_white_noise}
For $\alpha,\beta>0$
\begin{align*}
\blambda_n 
\left( Q_L, \beta \right)
\overset{d}{=}
 \tfrac{1}{\alpha^2} \blambda_n 
(
Q_{\frac{L}{\alpha}}, \alpha \beta
) 
+ \tfrac{1}{2\pi} \log \alpha.
\end{align*}
\end{lemma}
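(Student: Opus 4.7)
The plan is to work at the level of the Fourier-mollified noise $\xi_{L,\epsilon}$---where the Anderson eigenvalue coincides with the usual Schr\"odinger eigenvalue of $\Delta+\beta\xi_{L,\epsilon}$ by \ref{obs:smooth_potentials} and \ref{obs:rough_distributions_for_H_elements}---apply the smooth-potential scaling of Lemma \ref{lemma:scaling_smooth_potentials_dir}, then pass to the renormalized limit via Theorem \ref{theorem:convergence_in_enhanced_space_to_white_noise} and the Lipschitz bound \eqref{eqn:locally_lipschitz_eigenvalue_of_dirichlet} of Theorem \ref{theorem:dirichlet_summary}. The key arithmetic will be bookkeeping the divergent renormalizations $c_\epsilon=\tfrac{1}{2\pi}\log(\tfrac{1}{\epsilon})+c_\tau$, whose difference at two scales produces the finite shift on the right-hand side.

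The first step is the distributional identity $\xi_{L,\epsilon}(\alpha\cdot)\overset{d}{=}\alpha^{-1}\xi_{L/\alpha,\epsilon/\alpha}(\cdot)$ on $Q_{L/\alpha}$. Expanding both sides in the Neumann basis, this follows from the direct calculation $\fn_{k,L}(\alpha y)=\alpha^{-1}\fn_{k,L/\alpha}(y)$ (valid in $d=2$), the tautology $\tau(\tfrac{\epsilon}{L}k)=\tau(\tfrac{\epsilon/\alpha}{L/\alpha}k)$, and the fact that $(\langle\cW,\fn_{k,L}\rangle)_k$ and $(\langle\cW,\fn_{k,L/\alpha}\rangle)_k$ are each sequences of i.i.d.\ $\mathcal{N}(0,1)$ random variables. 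Feeding this into Lemma \ref{lemma:scaling_smooth_potentials_dir} applied to the smooth potential $V=\beta\xi_{L,\epsilon}$ on $Q_L$ with rescaling parameter $\alpha$ produces the purely smooth-level identity $\lambda_n(Q_L,\beta\xi_{L,\epsilon})\overset{d}{=}\tfrac{1}{\alpha^2}\lambda_n(Q_{L/\alpha},\alpha\beta\,\xi_{L/\alpha,\epsilon/\alpha})$.

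To move into the enhanced setting I use that $\sH_{(\xi,\Xi+c)}=\sH_{(\xi,\Xi)}+c\,\mathrm{id}$ for any constant $c$ (visible from the $f\Xi$ term in Definition \ref{def:anderson_operator}), so every eigenvalue shifts by $c$. Subtracting $\beta^2 c_\epsilon$ on the left and $(\alpha\beta)^2 c_{\epsilon/\alpha}$ on the right rewrites each side as the eigenvalue at a renormalized enhancement that, by Theorem \ref{theorem:convergence_in_enhanced_space_to_white_noise}, converges in $\fX_\fn^\alpha$ almost surely along $\epsilon\in\Q\cap(0,\infty)$ to $(\beta\xi_L,\beta^2\Xi_L)$ and $(\alpha\beta\xi_{L/\alpha},(\alpha\beta)^2\Xi_{L/\alpha})$, respectively. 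The Lipschitz bound of Theorem \ref{theorem:dirichlet_summary} then identifies the two limits with $\blambda_n(Q_L,\beta)$ and $\tfrac{1}{\alpha^2}\blambda_n(Q_{L/\alpha},\alpha\beta)$ (using the identity $\tfrac{1}{\alpha^2}(\alpha\beta)^2=\beta^2$, so that the same prefactor $\beta^2$ governs the renormalization on both sides). Rearranging and invoking $c_{\epsilon/\alpha}-c_\epsilon=\tfrac{1}{2\pi}\log\alpha$ yields the additive shift and closes the argument. The main subtlety is that both renormalization constants diverge as $\epsilon\downarrow 0$; the correct matching of the squared coupling on each side is what ensures that only the finite $L$-independent logarithm survives in the limit.
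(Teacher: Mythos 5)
Your route is essentially the paper's: rescale at the level of the Fourier-regularised potential $\xi_{L,\epsilon}$ using the smooth-potential identity of Lemma \ref{lemma:scaling_smooth_potentials_dir}, then pass to the enhanced limit via Theorem \ref{theorem:convergence_in_enhanced_space_to_white_noise} and the Lipschitz bound \eqref{eqn:locally_lipschitz_eigenvalue_of_dirichlet}. The paper expresses the rescaling through the operator $l_\alpha$ and the observation that $\alpha l_\alpha\xi_L$ is a white noise on $Q_{L/\alpha}$; your direct computation in the cosine basis is the same fact spelled out, and the identity $\xi_{L,\epsilon}(\alpha\cdot)\overset{d}{=}\alpha^{-1}\xi_{L/\alpha,\epsilon/\alpha}$ is correct for $d=2$.

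There is, however, a discrepancy you should flag explicitly. Carry your bookkeeping through for general $\beta$: to preserve the distributional equality you must subtract the \emph{same} quantity, say $\beta^2 c_\epsilon$, from both sides of $\lambda_n(Q_L,\beta\xi_{L,\epsilon})\overset{d}{=}\tfrac{1}{\alpha^2}\lambda_n(Q_{L/\alpha},\alpha\beta\,\xi_{L/\alpha,\epsilon/\alpha})$. The left side then converges to $\blambda_n(Q_L,\beta)$. The right side, rewritten as
\begin{align*}
\tfrac{1}{\alpha^2}\lambda_n\bigl(Q_{L/\alpha},\alpha\beta\,\xi_{L/\alpha,\epsilon/\alpha}\bigr)-\beta^2 c_{\epsilon/\alpha}+\beta^2\bigl(c_{\epsilon/\alpha}-c_\epsilon\bigr),
\end{align*}
has its first two terms converging to $\tfrac{1}{\alpha^2}\blambda_n(Q_{L/\alpha},\alpha\beta)$, while the last equals $\tfrac{\beta^2}{2\pi}\log\alpha$. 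So your argument in fact yields
\begin{align*}
\blambda_n(Q_L,\beta)\overset{d}{=}\tfrac{1}{\alpha^2}\blambda_n\bigl(Q_{L/\alpha},\alpha\beta\bigr)+\tfrac{\beta^2}{2\pi}\log\alpha,
\end{align*}
not the formula in the lemma, which has no $\beta^2$. Your remark that ``the same prefactor $\beta^2$ governs the renormalization on both sides'' is correct, but this is precisely why the finite shift acquires a factor $\beta^2$ --- it does not make it cancel. The lemma as stated is correct only for $\beta=1$, which is in fact the only case the paper's own proof handles (it sets $\beta=1$ at the outset); one can also check that only the $\beta^2$-version is consistent under composing two scalings $\alpha,\alpha'$ (the cocycle relation forces the coefficient of $\log\alpha$ to satisfy $c(\alpha\beta)=\alpha^2 c(\beta)$). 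Write your conclusion with the $\beta^2$ and note the typo in the statement; the discrepancy is harmless downstream because the subsequent applications either set $\beta=1$ or only use that the error term vanishes as $\epsilon\downarrow 0$.
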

\begin{proof}
For simplicity we take $\beta =1$. 
 $ \alpha l_\alpha \xi_L$ is a white noise  on $Q_{\frac{L}{\alpha}}$, 
 so that $\langle \alpha l_\alpha \xi_L, \fn_k \rangle \overset{d}{=} \langle  \xi_{\frac{L}{\alpha}}, \fn_k \rangle$ for all $k\in \N_0^2$ and thus $\frac1\alpha  \xi_{\frac{L}{\alpha}} \overset{d}{=}  l_\alpha \xi_L$. 
By \ref{obs:scaling_fourier_multipliers} 
$ 
l_\alpha \xi_{L,\epsilon} = \tau(\tfrac{\epsilon}{\alpha}\rD) [l_\alpha \xi_L]
 \overset{d}{=} 
 \tfrac{1}{\alpha}   \xi_{\frac{L}{\alpha},\frac{\epsilon}{\alpha} }. 
$ 
So that by Lemma \ref{lemma:scaling_smooth_potentials_dir} 
\begin{align*}
& \lambda_n  \left(Q_L, (\xi_{L,\epsilon}, \xi_{L,\epsilon} \reso \sigma(\rD) \xi_{L,\epsilon} - \tfrac{1}{2\pi} \log (\tfrac{1}{\epsilon}) )  \right)
 = 
\lambda_n \left(Q_{L}, \xi_{L,\epsilon} \right)  - \tfrac{1}{2\pi} \log (\tfrac{1}{\epsilon}) \\
& \qquad \overset{d}{=} \tfrac{1}{\alpha^2}
\lambda_n \left(Q_{\frac{L}{\alpha}},  \alpha  \xi_{\frac{L}{\alpha},\frac{\epsilon}{\alpha}}\right)  - \tfrac{1}{2\pi} \log (\tfrac{1}{\epsilon}) \\
& \qquad  \overset{d}{=} \tfrac{1}{\alpha^2}
\lambda_n \left(Q_{\frac{L}{\alpha}}, ( \alpha  \xi_{\frac{L}{\alpha},\frac{\epsilon}{\alpha}}, 
\alpha^2 \left[  \xi_{\frac{L}{\alpha},\frac{\epsilon}{\alpha}} \reso \sigma(\rD)  \xi_{\frac{L}{\alpha},\frac{\epsilon}{\alpha}} -  \tfrac{1}{2\pi} \log (\tfrac{\alpha}{\epsilon}) \right] )  \right) + \tfrac{1}{2\pi} \log \alpha. 
\end{align*}
Now we can subtract $c_\tau$ from both sides and take the limit $\epsilon\downarrow 0$.
\end{proof}
%

\begin{lemma}
\label{lemma:translation_of_eigenvalues}
For $y\in \R^2$ and $\beta>0$ 
\begin{align*}
\blambda_n(Q_L, \beta  )
 \overset{d}{=} \blambda_n(y+Q_L, \beta ). 
\end{align*}
Moreover, if $y +Q_L^\circ \cap Q_L^\circ = \emptyset$, then
$\blambda_n(Q_L, \beta  )$ and $\blambda_n(y+Q_L, \beta )$
 are independent. 
\end{lemma}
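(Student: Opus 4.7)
The plan is to reduce both assertions to corresponding properties of the underlying white noise $W$ on $\R^{2}$ and then transport them through the (deterministic and continuous) map $\bxi \mapsto \lambda_{n}(\cdot,\bxi)$ provided by Theorem~\ref{theorem:dirichlet_summary}. First I would set $\beta=1$ (the general case follows since multiplying $W$ by $\beta$ affects $\xi$ and $\Xi$ exactly as the definition of $\blambda_{n}(\cdot,\beta)$ prescribes) and recall from \ref{obs:shift_enhanced_white_noise} that $\bxi_{L}^{y}$ is constructed by the same recipe as $\bxi_{L}$ but with $W$ replaced by $\cT_{y}^{-1}W$. By the well-known translation invariance of white noise on $\R^{2}$, $\cT_{y}^{-1}W \overset{d}{=} W$ as random elements of $\cS'(\R^{2})$, so the sequences $(\xi_{L,\epsilon}^{y})_{\epsilon}$ (viewed through the canonical identification of $y+Q_{L}$ with $Q_{L}$ via $\cT_{y}$) have the same joint law as $(\xi_{L,\epsilon})_{\epsilon}$.

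Applying Theorem~\ref{theorem:convergence_in_enhanced_space_to_white_noise} together with the continuous mapping theorem, the renormalised enhanced pairs converge almost surely to $\bxi_{L}^{y}$ and $\bxi_{L}$ in their respective spaces $\fX_{\fn}^{\alpha}(y+Q_{L})$ and $\fX_{\fn}^{\alpha}(Q_{L})$, which are canonically isomorphic via $\cT_{y}$. Hence $\cT_{y}^{-1}\bxi_{L}^{y} \overset{d}{=} \bxi_{L}$. Now by Theorem~\ref{theorem:dirichlet_summary}, $\bxi \mapsto \lambda_{n}(\Gamma,\bxi)$ is a measurable (locally Lipschitz) function of $\bxi$, and from the very definition of $\sH_{\bxi}$ and the min--max formula \eqref{eqn:min-max_dir} it is immediate that for any $y\in\R^{2}$ one has $\lambda_{n}(y+Q_{L},\bxi_{L}^{y}) = \lambda_{n}(Q_{L},\cT_{y}^{-1}\bxi_{L}^{y})$ (the operator commutes with the translation $\cT_{y}$ acting on the domain and on $L^{2}$). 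This yields the distributional identity.

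For the independence statement, assume $y+Q_{L}^{\circ}\cap Q_{L}^{\circ}=\emptyset$. The key observation is that for $\varphi\in L^{2}(Q_{L})$ and $\psi\in L^{2}(y+Q_{L})$ extended by zero to all of $\R^{2}$, the supports are (essentially) disjoint, so $\E[\cW\varphi\,\overline{\cW\psi}]=\langle\varphi,\psi\rangle_{L^{2}(\R^{2})}=0$. Since $\cW\varphi$ and $\cW\psi$ are jointly Gaussian, this zero covariance upgrades to independence. Consequently, the two Gaussian families $(\langle\cW,\fn_{k,L}\rangle)_{k\in\N_{0}^{2}}$ (used to build $\xi_{L,\epsilon}$) and $(\langle\cT_{y}^{-1}\cW,\fn_{k,L}\rangle)_{k\in\N_{0}^{2}}$ (used to build $\xi_{L,\epsilon}^{y}$) are independent, and therefore the whole $\sigma$-algebras they generate are independent. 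The renormalised enhanced pairs are measurable with respect to these $\sigma$-algebras, so by Theorem~\ref{theorem:convergence_in_enhanced_space_to_white_noise} their almost-sure limits $\bxi_{L}$ and $\bxi_{L}^{y}$ are independent as well. Pushing forward through the measurable map $\bxi\mapsto\lambda_{n}(\cdot,\bxi)$ gives independence of $\blambda_{n}(Q_{L})$ and $\blambda_{n}(y+Q_{L})$.

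The only mildly delicate point I expect is bookkeeping the canonical isomorphism $\cT_{y}:\fX_{\fn}^{\alpha}(Q_{L})\to\fX_{\fn}^{\alpha}(y+Q_{L})$ and verifying that both the construction of $\bxi_{L}^{y}$ and the definition of $\sH_{\bxi}$ are equivariant under this isomorphism. Both however follow directly from the translation-equivariance of the Fourier multiplier calculus built in \ref{obs:scaling_fourier_multipliers} and Definition~\ref{def:spaces_on_general_boxes}, together with the fact that the renormalisation constants $c_{\epsilon}$ in the limits \eqref{eqn:limit_of_mollifiers} and \eqref{eqn:limit_of_mollifiers_shift} coincide (they depend only on $\epsilon$, $L$, and $\tau$, not on $y$).
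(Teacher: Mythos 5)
Your proof is correct and takes essentially the same approach as the paper: you reduce the identity in distribution to $\cT_{-y}\bxi_L^y \overset{d}{=} \bxi_L$ via the equivariance $\sH_{\bxi_L^y}f = \cT_y(\sH_{\cT_{-y}\bxi_L^y}(\cT_{-y}f))$, use translation invariance of white noise at the mollified level and pass to the almost-sure limit, and for the independence use disjoint supports to kill the covariance of the jointly Gaussian coefficient families. Your version is more verbose in spelling out the bookkeeping, but the argument is identical in substance.
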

\begin{proof}
As (see also Definition \ref{def:spaces_on_general_boxes}, in particular \eqref{eqn:definition_sigma_D_on_rectangle}) 
$ 
\sH_{\bxi_L^y} f = \cT_y ( \sH_{\cT_{-y} \bxi_L^y} (\cT_{-y} f) ),
$ 
it is sufficient to show $\bxi_L \overset{d}{=} \cT_{-y} \bxi_L^y$. 
As $\cT_{-y} \cW \overset{d}{=} \cW$, we have $\cT_{-y} \xi_{L,\epsilon}^y \overset{d}{=} \xi_{L,\epsilon}$ and hence obtain $\bxi_L \overset{d}{=} \cT_{-y} \bxi_L^y$ by \eqref{eqn:limit_of_mollifiers} and \eqref{eqn:limit_of_mollifiers_shift}. 

For the ``moreover''; note that $( \langle \cT_y^{-1} \cW, \fn_{k,L} \rangle )_{k\in\N_0^2}$ and $( \langle  \cW, \fn_{k,L} \rangle )_{k\in\N_0^2}$ are independent when $y+Q_L^\circ \cap Q_L^\circ = \emptyset$ (as $\E[ \langle \cT_y^{-1}  \cW,  \fn_{k,L} \rangle \langle  \cW, \fn_{m,L} \rangle ] = \langle \cT_y \fn_{k,L} , \fn_{k,L} \rangle =0$). 
\end{proof}

\section{Comparing eigenvalues on boxes of different size}
\label{section:eigenvalues_on_boxes}

\subsection{Bounded potentials}

In this section we prove the bounds comparing eigenvalues on large boxes with eigenvalues on smaller boxes for bounded potentials, see Lemma \ref{lemma:monotonicity_of_eigenvalues_in_radius}, Theorem \ref{theorem:eigenvalues_boxes} and Theorem \ref{theorem:n_eigenvalue_bound_from_below_of_principals}. 
In Section \ref{subsection:white_noise_bounds}, Theorem \ref{theorem:lower_and_upper_bounds_white_noise}, we extend this for white noise potentials. 
We fix $d\in \N$ and use the notation $|k|_\infty = \max_{i \in \{1,\dots, d\}} |k_i|$.

\begin{lemma}
\label{lemma:monotonicity_of_eigenvalues_in_radius}
Let $L>r>0$ and $\zeta \in L^\infty([0,L]^d)$. 
For all $y\in \R^2$ such that $y+[0,r]^d  \subset [0,L]^d$, we have 
\begin{align*}
 \lambda_n(y+[0,r]^d, \zeta) \le \lambda_n([0,L]^d, \zeta) . 
\end{align*}
\end{lemma}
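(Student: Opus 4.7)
The plan is to use the Courant--Fischer formula \eqref{eqn:min_max_for_smooth} derived in \ref{obs:smooth_potentials} for bounded potentials, which expresses $\lambda_n$ as a supremum of infima over $n$-dimensional subspaces of $C_c^\infty$, and then observe that extension by zero realises every admissible subspace on the small box as an admissible subspace on the large box, preserving both the Rayleigh quotient and the $L^2$ norm.

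First I would verify that the hypothesis $y+[0,r]^d \subset [0,L]^d$ implies $y+(0,r)^d \subset (0,L)^d$, so that extension by zero of any $\psi \in C_c^\infty(y+[0,r]^d)$ (which has support compactly contained in $y+(0,r)^d$) produces a function $\tilde\psi \in C_c^\infty([0,L]^d)$ in the sense of Section~\ref{subsection:notation}. Indeed, if $x \in y+(0,r)^d$ then $y_i < x_i < y_i+r$; combined with $y_i \ge 0$ and $y_i+r \le L$ (which follow from the containment hypothesis) this yields $0 < x_i < L$.

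Second, for such $\psi$ and $\tilde\psi$, a direct computation gives
\begin{align*}
\|\tilde\psi\|_{L^2([0,L]^d)}^2 = \|\psi\|_{L^2(y+[0,r]^d)}^2,
\qquad
\int_{[0,L]^d} -|\nabla \tilde\psi|^2 + \zeta \tilde\psi^2 = \int_{y+[0,r]^d} -|\nabla \psi|^2 + \zeta \psi^2,
\end{align*}
since $\tilde\psi$ and $\nabla \tilde\psi$ vanish outside $y+(0,r)^d$. Consequently, the linear map $\psi \mapsto \tilde\psi$ embeds $C_c^\infty(y+[0,r]^d)$ isometrically into $C_c^\infty([0,L]^d)$ while preserving the quadratic form appearing in \eqref{eqn:min_max_for_smooth}. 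In particular, if $F \sqsubset C_c^\infty(y+[0,r]^d)$ has dimension $n$, then its image $\tilde F \sqsubset C_c^\infty([0,L]^d)$ also has dimension $n$, and the inner infima over the unit sphere of $F$ and $\tilde F$ coincide.

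Finally, applying \eqref{eqn:min_max_for_smooth} on both boxes and using the embedding above yields
\begin{align*}
\lambda_n(y+[0,r]^d, \zeta)
&= \sup_{\substack{F\sqsubset C_c^\infty(y+[0,r]^d) \\ \dim F = n}}
\inf_{\substack{\psi \in F \\ \|\psi\|_{L^2}=1}} \int -|\nabla \psi|^2 + \zeta \psi^2 \\
&\le \sup_{\substack{G\sqsubset C_c^\infty([0,L]^d) \\ \dim G = n}}
\inf_{\substack{\psi \in G \\ \|\psi\|_{L^2}=1}} \int -|\nabla \psi|^2 + \zeta \psi^2
= \lambda_n([0,L]^d, \zeta).
\end{align*}
There is no real obstacle here; the only subtle point is the compact-support bookkeeping in the first step, so that \eqref{eqn:min_max_for_smooth} can be applied literally on both sides rather than needing to pass through the larger space $H_0^2$ or its Dirichlet analogue.
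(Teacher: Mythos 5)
Your proof is correct and takes essentially the same approach as the paper: apply the Courant--Fischer formula \eqref{eqn:min_max_for_smooth} and observe that extension by zero embeds $n$-dimensional subspaces on the small box into $n$-dimensional subspaces on the large box, preserving the Rayleigh quotient. The only cosmetic difference is that you work with the $C_c^\infty$ formulation while the paper invokes the $H_0^2$ formulation; these are equivalent by the two equalities in \eqref{eqn:min_max_for_smooth}, and your choice avoids any discussion of zero-extension for $H_0^2$ elements.
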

\begin{proof}
This follows from \eqref{eqn:min_max_for_smooth} 
as one can identify a finite dimensional
$F\sqsubset H_0^2(y+[0,r]^d)$ with a linear subspace of $H_0^2([0,L]^d)$ with the same dimension. 
\end{proof}


We will now prove an upper bound for $\lambda_n(Q_L,\zeta)$ in terms of a maximum over smaller boxes. 
For this we cover $Q_L$ by smaller boxes that overlap and correct the potential with a function that takes into account the overlaps. We use the following lemma. 

\begin{lemma}
\label{lemma:partition_with_a_and_r}
Let $r>a>0$. 
There exists a smooth function $\eta : \R^d \rightarrow [0,1]$ with $\eta = 1$ on $[0,r-a]^d$ and $\supp  \eta \subset [-a,r]^d$ such that $\| \nabla \eta\|_\infty \le \frac{K}{a}$  for some $K>0$ that does not depend on $r$ and $a$, and 
\begin{align}
\label{eqn:sum_squares_eta}
\sum_{k\in\Z^d} \eta(x-rk)^2 =1 \qquad (x\in \R^d). 
\end{align}
\end{lemma}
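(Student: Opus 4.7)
The plan is to construct $\eta$ as a tensor product of a one-dimensional bump and reduce the whole lemma to a one-dimensional construction. For $x \in \R^d$ I would set $\eta(x) = \prod_{i=1}^d \phi(x_i)$, where $\phi : \R \to [0,1]$ is smooth with $\phi = 1$ on $[0, r-a]$, $\supp \phi \subset [-a,r]$, $\|\phi'\|_\infty \le K_0/a$, and satisfies the one-dimensional partition of squares $\sum_{k\in\Z} \phi(t - rk)^2 = 1$ for all $t\in \R$. Granted this, $\sum_{k\in \Z^d}\eta(x-rk)^2 = \prod_{i=1}^d \sum_{k_i \in \Z} \phi(x_i - r k_i)^2 = 1$, the support and value-one properties pass through, and $|\partial_i \eta(x)| \le \|\phi'\|_\infty \le K_0/a$, so $\|\nabla \eta\|_\infty \le \sqrt{d} K_0 /a$, which gives the claim (with $K = \sqrt{d}K_0$; note $d$ is fixed).

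The one-dimensional construction is the crux. I would first produce an auxiliary smooth $\psi : \R \to [0,1]$ with $\psi = 0$ on $(-\infty, 0]$, $\psi = 1$ on $[1, \infty)$, and $\psi(t)^2 + \psi(1-t)^2 = 1$ for all $t \in [0,1]$. A standard way is to take a smooth $\chi:\R\to [0,1]$ with $\chi = 0$ on $(-\infty,0]$, $\chi = 1$ on $[1,\infty)$, and $\chi(t) + \chi(1-t) = 1$ on $[0,1]$ (for instance by mollifying a piecewise linear function with an even mollifier supported in $(-\frac14,\frac14)$), and then set $\psi(t) = \sin(\tfrac{\pi}{2}\chi(t))$: on $[0,1]$ one then has
\begin{align*}
\psi(t)^2 + \psi(1-t)^2 = \sin^2\bigl(\tfrac{\pi}{2}\chi(t)\bigr) + \sin^2\bigl(\tfrac{\pi}{2}(1-\chi(t))\bigr) = \sin^2(\cdot) + \cos^2(\cdot) = 1.
\end{align*}
Then I would rescale: define
\begin{align*}
\phi(x) := \begin{cases} \psi\bigl(\tfrac{x+a}{a}\bigr) & x \in [-a,0], \\ 1 & x \in [0, r-a], \\ \psi\bigl(\tfrac{r-x}{a}\bigr) & x \in [r-a, r], \\ 0 & \text{otherwise.} \end{cases}
\end{align*}
By construction $\phi$ is smooth (the pieces match with all derivatives at $0$ and $r-a$ because $\psi \equiv 1$ near $1$ and $\psi \equiv 0$ near $0$), $\supp \phi = [-a, r]$, $\phi = 1$ on $[0, r-a]$, and $\|\phi'\|_\infty \le \|\psi'\|_\infty/a$, so I can take $K_0 = \|\psi'\|_\infty$, which is independent of $r$ and $a$.

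The only thing left is the partition of squares. For $t \in [0, r-a]$ only the $k=0$ term contributes and it equals $1$. For $t \in [r-a,r]$ only $k=0$ and $k=1$ contribute, and writing $s = (r-t)/a \in [0,1]$ gives $\phi(t) = \psi(s)$ and $\phi(t-r) = \psi((t-r+a)/a) = \psi(1-s)$, so $\phi(t)^2 + \phi(t-r)^2 = \psi(s)^2 + \psi(1-s)^2 = 1$. The case $t \in [-a,0]$ is identical after shifting by $r$. Since both sides are $r$-periodic in $t$, this covers all of $\R$. I do not expect any real obstacle here; the only delicate point is arranging the matching-of-derivatives between $\psi$ and $1-\psi(1-\cdot)$, which is exactly what the $\sin\circ(\pi\chi/2)$ device secures.
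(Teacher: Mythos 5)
Your proof is correct and follows the same overall strategy as the paper: reduce to a one\mbox{-}dimensional bump $\phi$ satisfying the shifted partition of squares, then tensorize via $\eta(x)=\prod_i\phi(x_i)$. The tensorization steps (support, plateau, gradient bound via $|\partial_i\eta|\le\|\phi'\|_\infty$ since $0\le\phi\le1$, factorization of the sum of squares) are all fine.

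The difference is in the one\mbox{-}dimensional construction. The paper sets
$\zeta(x)=\sqrt{\varphi(\tfrac{2x}{a}+1)\,(1-\varphi(\tfrac{2(x-r)}{a}+1))}$
for a smooth step $\varphi$, so that the partition identity is a telescoping product under $k\mapsto k+1$, and reads off $\|\zeta'\|_\infty\le\tfrac{2}{a}[\|(\sqrt{\varphi})'\|_\infty+\|(\sqrt{1-\varphi})'\|_\infty]$. You instead piece together a rescaled $\psi$ with $\psi(t)^2+\psi(1-t)^2=1$, obtained as $\psi=\sin(\tfrac{\pi}{2}\chi)$ for a symmetric step $\chi$, so the partition identity becomes $\sin^2+\cos^2=1$ on the overlap zone. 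Both are standard and correct. Your $\sin\circ(\tfrac{\pi}{2}\chi)$ device has a small practical advantage worth noting: in the paper's square\mbox{-}root form one must choose $\varphi$ so that $\sqrt{\varphi}$ and $\sqrt{1-\varphi}$ are themselves $C^\infty$ (otherwise $\zeta$ can fail to be smooth at the boundary of its support and the bound on $\zeta'$ is vacuous); the paper is silent on this and implicitly assumes a good choice of $\varphi$. Your $\psi$ is manifestly smooth with all derivatives vanishing at $0$ and $1$, so the gluing in the definition of $\phi$ is automatically $C^\infty$, and you never have to worry about the square root. The two arguments are otherwise of the same length and spirit.
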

\begin{proof}
We adapt the proof of  \cite[Proposition 1]{GaKo00} 
and 
\cite[Lemma 4.6]{BiKo01LTT}. 
Let $\varphi : \R \rightarrow [0,1]$ be smooth,
$\varphi=0$ on $(-\infty,-1]$ and $\varphi=1$ on $[1,\infty)$  for all $x\in \R$. 
\begin{calc}
One has 
\begin{align*}
\varphi( \tfrac{2x}{a} +1) = 
\begin{cases}
1 & x\in [0,\infty), \\
0 & x\in (-\infty,-a],
\end{cases} \quad
\varphi( \tfrac{2(x-r)}{a} +1) = 
\begin{cases}
1 & x\in [r,\infty), \\
0 & x\in (-\infty,r-a],
\end{cases}
\end{align*}
\end{calc}
Let 
\begin{align*}
 \zeta(x) 
& =  \sqrt{ \varphi(\tfrac{2x}{a}+1) (1- \varphi(\tfrac{2(x-r)}{a}+1))} 
\end{align*}
Then $ \zeta =0$ outside $[-a,r]$, $ \zeta = 1$ on $[0,r-a]$ and 
$
 \sum_{k\in \Z}  \zeta(x-rk)^2 
 =1. 
$ 
\begin{calc}
Indeed, we have for $x\in [r-a,r]$ that 
\begin{align*}
\varphi(\tfrac{2x}{a}+1) (1- \varphi(\tfrac{2(x-r)}{a}+1))
+ \varphi(\tfrac{2(x-r)}{a}+1) (1- \varphi(\tfrac{2(x-2r)}{a}+1)) =1, 
\end{align*}
as $\varphi(\tfrac{2x}{a}+1) = 1$ and $(1- \varphi(\tfrac{2(x-2r)}{a}+1))=1$ (because $x-2r <-a$).
Furthermore, note that 
\begin{align*}
 \zeta'(x) 
& = \frac{2}{a} 
\sqrt{ (1- \varphi(\tfrac{2(x-r)}{a}+1))}   
\frac{\dd}{\dd u} \sqrt{\varphi(u)} \Big|_{u= \frac{2x}{a} +1} \\
& \qquad + \frac{2}{a}
\sqrt{ \varphi(\tfrac{2x}{a}+1)} 
\frac{\dd}{\dd u} \sqrt{1-\varphi(u)} \Big|_{u= \frac{2(x-r)}{a} +1}
\end{align*}
\end{calc}
Moreover,
$
\|  \zeta' \|_\infty \le \frac{2}{a} [ \|\sqrt{\varphi}'\|_\infty +  \|\sqrt{1-\varphi}'\|_\infty].
$ 
Hence with $ \eta: \R^d \rightarrow [0,1]$ defined by $  \eta (x) = \prod_{i=1}^d  \zeta(x_i)$ we have \eqref{eqn:sum_squares_eta} and $\| \nabla  \eta \|_\infty \le \frac{C}{a}$ for some $C>0$. 
\end{proof}

\begin{obs}[IMS formula]
\label{obs:IMS}
Write $\eta_k(x) = \eta(x-rk)$. Then 
\begin{align*}
& \eta_k^2 \Delta \psi + \Delta (\eta_k^2 \psi) - 2 \eta_k \Delta ( \eta_k \psi) 
\cnewline
\cand \begin{calc}
= 2 \eta_k^2 \Delta \psi + \psi \Delta (\eta_k^2) + \nabla \psi \cdot \nabla (\eta_k^2) - 2 \eta_k ( \eta_k \Delta \psi + \nabla \eta_k \cdot \nabla \psi + \psi \Delta \eta_k ) 
\end{calc} \cnewline
\cand \begin{calc}
= 2 \eta_k^2 \Delta \psi + 2\psi \eta_k \Delta \eta_k + \psi |\nabla \eta_k|^2+ 2\eta_k \nabla \psi \cdot \nabla \eta_k - 2 \eta_k ( \eta_k \Delta \psi + \nabla \eta_k \cdot \nabla \psi + \psi \Delta \eta_k ) 
\end{calc} \cnewline
\cand  = \psi | \nabla \eta_k|^2.
\end{align*}
Consequently,
\begin{calc}
\begin{align*}
\Delta \psi & = \frac12 \sum_{k\in \Z^d} \eta_k^2 \Delta \psi + \Delta ( \eta_k^2 \psi) 
 = \sum_{k\in \Z^d} \eta_k \Delta( \eta_k \psi) + \Phi \psi, 
\end{align*}
and thus 
\end{calc}
with $\sH_k \psi = \eta_k \sH(\eta_k \psi)$ (where $\sH=\sH_\zeta$) and $\Phi = \sum_{k \in \Z^d} | \nabla \eta_k|^2$ \towil{$\frac12$ missing in definition $\Phi$}
\begin{align}
\label{eqn:IMS_formula}
\sH- \Phi = \sum_{k\in \Z^d}  \sH_k.
\end{align}
\eqref{eqn:IMS_formula} is also called the IMS-formula, see also \cite[Lemma 3.1]{Si83} with references to first works in which it appears. 
The technique to prove \cite[Proposition 1]{GaKo00}, which we slightly generalize, is basically the IMS-formula. 
\end{obs}

\begin{theorem}
\label{theorem:eigenvalues_boxes}
For all $r>a>0$ there is a smooth function $\Phi_{a,r} : \R^d \rightarrow [0,\infty)$ whose support is contained in the $a$-neighbourhood of the grid $r \Z^d + \partial [0,r]^d$, is periodic in each coordinate with period $r$, with 
$\|\Phi_{a,r}\|_\infty \le \frac{K}{a}$ 
for some $K>0$ that does not depend on $a$ and $r$,
 such that  $\zeta \in L^\infty(\R^d)$ and $L>r$ 
 \begin{align}
\label{eqn:eigenvalue_overlapping_box_estimate}
\lambda([0,L]^d,\zeta) - \tfrac{K}{a} 
\le 
\lambda([0,L]^d,\zeta- \Phi_{a,r})  
\le 
\max_{k\in \N_0^d , |k|_\infty <  \frac{L}{r}+1}
\lambda(rk + [-a,r]^d,\zeta).
\end{align}
\end{theorem}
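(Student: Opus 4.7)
My plan is to use the IMS localization formula from \ref{obs:IMS} combined with the variational characterization \eqref{eqn:min_max_for_smooth} for bounded potentials. Let $\eta : \R^d \to [0,1]$ be the smooth cut-off from Lemma \ref{lemma:partition_with_a_and_r}, set $\eta_k(x) := \eta(x - rk)$ and define
\begin{align*}
\Phi_{a,r}(x) := \tfrac12 \sum_{k\in \Z^d} |\nabla \eta_k(x)|^2.
\end{align*}
The support of each $|\nabla \eta_k|^2$ lies in the $a$-neighbourhood of $\partial(rk + [0,r]^d)$ (since $\eta_k \equiv 1$ on $rk + [0,r-a]^d$), so $\Phi_{a,r}$ is supported in the $a$-neighbourhood of the grid $r\Z^d + \partial [0,r]^d$ and is $r$-periodic in each coordinate by construction. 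The bound $\|\nabla \eta\|_\infty \lesssim 1/a$ together with the fact that only finitely many (depending on $d$) of the $\eta_k$ overlap at any point yields $\|\Phi_{a,r}\|_\infty \lesssim 1/a^2$, which is the constant $K/a$ (absorbing the dimensional factor and rewriting the bound in the form stated).

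For the upper bound in \eqref{eqn:eigenvalue_overlapping_box_estimate}, I will use \eqref{eqn:min_max_for_smooth} (for $n=1$ this reads $\lambda(Q_L,\zeta) = \sup_{\|\psi\|_{L^2}=1} \int -|\nabla \psi|^2 + \zeta \psi^2$). For $\psi \in C_c^\infty([0,L]^d)$ with $\|\psi\|_{L^2}=1$, the IMS identity gives
\begin{align*}
\langle (\Delta + \zeta - \Phi_{a,r})\psi, \psi \rangle_{L^2}
= \sum_{k \in \Z^d} \langle (\Delta + \zeta)(\eta_k \psi), \eta_k \psi \rangle_{L^2}.
\end{align*}
Since $\eta_k \psi \in H^1_0(rk + [-a,r]^d)$ (because $\eta$ vanishes on $\partial([-a,r]^d)$) and the term corresponding to $k$ vanishes unless $rk + [-a,r]^d$ meets $[0,L]^d$, i.e.\ $|k|_\infty < L/r + 1$, I may apply the variational principle on each such small box to bound each summand by $\lambda(rk + [-a,r]^d, \zeta) \, \|\eta_k \psi\|_{L^2}^2$. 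Using \eqref{eqn:sum_squares_eta} one has $\sum_k \|\eta_k \psi\|_{L^2}^2 = \|\psi\|_{L^2}^2 = 1$, so
\begin{align*}
\langle (\Delta + \zeta - \Phi_{a,r})\psi, \psi \rangle_{L^2}
\le \max_{\substack{k\in \N_0^d \\ |k|_\infty < L/r +1}} \lambda(rk + [-a,r]^d, \zeta).
\end{align*}
Taking the supremum over $\psi$ gives the second inequality. (Strictly the sum runs over $k \in \Z^d$, but for $k$ with some $k_i < 0$ the corresponding box lies outside $[0,L]^d$ and can be handled by noting that $\psi = 0$ there, or by restricting the sum.)

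The lower bound $\lambda([0,L]^d,\zeta) - K/a \le \lambda([0,L]^d, \zeta - \Phi_{a,r})$ is a direct consequence of monotonicity of the principal eigenvalue in the potential: from \eqref{eqn:min_max_for_smooth} one immediately sees that for $W_1 \le W_2$ almost everywhere, $\lambda(Q_L, W_1) \le \lambda(Q_L, W_2)$, and since $\zeta - \Phi_{a,r} \ge \zeta - \|\Phi_{a,r}\|_\infty \ge \zeta - K/a$, applying this with the constant shift gives the stated inequality.

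The main obstacle in this plan is purely bookkeeping: verifying that the cutoffs produce genuine Dirichlet test functions on the small boxes (handling of $k$'s whose box $rk + [-a,r]^d$ straddles $\partial [0,L]^d$, where one must check that $\eta_k \psi \in H^1_0(rk + [-a,r]^d)$ either because $\eta_k$ vanishes on the relevant face or because $\psi$ does), and tracking the dimensional constants in the bound $\|\Phi_{a,r}\|_\infty \lesssim 1/a^2$ so that this is compatible with the form $K/a$ stated in the theorem.
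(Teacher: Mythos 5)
Your proposal follows the same route as the paper: IMS localization (Lemma \ref{lemma:partition_with_a_and_r} and \ref{obs:IMS}) for the upper bound, and monotonicity of $\lambda$ in the potential plus the sup-norm bound on $\Phi_{a,r}$ for the lower bound. Two remarks that go beyond mere confirmation.

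First, your inclusion of the factor $\tfrac12$ in $\Phi_{a,r}=\tfrac12\sum_k|\nabla\eta_k|^2$ is the correct normalization for the IMS identity $\sH-\Phi=\sum_k\sH_k$ (summing $\eta_k^2\Delta\psi+\Delta(\eta_k^2\psi)-2\eta_k\Delta(\eta_k\psi)=\psi|\nabla\eta_k|^2$ over $k$ and using $\sum_k\eta_k^2=1$ yields $2\Delta\psi-2\sum_k\eta_k\Delta(\eta_k\psi)=\psi\sum_k|\nabla\eta_k|^2$). The paper's displayed definition of $\Phi$ omits the $\tfrac12$, and the authors flagged this themselves in a marginal note; you have it right.

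Second, the obstacle you flag at the end — reconciling $\|\Phi_{a,r}\|_\infty\lesssim 1/a^2$ with the stated bound $K/a$ — is not a gap in your argument but a typo in the theorem statement. Only a bounded (dimension-dependent) number of $\eta_k$ overlap at any point and $\|\nabla\eta_k\|_\infty\le K/a$, so $\|\Phi_{a,r}\|_\infty\le K'/a^2$ is the correct bound, and that is in fact what the paper uses downstream: Theorem \ref{theorem:comparing_eigenvalues_different_boxes_summary} applies this result with $a=\tfrac12 r$ and reports the error term $\tfrac{4K}{r^2}$, which is $K/a^2$ with $a=r/2$, not $K/a=2K/r$. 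So you should not try to "absorb" a factor of $1/a$ into $K$ (which is impossible if $K$ is to be independent of $a$); simply state the bound as $K/a^2$.

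For the remaining bookkeeping concern (that $\eta_k\psi$ is an admissible Dirichlet test function on $rk+[-a,r]^d$ even when the small box overhangs $\partial[0,L]^d$): $\eta_k\psi$ is smooth, $\eta_k$ vanishes on $\partial(rk+[-a,r]^d)$ by construction, and $\psi\in C_c^\infty([0,L]^d)$ vanishes outside $[0,L]^d$; hence $\eta_k\psi\in H_0^2(rk+[-a,r]^d)$, and the density argument behind \eqref{eqn:min_max_for_smooth} applies. Your argument is sound.
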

\begin{proof}
Let $\eta$ be as in Lemma \ref{lemma:partition_with_a_and_r}, $\eta_k(x) = \eta(x-rk)$ and $\Phi_{a,r} = \Phi = \sum_{k\in \Z^d} |\nabla \eta_k|^2$. \towil{factor $\frac12$}
By Lemma \ref{lemma:partition_with_a_and_r} it follows that $\|\Phi\|_\infty \le \frac{K}{a}$ for some $K>0$ that does not depend on $a$ and $r$. 
Observe that $\sum_{k \in \N_0^d: |k|_\infty < \frac{L}{r}+1} \eta_k^2$ equals $1$ on $[0,L]^d$. 
With $\sH_k$ as in \ref{obs:IMS}, $\sH_k$ is self-adjoint and $\sH_k \le \lambda(rk+[-a,r]^d)\eta_k^2$ for all $k\in \Z^d$. 
Hence we have by the IMS-formula \eqref{eqn:IMS_formula} on $H_0^2([0,L]^d)$
\begin{align*}
 \sH- \Phi \le \sum_{k \in \N_0^d, |k|_\infty < \frac{L}{r}+1} \lambda(rk + [-a,r]^d) \eta_k^2
 \le \max_{k \in \N_0^d, |k|_\infty < \frac{L}{r}+1} \lambda(rk + [-a,r]^d).
\end{align*}
\begin{calc}
$\sum_{k \in \N_0^d: |k|_\infty < \frac{L}{r}+1} \eta_k^2$ indeed equals $1$ on $[0,L]^d$: Let us check this for $d=1$. 
This is the case when for $m=\max\{ k \in \N_0: k < \frac{L}{r}+1\}$ one has $L \le r(m+1)-a$. 
As $a<r$, this is the case when $\frac{L}{r} \le m$, which is the case by definition of $m$. 
\end{calc}
\end{proof}

\begin{calc}
\begin{obs}
An alternative way of proving \eqref{eqn:eigenvalue_overlapping_box_estimate} without \eqref{eqn:min_max_for_smooth} is as follows: \\
By the IMS-formula \eqref{eqn:IMS_formula} we have for all $\psi \in C_c^\infty(Q_L)$ with $\|\psi\|_{L^2} =1$, using $\sum_{k\in \N_0^d: |k|_\infty \le  \frac{L}{r} + 1 } \|\psi_k\|_{L^2}^2  =1$ for the second inequality and writing $\check \psi_k = \frac{\psi_k}{\|\psi_k\|_{L^2}}$ when $\psi_k \ne 0$ and $\check \psi_k =0$ otherwise, 
\begin{align}
\int_{\R^d} - |\nabla \psi|^2 + \left(\zeta-\Phi \right) \psi^2 
\cand \begin{calc}
\notag
 = \sum_{k\in \N_0^d: |k|_\infty \le  \frac{L}{r} + 1 } \int_{\R^d} - |\nabla \psi_k|^2 + \zeta \psi_k^2  
 \end{calc} \cnewline
 \label{eqn:formula_of_H_psi_paired_with_psi}
& = \sum_{k\in \N_0^d: |k|_\infty \le  \frac{L}{r} + 1 } \|\psi_k\|_{L^2}^2 
\int_{\R^d} - |\nabla \check \psi_k|^2 + \zeta  \check \psi_k^2 . 
\end{align}
Therefore the bound \eqref{eqn:eigenvalue_overlapping_box_estimate} follows by \eqref{eqn:min_max_for_smooth}.
\end{obs}
\end{calc}

\begin{theorem}
\label{theorem:n_eigenvalue_bound_from_below_of_principals}
Let $\zeta \in L^\infty(\R^d)$. 
Let $x,y_1,\dots,y_n \in \R^d$, $L>r>0$ be such that $(y_i + [0,r]^d)_{i=1}^n$ are pairwise disjoint subsets of $x+[0,L]^d$. 
Then 
\begin{align}
\label{eqn:lower_bound_n_eigenvalue_by_first}
\lambda_n(x+[0,L]^d,\zeta) \ge \min_{i\in \{1,\dots,n\}} \lambda(y_i +[0,r]^d ,\zeta). 
\end{align}
\end{theorem}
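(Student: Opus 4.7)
The plan is to apply the Courant--Fischer / Fischer's principle characterisation \eqref{eqn:min_max_for_smooth} to $\lambda_n(x + [0,L]^d, \zeta)$, by constructing an $n$-dimensional test subspace $F \sqsubset C_c^\infty(x + [0,L]^d)$ whose associated Rayleigh infimum is at least $\min_i \lambda(y_i + [0,r]^d, \zeta) - \varepsilon$.

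First, fix $\varepsilon > 0$. For each $i \in \{1,\dots,n\}$, apply \eqref{eqn:min_max_for_smooth} with $n=1$ on the box $y_i + [0,r]^d$ to choose $\varphi_i \in C_c^\infty(y_i + [0,r]^d)$ with $\|\varphi_i\|_{L^2} = 1$ and
\[
\int_{\R^d} -|\nabla \varphi_i|^2 + \zeta \varphi_i^2 \;\ge\; \lambda(y_i + [0,r]^d, \zeta) - \varepsilon.
\]
Extend each $\varphi_i$ by zero to all of $\R^d$; since $(y_i + [0,r]^d)_{i=1}^n$ are pairwise disjoint subsets of $x + [0,L]^d$, the extended $\varphi_i$'s lie in $C_c^\infty(x + [0,L]^d)$ and have pairwise disjoint supports, hence are $L^2$-orthonormal and span an $n$-dimensional subspace $F$.

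Next, for any $\psi = \sum_{i=1}^n c_i \varphi_i \in F$ with $\|\psi\|_{L^2}^2 = \sum_i c_i^2 = 1$, the disjoint supports give
\[
\int_{\R^d} -|\nabla \psi|^2 + \zeta \psi^2
= \sum_{i=1}^n c_i^2 \int_{\R^d} -|\nabla \varphi_i|^2 + \zeta \varphi_i^2
\;\ge\; \min_{i\in\{1,\dots,n\}} \lambda(y_i + [0,r]^d, \zeta) - \varepsilon.
\]
Applying \eqref{eqn:min_max_for_smooth} to $\lambda_n(x + [0,L]^d, \zeta)$ with this particular $F$ yields $\lambda_n(x+[0,L]^d,\zeta) \ge \min_i \lambda(y_i + [0,r]^d, \zeta) - \varepsilon$, and letting $\varepsilon \downarrow 0$ gives \eqref{eqn:lower_bound_n_eigenvalue_by_first}.

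There is no serious obstacle: the only point to check is that the extensions of $\varphi_i$ by zero remain $C_c^\infty$ on the enlarging box, which is immediate since each $\varphi_i$ has compact support strictly inside $y_i + [0,r]^d$. (If one prefers to work with $H_0^2$ instead of $C_c^\infty$, the same argument applies via the first equality in \eqref{eqn:min_max_for_smooth}, using that a function in $H_0^2(y_i+[0,r]^d)$ extends by zero to an element of $H_0^2(x+[0,L]^d)$ because the $y_i+[0,r]^d$ are disjoint.)
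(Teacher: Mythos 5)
Your proposal is correct and follows essentially the same strategy as the paper's proof: construct an $n$-dimensional test subspace spanned by compactly supported near-optimal functions $\varphi_i$ on the disjoint boxes $y_i+[0,r]^d$, exploit the disjointness of supports to show the Rayleigh quotient of any unit-norm combination is a convex combination of the individual Rayleigh quotients, and invoke the Courant--Fischer formula \eqref{eqn:min_max_for_smooth}. The only stylistic difference is that you pick $\varepsilon$-near-optimal $\varphi_i$'s first and take $\varepsilon\downarrow 0$ at the end, whereas the paper takes the supremum over all admissible $f_i$'s and applies \eqref{eqn:min_max_for_smooth} with $n=1$ at the last step, but this is the same argument.
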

\begin{proof}
By \eqref{eqn:min_max_for_smooth} (see also \eqref{eqn:min_max_of_eigenvalue_with_smooth_and_convex_formulation})
\begin{align*}
\lambda_n(x+[0,L]^d,\zeta) 
\cand 
\begin{calc}
\ge \sup_{\substack{f_1,\dots,f_n, \\ f_i \in C_c^\infty(y_i+[0,r]^d), \|f_i\|_{L^2}=1} }  
\inf_{\substack{\psi = \sum_{i=1}^n \alpha_i f_i  \\ \alpha_i \in [0,1], \sum_{i=1}^n \alpha_i^2 =1 }} 
 \int -| \nabla \psi |^2 + \zeta \psi^2
\end{calc}
\cnewline
& 
\ge \sup_{\substack{f_1,\dots,f_n, \\ f_i \in C_c^\infty(y_i+[0,r]^d), \|f_i\|_{L^2}=1} }  \min_{i\in \{1,\dots,n\}}  \int -| \nabla f_i |^2 + \zeta f_i^2, 
\end{align*}
which proves \eqref{eqn:lower_bound_n_eigenvalue_by_first} by \eqref{eqn:min_max_for_smooth} with $n=1$. 
\end{proof}

\subsection{White noise as potential}
\label{subsection:white_noise_bounds}

In this section we prove analogous bounds to those in Lemma \ref{lemma:monotonicity_of_eigenvalues_in_radius}, Theorem \ref{theorem:eigenvalues_boxes} and Theorem \ref{theorem:n_eigenvalue_bound_from_below_of_principals} by replacing the bounded potential $\zeta$ by white noise, i.e., we prove Theorem \ref{theorem:lower_and_upper_bounds_white_noise}. 

\begin{theorem} \phantomsection 
\label{theorem:lower_and_upper_bounds_white_noise} 
Let $L \ge r \ge 1$. 
\begin{enumerate}
\item 
\label{item:monotonicity_of_eigenvalues_enhanced_WN}
For all $\kappa>0$  and $x,y\in \R^2$ such that $y+ Q_r\subset x+ Q_L$
\begin{align}
\label{eqn:monotonicity_of_eigenvalues_enhanced_WN}
&   \blambda_n(y+Q_r, \kappa )  \le \blambda_n(x+Q_L, \kappa ) \qquad \mbox{a.s.}
\end{align}
\item There exists a $K>0$ such that for all $\kappa>0$, $x\in \R^2$ and $a\in (0,r)$, 
\begin{align}
& \blambda( x+ Q_L, \kappa  ) 
\label{eqn:upper_bound_white_noise}
 \le \max_{k\in \N_0^2, |k|_\infty< \frac{L}{r} +1}
\blambda(x+ rk + Q_{r+a}, \kappa  ) + \tfrac{K}{a^2} \qquad \mbox{a.s.} 
\end{align}
\item For $\kappa>0$ and $x,y_1,\dots,y_n \in \R^2$ such that $(y_i + Q_r)_{i=1}^n$ are pairwise disjoint subsets of $x+Q_L$
\begin{align}
\label{eqn:lower_bound_n_eigenvalue_by_first_white_noise}
\blambda_n(x+Q_L,\kappa) \ge \min_{i\in \{1,\dots,n\}} \blambda(y_i +Q_r ,\kappa) \qquad \mbox{a.s.}
\end{align}
\end{enumerate}
\end{theorem}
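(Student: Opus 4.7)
The plan is to deduce each of the three assertions from its bounded-potential counterpart (Lemma~\ref{lemma:monotonicity_of_eigenvalues_in_radius}, Theorem~\ref{theorem:eigenvalues_boxes}, and Theorem~\ref{theorem:n_eigenvalue_bound_from_below_of_principals}) applied to the smooth potential $V_\epsilon := \kappa\xi_{L,\epsilon}^x$ on $x+Q_L$, and then to pass to the limit $\epsilon\downarrow 0$ via the local Lipschitz continuity \eqref{eqn:locally_lipschitz_eigenvalue_of_dirichlet}. Since shifting the second component of the enhancement by a constant merely shifts every eigenvalue by that constant, Theorem~\ref{theorem:convergence_in_enhanced_space_to_white_noise} gives
\[
\lambda_n(x+Q_L, V_\epsilon) - \kappa^2 c_\epsilon \longrightarrow \blambda_n(x+Q_L,\kappa) \qquad \text{a.s.}
\]
and the same statement on any box on which the corresponding enhancement is available.

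Granting the cross-box convergence described below, each part is then short. For~\ref{item:monotonicity_of_eigenvalues_enhanced_WN}, Lemma~\ref{lemma:monotonicity_of_eigenvalues_in_radius} gives $\lambda_n(y+Q_r, V_\epsilon|_{y+Q_r}) \le \lambda_n(x+Q_L, V_\epsilon)$ for each $\epsilon>0$; subtracting $\kappa^2 c_\epsilon$ from both sides and sending $\epsilon\downarrow 0$ yields the inequality in $\blambda_n$. For part~(b), apply Theorem~\ref{theorem:eigenvalues_boxes} with potential $V_\epsilon$, grid spacing $r$ and overlap $a$; the deterministic perturbation $\Phi_{a,r}$ contributes at most $K/a^2$ uniformly in $\epsilon$, and the finitely many maxima on the right-hand side pass to the limit sub-box by sub-box. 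Part~(c) is analogous, applying Theorem~\ref{theorem:n_eigenvalue_bound_from_below_of_principals} to $V_\epsilon$ on the finitely many disjoint sub-boxes $y_i+Q_r$ and taking the limit on each.

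The main obstacle is the convergence of the \emph{restricted} enhancement on a strict sub-box. Although $V_\epsilon|_{y+Q_r}$ is smooth, its construction involves a Neumann Fourier cutoff on the \emph{large} box $x+Q_L$ rather than on $y+Q_r$, so it is not immediate that
\[
\bigl(V_\epsilon|_{y+Q_r}, \ V_\epsilon|_{y+Q_r}\reso\sigma(\rD)V_\epsilon|_{y+Q_r} - \kappa^2 c_\epsilon\bigr) \longrightarrow (\kappa\xi_r^y,\,\kappa^2\Xi_r^y) \quad \text{in } \fX_\fn^\alpha(y+Q_r),
\]
with the \emph{same} renormalization constant $c_\epsilon$ as on the large box. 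This cross-box convergence (the content of Section~\ref{section:proof_convergence_diff_box_sizes}, essentially Theorem~\ref{theorem:theta_epsilon_converges_to_xi_r}) is what the whole argument rests on.

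I would establish it along the same lines as Theorem~\ref{theorem:convergence_to_dirichlet_white_noise}: by the Kolmogorov--Chentsov Theorem~\ref{theorem:kolmogorov_chentsov} combined with the hypercontractive bound Lemma~\ref{lemma:gaussian_besov_bound}, the problem reduces to second-moment estimates on Littlewood--Paley blocks of the difference $V_\epsilon|_{y+Q_r} - \kappa\xi_{r,\epsilon}^y$ and of the corresponding difference of (Wick-renormalized) resonance products. Both differences are in the first and second Wiener chaos, respectively, so their $\cC_\fn^\alpha$, resp.\ $\cC_\fn^{2\alpha+2}$, norms are controlled by their $L^2(\Omega)$ norms. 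The cumbersome but routine computation of these second moments---which is the analogue of \ref{obs:second_moment_bounds} for a difference of smoothings on different boxes---is precisely the technical content postponed to Section~\ref{section:proof_convergence_diff_box_sizes}, and once it is in place, the limiting argument above completes the proof.
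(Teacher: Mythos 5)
Your sketch follows the paper's own route: apply the bounded-potential results (Lemma~\ref{lemma:monotonicity_of_eigenvalues_in_radius}, Theorem~\ref{theorem:eigenvalues_boxes}, Theorem~\ref{theorem:n_eigenvalue_bound_from_below_of_principals}) to a Fourier-cutoff mollification on the large box, pass to the limit via the local Lipschitz bound~\eqref{eqn:locally_lipschitz_eigenvalue_of_dirichlet}, and correctly single out as the only nontrivial input the convergence of the restricted enhancement on a sub-box with the \emph{same} renormalization constant---which is exactly Theorem~\ref{theorem:theta_epsilon_converges_to_xi_r}, proved via second-moment / hypercontractivity estimates in Sections~\ref{section:proof_white_noise_conv} and~\ref{section:proof_convergence_diff_box_sizes}.

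Two small implementation details where the paper deviates from your sketch, neither a gap: the paper works with the sharp cutoff $\tau'=\1_{(-1,1)^2}$ rather than the smooth $\tau$, because the explicit $\sin$/$\cos$ overlap integrals $b^z_{m,l}$ in Section~\ref{section:proof_convergence_diff_box_sizes} are cleaner with an indicator; and it only establishes convergence in probability (via Lemma~\ref{lemma:gaussian_besov_bound}\ref{item:zeta_convergence_hypercontractivity}), passing to an a.s.\ subsequence, rather than invoking Kolmogorov--Chentsov, which is used only for the a.s.\ convergence of the enhancement itself in Theorem~\ref{theorem:convergence_to_dirichlet_white_noise}. For part~(b) you should also note, as the paper does, that the enlarged sub-boxes $rk+Q_{r+a}$ may stick out of $x+Q_L$, so the cross-box convergence has to be carried out inside a larger ambient box (the paper replaces $L$ by $3L$).
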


Let us describe how the proof of Theorem \ref{theorem:lower_and_upper_bounds_white_noise} follows from the following theorem. 
Let $L \ge r \ge 1,\kappa>0$. 
By performing a translation over $x$ we may assume $x=0$. 

It is sufficient to show that for all $y\in \R^2$ and $r>0$ such that $y+Q_r \subset Q_L$  one has the following convergences in probability (and thus almost surely along a sequence $(\epsilon_n)_{n\in\N}$ in $(0,1)$ that converges to $0$) 
\begin{align}
\label{eqn:convergence_ev_on_smaller_box}
\lambda_n(y+Q_r, \kappa (\xi'_{L,\epsilon}  - c_\epsilon')) \xrightarrow{\P} \blambda_n(y+Q_r, \kappa),
\end{align}
for the right choices $\xi'_{L,\epsilon}$ and $c_\epsilon'$.  
Indeed, for \eqref{eqn:monotonicity_of_eigenvalues_enhanced_WN} and \eqref{eqn:lower_bound_n_eigenvalue_by_first_white_noise} this is clearly sufficient. For \eqref{eqn:upper_bound_white_noise} this is sufficient by ``replacing $L$'' in \eqref{eqn:convergence_ev_on_smaller_box} by ``$3L$'' and ``replacing $r$'' by either ``$L$'' or ``$r+a$''. 

In this case, we choose $\xi'_{L,\epsilon}$ like $\xi_{L,\epsilon}$ in \eqref{eqn:xi_epsilon_dir} but with $\tau' = \1_{(-1,1)^2}$ instead of $\tau$ and $c_\epsilon'= \frac{1}{2\pi} \log \frac{1}{\epsilon} + c_{\tau'}$ (the choice of $\tau'= \1_{(-1,1)^2}$ is convenient for  calculations in Section~\ref{section:proof_convergence_diff_box_sizes}). 
Observe that 
\begin{align*}
\lambda_n(y+Q_r, \kappa \xi_{L,\epsilon}') 
= \lambda_n(y+Q_r, \kappa \theta_\epsilon^y)
= \lambda_n(y+Q_r, (\kappa \theta_\epsilon^y, \kappa^2 \theta_\epsilon^y \reso \sigma(\rD) \theta_\epsilon^y)) ,
\end{align*}
for $\theta_\epsilon^y$ (which equals $\xi_{L,\epsilon}|_{y+Q_r}$ in $L^2(y+Q_r)$) given by 
\begin{align}
\notag
\theta_\epsilon^y 
& = 
\sum_{k\in \N_0^2} \langle \xi_{L,\epsilon}  , \cT_y \fn_{k,r} \rangle_{L^2(y+Q_r)} \cT_y \fn_{k,r} \\
\label{eqn:theta_epsilon}
& = \sum_{k\in \N_0^2} \sum_{m\in \N_0^2} 
\1_{(-1,1)^2}(\tfrac{\epsilon}{L}m) \langle \cW, \fn_{m,L} \rangle
 \langle \fn_{m,L}  , \cT_y \fn_{k,r} \rangle_{L^2(y+Q_r)} \cT_y \fn_{k,r} . 
\end{align}
Therefore the following theorem resembles the missing part of the proof. 
Observe that $\theta_\epsilon^y \reso \sigma(\rD) \theta_\epsilon^y \in H_\fn^1 \subset \cC_\fn^{0}$ as $\theta_\epsilon^y \in L^2= H_\fn^0$ (see also \ref{obs:rough_distributions_for_H_elements}).

\begin{theorem}
\label{theorem:theta_epsilon_converges_to_xi_r}
Let $L > r \ge 1$ and $x,y\in \R^2$ be such that $y+Q_r \subset x+ Q_L$.
Let  $\theta_\epsilon^y$ be as in \eqref{eqn:theta_epsilon}. 
Then $(\xi_{L,\epsilon}', \xi_{L,\epsilon}' \reso \sigma(\rD) \xi_{L,\epsilon}' - c_\epsilon') \xrightarrow{\P} \bxi_L$ in $\fX_\fn^{\alpha}(Q_L)$ and $(\theta_\epsilon^{y}, \theta_\epsilon^{y} \reso \sigma(\rD) \theta_\epsilon^{y} - c_\epsilon' ) \xrightarrow{\P} \bxi_r^{y}$
 in $\fX_\fn^\alpha(y + Q_r) $. 
\end{theorem}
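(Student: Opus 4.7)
The strategy is to treat both statements by the same Gaussian-chaos machinery used to prove Theorem \ref{theorem:convergence_in_enhanced_space_to_white_noise}. The linear parts $\xi_{L,\epsilon}'$ and $\theta_\epsilon^y$ lie in the first Wiener chaos, and their resonance products (after centering) lie in the second Wiener chaos, so by Lemma \ref{lemma:gaussian_besov_bound}\ref{item:zeta_convergence_hypercontractivity} it suffices to (i) produce a uniform $L^2$-estimate $\E|\Delta_i \zeta_\epsilon(x)|^2 \lesssim 2^{ai}$ for each component, and (ii) verify that every Fourier coefficient converges in $L^2(\Omega)$. Hypercontractivity then upgrades this to convergence in probability in $\cC_\fn^\alpha$ for $\alpha<-1$.

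For the first convergence, I would re-run the proofs of Theorem \ref{theorem:convergence_to_dirichlet_white_noise} and Lemmas \ref{lemma:expectation_reso_xi_eps_C_0_plus_reso_constant}, \ref{lemma:renormalisation_constant} with $\tau$ replaced by $\tau'=\1_{(-1,1)^2}$. The second-moment bounds in \ref{obs:second_moment_bounds} (proved in Section \ref{section:proof_white_noise_conv}) use only the sharp bound $0\le \tau\le 1$ and the dominated convergence $\tau(\tfrac{\epsilon}{L}k)^2\to 1$ pointwise; both survive for $\tau'$. The subtraction of $c_\epsilon' = \tfrac{1}{2\pi}\log\tfrac{1}{\epsilon}+c_{\tau'}$ is justified by Lemma \ref{lemma:renormalisation_constant}, whose hypotheses (a.e.\ continuity, equal to $1$ on a ball, compactly supported) are satisfied by $\tau'$. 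So the first limit is just Theorem \ref{theorem:convergence_in_enhanced_space_to_white_noise} with a less regular (but still admissible) cutoff.

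For the second convergence, the crucial observation is that $\tau'$ being a sharp cutoff makes $\xi_{L,\epsilon}'$ a finite linear combination of the $\fn_{m,L}$, hence an element of $L^2(Q_L)$. By Parseval applied on $y+Q_r$, $\theta_\epsilon^y$ is then the orthogonal projection (equivalently, the restriction in $L^2(y+Q_r)$) of $\xi_{L,\epsilon}'$. I would compare $\theta_\epsilon^y$ to the natural $r$-box regularisation $\widehat\xi_{r,\epsilon}^y$ of $\xi_r^y$ built as in Theorem \ref{theorem:convergence_in_enhanced_space_to_white_noise} on $y+Q_r$, via the decomposition
\begin{align*}
\bigl(\theta_\epsilon^y,\,\theta_\epsilon^y\reso\sigma(\rD)\theta_\epsilon^y-c_\epsilon'\bigr)
=\bigl(\widehat\xi_{r,\epsilon}^y,\,\widehat\xi_{r,\epsilon}^y\reso\sigma(\rD)\widehat\xi_{r,\epsilon}^y-c_\epsilon'\bigr)+R_\epsilon.
\end{align*}
The first pair converges to $\bxi_r^y$ by the first part of the theorem applied to the box $y+Q_r$, and the remainder $R_\epsilon$ is computed using
$\langle\cT_y\fn_{k,r},\fn_{m,L}\rangle_{L^2(y+Q_r)}$,
which is explicit and, crucially, gives an \emph{isometric} embedding of the $r$-basis into the span of the $\fn_{m,L}$ when $L/r\in\N$; in general it is bounded uniformly and decays off the ``diagonal'' $\tfrac{m}{L}\approx \tfrac{k}{r}$.

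The main obstacle is showing $R_\epsilon\to 0$ in $\fX_\fn^\alpha(y+Q_r)$. The linear component of $R_\epsilon$ is a first-chaos object whose covariance I would estimate by $\ell^2$-summability of the mixed inner products, identical in spirit to Lemma \ref{lemma:bound_on_expect_mod_Delta_i_difference_xi_in_x_squared} but applied to the difference of two Fourier cutoffs on different bases. For the quadratic piece I would separate expectation and centered part. The centered second-chaos component admits the same covariance bound as in Lemma \ref{lemma:bound_on_expect_mod_Delta_i_difference_Xi_in_x_squared} (a sum over $(m_1,m_2)$ of products of two resolvent kernels times four inner-product factors), yielding $2^{2\gamma i}$ decay plus a factor vanishing with $\epsilon$. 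The expectation piece is handled by a direct analogue of Lemma \ref{lemma:expectation_reso_xi_eps_C_0_plus_reso_constant}: the logarithmic divergences in $\E[\theta_\epsilon^y\reso\sigma(\rD)\theta_\epsilon^y(x)]$ and in $\E[\widehat\xi_{r,\epsilon}^y\reso\sigma(\rD)\widehat\xi_{r,\epsilon}^y(x)]$ both equal $\tfrac{1}{2\pi}\log\tfrac{1}{\epsilon}+$ bounded remainder converging in $\cC_\fn^{-\gamma}$, with the same leading constant because the sharp cutoff $\tau'$ makes both renormalisation sums Riemann-sum approximations of the same integral $\int_{B(0,1)}(\pi^2|x|^2)^{-1}\,dx$ plus lower-order corrections. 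Hence a \emph{single} constant $c_\epsilon'$ absorbs both divergences, which is precisely why the same $c_\epsilon'$ appears in the two components of the theorem. The fine bookkeeping of these two-resolvent covariance sums is the cumbersome calculation and is postponed to Section \ref{section:proof_convergence_diff_box_sizes}.
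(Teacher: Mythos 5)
Your overall architecture is sound, and your reduction of the second convergence to a comparison between the restriction $\theta_\epsilon^y$ and the direct $r$-box regularisation agrees with the paper (Theorem \ref{theorem:convergence_prima_and_theta}\ref{item:comparing_box_sizes}, verified in Section \ref{section:proof_convergence_diff_box_sizes} via the inner products $b_{m,l}^z$ in \eqref{eqn:b_m_l_z}--\eqref{eqn:bound_b_mk}). However, there is a genuine gap in your treatment of the first convergence: you claim that the second-moment bounds in \ref{obs:second_moment_bounds} ``use only the sharp bound $0\le\tau\le1$ and dominated convergence'' and therefore survive the replacement $\tau\mapsto\tau'=\1_{(-1,1)^2}$. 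They do not. The estimate in Lemma \ref{lemma:bound_on_expect_mod_Delta_i_difference_xi_in_x_squared}, which supplies the H\"older modulus $|\epsilon-\delta|^\gamma$ needed for the Kolmogorov--Chentsov argument in the proof of Theorem \ref{theorem:convergence_to_dirichlet_white_noise}, is obtained from the Lipschitz bound $|\tau(\epsilon k)-\tau(\delta k)|\le\|\nabla\tau\|_\infty|\epsilon-\delta||k|$ (see \eqref{eqn:bound_difference_tau_squared}), which fails for a discontinuous indicator. Lemma \ref{lemma:bound_on_expect_mod_Delta_i_difference_Xi_in_x_squared} has the same dependency. So you cannot simply ``re-run'' the almost-sure convergence argument with $\tau'$.

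The paper avoids this by never running the Cauchy-in-$\epsilon$ estimate for $\tau'$. Instead, the first convergence is obtained by the \emph{same} difference-comparison mechanism you already use for the second one: one sets $X_{k,r}^\epsilon=\tau'(\tfrac{\epsilon}{r}k)Z_k$, $Y_{k,r}^\epsilon=\tau(\tfrac{\epsilon}{r}k)Z_k$ (sharp vs.\ smooth cutoff) and applies Theorem \ref{theorem:convergence_enhanced_pair_theta}, whose input is only the stationary-in-$\epsilon$ bound \eqref{eqn:bound_Delta_i_zeta} together with the modewise $L^2$ convergence \eqref{eqn:zeta_epsilon_paired_with_fn_k_conv_0}; these enter Lemma \ref{lemma:gaussian_besov_bound}\ref{item:zeta_convergence_hypercontractivity}, which gives only convergence in probability --- exactly the mode of convergence claimed in the theorem. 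The renormalisation mismatch $c_{\tau'}-c_\tau$ then accounts for the shift from $c_\epsilon$ to $c_\epsilon'$, with $c_{\tau'}$ well-defined because Lemma \ref{lemma:renormalisation_constant} only requires $\tau'$ to be a.e.\ continuous, which $\1_{(-1,1)^2}$ is. If you adopt this structure, both halves of your proof become instances of the same lemma and the Lipschitz issue disappears. One further small point: when you write $\widehat\xi_{r,\epsilon}^y$ ``built as in Theorem \ref{theorem:convergence_in_enhanced_space_to_white_noise}'' you should be explicit that it uses the sharp cutoff $\tau'$, not $\tau$; otherwise the renormalisation constant in your first bracket would be $c_\epsilon$ and the discrepancy $c_{\tau'}-c_\tau$ would leak into $R_\epsilon$ and fail to vanish.
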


We prove Theorem \ref{theorem:theta_epsilon_converges_to_xi_r} in Section \ref{section:proof_white_noise_conv}: it follows from Theorem \ref{theorem:convergence_prima_and_theta}.

%
%
%

\section{Large deviation principle of the enhancement of white noise}
\label{section:ldp_enhancement}

In this section we assume $L>0$ and write $\bxi = (\xi, \Xi)$ for the limit $\bxi_L$ as in Theorem \ref{theorem:convergence_in_enhanced_space_to_white_noise}. 
We prove the following theorem.

\begin{theorem}
\label{theorem:ldp_white_noise_enhancement_dir}
$(\sqrt{\epsilon}\xi, \epsilon \Xi)$ satisfies the large deviation principle with rate $\epsilon$ and rate function $\fX_\fn^\alpha \rightarrow [0,\infty]$, 
$ 
(\psi_1, \psi_2) \mapsto \tfrac12 \|\psi_1\|_{L^2}^2. 
$ 
\end{theorem}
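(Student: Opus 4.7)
The plan is a classical Schilder-plus-contraction argument, modified by exponential equivalence to handle the renormalization. First, by Theorem~\ref{theorem:convergence_to_dirichlet_white_noise} the noise $\xi$ is a centered Gaussian random variable in the separable Banach space $\cC_\fn^\alpha(Q_L)$ whose Cameron--Martin space is $L^2(Q_L)$, since the covariance $\E[\xi(\varphi)\xi(\psi)] = \langle \varphi,\psi\rangle_{L^2}$ reads off the canonical injection $L^2 \hookrightarrow \cC_\fn^\alpha$. The abstract Schilder theorem on abstract Wiener spaces (\cite[Theorem 3.4.5]{DeZe10}) then yields an LDP for $\sqrt\epsilon\, \xi$ in $\cC_\fn^\alpha$ at speed $\epsilon$ with good rate function $I_1(\psi_1) = \tfrac12\|\psi_1\|_{L^2}^2$, finite precisely on $L^2$.

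To lift this to the enhanced pair I would introduce, for each $\delta>0$, the mollified enhancement map $T_\delta : \cC_\fn^\alpha \to \fX_\fn^\alpha$, $T_\delta(\eta) := \bigl(\eta,\; \tau(\delta\rD)\eta \reso \sigma(\rD)\tau(\delta\rD)\eta\bigr)$, whose continuity follows from the smoothing effect of $\tau(\delta\rD)$ combined with the Bony estimate for $\reso$. The contraction principle applied to $T_\delta(\sqrt\epsilon\, \xi) = \bigl(\sqrt\epsilon\,\xi,\; \epsilon\, \xi_\delta \reso \sigma(\rD)\xi_\delta\bigr)$ produces an LDP for this pair at speed $\epsilon$ with rate function $J_\delta(\psi_1,\psi_2) = \tfrac12\|\psi_1\|_{L^2}^2$ on the range of $T_\delta|_{L^2}$ and $+\infty$ elsewhere. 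Letting $\delta\to 0$ formally yields the rate function $J(\psi_1,\psi_2) = \tfrac12\|\psi_1\|_{L^2}^2$; the implicit compatibility $\psi_2 = \psi_1 \reso \sigma(\rD)\psi_1$ on $\{J<\infty\}$ is automatic from the definition of $\fX_\fn^\alpha$ as a closure, which matches the formulation of the statement.

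The decisive step is exponential equivalence between $T_\delta(\sqrt\epsilon\,\xi)$ and $(\sqrt\epsilon\,\xi,\, \epsilon\Xi)$. Modulo the renormalizing constant $c_\delta$ (which may be absorbed by a deterministic affine shift of $T_\delta$), the difference in the second coordinate equals $\epsilon\,\E[\xi_\delta \reso \sigma(\rD)\xi_\delta] + \epsilon(\Xi_\delta - \Xi)$. The first summand is deterministic and of order $\epsilon$ in $\cC_\fn^{2\alpha+2}$ for each fixed $\delta$; the second lies in the second Wiener chaos. Applying Gaussian hypercontractivity (Lemma~\ref{lemma:p_th_moment_of_Gaussian}) block-by-block as in Lemma~\ref{lemma:gaussian_besov_bound}\ref{item:zeta_bound_hypercontractivity}, together with the second-moment bounds for $\Xi_\delta - \Xi$ of Lemma~\ref{lemma:bound_on_expect_mod_Delta_i_difference_Xi_in_x_squared}, yields an exponential tail $\P\bigl(\|\Xi_\delta - \Xi\|_{\cC_\fn^{2\alpha+2}} > t\bigr) \le C e^{-c t/\sqrt{a_\delta}}$ with $a_\delta \to 0$. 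Coupling $\delta = \delta(\epsilon) \to 0$ slowly enough gives $\epsilon \log \P(\epsilon\|\Xi_\delta - \Xi\| > \eta) \to -\infty$, the exponential equivalence required by \cite[Theorem 4.2.13]{DeZe10}. The principal obstacle lies precisely here: because $\Xi_\delta - \Xi$ sits in the \emph{second}, not first, Wiener chaos, its tails are only sub-exponential rather than sub-Gaussian, placing the argument at the borderline of what is needed for an LDP at speed $\epsilon$; the decay rate of $a_\delta$ must be carefully matched to the coupling $\delta = \delta(\epsilon)$. Once this is handled, the remaining bookkeeping (convergence $J_\delta \to J$ and the affine shift absorbing the renormalization convention) is routine.
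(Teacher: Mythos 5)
Your overall route matches the paper's: define a mollified enhancement map, apply a contraction/approximation principle to transfer the Gaussian LDP for $\sqrt\epsilon\,\xi$ to the pair, and control the error using Gaussian hypercontractivity via Lemma~\ref{lemma:gaussian_besov_bound} and the second--moment bounds of Lemma~\ref{lemma:bound_on_expect_mod_Delta_i_difference_Xi_in_x_squared}. There are, however, two places where your write-up either cites the wrong tool or would need repair.

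The central issue is your use of ``coupling $\delta=\delta(\epsilon)$'' together with exponential equivalence \cite[Theorem 4.2.13]{DeZe10}. Exponential equivalence transfers an LDP from one family $\{Y_\epsilon\}$ to an equivalent family $\{Z_\epsilon\}$, but you still need $\{Y_\epsilon\}$ to satisfy an LDP in the first place. You obtain an LDP for $T_\delta(\sqrt\epsilon\,\xi)$ by the contraction principle \emph{for each fixed $\delta$}; this does not give an LDP for the diagonal family $T_{\delta(\epsilon)}(\sqrt\epsilon\,\xi)$, because the contraction map now varies with $\epsilon$. What is actually needed is the extended contraction principle \cite[Theorem 4.2.23]{DeZe10}, which the paper uses: it requires the two-parameter estimate
\begin{align*}
\lim_{\delta\downarrow 0}\ \limsup_{\epsilon\downarrow 0}\ \epsilon\log\P\bigl(\| F_\delta(\sqrt\epsilon\,\xi)-F(\sqrt\epsilon\,\xi)\|_{\fX_\fn^\alpha}>\kappa\bigr)=-\infty,
\end{align*}
together with uniform convergence of $F_\delta$ to $F$ on sublevel sets of the rate function (here supplied by Lemma~\ref{lemma:uniform_bound_mollifier}); no coupling of $\delta$ to $\epsilon$ is required or appropriate. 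Relatedly, your characterisation of the argument as ``at the borderline'' because the second chaos has only sub-exponential tails is not quite right: for fixed $\delta$ the bound $\P(\|\Xi_\delta-\Xi\|>t)\lesssim e^{-ct/\sqrt{a_\delta}}$ gives $\epsilon\log\P(\epsilon\|\Xi_\delta-\Xi\|>\kappa)\lesssim -c\kappa/\sqrt{a_\delta}$, a constant in $\epsilon$ which tends to $-\infty$ only in the supplementary limit $\delta\downarrow 0$. This is exactly the mechanism for which Theorem 4.2.23 is designed, and there is no delicate balancing act. (The paper instead applies Lemma~\ref{lemma:p_th_moment_of_Gaussian} with $p=1/\epsilon$, which reaches the same conclusion and also requires bounding the deterministic term $\epsilon c_\delta$, a bookkeeping step your ``absorb $c_\delta$ by an affine shift'' handles analogously.)

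A secondary point: your claim that the compatibility $\psi_2=\psi_1\reso\sigma(\rD)\psi_1$ on $\{J<\infty\}$ is ``automatic from the definition of $\fX_\fn^\alpha$ as a closure'' is not correct as stated. For instance $(0,c)\in\fX_\fn^\alpha$ for every constant $c$ (take $\zeta_n\equiv0$ and $c_n\to -c$ in Definition~\ref{def:rough_distributions_even}), so the second component is not determined by the first on the level set $\{\psi_1\in L^2\}$. The paper's own proof also glosses over this; the rate function produced by the extended contraction principle is genuinely $\inf_{\phi:F(\phi)=(\psi_1,\psi_2)}\tfrac12\|\phi\|_{L^2}^2$, which equals $\tfrac12\|\psi_1\|_{L^2}^2$ on the range of $F$ and $+\infty$ elsewhere, so this does not affect the applications, but one should not assert the compatibility as automatic. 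Finally, a technical caveat about your base step: invoking abstract Schilder theory in $\cC_\fn^\alpha=B_{\infty,\infty}^{\fn,\alpha}$ requires care because that space is not separable; the paper sidesteps this by proving Theorem~\ref{theorem:ldp_white_noise_dir} via the Dawson--G\"artner projective limit theorem plus the inverse contraction principle and a direct exponential-tightness argument, though it does note the Deuschel--Stroock route as an alternative.
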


\begin{remark}
Analogously, by some lines of the proof in a straightforward way, the statement in Theorem \ref{theorem:ldp_white_noise_enhancement_dir} holds with underlying space the torus and $(\xi,\Xi)$ being the analogue limit as in Theorem \ref{theorem:convergence_in_enhanced_space_to_white_noise} as is considered in \cite{AlCh15}.
\end{remark}

As a direct consequence of this large deviation principle and the continuity of the eigenvalues in the (enhanced) noise (see \eqref{eqn:locally_lipschitz_eigenvalue_of_dirichlet}), we obtain the following by an application of the contraction principle (see \cite[Theorem 4.2.1]{DeZe10}). 

\begin{corollary}
\label{cor:ldp_eigenvalues}
$\blambda_n(Q_L,\epsilon)=\lambda_n(Q_L, (\epsilon\xi_L, \epsilon^2 \Xi_L) )$ satisfies the large deviation principle with rate $\epsilon^2$ and rate function $I_{L,n} : \R \rightarrow [0,\infty]$ given by 
\begin{align}
\label{eqn:rate_function_n_th_eigenvalue}
I_{L,n}(x) = \inf_{\overset{V\in L^2(Q_L)}{ \lambda_n(Q_L,V) = x}} \tfrac12 \|V\|_{L^2}^2. 
\end{align}
\end{corollary}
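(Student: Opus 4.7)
The plan is to apply the contraction principle to the large deviation principle stated in Theorem~\ref{theorem:ldp_white_noise_enhancement_dir} via the continuous map $\bpsi \mapsto \lambda_n(Q_L,\bpsi)$. First, I would reparametrize: setting $\delta = \sqrt{\epsilon}$ in Theorem~\ref{theorem:ldp_white_noise_enhancement_dir} gives that $(\epsilon \xi_L,\epsilon^2 \Xi_L)$ satisfies the LDP on $\fX_\fn^\alpha$ with rate $\epsilon^2$ and rate function $J(\psi_1,\psi_2) = \tfrac12 \|\psi_1\|_{L^2}^2$ (understood as $+\infty$ unless $\psi_1 \in L^2$ and $\psi_2 = \psi_1 \reso \sigma(\rD)\psi_1$, which is consistent with the Gaussian nature of the limit and with \ref{obs:rough_distributions_for_H_elements}).

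Next, I would invoke the continuity of the eigenvalue map. By the local Lipschitz estimate \eqref{eqn:locally_lipschitz_eigenvalue_of_dirichlet} in Theorem~\ref{theorem:dirichlet_summary}, the map
\[
F : \fX_\fn^\alpha \rightarrow \R, \qquad \bpsi \mapsto \lambda_n(Q_L,\bpsi)
\]
is continuous. By the contraction principle (see \cite[Theorem~4.2.1]{DeZe10}), the pushforward of the LDP under $F$ gives that $\blambda_n(Q_L,\epsilon) = F\big((\epsilon\xi_L,\epsilon^2\Xi_L)\big)$ satisfies the LDP on $\R$ with rate $\epsilon^2$ and rate function
\[
x \mapsto \inf\{ J(\bpsi) : \bpsi \in \fX_\fn^\alpha, \ F(\bpsi) = x\}.
\]

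Finally, I would identify this infimum with $I_{L,n}(x)$. Since $J(\bpsi)=+\infty$ unless $\psi_1 \in L^2$ and $\psi_2 = \psi_1 \reso \sigma(\rD)\psi_1$, only enhancements of the form $\bpsi = (V, V \reso \sigma(\rD)V)$ with $V \in L^2$ contribute; for these, $J(\bpsi) = \tfrac12 \|V\|_{L^2}^2$ and $F(\bpsi) = \lambda_n(Q_L,V)$ in the sense of \ref{obs:rough_distributions_for_H_elements}. This yields
\[
\inf\{J(\bpsi) : F(\bpsi)=x\} = \inf_{\substack{V \in L^2(Q_L) \\ \lambda_n(Q_L,V)=x}} \tfrac12 \|V\|_{L^2}^2 = I_{L,n}(x),
\]
which is the claim. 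The only delicate point is a clean justification that $J$ is indeed concentrated on enhancements of the form $(V,V\reso\sigma(\rD)V)$, but this follows from the construction of $\fX_\fn^\alpha$ in Definition~\ref{def:rough_distributions_even} together with the continuous extension observed in \ref{obs:rough_distributions_for_H_elements}, so no new work is required beyond Theorem~\ref{theorem:ldp_white_noise_enhancement_dir}.
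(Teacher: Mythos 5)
Your argument is correct and matches the paper's: the corollary is obtained from Theorem~\ref{theorem:ldp_white_noise_enhancement_dir} via the contraction principle, using the local Lipschitz continuity of $\bxi\mapsto\lambda_n(Q_L,\bxi)$ from \eqref{eqn:locally_lipschitz_eigenvalue_of_dirichlet}. You also correctly flag (and resolve) the implicit point that the rate function of Theorem~\ref{theorem:ldp_white_noise_enhancement_dir} is effectively $+\infty$ off the graph $\{(V,V\reso\sigma(\rD)V):V\in L^2\}$, which the paper leaves unstated.
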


Theorem \ref{theorem:ldp_white_noise_enhancement_dir} is an extension of the following theorem. 
A proof can be given by using \cite[Theorem 3.4.5]{DeSt89}, but as our proof is rather simple and -- to our knowledge -- different from proofs in literature, we include it. 

\begin{theorem}
\label{theorem:ldp_white_noise_dir}
$\sqrt{\epsilon}\xi$ satisfies the large deviation principle with rate function $\cC_\fn^\alpha([0,L]^d) \rightarrow [0,\infty]$ given by 
$
\psi \mapsto \tfrac12 \|\psi\|_{L^2}^2. 
$ 
\end{theorem}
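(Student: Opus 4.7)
The plan is to reduce the infinite-dimensional LDP to a projective limit of finite-dimensional Cramér theorems through an exponential approximation argument, exploiting the Gaussian hypercontractivity bounds already developed in Lemma \ref{lemma:gaussian_besov_bound}.

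For $N \in \mathbb{N}$, I would introduce the truncation $\xi^N := \sum_{k \in \mathbb{N}_0^d,\, |k|_\infty \le N} Z_k \mathfrak{n}_{k,L}$, where $Z_k = \langle \mathcal{W}, \mathfrak{n}_{k,L}\rangle$ are the i.i.d.\ standard Gaussians from \eqref{eqn:first_definition_Z_k}. Then $\sqrt{\epsilon}\xi^N$ takes values in the finite-dimensional subspace $F_N := \mathrm{span}\{\mathfrak{n}_{k,L} : |k|_\infty \le N\} \subset \mathcal{C}_\mathfrak{n}^\alpha$, and classical finite-dimensional Cramér applied to the Gaussian vector $\sqrt{\epsilon}(Z_k)_{|k|_\infty \le N}$, combined with the contraction principle via the continuous linear map $(a_k) \mapsto \sum a_k \mathfrak{n}_{k,L}$, yields that $\sqrt{\epsilon}\xi^N$ satisfies the LDP on $\mathcal{C}_\mathfrak{n}^\alpha$ with rate function
\[
I_N(\psi) = \tfrac12 \sum_{|k|_\infty \le N} |\langle \psi, \mathfrak{n}_{k,L}\rangle|^2
\]
for $\psi \in F_N$ and $+\infty$ otherwise.

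Next I would verify that $(\xi^N)_N$ is an exponentially good approximation to $\xi$ in $\mathcal{C}_\mathfrak{n}^\alpha$, i.e.\ for each $\delta>0$,
\[
\limsup_{N \to \infty} \limsup_{\epsilon \downarrow 0} \epsilon \log \mathbb{P}\bigl( \sqrt{\epsilon}\,\|\xi - \xi^N\|_{\mathcal{C}_\mathfrak{n}^\alpha} > \delta \bigr) = -\infty.
\]
The key input is Lemma \ref{lemma:gaussian_besov_bound}\ref{item:zeta_bound_hypercontractivity} applied to $\xi - \xi^N$: since $\mathbb{E}|\Delta_i(\xi - \xi^N)(x)|^2$ vanishes while $2^i \lesssim N/L$ and otherwise is controlled as in \ref{obs:second_moment_bounds}, a refinement of the argument giving \eqref{eqn:zeta_bound_hypercontractivity} yields $\mathbb{E}\|\xi - \xi^N\|_{\mathcal{C}_\mathfrak{n}^\alpha}^p \lesssim p^p A_N^{p/2}$ with $A_N \to 0$ as $N \to \infty$ (the cushion comes from choosing $\alpha$ strictly below $-d/2$). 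By Markov's inequality applied with $p \sim 1/\epsilon$, the probability above decays super-exponentially in $\epsilon$, with prefactor tending to $0$ as $N \to \infty$.

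Finally I would combine the Dawson--Gärtner theorem (\cite[Theorem 4.6.1]{DeZe10}) with the exponential approximation lemma (\cite[Theorem 4.2.16]{DeZe10}) to conclude that $\sqrt{\epsilon}\xi$ satisfies the LDP on $\mathcal{C}_\mathfrak{n}^\alpha$ with rate function $I(\psi) = \sup_N I_N(\psi) = \tfrac12 \sum_{k \in \mathbb{N}_0^d} |\langle \psi, \mathfrak{n}_{k,L}\rangle|^2$. By Parseval together with Lemma \ref{lemma:basis_d_k_and_f_k}, this equals $\tfrac12 \|\psi\|_{L^2}^2$ when $\psi \in L^2([0,L]^d)$ and $+\infty$ otherwise, which is the asserted rate function; its goodness follows from the compact embedding $L^2 = H_\mathfrak{n}^0 \hookrightarrow \mathcal{C}_\mathfrak{n}^\alpha$ via Theorems \ref{theorem:inclusion_of_H_into_C} and \ref{theorem:compact_embeddings_besov_spaces}. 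I expect the main obstacle to be the exponential equivalence step: pinning down the correct $N$-dependence of the hypercontractivity constant so that $A_N$ genuinely decays (rather than just being uniformly bounded) requires carefully isolating the high-frequency tail and optimising the interplay between the Besov regularity $\alpha$, the order $p$, and the scale $\epsilon$.
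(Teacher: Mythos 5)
Your route differs from the paper's. You reduce to finite-dimensional Schilder-type LDPs for the truncations $\xi^N$ and glue them via exponentially good approximations, while the paper proves an LDP on the projective limit $\cY$, transfers it through the injection $\Phi$ by the inverse contraction principle, and establishes the required exponential tightness purely topologically, via the compact embedding of $L^2$-balls into $\cC_\fn^\alpha$ (Theorem \ref{theorem:compact_embeddings_besov_spaces}); no moment optimisation is needed there. Your plan is implementable, but the specific moment bound you invoke fails at the exponential-equivalence step, and in a way that is \emph{independent} of how fast $A_N$ decays. Lemma \ref{lemma:gaussian_besov_bound}\ref{item:zeta_bound_hypercontractivity} gives $\E\|\zeta\|_{\cC_\fn^\alpha}^p \lesssim p^p A^{p/2}$; the factor $p^p$ is the correct order for second Wiener chaos, but for a Gaussian (first-chaos) vector the sharp growth is $\sim p^{p/2}$, and the extra $\sqrt{p}$ is fatal here. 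Running Markov with the $p^p$ bound gives
\[
\P\bigl(\sqrt{\epsilon}\,\|\xi - \xi^N\|_{\cC_\fn^\alpha} > \delta\bigr) \le \Bigl(\tfrac{C\sqrt{\epsilon}\,p\,\sqrt{A_N}}{\delta}\Bigr)^p,
\]
and optimising over $p$ yields $\exp\bigl(-\delta/(eC\sqrt{\epsilon A_N})\bigr)$, so $\epsilon\log\P(\cdots) \lesssim -\sqrt{\epsilon}\,\delta/\sqrt{A_N} \to 0$, not $-\infty$, as $\epsilon\downarrow 0$ for every fixed $N$. Thus the double limit is $0$ and $(\xi^N)$ is not shown to be an exponentially good approximation. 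The genuine obstruction is therefore not the $N$-dependence of $A_N$ (which, as you say, the $\alpha < -\tfrac d2$ slack handles) but the moment growth in $p$.

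The fix is to replace the crude hypercontractivity bound by the sharp first-chaos bound $\E\|\zeta\|^p \lesssim C^p p^{p/2} A^{p/2}$, or equivalently to apply a Fernique/Gaussian-concentration estimate directly to $\|\xi-\xi^N\|_{\cC_\fn^\alpha}$. With $p^{p/2}$, the optimised Markov bound gives $\epsilon\log\P(\cdots) \lesssim -\delta^2/(C A_N)$, which is uniform in $\epsilon$ and tends to $-\infty$ as $N\to\infty$, exactly as required. It is instructive to compare with the paper's own exponential-approximation step in the proof of Theorem \ref{theorem:ldp_white_noise_enhancement_dir}: there the $p^p$ bound \emph{does} suffice, precisely because the quantity being approximated lives in the second Wiener chaos and so is scaled by $\epsilon$ rather than $\sqrt{\epsilon}$; with $p\sim 1/\epsilon$ the product $\epsilon p$ stays bounded and absorbs the extra $\sqrt{p}$. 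Transplanting that computation to the first-chaos level without adjusting the moment growth is where your argument breaks.
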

\begin{proof}

We use the Dawson-G\"artner projective limit theorem \cite[Theorem 4.6.1]{DeZe10} and the inverse contraction principle \cite[Theorem 4.2.4]{DeZe10}. 
Let $J = \N$ with its natural ordering. 
Let $\cY_i = \R^i$ for all $i \in J$. 
Let $p_{ij}$ be the projection $\cY_j \rightarrow \cY_i$ on the first $i$-coordinates. 
Let $\cY$ be the projective limit $\lim_{\leftarrow} \cY_j$ (see \cite[above Theorem 4.6.1]{DeZe10}, it is a subset of $\prod_{j\in J} \cY_j$). 
Let $p_j : \cY \rightarrow \cY_i$ be the canonical projection. 

Let $\fs: \N \rightarrow \N_0^d$ be a bijection. 
Write $ \fd_n' = \fd_{\fs(n)}$. 
Let $\Phi: \cC_\fn^\alpha([0,L]^d) \rightarrow \cY$ be given by $\Phi(u) = ( \langle u, \fd_1' \rangle , \dots, \langle u, \fd_n'\rangle )_{n\in\N}$. This $\Phi$ is continuous and injective. 
We first prove that $\Phi \circ \xi$ satisfies the large deviation principle. 

For every $n\in\N$ the vector $( \langle \xi, \fd_1' \rangle , \dots, \langle \xi, \fd_n'\rangle )$ is an $n$-dimensional standard normal variable, whence $\sqrt{\epsilon}( \langle \xi, \fd_1' \rangle , \dots, \langle \xi, \fd_n'\rangle ) = ( \langle \sqrt{\epsilon}\xi, \fd_1' \rangle , \dots, \langle \sqrt{\epsilon}\xi, \fd_n'\rangle )$ satisfies a large deviation principle on $\R^n$ with rate function given by 
$
I_n(y) :=  \frac12 |y|^2 = \frac12 \sum_{i=1}^n y_i^2.
$ 
By the Dawson-G\"artner projective limit theorem the sequence $\sqrt{\epsilon}( \langle \xi, \fd_1' \rangle , \dots, \langle \xi, \fd_n'\rangle )_{n\in\N}$ satisfies the large deviation principle on $\cY$ with rate function 
\begin{align*}
I( (y_1,\dots,y_n)_{n\in\N}) = \sup_{n\in\N} I_n(y_1,\dots,y_n) = \sup_{n\in\N} \frac12 \sum_{i=1}^n y_i^2. 
\end{align*}
The image of $\cC_\fn^\alpha$ under $\Phi$ is measurable, which follows from the following identity
\begin{align*}
\Phi(\cC_\fn^\alpha) = 
\Big\{ 
(a_1,\dots, a_n)_{n\in\N}: \sup_{i\in \N_{-1}} \Big\| \sum_{n\in\N } \rho_i( \tfrac{\fs(n)}{L} ) a_n \fd_n'  \Big\|_\infty <\infty 
\Big\}.
\end{align*}
\begin{calc}
Indeed, 
\begin{align*}
(a_1,\dots, a_n)_{n\in\N} \mapsto \Big\| \sum_{n\in\N } \rho_i( \tfrac{\fs(n)}{L} ) a_n \fd_n'  \Big\|_\infty
\end{align*}
is continuous as $\rho_i( \tfrac{\fs(n)}{L} ) \ne 0$ for only finitely many $n$. 
\end{calc}
As $\P( \Phi (\sqrt{\epsilon} \xi ) \in \Phi (\cC_\fn^\alpha) ) =1$, and the domain on which $I$ is finite is contained in $\Phi(\cC_\fn^\alpha)$, i.e., $\{ y\in \cY : I(y) <\infty\} \subset \Phi (\cC_\fn^\alpha)$, by \cite[Theorem 4.1.5]{DeZe10} $\Phi (\sqrt{\epsilon} \xi)$ satisfies the large deviation principle on $\Phi(\cC_\fn^\alpha)$ with rate function $I$ (restricted to $\Phi(\cC_\fn^\alpha)$). 

Now we apply the inverse contraction principle. $\Phi : \cC_\fn^\alpha \rightarrow \Phi(\cC_\fn^\alpha)$ is a continuous bijection. Also $I \circ \Phi (\psi) = \frac12 \|\psi\|_{L^2}^2$ (by Parseval's identity). Hence the proof is finished by showing that $\sqrt{\epsilon} \xi$ is exponentially tight in $\cC_\fn^\alpha$. 
Let $m>0$ and $K_m := \{ \psi \in \cC_\fn^\alpha : I \circ \Phi(\psi) \le m\}$. 
As $ L^2$ is compactly embedded in $H_\fn^{\alpha+1}$ by Theorem \ref{theorem:compact_embeddings_besov_spaces}, which is continuously embedded in $\cC_\fn^\alpha$ (by \cite[Theorem 2.71]{BaChDa11}, $K_m$ is relatively compact in $\cC_\fn^\alpha$. By the large deviation principle of $\Phi(\sqrt{\epsilon}\xi)$ on $\Phi(\cC_\fn^\alpha)$, and because $\overline{K_m}^c \subset K_m^c$, it follows that 
\begin{align*}
\limsup_{\epsilon \downarrow 0} \epsilon \log \P( \sqrt{\epsilon} \xi \in \overline{K_m}^c) 
\cand \begin{calc}
\le 
\limsup_{\epsilon \downarrow 0} \epsilon \log \P( \sqrt{\epsilon} \xi \in K_m^c) 
\end{calc} \cnewline
&
= \limsup_{\epsilon \downarrow 0} \epsilon \log \P( \Phi(\sqrt{\epsilon} \xi)  \in \{ y \in \cY : I \le m\}^c)
\le -m. 
\end{align*}
This proves the exponential tightness of $\sqrt{\epsilon} \xi$ in $\cC_\fn^\alpha$, which finishes the proof. 
\end{proof}

To prove Theorem \ref{theorem:ldp_white_noise_enhancement_dir} we use Theorem \ref{theorem:ldp_white_noise_dir}  and the extension of the contraction principle: 

\begin{theorem}
\cite[Theorem 4.2.23]{DeZe10}
\label{theorem:extended_contraction}
Let $\cX $ be a Hausdorff space and $(\cY,d)$ be a metric space. 
Suppose that $(\eta_\epsilon)_{\epsilon >0}$ are random variables with values in $\cX$ that satisfy the large deviation principle with (rate $\epsilon$ and) rate function $I: \cX \rightarrow [0,\infty]$. 
Suppose furthermore that 
$F_\delta : \cX \rightarrow \cY$ is a continuous map for all $\delta>0$, $F: \cX \rightarrow \cY$ is measurable and that for all $q\in [0,\infty)$ 
\begin{align}
\label{eqn:f_epsilon_close_to_f_on_sublevel_sets}
\lim_{\delta \downarrow 0} \sup_{x\in \cX : I(x) \le q} d( F_\delta (x), F(x) ) =0, 
\end{align}
and that $F_\delta(\eta_\epsilon)$ are exponential good approximations for $F(\eta_\epsilon)$, i.e., if for all $\kappa>0$ 
\begin{align}
\label{eqn:exponential_good_approximations}
\lim_{\delta \downarrow 0} \limsup_{\epsilon \downarrow 0} 
\epsilon \log \P ( d ( F_\delta (\eta_\epsilon) , F(\eta_\epsilon))> \kappa) = -\infty. 
\end{align}
Then $F(\eta_\epsilon)$ satisfies the large deviation principle with rate function $\cY \rightarrow [0,\infty]$ given by 
\begin{align*}
y \mapsto \inf_{x\in \cX : F(x) = y} I(x). 
\end{align*}
\end{theorem}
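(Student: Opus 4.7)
The plan is to deduce this extended contraction principle by combining the classical contraction principle for continuous maps with the approximating-LDPs theorem, and then carefully identifying the limiting rate function using condition~(a).

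First, for each $\delta > 0$, since $F_\delta : \cX \to \cY$ is continuous and $\eta_\epsilon$ satisfies the LDP on $\cX$ with rate function $I$, the classical contraction principle (\cite[Theorem~4.2.1]{DeZe10}) implies that $F_\delta(\eta_\epsilon)$ satisfies the LDP at speed $\epsilon$ with rate function $I_\delta(y) := \inf\{I(x) : F_\delta(x) = y\}$. Second, the exponential approximation hypothesis~\eqref{eqn:exponential_good_approximations} is exactly the hypothesis of the approximating-LDPs theorem (\cite[Theorem~4.2.16]{DeZe10}), which shows that $F(\eta_\epsilon)$ satisfies a weak LDP with candidate rate function
\[\tilde I(y) := \sup_{G} \liminf_{\delta \downarrow 0} \inf_{y' \in G} I_\delta(y'),\]
where the supremum runs over open neighborhoods $G$ of $y$.

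The core remaining task is to identify $\tilde I$ with $I^*(y) := \inf\{I(x) : F(x) = y\}$. The easy inequality $\tilde I(y) \le I^*(y)$ is immediate: for any $x$ with $F(x) = y$ and any open $G \ni y$, condition~\eqref{eqn:f_epsilon_close_to_f_on_sublevel_sets} applied at level $q = I(x)$ yields $F_\delta(x) \to y$, so $F_\delta(x) \in G$ for all $\delta$ small, giving $\inf_{y'\in G} I_\delta(y') \le I(x)$ and hence $\tilde I(y) \le I(x)$. The reverse $\tilde I(y) \ge I^*(y)$ proceeds by a diagonal argument: setting $M := \tilde I(y)$ and choosing open sets $G_n \downarrow \{y\}$, for each $n$ one can pick $\delta_n$ small enough and $x_n \in \cX$ so that $F_{\delta_n}(x_n) \in G_n$ and $I(x_n) \le M + 1/n$. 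Goodness of $I$ (compactness of $\{I \le M + 1\}$) then yields a subsequential limit $x^* \in \cX$, with $I(x^*) \le M$ by lower semicontinuity.

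The main obstacle is showing that this limit point satisfies $F(x^*) = y$, since $F$ is only assumed measurable on $\cX$. The resolution is that condition~\eqref{eqn:f_epsilon_close_to_f_on_sublevel_sets} promotes $F|_{\{I \le q\}}$ to a continuous map on the compact set $\{I \le q\}$ (a uniform limit of continuous $F_\delta$'s is continuous), so continuity at $x^*$ gives $F(x_n) \to F(x^*)$; combined with $d(F_{\delta_n}(x_n), F(x_n)) \to 0$ from~\eqref{eqn:f_epsilon_close_to_f_on_sublevel_sets} and $F_{\delta_n}(x_n) \in G_n \downarrow \{y\}$, we conclude $F(x^*) = y$ and therefore $I^*(y) \le I(x^*) \le M = \tilde I(y)$. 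Finally, the weak LDP is upgraded to a full LDP via exponential tightness: goodness of $I$ gives exponential tightness of $(\eta_\epsilon)$, which transfers to each $F_\delta(\eta_\epsilon)$ by continuity of $F_\delta$ (sending compact sublevel sets to compact subsets of $\cY$), and then to $F(\eta_\epsilon)$ through condition~\eqref{eqn:exponential_good_approximations}.
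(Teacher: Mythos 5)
The paper does not prove this statement; it is quoted verbatim from Dembo--Zeitouni, Theorem~4.2.23 (and the paper applies it as a black box). So there is no paper proof to compare against; the question is simply whether your argument is a correct proof of the cited result.

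The first three steps are correct and are exactly the route taken in Dembo--Zeitouni: apply the classical contraction principle to each $F_\delta$, invoke the approximating-LDPs theorem (Theorem~4.2.16) to get a weak LDP for $F(\eta_\epsilon)$ with the rate $\tilde I(y)=\sup_{G\ni y}\liminf_{\delta}\inf_{G}I_\delta$, and then identify $\tilde I$ with $I^*(y)=\inf\{I(x):F(x)=y\}$. Your identification argument is sound: the $\le$ direction is a one-liner from~\eqref{eqn:f_epsilon_close_to_f_on_sublevel_sets}; for the $\ge$ direction you correctly use goodness of $I$ to extract a convergent subnet, lower semicontinuity to bound $I(x^*)$, and the observation that $F$ restricted to a compact sublevel set is continuous (being a uniform limit of the continuous $F_\delta$) to conclude $F(x^*)=y$. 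Do note two small points: you should say ``subnet'' rather than ``subsequence'', since $\cX$ is only Hausdorff and $\{I\le q\}$ is merely compact; and you implicitly use that $I$ is a \emph{good} rate function, which the paper's restatement omits but is part of Dembo--Zeitouni's hypotheses and is needed already for Step~1 (the classical contraction principle).

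The genuine gap is in the last step, where you upgrade the weak LDP to a full LDP via exponential tightness. Your claim that ``goodness of $I$ gives exponential tightness of $(\eta_\epsilon)$'' is false for a general Hausdorff space $\cX$; it holds when $\cX$ is locally compact or Polish (Dembo--Zeitouni Exercises~1.2.19 and~4.1.10), but the theorem as stated makes no such assumption. Even granting exponential tightness of $\eta_\epsilon$, transferring it to $F(\eta_\epsilon)$ through the $\delta$-family requires a single compact set in $\cY$ that works uniformly in $\delta$, and your sketch (``sending compact sublevel sets to compact subsets of $\cY$, then through~\eqref{eqn:exponential_good_approximations}'') effectively replaces a compact set $K$ by its $\kappa$-fattening $K^\kappa$, whose closure need not be compact unless $\cY$ is, say, Polish. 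The cleaner route, which is what Dembo--Zeitouni actually do and requires no extra structure, is to verify the closed-set condition in part~(b) of Theorem~4.2.16: for every closed $C\subset\cY$, $\inf_C I^*\le\limsup_{\delta}\inf_C I_\delta$. This follows by precisely the same compactness and uniform-convergence argument you already carry out for the $\tilde I\ge I^*$ direction (pick $x_n$ with $F_{\delta_n}(x_n)\in C$, $I(x_n)$ close to the $\limsup$, pass to a subnet $x^*$, and use $d(F_{\delta_n}(x_n),F(x_n))\to0$ plus $C$ closed to get $F(x^*)\in C$), so you have all the ingredients; you just applied them to the wrong final lemma.
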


\begin{lemma}
\label{lemma:uniform_bound_mollifier}
Let $\alpha \in (-\frac43, -1)$. 
Let $\tau : \R^2 \rightarrow [0,1]$ be a compactly supported function that equals $1$ on a neighbourhood of $0$. 
Write $h_\delta = \tau(\delta \rD)h$. 
There exists a $C>0$ such that for all $\delta >0$ and $h\in L^2$
\begin{align}
\label{eqn:uniform_bound_reso_prod_molification}
\| h_\delta \reso \sigma(\rD) h_\delta - h \reso \sigma(\rD) h\|_{\cC_\fn^{2\alpha+2}} \le C \delta^{-\alpha-1} \|h\|_{L^2}^2.
\end{align}
\end{lemma}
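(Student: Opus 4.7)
The plan is to telescope the difference and apply the Bony resonance estimate, combined with the mollifier bound of Theorem \ref{theorem:fourier_mollifiers_converge_in_B_22_space} and the smoothing property of $\sigma(\rD) = (1-\Delta)^{-1}$. Specifically, I will write
\[
h_\delta \reso \sigma(\rD) h_\delta - h \reso \sigma(\rD) h = (h_\delta - h) \reso \sigma(\rD) h_\delta + h \reso \sigma(\rD) (h_\delta - h),
\]
and bound each summand in $H_\fn^{\alpha+2}$, then embed into $\cC_\fn^{2\alpha+2}$. The key quantitative input is Theorem \ref{theorem:fourier_mollifiers_converge_in_B_22_space} applied with $\gamma = 0$ and $\beta = \alpha + 1$ (note $\alpha + 1 < 0$, so $-\alpha - 1 \in (0,\tfrac13)$), which yields
\[
\|h_\delta - h\|_{H_\fn^{\alpha+1}} \le C \delta^{-\alpha-1}\|h\|_{L^2}.
\]

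For the first summand, the $H_\fn$-version of Theorem \ref{theorem:bony_estimates}\ref{item:bony_reso} applied with $f = h_\delta - h$ and $\xi = \sigma(\rD) h_\delta$ is admissible since the exponent sum $(\alpha+1) + 1 = \alpha + 2$ is positive (as $\alpha > -\tfrac43 > -2$). Combined with Theorem \ref{theorem:schauder}, Theorem \ref{theorem:inclusion_of_H_into_C} and $\|h_\delta\|_{L^2} \le \|h\|_{L^2}$, it gives
\[
\|(h_\delta - h) \reso \sigma(\rD) h_\delta\|_{H_\fn^{\alpha+2}} \lesssim \|h_\delta - h\|_{H_\fn^{\alpha+1}}\, \|\sigma(\rD) h_\delta\|_{\cC_\fn^1} \lesssim \delta^{-\alpha-1} \|h\|_{L^2}^2.
\]
For the second summand, the same Bony estimate with $f = h \in L^2 = H_\fn^0$ and $\xi = \sigma(\rD)(h_\delta - h)$ (again $0 + (\alpha+2) > 0$), together with the chain $\|\sigma(\rD)(h_\delta - h)\|_{\cC_\fn^{\alpha+2}} \lesssim \|\sigma(\rD)(h_\delta - h)\|_{H_\fn^{\alpha+3}} \lesssim \|h_\delta - h\|_{H_\fn^{\alpha+1}}$ from Theorems \ref{theorem:schauder} and \ref{theorem:inclusion_of_H_into_C}, produces the same bound $\delta^{-\alpha-1} \|h\|_{L^2}^2$ in $H_\fn^{\alpha+2}$.

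It remains to embed $H_\fn^{\alpha+2}$ into $\cC_\fn^{2\alpha+2}$. Theorem \ref{theorem:inclusion_of_H_into_C} (with $d=2$) gives $H_\fn^{\alpha+2} \hookrightarrow \cC_\fn^{\alpha+1}$, and the standard Besov inclusion $\cC_\fn^{\alpha+1} \hookrightarrow \cC_\fn^{2\alpha+2}$ holds because $\alpha + 1 \ge 2\alpha + 2 \iff \alpha \le -1$, which is exactly the assumed range. There is no substantial obstacle; the argument amounts to careful bookkeeping of exponents, with the interval $\alpha \in (-\tfrac43, -1)$ chosen precisely so that $\alpha + 2 > 0$ (needed for the Bony resonance estimate) and $\alpha + 1 \ge 2\alpha + 2$ (needed for the final embedding) both hold.
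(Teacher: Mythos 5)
Your proof is correct and follows essentially the same route as the paper: the identical telescoping decomposition, the quantitative mollifier bound from Theorem \ref{theorem:fourier_mollifiers_converge_in_B_22_space}, the Bony resonance estimate, the $(1-\Delta)^{-1}$ smoothing from Theorem \ref{theorem:schauder}, and the $H_\fn \hookrightarrow \cC_\fn$ embedding. The only cosmetic difference is that you land in $H_\fn^{\alpha+2}$ (keeping one factor in $L^2$) while the paper bounds both factors in $H_\fn^{\alpha+1}$; both versions give the claimed estimate with room to spare.
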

\begin{proof}
This follows by Theorem \ref{theorem:bony_estimates} (note $2\alpha +4>0$),  Theorem \ref{theorem:inclusion_of_H_into_C} (also using $\|h_\delta\|_{H_\fn^{\alpha+1}}\lesssim \|h\|_{H_\fn^{\alpha+1}} \lesssim \|h \|_{L^2}$; see also \ref{obs:H_0_equals_L_2}) and Theorem \ref{theorem:fourier_mollifiers_converge_in_B_22_space}: 
\begin{align*}
& \| h_\delta \reso \sigma(\rD) h_\delta - h \reso \sigma(\rD) h\|_{\cC_\fn^{2\alpha+2}} \\
&
 \qquad \le 
\| (h-h_\delta) \reso \sigma(\rD) h_\delta \|_{H_\fn^{2\alpha+4}} 
+ \| h \reso \sigma(\rD) ( h_\delta - h) \|_{H_\fn^{2\alpha+4}} 
\\ 
& \qquad \lesssim 
\| h-h_\delta \|_{H_\fn^{\alpha +1}} \|h \|_{H_\fn^{\alpha+1}}
\lesssim \delta^{- \alpha -1} \|h\|_{L^2}^2. 
\end{align*}
\end{proof}

\begin{proof}[Proof of Theorem \ref{theorem:ldp_white_noise_enhancement_dir}]
For $\delta>0$ we write $h_\delta = \tau(\delta \rD) h$ for $\tau$ as in \ref{obs:overview_white_noise_section}
 and define $F_\delta : \cC_\fn^\alpha(Q_L) \rightarrow \fX_\fn^\alpha(Q_L)$ by 
\begin{align*}
F_\delta(h) = (h, h_\delta \reso \sigma(\rD) h_\delta).
\end{align*}
We define $F : \cC_\fn^\alpha(Q_L) \rightarrow \fX_\fn^\alpha(Q_L)$ as follows. 
If for $h\in \cC^\alpha_\fn(Q_L)$ the function $h_\delta \reso \sigma(\rD) h_\delta$ converges in $\cC_\fn^{2\alpha+2}$, then $F(h) = \lim_{\delta \downarrow 0} (h,h_\delta \reso \sigma(\rD) h_\delta)$;
if $h_\delta \reso \sigma(\rD) h_\delta$ does not converge, but $h_\delta \reso \sigma(\rD) h_\delta - c_\delta$ does (where $c_\delta=\tfrac{1}{2\pi} \log(\tfrac{1}{\delta}) + c_\tau$), then  define $F(h)= \lim_{\delta \downarrow 0} (h,h_\delta \reso \sigma(\rD) h_\delta- c_\delta)$; whereas if $h_\delta \reso \sigma(\rD) h_\delta - c_\delta$ also does not converge, then $F(h) = 0$. 

With $\cX = \cC_\fn^\alpha(Q_L)$ and $\cY = \fX_\fn^\alpha(Q_L)$ and $\eta_\epsilon = \sqrt{\epsilon}\xi$, by Theorem \ref{theorem:ldp_white_noise_dir} and Theorem \ref{theorem:extended_contraction} it is sufficient to prove that \eqref{eqn:f_epsilon_close_to_f_on_sublevel_sets} and \eqref{eqn:exponential_good_approximations} hold because when  $F(\phi) = (\psi_1,\psi_2) \ne 0$ then $\phi = \psi_1$. 

$\bullet$ First we check \eqref{eqn:f_epsilon_close_to_f_on_sublevel_sets}. 
By Lemma \ref{lemma:uniform_bound_mollifier} we have ($F(h) = (h,h\reso \sigma(\rD) h)$ and) 
\begin{align*}
 \sup_{h\in \cC_\fn^\alpha (Q_L) : \|h\|_{L^2}\le q} \|F_\delta(h) - F(h)\|_{\fX_\fn^\alpha} \lesssim \delta^{-\alpha-1} q^2, 
\end{align*}
for all $q \ge 0$, i.e., \eqref{eqn:f_epsilon_close_to_f_on_sublevel_sets} holds. 

$\bullet$ Now we check \eqref{eqn:exponential_good_approximations}. 
Let $\kappa>0$. 
We have that $\Xi:= \lim_{\delta\downarrow 0 } \xi_\delta \reso \sigma(\rD) \xi_\delta -c_\delta$ exists almost surely  by Theorem \ref{theorem:convergence_in_enhanced_space_to_white_noise}.
Hence, for $p>1$
\begin{align*}
\P\left( \| F_\delta(\sqrt{\epsilon}\xi) - F(\sqrt{\epsilon}\xi)\|_{\fX_\fn^\alpha} >\kappa \right)
& \le \frac{\epsilon^{p}}{\kappa^p} \E \left[ 
\| \xi_\delta \reso \sigma(\rD) \xi_\delta - \Xi  \|_{\cC_\fn^{2\alpha+2}}^p \right] \\
& \le \frac{\epsilon^{p} 2^{p} }{\kappa^p} (c_\delta^p + \E \left[ 
 \| \xi_\delta \reso \sigma(\rD) \xi_\delta -c_\delta  - \Xi \|_{\cC_\fn^{2\alpha+2}}^p \right]) 
\end{align*}
Let $\eta = - (2\alpha +2) $. 
By Lemmas \ref{lemma:gaussian_besov_bound},  \ref{lemma:expectation_reso_xi_eps_C_0_plus_reso_constant},  \ref{lemma:renormalisation_constant} and  \ref{lemma:bound_on_expect_mod_Delta_i_difference_Xi_in_x_squared} 
 there exists a $C>0$ such that for all $p>1$ 
\begin{align*}
\E \left[ 
 \| \xi_\delta \reso \sigma(\rD) \xi_\delta -c_\delta  - \Xi \|_{\cC_\fn^{2\alpha+2}}^p \right]
 \le C^p p^p \delta^{\eta p}.
\end{align*}
Therefore (using that $a^p + b^p \le (a+b)^p$) 
\begin{align*}
\P\left( \| F_\delta(\sqrt{\epsilon}\xi) - F(\sqrt{\epsilon}\xi)\|_{\fX_\fn^\alpha} >\kappa \right) \le \left[ \frac{2\epsilon}{\kappa} (c_\delta + Cp \delta^\eta ) \right]^p
\end{align*}
Hence with $p= \frac{1}{\epsilon}$ we obtain
\begin{align*}
\limsup_{\epsilon\downarrow 0} \epsilon \log \P\left( \| F_\delta(\sqrt{\epsilon}\xi) - F(\sqrt{\epsilon}\xi)\|_{\fX_\fn^\alpha} >\kappa \right)
& \le 
\limsup_{\epsilon\downarrow 0}  \log \left[ \tfrac{2}{\kappa} ( \epsilon c_\delta + C\delta^\eta ) \right] \\
& \le  \log (\tfrac{2C}{\kappa} \delta^\eta ).
\end{align*}
So that 
\begin{align*}
\lim_{\delta \downarrow 0} \limsup_{\epsilon\downarrow 0} \epsilon \log \P\left( \| F_\delta(\sqrt{\epsilon}\xi) - F(\sqrt{\epsilon}\xi)\|_{\fX_\fn^\alpha} >\kappa \right) = - \infty, 
\end{align*}
i.e., \eqref{eqn:exponential_good_approximations} holds. 
\end{proof}

\section{Infima over the large deviation rate function}
\label{section:ldp_results}

In this section we consider infima over sets of the rate function $I_{L,n}$ as in  \eqref{eqn:rate_function_n_th_eigenvalue}. 
We prove the results summarized in Theorem \ref{theorem:convergence_infima_rate_function}.

\begin{lemma}
\label{lemma:infima_rate_function_translated_by_constant}
For $a,b\in \R$ and all $\delta>0$ 
\begin{align*}
(1 - \delta) \inf I_{L,n} [b,\infty)  + \tfrac12 (1- \tfrac{1}{\delta})  L^2 a^2  
 &\le 
 \inf I_{L,n} [b+a,\infty) \\
&  \le (1+ \delta) \inf I_{L,n} [b,\infty)  + \tfrac12 (1+ \tfrac{1}{\delta})  L^2 a^2. 
\end{align*}
Consequently, 
for 
$(a_L)_{L>0}$ in $\R$ with $\lim_{L\rightarrow \infty} L a_L =0$, 
\begin{align*}
\lim_{L\rightarrow \infty} \inf I_{L,n} [b, \infty) 
&= \lim_{L\rightarrow \infty} \inf I_{L,n} [b+a_L, \infty) \\
&= \lim_{L\rightarrow \infty} \inf I_{L,n} (b+a_L, \infty) 
= \lim_{L\rightarrow \infty} \inf I_{L,n} (b, \infty) .
\end{align*}
\end{lemma}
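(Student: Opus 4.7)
The plan rests on one observation that makes the whole lemma essentially a book-keeping exercise: shifting the potential by a real constant $a$ simply shifts every eigenvalue by $a$, i.e.\ $\lambda_n(Q_L, V+a)=\lambda_n(Q_L,V)+a$. This follows either from $(\Delta+V+a)\psi=(\lambda+a)\psi$ directly, or from the Courant--Fischer formula \eqref{eqn:min-max_dir} applied with bounded (smooth) potentials and then by the continuity of $\lambda_n$ in the potential (Lemma~\ref{lemma:continuity_of_eigenvalues_on_H_00}). Consequently, if $V$ is admissible for $\inf I_{L,n}[b,\infty)$ (meaning $\lambda_n(Q_L,V)\ge b$), then $V+a$ is admissible for $\inf I_{L,n}[b+a,\infty)$, and vice versa via $W\mapsto W-a$; the only quantity to control is the change in $L^2$ norm.

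For this I will expand
\begin{align*}
\|V+a\|_{L^2(Q_L)}^{2} = \|V\|_{L^2}^{2} + 2a\!\int_{Q_L}\!V + a^{2}L^{2},
\end{align*}
apply Cauchy--Schwarz $\bigl|\int_{Q_L}V\bigr|\le L\|V\|_{L^2}$, and then Young's inequality $2|a|L\|V\|_{L^2}\le \delta\|V\|_{L^2}^{2}+\tfrac{1}{\delta}a^{2}L^{2}$ with $\delta>0$. This yields the two-sided estimate
\begin{align*}
(1-\delta)\|V\|_{L^2}^{2}+\bigl(1-\tfrac{1}{\delta}\bigr)a^{2}L^{2}
\;\le\;\|V+a\|_{L^2}^{2}
\;\le\;(1+\delta)\|V\|_{L^2}^{2}+\bigl(1+\tfrac{1}{\delta}\bigr)a^{2}L^{2}.
\end{align*}
Dividing by $2$, inserting it into the two comparisons ($V\rightsquigarrow V+a$ for the upper bound and $W\rightsquigarrow W-a$ for the lower bound) and taking the infimum over admissible $V$, respectively $W$, gives exactly the two displayed inequalities.

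For the corollaries I will apply the inequalities with $a=a_L$ and use that $a_L^{2}L^{2}\to 0$ under the hypothesis $La_L\to 0$. Taking $\limsup_{L\to\infty}$ in the upper bound and $\liminf_{L\to\infty}$ in the lower bound, and then letting $\delta\downarrow 0$, gives
\begin{align*}
\limsup_{L\to\infty}\inf I_{L,n}[b+a_L,\infty)\le\limsup_{L\to\infty}\inf I_{L,n}[b,\infty),
\end{align*}
and the reverse inequality for $\liminf$. Swapping the roles of $b$ and $b+a_L$ (i.e.\ replacing $a_L$ by $-a_L$) gives the opposite inequalities, so $\limsup$ and $\liminf$ coincide for the two sequences; in particular if one limit exists so does the other, and they are equal. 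The closed/open comparison follows by monotonicity of $\inf I_{L,n}$ in its argument: for $a_L>0$ one has the sandwich $\inf I_{L,n}[b,\infty)\le\inf I_{L,n}(b,\infty)\le\inf I_{L,n}[b+a_L,\infty)$, and the same for $(b+a_L,\infty)$ sandwiched between $\inf I_{L,n}[b+a_L,\infty)$ and $\inf I_{L,n}[b+2a_L,\infty)$, so all four limits collapse to a common value.

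The only place where care is required is ensuring that the eigenvalue-shift identity $\lambda_n(Q_L,V+a)=\lambda_n(Q_L,V)+a$ is valid for general $V\in L^{2}(Q_L)$ and not only for smooth $V$; this is the one step that is not purely an $L^{2}$ manipulation, but it is immediate from the construction in \ref{obs:smooth_potentials} together with the continuity statement of Lemma~\ref{lemma:continuity_of_eigenvalues_on_H_00}. Everything else is elementary algebra with the $L^{2}$ norm.
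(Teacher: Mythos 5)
Your proof follows exactly the same route as the paper's: the eigenvalue-shift identity $\lambda_n(Q_L,V+a\1_{Q_L})=\lambda_n(Q_L,V)+a$ converts the constraint, and the expansion $\|V+a\1_{Q_L}\|_{L^2}^2=\|V\|_{L^2}^2+2a\langle V,\1_{Q_L}\rangle+a^2L^2$ together with Cauchy--Schwarz and Young's inequality gives the two-sided bound, after which one takes infima. Your treatment of the "consequently" part, including the sandwich for the open/closed interval comparison, is also correct and merely spells out what the paper leaves implicit.
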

\begin{proof}
As  $\lambda_n( Q_L, V) + a = \lambda_n(Q_L, V+a \1_{Q_L})$, 
 $\| a \1_{Q_L} \|_{L^2} = a L$, and  $2 \langle V, a \1_{Q_L} \rangle \le \delta \|V\|_{L^2}^2 + \frac{1}{\delta} a^2 L^2$ for all $\delta >0$; 
\begin{align*}
 \inf I_{L,n} [b+a,\infty) 
\cand 
\begin{calc}
  = \inf_{ \overset{V\in L^2(Q_L)}{ \lambda_n(Q_L,V-a\1_{Q_L}) \ge b} } \tfrac12 \|V\|_{L^2(Q_L)}^2 
\end{calc}
\cnewline
&  = \inf_{ \overset{V\in L^2(Q_L)}{ \lambda_n(Q_L,V) \ge b} } \tfrac12 \|V + a \1_{Q_L} \|_{L^2(Q_L)}^2  \\
& \le (1+ \delta) \inf_{ \overset{V\in L^2(Q_L)}{ \lambda_n(Q_L,V) \ge b} } \tfrac12  \|V  \|_{L^2(Q_L)}^2  + \tfrac12 (1+ \tfrac{1}{\delta}) a^2 L^2 . 
\end{align*}
The lower bound can be proven similarly. 
\end{proof}

We define 
\begin{align}
\label{eqn:mu_nL}
\mu_{L,n} &  := 
\inf I_{L,n} [1, \infty), \qquad 
\varrho_n  :=\inf_{L>0} \mu_{L,n}. 
\end{align}
We prove that $\varrho_n$ is bounded away from $0$ uniformly in $n$ (Lemma \ref{lemma:infima_rate_function_over_larger_than_q_downarrow_limit}) and give an alternative variational formula for $\varrho_n$ (Lemma \ref{lemma:formula_2_of_rho_n}) from which we conclude Theorem \ref{theorem:convergence_infima_rate_function}. 

\begin{lemma}
\label{lemma:infimum_over_smooth}
$ 
\mu_{L,n} 
= \inf I_{L,n} (1, \infty) 
= \inf_{\overset{V\in C_c^\infty(Q_L)}{ \lambda_n(Q_L,V) \ge 1}} \tfrac12 \|V\|_{L^2}^2. 
$ 
\end{lemma}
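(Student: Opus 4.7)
The plan is to establish the chain
\[
\inf I_{L,n}[1,\infty) \;\le\; \inf I_{L,n}(1,\infty) \;\le\; \inf\Bigl\{\tfrac12\|V\|_{L^2}^2 : V \in C_c^\infty(Q_L),\, \lambda_n(Q_L,V) \ge 1\Bigr\}
\]
and then to close the loop by showing that the rightmost quantity is bounded above by $\inf I_{L,n}[1,\infty)$. The two displayed inequalities are immediate from the inclusions of the admissible sets together with the definition $I_{L,n}(x) = \inf\{\tfrac12\|V\|_{L^2}^2 : \lambda_n(Q_L,V) = x\}$.

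The essential tool for both remaining inequalities is the \emph{shift identity} $\lambda_n(Q_L, V + a\mathbbm{1}_{Q_L}) = \lambda_n(Q_L, V) + a$ for $V \in L^2(Q_L)$ and $a \in \R$. This is classical for bounded $V$ (via \ref{obs:smooth_potentials}, since adding $a$ to the potential shifts the operator $\sH = \Delta + V$ by a scalar), and extends to arbitrary $V \in L^2(Q_L)$ by approximating in $L^2$ through bounded potentials and invoking the continuity of $\eta \mapsto \lambda_n(Q_L,\eta)$ on $L^2(Q_L)$ from Lemma~\ref{lemma:continuity_of_eigenvalues_on_H_00}. (This shift identity is already implicit in the proof of Lemma~\ref{lemma:infima_rate_function_translated_by_constant}.) Combined with the trivial fact that $\|V + a\mathbbm{1}\|_{L^2}^2 \to \|V\|_{L^2}^2$ as $a \downarrow 0$, it immediately gives $\inf I_{L,n}(1,\infty) \le \inf I_{L,n}[1,\infty)$: any $V$ with $\lambda_n(V) \ge 1$ yields shifted potentials $V + a\mathbbm{1}$ with $\lambda_n(V + a\mathbbm{1}) \ge 1 + a > 1$ whose $L^2$-norms approximate $\|V\|_{L^2}$ arbitrarily well.

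For the remaining inequality, fix admissible $V \in L^2(Q_L)$ (meaning $\lambda_n(Q_L,V) \ge 1$) and $\varepsilon > 0$. First shift: choose $a > 0$ small enough that $V' := V + a\mathbbm{1}_{Q_L}$ satisfies $\tfrac12\|V'\|_{L^2}^2 \le \tfrac12\|V\|_{L^2}^2 + \varepsilon$, while $\lambda_n(Q_L,V') \ge 1+a$. Then approximate: by the density of $C_c^\infty(Q_L)$ in $L^2(Q_L)$ and Lemma~\ref{lemma:continuity_of_eigenvalues_on_H_00}, select $W \in C_c^\infty(Q_L)$ with $\|W - V'\|_{L^2}$ so small that both $|\lambda_n(Q_L,W) - \lambda_n(Q_L,V')| < a/2$ and $\tfrac12\|W\|_{L^2}^2 \le \tfrac12\|V'\|_{L^2}^2 + \varepsilon \le \tfrac12\|V\|_{L^2}^2 + 2\varepsilon$. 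Then $\lambda_n(Q_L,W) \ge 1 + a/2 \ge 1$, so $W$ is admissible in the smooth infimum, and taking $\varepsilon \downarrow 0$ followed by the infimum over $V$ yields the desired bound.

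The argument is essentially routine once the shift identity and the $L^2$-continuity of $\lambda_n$ are granted; the only mild obstacle is justifying the shift identity in the $L^2$ regime, where the operator is constructed through its enhancement. This is however painless, either by treating $a\mathbbm{1}_{Q_L}$ as a bounded Kato--Rellich perturbation as in~\ref{obs:smooth_potentials}, or by approximation of $V$ by smooth potentials and passage to the limit via Lemma~\ref{lemma:continuity_of_eigenvalues_on_H_00}.
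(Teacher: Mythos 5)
Your argument is essentially the paper's own: the first equality is obtained from the shift identity $\lambda_n(Q_L,V + a\1_{Q_L}) = \lambda_n(Q_L,V) + a$, which is exactly what Lemma~\ref{lemma:infima_rate_function_translated_by_constant} encodes, and the second equality from the $L^2$-continuity of $V\mapsto\lambda_n(Q_L,V)$ (Lemma~\ref{lemma:continuity_of_eigenvalues_on_H_00}) together with density of $C_c^\infty(Q_L)$ in $L^2(Q_L)$.

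One logical slip in the write-up, though: you assert that the chain $\inf I_{L,n}[1,\infty) \le \inf I_{L,n}(1,\infty) \le \inf\{\tfrac12\|V\|_{L^2}^2 : V\in C_c^\infty(Q_L),\ \lambda_n(Q_L,V)\ge1\}$ has both inequalities ``immediate from the inclusions of the admissible sets.'' The first is; the second is not. The admissible set for the rightmost infimum asks $\lambda_n(Q_L,V)\ge 1$ while $\inf I_{L,n}(1,\infty)$ restricts to the \emph{strict} $\lambda_n(Q_L,V)>1$, so a smooth $V$ with $\lambda_n(Q_L,V)=1$ exactly lies in the smooth admissible set but not the $(1,\infty)$-admissible set; the sets are not nested. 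What \emph{does} follow from inclusion is $\mu_{L,n} = \inf I_{L,n}[1,\infty) \le (\text{smooth inf})$, since $C_c^\infty\subset L^2$ and both infima impose $\lambda_n \ge 1$. With that replacement, your argument is complete: you separately prove $\inf I_{L,n}(1,\infty)\le\mu_{L,n}$ (shift), $\mu_{L,n}\le\inf I_{L,n}(1,\infty)$ (inclusion), $\mu_{L,n}\le(\text{smooth inf})$ (inclusion, the corrected step), and $(\text{smooth inf})\le\mu_{L,n}$ (shift plus density plus continuity), so all three quantities agree. This is a misattribution rather than a missing idea, but as literally written the middle link of your stated chain is unsupported.
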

\begin{proof}
The first equality follows by Lemma \ref{lemma:infima_rate_function_translated_by_constant}. 
The second  follows by 
Lemma \ref{lemma:continuity_of_eigenvalues_on_H_00}. 
\end{proof}

We will use Ladyzhenskaya's inequality \cite{La58}, which is a special case of the Gagliardo--~Nirenberg interpolation inequality \cite{Ni59}. 

\begin{lemma}[Ladyzhenskaya's inequality]
\label{lemma:ladyzhenskaya}
There exists a $C>0$ such that for $f\in H^1(\R^2)$, 
\begin{align}
\label{eqn:name_it_later}
\|f\|_{L^4}^4 \le C \|\nabla f\|_{L^2}^2 \|f\|_{L^2}^2. 
\end{align}
\end{lemma}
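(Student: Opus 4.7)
The plan is to give the classical direct proof via the fundamental theorem of calculus in each coordinate, combined with Cauchy--Schwarz. By density, it suffices to prove \eqref{eqn:name_it_later} for $f\in C_c^\infty(\R^2)$ with a constant $C$ independent of $f$.

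First, for fixed $x_2\in\R$, I would write
\begin{align*}
f(x_1,x_2)^2 = 2\int_{-\infty}^{x_1} f(t,x_2)\,\partial_{x_1}f(t,x_2)\,\mathrm d t,
\end{align*}
which by Cauchy--Schwarz gives $f(x_1,x_2)^2 \le 2\,\|f(\cdot,x_2)\|_{L^2(\R)}\,\|\partial_{x_1}f(\cdot,x_2)\|_{L^2(\R)}$; note the right-hand side does not depend on $x_1$. By the symmetric argument in the second variable, $f(x_1,x_2)^2 \le 2\,\|f(x_1,\cdot)\|_{L^2(\R)}\,\|\partial_{x_2}f(x_1,\cdot)\|_{L^2(\R)}$, with the right-hand side independent of $x_2$.

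Multiplying these two bounds and integrating over $\R^2$, the integrand factorizes into a product of a function depending only on $x_1$ and a function depending only on $x_2$, so Fubini yields
\begin{align*}
\|f\|_{L^4}^4 &\le 4\Bigl(\int_{\R} \|f(\cdot,x_2)\|_{L^2}\,\|\partial_{x_1}f(\cdot,x_2)\|_{L^2}\,\mathrm d x_2\Bigr)\Bigl(\int_{\R} \|f(x_1,\cdot)\|_{L^2}\,\|\partial_{x_2}f(x_1,\cdot)\|_{L^2}\,\mathrm d x_1\Bigr).
\end{align*}
A further application of Cauchy--Schwarz on each one-dimensional integral bounds each factor by $\|f\|_{L^2(\R^2)}\,\|\partial_{x_i}f\|_{L^2(\R^2)}$, giving
\begin{align*}
\|f\|_{L^4}^4 \le 4\,\|f\|_{L^2}^2\,\|\partial_{x_1}f\|_{L^2}\,\|\partial_{x_2}f\|_{L^2}.
\end{align*}
Finally, AM--GM yields $\|\partial_{x_1}f\|_{L^2}\,\|\partial_{x_2}f\|_{L^2}\le \tfrac12\|\nabla f\|_{L^2}^2$, producing \eqref{eqn:name_it_later} with $C=2$. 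Extending from $C_c^\infty$ to $H^1(\R^2)$ follows from density and Fatou's lemma. There is no real obstacle here; the only point to watch is making sure the factorization of the integrand over $\R^2$ is justified before applying Fubini, which is the reason for keeping one bound in $x_1$ and the other in $x_2$.
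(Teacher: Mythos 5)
The paper does not actually prove this lemma; it merely cites Ladyzhenskaya \cite{La58} and notes the inequality is a special case of the Gagliardo--Nirenberg interpolation inequality \cite{Ni59}. Your proof supplies a full, self-contained, elementary argument, and it is correct: the one-dimensional fundamental-theorem-of-calculus identity $f(x_1,x_2)^2 = 2\int_{-\infty}^{x_1} f\,\partial_{x_1}f\,\mathrm d t$ with Cauchy--Schwarz gives an $x_1$-independent bound, the symmetric bound in $x_2$ is $x_2$-independent, multiplying and applying Fubini to the factorized integrand is valid, the second layer of Cauchy--Schwarz and AM--GM yield $C=2$, and density plus Fatou extends the inequality from $C_c^\infty(\R^2)$ to $H^1(\R^2)$. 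This is the classical direct proof and gives an explicit constant, whereas the paper simply defers to the literature; both are legitimate, with the advantage of your argument being that it makes the paper self-contained at the cost of a short extra paragraph.
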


\begin{lemma}
\label{lemma:infima_rate_function_over_larger_than_q_downarrow_limit}
Let $C>0$ be as in Lemma~\ref{lemma:ladyzhenskaya}. Then $\varrho_n \ge \frac{2}{C}$ for all $n\in\N$. 
\end{lemma}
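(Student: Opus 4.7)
The plan is to reduce to the principal eigenvalue and then combine the variational formula with Ladyzhenskaya's inequality. First I observe that $\lambda_n(Q_L,V)\le \lambda_1(Q_L,V)=\lambda(Q_L,V)$ always, so the condition $\lambda_n(Q_L,V)\ge 1$ is \emph{stronger} than $\lambda(Q_L,V)\ge 1$; taking the infimum of $\tfrac12\|V\|_{L^2}^2$ over the smaller set yields a larger number, hence $\mu_{L,n}\ge \mu_{L,1}$ and therefore $\varrho_n\ge\varrho_1$ for every $n\in\N$. Thus it suffices to prove $\varrho_1\ge \tfrac{2}{C}$.

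By Lemma~\ref{lemma:infimum_over_smooth} I may restrict attention to $V\in C_c^\infty(Q_L)$ with $\lambda(Q_L,V)\ge 1$. By the Courant--Fischer formula in \ref{obs:smooth_potentials} applied with $n=1$, for each $\epsilon>0$ there exists $\psi\in C_c^\infty(Q_L)$ with $\|\psi\|_{L^2}=1$ such that
\begin{align*}
1-\epsilon \;\le\; -\|\nabla\psi\|_{L^2}^2 + \int_{Q_L} V\psi^2.
\end{align*}
I then bound the potential term by Cauchy--Schwarz and Ladyzhenskaya's inequality (Lemma~\ref{lemma:ladyzhenskaya}):
\begin{align*}
\int_{Q_L} V\psi^2 \;\le\; \|V\|_{L^2}\,\|\psi\|_{L^4}^2 \;\le\; \sqrt{C}\,\|V\|_{L^2}\,\|\nabla\psi\|_{L^2}\,\|\psi\|_{L^2} \;=\; \sqrt{C}\,\|V\|_{L^2}\,\|\nabla\psi\|_{L^2},
\end{align*}
where I extend $\psi$ by zero to $\R^2$ so that Ladyzhenskaya applies. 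Writing $t=\|\nabla\psi\|_{L^2}\ge 0$, the inequality becomes
\begin{align*}
1-\epsilon \;\le\; -t^2 + \sqrt{C}\,\|V\|_{L^2}\,t,
\end{align*}
and maximising the right-hand side in $t$ (choose $t=\tfrac{\sqrt C}{2}\|V\|_{L^2}$) gives
\begin{align*}
1-\epsilon \;\le\; \tfrac{C}{4}\,\|V\|_{L^2}^2,
\qquad \text{i.e.,} \qquad
\tfrac12\|V\|_{L^2}^2 \;\ge\; \tfrac{2(1-\epsilon)}{C}.
\end{align*}
Letting $\epsilon\downarrow 0$ and taking the infimum over admissible $V$ yields $\mu_{L,1}\ge \tfrac{2}{C}$ uniformly in $L>0$, hence $\varrho_1\ge \tfrac{2}{C}$ and the claim follows.

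There is no real obstacle; the only point to verify carefully is that one is entitled to apply Ladyzhenskaya on $\R^2$ to the compactly supported test function $\psi\in C_c^\infty(Q_L)$, which is immediate by zero extension. The argument is sharp in the sense that it will, together with the converse bound, yield exactly $\tfrac{2}{\varrho_1}=\chi$ in Theorem~\ref{theorem:convergence_infima_rate_function}.
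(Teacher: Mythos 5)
Your proof is correct and follows essentially the same route as the paper: pick an approximate maximizer $\psi$ from the Courant--Fischer formula, bound $\int V\psi^2$ by Cauchy--Schwarz, invoke Ladyzhenskaya, and optimize. The only cosmetic difference is that you first reduce to $n=1$ via $\lambda_n\le\lambda_1$ (a clean observation the paper handles implicitly by just extracting one $\psi$ from the $n$-dimensional test subspace), and you organize the optimization as a quadratic in $t=\|\nabla\psi\|_{L^2}$ rather than the paper's AM--GM step after dividing by $\|\psi\|_{L^4}^2$; the two are algebraically equivalent.
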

\begin{proof}
Let $n\in\N$. 
Let $L>0$ and $\epsilon>0$. 
Let $V\in C_c^\infty(Q_L)$ 
 be such that $\lambda_n(Q_L,  V)\ge 1$ and $\frac12 \|V\|_{L^2}^2 \le \mu_{L,n} + \epsilon$. 
By \eqref{eqn:min_max_for_smooth} there is a $\psi \in C_c^\infty(Q_L)$ with $\|\psi\|_{L^2} =1$ such that (by integration by parts)
\begin{align*}
1 - \epsilon  \le - \|\nabla \psi\|_{L^2}^2 + \int V\psi^2 
\le 
- \|\nabla \psi\|_{L^2}^2 + \|V\|_{L^2} \|\psi\|_{L^4}^2.
\end{align*}
Hence by using Ladyzhenskaya's inequality \eqref{eqn:name_it_later}, which implies $\| \nabla \psi\|_{L^2}^2 \ge \frac{1}{C} \|\psi\|_{L^4}^4$, 
\begin{align*}
\|V\|_{L^2}
\ge 
\frac{1-\epsilon +\|\nabla \psi\|_{L^2}^2}{ \|\psi\|_{L^4}^2} 
\ge 
\frac{1-\epsilon }{ \|\psi\|_{L^4}^2} 
 +\frac{1}{C} \| \psi\|_{L^4}^2
\end{align*}
As $a^2 + b^2 \ge 2 a b$ we have 
\begin{calc}
\begin{align*}
\|V\|_{L^2} \ge 2 \sqrt{\frac{1-\epsilon}{C}}, 
\end{align*}
and thus 
\end{calc}
$ 
\mu_{L,n} + \epsilon \ge \tfrac12 \|V\|_{L^2}^2  \ge 2 \frac{1-\epsilon}{C}.
$ 
As this holds for all $\epsilon>0$ we conclude that $\mu_{L,n} \ge \frac{2}{C}$ for all $L>0$. Hence $\varrho_n \ge \frac{2}{C}$. 
\end{proof}

\begin{lemma}
\label{lemma:formula_2_of_rho_n}
For all $n\in\N$, $a>0$,   
\begin{align}
\label{eqn:formula_2_of_rho_n}
 \inf_{L>0} \inf_{ \overset{V\in C_c^\infty(Q_L)}{ \lambda_n(Q_L,V) \ge a} } \tfrac12 \|V\|_{L^2(Q_L)}^2
 =
 \inf_{L>0} \inf_{ \overset{ V\in C_c^\infty(Q_L)}{ \|V\|_{L^2}^2 \le  \frac{1}{a} } } \tfrac{1}{2 \lambda_n(Q_L, V) }. 
\end{align}
Moreover, $\mu_{L,n}$ is decreasing in $L$, and one could replace  ``$\inf_{L>0}$'' in \eqref{eqn:formula_2_of_rho_n} by ``$\lim_{L\rightarrow \infty}$''. In particular, $\varrho_n = \lim_{L\rightarrow \infty} \mu_{L,n}$.
\end{lemma}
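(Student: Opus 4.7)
My plan is to split the proof into three essentially independent steps, all of which are bookkeeping around the scaling identity in Lemma \ref{lemma:scaling_smooth_potentials_dir} and the monotonicity in Lemma \ref{lemma:monotonicity_of_eigenvalues_in_radius}; none of the steps should be difficult, and most of the work is a careful change of variables.

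First I would establish the identity \eqref{eqn:formula_2_of_rho_n} by a direct scaling argument. Given $V\in C_c^\infty(Q_L)$ and $\beta>0$, set $W(x)=\beta^2 V(\beta x)$. Since the support of $V$ lies in $Q_L^\circ$, the support of $W$ lies in $Q_{L/\beta}^\circ$, so $W\in C_c^\infty(Q_{L/\beta})$. Lemma \ref{lemma:scaling_smooth_potentials_dir} gives $\lambda_n(Q_{L/\beta},W)=\beta^2\lambda_n(Q_L,V)$, and (using $d=2$) a change of variables yields $\|W\|_{L^2}^2=\beta^2\|V\|_{L^2}^2$. For the inequality ``LHS$\,\ge\,$RHS'' of \eqref{eqn:formula_2_of_rho_n}, starting from a near-optimal $V$ with $\lambda_n(Q_L,V)\ge a$ and $\tfrac12\|V\|_{L^2}^2=\mu$, choose $\beta^2=1/(2\mu a)$ so that $\|W\|_{L^2}^2=1/a$; then $\tfrac{1}{2\lambda_n(Q_{L/\beta},W)}=\tfrac{1}{2\beta^2\lambda_n(Q_L,V)}\le \mu$. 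The reverse inequality is symmetric: starting from $V$ with $\|V\|_{L^2}^2\le 1/a$ and $\tfrac{1}{2\lambda_n(Q_L,V)}=\mu$, choose $\beta^2=2\mu a$, so that $\lambda_n(Q_{L/\beta},W)=\beta^2/(2\mu)=a$ and $\tfrac12\|W\|_{L^2}^2\le \beta^2/(2a)=\mu$.

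Second I would prove that $L\mapsto \mu_{L,n}$ is decreasing. Let $L_1<L_2$. By Lemma \ref{lemma:infimum_over_smooth}, $\mu_{L_1,n}$ is an infimum over $V\in C_c^\infty(Q_{L_1})$ with $\lambda_n(Q_{L_1},V)\ge 1$. Because ``$C_c^\infty$'' in this paper means compact support in the interior, any such $V$ extends by zero to $\widetilde V\in C_c^\infty(Q_{L_2})$ with $\|\widetilde V\|_{L^2(Q_{L_2})}^2=\|V\|_{L^2(Q_{L_1})}^2$, and $\widetilde V|_{Q_{L_1}}=V$. By Lemma \ref{lemma:monotonicity_of_eigenvalues_in_radius} (with $Q_{L_1}\subset Q_{L_2}$ in the role of $y+Q_r\subset Q_L$), we have $\lambda_n(Q_{L_2},\widetilde V)\ge \lambda_n(Q_{L_1},V)\ge 1$. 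Taking the infimum over such $V$ yields $\mu_{L_2,n}\le \mu_{L_1,n}$.

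Finally, combining the first two steps gives the replacement of $\inf_{L>0}$ by $\lim_{L\to\infty}$. At $a=1$ the left-hand side of \eqref{eqn:formula_2_of_rho_n} is exactly $\inf_{L>0}\mu_{L,n}=\varrho_n$, which by monotonicity coincides with $\lim_{L\to\infty}\mu_{L,n}$. For general $a>0$, the scaling map $V\mapsto W$ above is a bijection between admissible potentials on $Q_L$ (with $\lambda_n\ge a$) and those on $Q_{L/\beta}$ (with $\lambda_n\ge 1$) for $\beta=1/\sqrt{a}$, so the general case reduces to $a=1$ and equals $a\,\varrho_n$; the same monotone-in-$L$ reduction applies to the right-hand side of \eqref{eqn:formula_2_of_rho_n}. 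The only place where one has to be a little attentive is checking that the zero-extension and the dilation $V\mapsto \beta^2 V(\beta\cdot)$ both stay inside $C_c^\infty$ of the relevant box, which is immediate from the interior-support convention. I do not anticipate any genuine obstacle in carrying this out.
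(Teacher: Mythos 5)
Your proof is correct and takes a genuinely different route from the paper's. The paper first normalizes to the unit box via $W=L^2 V(L\cdot)$ (so both sides of \eqref{eqn:formula_2_of_rho_n} become double infima over $L>0$ and $W\in C_c^\infty(Q_1)$), and then invokes the abstract duality in Lemma \ref{lemma:infimum_duality}, whose proof shows that the right-hand side is really $1/\big(\sup_L\sup_{W}\lambda_n(Q_1,W)/L^2\big)$; this formulation sidesteps the fact that $\lambda_n(Q_L,V)$ can be nonpositive (e.g.\ at $V=0$), so the naive infimum of $\tfrac{1}{2\lambda_n}$ would be $-\infty$. You instead keep both the box-size and the dilation as free parameters and exchange the two constraints by choosing $\beta$ explicitly as a function of the given $V$; this is more hands-on and avoids the abstract lemma entirely. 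Your first direction (LHS $\ge$ RHS) is clean because any admissible $V$ on the left has $\lambda_n\ge a>0$, forcing $\|V\|_{L^2}>0$. Your second direction implicitly restricts to $V$ with $\lambda_n(Q_L,V)>0$ (otherwise $\mu<0$ and $\beta^2=2\mu a<0$ is meaningless); that is the correct reading of the right-hand side of \eqref{eqn:formula_2_of_rho_n} — it is the reciprocal of the supremum, exactly as the paper's Lemma \ref{lemma:infimum_duality} spells out — but it would be worth one sentence to make that convention explicit rather than tacit. The monotonicity argument (zero-extension plus Lemma \ref{lemma:monotonicity_of_eigenvalues_in_radius}) is precisely what the paper uses, and the passage from $\inf_{L>0}$ to $\lim_{L\to\infty}$ then follows as you say. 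The only thing the paper's route buys is that the abstract Lemma \ref{lemma:infimum_duality} packages the positivity and reciprocal bookkeeping once and for all; your route buys a self-contained, concrete computation that does not require introducing a separate lemma.
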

\begin{proof}
With $W= L^2 V(L \cdot)$ we have $W\in C_c^\infty(Q_1)$, $\|W\|_{L^2(Q_1)}^2 = L^2 \|V\|_{L^2(Q_L)}^2$ and by Theorem \ref{lemma:scaling_smooth_potentials_dir}
$\lambda_n(Q_L,V)  
= \lambda_n(Q_L, \tfrac{1}{L^2} W(\tfrac{1}{L} \cdot) ) 
= \tfrac{1}{L^2} \lambda_n(Q_1,  W )$.
Therefore 
\begin{align}
\inf_{ \overset{V\in C_c^\infty(Q_L)}{ \lambda_n(Q_L,V) \ge a} } \tfrac12 \|V\|_{L^2(Q_L)}^2
& = \inf_{ \overset{W\in C_c^\infty(Q_1)}{ \lambda_n(Q_1,W) \ge a L^2 } } \tfrac12 \tfrac{1}{L^2} \|W\|_{L^2(Q_1)}^2, \\
\label{eqn:inf_over_one_over_lambda}
\inf_{ \overset{ V\in C_c^\infty(Q_L)}{ \|V\|_{L^2}^2 \le  \frac{1}{a} } } \tfrac{1}{2 \lambda_n(Q_L, V) }
&=\inf_{ \overset{ W\in C_c^\infty(Q_1)}{ \|W\|_{L^2}^2 \le  \frac{L^2}{a} } } \tfrac{L^2}{2 \lambda_n(Q_1, W) }. 
\end{align}
With this, \eqref{eqn:formula_2_of_rho_n} follows directly from Lemma \ref{lemma:infimum_duality}. That $\mu_{L,n}$ and the left-hand side of \eqref{eqn:inf_over_one_over_lambda} 
are decreasing in $L$ follows from Lemma \ref{lemma:monotonicity_of_eigenvalues_in_radius}. 
\end{proof}

\begin{lemma}
\label{lemma:infimum_duality}
Let $\cY$ be a topological space and $f,g : \cY \rightarrow \R$ be continuous functions. Let $a >0$ and suppose that 
$\varrho:=\inf_{L>0} \inf_{  w\in \cY : f(w) \ge a L } \frac{g(w)}{L} >0$. 
Then
\begin{align*}
\inf_{L>0} \inf_{ \overset{ w\in \cY}{ f(w) \ge a L} } \frac{g(w)}{L} 
=
\inf_{L>0} \inf_{ \overset{ w\in \cY}{ g(w) \le \frac{L}{a} } }  \frac{L}{  f( w)}.
\end{align*}
\end{lemma}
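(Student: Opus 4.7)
My plan is to swap the two infima and optimise over $L$ in closed form for each fixed $w$. Write $A$ and $B$ for the left-hand and right-hand sides of the claimed equality. For the right-hand side to be a meaningful positive infimum matching $A = \varrho > 0$, one is implicitly restricted to $w$ with $f(w) > 0$; otherwise $L/f(w) \le 0$ for $L > 0$ and $B$ would be non-positive, contradicting $A = B$.

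For the inequality $B \le A$, I would fix any $L > 0$ and $w \in \cY$ with $f(w) \ge aL$. Then $f(w) > 0$, and since $A = \varrho > 0$ one must also have $g(w) > 0$ (otherwise $g(w)/L \le 0$, contradicting $A > 0$). Setting $L' := ag(w) > 0$, the pair $(L', w)$ is feasible for $B$ (as $g(w) = L'/a$), and the constraint $aL \le f(w)$ gives $a/f(w) \le 1/L$, so
\begin{align*}
\frac{L'}{f(w)} = \frac{ag(w)}{f(w)} \le \frac{g(w)}{L}.
\end{align*}
Taking the infimum over all feasible $(L, w)$ yields $B \le A$.

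For the converse $A \le B$, I would first observe that on $\{w : f(w) > 0\}$ one must have $g(w) > 0$: if $f(w) > 0$ and $g(w) \le 0$ then any $L \in (0, f(w)/a]$ is feasible for the infimum defining $A$ with $g(w)/L \le 0$, contradicting $\varrho > 0$. Then fix any $(L, w)$ feasible for $B$, so $f(w) > 0$, $g(w) > 0$ and $ag(w) \le L$. Setting $L' := f(w)/a$, the pair $(L', w)$ is feasible for $A$ (as $f(w) = aL'$), and
\begin{align*}
\frac{g(w)}{L'} = \frac{ag(w)}{f(w)} \le \frac{L}{f(w)},
\end{align*}
the last inequality being $ag(w) \le L$ divided by $f(w) > 0$. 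Hence $A \le L/f(w)$; taking the infimum gives $A \le B$, and combined with the previous direction, $A = B$.

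The whole argument is a Lagrangian-style duality with optimal ``multipliers'' $L' = ag(w)$ in one direction and $L' = f(w)/a$ in the other; both collapse the objective to the common value $ag(w)/f(w)$, so in fact $A = B = \inf\{ ag(w)/f(w) : f(w) > 0\}$. The only delicate point, and the one step that genuinely uses the positivity hypothesis $\varrho > 0$, is excluding $w$ with $g(w) \le 0$ on $\{f(w) > 0\}$ and implicitly restricting $B$ to $\{w : f(w) > 0\}$; continuity of $f$ and $g$ plays no essential role beyond ensuring the infima are well-defined quantities.
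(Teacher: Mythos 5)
Your proof is correct, and it takes a cleaner route than the paper's. The paper works with the reciprocal formulation: from the definition of $\varrho$ it derives the implication $g(w)\le K/a \Rightarrow f(w)/K\le 1/\varrho$ (citing continuity of $f,g$ to pass from strict to non-strict inequalities, though a limiting argument in $K$ alone suffices), which yields $\sup_{K}\sup_{g(w)\le K/a} f(w)/K \le 1/\varrho$; then it exhibits near-optimal $w_L$ for the left-hand side and feeds them into the right-hand side with the substitution $K=La(\varrho+\epsilon)$ to get the reverse inequality. You instead exchange feasible points directly between the two infima via the symmetric substitutions $L'=ag(w)$ and $L'=f(w)/a$, which collapses both sides to $\inf\{ag(w)/f(w): f(w)>0\}$ without ever introducing $\varrho$ into the construction; this is tighter and makes visible that continuity is not actually needed. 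Both proofs must confront the sign issue: on $\{f(w)>0\}$ one has $g(w)>0$ because $\varrho>0$, and the right-hand infimum must be read as restricted to $f(w)>0$ (the paper sidesteps this by stating its conclusion as a supremum of $f(w)/K$ and taking the reciprocal implicitly; you address it head-on). That explicit handling is a genuine improvement in exposition over the paper's argument.
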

\begin{proof}
By definition we have 
$\forall L>0 \ \forall w\in \cY 
: \frac{1}{L} g(w) < \varrho  \Longrightarrow f(w) < a L$, 
by continuity of $f$ and $g$ we obtain (by taking $K= L \varrho a$)
\begin{align*}
\forall K>0 \ \forall w\in \cY :  g(w)  \le \frac{K}{a} \Longrightarrow \frac{ f(w)}{K} \le \frac{1}{\varrho} . 
\end{align*}
Let $\epsilon>0$. 
Then there exists an $L>0$ and $w_L \in \cY $ such that $f(w_L) \ge a L$ and $\frac{1}{L} g(w_L)  \le \varrho+ \epsilon$. 
Then with $K= L a(\varrho+\epsilon)$ we have for $w=w_L$ that 
$\frac{g(w)}{K}   \le \frac{1}{a}$ and $\frac{f(w)}{K} \ge  \frac{1}{\varrho+ \epsilon} $. 
So that 
\bfold 
\sup_{K>0} \sup_{ \overset{ w\in \cY}{ g(w) \le \frac{K}{a} } } \tfrac{f(w)}{K} = \frac{1}{  \varrho }. 
\efold  
\end{proof}

\begin{proof}[Proof of Theorem~\ref{theorem:convergence_infima_rate_function}]
By  \eqref{eqn:min_max_for_smooth} and Lemma \ref{lemma:formula_2_of_rho_n} (for $a=1$) we have 
\begin{align*}
\frac{2}{\varrho_n} 
\cand 
\begin{calc}
=  \sup_{L >0} \sup_{ \overset{ V\in C_c^\infty(Q_L)}{ \|V\|_{L^2}^2 \le 1} } 4 \lambda_n(Q_L, V) 
\end{calc}  \cnewline
& = 4
\sup_{L >0} \sup_{ \overset{ V\in C_c^\infty(Q_L)}{ \|V\|_{L^2}^2 \le 1} } 
\sup_{ \overset{ F \sqsubset C_c^\infty(Q_L) }{ \dim F = n} } 
\inf_{ \overset{ \psi \in F }{ \|\psi \|_{L^2}^2 =1} } 
\int_{Q_L} - | \nabla \psi|^2 + V \psi^2, 
\end{align*}
from which \eqref{eqn:2_div_varrho_n_var_formula} follows. 
By Cauchy-Schwarz, for $\psi \in C_c^\infty(\R^2)$, the supremum  of $\int V \psi^2$ with respect to $V\in C_c^\infty(\R^2)$ with $L^2$ norm equal to $1$ is attained at $V= \frac{\psi^2}{\|\psi^2\|_{L^2}}$; therefore this supremum equals $\|\psi\|_{L^4}^2$ and hence we derive the first equality in \eqref{eqn:2_div_varrho_1_var_formula_and_lady_constant}. 
In Lemma~\ref{lemma:infima_rate_function_over_larger_than_q_downarrow_limit} we have already seen that $\frac{2}{\rho_1}\le \chi$. For the other inequality, we refer to \cite[Theorem C.1]{Ch10} (basically the trick is to replace ``$\psi$'' by ``$\lambda f(\lambda \cdot)$'' and optimise over $\lambda>0$ first, then over $f\in L^2$ with $\|f\|_{L^2}=1$).
\begin{calc}
First, let us check that the $L^2$ norm of the rescaled function equals the $L^2$ norm of the original function. 
\begin{align*}
\|\lambda^{\frac{d}{2}} f(\lambda \cdot) \|_{L^2}^2
= \int \lambda^{d} |f(\lambda x )|^2 \dd x 
= \int |f(y )|^2 \dd y = \|f\|_{L^2}. 
\end{align*}
Therefore
\begin{align*}
\frac{2}{\varrho_1} 
& = 4 \sup_{L >0} \sup_{ \overset{ V\in C_c^\infty(Q_L)}{ \|V\|_{L^2}^2 \le 1} } 
\sup_{ \overset{ \psi \in F }{ \|\psi \|_{L^2}^2 =1} } 
\int_{Q_L} - | \nabla \psi|^2 + V \psi^2, \\
& =
4 \sup_{ \overset{ \psi \in C_c^\infty(\R^2) }{ \|\psi \|_{L^2}^2 =1} } 
\|\psi\|_{L^4}^2 - 
\int_{\R^2} | \nabla \psi|^2 \\
& = 
4 \sup_{ \overset{ f \in C_c^\infty(\R^2) }{ \|\psi \|_{L^2}^2 =1} } \sup_{\lambda>0}
\left( \int_{\R^2} \lambda^{4} |f(\lambda x)|^4 \dd x \right)^{\frac12}
 - 
 \int_{\R^2} \lambda^4 | \nabla f(\lambda x)|^2 \dd x \\
 & = 
4 \sup_{ \overset{ f \in C_c^\infty(\R^2) }{ \|\psi \|_{L^2}^2 =1} } \sup_{\lambda>0}
\lambda \left( \int_{\R^2}  |f(y)|^4 \dd y \right)^{\frac12}
 - 
\lambda^{2} \int_{\R^2}  | \nabla f(y)|^2 \dd y. 
\end{align*}
The concave function $a\lambda - b\lambda^2$ attains its maximum where the derivative equals $0$: At $\lambda = \frac{a}{2b}$. 
Hence the maximum equals $\frac{a^2}{2b} - \frac{a^2}{4b}= \frac{a^2}{4b}$. 

Hence 
\begin{align*}
\frac{2}{\varrho_1} 
 & =  \sup_{ \overset{ f \in C_c^\infty(\R^2) }{ \|\psi \|_{L^2}^2 =1} } 
\frac{\|f\|_{L^4}^4}{\|\nabla f\|_{L^2}^2}. 
\end{align*}

\end{calc} 
\end{proof}

\section{Convergence of Gaussians}
\label{section:proof_white_noise_conv}

In this section we prove the convergence of Gaussians mentioned in Section \ref{section:white_noise} and Section \ref{section:eigenvalues_on_boxes}. 
We bundle the proofs together in a general setting as they rely on similar techniques.

For $r\ge 1$ we let 
$X_{k,r}^\epsilon$ and $Y_{k,r}^\epsilon$ be centered Gaussian variables for $k\in \N_0^d$, $\epsilon>0$ such that every finite subset of $\{Y_{k,r}^\epsilon:  k\in \N_0^d, \epsilon>0\} \cup \{ X_{k,r}^\epsilon : k\in \N_0^d, \epsilon>0\}$ is jointly Gaussian for all $r\ge 1$. 
We write 
\begin{align}
\xi_{r,\epsilon} = \sum_{k\in \N_0^d}  Y_{k,r}^\epsilon \fn_{k,r}, \qquad 
\theta_{r,\epsilon} = \sum_{k\in \N_0^d} X_{k,r}^\epsilon \fn_{k,r}.
\end{align}
Also, we introduce the notation
\begin{align*}
& \rho^\reso :  \R^d  \times  \R^d  \rightarrow \R, 
\qquad 
\rho^\reso(x,y) = 
\sum_{{\substack{ i,j\in \N_{-1} \\ |i-j|\le 1 }}} \rho_i(x) \rho_j(y), \\
& \Theta_{r,\epsilon} = \theta_{r,\epsilon}\reso \sigma(\rD) \theta_{r,\epsilon} - \E[ \theta_{r,\epsilon}\reso \sigma(\rD) \theta_{r,\epsilon}  ] , \\
& \Xi_{r,\epsilon} = \xi_{r,\epsilon}\reso \sigma(\rD) \xi_{r,\epsilon} - \E[ \xi_{r,\epsilon}\reso \sigma(\rD) \xi_{r,\epsilon}  ] .
\end{align*}

\begin{lemma}
\label{lemma:bound_theta_moments}
Let $d=2$. 
Write $F_{r,\epsilon}(k,l) = \E[ X_{k,r}^\epsilon X_{l,r}^\epsilon]$. 
Let $I\subset [1,\infty)$.
Suppose that 
\begin{align}
\notag 
& \forall \delta>0 \  \exists C>0\  \forall r \in I \  \forall k,l \in \N_0^2 \ \forall \epsilon >0  :\\
& 
\label{eqn:bound_F}
\qquad |F_{r,\epsilon}(k,l)| 
 \le  C \prod_{i=1}^d  (1+ |k_i - l_i | )^{\delta-1}.
\end{align}
For all $\gamma\in (0,1)$ there exists a $\fC>0$ such that for all $r \in I$, 
$i \in \N_{-1}$, $\epsilon >0$, $x\in Q_r$
\begin{align}
\label{eqn:second_moments_first_and_second_chaos}
\E[ | \Delta_i \theta_{r,\epsilon} |(x)^2]
 \le \fC r^{2\gamma}
2^{(2+\gamma )i}, 
\qquad 
\E [ |\Delta_i  \Theta_{r,\epsilon} |(x)^2] 
\le \fC r^{2\gamma} 2^{\gamma i }. 
\end{align}
\end{lemma}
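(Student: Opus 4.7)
The plan is to compute the second moments by direct expansion in the Neumann basis $(\fn_{k,r})_{k\in\N_0^2}$ and bound the resulting sums via \eqref{eqn:bound_F}, the sup-norm estimate $\|\fn_{k,r}\|_\infty\le 2/r$ (since $d=2$), and the support properties of the Littlewood--Paley blocks. The two bounds reflect the fact that $\theta_{r,\epsilon}$ lies in the first Wiener chaos while $\Theta_{r,\epsilon}$ lies in the second, so Wick's theorem will play a central role for the latter.

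For the first bound, since $\theta_{r,\epsilon}$ lies in the first Wiener chaos,
\begin{equation*}
\E[|\Delta_i\theta_{r,\epsilon}(x)|^2] = \sum_{k,l\in\N_0^2}\rho_i(\tfrac{k}{r})\rho_i(\tfrac{l}{r})F_{r,\epsilon}(k,l)\fn_{k,r}(x)\fn_{l,r}(x).
\end{equation*}
Applying \eqref{eqn:bound_F} with parameter $\delta$ and the sup-norm bound, and using that $\rho_i(\cdot/r)$ is supported where $|\cdot|_\infty\lesssim 2^ir$, an elementary coordinate-wise estimate $\sum_{|l_j|\lesssim 2^ir}(1+|k_j-l_j|)^{\delta-1}\lesssim (2^ir)^\delta$ controls the inner sum, while $\sum_k \rho_i(k/r)$ contributes the expected factor $(2^ir)^2$. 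This yields $\E[|\Delta_i\theta_{r,\epsilon}(x)|^2]\lesssim r^{-2}(2^ir)^{2+2\delta}=2^{(2+2\delta)i}r^{2\delta}$; choosing $\delta=\gamma/2$ and using $r\ge 1$ proves the first claim.

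For the second bound I would exploit the second-chaos structure of $\Theta_{r,\epsilon}$. First, \eqref{eqn:basis_prod_even_even} expresses $\fn_{k,r}\fn_{l,r}$ as a linear combination of $\fn_{k+\fp\circ l,r}$ over $\fp\in\{-1,1\}^2$, so that the action of $\Delta_i$ on $\Theta_{r,\epsilon}$ produces a factor $\rho_i((k+\fp\circ l)/r)$. Taking the second moment and applying Wick's (Isserlis') theorem reduces $\E[|\Delta_i\Theta_{r,\epsilon}(x)|^2]$ to a quadruple sum in $k,l,k',l'$ weighted by the two pairings $F_{r,\epsilon}(k,k')F_{r,\epsilon}(l,l')+F_{r,\epsilon}(k,l')F_{r,\epsilon}(l,k')$, together with the localisations $\rho^\reso(k/r,l/r)\rho^\reso(k'/r,l'/r)$, the factors $\sigma(l/r)\sigma(l'/r)$, and products of $\nu$'s from \eqref{eqn:basis_prod_even_even}. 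The resonance factor forces $|k|\sim|l|\sim 2^jr$ at a common dyadic scale $j$, and similarly $|k'|\sim|l'|\sim 2^{j'}r$, while the output localisation $\rho_i((k+\fp\circ l)/r)$ forces $j\gtrsim i-O(1)$ (and analogously for $j'$). The decisive gain comes from $\sigma(l/r)\sim 2^{-2j}$ at level $j$; combining this with the polynomial decay \eqref{eqn:bound_F} (used to control the sums in $(k,k')$ and $(l,l')$, or the swapped pairs) and with counting bounds on the constrained lattice points reduces matters to a geometric series over $j,j'\ge i-O(1)$, which closes to $r^{2\gamma}2^{\gamma i}$ once $\delta=\delta(\gamma)$ is chosen small.

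The hard part is the bookkeeping in the second bound: one must simultaneously track the four indices $k,l,k',l'$, the signs $\fp,\fp'\in\{-1,1\}^2$, the resonance constraints at scales $j,j'$, the Littlewood--Paley localisations on $k+\fp\circ l$ and $k'+\fp'\circ l'$, and the polynomial decay encoded in $F_{r,\epsilon}$, while extracting from $\sigma(l/r)\sigma(l'/r)$ \emph{exactly} the smallness needed to replace the first-bound exponent $2^{(2+\gamma)i}$ by $2^{\gamma i}$. A naive estimate loses too many powers of $2^ir$; the careful decoupling between dyadic scales and Wick pairings is what makes the summation converge.
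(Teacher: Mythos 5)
Your treatment of the first inequality is sound: using the hard support of $\rho_i$, the sup-norm bound $\|\fn_{k,r}\|_\infty\lesssim r^{-1}$, and the coordinate-wise estimate $\sum_{|l_j|\lesssim 2^ir}(1+|k_j-l_j|)^{\delta-1}\lesssim (2^ir)^\delta$, you obtain $\E[|\Delta_i\theta_{r,\epsilon}(x)|^2]\lesssim 2^{(2+2\delta)i}r^{2\delta}$, which with $\delta=\gamma/2$ gives $2^{(2+\gamma)i}r^\gamma\le 2^{(2+\gamma)i}r^{2\gamma}$ for $r\ge 1$ --- in fact marginally sharper than the stated bound. This is a modest variant of the paper's argument (Lemma \ref{lemma:expect_theta_eps}), which instead uses the polynomial tail \eqref{eqn:polynomial_bound_on_rho_i} and then Lemma \ref{lemma:bound_sum_gamma_delta} uniformly in the lattice spacing $b=1/r$; the ingredients and the outcome are the same.

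For the second inequality, your plan correctly identifies all the necessary structural inputs --- Wick's theorem, the smallness from $\sigma(l/r)\sim (1+|l/r|^2)^{-1}$, the decay of \eqref{eqn:bound_F}, and the output Littlewood--Paley localisation of $k\pm l$ and $m\pm n$ via \eqref{eqn:basis_prod_even_even}. However, the dyadic-scale decomposition you propose is not actually carried out, and there is a genuine structural snag you do not address: the decay in \eqref{eqn:bound_F} is \emph{anisotropic}, a tensor product $\prod_{i=1}^2(1+|k_i-m_i|)^{\delta-1}$, whereas your annuli $|l|\sim 2^jr$ are isotropic. In the off-diagonal sums $j\neq j'$, saying $|k-m|\sim 2^{\max(j,j')}r$ does not give decay in both factors --- one coordinate of $k-m$ can stay small while the other carries the whole modulus --- so the geometric series in $j,j'$ does not close as naively as your phrase ``the decisive gain comes from $\sigma(l/r)\sim 2^{-2j}$'' suggests. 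To make the off-diagonal terms sum, you would still need a coordinate-wise splitting of all four indices, which is exactly what the paper does in Lemma \ref{lemma:expectation_of_THETA_eps}: there the proof uses \eqref{eqn:Delta_q_f_k_f_l} to extract $2^{2\gamma i}(1+|k-l|/r)^{-\gamma}(1+|m-n|/r)^{-\gamma}$, bounds $\rho^\reso\le 1$, factorises the quadruple sum into a square of a one-dimensional quadruple sum, and then closes it with two applications of \eqref{eqn:sum_bound_with_gamma_and_delta}. Your own caveat that ``the bookkeeping is the hard part'' is exactly right; as written this is a plan, not a proof, and the plan as stated would need the coordinate-wise decoupling anyway, at which point it converges to the paper's argument.
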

\begin{proof}
This follows from Lemma \ref{lemma:expect_theta_eps} and Lemma \ref{lemma:expectation_of_THETA_eps}. 
\end{proof}

Observe that 
\begin{align}
 \label{eqn:about_difference_theta_and_xi}
&  \theta_{r,\epsilon} \reso \sigma(\rD) \theta_{r,\epsilon}
- 
\xi_{r,\epsilon} \reso \sigma(\rD) \xi_{r,\epsilon}  
 = 
\sum_{k,l\in \N_0^2} 
\frac{ \rho^\reso( \frac{k}{r},\frac{l}{r}) }{1+\frac{\pi^2}{r^2} |l|^2} 
\fn_{k,r}  \fn_{l,r} 
[X_{k,r}^\epsilon X_{l,r}^\epsilon - 
Y_{k,r}^\epsilon Y_{l,r}^\epsilon].
 \end{align}

\begin{theorem}
\label{theorem:convergence_enhanced_pair_theta}
Let $d=2$, $I\subset [1,\infty)$. 
We write $\fR = \{ (k,l)  \in \N_0^2 \times \N_0^2 : k_1 \ne l_1, k_2 \ne l_2 \}$. 
Let $G_{r,\epsilon}(k,l) = \E[X_{k,r}^\epsilon X_{l,r}^\epsilon - Y_{k,r}^\epsilon Y_{l,r}^\epsilon] $. 
Consider the following conditions. 
\begin{align}
\label{eqn:convergence_X_k_eps_to_Z_k}
& \forall k \in \N_0^2 \ \forall r \in I; \quad \E[ |X_{k,r}^\epsilon - Y_{k,r}^\epsilon|^2 ] \xrightarrow{\epsilon\downarrow 0} 0, \\
\notag &  \forall r \in I \  \forall \delta >0 \ \exists C>0  \ \exists \epsilon_0 >0 \ \forall \epsilon \in (0,\epsilon_0) \    \forall k,l \in \N_0^2 : \ 
 \\
\label{eqn:bound_G} 
& \quad 
 |G_{r,\epsilon}(k,l)|   
\le C \begin{cases}
\prod_{i=1}^2 \frac{1}{1+ |k_i - \frac{r}{\epsilon}|)^{1-\delta}} +  \frac{1}{1+ |l_i - \frac{r}{\epsilon}|)^{1-\delta}}
& (k,l) \in \fR,  \\
\sum_{i=1}^2 \frac{1}{1+ |k_i - \frac{r}{\epsilon}|)^{1-\delta}} +  \frac{1}{1+ |l_i - \frac{r}{\epsilon}|)^{1-\delta}}
& (k,l) \in \N_0^2 \times \N_0^2 \setminus \fR.
\end{cases}
\end{align}
\begin{enumerate}
\item 
\label{item:convergence_Theta_min_Xi_eps}
Suppose that \eqref{eqn:convergence_X_k_eps_to_Z_k} holds and that \eqref{eqn:bound_F} holds for $F_{r,\epsilon}(k,l)$ being either $\E[X_{k,r}^\epsilon X_{l,r}^\epsilon], \E[X_{k,r}^\epsilon Y_{l,r}^\epsilon]$ or $\E[Y_{k,r}^\epsilon Y_{l,r}^\epsilon]$. Then for $r\in I$, $\alpha<-1$, in $\fX_n^\alpha$ we have 
\begin{align*}
(
\theta_{r,\epsilon} 
 - 
 \xi_{r,\epsilon} 
,
\Theta_{r,\epsilon}
 - 
 \Xi_{r,\epsilon} 
) \xrightarrow{\P} 0. 
\end{align*}
\item 
\label{item:convergence_expectation_difference}
Suppose \eqref{eqn:bound_G} holds. Then $\E[ \theta_{r,\epsilon}\reso \sigma(\rD) \theta_{r,\epsilon}  -  \xi_{r,\epsilon} \reso \sigma(\rD)  \xi_{r,\epsilon} ] \rightarrow 0$ in $\cC_\fn^{-\gamma}$ for all $\gamma>0$ and $r\in I$. 
\end{enumerate}
Consequently, if the above assumptions in \ref{item:convergence_Theta_min_Xi_eps} and \ref{item:convergence_expectation_difference} hold, then with $c=0$, for $r\in I$, $\alpha<-1$, in $\fX_n^\alpha$
\begin{align}
\label{eqn:difference_first_and_second_chaos}
(
\theta_{r,\epsilon} 
 - 
 \xi_{r,\epsilon} 
,
\theta_{r,\epsilon}\reso \sigma(\rD) \theta_{r,\epsilon}
 - 
 \xi_{r,\epsilon} \reso \sigma(\rD)  \xi_{r,\epsilon}
) \xrightarrow{\P} (0,c). 
\end{align}
\end{theorem}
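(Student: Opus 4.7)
The plan is to decompose the theorem into its two main parts and recombine. For part \ref{item:convergence_Theta_min_Xi_eps}, I apply Lemma~\ref{lemma:gaussian_besov_bound}\ref{item:zeta_convergence_hypercontractivity} separately to the first-chaos piece $\zeta_\epsilon^{(1)} := \theta_{r,\epsilon} - \xi_{r,\epsilon}$ and the second-chaos piece $\zeta_\epsilon^{(2)} := \Theta_{r,\epsilon} - \Xi_{r,\epsilon}$, both of which lie in fixed Wiener chaoses so the hypothesis of Lemma~\ref{lemma:p_th_moment_of_Gaussian} is satisfied. For (b), I compute the expectation directly from \eqref{eqn:about_difference_theta_and_xi} and control it in $\cC_\fn^{-\gamma}$ by estimating its Littlewood--Paley blocks. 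Finally, since $\theta_{r,\epsilon}\reso\sigma(\rD)\theta_{r,\epsilon} - \xi_{r,\epsilon}\reso\sigma(\rD)\xi_{r,\epsilon} = (\Theta_{r,\epsilon} - \Xi_{r,\epsilon}) + \E[\theta_{r,\epsilon}\reso\sigma(\rD)\theta_{r,\epsilon} - \xi_{r,\epsilon}\reso\sigma(\rD)\xi_{r,\epsilon}]$, combining (a) and (b) yields \eqref{eqn:difference_first_and_second_chaos} with $c=0$.

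For the uniform moment bounds on $\zeta^{(j)}_\epsilon$ required by Lemma~\ref{lemma:gaussian_besov_bound}, note that the covariance $\E[(X_k - Y_k)(X_l - Y_l)]$ is a linear combination of the three types of expectations appearing in \eqref{eqn:bound_F}, so Lemma~\ref{lemma:bound_theta_moments} (applied to this $F$) gives the first chaos bound, and an analogous application to the second-chaos covariance (which is bilinear in the $F$'s via Wick/Isserlis) gives the second-chaos bound in \eqref{eqn:second_moments_first_and_second_chaos}. The pointwise convergence $\E[|\langle \zeta^{(1)}_\epsilon, \fn_{k,r}\rangle|^2] = \E[(X_{k,r}^\epsilon - Y_{k,r}^\epsilon)^2] \to 0$ is immediate from \eqref{eqn:convergence_X_k_eps_to_Z_k}; for $\zeta^{(2)}_\epsilon$ one expresses $\E[|\langle \zeta^{(2)}_\epsilon, \fn_{m,r}\rangle|^2]$ via Isserlis as a finite sum over pairings involving the covariances of the $X_k - Y_k$'s and combinations with $X_k$ and $Y_k$, and passes to the limit by dominated convergence, with the domination supplied by \eqref{eqn:bound_F} and the summable factors coming from the product formula \eqref{eqn:basis_prod_even_even} and the resolvent $(1+\pi^2|l|^2/r^2)^{-1}$.

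For part \ref{item:convergence_expectation_difference}, using \eqref{eqn:about_difference_theta_and_xi} the expectation is
\begin{align*}
E_{r,\epsilon}(x) := \sum_{k,l\in\N_0^2} \frac{\rho^\reso(k/r,l/r)}{1+\pi^2|l|^2/r^2}\,\fn_{k,r}(x)\fn_{l,r}(x)\, G_{r,\epsilon}(k,l).
\end{align*}
I apply the product formula \eqref{eqn:basis_prod_even_even} to rewrite $\fn_{k,r}\fn_{l,r}$ as a sum of $\fn_{k+\fp\circ l,r}$'s, take a Littlewood--Paley block $\Delta_i$, and use \eqref{eqn:polynomial_bound_on_rho_i} to bound $\rho_i(\cdot/r)$ by $(2b\,2^i/(1+|\cdot|/r))^\gamma$ for any $\gamma>0$. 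Then I split the $(k,l)$-sum according to whether $(k,l)\in\fR$ or not: on $\fR$ the product bound in \eqref{eqn:bound_G} together with the product structure of \eqref{eqn:basis_prod_even_even} allows two-factor summability, while off $\fR$ (so $k_i=l_i$ for some $i$), the product $\fn_{k,r}\fn_{l,r}$ degenerates and a separate one-dimensional estimate suffices with the weaker sum bound. In both regimes the factors $(1+|k_i - r/\epsilon|)^{-(1-\delta)}$ force the supremum over $x$ of $2^{-\gamma i}|\Delta_i E_{r,\epsilon}|$ to tend to $0$ as $\epsilon\downarrow 0$ by dominated convergence, proving $E_{r,\epsilon}\to 0$ in $\cC_\fn^{-\gamma}$.

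The main obstacle is the bookkeeping in part (b): one must simultaneously apply the product identity \eqref{eqn:basis_prod_even_even} (which produces four shifted modes with different signs and the $\nu$-factors), track the support of $\rho_i$ under these shifts, and split between the diagonal cases $k_i=l_i$ and the fully off-diagonal case so that the different bounds in \eqref{eqn:bound_G} can each be summed. The key quantitative check is that in every region the dominant factor $(1+|k_i-r/\epsilon|)^{-(1-\delta)}$ (or its $l$-analogue) is summable against the resolvent weight $(1+|l|/r)^{-2}$ and against the geometric factor $2^{\gamma i}(1+|k|/r)^{-\gamma}$ coming from $\rho_i$, uniformly in $i$, so that Lebesgue dominated convergence applies term-by-term as $\epsilon\downarrow 0$. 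Once this is done, combining with part (a) gives \eqref{eqn:difference_first_and_second_chaos}.
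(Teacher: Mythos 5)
Part (a) of your proposal matches the paper's argument essentially exactly: you invoke Lemma~\ref{lemma:gaussian_besov_bound}\ref{item:zeta_convergence_hypercontractivity} for each of the two chaos components, get the uniform second-moment bound \eqref{eqn:bound_Delta_i_zeta} from Lemma~\ref{lemma:bound_theta_moments}, obtain the pointwise moment convergence for the first-chaos piece from \eqref{eqn:convergence_X_k_eps_to_Z_k}, and handle the second-chaos pointwise convergence via Wick/Isserlis plus dominated convergence --- this is precisely the content of the paper's Lemma~\ref{lemma:difference_resos_paired_in_square_to_zero}.

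Part (b) is where your argument has a genuine gap. You claim that the sum
\[
\sum_{k,l\in\N_0^2}\frac{\rho^\reso(\tfrac{k}{r},\tfrac{l}{r})}{1+\tfrac{\pi^2}{r^2}|l|^2}\,\frac{|G_{r,\epsilon}(k,l)|}{(1+|\tfrac{k}{r}-\tfrac{l}{r}|)^\gamma}\longrightarrow 0
\]
by Lebesgue dominated convergence, with ``the factors $(1+|k_i-r/\epsilon|)^{-(1-\delta)}$ forcing convergence.'' But the obvious candidate dominating function fails: $\sup_{\epsilon>0}(1+|k_i-r/\epsilon|)^{-(1-\delta)}=1$ (attained at $r/\epsilon=k_i$), so the supremum over $\epsilon$ of the $G$-bound is of order one, and the resolvent weight $(1+|l|^2)^{-1}(1+|k-l|)^{-\gamma}$ alone is \emph{not} summable over $(k,l)\in\N_0^2\times\N_0^2$ when $\gamma<2$. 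Hence no $\epsilon$-independent summable envelope exists, and term-by-term dominated convergence does not apply. The paper's proof of Lemma~\ref{lemma:convergence_difference_expectations} circumvents this by extracting explicit polynomial decay in $\epsilon$: after splitting into the sums over $\fR$ and its complement, it uses the precise form of \eqref{eqn:bound_G} together with Lemma~\ref{lemma:bound_sum_gamma_delta} and Lemma~\ref{lemma:resonance_sum_estimate} to produce bounds such as $(1+\tfrac{r}{\epsilon})^{\delta-\gamma}$, $(1+\tfrac{1}{\epsilon})^{5\delta-\gamma}$ and $(1+\tfrac{1}{\epsilon})^{2\delta-\gamma}$, which tend to zero provided $\delta$ is chosen small relative to $\gamma$. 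So the quantitative interplay between the $\epsilon$-dependent localisation factors and the resolvent weight has to be exploited directly; a mere ``pointwise $\to 0$ plus domination'' argument is not available here.
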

\begin{proof}
\ref{item:convergence_Theta_min_Xi_eps} 
We use Lemma \ref{lemma:gaussian_besov_bound}\ref{item:zeta_convergence_hypercontractivity}.  
By Lemma \ref{lemma:bound_theta_moments} we obtain \eqref{eqn:bound_Delta_i_zeta} for $\zeta = \theta_{r,\epsilon}-\xi_{r,\epsilon}$ with $a= 2+\gamma$ and for $\zeta=\Theta_{r,\epsilon}-\Xi_{r,\epsilon}$ with $a= 2\gamma$ for $\gamma \in (0,1)$. 
\eqref{eqn:convergence_X_k_eps_to_Z_k} implies that $\E[ |\langle \theta_{r,\epsilon} -  \xi_{r,\epsilon} , \fn_{k,r} \rangle|^2] \rightarrow 0$, i.e., \eqref{eqn:zeta_epsilon_paired_with_fn_k_conv_0} holds for $\zeta_\epsilon= \theta_{r,\epsilon}-\xi_{r,\epsilon}$. 
In Lemma \ref{lemma:difference_resos_paired_in_square_to_zero}
we show that \eqref{eqn:zeta_epsilon_paired_with_fn_k_conv_0} holds for $\zeta_\epsilon= \Theta_{r,\epsilon}-\Xi_{r,\epsilon}$. 

\ref{item:convergence_expectation_difference} is shown in Lemma \ref{lemma:convergence_difference_expectations}. 
\end{proof}

\begin{theorem}
\label{theorem:convergence_prima_and_theta}
Let $\tau \in C_c^\infty(\R^2,[0,1])$ and $\tau':\R^2 \rightarrow [0,1]$ be compactly supported functions. 
Suppose $\tau$ and $\tau'$ are equal to $1$ on a neighbourhood of $0$. 
\begin{enumerate}
\item 
\label{item:smooth_vs_table_tau}
For all $r\ge 1$ \eqref{eqn:difference_first_and_second_chaos} holds 
with $c= c_{\tau'} - c_{\tau}$
in case $X_{k,r}^\epsilon = \tau'(\frac{\epsilon}{r} k) Z_k$ and $Y_{k,r} = \tau(\frac{\epsilon}{r} k) Z_k$. 
\item 
\label{item:comparing_box_sizes}
Let $L>r \ge 1$ and $y \in \R^2$ be such that $y + Q_r \subset Q_L$.
With $\cW$ as in \ref{obs:bounded_lin_op_corresponding_to_white_noise}, 
for
\begin{align*}
& \cZ_m = \langle \cW, \fn_{m, L} \rangle, 
 \qquad
Z_k = \langle \cW, \cT_y \fn_{k,r} \rangle = \sum_{m \in \N_0^2}  \cZ_m \langle \fn_{m,L}, \cT_y \fn_{k,r} \rangle_{L^2(Q_r)}, 
\\
& 
X_{k,r}^\epsilon = \sum_{m\in \N_0^2} 
\1_{(-1,1)^2} (\tfrac{\epsilon}{L}m)  \cZ_m 
 \langle \fn_{m,L}  , \cT_y \fn_{k,r} \rangle_{L^2(Q_r)} , 
 \qquad Y_{k,r}^\epsilon = \1_{(-1,1)^2}(\tfrac{\epsilon}{r}k) Z_k. 
\end{align*}
\eqref{eqn:difference_first_and_second_chaos} holds with $c=0$. 
\end{enumerate}
\end{theorem}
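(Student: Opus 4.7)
The plan is to derive both parts from Theorem~\ref{theorem:convergence_enhanced_pair_theta} by verifying its hypotheses, and for part~(i) to handle the expectation difference separately so as to extract the constant $c=c_{\tau'}-c_\tau$.

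For (i), both $X^\epsilon_{k,r}$ and $Y^\epsilon_{k,r}$ are scalar multiples of the i.i.d.~standard Gaussian family $(Z_k)$, so all three covariance kernels $\E[X_kX_l],\E[X_kY_l],\E[Y_kY_l]$ are diagonal and bounded by $\delta_{k,l}$; hypothesis \eqref{eqn:bound_F} therefore holds trivially for any $\delta>0$. The pointwise convergence \eqref{eqn:convergence_X_k_eps_to_Z_k} is immediate since $\tau(0)=\tau'(0)=1$ and both equal $1$ near $0$, so for each fixed $k$ one has $\tau(\tfrac{\epsilon}{r}k)=\tau'(\tfrac{\epsilon}{r}k)=1$ for $\epsilon$ small. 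Theorem~\ref{theorem:convergence_enhanced_pair_theta}(i) then gives $(\theta_{r,\epsilon}-\xi_{r,\epsilon},\Theta_{r,\epsilon}-\Xi_{r,\epsilon})\xrightarrow{\P}(0,0)$ in $\fX_\fn^\alpha$. It remains to identify the limit of the expectation difference $\E[\theta_{r,\epsilon}\reso\sigma(\rD)\theta_{r,\epsilon}]-\E[\xi_{r,\epsilon}\reso\sigma(\rD)\xi_{r,\epsilon}]$. Lemma~\ref{lemma:expectation_reso_xi_eps_C_0_plus_reso_constant} applies to $\tau$ and also to $\tau'=\1_{(-1,1)^2}$ (which is a.e.~continuous and equals $1$ near $0$), and the common ``$\tau$-independent'' limit cancels upon subtraction, leaving $\lim_{\epsilon\downarrow 0}[c_{\epsilon,r}(\tau')-c_{\epsilon,r}(\tau)]=c_{\tau'}-c_\tau$ by Lemma~\ref{lemma:renormalisation_constant}.

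For (ii), the key quantity is
\[a_{m,k}^y := \langle \fn_{m,L},\cT_y\fn_{k,r}\rangle_{L^2(Q_L)} = \int_{Q_r} \fn_{m,L}(y+x)\,\fn_{k,r}(x)\,\dd x.\]
Since $(\fn_{m,L})_m$ is an ONB of $L^2(Q_L)$ and $\cT_y\fn_{k,r}$ is supported in $y+Q_r\subset Q_L$, Parseval yields $\sum_m a_{m,k}^y a_{m,l}^y=\delta_{k,l}$. Consequently
\[\E[X_{k,r}^\epsilon X_{l,r}^\epsilon]=\sum_m \1_{|m|_\infty<L/\epsilon}\,a_{m,k}^y a_{m,l}^y,\qquad \E[Y_{k,r}^\epsilon Y_{l,r}^\epsilon]=\1_{|k|_\infty<r/\epsilon}\,\delta_{k,l},\]
and $\E[X_k^\epsilon Y_l^\epsilon]=\1_{|l|_\infty<r/\epsilon}\sum_m \1_{|m|_\infty<L/\epsilon}\,a_{m,k}^y a_{m,l}^y$. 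Using $\cos A\cos B=\tfrac12[\cos(A-B)+\cos(A+B)]$, each one-dimensional factor of $a_{m,k}^y$ reduces to an explicit oscillatory integral over an interval of length $r$, giving the decay
\[|a_{m,k}^y|\lesssim (rL)^{-1/2}\prod_{i=1}^{2}\min\!\bigl(r,\ |\tfrac{m_i}{L}-\tfrac{k_i}{r}|^{-1}+|\tfrac{m_i}{L}+\tfrac{k_i}{r}|^{-1}\bigr).\]
Inserting this bound and summing in $m$ yields \eqref{eqn:convergence_X_k_eps_to_Z_k} (the tail $\sum_{|m|_\infty\ge L/\epsilon}|a_{m,k}^y|^2\to 0$ for each fixed $k$) and \eqref{eqn:bound_F} for each of the three kernels, the dominant contributions being concentrated near $m\approx\pm Lk/r$ and $m\approx\pm Ll/r$ and interpolating to $\prod_i(1+|k_i-l_i|)^{\delta-1}$.

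Verifying \eqref{eqn:bound_G} is the main technical obstacle. Writing $G_{r,\epsilon}(k,l)=-\sum_{|m|_\infty\ge L/\epsilon}a_{m,k}^y a_{m,l}^y+(\1_{|k|_\infty<r/\epsilon}-1)\delta_{k,l}$ via Parseval, the nontrivial part localizes either at $|m_i|\sim L/\epsilon$ in the tail sum or on the diagonal near $|k_i|\sim r/\epsilon$ in the boundary term. The two-resonance structure of $a_{m,k}^y$ (peaks at $m_i/L\approx\pm k_i/r$) forces the tail sum to be split into four product-type contributions per coordinate pair; each is estimated by pushing the restriction $|m_i|\ge L/\epsilon$ onto $k_i$ (or $l_i$) via a shift, producing the factors $(1+|k_i-r/\epsilon|)^{-1+\delta}$ or $(1+|l_i-r/\epsilon|)^{-1+\delta}$ demanded by \eqref{eqn:bound_G}, with the product structure on $(k,l)\in\fR$ emerging from the coordinate-wise factorization of $a_{m,k}^y$. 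Once \eqref{eqn:bound_G} is in hand, Theorem~\ref{theorem:convergence_enhanced_pair_theta}(i)--(ii) together deliver \eqref{eqn:difference_first_and_second_chaos} with $c=0$.
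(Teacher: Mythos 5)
Your overall route is the same as the paper's: reduce both parts to verifying the three hypotheses of Theorem~\ref{theorem:convergence_enhanced_pair_theta} and then invoke it. Part~(i) is essentially the paper's argument verbatim; one small slip is that you specialize $\tau'$ to $\1_{(-1,1)^2}$, whereas in part~(i) $\tau'$ is arbitrary (compactly supported, equal to $1$ near $0$), but Lemmas~\ref{lemma:expectation_reso_xi_eps_C_0_plus_reso_constant} and \ref{lemma:renormalisation_constant} apply to such general $\tau'$ as well, so the argument goes through.

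In part~(ii), the ingredients you identify --- Parseval's identity $\sum_m a_{m,k}^y a_{m,l}^y=\delta_{k,l}$, the oscillatory-integral decay of the one-dimensional inner products, and pushing the truncation in $m$ onto $k$ or $l$ --- are precisely the ones the paper packages into Theorem~\ref{theorem:bound_on_sum_b_mk_b_ml} and Lemmas~\ref{lemma:bounds_on_h_M}--\ref{lemma:bound_G_eps_with_eps_dependence}. However, your treatment of \eqref{eqn:bound_G} is imprecise at the point where the product structure for $(k,l)\in\fR$ must emerge. You write $G_{r,\epsilon}=-\sum_{|m|_\infty\ge L/\epsilon}a_{m,k}^y a_{m,l}^y+(\1_{|k|_\infty<r/\epsilon}-1)\delta_{k,l}$ (the sign is reversed --- it should be $(1-\1)\delta_{k,l}$, though this is immaterial for the modulus). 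The difficulty is that $\{m:|m|_\infty\ge L/\epsilon\}$ is not a product set, so the tail sum does not factorize coordinate-wise; your appeal to ``the coordinate-wise factorization of $a_{m,k}^y$'' does not automatically yield the factor $\prod_{i=1}^2[\cdots]$ required on $\fR$. What makes it work --- and what the paper does directly in Lemma~\ref{lemma:bound_G_eps_with_eps_dependence} --- is to use the full Parseval sum to write $G_{r,\epsilon}(k,l)=\bigl(\sum_{m_1<L/\epsilon}b^{y_1}_{m_1,k_1}b^{y_1}_{m_1,l_1}\bigr)\bigl(\sum_{m_2<L/\epsilon}b^{y_2}_{m_2,k_2}b^{y_2}_{m_2,l_2}\bigr)$ exactly when the $\delta_{k,l}$ term vanishes (which includes all of $\fR$), since the box $\{|m|_\infty< L/\epsilon\}$ \emph{is} a product set; equivalently one must exploit $\sum_{m_i}b^{y_i}_{m_i,k_i}b^{y_i}_{m_i,l_i}=0$ for $k_i\ne l_i$ to convert your $\ge L/\epsilon$ tail into this form. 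There is also a normalization slip in your decay bound for $a_{m,k}^y$: since $a_{m,k}^y=\prod_{i=1}^2 b^{y_i}_{m_i,k_i}$ and the one-dimensional bound is of order $(rL)^{-1/2}\min(r,\cdot)$, the two-dimensional prefactor should be $(rL)^{-1}$, not $(rL)^{-1/2}$.
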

\begin{proof}
\ref{item:smooth_vs_table_tau}
That \eqref{eqn:convergence_X_k_eps_to_Z_k} holds is clear. 
As $|\E[ X_{k,r}^\epsilon X_{l,r}^\epsilon]|\vee|\E[ X_{k,r}^\epsilon Y_{l,r}^\epsilon]|\vee |\E[ Y_{k,r}^\epsilon Y_{l,r}^\epsilon]|\le 2 \delta_{k,l}$, also \eqref{eqn:bound_F} holds for each of those expectations and thus the conditions of Theorem \ref{theorem:convergence_enhanced_pair_theta}\ref{item:convergence_Theta_min_Xi_eps} hold. 
Therefore it is sufficient to show that $\E[ \theta_{r,\epsilon}\reso \sigma(\rD) \theta_{r,\epsilon}  -  \xi_{r,\epsilon} \reso \sigma(\rD)  \xi_{r,\epsilon} ] \xrightarrow{\P} c_{\tau'} - c_\tau$ in $\cC_\fn^{-\gamma}$ for all $\gamma>0$. 
This follows by Lemma \ref{lemma:expectation_reso_xi_eps_C_0_plus_reso_constant} and Lemma \ref{lemma:renormalisation_constant} as they show that $\E[\theta_{r,\epsilon}\reso \sigma(\rD) \theta_{r,\epsilon}] - c_\epsilon - c_{\tau'}$ and $\E[\xi_{r,\epsilon}\reso \sigma(\rD) \xi_{r,\epsilon}] - c_\epsilon - c_{\tau}$ converge to the same limit in $\cC_\fn^{-\gamma}$. 

\ref{item:comparing_box_sizes} 
We prove this in Theorem \ref{theorem:bounds_F_and_G_hold}. 
\end{proof}

\subsection{Terms in the first Wiener chaos}
\label{subsec:first_chaos}

\begin{lemma}
\label{lemma:bound_on_expect_mod_Delta_i_difference_xi_in_x_squared}
Consider the setting of \ref{obs:overview_white_noise_section}, i.e., $Y_{k,r}^\epsilon = \tau(\frac{\epsilon}{r}k) Z_k$ for i.i.d. standard normal random variables $(Z_k)_{k\in \N_0^d}$ and $\tau \in C_c^\infty(\R^2,[0,1])$. 
For all $\gamma\in (0,1)$ there exists a $C>0$ such that for all $r\ge 1$, $i \in \N_{-1}$, $\epsilon,\delta>0$, $x\in Q_r$ 
\begin{align}
\label{eqn:expect_mod_Delta_i_difference_xi_in_x_squared}
\E[ | \Delta_i ( \xi_{r,\epsilon} - \xi_{r,\delta}) (x)|^2]
& \le C 
2^{(d+2\gamma )i} |\epsilon - \delta|^{\gamma}. 
\end{align}
\end{lemma}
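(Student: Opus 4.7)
The plan is to write $\Delta_i(\xi_{r,\epsilon}-\xi_{r,\delta})$ explicitly in the orthonormal basis $\{\fn_{k,r}\}_{k\in\N_0^d}$ and compute the second moment directly, exploiting independence of the $(Z_k)$. Concretely,
\[
\Delta_i(\xi_{r,\epsilon}-\xi_{r,\delta})(x)=\sum_{k\in\N_0^d}\rho_i\!\bigl(\tfrac{k}{r}\bigr)\bigl[\tau\!\bigl(\tfrac{\epsilon}{r}k\bigr)-\tau\!\bigl(\tfrac{\delta}{r}k\bigr)\bigr]Z_k\,\fn_{k,r}(x),
\]
so by $\E[Z_kZ_l]=\delta_{k,l}$ the variance equals
\[
\sum_{k\in\N_0^d}\rho_i\!\bigl(\tfrac{k}{r}\bigr)^2\bigl[\tau\!\bigl(\tfrac{\epsilon}{r}k\bigr)-\tau\!\bigl(\tfrac{\delta}{r}k\bigr)\bigr]^2\fn_{k,r}(x)^2.
\]

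The next step is to interpolate between the trivial bound $|\tau(\tfrac{\epsilon}{r}k)-\tau(\tfrac{\delta}{r}k)|\le 2\|\tau\|_\infty$ and the Lipschitz bound $|\tau(\tfrac{\epsilon}{r}k)-\tau(\tfrac{\delta}{r}k)|\le\|\nabla\tau\|_\infty|\epsilon-\delta|\,|k|/r$: for any $\gamma\in(0,1)$ one has
\[
\bigl|\tau\!\bigl(\tfrac{\epsilon}{r}k\bigr)-\tau\!\bigl(\tfrac{\delta}{r}k\bigr)\bigr|^2\lesssim |\epsilon-\delta|^\gamma\bigl(|k|/r\bigr)^\gamma.
\]

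Finally I would insert the trivial bound $\fn_{k,r}(x)^2\le(2/r)^d$ and use the support properties of $\rho_i$: on the support of $\rho_i(\cdot/r)$ one has $|k|/r\lesssim 2^i$, and the number of $k\in\N_0^d$ in that support is $\lesssim(r 2^i)^d$ (for $i\ge 0$; for $i=-1$ the count is $\lesssim r^d$). Combining these three ingredients yields
\[
\E[|\Delta_i(\xi_{r,\epsilon}-\xi_{r,\delta})(x)|^2]\lesssim |\epsilon-\delta|^\gamma\,2^{\gamma i}\,(r 2^i)^d\,r^{-d}=|\epsilon-\delta|^\gamma\,2^{(d+\gamma)i},
\]
which, since $\gamma\le 2\gamma$, is dominated by the claimed bound $C\,2^{(d+2\gamma)i}|\epsilon-\delta|^\gamma$, with constant depending only on $\tau$ and $\gamma$ (in particular uniformly in $r\ge 1$).

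There is no real obstacle: the only subtle point is the interpolation trick converting smoothness of $\tau$ into an $|\epsilon-\delta|^\gamma$ factor at the cost of a harmless power of $|k|/r$, which is then absorbed into $2^{\gamma i}$ on the Littlewood--Paley support. The stated exponent $d+2\gamma$ is slack (the proof actually gives $d+\gamma$), which provides some room and shows the statement is comfortably true for any $\gamma\in(0,1)$.
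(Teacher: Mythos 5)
Your proof is correct and follows the same overall strategy as the paper: expand $\Delta_i(\xi_{r,\epsilon}-\xi_{r,\delta})$ in the basis $\fn_{k,r}$, use independence of the $Z_k$, bound $\fn_{k,r}(x)^2\le(2/r)^d$, and interpolate between the sup-norm and Lipschitz estimates on $\tau$ to produce the factor $|\epsilon-\delta|^\gamma(|k|/r)^\gamma$. The only genuine divergence is in how you close the estimate on $\sum_k \rho_i(k/r)^2(\cdots)$: the paper bounds $\rho_i(k/r)^2$ by the polynomial decay $(2b\,2^i/(1+|k/r|))^{d+2\gamma}$ (the estimate \eqref{eqn:polynomial_bound_on_rho_i}) and then dominates the resulting scaled lattice sum $r^{-d}\sum_{k\in\frac1r\N_0^d}|k|^\gamma/(1+|k|)^{d+2\gamma}$ by a convergent integral over $\R^d$ (uniformly in $r\ge1$); you instead use the compact support of $\rho_i$ directly, bound $|k|/r\lesssim2^i$ there, and count $\lesssim(r2^i)^d$ lattice points in the support. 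Your route is slightly more elementary and even gives the marginally sharper exponent $d+\gamma$ in place of $d+2\gamma$, whereas the paper's polynomial-decay trick is the one reused for the harder second-chaos estimates (Lemmas \ref{lemma:expect_theta_eps} and \ref{lemma:bound_on_expect_mod_Delta_i_difference_Xi_in_x_squared}), where a simple lattice count would not suffice; both are fine here.
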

\begin{proof}
Let $\gamma \in (0,1)$. 
 As 
$
\Delta_i (\xi_{r,\epsilon} - \xi_{r,\delta})(x) =  \sum_{k\in  \N_0^2} \rho_i( \frac{k}{r} ) 
( \tau(\epsilon  \frac{k}{r} ) - \tau( \delta   \frac{k}{r} )) Z_k \fn_{k,r}(x), 
$ 
and $\|\fn_{k,r}\|_\infty^2 \le (\frac2r)^d$,
\begin{calc}
\begin{align*}
\E[ | \Delta_i (\xi_{r,\epsilon} - \xi_{r,\delta}) (x)|^2 ]
 \le (\tfrac2r)^d  \sum_{k\in  \frac{1}{r}  \N_0^d} \rho_i(k)^2 
( \tau(\epsilon k) - \tau( \delta  k))^2
\end{align*}
\end{calc}
by \eqref{eqn:polynomial_bound_on_rho_i} we have 
\begin{align*}
\E[ | \Delta_i (\xi_{r,\epsilon} - \xi_{r,\delta}) (x)|^2 ]
 \lesssim  r^{-d} 
 2^{(d+2\gamma)i}  \sum_{k\in  \frac{1}{r}  \N_0^d} \frac{( \tau(\epsilon k) - \tau(\delta  k))^2}{(1+|k|)^{d+2\gamma}}.
\end{align*}
As $|\tau(\epsilon k) - \tau(\delta  k)| \le \|\nabla \tau\|_\infty |\epsilon - \delta| |k|$ and $\|\tau\|_\infty = 1$, 
\begin{align}
\label{eqn:bound_difference_tau_squared}
(\tau(\epsilon k) - \tau(\delta  k))^2 
\lesssim \|\nabla \tau\|_\infty^\gamma |\epsilon - \delta|^\gamma |k|^\gamma. 
\end{align}
Therefore, as $\sum_{k\in  \frac{1}{r}  \Z^d}  r^{-d}  \frac{|k|^\gamma }{(1+|k|)^{d+2\gamma}}  <\infty$ , we obtain \eqref{eqn:expect_mod_Delta_i_difference_xi_in_x_squared}. 
\begin{calc}
Indeed the sum is bounded by a constant not depending on $r$: 
For $k\in \Z^d$ and $x\in R^d$ with $|x- k|_\infty \le \frac12$ we have $|x-k| \le \frac{\sqrt{d}}{2}$, 
\begin{align*}
\sum_{k\in  \frac{1}{r}  \N_0^d} \frac{1}{(1+|k|)^{d+\eta}} r^{-d} 
& \le \int_{\R^d} \frac{1}{(1+ \frac{1}{\sqrt{d}} | \lfloor \frac{x}{r} \rfloor  |)^{d+\eta}} r^{-d} \dd x
\le \int_{\R^d} \frac{1}{(\frac12+ \frac{1}{\sqrt{d}} | \frac{x}{r}   |)^{d+\eta}} r^{-d} \dd x \\
& = \int_{\R^d} \frac{1}{(\frac12+ \frac{1}{\sqrt{d}} | y   |)^{d+\eta}}  \dd y  < \infty.
\end{align*}
\end{calc}
\end{proof}

\begin{lemma}
\label{lemma:expect_theta_eps}
Suppose that \eqref{eqn:bound_F} holds for $F_{r,\epsilon}(k,l) = \E[ X_{k,r}^\epsilon X_{l,r}^\epsilon]$.
For all $\gamma \in (0,1)$ there exists a $C>0$ (independent of $r$) such that for all 
$i \in \N_{-1}$, $\epsilon >0$, $x\in Q_r$ 
\begin{align}
\E[ | \Delta_i \theta_{r,\epsilon} (x)|^2]
& \le C r^{d\gamma}
2^{(d+\gamma )i} . 
\end{align}
\end{lemma}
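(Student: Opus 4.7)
The plan is to expand the second moment using orthogonality, bound the basis functions in $L^\infty$, apply the hypothesis \eqref{eqn:bound_F}, and then exploit the compact support of $\rho_i$ to control the resulting double sum. First I would write
\[
\E[|\Delta_i \theta_{r,\epsilon}(x)|^2]
= \sum_{k,l\in\N_0^d} \rho_i(\tfrac{k}{r})\rho_i(\tfrac{l}{r}) F_{r,\epsilon}(k,l)\,\fn_{k,r}(x)\fn_{l,r}(x),
\]
and use $\|\fn_{k,r}\|_\infty \le (2/r)^{d/2}$ together with \eqref{eqn:bound_F} applied with a small $\delta>0$ to obtain
\[
\E[|\Delta_i \theta_{r,\epsilon}(x)|^2]
\le \Big(\tfrac{2}{r}\Big)^{d} C \sum_{k,l\in\N_0^d} \rho_i(\tfrac{k}{r})\rho_i(\tfrac{l}{r}) \prod_{j=1}^d (1+|k_j-l_j|)^{\delta-1}.
\]

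Next I would use the support properties of $\rho_i$: there is an absolute $b\ge1$ such that $\rho_i(y)\ne0$ implies $|y|\le b\,2^i$, so $\rho_i(k/r)\ne0$ forces $|k_j|\le b\, 2^i r$ for each coordinate $j$, and likewise for $l$. Since $\rho_i\le 1$, for each fixed $k$ in the support,
\[
\sum_{\substack{l\in\N_0^d \\ \rho_i(l/r)\ne 0}} \prod_{j=1}^d (1+|k_j-l_j|)^{\delta-1}
\le \prod_{j=1}^d \sum_{0\le l_j\le b\,2^i r} (1+|k_j-l_j|)^{\delta-1}
\lesssim (1+2^i r)^{d\delta},
\]
where I used the elementary fact that, for $\delta\in(0,1)$ and $M\ge 0$, $\sum_{m=0}^{M}(1+m)^{\delta-1}\lesssim (1+M)^{\delta}$ (by comparison with $\int_0^{M+1}(1+t)^{\delta-1}\,dt$). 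Moreover, the number of $k\in\N_0^d$ for which $\rho_i(k/r)\ne 0$ is $\lesssim (1+2^i r)^d$.

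Combining these, and using that $r\ge 1$ and $i\in\N_{-1}$ imply $2^i r\ge 1/2$ and hence $1+2^i r\lesssim 2^i r$, I would conclude
\[
\E[|\Delta_i \theta_{r,\epsilon}(x)|^2]
\lesssim r^{-d}\cdot (2^i r)^d\cdot (2^i r)^{d\delta}
= r^{d\delta}\, 2^{i d(1+\delta)}.
\]
Given $\gamma\in(0,1)$, I would choose $\delta=\gamma/d\in(0,1/d)\subset(0,1)$, so that $d(1+\delta)=d+\gamma$ and $d\delta=\gamma$, yielding the bound $r^{\gamma}\,2^{i(d+\gamma)}\le r^{d\gamma}\,2^{i(d+\gamma)}$ (as $r\ge 1$ and $d\ge 1$). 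The main step to be careful about is the bookkeeping of the sum over $l$: one has to use the $\rho_i$-support restriction on \emph{both} $k$ and $l$ in order to get a finite geometric factor $(1+2^i r)^{d\delta}$, since without such a cutoff the kernel $(1+|k_j-l_j|)^{\delta-1}$ with $\delta-1\in(-1,0)$ is not summable over $\Z$. Everything else is routine; in particular the uniformity in $\epsilon$ and in $r\in I$ follows from the corresponding uniformity in the hypothesis \eqref{eqn:bound_F}.
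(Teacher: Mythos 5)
Your proof is correct, and it takes a genuinely different (and more self-contained) route than the paper. The paper's proof does not exploit the compact support of $\rho_i$ at all; instead it uses the polynomial-decay bound \eqref{eqn:polynomial_bound_on_rho_i} to convert each factor $\rho_i(k/r)$ into a damping factor $(1+|k/r|)^{-\beta}$, then factorises by coordinates and invokes the auxiliary discrete-convolution estimate of Lemma~\ref{lemma:bound_sum_gamma_delta} (via Lemma~\ref{lemma:integral_bound_with_gamma_and_theta}) to sum the resulting product of three power functions over an unbounded range. You instead bound $\rho_i\le 1$, use the support of $\rho_i$ to cut both the $k$- and $l$-sums off at $|k|,|l|\lesssim 2^i r$, and then only need the elementary one-dimensional estimate $\sum_{m=0}^M(1+m)^{\delta-1}\lesssim(1+M)^\delta$. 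This avoids the machinery of Lemmas~\ref{lemma:bound_sum_gamma_delta}--\ref{lemma:integral_bound_with_gamma_and_theta} entirely, and as you note, the point that would otherwise go wrong --- that $(1+|k_j-l_j|)^{\delta-1}$ is not summable over $\Z$ --- is exactly what the cutoff fixes. One small bonus of your bookkeeping: it produces $r^{\gamma}2^{(d+\gamma)i}$ directly, which matches the exponent $d+\gamma$ in the statement on the nose, whereas the computation displayed in the paper's proof produces $r^{d\gamma}2^{(d+d\gamma)i}$ (harmless, since $\gamma$ is a free parameter, but a rename is needed). Two tiny wording points: the expansion of $\E[|\Delta_i\theta_{r,\epsilon}(x)|^2]$ doesn't use ``orthogonality'' --- it is just squaring a finite sum and taking expectation; and your implicit constant in $\sum_{m\le M}(1+m)^{\delta-1}\lesssim(1+M)^\delta$ depends on $\delta=\gamma/d$, hence on $\gamma$, which is allowed by the statement but worth keeping in mind.
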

\begin{proof}
By \eqref{eqn:polynomial_bound_on_rho_i} $ 2^{-\beta i} \|\Delta_i \fn_{k,r} \|_{L^\infty} \lesssim  r^{-\frac{d}{2}}  (1+|\frac{k}{r}|)^{-\beta} \le   r^{-\frac{d}{2}}  \prod_{i=1}^d  (\frac{1}{r}+ \frac{k_i}{r})^{-\frac{\beta}{d} }$. 
Let $\delta >0$ be such that $\delta< \gamma$ (so that in particular $\delta < \frac{1+\gamma}{2}$). As 
$|\E[ X_{k,r}^\epsilon X_{l,r}^\epsilon]|  \lesssim \prod_{i=1}^d (1+|k_i-l_i|)^{\delta-1}= \prod_{i=1}^d r^{\delta-1} (\frac{1}{r}+|\frac{k_i}{r}-\frac{l_i}{r}|)^{\delta-1}$, we have by using Lemma \ref{lemma:bound_sum_gamma_delta}
\begin{align*}
2^{-(d+d\gamma)i} \E[ | \Delta_i \theta_{r,\epsilon}(x) |_{L^\infty}^2]
\cand \begin{calc}
\notag
\lesssim r^{-d} \sum_{  k,l \in  \N_0^d} 
\frac{1}{(1+| \frac{k}{r} |)^{\frac{d+d\gamma}{2}}}
\frac{1}{(1+| \frac{l}{r} |)^{\frac{d+d\gamma}{2}}}  
|\E[ X_{k,r}^\epsilon X_{l,r}^\epsilon]| 
\end{calc}\cnewline
& \lesssim 
\Big(
 \sum_{  k,l \in \frac{1}{r} \N_0} 
\frac{1}{(\frac{1}{r}+ k )^{\frac{1+\gamma}{2}}}
\frac{1}{(\frac{1}{r}+ l )^{\frac{1+\gamma}{2}}}
 \frac{r^{\delta-2}}{ (\frac{1}{r}+|k-l |)^{1-\delta} } \Big)^d \\
& \lesssim 
\Big( 
\sum_{  k \in \frac{1}{r} \N_0} 
\frac{r^{\delta-1} }{(\frac{1}{r}+ k )^{\frac{1+\gamma}{2}}}
\frac{1}{(\frac{1}{r}+ k )^{\frac{1+\gamma}{2}-\delta}} \Big)^d
\lesssim \big( r^\delta (\tfrac{1}{r})^{\delta - \gamma} \big)^d
\lesssim r^{d\gamma}. 
\end{align*}
\end{proof}

In the following two lemmas we present tools to bound sums by integrals, which will be frequently used. 

\begin{lemma}
\label{lemma:monotone_functions_sums_and_integrals}
Let $M\in \N$ and 
 $f : [0,M] \rightarrow \R$ be a decreasing measurable function. 
 Then 
$
 \sum_{m=1}^M f(m) \le \int_0^{M} f(x) \dd x \le \sum_{m=0}^{M-1} f(m) . 
$ 
If $f$ instead is increasing, then 
$
 \sum_{m=0}^{M-1} f(m) \le \int_0^M f(x) \dd x \le \sum_{m=1}^M f(m). 
$ 
\end{lemma}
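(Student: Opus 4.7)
The plan is to split the integral into unit intervals and bound each piece by a constant using monotonicity. Concretely, I write
\begin{align*}
\int_0^M f(x)\,\mathrm{d}x = \sum_{m=0}^{M-1} \int_m^{m+1} f(x)\,\mathrm{d}x,
\end{align*}
and note that on each $[m,m+1]$ the value $f(x)$ lies between $f(m)$ and $f(m+1)$ (with the ordering dictated by whether $f$ is decreasing or increasing). Measurability ensures the integrals are defined; monotonicity on a real interval also implies boundedness on $[m,m+1]$, so each integral is finite.

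For the decreasing case I would use $f(m+1)\le f(x)\le f(m)$ for $x\in[m,m+1]$ to deduce $f(m+1)\le \int_m^{m+1} f(x)\,\mathrm{d}x\le f(m)$, then sum from $m=0$ to $M-1$ and reindex the lower sum to obtain $\sum_{m=1}^M f(m)\le \int_0^M f(x)\,\mathrm{d}x \le \sum_{m=0}^{M-1} f(m)$. For the increasing case, the same reasoning gives $f(m)\le \int_m^{m+1} f(x)\,\mathrm{d}x \le f(m+1)$ and the two inequalities follow after summation and reindexing.

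There is no substantive obstacle here: the statement is a standard comparison between Riemann sums and the corresponding integral for a monotone function, and the proof is a direct consequence of the monotonicity bound applied interval by interval. The only mild subtlety is making sure one is careful with the endpoints when reindexing, but this is purely bookkeeping.
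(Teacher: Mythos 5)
Your proof is correct and is the standard interval-by-interval argument; the paper states this lemma without proof (it is treated as elementary), and your approach is exactly the one any written proof would take.
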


\begin{lemma}
\label{lemma:bound_sum_gamma_delta}
Let $\gamma, \delta>0$ be such that $\delta < \gamma<1$. 
There exists a $C>0$ such that for all  $r \ge 1$, $b>0$ and  $u,v  \in \R$,
\begin{align}
\label{eqn:sum_bound_with_gamma_and_delta}
\sum_{k \in \N_0} \frac{1}{r} \frac{1}{(b+| \frac{k}{r}- u |)^\gamma} \frac{1}{(b+| \frac{k}{r}-v|)^{1-\delta}} \le C (b+|u-v|)^{\delta-\gamma}. 
\end{align}
and for all $l \in \R^2$
\begin{align}
\label{eqn:bound_sum_k_with_min_gamma_power}
\sum_{k \in  \frac{1}{r} \N_0^2} 
 \frac{1}{r^2} 
\frac{ \rho^\reso(k,l)
 }{ (1+ |k-l|)^{\gamma}  } 
 \le C (1+|l|)^{ 2 -\gamma }.
\end{align}
\end{lemma}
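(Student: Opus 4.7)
The plan is to reduce both bounds to integral estimates via the comparison in Lemma~\ref{lemma:monotone_functions_sums_and_integrals}, and then prove those integral estimates by a splitting of the integration domain.

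For \eqref{eqn:sum_bound_with_gamma_and_delta}, the summand $F(t)=(b+|t-u|)^{-\gamma}(b+|t-v|)^{-(1-\delta)}$ is continuous on $[0,\infty)$ and monotone on each side of its two local maxima at $t=u,v$. Applying Lemma~\ref{lemma:monotone_functions_sums_and_integrals} on each monotonicity interval bounds the Riemann sum of mesh $\tfrac{1}{r}$ by $\int_0^\infty F(t)\dd t+\tfrac{C}{r}F_{\max}$, and $\tfrac{1}{r}F_{\max}$ is absorbed into the target bound in the regime $rb\gtrsim 1$ relevant to the applications. It then suffices, by translation invariance, to prove
\[
I(A,b):=\int_{\R}\frac{\dd t}{(b+|t|)^\gamma (b+|t-A|)^{1-\delta}} \le C(b+A)^{\delta-\gamma}, \qquad A\ge 0.
\]
I would split $\R$ into the three regions $(-\infty,0]$, $[0,A]$ and $[A,\infty)$. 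On the outer regions one factor dominates the other: for $t\le 0$, $b+|t-A|=b+A+|t|\ge b+|t|$, so the integrand is at most $(b+|t|)^{-(\gamma+1-\delta)}$, and $\gamma-\delta>0$ gives $\int_0^\infty (b+s)^{-(\gamma+1-\delta)}\dd s=\tfrac{1}{\gamma-\delta}b^{\delta-\gamma}$; the case $t\ge A$ is symmetric. On $[0,A/2]\subset[0,A]$ one has $b+|t-A|\ge \tfrac{1}{2}(b+A)$, so the integrand is bounded by a constant times $(b+A)^{-(1-\delta)}(b+t)^{-\gamma}$, and $\int_0^{A/2}(b+t)^{-\gamma}\dd t\le \tfrac{(b+A)^{1-\gamma}}{1-\gamma}$ yields $(b+A)^{\delta-\gamma}$; the piece $[A/2,A]$ is symmetric. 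Using $b\le b+A$ and $\delta-\gamma<0$ collects everything into the claimed bound.

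For \eqref{eqn:bound_sum_k_with_min_gamma_power}, I would exploit two elementary facts about $\rho^\reso$: since $\sum_{i\in\N_{-1}}\rho_i\equiv 1$ by \eqref{eqn:partition_prop_sum_to_1_and_squares_inbetween_half_and_1}, one has $\rho^\reso(k,l)\le 1$, and the annular supports from \eqref{eqn:partition_prop_disjointness_far_away} give a universal $c>0$ such that $\rho^\reso(k,l)\ne 0$ implies $|k|\le c(1+|l|)$. On each of the four sub-rectangles of $[0,\infty)^2$ determined by the lines $k_1=l_1$ and $k_2=l_2$ the summand $(1+|k-l|)^{-\gamma}$ is monotone in each coordinate separately, so iterating Lemma~\ref{lemma:monotone_functions_sums_and_integrals} (Fubini-style) bounds the sum by
\[
\int_{\{k\in[0,\infty)^2:\,|k|\le c(1+|l|)\}}\frac{\dd k}{(1+|k-l|)^\gamma}
\]
plus boundary errors of order $\tfrac{1}{r}(1+|l|)^{(1-\gamma)_+}+\tfrac{1}{r^2}$ coming from the peak at $k=l$. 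Substituting $m=k-l$ and enlarging the domain to $B(0,(c+1)(1+|l|))$, polar coordinates give $2\pi\int_0^{(c+1)(1+|l|)}\rho(1+\rho)^{-\gamma}\dd \rho\lesssim (1+|l|)^{2-\gamma}$ for $\gamma<2$, which dominates the error terms since $r\ge 1$.

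The main obstacle is the integral estimate for $I(A,b)$ in part~(a): neither factor is individually integrable at infinity because $\gamma,1-\delta<1$, and convergence is secured only by the combined tail decay $\gamma+1-\delta>1$ coming from $\gamma>\delta$. The region splitting above is designed precisely to extract this cancellation uniformly in $b$ and $A$; once it is in place, the sum-to-integral comparison via Lemma~\ref{lemma:monotone_functions_sums_and_integrals} and the two-dimensional analogue in part~(b) are essentially routine.
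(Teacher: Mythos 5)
Your overall strategy -- reduce both sums to the corresponding integrals and then prove the integral estimates by splitting the domain around the singular points -- matches the paper's, which packages the one-dimensional integral estimate as the appendix Lemma~\ref{lemma:integral_bound_with_gamma_and_theta} and the two-dimensional one via the annulus description in Lemma~\ref{lemma:resonance_sum_estimate}. Your sum-to-integral mechanism (piecewise monotonicity via Lemma~\ref{lemma:monotone_functions_sums_and_integrals}) differs cosmetically from the paper's cell-wise pointwise comparison in \eqref{eqn:abs_difference}, and your remark that the boundary/comparison terms require $br\gtrsim 1$ is a correct observation that the paper leaves implicit. Part~(b) is fine. However, your proof of the key estimate for $I(A,b)$ has a genuine gap in the outer regions: for $t\le 0$ you bound the integrand by $(b+|t|)^{-(\gamma+1-\delta)}$ and integrate over all of $(-\infty,0]$ to get $\tfrac{1}{\gamma-\delta}\,b^{\delta-\gamma}$, and then try to conclude via ``$b\le b+A$ and $\delta-\gamma<0$''. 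But $\delta-\gamma<0$ means $b\le b+A$ gives $b^{\delta-\gamma}\ge (b+A)^{\delta-\gamma}$, the \emph{wrong} direction; when $A\gg b$ your outer-region bound overshoots the claimed $(b+A)^{\delta-\gamma}$ by the unbounded factor $(1+A/b)^{\gamma-\delta}$.

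The fix is to keep $A$ in play on the outer region. Either split $(-\infty,0]$ at $-A$: on $[-A,0]$ use $b+A+|t|\ge b+A$ to get a contribution $(b+A)^{-(1-\delta)}\int_0^A(b+s)^{-\gamma}\,\dd s\lesssim (b+A)^{\delta-\gamma}$, and on $(-\infty,-A]$ integrate $(b+|t|)^{-(\gamma+1-\delta)}$ from $A$ rather than $0$, which gives $\tfrac{1}{\gamma-\delta}(b+A)^{\delta-\gamma}$ directly. Or do what the paper does in the proof of Lemma~\ref{lemma:integral_bound_with_gamma_and_theta}: establish the uniform-in-$a\in(0,1]$ bound $\int_0^\infty (a+x)^{-\gamma}(1+x)^{-\theta}\,\dd x\lesssim 1$, then substitute $s=(b+A)w$ in the outer integral, which pulls out the prefactor $(b+A)^{\delta-\gamma}$ and leaves a rescaled integral of that uniformly bounded type. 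Either repair closes the gap and yields the stated bound.
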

\begin{proof}
We can bound both sums by ``their corresponding integral'' by observing the following. 
For $k\in \Z^d$ and $x\in \R^d$ with $|x-\frac{k}{r}|_\infty < \frac{1}{2r}$ and thus $|x-\frac{k}{r}|\le \frac{\sqrt{d}}{2r}$, for $u \in \R^d$ 
\begin{align}
\label{eqn:abs_difference}
\big| x - u \big| 
& \le \big| \tfrac{k}{r} - u\big| + \big| x - \tfrac{k}{r} \big| 
\le \big| \tfrac{k}{r} -u\big| + \tfrac{\sqrt{d}}{2r}.
\end{align}
So that 
\begin{align*}
\frac{1}{(b+|\frac{k}{r}-u|)^{\gamma}} 
\le 
\frac{(b+\frac{\sqrt{d}}{2r})^\gamma}{(b+\frac{\sqrt{d}}{2r}+|\frac{k}{r}-u|)^{\gamma}} 
\le \frac{ (b+\frac{\sqrt{d}}{2})^\gamma }{(b+|x-u|)^{\gamma}} . 
\end{align*}
Then
\eqref{eqn:sum_bound_with_gamma_and_delta} follows by Lemma \ref{lemma:integral_bound_with_gamma_and_theta} 
and by Lemma  \ref{lemma:resonance_sum_estimate} we have $\sum_{k \in  \frac{1}{r} \N_0^2} 
 \frac{1}{r^2} 
\frac{ \rho^\reso(k,l)
 }{ (1+ |k-l|)^{\gamma}  } 
 \lesssim 
 1+ 2\pi \int_{\frac{1}{c} |l|}^{c |l|}
\frac{x}{ (1+|x-| l || )^{\gamma}} \dd x  \lesssim (1+|l|)^{ 2 -\gamma } $.
\end{proof}

\subsection{Terms in the second Wiener chaos}
\label{subsec:second_chaos}

In order to bound terms in the second Wiener chaos, i.e., $\Xi_{r,\epsilon}$, $\Theta_{r,\epsilon}$ and $\E[\theta_{r,\epsilon}\reso \sigma(\rD) \theta_{r,\epsilon}  - \xi_{r,\epsilon} \reso \sigma(\rD) \xi_{r,\epsilon}]$, we start by presenting auxiliary lemma's and observations.

\begin{theorem}[Wick's theorem] \cite[Theorem 1.28]{Ja97}
\label{theorem:wick}
Let $A,B,C,D$ be jointly Gaussian random variables. Then 
\begin{align*}
\E[ABCD] = \E[AB]\E[CD]+\E[AC]\E[BD] + \E[AD]\E[BC]. 
\end{align*}
\end{theorem}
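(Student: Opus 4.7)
).}

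As stated in \cite[Theorem 1.28]{Ja97}, the identity is meant for centered jointly Gaussian variables, so the first move is to reduce to this case. If $A,B,C,D$ are centered then the claim is the Isserlis identity; otherwise the customary convention is that the formula uses covariances in place of expectations, and the non-centered case follows by expanding $A = \E[A] + (A - \E[A])$ and similarly for $B,C,D$, then using linearity and the centered case for each resulting term (many of the terms vanish or cancel by parity). I would simply state this reduction and proceed assuming $\E[A]=\E[B]=\E[C]=\E[D]=0$.

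The cleanest route is then via the characteristic function. Let $X = (A,B,C,D)^T$ and let $\Sigma$ be its covariance matrix. Since $X$ is jointly Gaussian and centered,
\begin{align*}
\varphi(s,t,u,v) \;:=\; \E\bigl[e^{\i(sA+tB+uC+vD)}\bigr] \;=\; \exp\bigl(-\tfrac12 \langle \eta, \Sigma\eta\rangle\bigr),
\end{align*}
where $\eta = (s,t,u,v)^T$. The fourth mixed moment can be recovered by differentiation:
\begin{align*}
\E[ABCD] \;=\; \frac{1}{\i^4}\,\frac{\partial^4 \varphi}{\partial s\,\partial t\,\partial u\,\partial v}\bigg|_{\eta=0}.
\end{align*}
Writing $Q(\eta) = \tfrac12 \langle \eta, \Sigma \eta\rangle$, each derivative of $\varphi = e^{-Q}$ brings down a factor $-\partial Q$ (a linear form in $\eta$) or differentiates a previously produced linear form (yielding an entry of $-\Sigma$). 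When one finally sets $\eta = 0$, only those terms survive in which every one of the four differentiations has consumed a surviving linear factor — i.e.\ the variables $s,t,u,v$ are paired off. There are exactly three such pairings of $\{A,B,C,D\}$, and each pairing contributes the product of the two corresponding $\Sigma$-entries (with the two factors of $-1$ cancelling with $\i^4 = 1$).

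Carrying out the bookkeeping gives exactly
\begin{align*}
\E[ABCD] \;=\; \Sigma_{AB}\Sigma_{CD} + \Sigma_{AC}\Sigma_{BD} + \Sigma_{AD}\Sigma_{BC},
\end{align*}
which is the claim in the centered setting. There is no real obstacle; the only thing requiring care is the combinatorial bookkeeping of the four-fold derivative, for which one could alternatively diagonalise $\Sigma$ and write $A,B,C,D$ as linear combinations of independent standard Gaussians — then the identity reduces, by multilinearity, to the elementary computation $\E[Z_{i_1}Z_{i_2}Z_{i_3}Z_{i_4}]$ for i.i.d.\ standard Gaussians, which is exactly the sum over pair partitions. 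Either route is short and self-contained.
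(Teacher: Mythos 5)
The paper does not prove this statement — it cites it from \cite[Theorem 1.28]{Ja97} — so there is no in-paper argument to compare against. Your characteristic-function proof of the centred case (and the equally clean alternative you mention, diagonalising $\Sigma$ and reducing by multilinearity to $\E[Z_{i_1}Z_{i_2}Z_{i_3}Z_{i_4}]$ for i.i.d.\ standard Gaussians) is correct, standard, and self-contained, and the bookkeeping of which terms survive at $\eta=0$ is exactly as you describe.

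The one point to push back on is the preliminary remark about the non-centred case. The displayed identity is simply \emph{false} when the variables are not centred: taking $A=B=C=D=1$ (a degenerate Gaussian) gives $\E[ABCD]=1$ while $\E[AB]\E[CD]+\E[AC]\E[BD]+\E[AD]\E[BC]=3$; or take $A=B=C=D=Z+1$ with $Z$ standard normal, giving $\E[(Z+1)^4]=10$ versus $3\cdot\E[(Z+1)^2]^2=12$. Expanding $A=\E[A]+(A-\E[A])$ does not make the extra terms ``vanish or cancel by parity'' — only the odd-moment terms vanish, and one is left with a longer and genuinely different formula involving products of means and covariances. So there is no reduction of the non-centred case to the centred one for the identity as stated. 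What is true, and worth saying plainly, is that the theorem is to be read with the hypothesis ``centred'' added (as in \cite[Theorem 1.28]{Ja97}), and that every application in this paper satisfies it: in Lemmas \ref{lemma:bound_on_expect_mod_Delta_i_difference_Xi_in_x_squared}, \ref{lemma:expectation_of_THETA_eps} and \ref{lemma:difference_resos_paired_in_square_to_zero} the quantities plugged in are always built from the centred Gaussians $Z_k$, $X_{k,r}^\epsilon$, $Y_{k,r}^\epsilon$, $\cZ_m$. With that hypothesis stated, your proof is complete.
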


\begin{lemma}
\label{lemma:resonance_sum_estimate}
There exist $b >0$ and $c>1$ such that 
\begin{align*}
\supp \rho^\reso  \subset  B(0,b)^2 \cup \{ (x,y) \in\R^d \times \R^d : \tfrac{1}{c} |x| 
\le  |y| 
\le c |x|  \} 
\end{align*}
Consequently, uniformly in  $x,y\in \R^d$ 
\begin{align}
\label{eqn:getting_rid_of_one_1_plus_k_squared}
 \frac{\rho^\reso(x,y) }{(1+|x|^2)} \eqsim  \frac{\rho^\reso(x,y) }{(1+|y|^2)} .
\end{align}
\end{lemma}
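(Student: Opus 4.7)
The approach is a direct unpacking of the definition of $\rho^\reso$ combined with the dyadic support properties of the partition of unity. Specifically, by the definition just above \eqref{eqn:partition_prop_sum_to_1_and_squares_inbetween_half_and_1}, $\rho_{-1}$ is supported in some ball $B(0,R_{-1})$ and $\rho_0$ is supported in some annulus $\{a_1 \le |x| \le a_2\}$, so $\rho_j = \rho_0(2^{-j}\cdot)$ is supported in $\{a_1 2^j \le |x| \le a_2 2^j\}$ for $j \ge 0$. I would fix these constants once and for all.

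The plan is to consider the three cases coming from $\rho^\reso(x,y) = \sum_{|i-j|\le 1} \rho_i(x)\rho_j(y)$. If $(x,y) \in \supp \rho^\reso$, then some pair $(i,j)$ with $|i-j|\le 1$ has $x \in \supp \rho_i$ and $y \in \supp \rho_j$. First, if $i=-1$ or $j=-1$, then $|i-j|\le 1$ forces the other index to lie in $\{-1,0\}$; hence both $|x|$ and $|y|$ are bounded by some absolute constant $b$, placing $(x,y) \in B(0,b)^2$. Second, if $i,j \ge 0$, then $a_1 2^i \le |x| \le a_2 2^i$ and $a_1 2^j \le |y| \le a_2 2^j$; combined with $2^{-1} \le 2^{j-i} \le 2$ this yields
\[
|y| \le a_2 2^j \le 2 a_2 2^i \le \tfrac{2a_2}{a_1} |x|,
\]
and symmetrically $|x| \le \tfrac{2a_2}{a_1}|y|$, so with $c := 2a_2/a_1$ we land in the second set on the right-hand side of the claimed inclusion.

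For the consequence \eqref{eqn:getting_rid_of_one_1_plus_k_squared}: on $B(0,b)^2$ both $1+|x|^2$ and $1+|y|^2$ lie in $[1,1+b^2]$, so the ratio is bounded above and below. On the conical set $c^{-1}|x| \le |y| \le c|x|$, we have $1+|y|^2 \le 1 + c^2|x|^2 \le c^2(1+|x|^2)$ and similarly $1+|x|^2 \le c^2(1+|y|^2)$. Hence on the union, $1+|x|^2 \eqsim 1+|y|^2$, and multiplying by $\rho^\reso(x,y)$ (which vanishes outside this union) gives the stated equivalence.

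There is no real obstacle here; the only thing to be slightly careful about is to treat the boundary case involving $\rho_{-1}$ separately, since the comparability $|x|\eqsim|y|$ fails when one index equals $-1$ but $y$ happens to be zero (or vice versa), so it is cleaner to absorb those configurations into the bounded piece $B(0,b)^2$ rather than trying to force a conical bound there.
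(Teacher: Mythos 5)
Your proof is correct and takes essentially the same approach as the paper: split on whether a contributing index is $-1$ (in which case both $|x|$ and $|y|$ are bounded and $(x,y)$ lands in $B(0,b)^2$) or both indices are $\ge 0$ (in which case the dyadic annulus supports and $|i-j|\le 1$ give the conical comparability), then deduce \eqref{eqn:getting_rid_of_one_1_plus_k_squared} by treating the two pieces of the support separately. The only difference is cosmetic notation for the annulus radii.
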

\begin{proof}
Let $0<a<b$ be such that $\supp \rho_0 \subset \{ x \in \R^d : a\le |x| \le b\}$ and $\supp \rho_{-1} \subset B(0,b)$. 
Let $i, j \in \N_{-1}$ and $x,y \in \R^2$ be such that $\rho_i(x) \rho_j(y) \ne 0$. 
If $i,j \in \{-1,0\}$, then $x,y \in B(0,b)$. 
Suppose $i,j \ge 0$ and $|i-j| \le 1$. 
Then $|x| \in [2^i a, 2^i b]$ and $|y| \in [2^j a, 2^j b] \subset [2^{i-1} a, 2^{i+1} b]$. 
This in turn implies 
\begin{align*}
\tfrac{a}{2b} |x| 
\le 
\tfrac{a}{2b} 2^i b = 2^{i-1} a \le  |y| 
\le 2^{i+1} b \le 
\tfrac{2b}{a} 2^i a \le \tfrac{2b}{a} |x|. 
\end{align*}
\begin{calc}
For $x \in B(0,b)$ one has $1+|x|^2 \le 1+b^2 \le (1+b^2) ( 1+|y|^2)$ and for $(x,y) \in\R^d \times \R^d$ with $\tfrac{1}{c} |x| 
\le  |y| 
\le c |x|  $ one has $1+|x|^2 \lesssim 1+ c^2 |y|^2 \le c^2 (1+|x|^2)$. 
\end{calc}
\end{proof}

\begin{obs}
\label{obs:inner_prod_of_prod_f_k_f_l_with_f_z}
Let $k,l,z\in \N_0^d$. We write $\fn_k = \fn_{k,r}$ here. 
By \eqref{eqn:basis_prod_even_even} (and using \eqref{eqn:pairing_on_box_compared_with_extensions_on_torus}) and as $\fn_{\fq \circ k} = \fn_k $ for all $\fq \in \{-1,1\}^d$, 
\begin{align}
\langle \fn_k \fn_l , \fn_z \rangle_{L^2(Q_r)}
\cand 
\begin{calc} \notag 
 = 2^{-d} \langle \overline \fn_k \overline \fn_l , \overline \fn_z \rangle_{L^2(Q_r)}
 = 2^{-d} (2r)^{-\frac{d}{2}} \sum_{\fp \in \{-1,1\}^d} \frac{\nu_k \nu_l}{\nu_{k+\fp \circ l}} \langle \overline \fn_{ k + \fp \circ l}, \overline \fn_z \rangle_{L^2(Q_r)}
 \end{calc} \cnewline
\notag & = (2r)^{-\frac{d}{2}} \sum_{\fp \in \{-1,1\}^d} \frac{\nu_k \nu_l}{\nu_{k+\fp \circ l}} \langle \fn_{ k + \fp \circ l}, \fn_z \rangle_{L^2(Q_r)} \\
\label{eqn:innerprod_f_k_prod_f_l_with_f_z}
&  = (2r)^{-\frac{d}{2}}  \sum_{\fp,\fq \in \{-1,1\}^d} \frac{\nu_k \nu_l}{\nu_{k+\fq \circ \fp \circ l}} \delta_{\fq \circ k + \fp \circ l , z}. 
\end{align} 
By combining this with \eqref{eqn:polynomial_bound_on_rho_i}, using that $|\fn_k(x)| \le (\frac{2}{r})^{-\frac{d}{2}}$, we have 
for $x \in (0,r)^d$ and $\gamma>0$
\begin{align}
\label{eqn:Delta_q_f_k_f_l}
 r^d  |\Delta_i ( \fn_k \fn_l)(x)| 
\lesssim  \sum_{\fp,\fq \in \{-1,1\}^d} \rho_i \Big( \frac{\fq\circ k + \fp \circ l}{r}  \Big) 
 \frac{2^{\gamma i} }{ (1+ | \frac{k}{r}-\frac{l}{r} |)^\gamma }.
\end{align}
\end{obs}

\begin{lemma}
\label{lemma:bound_on_expect_mod_Delta_i_difference_Xi_in_x_squared}
Let $d=2$. 
Consider the setting of \ref{obs:overview_white_noise_section} as we did in Lemma \ref{lemma:bound_on_expect_mod_Delta_i_difference_xi_in_x_squared}.
For all $\gamma \in (0,1)$ there exists a $C>0$ (independent of $r$) such that for all  $i\in \N_{-1}$, $\epsilon,\delta>0$, $x\in Q_r$
\begin{align}
\label{eqn:bound_on_expect_mod_Delta_i_difference_Xi_in_x_squared}
\E [ |\Delta_i ( \Xi_{r,\epsilon} - \Xi_{r,\delta})(x)|^2] 
&\le C   |\epsilon - \delta|^{\gamma}  2^{2\gamma i }.
\end{align}
\end{lemma}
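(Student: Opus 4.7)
The plan is to expand the difference as a bilinear form in the independent standard Gaussians $(Z_k)_{k\in\N_0^2}$, then compute the second moment using Wick's theorem. Writing $T_\epsilon(k) := \tau(\tfrac{\epsilon}{r} k)$ and using the bilinearity trick $a_\epsilon b_\epsilon - a_\delta b_\delta = (a_\epsilon - a_\delta) b_\epsilon + a_\delta(b_\epsilon - b_\delta)$, we get
\begin{align*}
\Xi_{r,\epsilon} - \Xi_{r,\delta}
= \sum_{k,l\in\N_0^2} K_{k,l}^{\epsilon,\delta}\,\fn_k \fn_l \bigl(Z_k Z_l - \delta_{k,l}\bigr),
\end{align*}
where $K_{k,l}^{\epsilon,\delta} = \frac{\rho^\reso(k/r,l/r)}{1+\pi^2|l|^2/r^2}\bigl[T_\epsilon(k)T_\epsilon(l) - T_\delta(k)T_\delta(l)\bigr]$. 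Apply $\Delta_i$ and expand $\Delta_i(\fn_k\fn_l)$ using \eqref{eqn:Delta_q_f_k_f_l}. By Wick's theorem (Theorem \ref{theorem:wick}), the centering $Z_kZ_l - \delta_{k,l}$ kills the pairing $\{k,l\}\{m,n\}$, leaving $\E[(Z_kZ_l-\delta_{k,l})(Z_mZ_n-\delta_{m,n})] = \delta_{k,m}\delta_{l,n}+\delta_{k,n}\delta_{l,m}$, which reduces $\E[|\Delta_i(\Xi_{r,\epsilon}-\Xi_{r,\delta})(x)|^2]$ to essentially twice the diagonal sum $\sum_{k,l} |K_{k,l}^{\epsilon,\delta}|^2 |\Delta_i(\fn_k\fn_l)(x)|^2$.

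Next, split $T_\epsilon(k)T_\epsilon(l) - T_\delta(k)T_\delta(l) = (T_\epsilon(k)-T_\delta(k))T_\epsilon(l) + T_\delta(k)(T_\epsilon(l)-T_\delta(l))$ and apply the pointwise bound \eqref{eqn:bound_difference_tau_squared} to each factor, obtaining
\begin{align*}
\bigl|T_\epsilon(k)T_\epsilon(l) - T_\delta(k)T_\delta(l)\bigr|^2 \lesssim |\epsilon-\delta|^\gamma \bigl(|k/r|^\gamma + |l/r|^\gamma\bigr).
\end{align*}
By Lemma \ref{lemma:resonance_sum_estimate}, on $\supp \rho^\reso$ we have $|k| \eqsim |l|$ (or both lie in a fixed ball), so $|k/r|^\gamma + |l/r|^\gamma \lesssim (1+|l/r|)^\gamma$ and, by \eqref{eqn:getting_rid_of_one_1_plus_k_squared}, the $(1+\pi^2|l|^2/r^2)^{-1}$ factor controls either index symmetrically. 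This extracts the clean $|\epsilon-\delta|^\gamma$ factor we need.

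It then remains to bound
\begin{align*}
 2^{2\gamma i} |\epsilon-\delta|^\gamma r^{-2d} \sum_{k,l\in\N_0^2} \frac{\rho^\reso(k/r,l/r)^2(1+|l/r|)^\gamma}{(1+|l/r|^2)^2(1+|\tfrac{k-l}{r}|)^{2\gamma'}} \Bigl(\sum_{\fp,\fq} \rho_i\bigl(\tfrac{\fq\circ k + \fp\circ l}{r}\bigr)\Bigr)^2
\end{align*}
by a constant uniformly in $i$, $\epsilon,\delta$, $r\ge1$ and $x\in Q_r$. The sum of $\rho_i$-factors is harmless (it restricts $|\fq\circ k + \fp\circ l| \lesssim 2^i r$), and the $\Delta_i$-bound already provided the $2^{2\gamma i}$ prefactor with room to spare: combining $(1+|l/r|^2)^{-2}$ with $(1+|l|/r)^\gamma$ leaves a summable tail $(1+|l|/r)^{-4+\gamma}$ uniformly in $r$, after which the $(1+|(k-l)/r|)^{-2\gamma'}$ factor lets us freely sum in $k$ via Lemma \ref{lemma:bound_sum_gamma_delta}. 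The main obstacle is to choose $\gamma'\in(0,1)$ close enough to $1$ so that the $\rho^\reso$-supported $(k,l)$-sum with the bilinear $\rho_i$-constraint is absorbed into the $2^{2\gamma i}$ prefactor without losing powers of $r$: this is where the scaling $|k|\eqsim|l|\eqsim 2^i r$ on $\supp \rho^\reso \cap \supp \rho_i$ must be used carefully, and the factor $r^{-2d}$ from $\|\fn_k\|_\infty$ must cancel exactly against the Riemann-sum density $r^{-2}$ converting sums over $k/r, l/r$ into integrals of bounded total mass.
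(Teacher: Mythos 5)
Your approach is essentially the paper's proof, down to the key ingredients: expand in the $Z_k$'s, apply Wick's theorem to reduce the second moment to a diagonal sum, invoke \eqref{eqn:Delta_q_f_k_f_l} and \eqref{eqn:bound_difference_tau_squared}, use Lemma~\ref{lemma:resonance_sum_estimate} to symmetrise the $(1+\pi^2|l|^2/r^2)^{-1}$ factor, and finish with Lemma~\ref{lemma:bound_sum_gamma_delta}. One small cosmetic difference: for the $\tau$-difference you use $ab-cd=(a-c)b+c(b-d)$ whereas the paper uses the symmetric identity $2(ab-cd)=(a-c)(b+d)+(a+c)(b-d)$; both give $|\tau(\epsilon k)\tau(\epsilon l)-\tau(\delta k)\tau(\delta l)|^2\lesssim|\epsilon-\delta|^\gamma(|k|^\gamma+|l|^\gamma)$.

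The "main obstacle" you flag at the end is not in fact an obstacle, and the auxiliary parameter $\gamma'$ is unnecessary. The exponent in \eqref{eqn:Delta_q_f_k_f_l} is the same $\gamma$ that appears in the $2^{\gamma i}$ prefactor, so after squaring you get exactly $2^{2\gamma i}$ and $(1+|(k-l)/r|)^{-2\gamma}$; set $\gamma'=\gamma$ and there is nothing left to optimise. Nor does the $\rho_i$-factor need to interact with $\rho^\reso$: one simply bounds $\bigl(\sum_{\fp,\fq}\rho_i(\cdot)\bigr)^2\le 16$, and the $r$-uniformity of the remaining $(k,l)$-Riemann sum is already delivered by \eqref{eqn:bound_sum_k_with_min_gamma_power} — after summing in $k$ one is left with $\sum_{l\in\frac1r\N_0^2}r^{-2}(1+|l|)^{-2-\gamma}$, a bounded Riemann sum for any $\gamma>0$. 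So the sketch is correct; you just need to carry out the last estimate rather than flagging it as a potential difficulty.
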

\begin{proof}
First observe 
 $\Xi_{r,\epsilon}  =  \sum_{k,l\in \N_0^2} \rho^\reso( \frac{k}{r},\frac{l}{r}) \frac{\tau(\epsilon \frac{k}{r}) \tau(\epsilon \frac{l}{r})}{1+\frac{\pi^2}{r^2} |l|^2} [Z_k Z_l -\delta_{k,l}] \fn_k \fn_l $. 
By Theorem \ref{theorem:wick} and \eqref{eqn:Delta_q_f_k_f_l} (as both contributions $\delta_{k,m} \delta_{l,n}$ and $\delta_{k,n}\delta_{m,l}$ can be bounded by the same expression by Lemma \ref{lemma:resonance_sum_estimate})
\begin{align*}
&  2^{-2\gamma i} \E [ |\Delta_i ( \Xi_{r,\epsilon} - \Xi_{r,\delta})(x)|] \\
& \lesssim 
\sum_{k,l \in  \frac{1}{r}  \N_0^2} 
 \frac{1}{r^4} 
\frac{\rho^\reso(k,l)^2 }{(1+ \pi^2|l|^2)^2}  
\frac{ [\tau(\epsilon k) \tau(\epsilon l)-\tau(\delta k) \tau(\delta l)]^2 }{ (1+ |k-l|)^{2\gamma} }. 
\end{align*}
As $2(ab-cd) = (a-c) (b+d) + (a+c)(b-d)$
\begin{calc}
\begin{align*}
(a-c) (b+d) + (a+c)(b-d)
= ab +ad-cb-cd + ab -ad + cb-cd
\end{align*}
we use this as follows:
\begin{align*}
& 2[\tau(\epsilon k) \tau (\epsilon l)  - 
\tau(\delta k) \tau (\delta l) ]\\
& = 
\big(\tau(\epsilon k) - \tau(\delta k)\big)
\big(\tau (\epsilon l)  +  \tau (\delta l) \big)
+\big(\tau(\epsilon k) + \tau(\delta k)\big)
\big(\tau (\epsilon l)  -  \tau (\delta l) \big),
\end{align*}
\end{calc}
similar to \eqref{eqn:bound_difference_tau_squared} as in the proof of Lemma \ref{lemma:bound_on_expect_mod_Delta_i_difference_xi_in_x_squared} we obtain 
\begin{align*}
|\tau(\epsilon k) \tau (\epsilon l)  - 
\tau(\delta k) \tau (\delta l) |^2
\le 4 \|\nabla \tau\|_\infty^{\gamma}
|\epsilon - \delta|^{\gamma} (|k|^{\gamma} + |l|^{\gamma}). 
\end{align*}
Using Lemma \ref{lemma:resonance_sum_estimate} and \eqref{eqn:bound_sum_k_with_min_gamma_power} we obtain
\begin{align*}
 2^{-2\gamma i} \E [ \|\Delta_i ( \Xi_{r,\epsilon} - \Xi_{r,\delta})\|_{L^\infty}^2] 
& \lesssim 
|\epsilon - \delta|^{\gamma}
\sum_{l \in  \frac{1}{r}  \N_0^2} 
\frac{  r^{-4}  }{(1+ |l|)^{4-\gamma} }  
\sum_{k \in  \frac{1}{r}  \N_0^2} 
\frac{ \rho^\reso(k,l)^2 }{ (1+ |k-l|)^{2\gamma} }
\\
& \lesssim 
|\epsilon - \delta|^{\gamma}
\sum_{l \in  \frac{1}{r}  \N_0^2} 
\frac{  r^{-2}  }{(1+ |l|)^{2 + \gamma} }  . 
\end{align*}
\end{proof}

\begin{lemma}
\label{lemma:expectation_of_THETA_eps}
Suppose that \eqref{eqn:bound_F} holds for $F_{r,\epsilon}(k,l) = \E[ X_{k,r}^\epsilon X_{l,r}^\epsilon]$.
For all $\gamma \in (0,\infty)$ there exists a $C>0$ (independent of $r$)  such that for all  $i\in \N_{-1}$, $\epsilon>0$ 
\begin{align}
\label{eqn:bound_on_expect_mod_Delta_i_Theta}
\E [ |\Delta_i  \Theta_{r,\epsilon} (x)|^2] 
\le C r^{2\gamma} 2^{\gamma i }. 
\end{align}
\end{lemma}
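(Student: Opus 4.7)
The strategy mirrors that of Lemma~\ref{lemma:bound_on_expect_mod_Delta_i_difference_Xi_in_x_squared}, replacing the explicit delta-type correlations $\E[Z_k Z_l]=\delta_{k,l}$ used there by the polynomial correlation bound \eqref{eqn:bound_F}. Expanding the Littlewood-Paley block we would write
\begin{align*}
\Delta_i \Theta_{r,\epsilon}(x)
= \sum_{k,l\in\N_0^2} \frac{\rho^\reso(\tfrac{k}{r},\tfrac{l}{r})}{1+\tfrac{\pi^2}{r^2}|l|^2}\, \big(X_{k,r}^\epsilon X_{l,r}^\epsilon - \E[X_{k,r}^\epsilon X_{l,r}^\epsilon]\big) \, \Delta_i(\fn_k \fn_l)(x),
\end{align*}
and then compute $\E[|\Delta_i \Theta_{r,\epsilon}(x)|^2]$. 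Since the family $(X_{k,r}^\epsilon)_{k}$ is jointly Gaussian, Wick's theorem (Theorem~\ref{theorem:wick}) reduces the covariance of the two centered products to a sum of two terms of the form $F_{r,\epsilon}(k,m)F_{r,\epsilon}(l,n)$, both of which can be estimated by the same bound.

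The next step is to insert the three standing estimates. First, \eqref{eqn:Delta_q_f_k_f_l} controls each $|\Delta_i(\fn_k \fn_l)(x)|$ by $r^{-2}$ times a short sum of $\rho_i((\fq\circ k+\fp\circ l)/r)$ with a factor $2^{\gamma' i}(1+|\tfrac{k-l}{r}|)^{-\gamma'}$, for any $\gamma'>0$ chosen below. Second, Lemma~\ref{lemma:resonance_sum_estimate}, via \eqref{eqn:getting_rid_of_one_1_plus_k_squared}, allows us to replace $(1+\pi^2|l/r|^2)^{-1}$ by (say) $(1+|k/r|)^{-1}(1+|l/r|)^{-1}$ without loss on the support of $\rho^\reso$; similarly for the $(m,n)$-block. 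Third, for each $\delta>0$ the hypothesis \eqref{eqn:bound_F} gives the uniform bound $|F_{r,\epsilon}(k,m)|\lesssim r^{2(\delta-1)}\prod_{j=1}^2 (\tfrac{1}{r}+|\tfrac{k_j-m_j}{r}|)^{\delta-1}$, and likewise for $F_{r,\epsilon}(l,n)$.

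After these substitutions one is left with a product (over the two coordinate directions, by the tensor structure of $\rho^\reso$ and of $F_{r,\epsilon}$) of one-dimensional sums that can be iteratively collapsed using \eqref{eqn:sum_bound_with_gamma_and_delta} in Lemma~\ref{lemma:bound_sum_gamma_delta}. Concretely, one first sums in $m$ and $n$ against the correlation factors, which costs a factor $r^{2\delta}$ (this is exactly the place where, compared to the $\Xi_{r,\epsilon}$ case of Lemma~\ref{lemma:bound_on_expect_mod_Delta_i_difference_Xi_in_x_squared}, we lose polynomial powers of $r$), and then sums in $k$ and $l$ against the resulting factors $(1+|k/r|)^{-1}(1+|l/r|)^{-1}(1+|k/r-l/r|)^{-\gamma'}$. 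By choosing $\delta<\gamma$ small and $\gamma'$ slightly larger than $\gamma$ (exactly as in Lemma~\ref{lemma:expect_theta_eps}) all remaining sums converge uniformly in $r$ and one obtains the bound $\fC r^{2\gamma} 2^{\gamma i}$.

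The main obstacle is bookkeeping the interplay between the three sources of growth/decay: the $2^{\gamma' i}$ coming from \eqref{eqn:Delta_q_f_k_f_l}, the weight $(1+\pi^2|l/r|^2)^{-1}$ tied to the resonance support, and the weak decay $(1+|k-m|)^{\delta-1}$ of the covariance. Balancing these (i.e.\ choosing $\delta,\gamma'$ correctly as functions of $\gamma$) so that all sums converge while the $r$-dependence is kept to $r^{2\gamma}$ is the only delicate point; it is essentially the same balancing already carried out in the proof of Lemma~\ref{lemma:expect_theta_eps}, but performed twice because of the two covariance pairings produced by Wick's theorem.
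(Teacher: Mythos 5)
Your sketch reproduces the paper's proof almost exactly: Wick's theorem to reduce the fourth moment to products of covariances, \eqref{eqn:Delta_q_f_k_f_l} to control the Littlewood--Paley blocks of products, \eqref{eqn:bound_F} for the covariance decay, and \eqref{eqn:sum_bound_with_gamma_and_delta} to collapse the resulting four-index sum, with a final balance of $\delta$ against $\gamma$. The one place you diverge slightly is the treatment of the $\sigma(\rD)$-weight: the paper uses \eqref{eqn:getting_rid_of_one_1_plus_k_squared} only to move the second weight from $n$ to $m$ and then separates dimensions via $1+|l|^2 \gtrsim (1+l_1)(1+l_2)$, whereas you redistribute one weight as $(1+|k/r|)^{-1}(1+|l/r|)^{-1}$; this also works on the resonance support but changes the bookkeeping in the final collapse. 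Note also that $\rho^\reso$ is not literally tensor-structured (the $\rho_i$ are radial) — the paper, and effectively you, just bound it by $1$ before splitting dimensions — and the paper ultimately needs $\delta<\gamma/4$, a bit stronger than the $\delta<\gamma$ you inherit from Lemma~\ref{lemma:expect_theta_eps}; your deferred "balancing" would have to account for this.
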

\begin{proof}
First note that $\Theta_{r,\epsilon} = \sum_{k,l\in \N_0^2} 
 \frac{ \rho^\reso(\frac{k}{r},\frac{l}{r}) }{1+ \pi^2 |\frac{l}{r}|^2} 
\fn_{k,r}  \fn_{l,r} 
[X_{k,r}^\epsilon X_{l,r}^\epsilon - \E[ X_{k,r}^\epsilon X_{l,r}^\epsilon] ] $. 
By Theorem \ref{theorem:wick}
\begin{align*}
&  \E \left(
[X_{k,r}^\epsilon X_{l,r}^\epsilon - \E[ X_{k,r}^\epsilon X_{l,r}^\epsilon] ]
[X_{m,r}^\epsilon X_{n,r}^\epsilon - \E[ X_{m,r}^\epsilon X_{n,r}^\epsilon] ] 
  \right)  \\
&   = \E[ X_{k,r}^\epsilon X_{m,r}^\epsilon]\E[ X_{l,r}^\epsilon X_{n,r}^\epsilon] 
  + \E[ X_{k,r}^\epsilon X_{n,r}^\epsilon]\E[ X_{l,r}^\epsilon X_{m,r}^\epsilon]. 
\end{align*}
By exploiting symmetries using Lemma \ref{lemma:resonance_sum_estimate} and by \eqref{eqn:Delta_q_f_k_f_l} we have 
\begin{align*}
2^{-2\gamma i}  \E [ |\Delta_i  \Theta_{r,\epsilon} (x)|^2] 
\lesssim
%
 \sum_{k,l,m,n\in  \frac{1}{r} \N_0^2} 
\frac{ r^{-4} \rho^\reso(k,l) \rho^\reso(m,n) 
|\E[ X_{rk}^\epsilon X_{rm}^\epsilon]\E[ X_{rl}^\epsilon X_{rn}^\epsilon] |
}{(1+|k-l|)^\gamma (1+|m-n|)^\gamma(1+|l|^2)(1+|m|^2)}. 
\end{align*}
We will bound the $\rho^\reso$ function by $1$, 
use the bound \eqref{eqn:bound_F} for some $\delta>0$ (will be chosen small enough later) and we `separate the dimensions' by using that $1+|k|^2 \gtrsim (1+k_1) (1+k_2)$ and $(1+ |k-l|)^\gamma \gtrsim (\frac{1}{r}+ |k_1-l_1|)^{\frac{\gamma}{2}} (\frac{1}{r}+ |k_2-l_2|)^{\frac{\gamma}{2}} $ and  obtain 
\begin{align}
2^{-2\gamma i}  \E [ |\Delta_i  \Theta_{r,\epsilon} (x)|^2] 
\lesssim 
\bigg( 
\sum_{k,l,m,n\in \frac1r \N_0} 
\frac{r^{2\delta-4} (\frac{1}{r}+|k-m|)^{\delta-1 } (\frac{1}{r}+|l-n|)^{\delta-1 }
}{(\frac{1}{r}+|k-l|)^{\frac{\gamma}{2}} (\frac{1}{r}+|m-n|)^{\frac{\gamma}{2}} (\frac{1}{r}+l)(\frac{1}{r}+m)} 
\bigg)^2. 
 \label{eqn:bound_of_second_moment_Theta_by_square_4_dim_sum}
\end{align}
For $\delta < \frac{\gamma}{2}$ we have by Lemma \ref{lemma:bound_sum_gamma_delta}
\begin{align*}
\sum_{n\in \frac{1}{r}\N_0} 
\frac{r^{-1}(\frac{1}{r}+|l-n|)^{\delta-1 }}{(\frac{1}{r}+|m-n|)^{\frac{\gamma}{2}} } \vee 
\sum_{k\in \frac{1}{r} \N_0} 
\frac{r^{-1}(\frac{1}{r}+|k-m|)^{\delta-1 }}{(\frac{1}{r}+|k-l|)^{\frac{\gamma}{2}} } 
\lesssim \frac{1}{(\frac{1}{r}+|m-l|)^{\frac{\gamma}{2}-\delta} }, 
\end{align*}
and for $\delta < \frac{\gamma}{4}$ the square root of the right-hand side of \eqref{eqn:bound_of_second_moment_Theta_by_square_4_dim_sum} can be bounded by
\begin{align*}
\sum_{m,l \in \frac{1}{r} \N_0} \frac{r^{2\delta-2}}{(\frac{1}{r}+|m-l|)^{\gamma-2\delta} } \frac{1}{\frac{1}{r}+m} 
\frac{1}{\frac{1}{r}+l} 
\lesssim 
\sum_{l \in \frac{1}{r} \N_0}
\frac{r^{2\delta-1}}{(\frac{1}{r}+l)^{\gamma-3\delta} } 
\frac{1}{\frac{1}{r}+l} 
\lesssim r^{\gamma - 2\delta} \lesssim r^\gamma. 
\end{align*}
Hence we obtain \eqref{eqn:bound_on_expect_mod_Delta_i_Theta}.
\end{proof}

\begin{lemma}
\label{lemma:difference_resos_paired_in_square_to_zero}
Suppose that \eqref{eqn:convergence_X_k_eps_to_Z_k} holds and that \eqref{eqn:bound_F} holds for $F_{r,\epsilon}(k,l)$ being either 
 $\E[X_{k,r}^\epsilon X_{l,r}^\epsilon]$, $ \E[X_{k,r}^\epsilon Y_{l,r}^\epsilon]$ or $\E[Y_{k,r}^\epsilon Y_{l,r}^\epsilon]$. 
 Then 
$\E [ | \langle \Theta_{r,\epsilon} - \Xi_{r,\epsilon} , 
\fn_z \rangle |^2 ] \rightarrow 0$
for all $z\in \N_0^2$. 
\end{lemma}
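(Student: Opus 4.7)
The plan is to expand $\langle \Theta_{r,\epsilon}-\Xi_{r,\epsilon},\fn_z\rangle$ explicitly as a quadratic expression in the Gaussian variables $X_{k,r}^\epsilon$ and $Y_{k,r}^\epsilon$, use Wick's theorem to represent its second moment as a convergent lattice sum of products of pair covariances, and then conclude by dominated convergence. First, from the definitions of $\Theta_{r,\epsilon}$ and $\Xi_{r,\epsilon}$ together with \eqref{eqn:about_difference_theta_and_xi} and the product formula \eqref{eqn:innerprod_f_k_prod_f_l_with_f_z}, I would write
\begin{align*}
\langle \Theta_{r,\epsilon}-\Xi_{r,\epsilon},\fn_z\rangle
= \sum_{\fp,\fq\in\{-1,1\}^2}\ \sum_{\substack{k,l\in\N_0^2\\ \fq\circ k+\fp\circ l=z}} c_{k,l}^{\fp,\fq}(r)\, A_{k,l}^\epsilon,
\end{align*}
where $A_{k,l}^\epsilon:=X_{k,r}^\epsilon X_{l,r}^\epsilon-Y_{k,r}^\epsilon Y_{l,r}^\epsilon-\E[X_{k,r}^\epsilon X_{l,r}^\epsilon-Y_{k,r}^\epsilon Y_{l,r}^\epsilon]$ and $c_{k,l}^{\fp,\fq}(r)$ bundles the prefactor $\rho^\reso(k/r,l/r)/(1+\pi^2|l/r|^2)$ with the $(2r)^{-1}\nu_k\nu_l/\nu_{k+\fq\circ\fp\circ l}$ coming from \eqref{eqn:innerprod_f_k_prod_f_l_with_f_z}. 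Crucially $l$ is determined by $k$ (and the signs), so effectively one lattice variable remains in each sector.

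Second, I would square and take expectation: by Wick's theorem (Theorem~\ref{theorem:wick}) the covariance $\E[A_{k,l}^\epsilon A_{m,n}^\epsilon]$ expands into a fixed signed sum of eight products of pair covariances of the form $\E[U_a U_b]\,\E[U_c U_d]$ with each $U\in\{X_{\cdot,r}^\epsilon,Y_{\cdot,r}^\epsilon\}$. For pointwise (in $k,l,m,n$) convergence to zero, I would apply \eqref{eqn:convergence_X_k_eps_to_Z_k} together with the second-moment bound \eqref{eqn:bound_F} at $k=l$ and Cauchy--Schwarz, giving
\begin{align*}
\E[X_{k,r}^\epsilon U_{m,r}^\epsilon]-\E[Y_{k,r}^\epsilon U_{m,r}^\epsilon]\xrightarrow{\epsilon\downarrow 0}0 \qquad (U\in\{X,Y\}).
\end{align*}
Hence all three admissible pair covariances converge to the same common limit, so each of the eight Wick products in the expansion tends to a single value and their signed sum collapses to zero pointwise.

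Third, I would set up the dominating sum. Bounding each pair covariance by \eqref{eqn:bound_F} with a small $\delta>0$, using the sign constraints $l=\fp\circ(z-\fq\circ k)$ and $n=\fp'\circ(z-\fq'\circ m)$ (so $|l_i-n_i|$ reduces to a linear combination of $|k_i\pm m_i|$ up to additive constants depending on $z$), and exploiting the resonance support bound $|k|\sim|l|$ from Lemma~\ref{lemma:resonance_sum_estimate} together with the quadratic decay $(1+|l/r|^2)^{-1}(1+|n/r|^2)^{-1}$, the inner sum is controlled by an expression of the same shape as \eqref{eqn:bound_of_second_moment_Theta_by_square_4_dim_sum} in the proof of Lemma~\ref{lemma:expectation_of_THETA_eps}. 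Coordinatewise separation combined with Lemma~\ref{lemma:bound_sum_gamma_delta} then yields a summable, $\epsilon$-independent majorant. Dominated convergence finishes the argument.

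The main obstacle is verifying this uniform summability: unlike Lemma~\ref{lemma:expectation_of_THETA_eps} where pairing against $\Delta_i(\fn_k\fn_l)$ produced an auxiliary Littlewood--Paley cutoff $\rho_i(\fq\circ k+\fp\circ l)$, here the rigid constraint $\fq\circ k+\fp\circ l=z$ forces $k$ to range over the full lattice $z+\N_0^2$ in the sign sectors with $\fp=-\fq$, and one must rely entirely on the resonance-induced comparability $|l|\sim|k|$, the $(1+|l|^2)^{-1}$ factor and the off-diagonal decay $(1+|k_i-m_i|)^{\delta-1}$ from \eqref{eqn:bound_F} to recover integrability.
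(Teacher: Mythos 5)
Your proposal is correct and follows essentially the same route as the paper: expand $\langle\Theta_{r,\epsilon}-\Xi_{r,\epsilon},\fn_z\rangle$ via \eqref{eqn:innerprod_f_k_prod_f_l_with_f_z}, apply Wick's theorem (Theorem~\ref{theorem:wick}) to the second moment, deduce pointwise vanishing of the covariance from \eqref{eqn:convergence_X_k_eps_to_Z_k} (your Cauchy--Schwarz step is an equivalent rearrangement of the paper's telescoping), and conclude by dominated convergence with a majorant built from \eqref{eqn:bound_F} and Lemma~\ref{lemma:bound_sum_gamma_delta}. One small correction to your final paragraph: the resonance comparability $|k|\sim|l|$ from Lemma~\ref{lemma:resonance_sum_estimate} is not actually used for the majorant (the paper simply bounds $\rho^\reso\le 1$), and the Kronecker constraint is a \emph{simplification} rather than an obstacle — after merging the sign variables it forces $|k_i-m_i|=|l_i-n_i|$, so the two off-diagonal factors collapse into $(1+|l_i-n_i|)^{2\delta-2}$ and the majorant reduces to a two-variable sum over $\Z^2\times\Z^2$, not an analogue of the four-free-variable expression \eqref{eqn:bound_of_second_moment_Theta_by_square_4_dim_sum}.
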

\begin{proof}
Fix $z\in \N_0^2$. 
Given a function $H: (\N_0^2)^4 \rightarrow \R$ let us use the following (formal) notation 
\begin{align*}
\fS(H) = \sum_{k,l,m,n\in \N_0^2} 
\frac{ \rho^\reso(k,l) }{1+\frac{\pi^2}{r^2} |l|^2} 
\frac{ \rho^\reso(m,n) }{1+\frac{\pi^2}{r^2} |n|^2} 
\sum_{\fp,\fr , \fq , \fs\in \{-1,1\}^2} \delta_{\fr \circ k + \fp \circ l , z}
 \delta_{\fs \circ m + \fq \circ n , z} H(k,l,m,n).
\end{align*}
By \eqref{eqn:innerprod_f_k_prod_f_l_with_f_z}, as  $\frac14 \le \nu_k \le 1$ for all $k\in\N_0^2$,
\begin{align*}
 & \E [ | \langle 
\Theta_{r,\epsilon} 
 - 
\Xi_{r,\epsilon} , 
\fn_z \rangle |^2 ] 
\lesssim 
\fS(E_\epsilon),
\end{align*}
where 
\begin{align*}
E_\epsilon(k,l,m,n) & = 
  \E \left(
[X_{k,r}^\epsilon X_{l,r}^\epsilon - Y_{k,r}^\epsilon Y_{l,r}^\epsilon ]
[X_{m,r}^\epsilon X_{n,r}^\epsilon - Y_{m,r}^\epsilon Y_{n,r}^\epsilon ] 
  \right) \\
& \quad   - 
    \E 
[X_{k,r}^\epsilon X_{l,r}^\epsilon -  Y_{k,r}^\epsilon Y_{l,r}^\epsilon  ]
\E
[X_{m,r}^\epsilon X_{n,r}^\epsilon - Y_{m,r}^\epsilon Y_{n,r}^\epsilon ] . 
\end{align*}  
We decompose $E_\epsilon$ using Wick's theorem (Theorem \ref{theorem:wick}). Let us for a few lines write $A_k = X_{k,r}^\epsilon$ and $B_k =Y_{k,r}^\epsilon$, then we obtain 
\begin{align*}
& \E \left( [A_k A_l - B_k B_l ][ A_m A_n - B_m B_n ] \right)
- \E  [A_k A_l - B_k B_l ] \E[ A_m A_n - B_m B_n ]  \\
& = \E [A_k A_l  A_m A_n ] 
- \E [A_k A_l  B_m B_n ] 
- \E [ B_k B_l  A_m A_n ] 
+ \E [B_k B_l  B_m B_n ] \\
& \quad - (\E  [A_k A_l] - \E[ B_k B_l ])(\E[ A_m A_n] -\E[ B_m B_n ] ) \\
& = 
 \E [A_k A_m] \E[  A_l A_n ] 
- \E [A_k B_m ] \E[ A_l B_n ] 
- \E [ B_k A_m] \E[ B_l A_n ] 
+ \E [B_k B_m ] \E[B_l B_n ] \\
& \quad +  \E [A_k A_n]\E[  A_m A_l ] 
- \E [A_k B_n ] \E[ B_m A_l ] 
- \E [ B_k A_n] \E[ A_m B_l ] 
+ \E [B_k B_n ] \E[B_m B_l ] .
\end{align*}
Observe that
\begin{align*}
&  \E [A_k A_m] \E[  A_l A_n ] - \E [A_k B_m ] \E[ A_l B_n ] \\
\cand \begin{calc}
=  \E [A_k A_m] \E[  A_l A_n ] - 
\E [A_k (B_m-A_m) ] \E[ A_l B_n ] 
-\E [A_k A_m ] \E[ A_l B_n ] 
\end{calc}\cnewline
\cand \begin{calc}
=  \E [A_k A_m] \E[  A_l A_n ] 
- \E [A_k (B_m-A_m) ] \E[ A_l (B_n -A_n)] 
\end{calc}\cnewline
\cand \begin{calc}
\quad -\E [A_k (B_m-A_m) ] \E[ A_l A_n ] 
-\E [A_k A_m ] \E[ A_l B_n ] 
\end{calc}\cnewline
&
=  \E [A_k A_m] \E[  A_l (A_n-B_n) ] 
- \E [A_k (B_m-A_m) ] \E[ A_l (B_n -A_n)] 
\\ &
\quad -\E [A_k (B_m-A_m) ] \E[ A_l A_n ].
\end{align*}
Hence, as $\E[|A_k-B_k|^2] = \E[|X_{k,r}^\epsilon- Y_{k,r}^\epsilon|^2] \rightarrow 0$ by \eqref{eqn:convergence_X_k_eps_to_Z_k}, 
we have $E_\epsilon(k,l,m,n) \rightarrow 0$ for all $k,l,m,n\in \N_0^2$.
We show that $\fS(E_\epsilon)$ converges to zero by a dominated convergence argument. 
Let us write 
\begin{align*}
J(k,l,m,n) := \prod_{i=1}^2 (1+|k_i - m_i|)^{\delta-1} (1+|l_i - n_i|)^{\delta-1}, 
\end{align*}
and $\tilde J(k,l,m,n) = J(k,l,n,m)$.
Then by \eqref{eqn:bound_F} we have $E_\epsilon \le J + \tilde J$ and by the symmetries obtained by Lemma \ref{lemma:resonance_sum_estimate} $\fS(\tilde J) \lesssim \fS(J)$. 
Moreover, by ``merging the $\fp,\fq,\fr,\fs$ and $k,l,m,n$ variables'' (in the sense of summing over $k \in \Z^2$ instead of $\fq \circ k$ with $\fq \in \{-1,1\}^2$ and $k\in \N_0^2$) we have 
\begin{align*}
\fS(J) 
\cand \begin{calc}
\lesssim \sum_{k,l,m,n\in \Z^2} 
\frac{ \delta_{ k + l , z} }{1+\frac{\pi^2}{r^2} |l|^2} 
\frac{  \delta_{ m + n , z} }{1+\frac{\pi^2}{r^2} |n|^2} 
  \prod_{i=1}^2 (1+|k_i - m_i|)^{\delta-1} (1+|l_i - n_i|)^{\delta-1}
\end{calc} \cnewline
& \lesssim \sum_{l,n\in \Z^2} 
\frac{ 1 }{1+\frac{\pi^2}{r^2} |l|^2} 
\frac{ 1 }{1+\frac{\pi^2}{r^2} |n|^2} 
  \prod_{i=1}^2 (1+|l_i - n_i|)^{2\delta-2} 
\end{align*}
which is finite by Lemma \ref{lemma:bound_sum_gamma_delta}. 
\end{proof}

\begin{lemma}
\label{lemma:convergence_difference_expectations}
If \eqref{eqn:bound_G} holds, then 
$\E[ \theta_{r,\epsilon}\reso \sigma(\rD) \theta_{r,\epsilon}
 - 
\xi_{r,\epsilon} \reso \sigma(\rD) \xi_{r,\epsilon} ] \rightarrow 0$ in $\cC_\fn^{-\gamma}$ for all $\gamma>0$. 
\end{lemma}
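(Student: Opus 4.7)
Taking expectations in \eqref{eqn:about_difference_theta_and_xi} yields
\begin{align*}
E_\epsilon := \E[\theta_{r,\epsilon} \reso \sigma(\rD)\theta_{r,\epsilon} - \xi_{r,\epsilon}\reso \sigma(\rD)\xi_{r,\epsilon}]
= \sum_{k,l\in\N_0^2} \frac{\rho^\reso(\tfrac{k}{r},\tfrac{l}{r})}{1+\frac{\pi^2}{r^2}|l|^2}\,\fn_{k,r}\fn_{l,r}\,G_{r,\epsilon}(k,l).
\end{align*}
To control $E_\epsilon$ in $\cC_\fn^{-\gamma}$, I would apply the Littlewood--Paley block $\Delta_i$ to each product $\fn_{k,r}\fn_{l,r}$ via \eqref{eqn:Delta_q_f_k_f_l} (with a parameter $\gamma_1>2$ there, distinct from $\gamma$), together with \eqref{eqn:getting_rid_of_one_1_plus_k_squared} to freely exchange the decay $(1+|l/r|^2)^{-1}$ for $(1+|k/r|^2)^{-1}$ on the support of $\rho^\reso$. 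This reduces the task to showing that for every $\gamma>0$,
\begin{align*}
\sup_{i\in\N_{-1}} 2^{-\gamma i}\, r^{-d}\!\!\sum_{k,l\in\N_0^2} \frac{\rho^\reso(\tfrac{k}{r},\tfrac{l}{r})}{1+|l/r|^2}\!\!\sum_{\fp,\fq\in\{-1,1\}^2}\!\!\rho_i\!\Big(\tfrac{\fq\circ k+\fp\circ l}{r}\Big) \frac{|G_{r,\epsilon}(k,l)|}{(1+|\tfrac{k-l}{r}|)^{\gamma_1}}\;\longrightarrow 0
\end{align*}
as $\epsilon\downarrow 0$, where I would take $\gamma_1>\max(2,\gamma)$ so that the prefactor $2^{(\gamma_1-\gamma)i}$ from \eqref{eqn:Delta_q_f_k_f_l} can be absorbed into $\rho_i$ via \eqref{eqn:polynomial_bound_on_rho_i}.

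The plan is to decompose the sum according to the dichotomy in \eqref{eqn:bound_G}: the ``off-diagonal'' part $(k,l)\in\fR$ and the ``on-diagonal'' part $(k,l)\notin\fR$. On $\fR$, expanding the product in \eqref{eqn:bound_G} produces four terms of the form $\prod_{i=1}^2 (1+|a_i-r/\epsilon|)^{\delta-1}$ with $a_i\in\{k_i,l_i\}$. Dropping the $\rho_i$ and $\rho^\reso$ indicator bounds by $1$, then performing the inner sum against $(1+|\tfrac{k-l}{r}|)^{-\gamma_1}$ via \eqref{eqn:sum_bound_with_gamma_and_delta} of Lemma~\ref{lemma:bound_sum_gamma_delta} dimension by dimension, reduces each such term to a product of two one-dimensional sums of the form
\begin{align*}
\sum_{a\in\N_0} \frac{1}{r}\,\frac{1}{\frac{1}{r}+|\tfrac{a}{r}|}\,\frac{1}{(1+|a-r/\epsilon|)^{1-\delta}},
\end{align*}
which I would split into $\{a\le \tfrac{r}{2\epsilon}\}$, $\{\tfrac{r}{2\epsilon}<a<\tfrac{2r}{\epsilon}\}$ and $\{a\ge \tfrac{2r}{\epsilon}\}$; a direct estimate in each region yields an $O(\epsilon^{1-2\delta}\log\tfrac{1}{\epsilon})$ bound, hence vanishes as $\epsilon\downarrow 0$ for $\delta$ small. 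For $(k,l)\notin\fR$, the constraint $k_i=l_i$ for some $i\in\{1,2\}$ eliminates a summation direction; a completely analogous but one-dimension-lower calculation, again based on Lemma~\ref{lemma:bound_sum_gamma_delta}, supplies the same type of $O(\epsilon^{1-2\delta}\log\tfrac{1}{\epsilon})$ decay.

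The main obstacle is the on-diagonal piece $(k,l)\notin\fR$: the bound \eqref{eqn:bound_G} only provides a single $1$D decay factor $(1+|a-r/\epsilon|)^{\delta-1}$, so one has to arrange things so that, after fixing $k_i=l_i$, one full pair of summation variables is eliminated by the resonance constraint $\rho^\reso$, and the remaining two-dimensional sum carries genuine $\epsilon$-decay. Once these estimates are in place, taking the supremum over $i$ (using that $\rho_i$ localizes to a shell of radius $\sim 2^i$, together with $\gamma_1$ chosen large enough) gives the required convergence of $E_\epsilon$ to $0$ in $\cC_\fn^{-\gamma}$ for every $\gamma>0$.
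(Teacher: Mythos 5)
Your overall structure (apply the Littlewood--Paley block via \eqref{eqn:Delta_q_f_k_f_l}, split according to $\fR$ versus its complement, bound $|G_{r,\epsilon}|$ by \eqref{eqn:bound_G}, and iterate one-dimensional sums through Lemma~\ref{lemma:bound_sum_gamma_delta}) matches the paper's proof, and the claimed $O(\epsilon^{\text{power}})$ decay is the right conclusion. However, the pivot of your argument --- choosing $\gamma_1>\max(2,\gamma)$ in \eqref{eqn:Delta_q_f_k_f_l} and then ``absorbing the prefactor $2^{(\gamma_1-\gamma)i}$ into $\rho_i$ via \eqref{eqn:polynomial_bound_on_rho_i}'' --- does not work as stated, and this is a genuine gap. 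Inequality \eqref{eqn:polynomial_bound_on_rho_i} reads $\rho_i(x)\le (2b\,2^i/(1+|x|))^\beta$; it trades boundedness of $\rho_i$ for decay in $|x|$ at the \emph{price} of an extra factor $2^{\beta i}$. It therefore cannot soak up an already-present factor $2^{(\gamma_1-\gamma)i}$. The only way to absorb such a factor is the opposite observation, namely that on $\supp\rho_i$ (for $i\ge 0$) one has $|x|\gtrsim 2^i$, and then only after using the support of $\rho^\reso$ (so that $|\fq\circ k+\fp\circ l|\lesssim |l|$) can one pay for $2^{(\gamma_1-\gamma)i}$ out of the factor $(1+|l/r|^2)^{-1}$, at the cost of weakening the $l$-decay. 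This chain relies essentially on keeping $\rho_i$ and $\rho^\reso$, yet your plan is to immediately drop both ``by bounding them by $1$''.

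The two problems are linked: dropping $\rho^\reso$ is precisely what forces you to take $\gamma_1>2$, because in the mixed-decay contribution on $\fR$ (the $\cS_{\epsilon,2}$-type term, decay in $l_1$ and $k_2$) the $k_1$-sum $\sum_{k_1}(1+|k_1-l_1|)^{-\gamma_1/2}$ diverges unless $\gamma_1>2$. The paper avoids this circle entirely by taking $\gamma_1=\gamma$, so that the $2^{\gamma i}$ from \eqref{eqn:Delta_q_f_k_f_l} is exactly cancelled by the $2^{-\gamma i}$ in the $\cC_\fn^{-\gamma}$ norm, and then \emph{keeping} $\rho^\reso$ so that Lemma~\ref{lemma:resonance_sum_estimate} restricts the $k$-summation to the shell $|k|\lesssim |l|$, making the $k_1$-sum finite even for small $\gamma$. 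You would need to either adopt that strategy, or replace your appeal to \eqref{eqn:polynomial_bound_on_rho_i} by the correct support-based absorption argument and carry $\rho^\reso$ through all the estimates, before the remainder of your sketch (which is otherwise plausible) can go through.
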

\begin{proof}
Let us abbreviate $G_{r,\epsilon}(k,l) = \E[X_{k,r}^\epsilon X_{l,r}^\epsilon - Y_{k,r}^\epsilon Y_{l,r}^\epsilon]$. 
By \eqref{eqn:about_difference_theta_and_xi} and \eqref{eqn:Delta_q_f_k_f_l}
\begin{align*}
& \sup_{i\in \N_{-1}} 2^{-\gamma i} \| \Delta_i \E[ \theta_{r,\epsilon}\reso \sigma(\rD) \theta_{r,\epsilon}
 - 
\xi_{r,\epsilon} \reso \sigma(\rD) \xi_{r,\epsilon} ] \|_\infty 
 \lesssim 
\sum_{k,l \in \N_0^2} 
\frac{\rho^\reso(k,l)}{(1+ |l|^2)}  
\frac{|G_{r,\epsilon}(k,l)|}{ (1+ |k-l|)^\gamma  }. 
\end{align*}
We use \eqref{eqn:bound_G} and consider the sums over $\fR$ and $\N_0^2 \times \N_0^2 \setminus \fR$ as in Theorem \ref{theorem:convergence_enhanced_pair_theta} separately. 

$\bullet$ \textbf{[Sum over $\fR$]} 
By exploiting symmetries using Lemma \ref{lemma:resonance_sum_estimate} 
\begin{align*}
& \sum_{(k,l) \in \fR} 
\frac{\rho^\reso(k,l)}{(1+ |l|^2)}  
\frac{|G_{r,\epsilon}(k,l)|}{ (1+ |k-l|)^\gamma  } 
 \lesssim \cS_{\epsilon,1} + \cS_{\epsilon,2}, \\
& \cS_{\epsilon,1}
 = \sum_{k,l \in \N_0^2} 
\frac{\rho^\reso(k,l)}{(1+ |l|^2)}  
\frac{1}{ (1+ |k-l|)^\gamma } 
\frac{1}{(1+|\frac{r}{\epsilon}-l_1 |)^{1-\delta}} \frac{1}{(1+|\frac{r}{\epsilon}-l_2 |)^{1-\delta}} \\
& \cS_{\epsilon,2}
 = \sum_{k,l \in \N_0^2} 
\frac{\rho^\reso(k,l)}{(1+ |l|^2)}  
\frac{1}{ (1+ |k-l|)^\gamma } 
\frac{1}{(1+|\frac{r}{\epsilon}-l_1 |)^{1-\delta}} \frac{1}{(1+|\frac{r}{\epsilon}-k_2 |)^{1-\delta}}.
\end{align*}
By \eqref{eqn:bound_sum_k_with_min_gamma_power}, by using that $(1+|l|^2)\ge (1+l_1)  (1+l_2)$ and by using \eqref{eqn:sum_bound_with_gamma_and_delta} with $\delta < \gamma$
\begin{align*}
 \cS_{\epsilon,1}
& \lesssim 
\left( \sum_{l \in \N_0} 
\frac{1}{(1+ l)^{ \frac{\gamma}{2} } }  
\frac{1}{(1+|\frac{r}{\epsilon}-l |)^{1-\delta}} \right)^2
\lesssim (1+\tfrac{r}{\epsilon})^{\delta -\gamma} \lesssim \epsilon^{\gamma -\delta }.
\end{align*}
For $\cS_{\epsilon,2}$ by Lemma \ref{lemma:resonance_sum_estimate} there exist $b>0,c>1$ such that (using that  $|k-l| \ge |k_1-l_1|$)
\begin{align*}
& \sum_{k \in \N_0^2} 
\frac{\rho^\reso(k,l)}{ (1+ |k-l|)^\gamma } 
 \frac{1}{(1+|\frac{r}{\epsilon}-k_2 |)^{1-\delta}} \\
& \lesssim 
\sum_{ \substack{k \in \N_0^2 \\  |k| \le b } } 
 \frac{1}{(1+|\frac{r}{\epsilon}-k_2 |)^{1-\delta}} 
  +  
  \sum_{ \substack{k_1 \in \N_0 \\ k_1 \le c |l|} } 
\frac{1}{ (1+ | k_1- l_1 | )^\gamma } 
  \sum_{ \substack{k_2 \in \N_0 \\ k_2 \le c |l|} } 
 \frac{1}{(1+|\frac{r}{\epsilon}-k_2 |)^{1-\delta}}.
\end{align*}
We will bound the second sum on the right-hand side by its corresponding integrals (see Lemma \ref{lemma:monotone_functions_sums_and_integrals})
and will bound these to get a bound on the sum over $k$. 
Straightforward calculations show 
\begin{align*}
\int_{0}^{c |l|} \frac{1}{ (1+ | x - l_1 |)^\gamma } \dd x 
\lesssim (1+|l|)^{1-\gamma}.
\end{align*}
\begin{calc}
Indeed 
\begin{align*}
\int_{l_1}^{c |l|} \frac{1}{ (1+ | x -l_1 |)^\gamma } \dd x 
= \frac{1}{1-\gamma} (1+x- l_1)^{1-\gamma} |_{ l_1}^{c |l|}
\lesssim (1+|l|)^{1-\gamma}, \\
\int_0^{l_1} \frac{1}{ (1+ | x - l_1 |)^\gamma } \dd x 
= \frac{-1}{1-\gamma} (1+ l_1 -x)^{1-\gamma} |_{ 0}^{ l_1}
\lesssim (1+|l|)^{1-\gamma}, \\
\end{align*}
\end{calc}
On the other hand, for $\delta>0$ and $z>0$
\begin{align*}
  \int_0^{z}  \frac{1}{1+|\frac{r}{\epsilon}-x |} \dd x 
  & \lesssim  \log (1+\tfrac{r}{\epsilon})^2 (1+z) 
  \lesssim (1+\tfrac{r}{\epsilon})^{2\delta} (1+z)^{\delta}.
\end{align*}
\begin{calc} 
Indeed, 
\begin{align*}
  \int_0^{z}  \frac{1}{1+|\frac{r}{\epsilon}-x |} \dd x = - \log (1+ \tfrac{r}{\epsilon}-x)|_0^z \le \log(1+\tfrac{r}{\epsilon}) \qquad \mbox{ for } z\le \tfrac{r}{\epsilon}, \\
    \int_{\frac{r}{\epsilon}}^{z}  \frac{1}{1+|\frac{r}{\epsilon}-x |} \dd x = \log (1+x-\tfrac{r}{\epsilon})|_{\frac{r}{\epsilon}}^{z} = \log (1+z -\tfrac{r}{\epsilon}) \qquad \mbox{ for } z \ge \tfrac{r}{\epsilon}.
\end{align*}
Hence for all $z\ge 0$
\begin{align*}
  \int_0^{z}  \frac{1}{1+|\frac{r}{\epsilon}-x |} \dd x 
&   \lesssim \log(1+\tfrac{r}{\epsilon}) +  \log (1+|z -\tfrac{r}{\epsilon}|) \\
& =  \log(1+\tfrac{r}{\epsilon})(1+|z -\tfrac{r}{\epsilon}|) 
   \lesssim  \log (1+\tfrac{r}{\epsilon})^2 (1+z) .
\end{align*}
\end{calc}
Hence for all $\delta>0$ (we use \eqref{eqn:sum_bound_with_gamma_and_delta} for the last inequality) 
\begin{align*}
\cS_{\epsilon,2}
& \lesssim 
\sum_{l \in \N_0^2} 
\frac{1}{(1+ |l|^2)}  
\frac{1}{(1+|\frac{r}{\epsilon}-l_1 |)^{1-\delta}} 
 (1+l_1+l_2)^{1-\gamma+\delta} (1+\tfrac{1}{\epsilon})^{2\delta }
 \\
 &\lesssim 
\sum_{l \in \N_0^2} 
\frac{1}{(1+ l_1 + l_2)^{1+\gamma-\delta}}  
\frac{1}{(1+|\frac{r}{\epsilon}-l_1 |)^{1-\delta}} 
 (1+\tfrac{1}{\epsilon})^{2\delta }\\
  &\lesssim 
\sum_{l_2 \in \N_0} 
\frac{1}{(1 + l_2)^{1+\delta}}  
\sum_{l_1 \in \N_0} 
\frac{1}{(1+ l_1 )^{\gamma-2\delta}}  
\frac{1}{(1+|\frac{r}{\epsilon}-l_1 |)^{1-\delta}} 
 (1+\tfrac{1}{\epsilon})^{2\delta }
 \lesssim 
 (1+\tfrac{1}{\epsilon})^{5\delta -\gamma}.
\end{align*}
Therefore, by choosing $\delta < \frac{\gamma}{5}$ we obtain also $\cS_{\epsilon,2} \rightarrow 0$. 

$\bullet$ \textbf{[Sum over $\N_0^2\times\N_0^2 \setminus \fR$]} 
Observe that $\N_0^2\times\N_0^2 \setminus \fR = \{ (k,l) \in \N_0^2 \times \N_0^2 : \exists i \in \{1,2\} : k_i = l_i\}$. 
Therefore, again by exploiting symmetries using Lemma \ref{lemma:resonance_sum_estimate} (we bound the sum over $\N_0^2\times \N_0^2 \setminus \fR$ by the sum over all $l\in\N_0^2$, $k_2\in\N_0$ and take $k_1=l_1$), using \eqref{eqn:sum_bound_with_gamma_and_delta} for $\delta < \frac{\gamma}{2}$
\begin{align*}
& \sum_{(k,l) \in \N_0^2 \times \N_0^2 \setminus \fR } 
\frac{\rho^\reso(k,l)}{(1+ |l|^2)}  
\frac{|G_{r,\epsilon}(k,l)|}{ (1+ |k-l|)^\gamma  } 
 \lesssim 
  \sum_{l \in \N_0^2} 
\frac{1}{(1+ |l|^2)}  
\sum_{k_2\in\N_0}
\frac{1}{ (1+ |k_2-l_2|)^\gamma } 
\frac{1}{1+|\frac{r}{\epsilon}-k_2 |} \\
& \lesssim 
  \sum_{l \in \N_0^2} 
\frac{1}{(1+ l_1)^{1+\delta} }  
\frac{1}{(1+ l_2)^{1-\delta} }  
\frac{1}{(1+|\frac{r}{\epsilon}-l_2 |)^{\gamma-\delta}} 
\lesssim (1+\tfrac{1}{\epsilon})^{2\delta-\gamma}. 
\end{align*} 
\end{proof}

\section{Proof of Theorem \ref{theorem:convergence_prima_and_theta}\ref{item:comparing_box_sizes}}
\label{section:proof_convergence_diff_box_sizes}

In this section we consider $d=2$, $L > r \ge 1$ and $y \in \R^2$ such that $y + Q_r \subset Q_L$. 
We write $\tau = \1_{(-1,1)^2}$. We consider $X_{k,r}^\epsilon$ and $Y_{k,r}^\epsilon$ as in Theorem \ref{theorem:convergence_prima_and_theta}\ref{item:comparing_box_sizes}.
For $m,l \in \N_0$ and $z\in [0,L-r]$ we write 
\begin{align}
\label{eqn:b_m_l_z}
b_{m,l}^z = \langle \fn_{m,L} , \cT_z \fn_{l,r} \rangle_{L^2 ([0,r])}.
\end{align}
Then we have 
\begin{align*}
X_{k,r}^\epsilon = \sum_{m\in \N_0^2} \tau(\tfrac{\epsilon}{L}m) \cZ_m \prod_{i=1}^2 b_{m_i,k_i}^{y_i}, 
\qquad 
Y_{k,r}^\epsilon = \tau(\tfrac{\epsilon}{r}k) \sum_{m\in \N_0^2}  \cZ_m \prod_{i=1}^2 b_{m_i,k_i}^{y_i}. 
\end{align*}
And so with $G_{r,\epsilon}(k,l) = \E[X_{k,r}^\epsilon X_{l,r}^\epsilon - Y_{k,r}^\epsilon Y_{l,r}^\epsilon] $ as in Theorem \ref{theorem:convergence_enhanced_pair_theta}
\begin{align}
\label{eqn:formula_G_eps_in_terms_of_b}
G_{r,\epsilon}(k,l) = \sum_{m  \in \N_0^2} 
\Big( \prod_{i=1}^2 b_{m_i,k_i}^{y_i} b_{m_i,l_i}^{y_i} \Big)
[\tau(\tfrac{\epsilon}{L} m)^2
- \tau(\tfrac{\epsilon}{r} k ) \delta_{k,l} ] .
\end{align}

\begin{theorem}
\label{theorem:bounds_F_and_G_hold}
 \eqref{eqn:convergence_X_k_eps_to_Z_k}, \eqref{eqn:bound_F} and \eqref{eqn:bound_G} hold (for $I= [1,L]$)
\end{theorem}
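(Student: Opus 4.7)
The plan is to reduce every claim to one-dimensional estimates on the overlap coefficients $b_{m,l}^z$ defined in~\eqref{eqn:b_m_l_z}, exploiting the fact that the right-hand side of~\eqref{eqn:formula_G_eps_in_terms_of_b} (and the analogous expressions for $\E[X_{k,r}^\epsilon X_{l,r}^\epsilon]$ and $\E[X_{k,r}^\epsilon Y_{l,r}^\epsilon]$) factors across the two coordinates. The first step is therefore to establish a sharp pointwise bound on $b_{m,l}^z$. Expanding the cosines in $\fn_{m,L}$ and $\cT_z \fn_{l,r}$ and using a product-to-sum identity, the 1D integral can be computed explicitly, giving uniformly in $z \in [0,L-r]$ a bound of the form $|b_{m,l}^z| \lesssim \tfrac{1}{\sqrt{Lr}} \bigl( \tfrac{1}{1/L+|m/L-l/r|} + \tfrac{1}{1/L+m/L+l/r} \bigr)$. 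So $b_{m,l}^z$ is of order $\sqrt{r/L}$ near the resonance $m \approx \tfrac{L}{r} l$ and decays like $|m-\tfrac{L}{r}l|^{-1}$ away from it. Since $\{\fn_{m,L}\}_{m}$ is an orthonormal basis of $L^2([0,L])$ and the extension-by-zero of $\cT_z \fn_{l,r}$ belongs to it, Parseval gives the identity $\sum_{m\in\N_0} b_{m,k}^z b_{m,l}^z = \delta_{k,l}$, which is the workhorse for everything that follows.

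With this in hand, \eqref{eqn:convergence_X_k_eps_to_Z_k} is immediate by dominated convergence: for $k$ fixed and $\epsilon$ small enough $\tau(\tfrac{\epsilon}{r}k)=1$, whence $\E|X_{k,r}^\epsilon - Y_{k,r}^\epsilon|^2 = \sum_{m}(\tau(\tfrac{\epsilon}{L}m)-1)^2\prod_{i=1}^2|b_{m_i,k_i}^{y_i}|^2$, dominated by $\prod_i \sum_{m_i}|b_{m_i,k_i}^{y_i}|^2 = 1$. For~\eqref{eqn:bound_F}, the case $F=\E[Y_{k,r}^\epsilon Y_{l,r}^\epsilon]$ is trivial since this equals $\tau(\tfrac{\epsilon}{r}k)\tau(\tfrac{\epsilon}{r}l)\delta_{k,l}$, while the two other cases reduce, by the product structure, to the 1D estimate $\sum_{m\in\N_0}|b_{m,k}^z b_{m,l}^z| \lesssim (1+|k-l|)^{\delta-1}$. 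I would prove this by splitting the sum into the two resonance windows $m\approx\tfrac{L}{r}k$ and $m\approx\tfrac{L}{r}l$ (each contributing $O(\sqrt{r/L})$ at peak and the other factor $\lesssim 1/|k-l|$) plus the intermediate region, where the integral $\int \tfrac{dx}{|x-a||x-b|}$ produces a logarithmic factor in $|k-l|$ that is absorbed into the $\delta$-loss.

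The technically most demanding step is~\eqref{eqn:bound_G}. The key observation is that by combining~\eqref{eqn:formula_G_eps_in_terms_of_b} with the Parseval identity above, one may replace the cutoff $\tau(\tfrac{\epsilon}{L}m)^2$ by $\tau(\tfrac{\epsilon}{L}m)^2 - 1$, whose support is restricted to $\{m_1\ge L/\epsilon\}\cup\{m_2\ge L/\epsilon\}$. Using the decomposition $1-\tau(\tfrac{\epsilon}{L}m)^2 = \1_{m_1 \ge L/\epsilon} + \1_{m_2 \ge L/\epsilon} - \1_{m_1\ge L/\epsilon}\1_{m_2\ge L/\epsilon}$ and $\sum_{m_i}b_{m_i,k_i}^{y_i}b_{m_i,l_i}^{y_i} = \delta_{k_i,l_i}$, in the case $(k,l)\in\fR$ (so $\delta_{k_i,l_i}=0$ for both $i$) the "bulk" contributions cancel and we are left with the product of two one-dimensional tail sums, reducing the bound to the estimate
\[
\Big|\sum_{m\ge L/\epsilon} b_{m,k}^z b_{m,l}^z\Big| \;\lesssim\; \tfrac{1}{(1+|k-r/\epsilon|)^{1-\delta}} + \tfrac{1}{(1+|l-r/\epsilon|)^{1-\delta}}.
\]
This I would derive from the pointwise bound of Step~1: the only way to avoid pure polynomial decay is if one of the resonance positions $\tfrac{L}{r}k$ or $\tfrac{L}{r}l$ lies inside the tail $[L/\epsilon,\infty)$, i.e.\ if $k$ or $l$ is $\ge r/\epsilon$, and the distance of $k$ or $l$ to $r/\epsilon$ controls the polynomial gain. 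In the case $(k,l)\notin\fR$, say $k_2=l_2$, the $m_2$-sum collapses to~$1$ and $\tau(\tfrac{\epsilon}{r}k)\delta_{k,l}$ produces an additional boundary term $1-\1_{k_2<r/\epsilon}$; after this bookkeeping the remaining $m_1$-tail is handled exactly as in the $\fR$ case, producing the summed (rather than product) bound in~\eqref{eqn:bound_G}.

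The main obstacle will be the precise bookkeeping in the tail estimate: one must track how the resonances at $m \approx \tfrac{L}{r}k$ and $m\approx \tfrac{L}{r}l$ sit relative to the cutoff at $L/\epsilon$, which translates into the $(1+|k-r/\epsilon|)^{-(1-\delta)}$-type decay. As in Step~3, a logarithmic loss when summing across an intermediate region is absorbed into the $\delta$. I expect the proof to consist essentially of a careful case analysis of resonance location (below, inside, or above the cutoff window) combined with the explicit $b_{m,l}^z$-bound.
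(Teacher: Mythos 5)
Your proposal matches the paper's proof: both rest on the uniform pointwise bound $|b_{m,l}^z|\lesssim\sqrt{r/L}\,(1+|\tfrac{r}{L}m-l|)^{-1}$ from the explicit trigonometric formula, the Parseval identity $\sum_m b_{m,k}^z b_{m,l}^z=\delta_{k,l}$ to convert partial sums into tail sums, and integral-comparison estimates for those tail sums (the paper packages these as Lemmas~\ref{lemma:bound_b_mk}, \ref{lemma:bounds_on_h_M} and Theorem~\ref{theorem:bound_on_sum_b_mk_b_ml}, and reduces \eqref{eqn:bound_G} to Lemma~\ref{lemma:bound_G_eps_with_eps_dependence}). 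The only cosmetic difference is that where you decompose $1-\tau^2$ by inclusion--exclusion, the paper instead factors $G_{r,\epsilon}(k,l)=\prod_i\bigl(\sum_{m<L/\epsilon}b_{m,k_i}b_{m,l_i}\bigr)-\1_{|k|_\infty<r/\epsilon}\delta_{k,l}$ directly and splits off the single case $k=l$, $|k|_\infty<r/\epsilon$; the estimates that result are the same.
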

\begin{proof}
For \eqref{eqn:convergence_X_k_eps_to_Z_k}, we have
\begin{align*}
\E[ |\langle \theta_{r,\epsilon} - \xi_{r,\epsilon} , \fn_k \rangle|^2]
 = \E[ | X_{k,r}^\epsilon - Y_{k,r}^\epsilon|^2] 
\lesssim \sum_{m \in \N_0^d} (\tau(\tfrac{\epsilon}{L}m) - \tau(\tfrac{\epsilon}{r}k))^2 \prod_{i=1}^2 (b_{m_i,k_i}^{y_i})^2. 
\end{align*}
By Lebesgue's dominated convergence theorem this converges to zero. 
\eqref{eqn:bound_F} follows by Theorem   \ref{theorem:bound_on_sum_b_mk_b_ml} 
by observing that $\E[X_{k,r}^\epsilon Y_{l,r}^\epsilon ] = \tau(\frac{\epsilon}{r}k) \E[X_{k,r}^\epsilon X_{l,r}^\epsilon ]$, $\E[Y_{k,r}^\epsilon Y_{l,r}^\epsilon] \le 2 \delta_{k,l}$ and that $|\E[X_{k,r}^\epsilon X_{l,r}^\epsilon]|\le \prod_{i=1}^2 (\sum_{m\in \N_0} |b_{m,k_i}^{y_i} b_{m,l_i}^{y_i}|)$. 
\eqref{eqn:bound_G} follows by Lemma \ref{lemma:bound_G_eps_with_eps_dependence}. 
\end{proof}

\begin{obs}
The estimates \eqref{eqn:bound_F} and \eqref{eqn:bound_G} will rely on bounds on $b_{m,l}^{z}$ for $m,l \in \N_0$ and $z \in [0,L-r]$. Let us calculate $b_{m,l}^z$ here. 
For notational convenience we put $\frac{\sin(\pi x)}{x}$ and $\frac{1-\cos(\pi x)}{x}$ for $x=0$ equal to $1$ here. 
By using some trigonometric rules, one can compute that
\begin{align}
\label{eqn:pairing_n_m_l_and_shift_n_l_r}
b_{m,l}^z 
\begin{calc}
= \langle \fn_{m,L} , \cT_z \fn_{l,r} \rangle_{L^2 ([0,r])}
\end{calc}
= \sqrt{\tfrac{r}{L}}\tfrac{1}{\pi} \nu_m \nu_l [ f_{m,l}  \cos( \tfrac{\pi}{L} m z )  + g_{m,l}
  \sin( \tfrac{\pi}{L} m z ) ],
\end{align}
where 
\begin{align*}
f_{m,l} = \sum_{\fp\in \{-1,1\}} \frac{\sin( \pi  (\frac{r}{L}m + \fp l) )}{ \frac{r}{L}m +\fp l } , \qquad 
g_{m,l} = \sum_{\fp\in \{-1,1\}} \frac{1-\cos( \pi (\frac{r}{L}m + \fp l) )}{  \frac{r}{L}m + \fp l} .
\end{align*}
Let us demonstrate \eqref{eqn:pairing_n_m_l_and_shift_n_l_r} in the easier case $z=0$. 
Due to the identities $2 \cos(a) \cos(b) = \sum_{\fp \in \{-1,1\}} \cos(a+ \fp b) $  and 
$\sin(\pi (a\pm l))= (-1)^l \sin (\pi a)$ for $a,b\in \R$ and $ l\in \Z$, 
we obtain
\begin{align}
\langle \fn_{m, L} , \fn_{l,r} \rangle_{L^2([0,r])} 
\notag 
& = \frac{2}{\sqrt{Lr} } \nu_m \nu_l 
 \int_0^r \cos( \tfrac{\pi}{L} m x) \cos( \tfrac{\pi}{r} l x) \dd x \\
\label{eqn:b_expression}
&  = \sqrt{\frac{r}{L}} \frac{1}{\pi}  \nu_m \nu_l 
\sum_{\fp \in \{-1,1\}}    \frac{\sin( \pi ( \frac{m r}{L} +\fp l ))}{  \frac{m r}{L}  +\fp l } . 
\end{align}
\begin{calc}
\begin{align*}
\frac{1}{r} \int_0^r 2 \cos( a x) \cos( b x) \dd x
& =  \frac1r \int_0^r \cos( (a- b) x) +\cos( (a+b) x) \dd x \\
& =  \frac1r \left[ \frac{\sin((a-b)x)}{a-b} + \frac{\sin((a+b)x)}{a+b} \right]_0^r \\
& =  \left[ \frac{\sin((a-b)r)}{(a-b)r} + \frac{\sin((a+b)r)}{(a+b)r} \right] .
\end{align*}
\end{calc}
\begin{calc}
For general $z\in [0,L-r]$. First observe that 
\begin{align*}
 \langle \fn_{m, L} , \cT_{z} \fn_{l,r} \rangle_{L^2(\R)} 
& = \int_{z+[0,r]\cap [0,L]} \fn_{m,L}(x) \fn_{l,r}(x-z) \dd x \\
& =   \int_{0}^r \fn_{m,L}(x+z) \fn_{l,r}(x) \dd x . 
\end{align*}
Using the trigonometric identities for $\cos(a \pm b)$ and $\sin (a \pm b)$ 
we have 
\begin{align*}
& 2 \cos (a+d) \cos(b) 
 = \cos (a+ d -b) + \cos( a+d+b) \\
& = \cos(a-b) \cos(d) - \sin (a-b) \sin(d) 
+ \cos(a+b) \cos(d) - \sin(a+b)\sin(d) \\
& = \sum_{\fp \in \{-1,1\}} \cos(d) \cos(a+\fp b)  - \sin(d)  \sin(a+ \fp b), 
\end{align*}
Hence 
\begin{align*}
& \int_0^r 2\cos( \tfrac{\pi}{L} m x+ \tfrac{\pi}{L} m z ) \cos( \tfrac{\pi}{r} l x) \dd x \\
& =
\sum_{\fp\in \{-1,1\}}
\frac{\sin( \pi (\tfrac{m}{L} + \fp \tfrac{l}{r}) r)}{\pi (\tfrac{m}{L} + \fp \tfrac{l}{r})}  \cos( \tfrac{\pi}{L} m z ) \\
& \qquad +
\sum_{\fp\in \{-1,1\}}
\frac{1-\cos( \pi r (\tfrac{m}{L} + \fp \tfrac{l}{r}) )}{\pi (\tfrac{m}{L} + \fp \tfrac{l}{r})}  
  \sin( \tfrac{\pi}{L} m z ),
\end{align*}
from which we deduce \eqref{eqn:pairing_n_m_l_and_shift_n_l_r}. 
\end{calc}
\end{obs}

As a consequence we obtain the following. 

\begin{lemma}
\label{lemma:bound_b_mk}
There exists a $C>0$ (independent of $r$ and $L$) such that for all $z\in [0,L-r]$ and $m,l \in \N_0$, 
\begin{align}
\label{eqn:bound_b_mk}
|b_{m,l}^z| 
\le C \sqrt{\tfrac{r}{L}} \tfrac{1}{1+|\frac{r}{L} m - l|}. 
\end{align}
\end{lemma}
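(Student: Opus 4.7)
The plan is to prove the bound by directly exploiting the closed-form expression \eqref{eqn:pairing_n_m_l_and_shift_n_l_r} for $b_{m,l}^{z}$, showing that the $z$-dependence is harmless (it only contributes factors $\cos(\tfrac{\pi}{L}mz)$ and $\sin(\tfrac{\pi}{L}mz)$, each of modulus at most $1$) and that the prefactors $\nu_m\nu_l \le 1$ can be discarded. After these reductions it suffices to establish
\[
|f_{m,l}| + |g_{m,l}| \le \frac{C'}{1+\lvert \tfrac{r}{L}m - l\rvert}
\]
uniformly in $m,l \in \N_0$ and in $r,L$ (with $r\le L$), for some absolute constant $C'$.

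To get this, I would write $a := \tfrac{r}{L}m \ge 0$ and bound each of the four summands separately using two elementary inequalities: $\lvert \sin(\pi x)/x\rvert \le \tfrac{C_1}{1+|x|}$ and $\lvert (1-\cos(\pi x))/x\rvert \le \tfrac{C_2}{1+|x|}$ for all $x\in\R$. Both are immediate from the Taylor bounds $|\sin(\pi x)|\le \pi|x|$, $|1-\cos(\pi x)|\le \tfrac{\pi^{2}}{2}x^{2}$ for $|x|\le 1$, and the trivial bounds $|\sin(\pi x)|\le 1$, $|1-\cos(\pi x)|\le 2$ for $|x|\ge 1$. Applying these with $x = a\pm l$ yields
\[
|f_{m,l}| + |g_{m,l}| \le \frac{2(C_1+C_2)}{1+|a-l|} + \frac{2(C_1+C_2)}{1+|a+l|},
\]
and since $a,l\ge 0$ forces $|a+l|\ge |a-l|$, the second term is dominated by the first, giving the desired estimate.

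Combining this with the representation \eqref{eqn:pairing_n_m_l_and_shift_n_l_r} we obtain
\[
|b_{m,l}^{z}| \le \sqrt{\tfrac{r}{L}}\tfrac{1}{\pi}\nu_m\nu_l\bigl(|f_{m,l}|+|g_{m,l}|\bigr) \le C\sqrt{\tfrac{r}{L}}\,\frac{1}{1+\lvert \tfrac{r}{L}m-l\rvert},
\]
with $C$ independent of $m,l,z,r,L$, which is the claim. There is no genuine obstacle here; the only point requiring a little care is the constant for the $(1-\cos)/x$ term near $x=0$, but the Taylor bound handles it uniformly. This is an entirely deterministic computation; the probabilistic input comes later when this pointwise estimate is plugged into the sums defining $G_{r,\epsilon}(k,l)$ in \eqref{eqn:formula_G_eps_in_terms_of_b}.
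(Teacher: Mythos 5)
Your proof is correct and takes essentially the same route as the paper: the paper's proof also invokes the closed-form expression \eqref{eqn:pairing_n_m_l_and_shift_n_l_r} and the same two elementary bounds $|\sin(\pi x)/x| \lesssim (1+|x|)^{-1}$ and $|(1-\cos(\pi x))/x| \lesssim (1+|x|)^{-1}$. The only difference is cosmetic, in how the $(1-\cos)$ bound near zero is handled (your Taylor estimate $1-\cos(\pi x)\le\tfrac{\pi^2}{2}x^2$ versus the paper's $1-\cos(\pi x)\le\sin(\pi x)$); both work.
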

\begin{proof}
This follows from the expression \eqref{eqn:pairing_n_m_l_and_shift_n_l_r} by using that $|\frac{\sin(\pi x)}{x}| \lesssim \frac{1}{1+|x|}$ and $\frac{1-\cos(\pi x)}{x} \lesssim \frac{1}{1+|x|}$. 
\begin{calc}
The bounds $\frac{\sin(\pi x)}{x} \le \frac{\pi}{1+x}$ and $\frac{1-\cos(\pi x)}{x} \le \frac{2}{1+x}$ hold for $x \ge 1$, whereas for $x\in (0,1)$ we can use that $\sin(\pi x) \le \pi x$ and that $1- \cos(\pi x) \le 1- \cos(\pi x)^2 = \sin(\pi x)^2\le \sin (\pi x)$. 
\end{calc}
\end{proof}


\begin{theorem}
\label{theorem:bound_on_sum_b_mk_b_ml}
For all $\delta>0$ there exists a $C>0$  (independent of $L$ and $r$)  such that for all $k,l\in \N_0$ and $z\in [0,L-r]$
\begin{align}
\label{eqn:F_epsilon_bound}
\sum_{m\in \N_0} | b_{m,k}^z b_{m,l}^z|  
\le C   (1+ |k-l|)^{\delta-1}. 
\end{align}
\end{theorem}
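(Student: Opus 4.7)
The plan is to reduce the double-indexed sum to a one-dimensional Riemann sum and then compare it to an explicit integral that can be evaluated by partial fractions. First, by Lemma \ref{lemma:bound_b_mk}, uniformly in $z \in [0, L-r]$,
\[
|b_{m,k}^z b_{m,l}^z| \lesssim \tfrac{r}{L} \tfrac{1}{(1+|\tfrac{r}{L}m - k|)(1+|\tfrac{r}{L}m - l|)}.
\]
Writing $\alpha := r/L \in (0,1]$, the task reduces to proving, uniformly in $\alpha$, that
\[
T(k,l) \;:=\; \alpha \sum_{m \in \mathbb{N}_0} \frac{1}{(1+|\alpha m - k|)(1+|\alpha m - l|)} \;\lesssim_\delta\; (1+|k-l|)^{\delta - 1}.
\]

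Without loss of generality take $k \le l$ and set $a = l - k \ge 0$ and $f(x) := (1+|x-k|)^{-1}(1+|x-l|)^{-1}$. Next I would compare the sum to $\int_{\mathbb{R}} f(x)\,dx$ by splitting the grid $\alpha\mathbb{N}_0$ according to the four monotonicity intervals of $f$, namely $(-\infty,k]$, $[k,(k+l)/2]$, $[(k+l)/2,l]$, $[l,\infty)$, and applying Lemma \ref{lemma:monotone_functions_sums_and_integrals} on each piece. Since $f$ attains its maximum $\tfrac{1}{1+a}$ at the endpoints $k$ and $l$, the resulting boundary corrections are each at most $\alpha\|f\|_\infty \le \tfrac{\alpha}{1+a}$, so
\[
T(k,l) \;\le\; \int_{\mathbb{R}} f(x)\,dx + C\,\tfrac{\alpha}{1+a}.
\]

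The integral is computed by partial fractions. On $[k,l]$, substituting $y=x-k$ and using $\tfrac{1}{(1+y)(1+a-y)} = \tfrac{1}{2+a}\bigl(\tfrac{1}{1+y}+\tfrac{1}{1+a-y}\bigr)$ gives $\tfrac{2\log(1+a)}{2+a}$. On $(-\infty,k]$, substituting $u = k-x$ and telescoping $\tfrac{1}{(1+u)(1+a+u)} = \tfrac{1}{a}\bigl(\tfrac{1}{1+u}-\tfrac{1}{1+a+u}\bigr)$ gives $\tfrac{\log(1+a)}{a}$; the interval $[l,\infty)$ gives the same by symmetry. Thus
\[
\int_{\mathbb{R}} f(x)\,dx = \frac{2\log(1+a)}{a} + \frac{2\log(1+a)}{2+a} \;\lesssim\; \frac{\log(2+a)}{1+a}
\]
(with the convention $\log(1+a)/a \to 1$ as $a \to 0$). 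Combining and using $\log(2+a) \lesssim_\delta (1+a)^\delta$ for every $\delta>0$ finishes the proof, since $\tfrac{\alpha}{1+a}\le \tfrac{1}{1+a}$ as well.

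The main obstacle is the bookkeeping in the sum-versus-integral step across the four monotonicity intervals: one must ensure that the boundary contributions at the peaks $x=k$ and $x=l$ (where successive Riemann blocks could overcount by as much as $\|f\|_\infty$) combine into an error of order $(1+a)^{-1}$ rather than a constant, so that the final estimate genuinely decays in $|k-l|$ and not merely stays bounded.
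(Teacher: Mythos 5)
Your proposal is correct and reaches the same bound, but by a genuinely more explicit route. The paper first weakens the exponent from $1$ to $1-\tfrac{\delta}{2}$ using Lemma~\ref{lemma:bound_b_mk} together with $\tfrac{1}{1+t}\le \tfrac{1}{(1+t)^{1-\delta/2}}$, precisely so that it can invoke the black-box estimate \eqref{eqn:sum_bound_with_gamma_and_delta} of Lemma~\ref{lemma:bound_sum_gamma_delta}, whose underlying integral bound Lemma~\ref{lemma:integral_bound_with_gamma_and_theta} requires both exponents $\gamma,\theta$ to lie strictly in $(0,1)$. You keep the exponent at $1$ throughout, split the line into the four monotone pieces of $f$, compare the lattice sum to $\int_{\R}f$ with boundary corrections of order $\alpha\|f\|_\infty \lesssim \tfrac{1}{1+a}$, and then compute $\int_{\R}f$ exactly by partial fractions, arriving at the sharper intermediate bound $\tfrac{\log(2+a)}{1+a}$ before sacrificing the log to get $(1+a)^{\delta-1}$. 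Your route buys a strictly better pointwise estimate (logarithmic instead of $\delta$-polynomial loss) at the price of an ad hoc computation rather than a reusable lemma; the paper's packaging is more modular because Lemma~\ref{lemma:bound_sum_gamma_delta} is reused in Section~\ref{section:proof_white_noise_conv}.

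One small caveat: Lemma~\ref{lemma:monotone_functions_sums_and_integrals} in the paper is stated for unit-spaced sums $\sum_{m=0}^{M}f(m)$, not for sums over the lattice $\alpha\N_0$. It does not apply verbatim to $\alpha\sum_{m}f(\alpha m)$; you need the (equally elementary) rescaled version, which in the paper is implicitly handled via the substitution in \eqref{eqn:abs_difference}. Your boundary-correction argument is correct once that rescaling is stated, but as written it leans on a lemma that does not literally cover this case.
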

\begin{proof}
This follows by Lemma \ref{lemma:bound_b_mk} and by \eqref{eqn:sum_bound_with_gamma_and_delta} as $1+| \frac{r}{L}m - u | \ge (1+| \frac{r}{L}m - u |)^{1-\frac{\delta}{2}} $ for $\delta>0$. 
\begin{calc}
With $C$ as in Lemma \ref{lemma:bound_b_mk}
\begin{align*}
\sum_{m\in \N_0} | b_{m,k}^z b_{m,l}^z|  
& \le C^2
\frac{r}{L} 
 \frac{1}{1+|\frac{r}{L}m - k | } 
  \frac{1}{1+|\frac{r}{L}m - l | } \\
& \le C^2   \sum_{m\in\N_0}
\frac{r}{L} 
 \frac{1}{(1+|\frac{r}{L}m - k | )^{1-\frac{\delta}{2}} }
  \frac{1}{(1+|\frac{r}{L}m - l | )^{1-\frac{\delta}{2}}}.
\end{align*}
Now apply \eqref{eqn:sum_bound_with_gamma_and_delta}. 
\end{calc}
\end{proof}

\begin{obs}
\label{obs:bound_absolute_value_sum_less_M_k=l_ge_M_N}
Let $C>0$ be as in Lemma \ref{lemma:bound_b_mk}.  
\begin{enumerate}
\item 
\label{item:k_ge_M_N}
For all  $z\in [0,L-r]$ and $M,k,l\in \N_0$ such that $ \frac{r}{L}M \le l \le k $,  by Lemma~\ref{lemma:monotone_functions_sums_and_integrals}
\begin{calc}
\begin{align*}
|b_{m,k}^z| \vee |b_{m,l}^z| \le C \frac{1}{1+l-\frac{r}{L}m },
\end{align*}
and thus 
\end{calc}
\begin{align*}
\sum_{m=0}^{M-1} 
|b_{m,k}^z b_{m,l}^z|
\cand \begin{calc}
\le C^2 \sum_{m=0}^{M-1} 
\frac{1}{1+(l-\frac{r}{L}m)^2 }
\end{calc} \cnewline
& \le C^2 \int_{0}^M \frac{1}{ (1+l-\frac{r}{L}x)^2} \dd x 
 \le C^2 \frac{1}{1+|l- \frac{r}{L}M|},
\end{align*}
\item 
\label{item:k_less_M_N}
Similarly, for all  $z\in [0,L-r]$ and $M,k,l\in \N_0$ such that  $l\le k \le  \frac{r}{L}M$,
 \begin{calc}
\begin{align*}
|b_{m,k}^z| \vee |b_{m,l}^z| \le C \frac{1}{1+\frac{r}{L}m -k},
\end{align*}
and thus 
\end{calc}
\begin{align*}
\sum_{m=M}^{\infty} 
|b_{m,k}^z b_{m,l}^z|
\cand \begin{calc}
\le C^2 \sum_{m=M+1}^{\infty} \frac{1}{1+\frac{r}{L}m -k} + \frac{1}{(1+\frac{r}{L}M -k)^2}
\end{calc}\cnewline
\cand \begin{calc} 
\le C^2 
\int_{M}^\infty \frac{1}{ (1+\frac{r}{L}x-k)^2} \dd x 
+ \frac{1}{1+|\frac{r}{L}M -k|}
\end{calc} \cnewline
& \le (C^2 +1) \frac{1}{1+|k- \frac{r}{L}M|} .
\end{align*}
%
\end{enumerate}
\end{obs}

As a consequence of the above and $\sum_{m\in \N_0} b_{m,k}^z b_{m,l}^z = \delta_{k,l}$ we obtain the following lemma. 

\begin{lemma}
\label{lemma:bounds_on_h_M}
There exists a $C>0$ such that for all  $z\in [0,L-r]$, $M \in [0,\infty) $ and $k,l \in \N_0$: If either $k\ne l$ or $k=l \le \frac{r}{L}M$, then 
\begin{align}
\label{eqn:bound_sum_ge_L_eps}
\frac{1}{C}\Big| \sum_{m\in \N_0, m \ge M} b_{m,k}^z b_{m,l}^z \Big|
 \le
 \frac{1}{(1+|k- \frac{r}{L}M|)^{1-\delta}} +  \frac{1}{(1+|l- \frac{r}{L}M| ) ^{1-\delta}}, 
\end{align}
and if either $k\ne l$ or $k =l \ge \frac{r}{L}M$ 
\begin{align}
\label{eqn:bound_sum_less_L_eps}
\frac{1}{C} \Big| \sum_{m\in N_0, m <M } b_{m,k}^z b_{m,l}^z \Big|
 \le
 \frac{1}{(1+|k- \frac{r}{L}M|)^{1-\delta}} +  \frac{1}{(1+|l- \frac{r}{L}M| ) ^{1-\delta}} .
\end{align}
\end{lemma}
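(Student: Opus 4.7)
The plan is to exploit the orthonormality identity $\sum_{m\in\N_0}b_{m,k}^z b_{m,l}^z=\delta_{k,l}$, which follows by Parseval because the extension of $\cT_z\fn_{k,r}$ by zero has unit $L^2([0,L])$-norm and $(\fn_{m,L})_{m\in\N_0}$ is an orthonormal basis of $L^2([0,L])$. For $k\neq l$ this gives $\sum_{m\ge M}b_{m,k}^z b_{m,l}^z=-\sum_{m<M}b_{m,k}^z b_{m,l}^z$, so one may bound whichever of the two sums is easier.

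Next I would dispose of most configurations by direct appeal to \ref{obs:bound_absolute_value_sum_less_M_k=l_ge_M_N}. Assume WLOG $l\le k$. When $k=l\le \frac{r}{L}M$, part \ref{item:k_less_M_N} bounds $\sum_{m\ge M}(b_{m,k}^z)^2$ by $(C^2+1)/(1+|k-\frac{r}{L}M|)$; when $k=l\ge \frac{r}{L}M$, part \ref{item:k_ge_M_N} is applied analogously. When $k\neq l$ and $\frac{r}{L}M\le l$, part \ref{item:k_ge_M_N} yields $\sum_{m<M}|b_{m,k}^zb_{m,l}^z|\le C^2/(1+|l-\frac{r}{L}M|)$, and the sign identity carries this bound over to $\sum_{m\ge M}$ as well; symmetrically, $\frac{r}{L}M\ge k$ with $k\neq l$ is dealt with by part \ref{item:k_less_M_N}. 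Each of these estimates is dominated by the right-hand side of the lemma.

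The only genuinely new configuration is the intermediate one, $l<\frac{r}{L}M<k$ with $k\neq l$, and this is the main obstacle: neither observation applies and a $\delta$-loss is unavoidable. Here I would estimate $\sum_{m\ge M}|b_{m,k}^zb_{m,l}^z|$ directly from Lemma \ref{lemma:bound_b_mk}. Since $\tfrac{r}{L}m-l\ge \tfrac{r}{L}M-l>0$ for every $m\ge M$, the H\"older-type split
\begin{align*}
\frac{1}{1+\tfrac{r}{L}m-l}\le \frac{1}{(1+\tfrac{r}{L}M-l)^{\delta}}\cdot \frac{1}{(1+\tfrac{r}{L}m-l)^{1-\delta}}
\end{align*}
peels off a factor $(1+|l-\tfrac{r}{L}M|)^{-\delta}$. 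Extending the resulting sum to all $m\ge 0$ and applying Lemma \ref{lemma:bound_sum_gamma_delta} with $\gamma=1-\eta$ for a small auxiliary $\eta>0$ with $\delta+\eta<1$ bounds the remainder by $C(1+|k-l|)^{\delta+\eta-1}$. Since $|k-l|\ge |k-\tfrac{r}{L}M|$ in this regime, setting $a=1+|l-\tfrac{r}{L}M|$ and $b=1+|k-\tfrac{r}{L}M|$ one obtains an estimate of the form $C/(a^{\delta}b^{1-\delta-\eta})$. A case split on $a\le b$ versus $a\ge b$ then yields
\begin{align*}
\frac{1}{a^{\delta}b^{1-\delta-\eta}}\le \frac{1}{a^{1-\eta}}+\frac{1}{b^{1-\eta}},
\end{align*}
which after renaming $\eta$ as the $\delta$ of the lemma is precisely the required form. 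The bound on $\sum_{m<M}$ in the same intermediate regime follows by the symmetric argument using $k-\tfrac{r}{L}m\ge k-\tfrac{r}{L}M$ for $m<M$.
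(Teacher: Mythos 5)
Your proof is correct, and for the configurations $k=l$, $\tfrac{r}{L}M\le l\le k$, and $l\le k\le\tfrac{r}{L}M$ (using the sign identity for $k\ne l$), it agrees with the paper's argument. Where you diverge is in the intermediate case $l<\tfrac{r}{L}M<k$: you rebuild the estimate from Lemma~\ref{lemma:bound_b_mk} via a H\"older split that peels off $(1+\tfrac{r}{L}M-l)^{-\delta}$, an auxiliary parameter $\eta$ so that Lemma~\ref{lemma:bound_sum_gamma_delta} applies with $\gamma=1-\eta<1$, and a final case split on $a\lessgtr b$. The paper short-circuits all of this by invoking Theorem~\ref{theorem:bound_on_sum_b_mk_b_ml} directly: $\bigl|\sum_{m<M}b^z_{m,k}b^z_{m,l}\bigr|\le\sum_{m\in\N_0}|b^z_{m,k}b^z_{m,l}|\lesssim(1+|k-l|)^{\delta-1}$, and in the regime $l\le\tfrac{r}{L}M\le k$ one has $|k-l|=|k-\tfrac{r}{L}M|+|\tfrac{r}{L}M-l|\ge\max\bigl(|k-\tfrac{r}{L}M|,|l-\tfrac{r}{L}M|\bigr)$, so $(1+|k-l|)^{\delta-1}$ is already dominated by either single term on the right-hand side, without any H\"older split or $\eta$-bookkeeping. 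Your version essentially re-proves a weaker form of Theorem~\ref{theorem:bound_on_sum_b_mk_b_ml} in situ, which works but obscures the simple geometric observation that when $\tfrac{r}{L}M$ lies between $l$ and $k$ the gap $|k-l|$ controls both distances to $\tfrac{r}{L}M$. Both routes pay the same $\delta$-loss, so you are right that it is unavoidable; the paper's argument simply reaches it in one line. One minor point: the paper's proof contains a harmless sign typo ($\sum_{m<M}=-\sum_{m\ge M}$ for $k\ne l$, not $=$), which you state correctly.
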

\begin{proof}
By \eqref{eqn:bound_b_mk} we may assume $M\in \N_0$. 
The statements for $k=l$ follow immediately by the bounds in \ref{obs:bound_absolute_value_sum_less_M_k=l_ge_M_N}.
For $k \ne l$ we have $\sum_{m\in N_0, m <M }  b_{m,k}^z b_{m,l}^z = \sum_{m\in N_0, m \ge M }  b_{m,k}^z b_{m,l}^z$ so that the rest follows by  \ref{obs:bound_absolute_value_sum_less_M_k=l_ge_M_N} and by observing that if $l \le \frac{r}{L}M \le k$ that $|\sum_{m\in N_0, m <M } b_{m,k}^z b_{m,l}^z|   \lesssim  \frac{1}{(1+|k- l|)^{1-\delta}}$ by Theorem \ref{theorem:bound_on_sum_b_mk_b_ml}, which is less than the right-hand side of both \eqref{eqn:bound_sum_ge_L_eps} and \eqref{eqn:bound_sum_less_L_eps}.
\end{proof}

\begin{lemma}
\label{lemma:bound_G_eps_with_eps_dependence}
Write 
$G_{r,\epsilon}(k,l)  = \E[X_{k,r}^\epsilon X_{l,r}^\epsilon - Y_{k,r}^\epsilon Y_{l,r}^\epsilon  ]$. 
There exists a $C>0$ such that for all $\epsilon>0$ and $k,l \in \N_0^2$
\begin{align}
\label{eqn:bound_G_eps_with_eps_dependence}
\frac{1}{C} |G_{r,\epsilon}(k,l)|
\le  
\begin{cases}
\prod_{i=1}^2 \frac{1}{(1+ |k_i - \frac{r}{\epsilon}|)^{1-\delta}} +  \frac{1}{(1+ |l_i - \frac{r}{\epsilon}|)^{1-\delta}} 
& \mbox{if for } i \in \{1,2\} \mbox{ either } k_i \ne l_i \\
& \mbox{ or }  k_i = l_i \ge \frac{r}{\epsilon}, \\
 \frac{1}{(1+ |k_i - \frac{r}{\epsilon}|)^{1-\delta}} +  \frac{1}{(1+ |l_i - \frac{r}{\epsilon}|)^{1-\delta}} 
&  \mbox{if either }
 k_i \ne l_i \mbox{ or }  k_i = l_i \ge \frac{r}{\epsilon} \\
 & 
  \mbox{ and } k_{3-i} = l_{3-i} < \frac{r}{\epsilon}, \\
 \frac{1}{(1+ |k_1 - \frac{r}{\epsilon}|)^{1-\delta}}  + \frac{1}{(1+ |k_2 - \frac{r}{\epsilon}|)^{1-\delta}}  
& k_i = l_i < \frac{r}{\epsilon} \mbox{ for } i \in \{1,2\}. 
\end{cases}
\end{align}
\end{lemma}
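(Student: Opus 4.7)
With $\tau = \1_{(-1,1)^2}$, the factor $\tau(\tfrac{\epsilon}{L}m)^2$ (resp.\ $\tau(\tfrac{\epsilon}{r}k)$) appearing in \eqref{eqn:formula_G_eps_in_terms_of_b} equals $\1\{m_1 < L/\epsilon, \, m_2 < L/\epsilon\}$ (resp.\ $\1\{k_1 < r/\epsilon, \, k_2 < r/\epsilon\}$) for $m,k \in \N_0^2$. I will choose $M \in \N$ so that $\{m \in \N_0 : m < L/\epsilon\} = \{0,\dots,M-1\}$ and set $N := \tfrac{r}{\epsilon}$; because $r \le L$ we have $|\tfrac{r}{L}M - N| \le \tfrac{r}{L} \le 1$, so $(1+|k - \tfrac{r}{L}M|)^{-1} \eqsim (1+|k - N|)^{-1}$ uniformly. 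For $i \in \{1,2\}$ introduce the partial and tail sums
\[
h_M^i(k,l) := \sum_{m<M} b_{m,k}^{y_i} b_{m,l}^{y_i}, \qquad H_M^i(k,l) := \sum_{m\ge M} b_{m,k}^{y_i} b_{m,l}^{y_i}.
\]
Since $(\fn_{m,L})_m$ is an orthonormal basis of $L^2(Q_L)$ and the coefficients of $\cT_{y_i}\fn_{k,r}$ (extended by $0$) in this basis are $(b_{m,k}^{y_i})_m$, Parseval gives $h_M^i(k,l) + H_M^i(k,l) = \delta_{k,l}$. Substituting into \eqref{eqn:formula_G_eps_in_terms_of_b} yields the clean identity
\[
G_{r,\epsilon}(k,l) \;=\; h_M^1(k_1,l_1)\, h_M^2(k_2,l_2) - \tau(\tfrac{\epsilon}{r}k)\,\delta_{k_1,l_1}\delta_{k_2,l_2}.
\]

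The proof then splits according to the three regimes in the statement. In \textbf{Case 1} (for each $i$ either $k_i \ne l_i$ or $k_i = l_i \ge N$) the correction $\tau(\tfrac{\epsilon}{r}k)\delta_{k,l}$ vanishes: if $\delta_{k,l}\ne 0$ then $k_i \ge N$ for both $i$, so $\tau(\tfrac{\epsilon}{r}k) = 0$. Hence $G_{r,\epsilon} = h_M^1 h_M^2$, and the case-1 hypothesis is precisely the condition needed to apply Lemma~\ref{lemma:bounds_on_h_M}, \eqref{eqn:bound_sum_less_L_eps}, to each factor; taking the product gives the claimed bound. In \textbf{Case 2} (say $k_2 = l_2 < N$ and index $1$ is of case-1 type) the correction again vanishes for the same reason, so $G_{r,\epsilon} = h_M^1(k_1,l_1)\, h_M^2(k_2,k_2)$; apply \eqref{eqn:bound_sum_less_L_eps} to $h_M^1$ and use $|h_M^2(k_2,k_2)| \le \sum_m (b_{m,k_2}^{y_2})^2 = 1$.

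In \textbf{Case 3} (both $k_i = l_i < N$) we have $\tau(\tfrac{\epsilon}{r}k)\delta_{k,l} = 1$, and substituting $h_M^i(k_i,k_i) = 1 - H_M^i(k_i,k_i)$ produces the cancellation
\[
G_{r,\epsilon}(k,l) = (1-H_M^1(k_1,k_1))(1-H_M^2(k_2,k_2)) - 1 = -H_M^1(k_1,k_1) - H_M^2(k_2,k_2) + H_M^1 H_M^2.
\]
Both diagonal tails $H_M^i(k_i,k_i) = \sum_{m\ge M}(b_{m,k_i}^{y_i})^2$ are nonnegative and bounded by $1$, so $|G_{r,\epsilon}| \le 2(H_M^1(k_1,k_1) + H_M^2(k_2,k_2))$, and the hypothesis $k_i = l_i < N \le \tfrac{r}{L}M$ is exactly what lets Lemma~\ref{lemma:bounds_on_h_M}, \eqref{eqn:bound_sum_ge_L_eps}, deliver the bound on each $H_M^i$.

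The main obstacle is not analytic but bookkeeping: one must carefully identify, in each regime, whether the correction term $\tau(\tfrac{\epsilon}{r}k)\delta_{k,l}$ is zero (Cases 1 and 2) or must be absorbed via the identity $h_M^i = 1 - H_M^i$ (Case 3), so that the appropriate half of Lemma~\ref{lemma:bounds_on_h_M}---partial sums vs.\ tail sums---is invoked. The replacement of $\tfrac{r}{L}M$ by $\tfrac{r}{\epsilon}$ in the final denominators is harmless thanks to $r \le L$.
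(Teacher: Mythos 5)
Your proposal is correct and follows the paper's own strategy: split each coordinate sum at the threshold given by $\tau=\1_{(-1,1)^2}$, express $G_{r,\epsilon}$ as the product of two one-dimensional partial sums minus the diagonal correction $\tau(\tfrac{\epsilon}{r}k)\delta_{k,l}$, and then invoke Lemma~\ref{lemma:bounds_on_h_M} regime by regime. The explicit identity $h_M^i + H_M^i = \delta_{k_i,l_i}$ via Parseval, and the expansion $(1-H^1)(1-H^2)-1 = -H^1-H^2+H^1H^2$ in Case 3, is a cleaner way to write what the paper compresses into the single line $|G_{r,\epsilon}(k,k)| = |\sum_{|m|_\infty\ge L/\epsilon}\prod_i (b^{y_i}_{m_i,k_i})^2|$; the two derivations are identical in substance.

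One small bookkeeping gap: you pick $M\in\N$ with $\{m<L/\epsilon\}=\{0,\dots,M-1\}$, so $\tfrac{r}{L}M$ may strictly exceed $N=\tfrac{r}{\epsilon}$ (by at most $r/L\le 1$). In Cases~1 and~2 you then apply \eqref{eqn:bound_sum_less_L_eps} under your hypothesis $k_i=l_i\ge N$, but that estimate is stated under the stronger hypothesis $k_i=l_i\ge\tfrac{r}{L}M$. The single borderline integer $k_i$ with $N\le k_i<\tfrac{r}{L}M$ is not covered by the lemma as you invoke it. This is harmless---there $|k_i-N|<1$, so the target right-hand side is $\gtrsim 1$ while $|h_M^i(k_i,k_i)|\le 1$ trivially---but the cleanest fix is to take $M=L/\epsilon$ as a \emph{real} number, which Lemma~\ref{lemma:bounds_on_h_M} explicitly permits ($M\in[0,\infty)$): then $\tfrac{r}{L}M=N$ exactly, the $\eqsim$-step disappears, and the hypotheses of the lemma align precisely with your case distinctions. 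This is what the paper does implicitly.
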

\begin{proof}
Let $(k,l) \in \N_0^2 \times \N_0^2 $ be such that  $k=l$ with $|k|_\infty < \frac{r}{\epsilon}$. Then (see \eqref{eqn:formula_G_eps_in_terms_of_b})
\begin{align*}
|G_{r,\epsilon}(k,l)| 
\cand \begin{calc}
 = 
\Big| \sum_{m  \in \N_0^2} 
\prod_{i=1}^2 b_{m_i,k_i}^{y_i} b_{m_i,l_i}^{y_i}
[\tau(\tfrac{\epsilon}{L} m)^2
- 1 ] \Big|
\end{calc} \cnewline
& = 
\Big| \sum_{m  \in \N_0^2: |m|_\infty \ge \frac{L}{\epsilon} } 
\prod_{i=1}^2 b_{m_i,k_i}^{y_i} b_{m_i,l_i}^{y_i}
 \Big| 
 \lesssim 
\Big|
\sum_{m\in\N_0, m \ge \frac{L}{\epsilon}} (b_{m,k_1}^{y_1})^2
\Big| 
+ 
\Big|
\sum_{m\in\N_0, m \ge \frac{L}{\epsilon}} (b_{m,k_2}^{y_2})^2
\Big|.
\end{align*}
If $k$ and $l$ are not like that, then 
\begin{align*}
G_{r,\epsilon}(k,l)
& = \Big(\sum_{m\in \N_0, m < \frac{L}{\epsilon} } b_{m,k_1}^{y_1} b_{m,l_1}^{y_1}\Big) 
\Big( \sum_{m\in \N_0, m < \frac{L}{\epsilon} } b_{m,k_2}^{y_2} b_{m,l_2}^{y_2}\Big). 
\end{align*}
So that the bound \eqref{eqn:bound_G_eps_with_eps_dependence} follows from Lemma \ref{lemma:bounds_on_h_M}. 
\end{proof}

\appendix

\section{The min-max formula for smooth potentials}

\begin{lemma}
\label{lemma:smooth_approximation_orthogonal_sobolev_functions}
Let $f_1,\dots, f_n$ be pairwise orthogonal in $H_0^2$. 
There exist pairwise orthogonal $f_{1,k}, \dots, f_{n,k}$ in $C_c^\infty$ for $k\in\N$ such that for all $i$
\begin{align}
\label{eqn:convergence_f_i_k}
f_{i,k} \xrightarrow{k \rightarrow \infty} f_i \quad \mbox{ in } H_0^2. 
\end{align}
\end{lemma}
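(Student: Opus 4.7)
The plan is the following. Since $C_c^\infty([0,L]^d)$ is dense in $H_0^2([0,L]^d)$ (see Theorem~\ref{theorem:denseness_smooth_compact_in_dirchlet_besov} together with Theorem~\ref{theorem:equivalences_H_0_spaces}), I can first choose, for each $i\in\{1,\dots,n\}$ and $k\in\N$, functions $g_{i,k}\in C_c^\infty$ with $g_{i,k}\to f_i$ in $H_0^2$ as $k\to\infty$. The issue is only orthogonality, which I will restore by Gram--Schmidt in the Hilbert space $H_0^2$. Since linear combinations of $C_c^\infty$ functions remain in $C_c^\infty$, the Gram--Schmidt procedure preserves compact support and smoothness.

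First I would dispose of degenerate cases. If some $f_i=0$, just set $f_{i,k}=0$; so assume all $f_i\ne 0$. I would then define inductively
\begin{align*}
f_{1,k} &:= g_{1,k}, \\
f_{i,k} &:= g_{i,k} - \sum_{j=1}^{i-1} \frac{\langle g_{i,k}, f_{j,k}\rangle_{H_0^2}}{\|f_{j,k}\|_{H_0^2}^2}\, f_{j,k}, \qquad i=2,\dots,n,
\end{align*}
for all $k$ large enough that the denominators are nonzero (this is the case eventually since $\|f_{j,k}\|_{H_0^2}\to\|f_j\|_{H_0^2}>0$, as will follow from the induction). By construction each $f_{i,k}\in C_c^\infty$ and $\langle f_{i,k},f_{j,k}\rangle_{H_0^2}=0$ for $i\ne j$.

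The convergence $f_{i,k}\to f_i$ in $H_0^2$ is proved by induction on $i$. The base case $i=1$ is immediate. For the inductive step, assume $f_{j,k}\to f_j$ in $H_0^2$ for $j<i$. Then $\|f_{j,k}\|_{H_0^2}^2\to\|f_j\|_{H_0^2}^2\ne 0$, and by continuity of the inner product together with $g_{i,k}\to f_i$ and the orthogonality $\langle f_i,f_j\rangle_{H_0^2}=0$,
\begin{align*}
\langle g_{i,k}, f_{j,k}\rangle_{H_0^2} \xrightarrow{k\to\infty} \langle f_i, f_j\rangle_{H_0^2} = 0.
\end{align*}
Hence each correction term in the sum above tends to $0$ in $H_0^2$, so $f_{i,k}\to f_i$ in $H_0^2$, which gives \eqref{eqn:convergence_f_i_k}.

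There is no real obstacle here; the only point requiring a little care is ensuring the denominators $\|f_{j,k}\|_{H_0^2}^2$ are nonzero, but this is handled by restricting to sufficiently large $k$ (and redefining $f_{i,k}$ for the finitely many initial $k$ arbitrarily, e.g.\ as $g_{i,k}$, which does not affect the asymptotic statement).
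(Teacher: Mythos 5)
Your proof is correct and follows essentially the same route as the paper's: approximate each $f_i$ by compactly supported smooth $g_{i,k}$ using density, then apply Gram--Schmidt in $H_0^2$ and use continuity of the inner product together with $\langle f_i,f_j\rangle=0$ to see that the correction terms vanish in the limit. Your write-up is in fact slightly more careful than the paper's, since you address the degenerate case $f_i=0$ and explicitly handle the possibility that $\|f_{j,k}\|_{H_0^2}$ vanishes for small $k$.
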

\begin{proof}
Let $g_{i,k} \in C_c^\infty$ be such that $g_{i,k} \rightarrow f_i$ in $H_0^2$ for all $i$.
By doing a Gram-Schmidt procedure on $g_{1,k}, \dots, g_{n,k}$ we can give the proof by induction. 
We prove the induction step, assuming that $f_{1,k}=g_{1,k}, \dots, f_{n-1,k}=g_{n-1,k}$ are pairwise independent. We define 
\begin{align*}
f_{n,k} = g_{n,k} - \sum_{i=1}^{n-1} \frac{ \langle g_{n,k}, f_{i,k} \rangle}{ \langle f_{i,k}, f_{i,k} \rangle } f_{i,k}. 
\end{align*}
Then $f_{n,k}$ is pairwise independent from $f_{1,k}, \dots, f_{n-1,k}$. As for $i \in \{1,\dots, n-1\}$ we have 
\begin{align*}
\langle g_{n,k} , f_{i,k} \rangle  \rightarrow \langle f_n, f_i \rangle =0, 
\end{align*}
it follows that $f_{n,k} \rightarrow f_n$. 
\end{proof}

\begin{lemma}
\label{lemma:min_max_over_smooth}
Let $\zeta \in L^\infty$, $n\in \N$ and $L>0$. 
Then (for notation see \ref{theorem:dirichlet_summary})
\begin{align}
\label{eqn:min_max_over_smooth}
\lambda_n(Q_L,\zeta) 
& = 
\sup_{\substack{F\sqsubset H_0^2 \\ \dim F = n}}
\inf_{\substack{\psi \in F \\ \|\psi\|_{L^2}=1}} \langle \sH_\zeta \psi , \psi \rangle 
= 
\sup_{\substack{F\sqsubset C_c^\infty \\ \dim F = n}}
\inf_{\substack{\psi \in F \\ \|\psi\|_{L^2}=1}} \langle \sH_\zeta \psi , \psi \rangle . 
\end{align}
\end{lemma}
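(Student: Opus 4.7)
My plan is as follows. The first equality is a direct instance of the classical Courant--Fischer (Fischer's) principle applied to the self-adjoint operator $\sH_\zeta$ on $H_0^2$: by \ref{obs:smooth_potentials}, $\sH_\zeta$ is self-adjoint with compact resolvent and purely discrete spectrum $\lambda_1(Q_L,\zeta) \ge \lambda_2(Q_L,\zeta) \ge \cdots$, so \cite[Section 28, Theorem 4]{La02} gives the first equality immediately. So the content of the lemma lies in the second equality.

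The inequality ``$\le$'' in the second equality, namely $\sup_{C_c^\infty} \le \sup_{H_0^2}$, is trivial since $C_c^\infty \subset H_0^2$. For the reverse inequality, I will approximate any test subspace $F \sqsubset H_0^2$ with $\dim F = n$ by a smooth subspace $F_k \sqsubset C_c^\infty$ of the same dimension, in such a way that the corresponding infima converge. After an application of $L^2$-Gram--Schmidt inside $F$ (which is valid because $F$ is a finite-dimensional subspace of $L^2$), I may assume $F$ has an $L^2$-orthonormal basis $f_1,\dots,f_n$. By Theorem~\ref{theorem:denseness_smooth_compact_in_dirchlet_besov} together with Theorem~\ref{theorem:equivalences_H_0_spaces} the space $C_c^\infty$ is dense in $H_0^2$, so I choose $\tilde f_{i,k} \in C_c^\infty$ with $\tilde f_{i,k} \to f_i$ in $H_0^2$ as $k\to\infty$. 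Since convergence in $H_0^2$ implies convergence in $L^2$, the Gram matrix $G^k = (\langle \tilde f_{i,k},\tilde f_{j,k}\rangle_{L^2})_{i,j}$ converges to the identity; in particular it is positive definite for $k$ large, and standard $L^2$-Gram--Schmidt (Cholesky) yields an invertible matrix $A^k \to I$ such that $f_{i,k} := \sum_j A^k_{ij} \tilde f_{j,k}$ are $L^2$-orthonormal elements of $C_c^\infty$ with $f_{i,k} \to f_i$ in $H_0^2$.

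Set $F_k := \mathrm{span}(f_{1,k},\dots,f_{n,k}) \sqsubset C_c^\infty$, which has dimension $n$ for $k$ large. Any $\psi \in F_k$ with $\|\psi\|_{L^2}=1$ is of the form $\psi = \sum_i a_i f_{i,k}$ with $a \in S^{n-1} \subset \R^n$, and
\begin{align*}
\langle \sH_\zeta \psi, \psi\rangle_{L^2}
= \sum_{i,j} a_i a_j \langle \sH_\zeta f_{i,k}, f_{j,k}\rangle_{L^2}.
\end{align*}
Using integration by parts (valid since $f_{i,k}, f_{j,k} \in H_0^1$), one has $\langle \sH_\zeta \varphi, \varphi\rangle_{L^2} = -\|\nabla \varphi\|_{L^2}^2 + \int \zeta \varphi^2$, and more generally $\langle \sH_\zeta \varphi, \chi\rangle_{L^2} = -\langle \nabla \varphi,\nabla \chi\rangle_{L^2} + \int \zeta \varphi\chi$ for $\varphi,\chi \in H_0^1$. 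This bilinear form is continuous on $H_0^1 \times H_0^1$ (using $\zeta\in L^\infty$), so the matrix entries $M^k_{ij} := \langle \sH_\zeta f_{i,k},f_{j,k}\rangle_{L^2}$ converge to $M_{ij} := \langle \sH_\zeta f_i, f_j\rangle_{L^2}$. The infimum over $S^{n-1}$ of the quadratic form $a \mapsto a^\top M^k a$ is the smallest eigenvalue of $M^k$, which depends continuously on the matrix; hence $\inf_{\psi \in F_k, \|\psi\|_{L^2}=1}\langle \sH_\zeta \psi,\psi\rangle_{L^2} \to \inf_{\psi \in F, \|\psi\|_{L^2}=1}\langle \sH_\zeta \psi,\psi\rangle_{L^2}$. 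Since this holds for every $F$, the sup over $F \sqsubset H_0^2$ is bounded above by the sup over $F_k \sqsubset C_c^\infty$, completing the proof.

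The only delicate step is the simultaneous smoothing and preservation of $L^2$-orthonormality, which is handled cleanly by the Gram--Schmidt/Cholesky procedure above; all remaining pieces are density and continuity of the quadratic form on $H^1$.
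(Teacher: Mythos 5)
Your proof is correct, and it takes a genuinely different (and, as written, cleaner) route from the paper's. The paper's proof (via Lemma~\ref{lemma:smooth_approximation_orthogonal_sobolev_functions}) orthogonalizes the basis $f_1,\dots,f_n$ in the $H_0^2$ inner product, produces $H_0^2$-orthogonal smooth approximants by Gram--Schmidt in $H_0^2$, and then estimates the difference of the two infima crudely by the supremum over coefficient vectors on the sphere of the difference of quadratic forms, which it bounds by $\sum_i \|f_i - f_{i,k}\|_{H_0^2}$. In doing so it parametrizes $\psi = \sum_i\alpha_i f_i$ with $\sum\alpha_i^2=1$ and $H_0^2$-orthonormal $f_i$, which fixes $\|\psi\|_{H_0^2}$ rather than $\|\psi\|_{L^2}$, so the constraint in the min--max (which is $\|\psi\|_{L^2}=1$) is not literally enforced; the paper's argument is best read as an estimate on the Rayleigh quotient, though it does not quite write it that way. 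Your approach sidesteps this delicacy entirely: by doing Gram--Schmidt in $L^2$ on $F$ and then re-orthonormalizing the smooth approximants in $L^2$ via the (near-identity) Cholesky factor, the constraint $\|\psi\|_{L^2}=1$ is precisely the constraint $|a|_{\mathbb{R}^n}=1$, the infimum becomes exactly the smallest eigenvalue of the $n\times n$ matrix $M^k_{ij}=\langle\sH_\zeta f_{i,k},f_{j,k}\rangle_{L^2}$, and convergence follows from continuity of the smallest eigenvalue together with continuity of the bilinear form $-\langle\nabla\cdot,\nabla\cdot\rangle + \int\zeta\,\cdot\,\cdot$ on $H_0^1\times H_0^1$. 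Both proofs turn on the same two pillars (density of $C_c^\infty$ in $H_0^2$ and continuity of the form), but your bookkeeping of the $L^2$-normalization is tighter.
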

\begin{proof}
First observe that 
\begin{align*}
\lambda_n(Q_L,\zeta) 
& = 
\sup_{\substack{ f_1,\dots, f_n \in  H_0^2 \\ \langle f_i, f_j \rangle_{H_0^2} = \delta_{ij} }}
\inf_{\substack{\psi = \sum_{i=1}^n \alpha_i f_i  \\ \alpha_i \in [0,1], \sum_{i=1}^n \alpha_i^2 =1 }} \langle \sH_\zeta \psi , \psi \rangle . 
\end{align*}
Let $f_1,\dots,f_n \in H_0^2$ with $\langle f_i, f_j \rangle_{H_0^2}= \delta_{ij}$. 
By Lemma \ref{lemma:smooth_approximation_orthogonal_sobolev_functions}
there exist $f_{1,k}, \dots, f_{n,k}$ in $C_c^\infty$ with $\langle f_{i,k} , f_{j,k} \rangle_{H^2_0} = \delta_{ij}$ (by renormalising) such that \eqref{eqn:convergence_f_i_k} holds. 
Then 
\begin{align*}
& \left|
\inf_{\substack{\psi = \sum_{i=1}^n \alpha_i f_i  \\ \alpha_i \in [0,1], \sum_{i=1}^n \alpha_i^2 =1 }} \langle \sH_\zeta \psi , \psi \rangle 
- 
\inf_{\substack{\psi = \sum_{i=1}^n \alpha_i f_{i,k}  \\ \alpha_i \in [0,1], \sum_{i=1}^n \alpha_i^2 =1 }} \langle \sH_\zeta \psi , \psi \rangle \right| \\
& \le 
\sup_{\substack{\psi = \sum_{i=1}^n \alpha_i f_i , \varphi = \sum_{i=1}^n \alpha_i f_{i,k}  \\ \alpha_i \in [0,1], \sum_{i=1}^n \alpha_i^2 =1 }}
\left|
 \langle \sH_\zeta \psi , \psi \rangle_{L^2} 
-
\langle \sH_\zeta \varphi , \varphi \rangle_{L^2}  \right| \\
& \lesssim 
\sup_{\alpha_i \in [0,1], \sum_{i=1}^n \alpha_i^2 =1 }
\left\| \sum_{i=1}^n \alpha_i f_i - \sum_{i=1}^n \alpha_i f_{i,k} \right\|_{H_0^2} 
\le  \sum_{i=1}^n \| f_i - f_{i,k} \|_{H_0^2} \rightarrow 0. 
\end{align*}
\begin{calc}
We used 
\begin{align*}
& \left|
\langle \sH_\zeta \psi , \psi \rangle_{L^2}
-
\langle \sH_\zeta \varphi , \varphi \rangle_{L^2}
\right| 
 = 
 \left| \langle \sH_\zeta (\psi -\varphi) , \psi \rangle_{L^2}
+
\langle \sH_\zeta \varphi , \psi- \varphi \rangle_{L^2} \right| 
\end{align*}
and $| \langle \sH_\zeta f, g \rangle_{L^2}| \le \| \sH_\zeta f\|_{L^2} \|g\|_{L^2} \lesssim \|f\|_{H_0^2} \|g\|_{H_0^2} $.
\end{calc}
This proves 
\begin{align}
\label{eqn:min_max_of_eigenvalue_with_smooth_and_convex_formulation}
\lambda_n(Q_L,\zeta) 
& = 
\sup_{\substack{ f_1,\dots, f_n \in  C_c^\infty \\ \langle f_i, f_j \rangle_{H_0^2} = \delta_{ij} }}
\inf_{\substack{\psi = \sum_{i=1}^n \alpha_i f_i  \\ \alpha_i \in [0,1], \sum_{i=1}^n \alpha_i^2 =1 }} \langle \sH_\zeta \psi , \psi \rangle , 
\end{align}
and therefore 
\eqref{eqn:min_max_over_smooth}. 
\end{proof}

\section{Useful bound on an integral}
\label{section:integral_bound}

\begin{lemma}
\label{lemma:integral_bound_with_gamma_and_theta}
Let $\gamma, \theta \in (0,1)$ and $\gamma+\theta >1$. 
There exists a $C>0$ such that for all $b>0$ and $u \in \R$ 
\begin{align}
\label{eqn:integral_bound_with_gamma_and_theta_with_u}
\int_0^\infty \frac{1}{(b+|x-u|)^\gamma} \frac{1}{(b+x)^\theta} \dd x 
\le C (b+|u|)^{1-\gamma-\theta}. 
\end{align}
Consequently, there exists a $C>0$ such that for all $b>0$ and $u,v \in \R$
\begin{align}
\label{eqn:integral_bound_with_gamma_and_theta_over_R_with_u_and_v}
\int_\R \frac{1}{(b+|x-u|)^\gamma} \frac{1}{(b+|x-v|)^\theta} \dd x 
\le C (b+|u-v|)^{1-\gamma-\theta}. 
\end{align}
\end{lemma}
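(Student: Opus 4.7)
The plan is to reduce everything to the case $b=1$ by a scaling change of variables, then to prove the half-line bound \eqref{eqn:integral_bound_with_gamma_and_theta_with_u} by a three-region (or two-region) split that depends on the sign and size of $u$, and finally to deduce the full-line bound \eqref{eqn:integral_bound_with_gamma_and_theta_over_R_with_u_and_v} by a translation that places one of the two singularities at the origin.

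First, I would substitute $x = by$ and $u = bw$ in \eqref{eqn:integral_bound_with_gamma_and_theta_with_u} to get
\begin{equation*}
\int_0^\infty \frac{\dd x}{(b+|x-u|)^\gamma (b+x)^\theta}
= b^{1-\gamma-\theta}\int_0^\infty \frac{\dd y}{(1+|y-w|)^\gamma (1+y)^\theta},
\end{equation*}
so the claim reduces to showing
\begin{equation*}
J(w):=\int_0^\infty \frac{\dd y}{(1+|y-w|)^\gamma (1+y)^\theta}\ \lesssim\ (1+|w|)^{1-\gamma-\theta}
\qquad (w\in\R).
\end{equation*}
When $w\le 0$ one has $|y-w|=y+|w|\ge y$, so on $\{y\ge |w|\}$ the integrand is bounded by $(1+y)^{-(\gamma+\theta)}$, whose integral over $[|w|,\infty)$ is $\lesssim (1+|w|)^{1-\gamma-\theta}$ (this is where $\gamma+\theta>1$ enters); on $\{y\le |w|\}$ one uses $(1+y+|w|)^{-\gamma}\le (1+|w|)^{-\gamma}$ and $\int_0^{|w|}(1+y)^{-\theta}\,\dd y\lesssim (1+|w|)^{1-\theta}$ (using $\theta<1$). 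When $w>0$ I would split $[0,\infty)=[0,w/2]\cup[w/2,2w]\cup[2w,\infty)$: on $[0,w/2]$ bound $(1+|y-w|)^{-\gamma}$ by $\lesssim (1+w)^{-\gamma}$ and integrate $(1+y)^{-\theta}$; on $[w/2,2w]$ bound $(1+y)^{-\theta}$ by $\lesssim (1+w)^{-\theta}$ and integrate $(1+|y-w|)^{-\gamma}$ (using $\gamma<1$); on $[2w,\infty)$ use $|y-w|\ge y/2$ to reduce to $\int_{2w}^\infty (1+y)^{-(\gamma+\theta)}\,\dd y\lesssim (1+w)^{1-\gamma-\theta}$. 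Each of the three pieces is $\lesssim (1+w)^{1-\gamma-\theta}$.

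For the consequence \eqref{eqn:integral_bound_with_gamma_and_theta_over_R_with_u_and_v} I would substitute $x=t+v$ and split at $t=0$:
\begin{equation*}
\int_\R \frac{\dd x}{(b+|x-u|)^\gamma (b+|x-v|)^\theta}
=\int_0^\infty \frac{\dd t}{(b+|t-(u-v)|)^\gamma(b+t)^\theta}
+\int_0^\infty \frac{\dd s}{(b+|s+(u-v)|)^\gamma(b+s)^\theta},
\end{equation*}
after changing $t=-s$ in the $t<0$ half. Each of the two half-line integrals is of the form \eqref{eqn:integral_bound_with_gamma_and_theta_with_u} (with $u$ replaced by $\pm(u-v)$), hence bounded by $C(b+|u-v|)^{1-\gamma-\theta}$.

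The only mildly delicate point is making sure the three-region split gives constants independent of $w$ and $b$, but once the scaling step is in place every estimate reduces to elementary beta-type integrals $\int_0^M (1+y)^{-\alpha}\,\dd y$ whose behaviour is controlled by whether $\alpha$ is less than, equal to, or greater than $1$; the hypotheses $\gamma,\theta\in(0,1)$ and $\gamma+\theta>1$ are precisely what is needed for all three regions to contribute $\lesssim (1+|w|)^{1-\gamma-\theta}$.
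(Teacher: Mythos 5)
Your proof is correct and proceeds along essentially the same lines as the paper's: reduce to $b=1$ by scaling, split the half-line into the near- and far-from-$u$ regions, estimate each by freezing the slowly varying factor and integrating the other, and then derive the two-sided bound by translating and splitting at the remaining singularity. The one small stylistic difference is in the tail region: the paper factors out $(1+u)^{1-\gamma-\theta}$ by a second change of variables and invokes a uniform-in-$a$ bound, whereas you compare the integrand pointwise with $(1+y)^{-(\gamma+\theta)}$; both are valid and both use $\gamma+\theta>1$, $\gamma,\theta<1$ at the same points.
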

\begin{proof}
By a simple substitution argument we may assume $b=1$. 
\begin{calc}
Indeed, by substituting $y= \frac1b x$ we have (assuming \eqref{eqn:integral_bound_with_gamma_and_theta_with_u} holds for $b=1$)
\begin{align*}
\int_0^\infty \frac{1}{(b+|x-u|)^\gamma} \frac{1}{(b+x)^\theta} \dd x 
& = \int_0^\infty \frac{b}{(b+|by-u|)^\gamma (b(1+y))^\theta} \dd y \\
& = b^{1- \gamma - \theta} \int_0^\infty \frac{1}{(1+|y-\frac{u}{b}|)^\gamma ((1+y))^\theta} \dd y \\
& \le C b^{1- \gamma - \theta} (1+|\frac{u}{b}|)^{1-\gamma-\theta}
=C (b+|u|)^{1-\gamma-\theta}. 
\end{align*}
\end{calc}
We have uniformly in  $a\in (0,1)$
\begin{align*}
\int_0^\infty \frac{1}{(a+x)^\gamma} \frac{1}{(1+x)^\theta} \dd x 
\le 
\int_1^\infty \frac{1}{x^{\gamma+\theta}} \dd x 
+ \int_0^1 \frac{1}{(a+x)^{\gamma}} \dd x \lesssim 1+ (1+a)^{1-\gamma} \lesssim 1. 
\end{align*}
Hence for all $u \ge 0$ 
\begin{align}
\notag & \int_u^\infty \frac{1}{(1+x-u)^\gamma} \frac{1}{(1+x)^\theta} \dd x 
= \int_0^\infty \frac{1}{(1+x)^\gamma} \frac{1}{(1+u+x)^\theta} \dd x \\
\label{eqn:integral_bound_with_u_negative}
& \qquad 
= (1+u)^{1-\gamma-\theta} 
\int_0^\infty \frac{1}{(\frac{1}{1+u}+x)^\gamma} \frac{1}{(1+x)^\theta} \dd x 
\lesssim (1+u)^{1-\gamma-\theta} .
\end{align}
On the other hand we have
\begin{align*}
\int_0^{\frac{u}{2}}  \frac{1}{(1+u-x)^\gamma} \frac{1}{(1+x)^\theta} \dd x
\le (1+ \frac{u}{2})^{-\gamma} \int_0^{\frac{u}{2}} \frac{1}{(1+x)^\theta} \dd x \lesssim (1+u)^{1-\gamma-\theta}, 
\end{align*}
and similarly $\int_{\frac{u}{2}}^u \frac{1}{(1+u-x)^\gamma} \frac{1}{(1+x)^\theta} \dd x \lesssim (1+u)^{1-\gamma-\theta}$. 
In case $u$ is negative, the bound is already proved in \eqref{eqn:integral_bound_with_u_negative} (by interchanging $\theta$ and $\gamma$). 

For \eqref{eqn:integral_bound_with_gamma_and_theta_over_R_with_u_and_v} it is sufficient to observe that 
\begin{align*}
\int_v^\infty \frac{1}{(1+|x-u|)^\gamma} \frac{1}{(1+|x-v|)^\theta} \dd x 
& = \int_0^\infty  \frac{1}{(1+|x+v-u|)^\gamma} \frac{1}{(1+x)^\theta} \dd x ,\\
\int_{-\infty}^v \frac{1}{(1+|x-u|)^\gamma} \frac{1}{(1+|x-v|)^\theta} \dd x 
& = \int_0^\infty  \frac{1}{(1+|x+u-v|)^\gamma} \frac{1}{(1+x)^\theta} \dd x. 
\end{align*}
\end{proof}

\begin{calc}

\section{Spectrum of an operator with compact resolvents}
\label{section:spectrum_compact_resolvents}

Let $H$ be a Hilbert space. 
Let $A : \cD \rightarrow H$ be a linear operator, where $\cD $ is a linear subspace of $H$. $\rho(A)$ denotes the resolvent set of $A$, $\sigma(A)$ the spectrum and $\sigma_p(A)$ the point spectrum, i.e., the set of eigenvalues.

\begin{theorem}
\label{theorem:spectrum_under_compact_resolvent}
Suppose $H$ is infinite dimensional. 
Let $\alpha>0$. 
Suppose $\mu \in (-\infty, \alpha] \subset \rho(A)$ and write $R_\mu = (\mu - A)^{-1}$. 
Suppose that $R_\mu$ is a self-adjoint compact operator as a map $H\rightarrow H$. 
Then 
\begin{align}
\label{eqn:spectrum_A_in_terms_of_a_spectrum_resolvent}
\sigma(A) = \sigma_p(A) = \{   \mu -\tfrac{1}{\lambda} : \lambda \in \sigma_p(R_\mu) \setminus \{0\} \}. 
\end{align}
Suppose moreover that $A$ is a closed symmetric (densely defined) operator. 
Then $A$ is self-adjoint and has an (at most) countable spectrum without accumulation points. 
For all $\lambda \in \sigma(A)$, $\ker (\lambda -A)$ is finite dimensional and 
\begin{align}
\label{eqn:domain_A_as_direct_sum_kernels_eigenvalue_min_A}
\cD= \bigoplus_{\lambda \in \sigma(A)} \ker ( \lambda - A),
\end{align}
Let $\lambda_1\le \lambda_2 \le \cdots$ be such that  $\sigma(A) = \{ \lambda_n : n\in\N\}$ and such that $\#\{n\in\N: \lambda_n = \lambda\} = \dim \ker (\lambda - A)$ for all $\lambda \in \sigma(A)$. 
Then with the notation $\sqsubset $ for ``is a linear subspace of'' 
\begin{align}
\label{eqn:min_max_for_lambda}
\lambda_n 
&= 
\inf_{\substack{F\sqsubset \cD \\ \dim F = n}}
\sup_{\substack{v\in F\\ \|v\|_{H}=1}} \langle Av, v\rangle, \\
\label{eqn:max_min_for_one_over_mu_min_lambda}
\frac{1}{\mu - \lambda_n} 
& =
 \sup_{\substack{F\sqsubset H \\ \dim F = n}}
\inf_{\substack{v\in F\\ \|v\|_{H}=1}} \langle R_\mu v, v\rangle. 
\end{align}
\end{theorem}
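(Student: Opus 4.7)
The plan for \eqref{eqn:spectrum_A_in_terms_of_a_spectrum_resolvent} is to exploit the algebraic identity already noted in the paper, namely that for $\lambda \ne 0$,
\[
\lambda(\mu - \tfrac{1}{\lambda} - A) = (\lambda - R_\mu)(\mu - A) = (\mu - A)(\lambda - R_\mu)
\]
on $\cD$, which shows that $\lambda - R_\mu$ is boundedly invertible (respectively injective) on $H$ precisely when $\mu - 1/\lambda - A$ is so on $\cD$. Hence $\lambda \mapsto \mu - 1/\lambda$ sets up bijections between $\sigma(R_\mu)\setminus\{0\}$ and $\sigma(A)$, and between $\sigma_p(R_\mu)\setminus\{0\}$ and $\sigma_p(A)$. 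The Riesz--Schauder theorem for the self-adjoint compact $R_\mu$ then gives $\sigma(R_\mu)\setminus\{0\} = \sigma_p(R_\mu)\setminus\{0\}$, so $\sigma(A) = \sigma_p(A)$, completing the first part.

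For the second half, the starting observation is that, since $R_\mu$ is a bounded injective self-adjoint operator, its algebraic inverse $\mu - A$ is self-adjoint on $\cD = \ran(R_\mu)$, whence $A$ is self-adjoint. Next I would invoke the Hilbert--Schmidt theorem for $R_\mu$ to produce an orthonormal basis $(e_i)_{i \in \N}$ of $H$ consisting of eigenvectors with eigenvalues $r_i$, each nonzero eigenspace being finite-dimensional and $0$ the only possible accumulation point of $\{r_i\}$. Injectivity of $R_\mu$ (which comes from $(\mu - A)R_\mu = I$) rules out $r_i = 0$, and by the first part each $e_i$ is an eigenvector of $A$ with eigenvalue $\mu - 1/r_i$; this yields discreteness of $\sigma(A)$, finite-dimensionality of each $\ker(\lambda - A)$, and a well-defined ordering $\lambda_1 \le \lambda_2 \le \cdots$. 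The decomposition \eqref{eqn:domain_A_as_direct_sum_kernels_eigenvalue_min_A} then follows from $\cD = R_\mu(H)$: any $f \in \cD$ is uniquely $\sum_i \alpha_i r_i e_i$ with $(\alpha_i) \in \ell^2$ (apply $R_\mu^{-1}$ to see the square-summability), and grouping summands by equal $A$-eigenvalue gives the stated direct sum; the reverse inclusion is checked in the same way.

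Finally, the variational identities \eqref{eqn:min_max_for_lambda}--\eqref{eqn:max_min_for_one_over_mu_min_lambda} will be obtained as instances of the standard Courant--Fischer / Fischer variational principle applied to $A$ and to the bounded self-adjoint operator $R_\mu$ respectively. With the spectral decomposition in hand the argument is routine: one inequality is witnessed by taking $F$ to be the span of the first $n$ eigenvectors (giving the explicit upper/lower bound $\lambda_n$, respectively $1/(\mu - \lambda_n)$), while the reverse inequality follows from the elementary dimension count that any $n$-dimensional $F \subset H$ meets the tail $\spn(e_{i_n}, e_{i_{n+1}}, \dots )$ nontrivially. The main technical obstacle I anticipate is the proper handling of the unbounded domain $\cD$ in \eqref{eqn:min_max_for_lambda}: one must verify that restricting the outer optimisation to subspaces $F \sqsubset \cD$ loses nothing compared to all $n$-dimensional $F \subset H$, and this is where \eqref{eqn:domain_A_as_direct_sum_kernels_eigenvalue_min_A} is crucial, since it ensures that the relevant extremising eigenvectors already lie in $\cD$.
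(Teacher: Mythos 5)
Your proposal is correct and follows essentially the same route as the paper: the algebraic identity $\lambda(\mu - \tfrac1\lambda - A) = (\lambda - R_\mu)(\mu - A) = (\mu - A)(\lambda - R_\mu)$ to transfer the spectrum, the spectral theory of the compact self-adjoint $R_\mu$ to get discreteness and finite multiplicities, and Fischer's min--max principle for the variational formulas. The only minor divergences are cosmetic: you establish self-adjointness of $A$ directly by inverting the bounded injective self-adjoint $R_\mu$ and you realise \eqref{eqn:domain_A_as_direct_sum_kernels_eigenvalue_min_A} by expanding $\cD = \ran R_\mu$ in the Hilbert--Schmidt eigenbasis, while the paper instead cites the characterisation of self-adjointness for closed symmetric operators and Rudin's spectral decomposition for normal operators with countable spectrum; these fill in the same gaps with equivalent machinery.
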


\begin{theorem}
\label{theorem:kato_operators_with_compact_resolvent}
\cite[Theorem 6.29]{Ka95}
Let $A$ be a closed operator and $\mu \in \rho(A)$. 
If $R_\mu$ is compact, then $\sigma(A)$ consists of countably many eigenvalues with finite multiplicities and has no  accumulation points. Moreover $R_\lambda$ is compact for all $\lambda \in \rho(A)$. 
\end{theorem}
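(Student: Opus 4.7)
The plan is to deduce everything from a spectral mapping identity relating $A$ and its bounded resolvent $R_\mu$. Concretely, for every $\sigma \in \mathbb{C}\setminus\{0\}$ the algebraic computation
\[
\sigma\bigl(\mu - \tfrac{1}{\sigma} - A\bigr) = (\mu - A)(\sigma - R_\mu) = (\sigma - R_\mu)(\mu - A)
\]
on $\cD$ (using $(\mu - A)R_\mu = I$ on $H$ and $R_\mu(\mu - A) = I$ on $\cD$) shows, since $\mu - A$ is a bounded bijection $\cD \to H$, that $\mu - \tfrac{1}{\sigma} - A$ is injective, resp.\ boundedly invertible $\cD \to H$, if and only if $\sigma - R_\mu$ is injective, resp.\ boundedly invertible on $H$. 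This yields the bijective correspondence
\[
\lambda \in \sigma(A) \ \Longleftrightarrow\ (\mu - \lambda)^{-1} \in \sigma(R_\mu)\setminus\{0\},
\]
together with $\ker(\lambda - A) = \ker\bigl((\mu-\lambda)^{-1} - R_\mu\bigr)$; the only subtlety here is verifying that any eigenvector of $R_\mu$ for a nonzero eigenvalue automatically lies in $\cD = \ran R_\mu$, which follows from $v = \sigma^{-1} R_\mu v$.

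With this correspondence in hand, I invoke the Riesz--Schauder theorem for the compact operator $R_\mu$: $\sigma(R_\mu)$ is at most countable, every nonzero spectral point is an eigenvalue of finite multiplicity, and $0$ is the only possible accumulation point. Transporting these statements under the homeomorphism $\sigma \mapsto \mu - \sigma^{-1}$ from $\mathbb{C}\setminus\{0\}$ onto $\mathbb{C}\setminus\{\mu\}$ gives at once that $\sigma(A) = \sigma_p(A)$ is at most countable, that each eigenspace $\ker(\lambda - A)$ is finite-dimensional, and that $\sigma(A)$ has no accumulation point in $\mathbb{C}$ (the only possible accumulation point of $\sigma(R_\mu)$, namely $0$, corresponds to $\lambda$ escaping to $\infty$).

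For the final assertion, I would deploy the first resolvent identity
\[
R_\mu - R_\lambda = (\lambda - \mu) R_\mu R_\lambda \qquad (\mu, \lambda \in \rho(A)),
\]
obtained by multiplying out $R_\mu\bigl[(\lambda - A) - (\mu - A)\bigr]R_\lambda$ on $H$. Rearranging as $R_\lambda = R_\mu\bigl[I - (\lambda - \mu) R_\lambda\bigr]$ exhibits $R_\lambda$ as the composition of the compact operator $R_\mu$ with a bounded operator on $H$, hence $R_\lambda$ is compact.

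There is no deep obstacle; the only point requiring genuine care is the domain bookkeeping around the central identity, namely making sure the equivalence between bounded invertibility of $\sigma - R_\mu$ on $H$ and of $\mu - \tfrac{1}{\sigma} - A$ as an operator $\cD \to H$ is rigorously justified despite $A$ being unbounded, which is exactly what the factorisations through the bijection $\mu - A : \cD \to H$ achieve.
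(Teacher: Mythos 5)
Your proof is correct. The paper does not supply its own proof of this statement (it is quoted from Kato), but your central identity $\sigma(\mu-\tfrac{1}{\sigma}-A)=(\sigma-R_\mu)(\mu-A)=(\mu-A)(\sigma-R_\mu)$ combined with the Riesz--Schauder theorem is exactly the mechanism the paper deploys for the adjacent results on operators with compact resolvents, so your argument is in the same spirit; the two genuine subtleties --- that an eigenvector of $R_\mu$ for a nonzero eigenvalue automatically lies in $\ran R_\mu=\cD$, and that the factorisation through the bijection $\mu-A:\cD\to H$ transfers bounded invertibility between $\sigma-R_\mu$ and $\mu-\tfrac{1}{\sigma}-A$ --- are handled correctly, as is the first-resolvent-identity step giving compactness of $R_\lambda$ for every $\lambda\in\rho(A)$.
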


\begin{theorem}[F. Riesz]
\label{theorem:Ru425}
\cite[Theorem 4.25]{Ru91}
\cite[Theorem VI.7.1]{Co07}
Let $R : H \rightarrow H$ be a compact operator. 
Then $\sigma(R) \setminus \{0\} = \sigma_p(R)$, $\sigma(R)$ is countable and has at most one limit point, namely $0$. If $\dim(H) = \infty$, then $0 \in \sigma(R)$. 
\end{theorem}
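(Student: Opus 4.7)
The theorem has three assertions which I would prove in increasing order of depth.

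\textbf{Claim~(c): $0\in\sigma(R)$ when $\dim H=\infty$.} If instead $0$ lay in the resolvent set, then $R$ would have a bounded inverse, whence $I=R\circ R^{-1}$ would be compact as the composition of a compact and a bounded operator. But on an infinite dimensional normed space the closed unit ball is not norm-compact (Riesz's lemma on the unit ball), so the identity cannot be compact---contradiction.

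\textbf{Claim~(a): $\sigma(R)\setminus\{0\}=\sigma_p(R)$.} Fix $\lambda\ne 0$ and write $\lambda I-R=\lambda(I-\lambda^{-1}R)$ with $\lambda^{-1}R$ compact. I would invoke the Fredholm alternative for $I-T$ with $T$ compact: $\ker(I-T)$ is finite dimensional, $\operatorname{ran}(I-T)$ is closed, and injectivity of $I-T$ is equivalent to its surjectivity. Hence $\lambda I-R$ fails to be a bijection precisely when its kernel is nontrivial, i.e.\ $\lambda$ is an eigenvalue. The reverse containment $\sigma_p(R)\subset\sigma(R)$ is trivial.

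\textbf{Claim~(b): $\sigma(R)$ is countable with $0$ as only possible accumulation point.} By (a) it suffices to show that for every $\varepsilon>0$ only finitely many $\lambda\in\sigma_p(R)$ satisfy $|\lambda|\ge\varepsilon$; countability then follows by taking $\varepsilon=1/k$ for $k\in\N$. Arguing by contradiction, suppose $(\lambda_n)_{n\in\N}$ are pairwise distinct eigenvalues with $|\lambda_n|\ge\varepsilon$, and pick associated eigenvectors $v_n$, which are necessarily linearly independent. Setting $F_n:=\operatorname{span}(v_1,\ldots,v_n)$, each inclusion $F_{n-1}\subsetneq F_n$ is proper with $F_{n-1}$ closed (finite dimensional); Riesz's almost-orthogonal lemma supplies $w_n\in F_n$ with $\|w_n\|=1$ and $d(w_n,F_{n-1})\ge \tfrac12$. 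Since $RF_n\subset F_n$ and $Rw_n-\lambda_n w_n\in F_{n-1}$, for $m<n$ the vector $Rw_m+(Rw_n-\lambda_n w_n)$ lies in $F_{n-1}$, giving $\|Rw_n-Rw_m\|\ge|\lambda_n|\,d(w_n,F_{n-1})\ge \varepsilon/2$. Thus $(Rw_n)$ has no Cauchy subsequence, contradicting compactness of $R$ applied to the bounded sequence $(w_n)$.

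The main obstacle is the construction in Claim~(b): the inductive use of Riesz's almost-orthogonal lemma together with the bookkeeping that ensures $Rw_m+(Rw_n-\lambda_n w_n)\in F_{n-1}$. Once this is set up correctly, compactness of $R$ is contradicted immediately. The other two assertions are then short. The whole argument is the classical Riesz--Schauder theory and can be copied essentially verbatim from \cite[Theorem VI.7.1]{Co07} or \cite[Theorem 4.25]{Ru91}.
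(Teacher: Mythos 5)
The paper does not prove this theorem: it is stated inside a suppressed \verb|calc| environment purely as a citation to \cite[Theorem 4.25]{Ru91} and \cite[Theorem VI.7.1]{Co07}, so there is no in-paper argument to compare against. Your proof is the classical Riesz--Schauder argument found in those references, and the overall structure --- Riesz's lemma for claim (c), the Fredholm alternative for claim (a), and the almost-orthogonal sequence for claim (b) --- is correct and complete.

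One small algebraic slip in claim (b): what you actually need in $F_{n-1}$ is $Rw_m - (Rw_n - \lambda_n w_n)$, not $Rw_m + (Rw_n - \lambda_n w_n)$ (both lie in $F_{n-1}$, but only the former gives the decomposition $Rw_n - Rw_m = \lambda_n w_n - y$ with $y \in F_{n-1}$ that yields the bound $\|Rw_n - Rw_m\| \ge |\lambda_n|\, d(w_n, F_{n-1})$). Also worth noting: the identity $\sigma(R) \setminus \{0\} = \sigma_p(R)$ as stated in the theorem is a slight overstatement for a general compact $R$, since $0$ may well be an eigenvalue; your proof correctly establishes $\sigma(R) \setminus \{0\} \subset \sigma_p(R)$ and $\sigma_p(R) \setminus \{0\} \subset \sigma(R)$, which is what the cited theorems and the paper's subsequent use (where $R$ is an injective resolvent) actually require.
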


\begin{calc}
\begin{theorem}[Riesz-Schauder theorem]\cite[Theorem VI.15]{ReSi75} 
Let $A$ be a compact operator on $H$. 
Then $\sigma(A)$ is countable with no accumulation point except possibly $0$. 
Further, every $\lambda \in \sigma(A) \setminus \{0\}$ is an eigenvalue of finite multiplicity. 
\end{theorem}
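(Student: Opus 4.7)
The plan is to establish three facts that together imply the theorem: (i) for every $\lambda \in \sigma(A) \setminus \{0\}$ the eigenspace $\ker(\lambda - A)$ is finite-dimensional; (ii) every $\lambda \in \sigma(A) \setminus \{0\}$ is in fact an eigenvalue (the Fredholm alternative); and (iii) the set of nonzero eigenvalues has no accumulation point away from $0$. Combined with the inclusion $\sigma(A) \subset \{z : |z| \le \|A\|\}$, (ii) and (iii) yield countability of $\sigma(A)$, and (i) provides the finite multiplicity claim.

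For (i), if $\lambda \ne 0$ and $x \in \ker(\lambda - A)$ then $x = \lambda^{-1} A x$, so the closed unit ball of $\ker(\lambda - A)$ lies inside the relatively compact set $A(\overline{B(0, |\lambda|^{-1})})$. A normed space whose closed unit ball is compact is finite-dimensional, by the Riesz lemma. For (iii), suppose for contradiction that distinct nonzero eigenvalues $\lambda_n$ satisfy $\lambda_n \to \lambda$ with $|\lambda| > 0$, and pick unit eigenvectors $x_n \in \ker(\lambda_n - A)$, which are linearly independent. Set $M_n = \mathrm{span}(x_1, \ldots, x_n)$ and apply Riesz's lemma to obtain $y_n \in M_n$ with $\|y_n\| = 1$ and $\mathrm{dist}(y_n, M_{n-1}) \ge 1/2$. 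Each $M_k$ is $A$-invariant, and $(A - \lambda_k) M_k \subset M_{k-1}$ because $(A - \lambda_k) x_k = 0$ and $(A - \lambda_k) x_i = (\lambda_i - \lambda_k) x_i$ for $i < k$. Hence for $n > m$,
\[
A y_n - A y_m = \lambda_n y_n + w_{n,m}, \qquad w_{n,m} \in M_{n-1},
\]
so $\|A y_n - A y_m\| \ge |\lambda_n| \cdot \mathrm{dist}(y_n, M_{n-1}) \ge |\lambda_n|/2 \to |\lambda|/2 > 0$, contradicting the existence of a convergent subsequence of $(A y_n)$ forced by compactness of $A$ on the bounded sequence $(y_n)$.

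For (ii), the deepest step, write $\lambda - A = \lambda(I - K)$ with $K := A/\lambda$ compact, and aim to show that $I - K$ injective implies $I - K$ surjective (the open mapping theorem then supplies a bounded inverse, so $\lambda \in \rho(A)$). The standard route is first to verify that $I - K$ has closed range: take $(I - K)x_n \to y$, reduce modulo $\ker(I - K)$ to choose representatives of minimal norm, use compactness of $K$ to extract a convergent subsequence of these (necessarily bounded) representatives, and conclude $y \in \mathrm{range}(I - K)$. Next, analyze the ascending chain $N_n := \ker(I - K)^n$ and the descending chain $R_n := \mathrm{range}(I - K)^n$; a Riesz-lemma argument entirely analogous to (iii), but applied inside each strict increase or decrease of these chains with $K$ in place of $A$, forces both chains to stabilize. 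A short algebraic argument then produces the decomposition $H = N_n \oplus R_n$ past the stabilization index, from which injectivity and surjectivity of $I - K$ become equivalent.

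The main obstacle is step (ii): establishing closed range and the stabilization of the iterated kernel/range chains concentrates essentially all of the functional-analytic content of the theorem, whereas (i) and (iii) are short compactness-and-Riesz-lemma arguments once the geometric picture is in place.
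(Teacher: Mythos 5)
The paper does not prove this result at all: it is quoted from Reed and Simon \cite[Theorem VI.15]{ReSi75} and used as a black box (the statement even sits inside a suppressed \texttt{calc} block in the source), so there is no proof in the paper to compare against. Your argument is the standard Riesz theory of compact operators, and it is essentially correct. Parts (i) and (iii) are fully argued: the ball-containment argument for finite-dimensional eigenspaces is right, and the $\|Ay_n - Ay_m\| \ge |\lambda_n|/2$ estimate via Riesz's lemma applied to $M_n = \mathrm{span}(x_1,\dots,x_n)$ correctly rules out accumulation of eigenvalues away from $0$.

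Part (ii), as you yourself flag, is an outline rather than a proof. Two points there deserve to be made explicit. First, in the closed-range step, the boundedness of the minimal-norm representatives is not automatic; if they were unbounded you would normalize, extract a convergent subsequence via compactness of $K$, obtain a unit vector in $\ker(I-K)$ that the normalized sequence approaches, and contradict the minimal-norm choice. Second, the final ``short algebraic argument'' should be stated: once the range chain $R_n$ stabilizes, if $I-K$ were injective but not surjective then $R_0 \supsetneq R_1$ and injectivity would propagate strict inclusion down the whole chain $R_n \supsetneq R_{n+1}$, contradicting stabilization; so injectivity forces surjectivity, and the open mapping theorem gives $\lambda \in \rho(A)$. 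With these two gaps filled the proof is complete. (In a Hilbert space, as in the paper's application, one could also appeal to the spectral theorem for the compact normal operator $A^*A$ or, when $A$ is self-adjoint, directly to the Hilbert--Schmidt theorem, which shortens (ii) considerably; your route has the merit of working in any Banach space.)
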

\end{calc}

\begin{theorem}
\label{theorem:Ru1229}
\cite[Theorem 12.29]{Ru91}
If $T: H \rightarrow H$ is a normal operator and $\sigma(T)$ is countable, then $H= \bigoplus_{\lambda \in \sigma(T)} \ker(\lambda -T)$. 
\end{theorem}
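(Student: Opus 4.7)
The plan is to deduce the decomposition from the spectral theorem for normal operators. Invoking \cite[Theorem 12.23]{Ru91}, there exists a unique projection-valued (regular, Borel) measure $E$ on $\sigma(T)\subset\C$ such that
\[
T=\int_{\sigma(T)}\mu\,\mathrm{d}E(\mu),\qquad E(\sigma(T))=I,
\]
and $E(A\cap B)=E(A)E(B)$ for all Borel sets $A,B\subset\sigma(T)$. Since $\sigma(T)$ is countable, each singleton $\{\lambda\}$ is Borel, and by countable additivity of $E$ in the strong operator topology applied to the disjoint decomposition $\sigma(T)=\bigsqcup_{\lambda\in\sigma(T)}\{\lambda\}$, we get
\[
I=\sum_{\lambda\in\sigma(T)}E(\{\lambda\}),
\]
the sum converging strongly. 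Moreover, for $\lambda\ne\mu$ in $\sigma(T)$, $E(\{\lambda\})E(\{\mu\})=E(\emptyset)=0$, so the ranges of these projections are pairwise orthogonal.

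The key remaining step is to identify $\operatorname{ran}E(\{\lambda\})=\ker(\lambda-T)$ for every $\lambda\in\sigma(T)$. For the inclusion $\operatorname{ran}E(\{\lambda\})\subset\ker(\lambda-T)$, apply the bounded Borel functional calculus to $g(\mu):=(\mu-\lambda)\mathbbm{1}_{\{\lambda\}}(\mu)$, which is identically zero; hence
\[
(T-\lambda I)E(\{\lambda\})=\int g(\mu)\,\mathrm{d}E(\mu)=0.
\]
For the reverse inclusion, let $v\in\ker(\lambda-T)$ and set $P:=I-E(\{\lambda\})=E(\sigma(T)\setminus\{\lambda\})$. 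I would show $Pv=0$ by writing, via the functional calculus and the identity $\mathbbm{1}_{\sigma(T)\setminus\{\lambda\}}(\mu)=\lim_{n\to\infty}h_n(\mu)(\mu-\lambda)$ with $h_n(\mu):=\mathbbm{1}_{|\mu-\lambda|\ge 1/n}\cdot(\mu-\lambda)^{-1}$ (each $h_n$ is a bounded Borel function on $\sigma(T)$ and the convergence is pointwise and bounded on $\sigma(T)\setminus\{\lambda\}$), the relation
\[
Pv=\lim_{n\to\infty}h_n(T)(T-\lambda I)v=0,
\]
using dominated convergence for the spectral measure $\langle E(\cdot)v,v\rangle$. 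Hence $v=E(\{\lambda\})v\in\operatorname{ran}E(\{\lambda\})$.

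Combining the two inclusions with the pairwise orthogonality and the strong sum $\sum_{\lambda}E(\{\lambda\})=I$ yields the orthogonal direct sum decomposition
\[
H=\bigoplus_{\lambda\in\sigma(T)}\ker(\lambda-T).
\]
The main obstacle I anticipate is the passage from the functional-calculus identity to the statement about eigenvectors, i.e.\ proving $\operatorname{ran}E(\{\lambda\})\supset\ker(\lambda-T)$; the cutoff–inversion argument above is standard but must be justified by the dominated convergence applied to the scalar measure $\langle E(\cdot)v,v\rangle$, which is supported on $\sigma(T)$ and finite. Everything else (countable additivity, multiplicativity of $E$, orthogonality of ranges) is immediate from the spectral theorem.
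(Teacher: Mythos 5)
Your proof is correct and follows essentially the same route as the source the paper cites for this statement (Rudin's Theorem 12.29 is itself proved via the spectral theorem for bounded normal operators, countable additivity of the spectral measure over the singletons of $\sigma(T)$, and the identification $\operatorname{ran}E(\{\lambda\})=\ker(\lambda-T)$). The cutoff--inversion argument for the inclusion $\ker(\lambda-T)\subset\operatorname{ran}E(\{\lambda\})$ is correctly justified by dominated convergence for the finite scalar measure $\langle E(\cdot)v,v\rangle$, so there is no gap.
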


%
%
%

\begin{lemma} 
[Fischer's principle] \cite[Section 28, Theorem 4, p. 318]{La02}\footnote{In this reference the operator is actually assumed to be compact and symmetric, but this is only done to guarantee that the spectrum is countable and ordered.} 
\label{lemma:min_max_and_max_min_shorter}
Suppose that $\sigma_p(A) = \{ \lambda_n : n\in\N\}$ and $\#\{n\in\N: \lambda_n = \lambda\} = \dim \ker (\lambda - A)$ for all $\lambda \in \sigma(A)$. 
If $\lambda_{n} \le \lambda_{n+1}$ for all $n\in\N$, then 
\begin{align*}
\lambda_n = 
\inf_{\substack{F\sqsubset \cD \\ \dim F = n}}
\sup_{\substack{v\in F\\ \|v\|_{H}=1}} \langle Av, v\rangle. 
\end{align*}
 If $\lambda_{n} \ge \lambda_{n+1}$ for all $n\in\N$, then 
\begin{align*}
\lambda_n = 
\sup_{\substack{F\sqsubset \cD \\ \dim F = n}}
\inf_{\substack{v\in F\\ \|v\|_{H}=1}} \langle Av, v\rangle. 
\end{align*}
\end{lemma}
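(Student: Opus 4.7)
The plan is to reduce everything to the spectral decomposition and then argue by dimension counting.

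First I would invoke the assumed multiplicity structure: from $\sigma_p(A) = \{\lambda_n : n\in \N\}$ together with the multiplicity condition $\#\{n : \lambda_n = \lambda\} = \dim \ker(\lambda - A)$, and self-adjointness of $A$ (which is the setting in which this lemma is applied in the paper, cf.\ Theorem \ref{theorem:dirichlet_summary} and Theorem \ref{theorem:spectrum_under_compact_resolvent}), one obtains an orthonormal basis $(e_n)_{n\in\N}$ of $H$ with $e_n \in \cD$ and $A e_n = \lambda_n e_n$. For any $v \in \cD$, writing $v = \sum_{i} c_i e_i$ with $c_i = \langle v, e_i\rangle$, one has $Av = \sum_i \lambda_i c_i e_i$ in $H$ and hence
\begin{equation*}
\langle Av, v\rangle = \sum_{i\in\N} \lambda_i |c_i|^2, \qquad \|v\|_H^2 = \sum_{i\in\N} |c_i|^2.
\end{equation*}

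Now I would handle the increasing case $\lambda_n \le \lambda_{n+1}$. For the upper bound on $\inf_F \sup_v$, I choose the trial subspace $F_* := \mathrm{span}(e_1,\dots,e_n) \subset \cD$; every unit $v = \sum_{i=1}^n c_i e_i$ in $F_*$ satisfies $\langle Av, v\rangle = \sum_{i=1}^n \lambda_i |c_i|^2 \le \lambda_n$, which gives $\inf_F \sup_v \le \lambda_n$. For the lower bound, fix any $n$-dimensional $F \sqsubset \cD$ and set $M := \mathrm{span}(e_1,\dots,e_{n-1})^\perp$. Since the orthogonal projection $P : F \to \mathrm{span}(e_1,\dots,e_{n-1})$ has range of dimension at most $n-1 < n = \dim F$, its kernel $F \cap M$ is nontrivial; pick a unit vector $v \in F \cap M$. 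Then $v = \sum_{i \ge n} c_i e_i$ with $\sum_{i\ge n}|c_i|^2 = 1$, so $\langle Av, v\rangle = \sum_{i\ge n} \lambda_i |c_i|^2 \ge \lambda_n$, and therefore $\sup_{v\in F, \|v\|=1} \langle Av, v\rangle \ge \lambda_n$, completing the opposite inequality.

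The decreasing case $\lambda_n \ge \lambda_{n+1}$ follows by the mirror argument (or by applying the increasing case to $-A$): for the lower bound on $\sup_F \inf_v$, take again $F_* = \mathrm{span}(e_1,\dots,e_n)$ and use $\langle Av, v\rangle \ge \lambda_n$ on unit $v \in F_*$; for the upper bound, given an $n$-dimensional $F$, again extract a unit $v \in F \cap \mathrm{span}(e_1,\dots,e_{n-1})^\perp$ and compute $\langle Av, v\rangle \le \lambda_n$.

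The only delicate point is the dimension-counting step in the infinite-dimensional setting and the fact that the expansion $v = \sum c_i e_i$ used in the estimate is of a vector in $\cD$, so the series $\sum \lambda_i c_i e_i$ converges in $H$ by the spectral theorem and the quadratic form $\langle Av, v\rangle$ is genuinely given by the diagonal sum. Everything else is a rearrangement of a convergent non-negative (after a constant shift) series, so there is no further obstacle.
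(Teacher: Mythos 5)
Your proof is correct and is the standard Courant--Fischer argument; the paper itself does not prove this lemma but only cites \cite[Section 28, Theorem 4]{La02}, where essentially the same two-step argument (trial subspace $\spn(e_1,\dots,e_n)$ for one inequality, rank--nullity to extract a unit vector in $F\cap \spn(e_1,\dots,e_{n-1})^\perp$ for the other) is carried out. The only point worth making explicit is that the hypotheses as literally stated in the lemma (point spectrum plus multiplicity count) do not suffice: you also need $A$ symmetric, so that $\langle Av,e_i\rangle=\lambda_i\langle v,e_i\rangle$ and the quadratic form diagonalises, and you need the eigenvectors to form a \emph{complete} orthonormal system, i.e.\ $\cD=\bigoplus_{\lambda}\ker(\lambda-A)$ as in \eqref{eqn:domain_A_as_direct_sum_kernels_eigenvalue_min_A}; you correctly import both from the self-adjoint compact-resolvent setting in which the lemma is applied. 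One small inaccuracy in your closing remark: when the $\lambda_n$ are unbounded (below, as in the application), no constant shift makes the series non-negative; the rearrangements are instead justified by absolute convergence of $\sum_i\lambda_i|c_i|^2$, which follows from $Av\in H$ via Cauchy--Schwarz, as you in fact note earlier.
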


These theorems can be used to give a short  proof of Theorem \ref{theorem:spectrum_under_compact_resolvent}. 

\begin{proof}[Proof of Theorem \ref{theorem:spectrum_under_compact_resolvent}]
For the most statements one can combine 
Theorems \ref{theorem:kato_operators_with_compact_resolvent} and \ref{theorem:Ru425}. 
\eqref{eqn:spectrum_A_in_terms_of_a_spectrum_resolvent} follows from
\begin{align*}
\lambda( \mu - \tfrac{1}{\lambda}  - A) 
&= \lambda (\mu - A) -1 = (\lambda - R_\mu) (\mu - A) \\
&=   (\mu - A)\lambda -1  = (\mu - A) (\lambda - R_\mu), 
\end{align*}
as this implies that
$\lambda - R_\mu$ is boundedly invertible (or injective)  if and only if $ \mu - \tfrac{1}{\lambda}  - A$ is. 
\begin{calc}
If $Q$ is an inverse for $\mu - \frac{1}{\lambda} - A$ then 
\begin{align*}
\frac{1}{\lambda}Q ( \lambda - R_\mu) (\mu - A) = (\lambda - R_\mu) ( \mu - A) \frac{1}{\lambda}Q = I
\end{align*}
Therefore $\frac{1}{\lambda}(\mu-A)Q$ is an inverse of $\lambda - R_\mu$, as
\begin{align*}
\frac{1}{\lambda}(\mu- A) Q (\lambda - R_u) 
& = \frac{1}{\lambda}(\mu- A) Q (\lambda - R_u) (\mu- A) R_\mu \\
& = 
(\mu- A)  (\lambda - R_u) (\mu- A) \frac{1}{\lambda}Q R_\mu 
= (\mu- A)  I R_\mu = I. 
\end{align*}
Vice versa if $Q$ is an inverse $\lambda - R_\mu$, then $\lambda Q R_\mu$ is an inverse for $\mu - \frac{1}{\lambda} - A$.
\end{calc}
Moreover, $\ker (\mu - \tfrac{1}{\lambda}  - A) = \ker  (\lambda - R_\mu)$. 
Then \eqref{eqn:domain_A_as_direct_sum_kernels_eigenvalue_min_A} follows by applying Theorem \ref{theorem:Ru1229} to $R_\mu$ and observing that $0 \in \sigma(R_\mu)$ because $\dim(H) = \infty$ and $(\ker R_\mu)^\perp = \ran(R_\mu) = \cD$. 
\eqref{eqn:min_max_for_lambda} and \eqref{eqn:max_min_for_one_over_mu_min_lambda} follow from Lemma \ref{lemma:min_max_and_max_min_shorter}.
\end{proof}

\begin{calc}
\begin{definition}
\label{def:A_bounded}
Let $A: \cD(A) \rightarrow H$ and $B: \cD(B) \rightarrow H$ be densely defined operators. We say that $B$ is $A$-\emph{bounded} when 
\begin{enumerate}
\item $\cD(A) \subset \cD(B)$,
\item There exist $a,b\ge 0$ such that for all $\varphi \in \cD(A)$ 
\begin{align}
\label{eqn:A_bounded}
\|B \varphi\| \le a \|A \varphi\| + b\|\varphi\|. 
\end{align}
\end{enumerate}
The infimum of such $a$ is called the \emph{relative bound} of $B$ with respect to $A$. If the relative bound equals $0$, we say that $B$ is infinitesimally small with respect to $A$. 
\end{definition}

\begin{theorem}[Hilbert-Schmidt theorem] \cite[Theorem VI.16]{ReSi75}
Let $A$ be a self-adjoint compact operator on $H$. 
Then there is a complete orthonormal basis $(\phi_n)_{n\in\N}$ for $H$ such that $A\phi_n = \lambda_n \phi_n$ and $\lambda_n \rightarrow 0$. 
\end{theorem}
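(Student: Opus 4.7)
The plan is to give the standard constructive proof via iterated extraction of extremal eigenvectors, rather than invoking the general spectral theorem. First I would reduce to the case $A\neq 0$ and recall the self-adjoint identity $\|A\| = \sup_{\|x\|=1}|\langle Ax,x\rangle|$. Picking a sequence $x_n$ with $\|x_n\|=1$ and $\langle Ax_n,x_n\rangle \to \mu_1 := \pm\|A\|$, compactness of $A$ lets me pass to a subsequence along which $Ax_n$ converges in $H$; combined with the elementary inequality
\[
\|(A-\mu_1)x_n\|^2 = \|Ax_n\|^2 - 2\mu_1\langle Ax_n,x_n\rangle + \mu_1^2 \le 2\mu_1^2 - 2\mu_1\langle Ax_n,x_n\rangle \longrightarrow 0,
\]
I obtain a limit vector $\phi_1$ with $\|\phi_1\|=1$ and $A\phi_1 = \mu_1 \phi_1$, so $\lambda_1:=\mu_1$ is an eigenvalue.

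Next I iterate on the orthogonal complement. By self-adjointness, $H_1 := \{\phi_1\}^\perp$ is invariant under $A$, and the restriction $A|_{H_1}$ is again self-adjoint and compact. If $A|_{H_1} \neq 0$, the previous step produces $\phi_2 \perp \phi_1$ with $A\phi_2 = \lambda_2\phi_2$ and $|\lambda_2|\le |\lambda_1|$; continuing, I either terminate after finitely many steps (in which case $A$ restricted to the orthogonal complement of $\operatorname{span}\{\phi_1,\dots,\phi_N\}$ is zero) or obtain an infinite orthonormal family $(\phi_n)_{n\in\N}$ with $A\phi_n=\lambda_n\phi_n$ and $|\lambda_n|$ non-increasing. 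To show $\lambda_n\to 0$ in the infinite case, suppose otherwise that $|\lambda_n|\ge\delta>0$; then $(\phi_n)$ is bounded, but
\[
\|A\phi_n - A\phi_m\|^2 = \lambda_n^2 + \lambda_m^2 \ge 2\delta^2 \qquad (n\neq m),
\]
contradicting compactness of $A$, which forces $(A\phi_n)$ to have a Cauchy subsequence.

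Finally I verify completeness. Let $K$ be the closed linear span of $(\phi_n)_{n\in\N}$ (or of the finite family in the terminating case), and consider $K^\perp$. By self-adjointness $K^\perp$ is $A$-invariant and $A|_{K^\perp}$ is self-adjoint and compact. If $A|_{K^\perp}\neq 0$, the extremal argument would produce a further unit eigenvector with eigenvalue of modulus $\|A|_{K^\perp}\| > 0$; but this eigenvalue would have to appear among the $\lambda_n$ (its modulus is $\le |\lambda_n|$ for every $n$ in the infinite case, contradicting $\lambda_n\to 0$; in the finite case it contradicts $A|_{K^\perp}=0$). Hence $A|_{K^\perp}=0$, i.e.\ $K^\perp \subset \ker A$. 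Choosing any orthonormal basis $(\psi_\alpha)$ of $K^\perp\subset\ker A$ (which exists since $H$ is separable once $K$ is non-trivial, and in any case Zorn gives such a basis) and appending these as further $\phi_n$'s with eigenvalue $0$, I obtain a complete orthonormal system of eigenvectors; the eigenvalues still satisfy $\lambda_n\to 0$ since appending zeros preserves this.

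The main obstacle is the first step: establishing that the norm of a self-adjoint compact operator is attained as an eigenvalue. This is where compactness is genuinely used (passing from a maximizing sequence of the Rayleigh quotient to an actual eigenvector via a subsequential limit), and it is the engine that the whole iteration relies on. Everything else (invariance of orthogonal complements, decay of $\lambda_n$, exhaustion of $H$) is a clean formal consequence, but each relies on compactness re-applied to the restricted operator.
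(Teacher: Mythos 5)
Your proof is correct, and it is worth noting that the paper does not prove this statement at all: it is quoted verbatim from Reed--Simon and used as a black box (together with the Riesz--Schauder theorem) to diagonalise the compact resolvents of $\sH_\bxi$. The route in the cited source is different from yours: Reed--Simon first establish the Riesz--Schauder theorem, collect orthonormal bases of the (finite-dimensional) eigenspaces for the nonzero eigenvalues, and then show that $A$ vanishes on the orthogonal complement of their span because a self-adjoint compact operator with spectrum $\{0\}$ has norm equal to its spectral radius, hence is zero. Your argument instead builds the eigenvectors one at a time by the extremal/Rayleigh-quotient construction, which is more elementary (it needs neither Riesz--Schauder nor the spectral radius formula) at the cost of redoing the compactness argument at each stage; both are standard. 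Two cosmetic points: the parenthetical claim that ``$H$ is separable once $K$ is non-trivial'' is not right (non-separability can only live in $\ker A$, and then the basis cannot be indexed by $\N$; the statement tacitly assumes $H$ separable, as Reed--Simon do), and in the infinite case one should say explicitly that $\|A|_{K^\perp}\|\le|\lambda_{n+1}|$ for every $n$ because $K^\perp\subset\{\phi_1,\dots,\phi_n\}^\perp$ and $|\lambda_{n+1}|$ was chosen as the norm of the restriction to the latter --- which is exactly the inequality your completeness step is implicitly using.
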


\begin{theorem}[The Kato-Rellich theorem]
\label{theorem:kato_rellich}
\cite[Theorem X.12]{ReSi75}
Suppose $A$ is self-adjoint, $B$ is symmetric and $B$ is $A$-bounded with relative bound $a<1$. 
Then $A+B$ is self-adjoint on $\cD(A)$. 
Furthermore, if $\sigma(A) \subset [M,\infty)$, then $\sigma(A+B) \subset [M - \max\{ \frac{b}{1-a}, a|M|+ b\}, \infty)$ (where $a,b$ are as in \eqref{eqn:A_bounded}). 
\end{theorem}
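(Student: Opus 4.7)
The plan is to prove both assertions by studying the resolvent via the factorisation
\[
A + B - z = \bigl( I + B(A-z)^{-1}\bigr)(A - z),
\]
valid for any $z \in \rho(A)$. If we can show $\|B(A-z)^{-1}\| < 1$, then the first factor is invertible by a Neumann series, and consequently $A+B-z : \cD(A) \to H$ is bijective with bounded inverse. The whole proof amounts to making this observation quantitative in two different regimes: $z = \pm i\mu$ for the self-adjointness claim, and $z = \lambda < M$ for the spectral bound.

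First I would handle self-adjointness. The key estimate comes from the spectral theorem for $A$: for any $\mu > 0$ and $\varphi \in \cD(A)$, $\|(A \pm i\mu)\varphi\|^2 = \|A\varphi\|^2 + \mu^2\|\varphi\|^2$, so with $\varphi = (A\pm i\mu)^{-1}\psi$ one has $\|A\varphi\| \le \|\psi\|$ and $\|\varphi\| \le \mu^{-1}\|\psi\|$. Substituting into the relative bound \eqref{eqn:A_bounded} yields $\|B(A\pm i\mu)^{-1}\psi\| \le (a+b/\mu)\|\psi\|$. Choosing $\mu$ large enough that $a+b/\mu < 1$ makes $I + B(A\pm i\mu)^{-1}$ invertible, and the factorisation above shows $(A+B\pm i\mu)\cD(A) = H$. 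Combined with the obvious symmetry of $A+B$ on $\cD(A)$ (it is the sum of two symmetric operators, both defined on $\cD(A)$), the standard criterion that a symmetric operator $T$ with $\ran(T\pm i\mu) = H$ is self-adjoint then concludes this step.

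For the spectrum shift, I would apply the same factorisation with real $z = \lambda < M$. Since $A$ is self-adjoint with $\sigma(A) \subset [M,\infty)$, the spectral theorem gives $\|(A-\lambda)^{-1}\| \le (M-\lambda)^{-1}$. Writing $\varphi = (A-\lambda)^{-1}\psi$ so that $A\varphi = \psi + \lambda\varphi$, I obtain $\|A\varphi\| \le \|\psi\| + |\lambda|(M-\lambda)^{-1}\|\psi\|$ and $\|\varphi\| \le (M-\lambda)^{-1}\|\psi\|$. Plugging into \eqref{eqn:A_bounded} gives
\[
\|B(A-\lambda)^{-1}\psi\| \;\le\; \Bigl( a + \frac{a|\lambda|+b}{M-\lambda} \Bigr)\|\psi\|.
\]
The condition for this coefficient to be $<1$ is $a|\lambda| + b < (1-a)(M-\lambda)$. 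A short case analysis separating $\lambda \ge 0$ and $\lambda < 0$ shows that this inequality is implied by $M - \lambda > \max\{\tfrac{b}{1-a},\, a|M|+b\}$, which gives exactly the asserted spectral lower bound $M - \max\{\tfrac{b}{1-a},\, a|M|+b\}$.

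The main obstacle is the bookkeeping in the last step: one must keep track of $|\lambda|$ versus $\lambda$ in the inequality $a|\lambda| + b < (1-a)(M-\lambda)$ and repackage the resulting condition into the clean form of the statement. Both sub-cases of the case analysis depend on signs of $M$ and $\lambda$, and the envelope that works uniformly is precisely the $\max$ of the two natural thresholds: $aM + b$ (controlling the $\lambda \ge 0$ regime) and $b/(1-a)$ (controlling the $\lambda < 0$ regime where $|\lambda|$ can be as large as $M - \lambda$). Everything else is a direct application of the spectral theorem together with the Neumann series.
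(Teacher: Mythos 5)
Your first half is correct and is precisely the standard argument behind the cited reference: the factorisation $A+B-z=(I+B(A-z)^{-1})(A-z)$, the identity $\|(A\pm \i\mu)\varphi\|^2=\|A\varphi\|^2+\mu^2\|\varphi\|^2$ giving $\|B(A\pm \i\mu)^{-1}\|\le a+b/\mu<1$ for large $\mu$, and the basic criterion for self-adjointness. No issues there.

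The second half has a genuine gap. From $A\varphi=\psi+\lambda\varphi$ you bound $\|A(A-\lambda)^{-1}\psi\|\le\bigl(1+|\lambda|(M-\lambda)^{-1}\bigr)\|\psi\|$ and hence $\|B(A-\lambda)^{-1}\|\le a+\frac{a|\lambda|+b}{M-\lambda}$. This triangle-inequality estimate is too lossy to close the regime $\lambda<0$: as $\lambda\to-\infty$ your bound tends to $2a$, so for $a\ge\tfrac12$ it never drops below $1$ and the argument cannot even show that $A+B$ is bounded below; and for $a<\tfrac12$ the condition $a|\lambda|+b<(1-a)(M-\lambda)$ with $|\lambda|$ comparable to $M-\lambda$ forces $M-\lambda>b/(1-2a)$, which is strictly larger than the claimed threshold $b/(1-a)$. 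Concretely, take $M=0$, $a=\tfrac14$, $b=1$, $\lambda=-\tfrac32$: the statement requires $\lambda\in\rho(A+B)$ since $M-\max\{\tfrac{b}{1-a},a|M|+b\}=-\tfrac43>-\tfrac32$, but your bound gives $\|B(A-\lambda)^{-1}\|\le\tfrac14+\tfrac{3/8+1}{3/2}=\tfrac76>1$, so the Neumann series does not converge and your case analysis does not go through. The repair is to replace the triangle inequality by the functional-calculus estimate $\|A(A-\lambda)^{-1}\|\le\sup_{t\ge M}\tfrac{|t|}{t-\lambda}=\max\bigl\{1,\tfrac{|M|}{M-\lambda}\bigr\}$, which uses that $A$ is self-adjoint with $\sigma(A)\subset[M,\infty)$. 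Then $\|B(A-\lambda)^{-1}\|\le a\max\bigl\{1,\tfrac{|M|}{M-\lambda}\bigr\}+\tfrac{b}{M-\lambda}$, and the two cases $M-\lambda\ge|M|$ and $M-\lambda<|M|$ yield exactly the two thresholds $b/(1-a)$ and $a|M|+b$, giving the stated lower bound $M-\max\{\tfrac{b}{1-a},a|M|+b\}$.
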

\end{calc}

\end{calc}

\bibliographystyle{abbrv}
\bibliography{references}

\end{document}